\titleformat{\subsection}[runin]{\normalfont\bfseries}{\thesubsection.}{.5em}{}[.]\titlespacing{\subsection}{0pt}{2ex plus .1ex minus .2ex}{.8em}
\titleformat{\subsubsection}[runin]{\normalfont\itshape}{\thesubsubsection.}{.3em}{}[.]\titlespacing{\subsubsection}{0pt}{1ex plus .1ex minus .2ex}{.5em}
\newcommand{{\small \input{.pdf_tex}}}[1]{{\small \input{#1.pdf_tex}}}
\definecolor{darkred}{rgb}{0.8,0,0.3}
\definecolor{darkblue}{rgb}{0,0.3,0.8}
\newcommand{\nc}{\normalcolor}
\def\comment#1{\ifthenelse{\isodd{\value{page}}}{\marginpar{\raggedright\scriptsize{\textcolor{darkred}{#1}}}}{\marginpar{\raggedleft\scriptsize{\textcolor{darkred}{#1}}}}}  
\definecolor{vdarkred}{rgb}{0.6,0,0.2}
\definecolor{vdarkblue}{rgb}{0,0.2,0.6}
\numberwithin{equation}{section}
\numberwithin{figure}{section}
\theoremstyle{plain} 
\newtheorem{theorem}{Theorem}[section]
\newtheorem*{theorem*}{Theorem}
\newtheorem{lemma}[theorem]{Lemma}
\newtheorem*{lemma*}{Lemma}
\newtheorem{corollary}[theorem]{Corollary}
\newtheorem*{corollary*}{Corollary}
\newtheorem{proposition}[theorem]{Proposition}
\newtheorem*{proposition*}{Proposition}
\newtheorem{definition}[theorem]{Definition}
\newtheorem*{definition*}{Definition}
\newtheorem*{conjecture*}{Conjecture}
\newtheorem{assumption}[theorem]{Assumption}
\theoremstyle{definition} 
\newtheorem{example}[theorem]{Example}
\newtheorem*{example*}{Example}
\newtheorem{remark}[theorem]{Remark}
\newtheorem*{remark*}{Remark}
\newcommand{\f}[1]{\boldsymbol{\mathrm{#1}}} 
\newcommand{\bb}{\mathbb} 
\renewcommand{\r}{\mathrm}  
\renewcommand{\cal}{\mathcal}
\newcommand{\ul}[1]{\underline{#1} \!\,} 
\newcommand{\ol}[1]{\overline{#1} \!\,} 
\newcommand{\wh}{\widehat}
\newcommand{\wt}{\widetilde}
\newcommand{\txt}[1]{\text{\rm{#1}}}
\renewcommand{\P}{\mathbb{P}}
\newcommand{\E}{\mathbb{E}}
\newcommand{\R}{\mathbb{R}}
\newcommand{\C}{\mathbb{C}}
\newcommand{\N}{\mathbb{N}}
\newcommand{\Z}{\mathbb{Z}}
\newcommand{\me}{\mathrm{e}}
\newcommand{\ii}{\mathrm{i}}
\newcommand{\dd}{\mathrm{d}}
\newcommand{\col}{\mathrel{\vcenter{\baselineskip0.75ex \lineskiplimit0pt \hbox{.}\hbox{.}}}}
\newcommand*{\deq}{\mathrel{\vcenter{\baselineskip0.65ex \lineskiplimit0pt \hbox{.}\hbox{.}}}=}
\newcommand{\eqdist}{\overset{\text{d}}{=}}
\newcommand{\todist}{\overset{\text{d}}{\longrightarrow}}
\renewcommand{\leq}{\leqslant}
\renewcommand{\geq}{\geqslant}
\renewcommand{\epsilon}{\varepsilon}
\newcommand{\floor}[1] {\lfloor {#1} \rfloor}
\newcommand{\ceil}[1]  {\lceil  {#1} \rceil}
\newcommand{\qq}[1]{[\![{#1}]\!]}
\newcommand{\ind}[1]{\f 1 (#1)}
\newcommand{\indbb}[1]{\f 1 \pbb{#1}}
\newcommand{\p}[1]{({#1})}
\newcommand{\pb}[1]{\bigl({#1}\bigr)}
\newcommand{\pB}[1]{\Bigl({#1}\Bigr)}
\newcommand{\pbb}[1]{\biggl({#1}\biggr)}
\newcommand{\pBB}[1]{\Biggl({#1}\Biggr)}
\newcommand{\pa}[1]{\left({#1}\right)}
\newcommand{\q}[1]{[{#1}]}
\newcommand{\qb}[1]{\bigl[{#1}\bigr]}
\newcommand{\qB}[1]{\Bigl[{#1}\Bigr]}
\newcommand{\qBB}[1]{\Biggl[{#1}\Biggr]}
\newcommand{\qa}[1]{\left[{#1}\right]}
\newcommand{\h}[1]{\{{#1}\}}
\newcommand{\hb}[1]{\bigl\{{#1}\bigr\}}
\newcommand{\hB}[1]{\Bigl\{{#1}\Bigr\}}
\newcommand{\abs}[1]{\lvert #1 \rvert}
\newcommand{\absb}[1]{\bigl\lvert #1 \bigr\rvert}
\newcommand{\absbb}[1]{\biggl\lvert #1 \biggr\rvert}
\newcommand{\absBB}[1]{\Biggl\lvert #1 \Biggr\rvert}
\newcommand{\absa}[1]{\left\lvert #1 \right\rvert}
\newcommand{\norm}[1]{\lVert #1 \rVert}
\newcommand{\normb}[1]{\bigl\lVert #1 \bigr\rVert}
\newcommand{\scalar}[2]{\langle{#1} \mspace{2mu}, {#2}\rangle}
\newcommand{\scalarb}[2]{\bigl\langle{#1} \mspace{2mu}, {#2}\bigr\rangle}
\DeclareMathOperator{\diag}{diag}
\DeclareMathOperator{\tr}{Tr}
\DeclareMathOperator{\supp}{supp}
\DeclareMathOperator{\re}{Re}
\DeclareMathOperator{\im}{Im}
\DeclareMathOperator{\dist}{dist}
\DeclareMathOperator{\spec}{spec}
\newcommand{\bv}{{\bf{v}}}
\newcommand{\be}{\begin{equation}}
\newcommand{\ee}{\end{equation}}
\newcommand{\e}{{\varepsilon}}
\begin{document}

\title{Anisotropic local laws for random matrices}
\author{Antti Knowles\footnote{ETH Z\"urich, {\tt knowles@math.ethz.ch}.} \and Jun Yin\footnote{University of Wisconsin, {\tt jyin@math.wisc.edu}.}}
\maketitle

\begin{abstract}
We develop a new method for deriving local laws for a large class of random matrices. It is applicable to many matrix models built from sums and products of deterministic or independent random matrices. In particular, it may be used to obtain local laws for matrix ensembles that are \emph{anisotropic} in the sense that their resolvents are well approximated by deterministic matrices that are not multiples of the identity. For definiteness, we present the method for sample covariance matrices of the form $Q \deq T X X^* T^*$, where $T$ is deterministic and $X$ is random with independent entries. We prove that with high probability the resolvent of $Q$ is close to a deterministic matrix, with an optimal error bound and down to optimal spectral scales.

As an application, we prove the edge universality of $Q$ by establishing the Tracy-Widom-Airy statistics of the eigenvalues of $Q$ near the soft edges. This result applies in the single-cut and multi-cut cases. Further applications include the distribution of the eigenvectors and an analysis of the outliers and BBP-type phase transitions in finite-rank deformations; they will appear elsewhere.

We also apply our method to Wigner matrices whose entries have arbitrary expectation, i.e.\ we consider $W+A$ where $W$ is a Wigner matrix and $A$ a Hermitian deterministic matrix. We prove the anisotropic local law for $W+A$ and use it to establish edge universality.
\end{abstract}

\section{Introduction}

\subsection{Local laws}
The empirical eigenvalue density of a large random matrix typically converges to
a deterministic asymptotic density. For Wigner matrices this law is the celebrated Wigner semicircle law \cite{Wig} and for uncorrelated sample covariance matrices it is the Marchenko-Pastur law \cite{MP}. This convergence is best formulated using \emph{Stieltjes transforms}. Let $Q$ be an $M \times M$ Hermitian random matrix, normalized so that its eigenvalues are typically of order one, and denote by $R(z) \deq (Q - z I_M)^{-1}$ its resolvent. Here $z = E + \ii \eta$ is a spectral parameter with positive imaginary part $\eta$. Then the Stieltjes transform of the empirical eigenvalue density is equal to $M^{-1} \tr R(z)$, and the convergence mentioned above may be written informally as
\begin{equation} \label{intr_avg_ll}
\frac{1}{M} \tr R(z) \;=\; \frac{1}{M} \sum_{i = 1}^M R_{ii}(z) \;\approx\; m(z)
\end{equation}
for large $M$ and with high probability. Here $m(z)$ is the Stieltjes transform of the asymptotic eigenvalue density, which we denote by $\varrho$. We call an estimate of the form \eqref{intr_avg_ll} an \emph{averaged law}.

As may be easily seen by taking the imaginary part of \eqref{intr_avg_ll}, control of the convergence of $M^{-1} \tr R(z)$ yields control of an order $\eta M$ eigenvalues around the point $E$. A \emph{local law} is an estimate of the form \eqref{intr_avg_ll} for all $\eta \gg M^{-1}$. Note that the approximation \eqref{intr_avg_ll} cannot be correct at or below the scale $\eta \asymp M^{-1}$, at which the behaviour of the left-hand side of \eqref{intr_avg_ll} is governed by the fluctuations of individual eigenvalues. Such local laws have become a cornerstone of random matrix theory, starting from the work \cite{ESY2} where a local law was first established for Wigner matrices. Well known corollaries of a local law include bounds on the eigenvalue counting function as well as eigenvalue rigidity.  Moreover, local laws constitute the main tool needed to analyse (a) the distribution of eigenvalues (including the universality of the local spectral statistics), (b) eigenvector delocalization, (c) the distribution of eigenvectors, and (d) finite-rank deformations of $Q$.

In fact, for all of the applications (a)--(d), the averaged local law from \eqref{intr_avg_ll} is not sufficient. One has to control not only the normalized trace of $R(z)$ but the matrix $R(z)$ itself, by showing that $R(z)$ is close to some deterministic matrix depending on $z$, provided that $\eta \gg M^{-1}$. Such control was first obtained for Wigner matrices in \cite{EYY1}, where the closeness was established in the sense of individual matrix entries: $R_{ij}(z) \approx m(z) \delta_{ij}$. We call such an estimate an \emph{entrywise local law}. More generally, in \cite{KY2, BEKYY} this closeness was established in the sense of \emph{generalized matrix entries}:
\begin{equation} \label{intr_iso_ll}
\scalar{\f v}{R(z) \f w} \;\approx\; m(z) \scalar{\f v}{\f w}\,, \qquad \eta \;\gg\; M^{-1}\,, \qquad \abs{\f v}, \abs{\f w} \;\leq\; 1\,.
\end{equation}
Analogous results for uncorrelated sample covariance matrices were obtained in \cite{PY, BEKYY}. The estimate \eqref{intr_iso_ll} states that for large $M$ the resolvent $R(z)$ is approximately \emph{isotropic} (i.e.\ proportional to the identity matrix), and we accordingly call an estimate of the form \eqref{intr_iso_ll} an \emph{isotropic local law}. We remark that the basis-independent control in \eqref{intr_iso_ll} is crucial for many applications, including the distribution of eigenvectors and the study of finite-rank deformations of $Q$.

Unlike in the case of Wigner matrices and uncorrelated sample covariance matrices mentioned above, the resolvent $R(z)$ is in general not close to a multiple of the identity matrix, but rather to some general deterministic matrix $P(z)$. In that case \eqref{intr_iso_ll} is to be replaced with
\begin{equation} \label{intr_aniso_ll}
\scalar{\f v}{R(z) \f w} \;\approx\; \scalar{\f v}{P(z) \f w}\,, \qquad \eta \;\gg\; M^{-1}\,, \qquad \abs{\f v}, \abs{\f w} \;\leq\; 1\,.
\end{equation}
We call an estimate of the form \eqref{intr_aniso_ll} an \emph{anisotropic local law}. The goal of this paper is to develop a method yielding (anisotropic) local laws for many matrix models built from sums and products of deterministic or independent random matrices. Applications include all the four (a)--(d) listed above, some of which we illustrate in this paper.

\subsection{Sample covariance matrices}
For definiteness, and motivated by applications to multivariate statistics, in this paper we focus mainly on sample covariance matrices. (In Section \ref{sec:Wig}, we also explain how to apply our method to deformed Wigner matrices.) We consider sample covariance matrices of the form $Q = N^{-1} A A^*$, where $A$ is an $M \times N$ matrix. The columns of $A$ represent $N$ independent and identically distributed observations of some random $M$-dimensional vector $\f a$. We shortly outline the statistical interpretation of $Q$, and refer e.g.\ to \cite{KYY} for more details. A fundamental goal of multivariate statistics is to obtain information on the population covariance matrix $\Sigma \deq \E \f a \f a^*$ from $N$ independent samples $A = [\f a^{(1)} \cdots \f a^{(N)}]$ of the population $\f a$. Then $Q$ corresponds to $\Sigma$ with the expectation replaced by the empirical average over the $N$ samples. If the entries $\f a$ do not have mean zero, then the population covariance matrix $\Sigma$ reads $\E (\f a - \E \f a)(\f a - \E \f a)^*$, and the sample covariance matrix is accordingly obtained by subtracting the empirical average $\frac{1}{N} \sum_{\nu = 1}^N A_{i \nu}$ from each entry $A_{i \mu}$. We may therefore write the sample covariance matrix as $\dot Q = (N-1)^{-1} A (I_N - \f e \f e^*) A^*$, where we introduced the normalized vector $\f e \deq N^{-1/2}(1,1, \dots, 1)^* \in \R^N$. Since $\dot Q$ is invariant under the deterministic shift $A_{i \mu} \mapsto A_{i \mu} + f_i$, we may without loss of generality assume that $\E A_{i \mu} = 0$. We shall always makes this assumption.

For the sample vector $\f a$ we take a linear model $\f a = T \f b$, where $T$ is a deterministic $M \times \wh M$ matrix and $\f b$ is a random $\wh M$-dimensional vector with zero expectation. We assume that the entries of $\f b$ are independent. This model includes in particular the usual linear (or ``signal + noise'') models from multivariate statistics; see \cite[Section 1.2]{BEKYY} for more details. Note that, in addition to the assumption $\E b_i = 0$, we may without loss of generality assume that $\E \abs{b_i}^2 = 1$ by absorbing the variance of $b_i$ into the deterministic matrix $T$. Hence, without loss of generality, throughout the following we make the assumptions $\E b_i = 0$ and $\E \abs{b_i}^2 = 1$.

Letting $X$ be an $\wh M \times N$ matrix of independent entries satisfying $\E X_{i \mu} = 0$ and $\E \abs{X_{i \mu}}^2 = N^{-1}$, we may therefore write the matrices $Q$ and $\dot Q$ as
\begin{equation} \label{def_Q}
Q \;=\; T X X^* T^*\,, \qquad \dot Q \;=\; \frac{N}{N-1} T X (I_N - \f e \f e^*) X^* T^*\,.
\end{equation}
It is easy to check that in both cases the associated population covariance matrix is $\Sigma = \E Q = \E \dot Q = T T^*$. The matrix $Q$ was first studied in the seminal work of Marchenko and Pastur \cite{MP}, where it was proved that the Stieltjes transform $m$ of the asymptotic density $\varrho$ may be characterized as the solution of an integral equation, \eqref{def_m} below, depending on the spectrum of $\Sigma$. In the language of free probability, the asymptotic density $\varrho$ is the free multiplicative convolution of the famous Marchenko-Pastur law with the empirical eigenvalue density of $\Sigma$.

\subsection{Outline of results}
For simplicity, we focus on the matrix $Q$, bearing in mind that similar results also apply for $\dot Q$ (see Section \ref{sec:Q_dot}). We assume that the three matrix dimensions $M, \wh M, N$ are comparable, and that the entries of $\sqrt{N} X$ possess a sufficient number of bounded moments. Moreover, we assume that $\norm{\Sigma}$ is bounded, and that the spectrum of $\Sigma$ satisfies certain regularity conditions, given Definition \ref{def:regular} below, which essentially state that the connected components of the support of $\varrho$ are separated by some positive constant, and that the density of $\varrho$ has square root decay near its edges in $(0,\infty)$. Note that we do not assume that $T$ is square, and in particular $T$ may have many vanishing singular values; this allows us to cover e.g.\ the general linear model of multivariate statistics.

Our main result is the anisotropic local law for $Q$. Roughly, it states that \eqref{intr_aniso_ll} holds with
\begin{equation*}
P(z) \;\deq\; - (z (I_M + m(z) \Sigma))^{-1}\,,
\end{equation*}
where $m(z)$ is the Stieltjes transform of the asymptotic density $\varrho$. In fact, we prove a more general anisotropic local law that is more useful in applications. Its formulation is most transparent under the additional assumption that $T = T^* = \Sigma^{1/2}$, although this assumption is a mere convenience and may be easily removed (see Section \ref{sec:gen_T}). We prove an anisotropic local law of the form
\begin{equation} \label{intr_G_Pi}
\scalar{\f v}{G(z) \f w} \approx \scalar{\f v}{\Pi(z) \f w}\,,
\end{equation}
for $\eta \gg M^{-1}$ and $\abs{\f v}, \abs{\f w} \leq 1$, where we defined
\begin{equation*}
\quad G(z) \deq \begin{pmatrix}
-\Sigma^{-1} & X
\\
X^* & -zI_N
\end{pmatrix}^{-1}\,, \qquad
\Pi(z) \deq \begin{pmatrix}
-\Sigma(I_M + m(z) \Sigma)^{-1} & 0
\\
0 & m(z) I_N
\end{pmatrix}\,.
\end{equation*}
A simple application of Schur's complement formula to \eqref{intr_G_Pi} yields the anisotropic local law for the resolvent of $Q = TXX^*T^*$ and a similar result for the resolvent of the companion matrix $X^* T^* T X$. The estimate \eqref{intr_G_Pi} holds with high probability, and we give an explicit and optimal error bound. We remark that the anisotropic local law holds under very general assumptions on the distribution of $X$, the dimensions of $X$ and $T$, and the spectrum of $T T^*$. In particular, we make no assumptions on the singular vectors of $T$. We remark that, previously, an anisotropic global law, valid for $\eta \asymp 1$, was derived in \cite{HLNP} for a different matrix model.

As an application of the anisotropic local law, we prove the edge universality of the eigenvalues near the soft spectral edges, whereby the joint distribution of the eigenvalues is asymptotically governed by the Tracy-Widom-Airy statistics of random matrix theory. More precisely, we prove that the asymptotic distribution of the eigenvalues near the soft edges depends only on the nonzero spectrum of $T T^*$. This may be regarded as a universality result in both the distribution of the entries of $X$ and the (left and right) singular vectors of $T$, including their dimensions. We then conclude that the Tracy-Widom-Airy statistics hold near the soft edges by noting that they have been previously established \cite{HHN, ElK2, Ona, SL} for Gaussian $X$ and diagonal $T$.

We comment briefly on the history of edge universality for sample covariance matrices of the form $Q$. The Tracy-Widom-Airy statistics were first established near the rightmost spectral edge in the case of complex Gaussian $X$ in \cite{ElK2, Ona}. In \cite{BaoPanZhou}, this result was extended to general complex $X$ under the assumption that $\Sigma$ is diagonal, i.e.\ the population vector $\f a$ is uncorrelated. Very recently, in \cite{HHN} the Tracy-Widom-Airy statistics were established near all soft edges in the case of complex Gaussian $X$. Moreover, in \cite{SL}, building on a new comparison method developed in \cite{LSY}, the Tracy-Widom-Airy statistics near the rightmost edge were established also in the case of real Gaussian $X$, or general real $X$ and diagonal $\Sigma$. We remark that all proofs from \cite{ElK2, Ona, HHN} crucially rely on the integrable structure of $Q$ in the complex Gaussian case (the Harish-Chandra-Itzykson-Zuber formula and the determinantal form of the eigenvalue process); this structure is not available in the real case, and the method of \cite{SL} is different from that of \cite{HHN, ElK2, Ona}.

We also prove the rigidity of eigenvalues, as well as the complete delocalization of the eigenvectors with respect to an arbitrary deterministic basis. Further applications of the anisotropic local law, such as the distribution of the eigenvectors and an analysis of the outliers and BBP-type phase transitions in finite-rank deformations, will appear elsewhere.

Finally, we also apply our method to \emph{deformed Wigner matrices} of the form $W + A$, where $W$ is a Wigner matrix and $A$ a bounded Hermitian matrix. This model describes Wigner matrices whose entries may have arbitrary expectations. As for $Q$, we establish the Tracy-Widom-Airy statistics near the spectral edges of $W+A$. More precisely, we prove that the asymptotic distribution of the eigenvalues near the edges depends only on the asymptotic spectrum of $A$, which may be regarded as a universality result in the distribution of $W$ and the eigenvectors of $A$. We then conclude that the Tracy-Widom-Airy statistics hold near the edges by noting that they have been previously established \cite{LSY} for diagonal $A$.

\subsection{Overview of the proof}
We conclude this section by outlining some ideas of the proof of the anisotropic local law. Roughly, the proof proceeds in three steps: (A) the entrywise local law for Gaussian $X$ and diagonal $\Sigma$, (B) the anisotropic local law for Gaussian $X$ and general $\Sigma$, and (C) the anisotropic local law for general $X$ and general $\Sigma$. Steps (A) and (B) may be performed by adapting the methods of \cite{BEKYY}, and we do not comment on them any further.

The main argument, and the bulk of the proof, is Step (C). Its core is a \emph{self-consistent comparison method}, which yields the anisotropic local law for general $X$ assuming it has been proved for Gaussian $X$. Up to now, all interpolation or Lindeberg replacement arguments in random matrix theory have crucially relied on a local law as input. Since the local law is exactly what we are trying to prove, a different approach is clearly needed -- one that does not need a local law to work.
Our method yields a new way of deriving local laws for general random matrices expressed as polynomials of deterministic and random matrices whose entries are independent. It relies on two key novel ideas: (a) continous Green function comparison argument where the errors are controlled self-consistently, and (b) a bootstrapping of the Green function comparison on the spectral scale $\eta$.

In the remainder of this subsection we give a few details of our method, and in particular explain the ideas (a) and (b) in more detail. We construct a continuous family $(X^\theta)_{\theta \in [0,1]}$ of matrices, whereby $X^0$ is a Gaussian ensemble and $X^1$ is the ensemble in which we are interested.
Then we operate in the $(\theta, \eta)$-plane and perform simultaneously a continuous interpolation in $\theta$ and a discrete bootstrapping in $\eta$. (See Figure \ref{fig:bootstrap} below.)

Interpolation methods have been extensively used in random matrix theory, in the form of both discrete Lindeberg-type replacement schemes \cite{Chat, TV1, EYY1, EYY3} and continuous interpolations \cite{SL,LSY,LP}. Moreover, Dyson Brownian motion, as used e.g.\ in \cite{SL,LSY,ESY4,ESYY}, may be regarded as a form of continuous interpolation for the case that $X^0$ is Gaussian. The usual choice of interpolation, as used in \cite{SL,LP} and also resulting from Dyson Brownian motion, is $X^\theta = \sqrt{1 - \theta} \, X^0 + \sqrt{\theta} \, X^1$. In this paper, we instead interpolate using i.i.d.\ Bernoulli random variables:
\begin{equation} \label{bern_interpol}
X^\theta_{\alpha} \;\deq\; \chi^\theta_\alpha X_\alpha^1 + (1 - \chi^\theta_\alpha) X_\alpha^0\,,
\end{equation}
where $\alpha = (i, \mu)$ indexes the matrix entries and $(\chi^\theta_\alpha)$ is a family of i.i.d.\ Bernoulli-$\theta$ random variables. The choice \eqref{bern_interpol} is convenient for our purposes, since the expectation of a function of $X^\theta = (X_\alpha^\theta)$ is particularly easy to differentiate in $\theta$, resulting in explicit formulas that are the starting point of our analysis.
Indeed, as explained in Section \ref{sec:comparison1} (see Remark \ref{rem:Bernoulli}), for any smooth enough function $F$ and $\ell \in \N$ we have an identity of the form
\begin{equation} \label{interpol25}
\frac{\dd}{\dd \theta} \E F (X^\theta) \;=\;  \sum_{n = 1}^{\ell}  \sum_{\alpha} K_{n,\alpha}^\theta \, \E \pbb{\frac{\partial}{\partial X^\theta_{\alpha}}}^n F (X^\theta) + \cal E_{\ell}\,,
\end{equation}
where $K_{n,\alpha}^\theta$ is defined through the formal power series
\begin{equation} \label{K_n_def}
\sum_{n \geq 1} K_{n,\alpha}^\theta \, t^n \;\deq\; \frac{\E \me^{t X_\alpha^1} - \E \me^{t X_\alpha^0}}{\E \me^{t X_\alpha^\theta}}\,,
\end{equation}
and the error term $\cal E_{\ell}$ is negligible for large enough $\ell$.
In particular, $K_{n,\alpha}^\theta$ depends only on the first $n$ moments of $X^0_\alpha$ and $ X^1_\alpha$, and vanishes if the first $n$ moments of $X^0_\alpha$ and $X^1_\alpha$ match. We remark that the interpolation method from \eqref{bern_interpol}--\eqref{K_n_def} can be used to simplify several previous comparison arguments in random matrix theory, especially the proofs of edge universality in \cite{EYY3,EKYY2}.

Next, we explain the basic strategy behind the self-consistent comparison method mentioned in (a) above. We choose a function
\begin{equation*}
F_{\f v}^p(X,z) \;\deq\; \absb{\scalar{\f v}{(G(z) - \Pi(z)) \f v}}^p
\end{equation*}
that controls the error in the anisotropic low. Here $\f v$ is a deterministic vector on the unit sphere $\bb S$ and $p$ a fixed integer. Our goal is to prove that $\absb{\scalar{\f v}{(G(z) - \Pi(z)) \f v}}   \leq  N^{o(1)} \nc \Psi(z)$ with high probability, where $\Psi$ is an explicit error parameter defined in \eqref{def_Psi} below. By Markov's inequality, it suffices to prove $\E F_{\f v}^p(X^1,z)   \leq N^{o(1)} \nc \Psi(z)^p$ for large but fixed $p$. Since we know (by assumption on the Gaussian case) that $\E F_{\f v}^p(X^0,z)   \leq N^{o(1)} \nc \Psi(z)^p$, it suffices to estimate the derivative as $\frac{\dd}{\dd \theta} \E F_{\f v}^p(X^\theta,z)  \leq N^{o(1)} \nc \Psi(z)^p$. In fact, by Gr\"onwall's inequality, it suffices to establish the weaker \emph{self-consistent} estimate
\begin{equation} \label{intr_sc}
\frac{\dd}{\dd \theta} \E F_{\f v}^p(X^\theta,z)  \;\leq\; N^{o(1)} \nc \Psi(z)^p + C \sup_{\f w \in \bb S} \E F_{\f w}^p(X^\theta, z)\,.
\end{equation} 
Note that \eqref{intr_sc} is self-consistent in the sense that the right-hand side depends on the quantities to be estimated. Most of the work is therefore to estimate the right-hand side of \eqref{interpol25} with $F \deq F_{\f v}^p$ by the right-hand side of \eqref{intr_sc}. The derivatives on the right-hand side of \eqref{interpol25} are polynomials of generalized matrix entries of $G$. Their structure has to be carefully tracked, which we do in Sections \ref{sec:comparison1} and \ref{sec:self-const_II} by encoding them using an appropriately constructed family of words.

It turns out, however, that in general these polynomials involve factors that cannot be estimated by either term on the right-hand side of \eqref{intr_sc}. To handle these bad terms, as mentioned in (b) above, we make use of a \emph{bootstrapping in the spectral scale $\eta$}. The basic idea is to perform the interpolation from $\theta = 0$ to $\theta = 1$ on a spectral scale $\tilde \eta$, and to use the obtained anisotropic local law at scale $\tilde \eta$ to deduce rough a priori bounds on the generalized entries of $G$ on the smaller spectral scale $\eta$. The bootstrapping is started at $\eta = 1$, where the trivial a priori bound $\eta^{-1} = 1$ on the entries of $G$ is sufficient. The bootstrapping proceeds in multiplicative increments of $N^{-\delta}$, where $\delta > 0$ is a fixed small exponent. Hence, the bootstrapping iteration needs to be performed only an order $O(1)$ times. (We remark that this bootstrapping is different from the stochastic continuity arguments commonly used to establish local laws for Wigner matrices \cite{EKYY1, EKYY4, ESY2}, which propagate much more precise estimates but have to be iterated an order $N^{O(1)}$ times.)

\begin{figure}[ht!]
\begin{center}
{\small 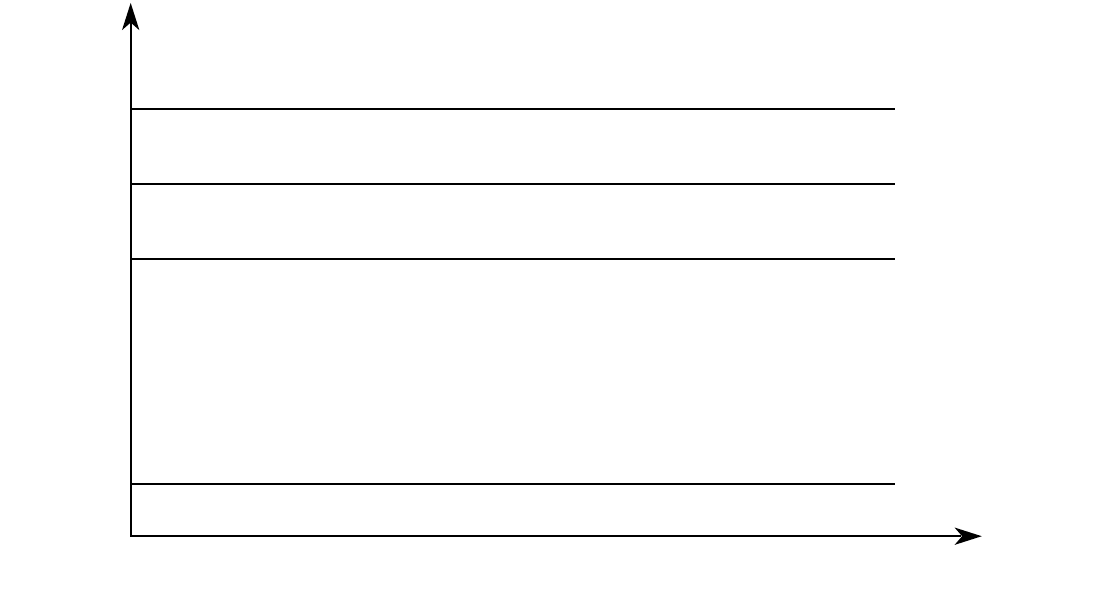}
\end{center}
\caption{
A schematic overview of the self-consistent comparison method. The horizontal axis depicts the interpolation parameter $\theta \in [0,1]$, with $\theta = 0$ corresponding to the Gaussian case and $\theta = 1$ to the general case. The vertical axis depicts the spectral scale $\eta$, with $\eta = 1$ corresponding to the macroscopic scale and $\eta = N^{-1}$ the microscopic scale. The self-consistent comparison argument takes place in the $(\theta, \eta)$-plane, and follows the horizontal lines from left to right and top to bottom, i.e.\ $(0,1) \to (1,1), (0,N^{-\delta}) \to (1, N^{-\delta}), (0,N^{-2\delta}) \to (1, N^{-2\delta}), \dots$. The diamonds on the vertical axis indicate that the anisotropic local law is known at $(\theta, \eta)$. The circle indicates the goal $(\theta,\eta) = (1,N^{-1})$.
\label{fig:bootstrap}}
\end{figure}

At each step of the bootstrapping, we use the anisotropic local law on the scale $\tilde \eta = N^\delta \eta$ and simple monotonicity properties of $G$ to obtain a priori bounds on $\abs{G_{\f v \f w}}$ and $\im G_{\f w \f w}$ on the scale $\eta$. Roughly, the smallness that allows us to propagate the bootstrapping arises from a certain structure of summation indices in the polynomials, which yields factors of $\im G_{\f w \f w}$. It is important to ensure that a sufficiently large number of such factors is generated, and that the a priori bound on them is close to optimal. In contrast, the a priori bound that one can use for general entries $\abs{G_{\f v \f w}}$ is very rough. Figure \ref{fig:bootstrap} gives a schematic representation of the self-consistent comparison method, which combines a continuous interpolation in $\theta$ and a discrete bootstrapping in $\eta$. We refer to Sections \ref{sec:sketch_compar} and \ref{sec:sketch_proof_7} for a more detailed outline of the proof.

The next section is devoted to the definition of the model and the statement of results. We give an outline of the structure of the paper in Section \ref{sec:outline} below.

\subsection*{Conventions}
The fundamental large parameter is $N$. All quantities that are not explicitly constant may depend on $N$; we almost always omit the argument $N$ from our notation.

We use $C$ to denote a generic large positive constant, which may depend on some fixed parameters and whose value may change from one expression to the next. Similarly, we use $c$ to denote a generic small positive constant. If a constant $C$ depends on some additional quantity $\alpha$, we indicate this by writing $C_\alpha$. For two positive quantities $A_N$ and $B_N$ depending on $N$ we use the notation $A_N \asymp B_N$ to mean $C^{-1} A_N \leq B_N \leq C A_N$ for some positive constant $C$. For $a < b$ we set $\qq{a,b} \deq [a,b] \cap \Z$. We use the notation $\f v = (\f v(i))_{i = 1}^n$ for vectors in $\C^n$, and denote by $\abs{\,\cdot\,}= \norm{\,\cdot\,}_2$ the Euclidean norm of vectors. We use $\norm{\,\cdot\,}$ to denote the Euclidean operator norm of a matrix. We often write the $k \times k$ identity matrix $I_k$ simply as $1$ when there is no risk of confusion.

We use $\tau > 0$ in various assumptions to denote a positive constant that may be chosen arbitrarily small. A smaller value of $\tau$ corresponds to a weaker assumption. All constants may $C,c$ depend on $\tau$, and we neither indicate nor track this dependence.

\section{Model} \label{sec:model}

In this section we define our model, list our key assumptions, and explain the basic structure of the asymptotic eigenvalue density $\varrho$.

\subsection{Definition of the model} \label{sec:def_model}
We consider the $M \times M$ matrix $Q \deq T X X^* T^*$, where $T$ is a deterministic $M \times \wh M$ matrix and $X$ a random $\wh M \times N$ matrix. We regard $N$ as the fundamental parameter and $M \equiv M_N$ and $\wh M \equiv \wh M_N$ as depending on $N$. Here, and throughout the following, we omit the index $N$ from our notation, bearing in mind that all quantities that are not explicitly constant (such as the constant $\tau$) may depend on $N$. For simplicity, we always make the assumption that
\begin{equation} \label{MsimN}
M \;\asymp\; \wh M \;\asymp\; N\,.
\end{equation} 
(This assumption may relaxed to $\log M \asymp \log \wh M \asymp \log N$ with some extra work; see \cite{KYY}. We do not pursue this direction here.) \nc We introduce the dimensional ratio
\begin{equation} \label{def_phi}
\phi \;\deq\; \frac{M}{N}\,.
\end{equation}
Note that \eqref{MsimN} implies
\begin{equation} \label{phi_bounded}
\tau \;\leq\; \phi \;\leq\; \tau^{-1}
\end{equation}
provided $\tau > 0$ is chosen small enough.

We assume that the entries $X_{i \mu}$ of the $\wh M \times N$ matrix $X$ are independent (but not necessarily identically distributed) real-valued random variables satisfying
\begin{equation} \label{cond on entries of X}
\E X_{i \mu}\;=\;0\,,\qquad \E \abs{X_{i \mu}}^2\;=\; \frac{1}{N}
\end{equation}
for all $i$ and $\mu$.
In addition, we assume that,  for all $p \in \N$, the random variables $\sqrt{N} X_{i \mu}$ have a uniformly bounded $p$-th moment. In other words, we assume that for all $p \in \N$ there is a constant $C_p$ such that
\begin{equation} \label{moments of X-1}
\E \absb{\sqrt{N} X_{i \mu}}^p \;\leq\; C_p
\end{equation}
for all $i$ and $\mu$.
The assumption that \eqref{moments of X-1} hold for all $p \in \N$ may be easily relaxed. For instance, it is easy to check that our results and their proofs remain valid, after minor adjustments, if we only require that \eqref{moments of X-1} holds for all $p \leq C$ for some large enough constant $C$. We do not pursue such generalizations. 

For definiteness, in this paper we focus mostly on the real symmetric case ($\beta = 1$), \nc where all matrix entries are real. We remark, however, that all of our results and proofs also hold, after minor changes, in the complex Hermitian case  ($\beta = 2$), \nc where $X_{i \nu} \in \C$ and in addition to \eqref{cond on entries of X} we have $\E X_{i \mu}^2 = 0$.

The population covariance matrix is defined as
\begin{equation*}
\Sigma \;\deq\; \E Q \;=\; T T^*\,.
\end{equation*}
We denote the eigenvalues of $\Sigma$ by
\begin{equation*}
\sigma_1 \;\geq\; \sigma_2 \;\geq\; \cdots \;\geq\; \sigma_M \;\geq\; 0\,.
\end{equation*}
Let
\begin{equation} \label{def_pi}
\pi \;\deq\; \frac{1}{M} \sum_{i = 1}^M \delta_{\sigma_i}
\end{equation}
denote the empirical spectral density of $\Sigma$.
We suppose that
\begin{equation} \label{bound_Sigma}
\sigma_1 \;\leq\; \tau^{-1}
\end{equation}
and that
\begin{equation} \label{Sigma_gap}
\pi ([0,\tau]) \;\leq\; 1 - \tau\,.
\end{equation}
This latter assumption means that the spectrum of $\Sigma$ cannot be concentrated at zero.

Sometimes it will be convenient to make the following stronger assumption on $T$:
\begin{equation} \label{T_diag}
\wh M = M \txt{ and } T = T^* = \Sigma^{1/2} > 0\,.
\end{equation}
The assumption \eqref{T_diag} will frequently simplify the presentation and the proofs. Thanks to our general assumptions \eqref{bound_Sigma} and \eqref{Sigma_gap}, it will always be relatively easy to remove \eqref{T_diag}. In particular, we emphasize that the assumption $\Sigma > 0$ is purely qualitative in nature, and is made in order to simplify expressions involving the inverse of $\Sigma$. The case of $\Sigma \geq 0$ may always be easily obtained by considering $\Sigma + \epsilon I_M$ and then taking $\epsilon \downarrow 0$ at fixed $N$. We refer to Section \ref{sec:gen_T} below for the details on how to remove the assumption \eqref{T_diag}.

To avoid repetition, we summarize our basic assumptions for future reference.
\begin{assumption} \label{ass:main}
We suppose that \eqref{MsimN}, \eqref{cond on entries of X}, \eqref{moments of X-1}, \eqref{bound_Sigma}, and \eqref{Sigma_gap} hold.
\end{assumption}

\subsection{Asymptotic eigenvalue density} \label{sec:structure of rho}

Next, we define the asymptotic eigenvalue density of $X^* T^* T X $, $\varrho$, via its Stieltjes transform, $m$. Let $\bb C_+$ denote the complex upper-half plane. 
\begin{lemma}
Let $\pi$ be a compactly supported probability measure on $\R$, and let $\phi > 0$. Then for each $z \in \bb C_+$ there is a unique $m \equiv m(z) \in \bb C_+$ satisfying
\begin{equation} \label{m_sc_eq}
\frac{1}{m} \;=\; -z + \phi \int \frac{x}{1 + m x} \, \pi(\dd x)\,.
\end{equation}
Moreover, $m(z)$ is the Stieltjes transform of a probability measure $\varrho$ with bounded support in $[0,\infty)$.
\end{lemma}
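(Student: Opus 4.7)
The plan is to establish, for each $z \in \mathbb{C}_+$, existence and uniqueness of $m(z) \in \mathbb{C}_+$ satisfying \eqref{m_sc_eq}, and then to identify $m$ with the Stieltjes transform of a compactly supported probability measure on $[0,\infty)$. The cleanest ingredient is uniqueness. If $m_1, m_2 \in \mathbb{C}_+$ both satisfy \eqref{m_sc_eq} at the same $z$, subtracting and factoring yields
\begin{equation*}
(m_1 - m_2)\left(-\frac{1}{m_1 m_2} + \phi \int \frac{x^2}{(1+m_1 x)(1+m_2 x)}\,\pi(\mathrm d x)\right) \;=\; 0.
\end{equation*}
Taking imaginary parts of \eqref{m_sc_eq} yields the a priori bound $\phi \int x^2/|1+mx|^2\,\pi(\mathrm d x) < 1/|m|^2$ for any solution $m \in \mathbb{C}_+$, and Cauchy--Schwarz applied to the integral in the bracket gives $\phi \, |\int x^2/((1+m_1 x)(1+m_2 x))\,\pi(\mathrm d x)| < 1/|m_1 m_2|$, forcing the bracket to be nonzero and hence $m_1 = m_2$.

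For existence I would proceed by analytic continuation from $z$ with large imaginary part. First, for $z = \mathrm i y$ with $y$ large, the ansatz $m \approx -1/z = \mathrm i/y$ reduces \eqref{m_sc_eq} to a Banach contraction on a small disk centered at $\mathrm i/y$, producing a solution in $\mathbb{C}_+$. Next, setting $F(m, z) \deq 1/m + z - \phi \int x/(1+mx)\,\pi(\mathrm d x)$, one computes $\partial_m F = -1/m^2 + \phi \int x^2/(1+mx)^2\,\pi(\mathrm d x)$ and finds $|\partial_m F| \geq 1/|m|^2 - \phi \int x^2/|1+mx|^2\,\pi(\mathrm d x) > 0$ at any solution (by the same a priori bound used above), so the implicit function theorem extends $m$ analytically along any path in $\mathbb{C}_+$; uniqueness rules out branching, and a priori control of $|m|$ and $\im m$ (from the real and imaginary parts of \eqref{m_sc_eq}) prevents escape to $\partial \mathbb{C}_+$.

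With $m$ now holomorphic on $\mathbb{C}_+$ and taking values in $\mathbb{C}_+$, I verify the Nevanlinna--Herglotz asymptotic: plugging $z = \mathrm i y$ into \eqref{m_sc_eq} with the bound $|m(\mathrm i y)| \leq C/y$ shows $m(\mathrm i y) = -1/(\mathrm i y) + O(1/y^2)$ as $y \to \infty$, so $\lim_{y\to\infty}\mathrm i y\, m(\mathrm i y) = -1$. This identifies $m$ as the Stieltjes transform of a probability measure $\varrho$ on $\mathbb{R}$. For the support claim, since $\pi$ is supported in $[0,\infty)$ in the paper's setup, I would extend $m$ holomorphically to $\mathbb{C}\setminus [0, K]$ for some $K > 0$: on $(-\infty, 0)$ both sides of \eqref{m_sc_eq} are real, strictly decreasing in $m > 0$, and meet at a unique positive value, and a symmetric analysis produces a real negative solution for $z > K$ with $K$ depending only on $\sup \supp \pi$ and $\phi$; the implicit function theorem yields the analytic continuation, whence Stieltjes inversion gives $\supp \varrho \subset [0, K]$.

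The main obstacle is the existence step: a generic holomorphic self-map of $\mathbb{C}_+$ need not have a fixed point inside $\mathbb{C}_+$ (its Denjoy--Wolff point may sit on the boundary), so one has to exploit the specific structure of the equation. The two-step ``large-$y$ Banach fixed point, then analytic continuation'' scheme, powered by the uniqueness argument and the nonvanishing of $\partial_m F$ at any solution, is what lets us conclude.
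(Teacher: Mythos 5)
The paper's own ``proof'' is a citation to Bai--Silverstein together with the remark that it is ``a fixed point argument in $\C_+$'', so your proposal actually supplies the missing argument. Your uniqueness step is the standard one and correct: subtract the two equations, factor out $m_1 - m_2$, note that taking $\im$ of \eqref{m_sc_eq} gives $\phi\int x^2|1+mx|^{-2}\,\pi(\rd x) < |m|^{-2}$ for any solution $m\in\C_+$, and close with Cauchy--Schwarz. For existence, your route --- contraction near $m=-1/z$ at large $\im z$, then implicit-function-theorem continuation --- is a sound alternative to a global fixed-point argument; the a priori bounds $|m|\leq 1/\eta$ (from $\im m\geq\eta|m|^2$) and a matching lower bound on $|m|$ keep the continuation inside a compact subset of $\C_+$, which is exactly what the argument needs. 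Your remark that the Denjoy--Wolff point of the iteration map $m\mapsto(-z+\phi\int x(1+mx)^{-1}\pi(\rd x))^{-1}$ might a priori sit on $\partial\C_+$ correctly identifies why a bare Schwarz--Pick argument is insufficient: one must either show the image is hyperbolically bounded or, as you do, work inward from large $\im z$. The Nevanlinna asymptotic $\ii y\,m(\ii y)\to -1$ then identifies $m$ as a probability Stieltjes transform.

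Two small corrections in the support step. First, for $\phi>1$ the map $m\mapsto 1/m-\phi\int x(1+mx)^{-1}\pi(\rd x)$ is \emph{not} monotone on $(0,\infty)$: with $\pi=\delta_1$ and $\phi=10$ it has an interior minimum. The unique positive root for $z<0$ still exists, because the function is positive only on an initial interval on which it \emph{is} strictly decreasing, so replace ``both sides strictly decreasing'' by that observation. Second, the restriction $\supp\pi\subset[0,\infty)$ that you make tacitly is not cosmetic: as stated the lemma allows $\pi$ supported anywhere on $\R$, and then the conclusion $\supp\varrho\subset[0,\infty)$ is simply false. For $\pi=\delta_{-1}$, $\phi=1$, the equation reduces to $m^2-m-1/z=0$, giving $m(-1)=(1+\ii\sqrt3)/2$, so $-1\in\supp\varrho$. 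You are right to specialize to $\supp\pi\subset[0,\infty)$, but it is worth flagging that the hypothesis of the lemma needs this strengthening for the final clause to hold.
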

\begin{proof}
This is a well-known result, which may be proved using a fixed point argument in $\C_+$; the function $m(z)$ is the Stieltjes transform of the multiplicative free convolution of $\pi$ and the Marchenko-Pastur law with dimensional ratio \eqref{def_phi}. See e.g.\ \cite[Section 5]{BS4} and \cite{BSbook} for more details.
\end{proof}

\begin{definition}[Asymptotic density] \label{def:m}
We define the deterministic function $m \equiv m_{\Sigma, N} \col \bb C_+ \to \bb C_+$ as the unique solution $m(z)$ of \eqref{m_sc_eq} with $\phi$ defined in \eqref{def_phi} and $\pi$ defined in \eqref{def_pi}.
We denote by $\varrho \equiv \varrho_{\Sigma, N}$ the probability measure associated with $m$, and call it the asymptotic eigenvalue density.
\end{definition}

The rest of this subsection is devoted to a discussion of the basic properties of the asymptotic eigenvalue density $\varrho$. Much this discussion is well known; see e.g.\ \cite{BS2, SC, HHN}. The reader interested only in the principal components of $Q$, i.e.\ its top eigenvalues, can skip this subsection and proceed directly to the results in Theorem \ref{thm:edge} (with $k = 1$) and Corollary \ref{cor:TWA} (i).

Let $n \deq \abs{\supp \pi \setminus \{0\}}$ be the number of distinct nonzero eigenvalues of $\Sigma$, and write
\begin{equation*}
\supp \pi \setminus \{0\} \;=\; \{s_1 > s_2 > \dots > s_n\}\,.
\end{equation*}
Let $z \in \C_+$. Then $m \equiv m(z)$ may also be characterized as the unique solution of the equation
\begin{equation} \label{def_m}
z \;=\; f(m) \,, \qquad \im m > 0\,,
\end{equation}
where we defined
\begin{equation} \label{def_fx}
f(x) \;\deq\; - \frac{1}{x} + \phi \sum_{i = 1}^n \frac{\pi (\{s_i\})}{x + s_i^{-1}}\,.
\end{equation}
In Figures \ref{fig:plot_f} and \ref{fig:plot_f2}, we illustrate the graph of $f$ for the cases $\phi < 1$ and $\phi > 1$ respectively.
In Figure \ref{fig:density} we plot the density of $\varrho$ for the examples from Figures \ref{fig:plot_f} and \ref{fig:plot_f2}.
The behaviour of $\varrho$ may be entirely understood by an elementary analysis of $f$.

\begin{figure}[ht!]
\begin{center}
{\small 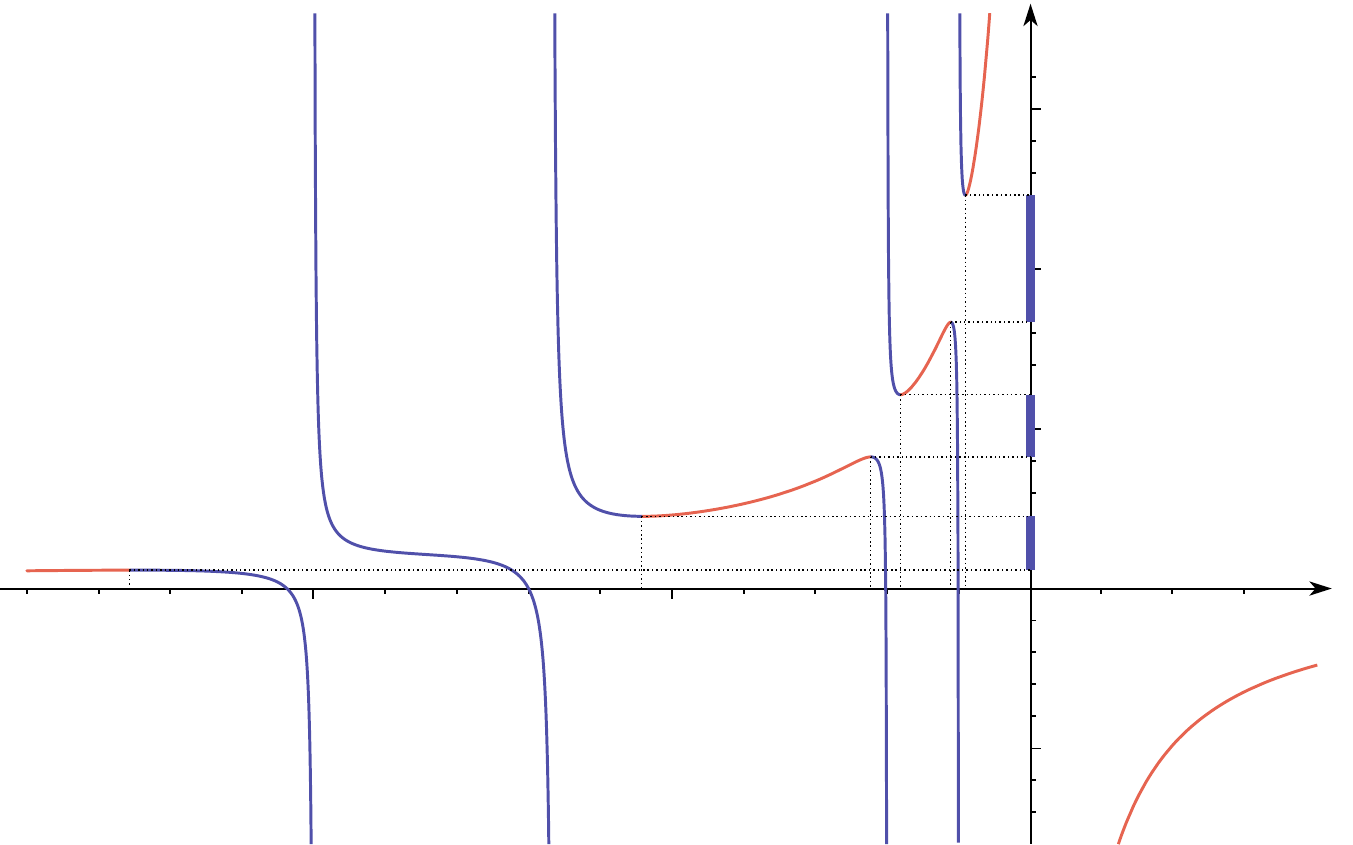}
\end{center}
\caption{The function $f(x)$ for $\phi \pi = 0.01 \, \delta_{10} + 0.01 \, \delta_{5} + 0.05 \, \delta_{1.5} + 0.03 \, \delta_1$.
Here, $\phi = 0.1$ and we have $p = 3$ connected components. The vertical asymptotes are located at $- \sigma^{-1}$ for $\sigma \in \supp \pi$. The support of $\varrho$ is indicated with thick blue lines on the vertical axis. The inverse of $m \vert_{\R \setminus \supp \varrho}$ is drawn in red. \label{fig:plot_f}}
\end{figure}

\begin{figure}[ht!]
\begin{center}
{\small 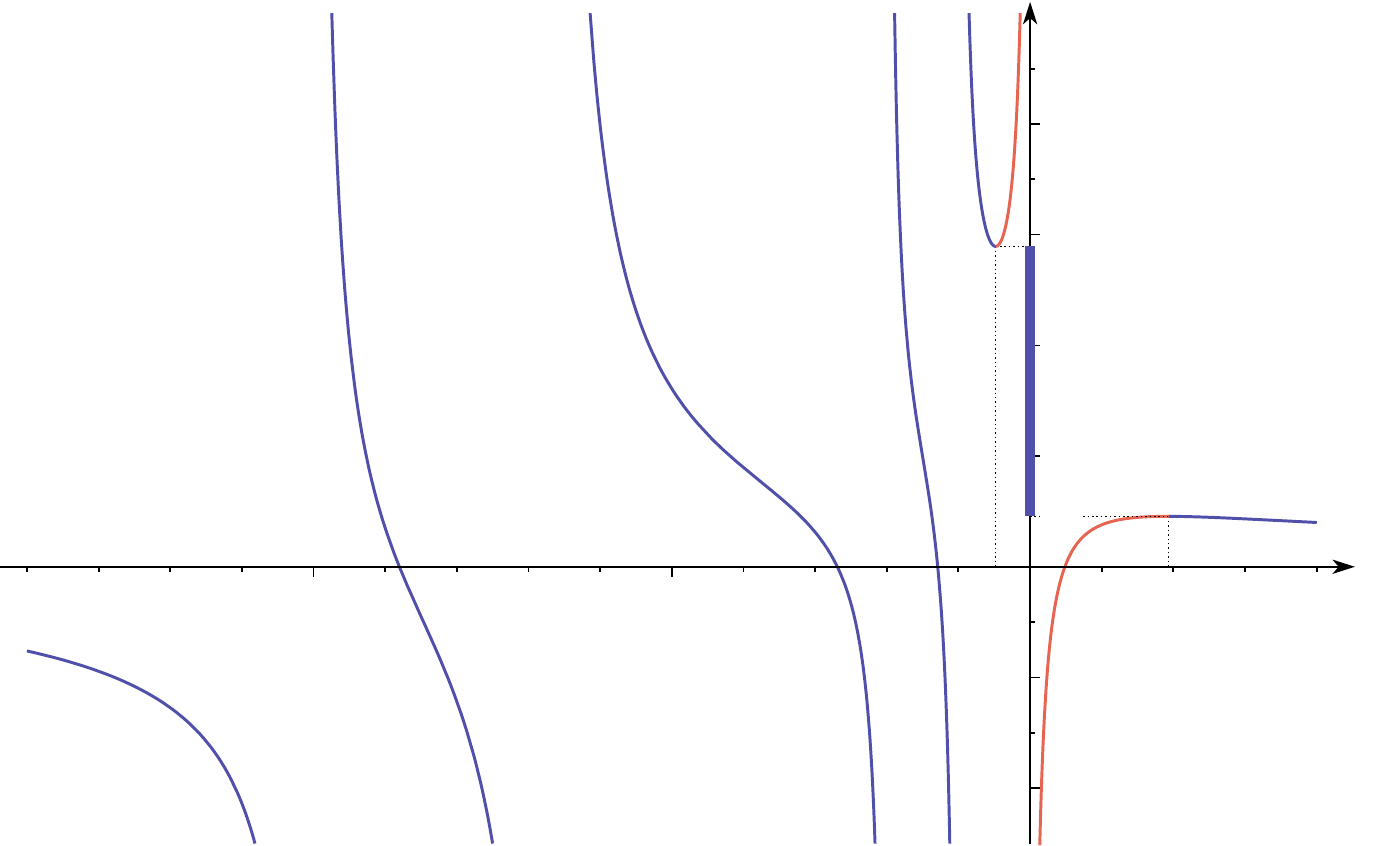}
\end{center}
\caption{The function $f(x)$ for $\phi \pi = \delta_{10} + \delta_{5} + 5 \, \delta_{1.5} + 3 \, \delta_1$.
Here, $\phi = 10$ and we have $p = 1$ connected component. The vertical asymptotes are located at $- \sigma^{-1}$ for $\sigma \in \supp \pi$. The support of $\varrho$ is indicated with a thick blue line. The inverse of $m \vert_{\R \setminus \supp \varrho}$ is drawn in red. \label{fig:plot_f2}}
\end{figure}

\begin{figure}[ht!]
\begin{center}
{\small 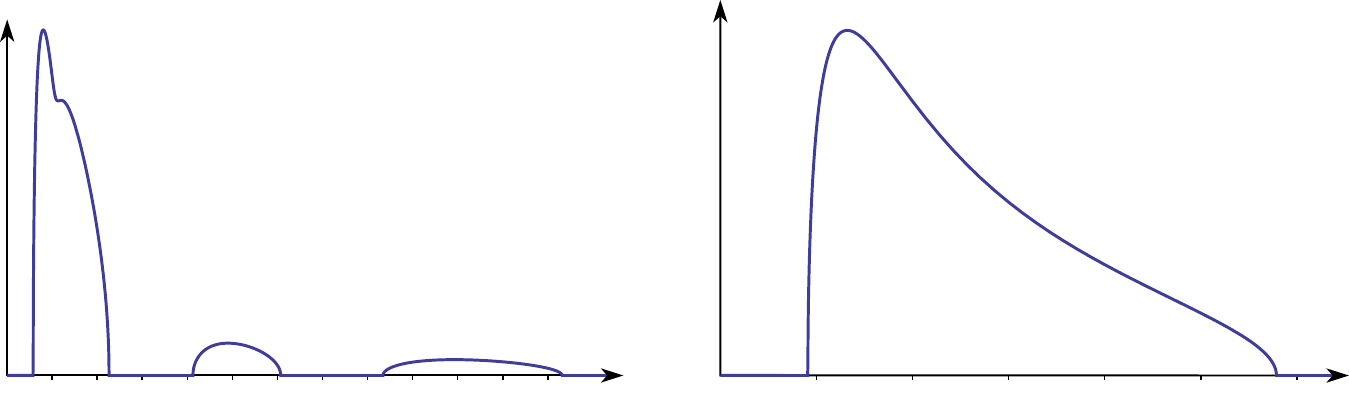}
\end{center}
\caption{The densities of $\varrho$ for the examples from Figures \ref{fig:plot_f} (left) and \ref{fig:plot_f2} (right). \label{fig:density}}
\end{figure}

It is convenient to extend the domain of $f$ to the real projective line $\ol \R = \R \cup \{\infty\} \cong S^1$. Clearly, $f$ is smooth on the $n+1$ open intervals of $\ol \R$ defined through
\begin{equation*}
\qquad I_1 \;\deq\; (-s_1^{-1},0)\,, \qquad I_i \;\deq\; (-s_i^{-1}, -s_{i-1}^{-1}) \quad (i = 2, \dots, n)\,, \qquad I_0 \;\deq\; \ol \R \setminus \bigcup_{i = 1}^n \ol I_i\,.
\end{equation*}
Next, we introduce the multiset $\cal C \subset \ol \R$ of critical points of $f$, using the conventions that a nondegenerate critical point is counted once and a degenerate critical point twice, and if $\phi = 1$ then $\infty$ is a nondegenerate critical point. The following three elementary lemmas are proved in Appendix \ref{sec:varrho}.

\begin{lemma}[Critical points] \label{lem:crit points}
We have $\abs{\cal C \cap I_0} = \abs {\cal C \cap I_1}= 1$ and $\abs{\cal C \cap I_i} \in \{0,2\}$ for $i = 2, \dots, n$.
\end{lemma}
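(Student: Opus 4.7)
The plan is to make the change of variable $y = -1/x$ and to analyze the pushed-forward function $\tilde f(y) \deq f(-1/y)$. A direct computation gives
\[
\tilde f(y) \;=\; y + \phi y \sum_{j=1}^n \frac{\pi(\{s_j\}) s_j}{y - s_j}\,,
\]
which is smooth on $\ol\R \setminus \{s_1, \dots, s_n\}$. Since $x \mapsto -1/x$ is a diffeomorphism of $\ol\R$ onto itself carrying the set $\{-s_j^{-1}\}$ to $\{s_j\}$ and swapping $0$ with $\infty$, it preserves critical points and their degeneracy. In the new coordinate the intervals transform as $I_0 \leftrightarrow (-\infty, s_n)$ (an interior point of which is $y = 0 \leftrightarrow x = \infty$), $I_i \leftrightarrow (s_i, s_{i-1})$ for $2 \leq i \leq n$, and $I_1 \leftrightarrow (s_1, \infty)$.

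Differentiating yields $\tilde f'(y) = 1 - \phi h(y)$ with $h(y) \deq \sum_{j=1}^n \pi(\{s_j\}) s_j^2 / (y - s_j)^2$. The crucial structural input is
\[
h''(y) \;=\; 6 \sum_{j=1}^n \frac{\pi(\{s_j\}) s_j^2}{(y - s_j)^4} \;>\; 0\,,
\]
so $h$ is strictly convex, i.e.\ $h'$ is strictly increasing, on every connected component of $\R \setminus \{s_1, \dots, s_n\}$. This convexity, invisible in the $x$-coordinate where $f''$ has no definite sign on the intervals $I_i$ with $i \geq 2$, is what will control the number of zeros of $\tilde f'$ on each interval.

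For $I_0$ I would note that $h'(y) \to 0^+$ as $y \to -\infty$, so strict monotonicity forces $h' > 0$ throughout $I_0$; combined with $h(y) \to +\infty$ as $y \to s_n^-$ this shows $\tilde f'$ strictly decreases from $1$ to $-\infty$, giving exactly one nondegenerate zero. The same argument on $I_1 = (s_1, \infty)$ uses $h'(y) \to 0^-$ as $y \to +\infty$ to get $h' < 0$ throughout, so $h$ strictly decreases from $+\infty$ to $0$ and $\tilde f'$ strictly increases from $-\infty$ to $1$, again yielding one nondegenerate zero. This gives $|\cal C \cap I_0| = |\cal C \cap I_1| = 1$. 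The convention concerning $\phi = 1$ is compatible with this: $\tilde f'(0) = 1 - \phi(1 - \pi(\{0\}))$, so when this vanishes the unique zero of $\tilde f'$ on $I_0$ is placed at $y = 0$, i.e.\ at $x = \infty$, and it is nondegenerate because $\tilde f''(0) = -\phi h'(0) \neq 0$.

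The main obstacle is $I_i$ with $i \geq 2$, where $f''$ genuinely fails to have a definite sign. Here I would use the endpoint limits $h'(y) \to -\infty$ as $y \to s_i^+$ and $h'(y) \to +\infty$ as $y \to s_{i-1}^-$ (each driven by a single dominant term of $h'$) together with monotonicity of $h'$ to extract exactly one zero of $h'$ in $I_i$, a nondegenerate minimum of $h$. Since $h \to +\infty$ at both endpoints of $I_i$, $\tilde f' = 1 - \phi h$ has a unique interior maximum on $I_i$ with $\tilde f' \to -\infty$ at both ends. According as this maximum is positive, zero, or negative, $\tilde f'$ has two simple zeros, one double zero, or no zero; under the multiset convention all three cases yield $|\cal C \cap I_i| \in \{0, 2\}$.
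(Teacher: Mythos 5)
Your proof is correct and takes a genuinely different route from the paper's. The paper works in the $x$-coordinate and needs three separate ingredients taken from \eqref{f_der}: the strict monotonicity $(x^3 f''(x))' > 0$, which bounds the number of inflection points of $f$ on each $I_i$ with $i \geq 2$; the sign-definiteness of $(x^2 f'(x))'$ on $I_1$, which yields the one-critical-point claim there; and the asymptotics of $f$ as $x \to \infty$, which settle $I_0$. Your change of variable $y = -1/x$ replaces all of these by a single observation, $h'' > 0$, equivalently strict concavity of $\tilde f'$, and the same boundary-behaviour argument then disposes of all three types of intervals uniformly, which is cleaner. These really are distinct algebraic facts: pulled back to the $x$-coordinate, your $h'' > 0$ reads $6x^2 f'(x) + 6x^3 f''(x) + x^4 f'''(x) < 0$, which is not the paper's $(x^3 f''(x))' = 3x^2 f''(x) + x^3 f'''(x) > 0$, though both expand to sums of terms of a common sign. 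A further small gain of your route is that $\tilde f'(0) = 1 - \phi\pb{1 - \pi(\{0\})}$ identifies precisely when $\infty$ is a critical point; the paper's expansion $f(x) = (\phi - 1)x^{-1} + O(x^{-2})$ and the accompanying $\phi = 1$ convention tacitly assume $\pi(\{0\}) = 0$, which your computation makes visible.
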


We deduce from Lemma \ref{lem:crit points} that $\abs{\cal C} = 2p$ is even. We denote by $x_1 \geq x_2 \geq \cdots \geq x_{2p - 1}$ the $2p - 1$ critical points in $I_1 \cup \dots \cup I_n$, and by $x_{2p}$ the unique critical point in $I_0$. For $k = 1, \dots, 2p$ we define the critical values $a_k \deq f(x_k)$.

\begin{lemma}[Ordering of the critical values] \label{lem:a_order}
We have $a_1 \geq a_2 \geq \cdots \geq a_{2p}$. Moreover, for $k = 1, \dots, 2p$ we have $a_k = f(x_k)$ and $x_k = m(a_k)$, where $m$ is defined on $\R$ by continuous extension from $\C_+$, and we use the convention $m(0) \deq \infty$ for $\phi = 1$.
Finally, under the assumptions \eqref{phi_bounded} and \eqref{bound_Sigma} there exists a constant $C$ (depending on $\tau$) such that $a_k \in [0,C]$ for $k = 1, \dots, 2p$.
\end{lemma}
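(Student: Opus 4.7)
My plan is to prove the three assertions by exploiting the inverse-function relation $z = f(m(z))$, which holds on $\C_+$ and extends by continuity to $\R \setminus \supp \varrho$. First I will show that the real critical points of $f$ are in canonical bijection with the edges of $\supp \varrho \cap (0,\infty)$. On any connected component of $\R \setminus \supp \varrho$, the Stieltjes transform $m$ extends to a real-analytic, strictly increasing function with $m'(E) = \int \varrho(\dd x)/(x-E)^2 > 0$; differentiating $E = f(m(E))$ yields $f'(m(E))\, m'(E) = 1$, so $f'(m(E)) > 0$ there. Hence $m$ takes values only on the increasing branches of $f$. At a regular edge $E_0 \in \partial \supp \varrho \cap (0,\infty)$, the square-root vanishing of the density of $\varrho$ (a standard property of multiplicative free convolution with Marchenko-Pastur; see e.g.\ \cite{SC, HHN, BS2}) forces $m'(E) \to +\infty$ as $E \to E_0$ from outside $\supp \varrho$, and thus $f'(m(E_0)) = 0$. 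The map $E_0 \mapsto m(E_0)$ is therefore an injection from the set of edges of $\supp \varrho \cap (0,\infty)$ into the set of real critical points of $f$; by Lemma \ref{lem:crit points} both sets have cardinality $2p$, so the map is a bijection, producing the identities $a_k = f(x_k)$ and $x_k = m(a_k)$.

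Second, I check that this indexing matches the lemma's convention. Label the edges so that $a_1 > a_2 > \cdots > a_{2p}$ and set $x_k \deq m(a_k)$. A direct analysis of $f$ on each interval $I_j$ (case-splitting on the sign of $\phi - 1$ and on whether $\varrho$ carries an atom at $0$) shows that the unique critical point of $f$ lying on $I_0$ terminates the increasing branch of $f$ whose image contains or abuts a half-line of $E$-values reaching to $\pm\infty$, and that this branch has $a_{2p}$ as its extremal critical value; thus $x_{2p} \in I_0$. The remaining $2p - 1$ critical points in $I_1 \cup \cdots \cup I_n$ terminate the $p$ increasing branches of $f$ inside $(-s_n^{-1}, 0)$ whose images are the $p$ bounded gaps of $\R \setminus \supp\varrho$ in $(0, a_1)$. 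The ordering $x_1 \geq x_2 \geq \cdots \geq x_{2p-1}$ now follows from tracking $m(E)$ continuously along the real line: as $E$ decreases from $+\infty$ through successive bands and gaps, $m(E)$ moves monotonically within the current interval $I_j$ and jumps to the next interval on its left across each band, never crossing any pole $-s_j^{-1}$ of $f$ by continuity; consequently the $x_k$ values decrease in step with the decreasing $a_k$ values.

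Third, the bounds. The inequality $a_k \geq 0$ is immediate because each $a_k$ is by construction an edge of $\supp \varrho \subset [0, \infty)$. For the upper bound, the global estimate $\sup \supp \varrho \leq C(\tau)$ follows from a standard bound on the support of the multiplicative free convolution (see e.g.\ \cite{BS2}) combined with the hypotheses \eqref{phi_bounded} and \eqref{bound_Sigma}; this gives $a_k \leq a_1 \leq C$. The main obstacle is the ordering step of the second paragraph: one must carefully identify which increasing branch of $f$ on each $I_j$ is terminated by which critical point, which is complicated by the several sub-cases $\phi \lessgtr 1$ and by the possible atom of $\varrho$ at $0$ (which alters the connected-component structure of $\R \setminus \supp\varrho$ and hence the correspondence between components and increasing branches of $f$).
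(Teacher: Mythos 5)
Your proposal takes a genuinely different route from the paper, but it contains a gap that you yourself acknowledge, and that gap is precisely the hard part of the lemma.

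The paper's proof never touches the measure $\varrho$ or free-probability facts about it. Its two devices are: (1) the polynomial $P_z$ of degree $n+1$ obtained by clearing denominators in $z = f(m)$, which bounds the number of real preimages $\abs{\h{x \col f(x) = E}}$ and therefore forces the images $f(J_i)$ of the increasing branches to be pairwise disjoint; and (2) a homotopy in $\phi$ (replacing $\phi$ by $t\phi$, $t \in (0,1]$), for which the ordering $f(J_i) > f(J_j) > f(J_0)$ is elementary at small $t$, is preserved under the flow because $\partial_t \partial_x f^t < 0$ keeps the branches alive, and the disjointness from (1) forbids any crossing. The identity $x_k = m(a_k)$ then drops out for free, because once $E$ lies in $\bigcup f(J_i)$ the polynomial $P_E$ of degree $n+1$ has $n+1$ real roots, forcing $m(E)$ to be real with $m'(E) > 0$, so $m$ is the inverse of $f$ on each branch. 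Your route instead invokes the boundary structure of $\supp\varrho$ (square-root vanishing, $m'(E)\to\infty$ at edges) from external free-probability sources. This has two weaknesses even granting the references: at a degenerate critical point of $f$ (which the paper's $\cal C$ counts with multiplicity two) the local behavior of $m$ is not square-root, so the bijection-by-cardinality step needs a separate argument; and, inside the paper's own logical order, the boundary structure of $\supp\varrho$ is established in Lemma~\ref{lem:rho_struct}, which \emph{uses} Lemma~\ref{lem:a_order}, so a proof of Lemma~\ref{lem:a_order} should not presuppose it.

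The concrete gap is your second paragraph. You write that a "direct analysis of $f$ on each $I_j$ (case-splitting on $\phi \lessgtr 1$ and on whether $\varrho$ carries an atom at $0$)" and "tracking $m(E)$ continuously along the real line" yield the ordering, but neither is carried out, and you flag this yourself as "the main obstacle." The assertion that $m(E)$ "jumps to the next interval on its left across each band" is precisely what needs proof: a priori nothing prevents $m$ from re-entering an interval $I_j$ that it has already visited, or skipping one, unless you have the disjointness of the $f(J_i)$. That disjointness is what the paper's degree-$(n{+}1)$ bound on $P_z$ provides, and it is not derivable from the inverse-function relation alone. To repair the proposal you would essentially have to reintroduce the polynomial argument (or an equivalent degree bound), at which point you would be reproducing the paper's proof; or else carry out the case analysis you sketch, which as you note is delicate because the component structure of $\R\setminus\supp\varrho$ changes across the cases.

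Minor point on the third paragraph: the inequality $a_{2p} \geq 0$ cannot be asserted simply by "$a_k$ is an edge of $\supp\varrho$" without first establishing the identification of critical values with edges, which is part of what is being proved. The paper avoids this by observing that $a_{2p} < 0$ would force an $E < 0$ with more than $n+1$ preimages under $f$, contradicting $\deg P_E = n+1$; the upper bound $a_1 \leq C$ is extracted directly from the explicit form \eqref{def_fx} of $f$ together with \eqref{phi_bounded} and \eqref{bound_Sigma}, with no appeal to support bounds for free convolutions.
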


The following result gives the basic structure of $\varrho$.
\begin{lemma}[Structure of $\varrho$] \label{lem:rho_struct}
We have
\begin{equation} \label{rho_supp}
\supp \varrho \cap (0,\infty)\;=\; \pBB{\bigcup_{k = 1}^p [a_{2k}, a_{2k - 1}]} \cap (0,\infty)\,.
\end{equation}
\end{lemma}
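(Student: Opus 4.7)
The proof strategy is to combine the standard analytic-extension characterization of $\supp \varrho$ with an elementary analysis of the graph of the function $f$ from \eqref{def_fx}.

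First I would record the basic analytic structure of $m$ on the real axis. Since $\varrho$ is a probability measure on $[0,\infty)$, a real point $x$ lies outside $\supp \varrho$ if and only if $m$ extends to an analytic real-valued function across $x$. On $\R \setminus \supp \varrho$ one has $m'(x) = \int (x-y)^{-2} \varrho(\dd y) > 0$, and the relation $z = f(m(z))$ from \eqref{def_m} extends by continuity. Differentiating gives $f'(m(x)) \, m'(x) = 1$, so $f'(m(x)) > 0$ and $m(x) \neq -s_i^{-1}$ for every $i$. Hence $m$ maps $\R \setminus \supp \varrho$ into the open set
\begin{equation*}
U \;\deq\; \hb{y \in \R : f'(y) > 0} \;\setminus\; \hb{-s_1^{-1}, \dots, -s_n^{-1}}\,,
\end{equation*}
which is a disjoint union of maximal open intervals $J_1, \dots, J_r$. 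On each $J_\ell$, $f$ is an increasing real-analytic diffeomorphism onto an open interval $K_\ell \deq f(J_\ell)$, and $\R \setminus \supp \varrho \subset \bigcup_\ell K_\ell$.

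For the reverse inclusion, on each $K_\ell$ the local inverse $g_\ell \deq (f \vert_{J_\ell})^{-1}$ extends analytically to a complex neighborhood of $K_\ell$ by the analytic implicit function theorem (using $f' \neq 0$ on $J_\ell$). A direct computation using $f' > 0$ on $J_\ell$ shows that this extension $\tilde g_\ell$ sends a small strip above $K_\ell$ into $\C_+$. By the uniqueness of solutions of \eqref{m_sc_eq} in $\C_+$ asserted in the lemma preceding Definition~\ref{def:m}, $\tilde g_\ell$ must coincide with $m$ on this strip. Hence $m$ extends analytically across $K_\ell$, so $K_\ell \subset \R \setminus \supp \varrho$, and therefore $\R \setminus \supp \varrho = \bigcup_\ell K_\ell$.

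Finally I would identify $\bigcup_\ell K_\ell$ explicitly by case analysis on each interval $I_j$, using Lemma~\ref{lem:crit points} together with the boundary behavior of $f$ at the poles $-s_i^{-1}$ and at $0, \pm \infty$. On $I_1$, $f$ tends to $+\infty$ at both endpoints and has one interior critical point (a minimum), so the single increasing branch maps onto $(f(x_*), +\infty)$ for the corresponding $x_* \in I_1$. On each $I_j$ with $j \geq 2$ carrying two critical points, $f$ descends from $+\infty$ at $-s_j^{-1}$ to a local minimum, rises to a local maximum, and descends to $-\infty$ at $-s_{j-1}^{-1}$, so the increasing branch between the two critical points maps to the open interval delimited by the two corresponding critical values. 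On each $I_j$ with no critical points, $f$ is strictly decreasing and contributes no increasing branch. On $I_0$, viewed as the arc of the projective line through $\infty$, the unique critical point gives one increasing branch with image $(-\infty, f(x_{2p}))$. By Lemma~\ref{lem:crit points} there are exactly $p + 1$ increasing branches, and the ordering $a_1 \geq \cdots \geq a_{2p}$ from Lemma~\ref{lem:a_order} forces the $K_\ell$'s to be $(-\infty, a_{2p})$, $(a_{2p-1}, a_{2p-2})$, $\dots$, $(a_3, a_2)$, $(a_1, +\infty)$. Taking complements and intersecting with $(0, \infty)$ yields \eqref{rho_supp}.

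The main obstacle is the case analysis on $I_0$, whose structure depends on the sign of $\phi - 1$ and requires careful bookkeeping near the point at infinity on the projective line (in the degenerate case $\phi = 1$ the critical point is at infinity with critical value $0$, while for $\phi > 1$ the increasing branch crosses the point at infinity). The remaining ingredients are routine from the theory of Stieltjes transforms.
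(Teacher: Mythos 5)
Your proof is correct, and it takes a genuinely different route from the paper's. The paper's argument is built entirely around the polynomial $P_E$ of degree $n+1$ obtained by clearing denominators in the equation $f(x)=E$. In the proof of Lemma~\ref{lem:a_order}, the authors already observed that if $E$ lies in the image of the increasing branches $\bigcup_i f(J_i)$, then $P_E$ has $n+1$ real roots, forcing the root $m(E)$ to be real, hence $E \notin \supp\varrho$; the proof of Lemma~\ref{lem:rho_struct} then handles the reverse inclusion by noting that for $E$ in the claimed support $P_E$ has only $n-1$ real roots, so it has a complex conjugate pair, and a continuity-of-roots argument combined with the uniqueness of the solution of \eqref{def_m} in $\C_+$ shows that $m(E)$ must be the root with positive imaginary part. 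Your approach instead uses the analytic-continuation characterization of the support: on $\R\setminus\supp\varrho$ the identity $f'(m(x))\,m'(x)=1$ with $m'>0$ forces $m(x)$ into the set $\{f'>0\}$ (so that $x\in\bigcup K_\ell$), while conversely the inverse-function theorem applied to $f$ on an increasing branch plus the same uniqueness statement furnishes an analytic real-valued extension of $m$ across each $K_\ell$. Both proofs lean on Lemmas~\ref{lem:crit points} and~\ref{lem:a_order} for the critical-point and critical-value structure of $f$, and both appeal to uniqueness of $m$ in $\C_+$; the difference is essentially algebraic root counting versus analytic continuation via local inverses. The paper's version is economical because $P_E$ was already set up, and it sidesteps the case analysis at the point at infinity in $I_0$; your version is more self-contained and is the standard textbook argument for Stieltjes transforms, but does require the careful bookkeeping near $\infty$ that you flag (the sign of $\phi-1$ and the degenerate case $\phi=1$), and one should also note explicitly that $m(x)\neq 0$ on $\R\setminus\supp\varrho$ for $x$ finite (since $f(0)$ is infinite), not just $m(x)\neq -s_i^{-1}$, so that $m(x)$ genuinely lands in a $J_\ell$.
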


Our assumptions on $\pi$ (i.e.\ on the spectrum of $\Sigma$) take the form of the following regularity conditions.

\begin{definition}[Regularity] \label{def:regular}
Fix $\tau > 0$.
\begin{enumerate}
\item
We say that the edge $k = 1, \dots, 2p$ is \emph{regular} if
\begin{equation} \label{reg: edge}
a_k \;\geq\; \tau \,, \qquad \min_{l \neq k} \abs{a_k - a_l} \;\geq\; \tau\,, \qquad \min_{i} \abs{x_k + s_i^{-1}} \;\geq\; \tau\,.
\end{equation}
\item
We say that the bulk component $k = 1, \dots, p$ is \emph{regular} if for any fixed $\tau' > 0$ there exists a constant $c \equiv c_{\tau, \tau'} > 0$ such that the density\footnote{This means the density of the absolutely continuous part of $\varrho$. In fact, as explained after Lemma \ref{lem:gen_prop_m} below, $\varrho$ is always absolutely continuous on $\R \setminus \{0\}$.} of $\varrho$ in $[a_{2k} + \tau', a_{2k - 1} - \tau']$ is bounded form below by $c$.
\end{enumerate}
\end{definition}

The edge regularity condition from Definition \ref{def:regular} (i) has previously appeared (in a slightly different form) in several works on sample covariance matrices. For the rightmost edge $k = 1$, it was introduced in \cite{ElK2} and was subsequently used in the works \cite{SL, Ona, BaoPanZhou} on the distribution of eigenvalues near the top edge $a_1$. For general $k$, it was introduced in \cite{HHN}. The second condition of \eqref{reg: edge} states that the gap in the spectrum of $\varrho$ adjacent to the edge $a_k$ does not close for large $N$; the third condition of \eqref{reg: edge} ensures a regular square-root behaviour of the spectral density near $a_k$, and in particular rules out outliers.

The bulk regularity condition from Definition \ref{def:regular} (ii) imposes a lower bound on the density of eigenvalues away from the edges. Without it, one can have points in the interior of $\supp \varrho$ with an arbitrarily small density, and our results can be shown to be false. Such points may be easily constructed by taking $\pi$ with a degenerate critical point $x$, which corresponds to two components of $\varrho$ touching at $a = f(x)$. Then it is not hard to check that one can construct an arbitrarily small perturbation $\tilde \pi$ of $\pi$ such that the point $a$ is well-separated from the edges of the support of the associated density $\tilde \varrho$, and the density $\tilde \varrho$ at $a$ is an arbitrarily small positive number. The condition from Definition \ref{def:regular} (ii) is stable under perturbation of $\pi$; see Remark \ref{rem:bulk_stab} below.

We conclude this subsection with a couple of examples verifying the regularity conditions of Definition \ref{def:regular}.

\begin{example}[Bounded number of distinct eigenvalues]
We suppose that $n$ is fixed, and that $s_1, \dots, s_n$ and $\phi \pi(\{s_1\}), \dots, \phi \pi(\{s_n\})$ all converge in $(0,\infty)$ as $N \to \infty$.
We suppose that all critical points of $\lim_N f$ are nondegenerate, and that  $\lim_N a_i > \lim_N a_{i+1}$ for $i = 1, \dots, 2p$. Then it is easy to check that, for small enough $\tau$, all edges $k = 1, \dots, 2p$ and bulk components $k = 1, \dots, p$ are regular in the sense of Definition \ref{def:regular}.
\end{example}

\begin{example}[Continuous limit]
We suppose that  $\phi$ converges in $(0,\infty) \setminus \{1\}$. Moreover, we suppose that $\pi$ is supported in some interval $[a, b] \subset (0,\infty)$, and that $\pi$ converges in distribution to some measure $\pi_\infty$ that is absolutely continuous and whose density satisfies $\tau \leq \dd \pi_\infty (E) / \dd E \leq \tau^{-1}$ for $E \in [a,b]$.  It is easy to check that in this case $p=1$, and that the two edges and the single bulk component are regular in the sense of Definition \ref{def:regular}.
\end{example}

\section{Results}

In this section we state our main results for the sample covariance matrix $Q$ defined in Section \ref{sec:model}. To ease readability, we state the anisotropic local law under three different sets of increasingly weak assumptions. In Section \ref{sec:res_full}, we assume that all edges and bulk components are regular in the sense of Definition \ref{def:regular}. In Section \ref{sec:res_detail}, we assume only regularity of a single edge or bulk component, and state the anisotropic local law in the vicinity of the corresponding edge or bulk component. As an application, we prove the edge universality near a regular edge. Finally, in Section \ref{sec:general_results} we give a general anisotropic law, where the concrete regularity assumptions from Definition \ref{def:regular} are replaced with a more general but abstract stability condition. The reader interested only in the principal components of $Q$ can proceed directly to the results in Theorem \ref{thm:edge} (with $k = 1$) and Corollary \ref{cor:TWA} (i).

\subsection{Basic definitions}
In this preliminary subsection we introduce some basic notations and definitions.
Our main results take the form of \emph{local laws}, which relate the resolvents
\begin{equation} \label{def_RM_RN}
R_N(z) \;\deq\; (X^* T^* T X - z)^{-1} \qquad \txt{and} \qquad R_M(z) \;\deq\; (T X X^* T^* - z)^{-1}
\end{equation}
to the Stieltjes transform $m$ of the asymptotic density $\varrho$. 
These local laws may be formulated in a simple, unified fashion under the assumption \eqref{T_diag} using an $(M + N) \times (M + N)$ block matrix, which is a linear function of $X$.

\begin{definition}[Index sets]
We introduce the index sets
\begin{equation*}
\cal I_M \;\deq\; \qq{1, M}\,, \qquad \cal I_N \;\deq\; \qq{M+1, M+N}\,, \qquad \cal I \;\deq\; \cal I_M \cup \cal I_N \;=\; \qq{1, M+N}\,.
\end{equation*}
We consistently use the letters $i,j \in \cal I_M$, $\mu,\nu \in \cal I_N$, and $s,t \in \cal I$. We label the indices of the matrices according to
\begin{equation*}
X \;=\; (X_{i \mu} \col i \in \cal I_M, \mu \in \cal I_N) \,, \qquad \Sigma \;=\; (\Sigma_{ij} \col i,j \in \cal I_M)\,.
\end{equation*}
\end{definition}

\begin{definition}[Linearizing block matrix]
Under the condition \eqref{T_diag} and for $z \in \bb C_+$ we define the $\cal I \times \cal I$ matrix
\begin{equation} \label{def_G}
G(z) \;\equiv\; G^\Sigma(X, z) \;\deq\; \begin{pmatrix}
-\Sigma^{-1} & X
\\
X^* & -z
\end{pmatrix}^{-1}\,.
\end{equation}
\end{definition}

The motivation behind this definition is that, assuming \eqref{T_diag}, a control of $G$ immediately yields control of the resolvents $R_N$ and $R_M$
via the identities
\begin{equation} \label{G_M_ident}
G_{ij} \;=\; \p{z \Sigma^{1/2} R_M \Sigma^{1/2}}_{ij}
\end{equation}
for $i,j \in \cal I_M$ and
\begin{equation} \label{G_N_ident}
G_{\mu \nu} \;=\; (R_N)_{\mu \nu}
\end{equation}
for $\mu,\nu \in \cal I_N$. Both of these identities may be easily checked using Schur's complement formula.

Next, we introduce a deterministic matrix $\Pi$, which we shall prove is close to $G$ with high probability (in the sense of Definition \ref{def:stocdom} (iii) below).
\begin{definition}[Deterministic equivalent of $G$]
For $z \in \bb C_+$ we define the $\cal I \times \cal I$ deterministic matrix
\begin{equation} \label{def_Pi}
\Pi(z) \;\equiv\; \Pi^\Sigma(z) \;\deq\;
\begin{pmatrix}
-\Sigma(1 + m(z) \Sigma)^{-1} & 0
\\
0 & m(z)
\end{pmatrix}\,.
\end{equation}
\end{definition}
We also extend $\Sigma$ to an $\cal I \times \cal I$ matrix
\begin{equation} \label{def_ul_Sigma}
\ul \Sigma \;\deq\; \begin{pmatrix}
\Sigma & 0
\\
0 & 1
\end{pmatrix}\,.
\end{equation}

We consistently use the notation $z = E + \ii \eta$
for the spectral parameter $z$. Throughout the following we regard the quantities $E(z)$ and $\eta(z)$ as functions of $z$ and usually omit the argument unless it is needed to avoid confusion. The spectral parameter $z$ will always lie in the fundamental domain
\begin{equation} \label{def_D}
\f D \;\equiv\; \f D(\tau,N) \;\deq\; \hb{z \in \bb C_+ \col \abs{z} \geq \tau \,,\, \abs{E} \leq \tau^{-1} \,,\, N^{-1 + \tau} \leq \eta \leq \tau^{-1}}\,,
\end{equation}
where $\tau > 0$ is a fixed parameter.

The following notion of a high-probability bound was introduced in \cite{EKY2}, and has been subsequently used in a number of works on random matrix theory. It provides a simple way of systematizing and making precise statements of the form ``$\xi$ is bounded with high probability by $\zeta$ up to small powers of $N$''.

\begin{definition}[Stochastic domination]\label{def:stocdom}
\begin{enumerate}
\item
Let
\begin{equation*}
\xi \;=\; \pb{\xi^{(N)}(u) \col N \in \N, u \in U^{(N)}} \,, \qquad
\zeta \;=\; \pb{\zeta^{(N)}(u) \col N \in \N, u \in U^{(N)}}
\end{equation*}
be two families of nonnegative random variables, where $U^{(N)}$ is a possibly $N$-dependent parameter set. 

We say that $\xi$ is \emph{stochastically dominated by $\zeta$, uniformly in $u$,} if for all (small) $\epsilon > 0$ and (large) $D > 0$ we have
\begin{equation*}
\sup_{u \in U^{(N)}} \P \qB{\xi^{(N)}(u) > N^\epsilon \zeta^{(N)}(u)} \;\leq\; N^{-D}
\end{equation*}
for large enough $N\geq N_0(\e, D)$.
Throughout this paper the stochastic domination will always be uniform in all parameters (such as matrix indices, deterministic vectors, and spectral parameters $z \in \f D$) that are not explicitly fixed. Note that $N_0(\e, D)$ may depend on quantities that are explicitly constant, such as $\tau$ and $C_p$ from \eqref{moments of X-1}.
\item
If $\xi$ is stochastically dominated by $\zeta$, uniformly in $u$, we use the notation $\xi \prec \zeta$. Moreover, if for some complex family $\xi$ we have $\abs{\xi} \prec \zeta$ we also write $\xi = O_\prec(\zeta)$.
\item
We extend the definition of $O_\prec(\,\cdot\,)$ to matrices in the weak operator sense as follows. Let $A$ be a family of complex square random matrices and $\zeta$ a family of nonnegative random variables. Then we use $A = O_\prec(\zeta)$ to mean $\abs{\scalar{\f v}{A \f w}} \prec \zeta \abs{\f v}\abs{\f w}$ uniformly for all deterministic vectors $\f v$ and $\f w$.
\end{enumerate}
\end{definition}

\begin{remark}
Because of \eqref{MsimN}, all (or some) factors of $N$ in Definition \ref{def:stocdom} could be replaced with $M$ or $\wh M$ without changing the definition of stochastic domination. 
\end{remark}

\subsection{The full spectrum} \label{sec:res_full}

As explained at the beginning of this section, we first state the anisotropic local law for all arguments $z$ under the assumption that all edges and bulk components are regular in the sense of Definition \ref{def:regular}. In the next subsection, we relax these assumptions by restricting the domain of $z$ to the vicinity of an edge or bulk component, and only requiring the regularity of the corresponding edge or bulk component.

Our main result is the following anisotropic local law. We introduce the fundamental control parameter
\begin{equation} \label{def_Psi}
\Psi(z) \;\deq\; \sqrt{\frac{\im m(z)}{N \eta}} + \frac{1}{N \eta}
\end{equation}
and the Stieltjes transform of the empirical eigenvalue density of $X^* T^* T X$,
\begin{equation} \label{def_m_N}
m_N(z) \;\deq\; \frac{1}{N} \sum_{\mu \in \cal I_N} (R_N)_{\mu \mu}(z)\,.
\end{equation}

\begin{theorem}[Anisotropic local law] \label{thm:LL_G}
Fix $\tau > 0$. Suppose that \eqref{T_diag} and Assumption \ref{ass:main} hold. Suppose moreover that every edge $k = 1, \dots, 2p$ satisfying $a_k \geq \tau$ and every bulk component $k = 1, \dots, p$ is regular in the sense of Definition \ref{def:regular}. Then
\begin{equation} \label{edge_G_result}
\ul \Sigma^{-1} \pb{G(z) - \Pi(z)} \ul \Sigma^{-1} \;=\; O_\prec(\Psi(z))
\end{equation}
and
\begin{equation} \label{edge_G_avg_result}
m_N(z) - m(z) \;=\; O_\prec \pbb{\frac{1}{N \eta}}
\end{equation}
uniformly in $z \in \f D$.
\end{theorem}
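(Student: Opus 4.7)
The plan follows the three-stage strategy (A)--(C) outlined in the introduction. First I would establish the entrywise local law for Gaussian $X$ and diagonal $\Sigma$. Here the standard resolvent approach applies: using Schur's complement on the block matrix $G$ defined in \eqref{def_G}, one expands each entry $G_{ss}$ in terms of rows/columns of $X$ with index $s$ removed, derives an approximate version of \eqref{m_sc_eq} for $m_N$, and controls fluctuations via Gaussian integration by parts (or a cumulant expansion). A fluctuation averaging estimate, combined with a stability analysis of the map $f$ in \eqref{def_fx} near $\supp \varrho$ (which is precisely what the regularity conditions in Definition \ref{def:regular} guarantee), converts the averaged control into entrywise bounds $G_{ss}(z) - \Pi_{ss}(z) = O_\prec(\Psi)$ and $G_{st}(z) = O_\prec(\Psi)$ for $s \neq t$, uniformly in $z \in \f D$.

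Second, I would upgrade to the anisotropic law for Gaussian $X$ and general $\Sigma$. Since the law of $X$ is invariant under left multiplication by orthogonal matrices, diagonalizing $\Sigma = U D U^*$ and conjugating the index set $\cal I_M$ by $U$ reduces $\scalarb{\bv}{(G - \Pi) \bw}$ to a double sum over the entrywise differences already controlled in stage (A), evaluated at the rotated vectors $U^* \bv$ and $U^* \bw$. A second moment computation in the rotated basis, exploiting the improved off-diagonal bound and the Ward identity $\sum_t \abs{G_{st}}^2 = \eta^{-1} \im G_{ss}$, yields the anisotropic estimate $\ul \Sigma^{-1}(G - \Pi) \ul \Sigma^{-1} = O_\prec(\Psi)$ in the Gaussian case.

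Third, and this is where the bulk of the work lies, I would transfer the anisotropic law from Gaussian $X^0$ to general $X^1$ via the Bernoulli interpolation \eqref{bern_interpol} and the differentiation identity \eqref{interpol25}. Taking as test function
\begin{equation*}
F_{\bv}^p(X, z) \;\deq\; \absb{\scalarb{\bv}{(G(z) - \Pi(z)) \bv}}^p,
\end{equation*}
I would expand $\frac{\dd}{\dd \theta} \E F_{\bv}^p(X^\theta, z)$ using \eqref{interpol25}. Each derivative $\partial / \partial X^\theta_\alpha$ acts on $G$ by $-G (\partial H / \partial X_\alpha) G$, so the result is a sum of monomials in generalized entries of $G$, which I would organize via the word encoding mentioned in the introduction. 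Each such monomial must be classified as either (i) absorbable into $N^{o(1)} \Psi^p$ using a priori bounds at the current scale $\eta$, or (ii) controllable by $\sup_{\bw \in \bb S} \E F_{\bw}^p(X^\theta, z)$. This yields the self-consistent estimate \eqref{intr_sc}, which closes by Gr\"onwall's inequality; Markov's inequality then promotes the resulting moment bound to the stated high-probability statement. The averaged bound \eqref{edge_G_avg_result} follows by a fluctuation averaging argument applied to $\frac{1}{N} \sum_{\mu \in \cal I_N} (G_{\mu\mu} - m)$, which gains an additional factor $(N\eta)^{1/2} / \sqrt{\im m}$ over the entrywise bound.

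The a priori bounds on $\abs{G_{\bv \bw}}$ and $\im G_{\bw \bw}$ needed to run the comparison at scale $\eta$ are not available a priori; they must be generated by bootstrapping in $\eta$. The plan is to initialize at $\eta = 1$, where the trivial estimates $\norm{G}, \norm{\im G} \leq \eta^{-1} = 1$ suffice to carry the comparison from $\theta = 0$ to $\theta = 1$. At each subsequent level, the anisotropic law at the larger scale $\tilde \eta = N^\delta \eta$ (known for $X^1$ from the previous iteration) combined with the monotonicity of $\eta \mapsto \eta \, \im G_{\bw \bw}(E + \ii \eta)$ gives a priori bounds on the smaller scale $\eta$ that are strong enough, in particular on $\im G_{\bw\bw}$, to close the self-consistent comparison one level down. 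After $O(\delta^{-1})$ multiplicative steps of $N^{-\delta}$, we reach the optimal scale $\eta \geq N^{-1+\tau}$ of $\f D$. The main obstacle throughout is the combinatorial organization of the derivatives in step three: one must extract from the summation structure in $\alpha$ enough factors of $\im G_{\bw \bw}$ (each contributing a favorable $\Psi^2$ via the Ward identity) to dominate the unavoidable rough factors $\abs{G_{\bv \bw}} \leq \eta^{-1}$, and simultaneously identify those terms whose natural bound is \emph{exactly} the quantity being estimated, so that the Gr\"onwall structure of \eqref{intr_sc} actually closes.
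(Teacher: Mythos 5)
Your proposal follows the paper's three-stage strategy faithfully: entrywise law for Gaussian $X$ with diagonal $\Sigma$, rotation by orthogonal invariance to general $\Sigma$, and self-consistent Bernoulli comparison with an $\eta$-bootstrap from $\eta = 1$ down to the optimal scale, closed by Gr\"onwall and Markov. The averaged law is deduced, as you say, using the established anisotropic law as input (though note the transfer to general $X,\Sigma$ is itself a second comparison argument, Proposition~\ref{prop:gen_avg}, not merely a fluctuation-averaging computation).

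There are two points where the proposal as written would not close. First, your last sentence invokes the rough bound $\abs{G_{\bv\bw}} \leq \eta^{-1}$; this is exactly the estimate the paper points out is far too weak at small $\eta$, and the whole purpose of the bootstrap is to replace it by $\abs{G_{\bv\bw}} \prec N^{2\delta}$ (Lemma~\ref{cor:rough_bound}). If you carry $\eta^{-1}$ into the comparison instead of $N^{2\delta}$, the self-consistent inequality does not close. Second, and more substantively, your dichotomy between terms absorbable into $N^{o(1)}\Psi^p$ via Ward-identity factors and terms controllable by $\sup_{\bw}\E F^p_{\bw}$ closes cleanly only for derivative orders $n\geq 4$, where the prefactor $N^{-n/2}\leq N^{-2}$ already cancels the $\asymp N^2$ summation terms. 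The order $n=3$, which arises whenever $\E X_{i\mu}^3\neq 0$, is the hardest part of the argument: $N^{-3/2}$ leaves a surplus $N^{1/2}$ that cannot be recovered from the Ward identity alone. The paper extracts the missing $N^{-1/2}$ by expanding each factor as a polynomial in the $\mu$-th column of $X$ with $X^{(\mu)}$-measurable coefficients, taking the conditional expectation $\E_\mu$, and invoking a parity argument: the degree $d_{\f X}$ of the resulting polynomial is odd (Lemma~\ref{lem:d odd}), so a perfect pairing of summation indices — which would give an $N^0$ contribution — is impossible. This requires the tagged-word and degree-counting machinery of Section~\ref{sec:self-const_II} and is a genuinely separate ingredient your plan does not supply.
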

(Note that the presence of the factors $\ul \Sigma$ in \eqref{edge_G_result} strengthens the result; by \eqref{bound_Sigma}, they can be trivially dropped to obtain a weaker estimate. They ensure that the control of the error is stronger in directions where the covariance $\Sigma$ is small.)

Outside the support of the asymptotic spectrum, one has stronger control all the way down to the real axis.

\begin{theorem}[Anisotropic local law outside of the spectrum] \label{thm:G_outside}
Fix $\tau > 0$. Suppose that \eqref{T_diag} and Assumption \ref{ass:main} hold. Suppose moreover that every edge $k = 1, \dots, 2p$ satisfying $a_k \geq \tau$ is regular in the sense of Definition \ref{def:regular} (i). Then
\begin{equation} \label{G_outside}
\ul \Sigma^{-1} \pb{G(z) - \Pi(z)} \ul \Sigma^{-1} \;=\; O_\prec \pBB{\sqrt{\frac{\im m(z)}{N \eta}}}
\end{equation}
uniformly in $z \in [\tau, \tau^{-1}] \times (0, \tau^{-1}]$ satisfying $\dist(E, \supp \varrho) \geq N^{-2/3 + \tau}$.
\end{theorem}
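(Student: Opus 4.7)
The plan is to deduce Theorem \ref{thm:G_outside} from Theorem \ref{thm:LL_G} by combining it with eigenvalue rigidity and an analytic continuation argument. Two improvements over Theorem \ref{thm:LL_G} are required: the systematic error term $1/(N\eta)$ in $\Psi(z)$ must be absorbed into $\sqrt{\im m(z)/(N\eta)}$, and the restriction $\eta \geq N^{-1+\tau}$ must be lifted to allow arbitrarily small $\eta > 0$. Both improvements hinge on the fact that when $\kappa \deq \dist(E, \supp \varrho) \geq N^{-2/3+\tau}$, no eigenvalues of $X^* T^* T X$ lie in $(E - \kappa/2, E + \kappa/2)$ with high probability.

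First I would derive eigenvalue rigidity from the averaged local law \eqref{edge_G_avg_result}. A standard Helffer-Sj\"ostrand integration of \eqref{edge_G_avg_result} against a smooth cutoff near a regular edge yields that, with probability $1 - O(N^{-D})$, no eigenvalue of $X^* T^* T X$ lies farther than $N^{-2/3 + \tau/2}$ from $\supp \varrho$. Under the hypothesis $\kappa \geq N^{-2/3+\tau}$, this excludes eigenvalues from $(E - \kappa/2, E + \kappa/2)$, implying both the a priori bound $\norm{G(z)} \leq 2/\kappa$ for all $\eta \geq 0$ and the analytic extension of $f(z) \deq \scalar{\f v}{\ul\Sigma^{-1}(G(z) - \Pi(z))\ul\Sigma^{-1} \f w}$ from $\bb C_+$ to the full disk $D(E, \kappa/2) \subset \bb C$.

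Next I would split by the size of $\eta$. When $\eta \geq \kappa/4$, Theorem \ref{thm:LL_G} applies and yields $\abs{f(z)} \prec \Psi(z)$; using the outside-spectrum estimate $\im m(E + \ii \eta) \asymp \eta/\sqrt{\kappa + \eta}$, one checks that $1/(N\eta) \leq \sqrt{\im m(z)/(N\eta)}$ whenever $\eta \geq N^{-2/3}$, which holds in this regime. For $\eta \in (0, \kappa/4)$, the target $\sqrt{\im m(z)/(N\eta)} \asymp 1/\sqrt{N \sqrt{\kappa}}$ is independent of $\eta$, and I would estimate $f(z)$ via the Cauchy integral over the circle $\gamma \deq \partial D(E, \kappa/4)$. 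On the portion of $\gamma$ with $\abs{\im w} \geq N^{-1+\tau}$, Theorem \ref{thm:LL_G} is available and yields $\abs{f(w)} \prec \Psi(w)$; a careful integration along $\gamma$ of this bound, using $\im m(w) \asymp (\im w)/\sqrt{\kappa}$ for $w$ on the circle, gives a contribution of order $1/\sqrt{N\sqrt\kappa}$, matching the target.

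The main obstacle is the short arc of $\gamma$ with $\abs{\im w} < N^{-1+\tau}$, where Theorem \ref{thm:LL_G} is unavailable and only the crude rigidity bound $\abs{f(w)} \leq 2/\kappa + O(1)$ is directly accessible, producing a contribution of order $N^{1/3-\tau}$ that overwhelms the target. To circumvent this I would bootstrap the Cauchy estimate: the first round with the crude bound yields a preliminary estimate $\abs{f(z)} \prec N^{1/3-\tau}$ uniformly on $\bar D(E, \kappa/4)$; feeding this back as an improved a priori bound on the bad arc in a second round gains a factor $N^{-1/3}$; after a bounded number of iterations depending on $\tau$, the bad-arc contribution drops below the target and the claimed bound $\sqrt{\im m(z)/(N\eta)}$ follows uniformly in $z$. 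This iteration of the bad-arc estimate resembles in spirit the bootstrap in the spectral scale $\eta$ used in the proof of Theorem \ref{thm:LL_G}.
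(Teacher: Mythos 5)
The route you propose -- a Cauchy integral over a circle around $E$, combined with a bootstrap to handle the short ``bad arc'' near the real axis -- is genuinely different from the paper's. The paper deduces Theorem \ref{thm:G_outside} from Theorem \ref{thm:LL_G}, eigenvalue rigidity, and the spectral decomposition \eqref{decompfullG}, following \cite[Theorem 3.12]{BEKYY}: once there are no eigenvalues in $(E-\kappa/2, E+\kappa/2)$, the summands $\f u_k \f u_k^*/(\lambda_k - z)$ are uniformly Lipschitz in $\eta$ with constant controlled by $\kappa$, so $G(z)$ at small $\eta$ can be compared directly to $G$ at a reference scale $\eta_0 \geq N^{-1+\tau}$ where Theorem \ref{thm:LL_G} applies. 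That direct comparison makes the bootstrap unnecessary, and is the main thing your Cauchy approach gives up; on the other hand, both proofs hinge on the same two inputs (the anisotropic law down to $\eta = N^{-1+\tau}$, and a spectral gap near $E$).

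There is, however, a genuine gap in your derivation of that spectral gap. You assert that a ``standard Helffer--Sj\"ostrand integration of \eqref{edge_G_avg_result} against a smooth cutoff'' shows that no eigenvalue lies farther than $N^{-2/3+\tau/2}$ from $\supp\varrho$. The averaged local law \eqref{edge_G_avg_result} has error $1/(N\eta)$, and $N\eta \cdot 1/(N\eta) = 1$, so it cannot rule out even a single eigenvalue near $E$: at $\eta \asymp N^{-2/3}$ the error competes at order one against the target $N\eta\,\im m \ll 1$. What is actually needed is the strengthened bound \eqref{improved_avg_G}, $\abs{m_N - m} \prec (\kappa+\eta)^{-1/2}\Psi^2$, which is established in the proof of Lemma \ref{lem:gap} using the \emph{strong} stability notion (Definition \ref{def:strong_stab}, Lemmas \ref{lem:stab_edge1}--\ref{lem:stab_outside}) and Proposition \ref{prop:gen_avg}. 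Without this refined input your disk $D(E,\kappa/2)$ has no guarantee of being eigenvalue-free, and the analytic continuation of $f$ that your whole Cauchy argument rests on is unavailable.

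Assuming the spectral gap is granted, your bootstrap is plausible: the bad-arc contribution is $\lesssim M\cdot N^{-1+\tau}/\kappa \leq M N^{-1/3}$ for an a priori bound $M$, which indeed contracts by $N^{-1/3}$ per round and reaches the target $1/\sqrt{N\sqrt\kappa}$ after $O(1)$ rounds. But the write-up is too loose on the uniformity needed to close the loop. The points $w$ on the bad arc have $\re w$ at distance $\asymp \kappa(E)$ from $E$, hence a \emph{different} $\kappa(\re w)$, so ``feeding back the first-round bound'' at $w$ requires that the bound has already been established uniformly over a range of real parts, with disks of shrinking radii nested so that the bad arc of stage $j$ lies inside the domain of validity of stage $j-1$. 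This is fixable with a decreasing sequence of radii and a net argument (as in the paper's treatment of $\wh{\f S}$), but as stated the iteration refers to itself circularly. I would either supply this bookkeeping, or -- simpler and closer to the paper -- replace the Cauchy integral by the spectral-decomposition comparison of \cite[Theorem 3.12]{BEKYY}, which avoids the bootstrap entirely.
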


An explicit expression for the error term in \eqref{G_outside} may be obtained from \eqref{m_sq_root} below.

\begin{remark}
Theorem \ref{thm:G_outside} can be used to obtain a complete picture of the \emph{outlier eigenvalues} of $Q$ in the case where a bounded number of eigenvalues of $\Sigma$ are changed to some arbitrary values (in particular possibly violating the regularity assumption from Definition \ref{def:regular} (i)). We note that the outliers may also lie between bulk components of $\varrho$. The analysis is similar to the one performed in \cite{KYY} for the case $\Sigma = I_M$; we omit the details.
\end{remark}

In the remainder of this subsection, we state several corollaries of Theorem \ref{thm:LL_G} where the assumption \eqref{T_diag} is removed. From Theorem \ref{thm:LL_G} it is not hard to deduce the following result on the resolvents $R_N$ and $R_M$, defined in \eqref{def_RM_RN}.

\begin{corollary}[Anisotropic local laws for $X^* T^* T X$ and $T X X^* T^*$] \label{cor:GM_GN}
Fix $\tau > 0$. Suppose that Assumption \ref{ass:main} holds. Suppose moreover that every edge $k = 1, \dots, 2p$ satisfying $a_k \geq \tau$ and every bulk component $k = 1, \dots, p$ is regular in the sense of Definition \ref{def:regular}. Then
\begin{equation} \label{R_N_edge}
R_N - m(z) \;=\; O_\prec(\Psi(z))
\end{equation}
and 
\begin{equation} \label{R_M_edge}
\Sigma^{-1/2} \pbb{R_M - \frac{-1}{z (1 + m(z) \Sigma)}} \Sigma^{-1/2} \;=\; O_\prec(\Psi(z))
\end{equation}
uniformly in $z \in \f D$.
Moreover, \eqref{edge_G_avg_result} holds uniformly in $z \in \f D$.
\end{corollary}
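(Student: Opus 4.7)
The plan is to first establish the corollary under the additional hypothesis \eqref{T_diag}, where it is a direct translation of Theorem \ref{thm:LL_G} via the Schur-complement identities \eqref{G_M_ident} and \eqref{G_N_ident}, and then to remove this hypothesis by the reduction outlined in Section \ref{sec:gen_T}.

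Under \eqref{T_diag}, the $\cal I_N$-diagonal block of $\Pi$ equals $m(z) I_N$ and the corresponding block of $\ul \Sigma$ is $I_N$, so \eqref{G_N_ident} converts the $\cal I_N$-diagonal block of \eqref{edge_G_result} directly into \eqref{R_N_edge}. For $R_M$, the $\cal I_M$-diagonal block of $\Pi$ is $-\Sigma(1+m\Sigma)^{-1}$; combining this with \eqref{G_M_ident}, the commutativity of $\Sigma$ with $(1 + m\Sigma)^{-1}$, and the fact that $\abs{z} \geq \tau$ on $\f D$ absorbs the resulting factor of $z^{-1}$, a short algebraic rearrangement rewrites the $\cal I_M$-diagonal block of \eqref{edge_G_result} as \eqref{R_M_edge}. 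The averaged law \eqref{edge_G_avg_result} is already stated in Theorem \ref{thm:LL_G}.

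To drop \eqref{T_diag} I would treat $R_N$ and $R_M$ separately. The matrix $X^* T^* T X$ depends on $T$ only through $T^* T$, so replacing $T$ by the symmetric nonnegative matrix $\tilde T \deq (T^* T)^{1/2}$ leaves $R_N$ intact while putting $(\tilde T, X, z)$ into the setup \eqref{T_diag} with effective covariance $\tilde \Sigma = T^* T$. Because $T^* T$ and $T T^*$ share the same nonzero spectrum, a direct computation shows that the defining equation \eqref{m_sc_eq} produces the same Stieltjes transform $\tilde m = m$ and the same asymptotic density on $(0,\infty)$, and the regularity hypotheses of Definition \ref{def:regular} transfer (possibly after shrinking $\tau$). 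Applying the first step to $(\tilde T, X)$ then yields \eqref{R_N_edge} and \eqref{edge_G_avg_result} in full generality. For $R_M$, this symmetrization trick fails because $T X X^* T^*$ genuinely depends on $T$ and not just on $T^* T$; in this case I would invoke the enlarged block-matrix linearization of Section \ref{sec:gen_T}, which extends Theorem \ref{thm:LL_G} to rectangular $T$, and then translate by Schur's complement exactly as in the first step.

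The hard part is the $R_M$ half of the second step. A tempting algebraic shortcut is the resolvent identity $R_M = -z^{-1} + z^{-1} TX R_N X^* T^*$, derived from the intertwining $R_M TX = TX R_N$, together with the Woodbury rewriting $(1 + m\Sigma)^{-1} = I - m T (1 + m T^* T)^{-1} T^*$; these would reduce \eqref{R_M_edge} to the anisotropic estimate $XR_N X^* \approx m (1 + m T^* T)^{-1}$. However, propagating the optimal error $\Psi(z)$ through the extra factors of $T$ still forces one into the rectangular version of Theorem \ref{thm:LL_G} developed in Section \ref{sec:gen_T}, so the substantive content of the corollary rests on that extension, while the present deduction is essentially just the Schur-complement translation.
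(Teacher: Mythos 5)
Your proposal is correct and follows essentially the same two-step route the paper uses: under \eqref{T_diag}, the corollary is an immediate Schur-complement translation of Theorem \ref{thm:LL_G} via \eqref{G_M_ident}, \eqref{G_N_ident}, and \eqref{def_Pi}; removing \eqref{T_diag} is deferred to the block-matrix reduction of Section \ref{sec:gen_T}.

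The one place where you depart from the paper is your treatment of $R_N$. You replace $T$ by the symmetric square root $(T^*T)^{1/2}$, observe that $X^*T^*TX$ is unchanged, and argue that $m$, $f$, and the regularity conditions transfer because $\phi\pi(\{s_i\})$ equals (multiplicity of $s_i$)$/N$ for both $T^*T$ and $TT^*$. That is a correct and rather clean observation. However, it is not really a \emph{different} argument from the paper's: Section \ref{sec:gen_T} pads $T$ to a square $\hat T = \binom{0}{T}$ with $\hat S \deq \hat T^*\hat T = T^*T$, applies Theorem \ref{thm:LL_G} to the covariance $\hat S + \epsilon$ (equivalently, to your $(T^*T)^{1/2}$ after $\epsilon$-regularization), rotates by the polar factor $\hat U$, and takes $\epsilon\downarrow 0$. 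Your $R_N$ reduction is precisely the lower-right block of this one calculation. The advantage of the paper's unified version is that the same rotated block matrix \eqref{def_block_G} simultaneously contains the block $z\Sigma^{1/2}R_M\Sigma^{1/2}$, so that \eqref{R_M_edge} drops out without any separate argument; treating $R_N$ and $R_M$ as two distinct cases, as you do, is not wrong but is strictly more work. Two small technical points you should make explicit: $(T^*T)^{1/2}$ can be singular, so you still need the $\epsilon$-regularization and $\epsilon\downarrow 0$ limit before invoking \eqref{T_diag}; and the weak operator bound $O_\prec(\cdot)$ is invariant under conjugation by a deterministic orthogonal matrix, which is what lets the rotated block matrix inherit the anisotropic estimate — this is the algebraic fact that makes the $R_M$ half go through and is the actual content of the ``rectangular version'' you appeal to.
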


\begin{remark}
Theorem \ref{thm:G_outside} has an analogous corollary for $R_N$ and $R_M$ for $z$ satisfying $\dist(E, \supp \varrho) \geq N^{-2/3 + \tau}$, whereby the right-hand sides of \eqref{R_N_edge} and \eqref{R_M_edge} are replaced with the right-hand side of \eqref{G_outside}; we omit the precise statement.
\end{remark}

\begin{remark}
The following consistency check may be applied to the deterministic matrices on the left-hand sides of \eqref{R_N_edge} and \eqref{R_M_edge}.
If, in the identity $\frac{1}{M} \tr R_M = \frac{1}{M} \tr R_N + \frac{\phi - 1}{\phi} \frac{1}{z}$, we replace $\tr R_M$ and $\tr R_N$ with the corresponding deterministic matrices from the left-hand sides of \eqref{R_N_edge} and \eqref{R_M_edge}, we recover \eqref{def_m}.
\end{remark}

We conclude this subsection with another consequence of Theorem \ref{thm:LL_G} -- eigenvalue rigidity. We denote by
\begin{equation*}
\lambda_1 \;\geq\; \lambda_2 \;\geq\; \cdots \;\geq\; \lambda_{M \wedge N}
\end{equation*}
the nontrivial eigenvalues of $Q$, and by $\gamma_1 \geq \gamma_2 \geq \cdots \geq \gamma_{M \wedge N}$ the \emph{classical eigenvalue locations} defined through
\begin{equation} \label{def:gamma_alpha}
N \int_{\gamma_i}^\infty \dd \varrho \;=\; i-1/2\,.
\end{equation}
If $p \geq 1$, it is convenient to relabel $\lambda_i$ and $\gamma_i$ separately for each bulk component $k = 1, \dots, p$. To that end, for $k = 1, \dots, p$ we define the \emph{classical number of eigenvalues in the $k$-th bulk component} through
\begin{equation} \label{def_N_k}
N_k \;\deq\; N \int_{a_{2k}}^{a_{2k - 1}} \dd \varrho\,.
\end{equation}
In Lemma \ref{lem:N_i} below we prove that $N_k \in \N$ for $k = 1, \dots, p$. For $k = 1, \dots, p$ and $i = 1, \dots, N_k$ we introduce the relabellings
\begin{equation*}
\lambda_{k,i} \;\deq\; \lambda_{i + \sum_{l < k} N_l}\,, \qquad \gamma_{k,i} \;\deq\; \gamma_{i + \sum_{l < k} N_l} \;\in\; (a_{2k}, a_{2k - 1})\,.
\end{equation*}
Note that we may also characterize $\gamma_{k,i}$ through $N \int_{\gamma_{k,i}}^{a_{2k-1}} \dd \varrho = i-1/2$.

\begin{theorem}[Eigenvalue rigidity] \label{thm:rigidity}
Fix $\tau > 0$. Suppose that Assumption \ref{ass:main} holds. Suppose moreover that every edge $k = 1, \dots, 2p$ satisfying $a_k \geq \tau$ and every bulk component $k = 1, \dots, p$ is regular in the sense of Definition \ref{def:regular}. Then we have for all $k = 1, \dots, p$ and $i = 1, \dots, N_k$ satisfying $\gamma_{k,i} \geq \tau$ that
\begin{equation} \label{rigidity}
\abs{\lambda_{k,i} - \gamma_{k,i}} \;\prec\; \pb{i \wedge (N_k + 1 - i)}^{-1/3} N^{-2/3}\,.
\end{equation}
\end{theorem}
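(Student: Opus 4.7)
The plan is to deduce rigidity from the averaged local law \eqref{edge_G_avg_result} together with the sharper off-spectrum estimate of Theorem \ref{thm:G_outside}, via the Helffer--Sj\"ostrand formula, and then to invert at each edge using the square-root behavior of $\varrho$. The argument splits into three steps: a counting-function bound, a multi-cut labeling step, and an edge inversion.

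First I would control the empirical eigenvalue counting function $\cal N(E) \deq \#\h{i \col \lambda_i \geq E}$. Applying the Helffer--Sj\"ostrand formula to a smooth approximation $\chi_E$ of the indicator of $[E, \infty)$ with a quasi-analytic extension $\wt \chi_E$, one writes
\begin{equation*}
\cal N(E) - N \int_E^\infty \dd \varrho \;=\; \frac{N}{\pi} \int \partial_{\bar z} \wt \chi_E(z) \, (m_N(z) - m(z)) \, \dd^2 z\,.
\end{equation*}
Splitting the $z$-integration according to whether $E(z)$ lies inside or outside $\supp \varrho$ and applying \eqref{edge_G_avg_result} in the former case and Theorem \ref{thm:G_outside} in the latter, the standard dyadic computation yields $\absb{\cal N(E) - N \int_E^\infty \dd \varrho} \prec 1$ uniformly in $E \geq \tau/2$.

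Next, I would verify that the labels $\lambda_{k,i}$ are well-defined in the multi-cut case. The gap condition in Definition \ref{def:regular} (i) together with the counting-function bound just obtained implies that with high probability no eigenvalue lies in any of the spectral gaps $(a_{2k+1}, a_{2k})$, and combined with Lemma \ref{lem:N_i} this shows that exactly $N_k$ eigenvalues fall inside an $N^{-2/3 + \tau}$-neighborhood of $[a_{2k}, a_{2k - 1}]$. At a regular edge $a_{2k - 1}$, the square-root behavior $\varrho'(E) \asymp \sqrt{a_{2k-1} - E}$, which follows from the third condition in \eqref{reg: edge} and the analysis of $f$ in Section \ref{sec:structure of rho}, yields $a_{2k-1} - \gamma_{k,i} \asymp (i/N)^{2/3}$ with local density at $\gamma_{k,i}$ of order $(i/N)^{1/3}$. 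Inverting the counting-function bound $\absb{\cal N(\lambda_{k,i}) - \cal N(\gamma_{k,i})} \prec 1$ therefore gives $\abs{\lambda_{k,i} - \gamma_{k,i}} \prec i^{-1/3} N^{-2/3}$, and the symmetric analysis near $a_{2k}$ contributes the $(N_k + 1 - i)^{-1/3}$ factor.

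The main obstacle, and essentially the only step requiring genuine care, is the Helffer--Sj\"ostrand analysis at the edges. The cutoff scale $\eta(E)$ along the contour must balance the $1/(N\eta)$ bulk contribution from \eqref{edge_G_avg_result} against the sharper $\sqrt{\im m(z)/(N \eta)}$ contribution from Theorem \ref{thm:G_outside}, while exploiting the near-edge behavior $\im m(E + \ii \eta) \asymp \sqrt{\kappa + \eta}$ inside the support and $\eta/\sqrt{\kappa + \eta}$ outside (with $\kappa \deq \dist(E, \partial \supp \varrho)$). Apart from this delicate but by now standard computation, every step above is mechanical.
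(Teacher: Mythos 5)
You correctly identify the counting-function bound (via Helffer--Sj\"ostrand using \eqref{edge_G_avg_result} and Theorem \ref{thm:G_outside}) and the edge inversion using the square-root behaviour of $\varrho$ as two of the three required steps. However, your Step 2 contains a genuine gap: you assert that the counting-function bound $\absb{\cal N(E) - N \int_E^\infty \dd \varrho} \prec 1$ together with the emptiness of the spectral gaps ``shows that exactly $N_k$ eigenvalues fall inside'' a neighbourhood of $[a_{2k}, a_{2k-1}]$. This does not follow. The relation $\prec$ only controls the counting function up to a factor $N^\epsilon$ for arbitrarily small $\epsilon$, which still allows an integer discrepancy of, say, one eigenvalue migrating from component $k$ to component $k+1$; both your counting bound and the gap statement would be satisfied in that scenario. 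Since the relabelling $\lambda_{k,i}$ is defined precisely through this exact count, the rigidity estimate \eqref{rigidity} cannot be deduced without closing this gap.

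The paper treats the exact count as the main technical obstacle (Proposition \ref{prop:counting}) and proves it by a two-stage continuity/deformation argument: first interpolating from $X$ to a Gaussian ensemble along $X(t) = \sqrt{t}X + \sqrt{1-t}X^{\text{Gauss}}$ (Lemma \ref{lem:interp_Gauss}), then deforming the covariance $\Sigma(t) = \diag(t\Sigma_1, \Sigma_2)$ to a large $t$ where the block structure of \eqref{interp_block} makes the count elementary (Lemma \ref{lem:Gaussian counting}). Because the eigenvalues move continuously, the emptiness of the gaps (Lemma \ref{lem:gap}) along the path, which the paper proves using the improved off-spectrum bound \eqref{improved_avg_G} on $\abs{m_N - m}$ rather than the Helffer--Sj\"ostrand counting estimate, pins the count at its initial value. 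Relatedly, your claim that ``no eigenvalue lies in any of the spectral gaps'' follows from the counting-function bound is also unsupported: the paper's Lemma \ref{lem:gap} is proved directly from a pointwise estimate on $\im m_N$, not from a counting argument. Once these two ingredients are in place, your third step (edge inversion) matches the paper's third step, which is delegated to the standard analysis of \cite{EYY3,PY}.
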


\begin{remark}
As in \cite[Theorem 2.8]{BEKYY}, Corollary \ref{cor:GM_GN} and Theorem \ref{thm:rigidity} imply the complete delocalization, with respect to an arbitrary deterministic basis, of the eigenvectors of $TXX^*T^*$ and $X^* T^* T X$ associated with eigenvalues $\lambda_i$ satisfying $\gamma_i \geq \tau$.
\end{remark}

Note that Theorem \ref{thm:rigidity} in particular implies an exact separation of the eigenvalues into connected components, whereby the number of eigenvalues in the $k$-th connected component is with high probability equal to the deterministic number $N_i$. This phenomenon of exact separation was first established in \cite{BS2, BS3}.

\subsection{Individual spectral regions and edge universality} \label{sec:res_detail}
In this subsection we elaborate on the results of Subsection \ref{sec:res_full} by only requiring the regularity of the edge or bulk component near which the spectral parameter $z$ lies. To that end, for fixed $\tau ,\tau' > 0$ we define the subdomains
\begin{align*}
\f D^e_k &\;\equiv\; \f D^e_k(\tau, \tau', N) \;\deq\; \hb{z \in \f D(\tau, N) \col E \in [a_k - \tau', a_k + \tau'] } \qquad (k = 1, \dots, 2p)\,,
\\
\f D^b_k &\;\equiv\; \f D^b_k(\tau, \tau', N) \;\deq\; \hb{z \in \f D(\tau, N) \col E \in [a_{2k} + \tau', a_{2k - 1} - \tau']} \qquad (k = 1, \dots, p)\,,
\\
\f D^o &\;\equiv\; \f D^o(\tau, \tau', N) \;\deq\; \hb{z \in \f D(\tau, N) \col \dist(E, \supp \varrho) \geq \tau'}\,.
\end{align*}
The superscripts $e$, $b$, and $o$ stand for ``edge'', ``bulk'', and ``outside'', respectively.

Roughly, the results from Section \ref{sec:res_full} hold in each one of these smaller domains (instead of the full domain $\f D$) provided that just the associated edge (for $z \in \f D^e_k$) or bulk component (for $z \in \f D^b_k$) is regular. Results outside of the spectrum (for $z \in \f D^o$) do not need any regularity assumption.

\begin{theorem}[Regular edge] \label{thm:edge}
Fix $\tau > 0$. Suppose that Assumption \ref{ass:main} holds. Suppose that the edge $k = 1, \dots, 2p$ is regular in the sense of Definition \ref{def:regular} (i). Then there exists a constant $\tau' > 0$, depending only on $\tau$, such that the following holds.
\begin{enumerate}
\item
Under the assumption \eqref{T_diag} the estimates \eqref{edge_G_result} and \eqref{edge_G_avg_result} hold uniformly in $z \in \f D^e_k$.
\item
The estimates \eqref{R_N_edge} and \eqref{R_M_edge} hold uniformly in $z \in \f D^e_k$.
\item
Let $\hat k \deq \floor{(k+1)/2}$ be the bulk component to which the edge $k$ belongs. Then for all $i = 1, \dots, N_{\hat k}$ satisfying $\gamma_{\hat k, i} \in [a_k - \tau', a_k + \tau']$ we have
\begin{equation*}
\abs{\lambda_{\hat k,i} - \gamma_{\hat k,i}} \;\prec\; \pb{i \wedge (N_{\hat k} + 1 - i)}^{-1/3} N^{-2/3}\,.
\end{equation*}
\end{enumerate}
\end{theorem}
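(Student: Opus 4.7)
The approach is to localize the proof of Theorem \ref{thm:LL_G} to the domain $\f D^e_k$. Inspection of the self-consistent comparison argument outlined in Section 1.4 reveals that the regularity assumptions of Definition \ref{def:regular} enter only through the local stability of the Marchenko--Pastur self-consistent equation \eqref{m_sc_eq} at the spectral parameter $z$ under consideration. The key point is that regularity of a single edge $a_k$ already yields this local stability throughout $\f D^e_k$ when $\tau'$ is small enough (depending only on $\tau$), so that the bootstrap iteration in $\eta$ at fixed $E$ can be carried out entirely inside $\f D^e_k$ without any global control on the spectrum of $\Sigma$.

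To prove (i), we first extract from the three conditions \eqref{reg: edge} the local behavior of $m$ near $a_k$: they imply that the critical point $x_k$ of $f$ is nondegenerate, bounded away from the poles $-s_i^{-1}$, and separated from the other critical values. Taylor-expanding $z = f(x)$ about $x_k$ then gives the square-root expansion $m(z) = x_k + c_k \sqrt{z - a_k}\,(1 + o(1))$ with $c_k \asymp 1$, and in particular $\im m(z) \asymp \sqrt{\kappa + \eta}$ inside the spectrum and $\asymp \eta/\sqrt{\kappa + \eta}$ outside, where $\kappa \deq \abs{E - a_k}$. This is exactly the stability input fed into the self-consistent comparison method, and together with the observation that $\f D^e_k$ is a product of an $E$-interval and an $\eta$-interval (so that bootstrap trajectories remain inside $\f D^e_k$), it allows the proof of Theorem \ref{thm:LL_G} to go through verbatim on $\f D^e_k$ under the assumption \eqref{T_diag}, yielding \eqref{edge_G_result} and \eqref{edge_G_avg_result}.

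Part (ii) follows from (i) via the Schur complement identities \eqref{G_M_ident} and \eqref{G_N_ident}, which are purely algebraic; the assumption \eqref{T_diag} is removed through the reduction of Section \ref{sec:gen_T}, in which a singular value decomposition $T = U D V^*$ combined with the distributional invariance of $X$ absorbs the unitaries $U,V^*$ into the arbitrary deterministic test vectors $\f v, \f w$ entering the definition of $O_\prec$. Part (iii) is the standard deduction of edge rigidity from the averaged local law: integrating \eqref{edge_G_avg_result} against a Helffer--Sjöstrand representation of a smoothed indicator of $[E, a_k + \tau']$ gives a bound of order $N^{-1}$ on the fluctuation of the empirical eigenvalue counting function, uniformly for $E \in [a_k - \tau', a_k + \tau']$; inverting this bound through the square-root density $\varrho(\dd E) \asymp \sqrt{\abs{E - a_k}}$ produces the claimed rigidity estimate on $\lambda_{\hat k, i} - \gamma_{\hat k, i}$.

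The main obstacle is in part (i): while the combinatorial and algebraic structure of the self-consistent comparison argument is $z$-independent, one must verify that all a priori bounds on $\abs{G_{\f v \f w}}$ and $\im G_{\f w \f w}$ used at each bootstrap step are derivable from data lying entirely within $\f D^e_k$. This is possible because the bootstrap moves vertically in the $(\theta,\eta)$-plane at fixed $E$ and the monotonicity of $G$ in $\eta$ transfers estimates downward. A subtle point is that the local stability analysis must remain insensitive to any non-regular edges $a_l \leq \tau$, which is guaranteed by the lower bound $\abs{z} \geq \tau$ built into $\f D$ keeping $z$ uniformly away from all such edges.
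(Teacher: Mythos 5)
Your treatment of parts (i) and (ii) essentially tracks the paper's. The paper formalizes "localize Theorem \ref{thm:LL_G} to $\f D^e_k$" via the black-box Theorems \ref{thm:gen_iso} and \ref{thm:diag_gen}: one verifies \eqref{1+xm} and the stability condition of Definition \ref{def:stability} on $\f D^e_k$ (Lemmas \ref{lem:stab_edge1}--\ref{lem:stab_edge2} in the Appendix, which indeed exploit the Taylor expansion of $f$ at the nondegenerate critical point $x_k$ to get the square-root behavior $\im m \asymp \sqrt{\kappa+\eta}$), and then invokes the general machinery. Your observation that $\f D^e_k$ is vertically closed, so the bootstrap stays inside the domain, is the correct justification for the localization. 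A small inaccuracy in your part (ii): the paper's removal of \eqref{T_diag} (Section \ref{sec:gen_T}) does \emph{not} use distributional invariance of $X$, which would only be available in the Gaussian case. Instead it pads $T$ to a square matrix, writes a polar decomposition $\wh T = \wh U \wh S^{1/2}$, and absorbs the orthogonal factor $\wh U$ into the test vectors using the \emph{anisotropy} of the already-proved local law for $\wh S^{1/2}$. The conclusion is the same, but the mechanism is the basis-independence of the error bound, not a symmetry of the law of $X$.

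There is a genuine gap in your part (iii). Integrating the averaged local law \eqref{edge_G_avg_result} against a Helffer--Sj\"ostrand representation of a compactly supported test function controls only the \emph{difference} of the eigenvalue counting function between two energies inside $[a_k-\tau', a_k+\tau']$ relative to its deterministic counterpart; it cannot, by itself, determine the absolute index of $\lambda_{\hat k, i}$. To conclude $\abs{\lambda_{\hat k, i} - \gamma_{\hat k, i}} \prec (i \wedge (N_{\hat k}+1-i))^{-1/3} N^{-2/3}$ one must first know that the gap adjacent to $a_k$ contains no eigenvalues (Lemma \ref{lem:gap}, which itself requires the sharper outside-the-spectrum bound \eqref{improved_avg_G}, not just \eqref{edge_G_avg_result}), and then that the number of eigenvalues on the relevant side of that gap matches the deterministic count $\sum_{l \leq \hat k - 1} N_l$ (or $N \wedge M - \sum_{l \leq \hat k} N_l$). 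The paper establishes this exact-separation fact via Proposition \ref{prop:counting}, which uses two continuity/deformation arguments: interpolation of $X$ to a Gaussian matrix, and then deformation of $\Sigma$ that inflates the top block until the count can be read off by elementary linear algebra. This counting step is non-trivial and is the historical content of Bai--Silverstein's exact separation of eigenvalues; without it, your Helffer--Sj\"ostrand argument yields a relative rigidity statement but does not pin down the index $i$ in the asserted estimate.
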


\begin{theorem}[Regular bulk] \label{thm:bulk}
Fix $\tau,\tau'>0$. Suppose that Assumption \ref{ass:main} holds. Suppose that the bulk component $k = 1, \dots, 2p$ is regular in the sense of Definition \ref{def:regular} (ii).
\begin{enumerate}
\item
Under the assumption \eqref{T_diag} the estimates \eqref{edge_G_result} and \eqref{edge_G_avg_result} hold uniformly in $z \in \f D^b_k$.
\item
The estimates \eqref{R_N_edge} and \eqref{R_M_edge} hold uniformly in $z \in \f D^b_k$.
\item
Suppose that at least one of the two edges $2k$ and $2k-1$ is regular in the sense of Definition \ref{def:regular} (i). Then for all $i = 1, \dots, N_{k}$ satisfying $\gamma_{k,i} \in [a_{2k} + \tau', a_{2k - 1} - \tau']$ we have $\abs{\lambda_{k,i} - \gamma_{k,i}} \prec N^{-1}$.
\end{enumerate}
\end{theorem}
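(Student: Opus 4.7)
The plan is to localize the proof strategy of Theorem \ref{thm:LL_G} to the bulk region $\f D^b_k$. The essential observation is that the bulk regularity condition of Definition \ref{def:regular}(ii) translates, via the basic properties of $m$, into the uniform lower bound $\im m(z) \geq c(\tau') > 0$ for $z \in \f D^b_k$. Consequently $\Psi(z) \asymp (N\eta)^{-1/2}$, the self-consistent equation \eqref{m_sc_eq} is stable in a quantitatively mild sense, and the edge square-root behavior plays no role in the analysis on this region. One therefore expects the anisotropic local law to hold throughout $\f D^b_k$ requiring only the regularity of component $k$.

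For part (i), I would revisit the three-step self-consistent comparison scheme (A)--(C) outlined in the introduction, restricted to spectral parameters $z \in \f D^b_k$. Steps (A) and (B), which establish the entrywise and anisotropic local laws for Gaussian $X$ with respectively diagonal and general $\Sigma$, rest on standard resolvent expansions together with the stability of \eqref{m_sc_eq}; both inputs remain available in $\f D^b_k$ under the sole hypothesis that component $k$ is bulk-regular. Step (C)---the Bernoulli interpolation \eqref{bern_interpol}, the Gr\"onwall-type self-consistent estimate \eqref{intr_sc} applied to $F_{\f v}^p$, and the bootstrapping in $\eta$---is analytic and, crucially, uses a priori control on $G(z)$ only for $z$ in the same spectral region as the target estimate. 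Running the bootstrapping from $\eta = 1$ down to $\eta = N^{-1+\tau}$ inside the vertical strip $\{E + \ii \eta' \col E \in [a_{2k}+\tau', a_{2k-1}-\tau']\}$ therefore yields \eqref{edge_G_result} and \eqref{edge_G_avg_result} on $\f D^b_k$.

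Part (ii) then follows from part (i) by removing the assumption \eqref{T_diag} via the SVD reduction $T = U D V^*$ described in Section \ref{sec:gen_T}, together with the identities \eqref{G_M_ident} and \eqref{G_N_ident}, which convert generalized entries of $G$ into generalized entries of the resolvents $R_M$ and $R_N$. For part (iii), rigidity at the optimal scale $N^{-1}$ in the bulk is deduced from the averaged law \eqref{edge_G_avg_result} by a standard Helffer--Sj\"ostrand argument: integrating $m_N - m$ against a smooth approximation of $\indB{[E, a_{2k-1}]}$ on the scale $\eta = N^{-1+\delta}$, one obtains
\begin{equation*}
\absBB{\frac{1}{N}\#\h{i \col \lambda_{k,i} \geq E} - \int_E^{a_{2k-1}} \dd \varrho} \;\prec\; \frac{1}{N}
\end{equation*}
uniformly in $E \in [a_{2k}+\tau', a_{2k-1}-\tau']$, provided the left-hand side is anchored at one end of the interval. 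That anchor is supplied by Theorem \ref{thm:edge}(iii) applied at whichever of $a_{2k}$ or $a_{2k-1}$ is regular. Inverting this counting bound using the lower bound $\dd\varrho/\dd E \geq c > 0$ on $[a_{2k}+\tau', a_{2k-1}-\tau']$ furnished by bulk regularity yields $\abs{\lambda_{k,i} - \gamma_{k,i}} \prec N^{-1}$.

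The main obstacle is confirming the locality of Step (C) in the proof of Theorem \ref{thm:LL_G}. One must verify that the polynomial expansions produced by the Bernoulli interpolation (encoded via the words of Sections \ref{sec:comparison1} and \ref{sec:self-const_II}), the a priori bounds on generalized entries $G_{\f v \f w}$ and on $\im G_{\f w \f w}$ used in the bootstrap, and the self-consistent estimate \eqref{intr_sc} close up when $z$ is restricted to a single-component neighborhood, without tacit recourse to bounds from other edges. Once this locality is established, the weakening from ``all edges and bulk components regular'' to ``just component $k$ bulk-regular'' in (i)--(ii) follows, because the stability analysis in the bulk is essentially trivial; the remaining step (iii) is then a standard anchoring argument.
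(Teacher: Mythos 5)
Your proposal is essentially correct and follows the same route as the paper; the one structural comment worth making is that the paper has already packaged exactly the ``locality of Step (C)'' concern you flag. Theorems \ref{thm:gen_iso} and \ref{thm:diag_gen} are formulated for an arbitrary spectral domain $\f S$, with the hypotheses reduced to \eqref{1+xm} and the abstract stability condition of Definition \ref{def:stability}. Once that is in hand, part (i) is a one-line deduction: Lemma \ref{lem:stab_bulk} verifies precisely that $\f D^b_k$ satisfies \eqref{m_sq_root} and \eqref{mz_reg} (hence \eqref{1+xm}) and that \eqref{def_m} is strongly stable there, using only bulk regularity of component $k$; then Theorems \ref{thm:diag_gen} and \ref{thm:gen_iso} give the entrywise, anisotropic, and averaged laws on $\f D^b_k$. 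Your re-derivation of the bootstrap/interpolation argument inside the vertical strip is correct but redundant given this black-box form --- you are, in effect, re-proving the uniformity-in-$\f S$ statements the paper has already isolated. The essential input your sketch leaves implicit is the content of Lemma \ref{lem:stab_bulk}: the lower bound $\abs{\beta}\geq c$ on $\f D^b_k$ (equivalently $\im m \geq c$) is what makes the quadratic stability estimate trivially satisfied, and this is where bulk regularity enters.

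Parts (ii) and (iii) match the paper. For (ii), the removal of \eqref{T_diag} is the Section \ref{sec:gen_T} padding/SVD reduction plus the Schur-complement identities \eqref{G_M_ident} and \eqref{G_N_ident}, exactly as you say. For (iii), your Helffer--Sj\"ostrand counting argument with anchoring at the regular edge is the same as the EYY3/PY argument the paper invokes for Theorem \ref{thm:rigidity} and then localizes; note however that the anchoring step in the paper rests on the companion gap and counting statements (Lemma \ref{lem:gap} and Proposition \ref{prop:counting}), not solely on Theorem \ref{thm:edge}(iii) --- your proposal glosses over how one propagates the label across the remaining components, which the paper itself only addresses with ``we omit the details.'' This is a known soft spot shared by both presentations rather than a gap unique to yours.
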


\begin{theorem}[Outside of the spectrum] \label{thm:outside}
Fix $\tau,\tau'>0$. Suppose that Assumption \ref{ass:main} holds.
\begin{enumerate}
\item
Under the assumption \eqref{T_diag} the estimates \eqref{edge_G_result} and \eqref{edge_G_avg_result} hold uniformly in $z \in \f D^o$.
\item
The estimates \eqref{R_N_edge} and \eqref{R_M_edge} hold uniformly in $z \in \f D^o$.
\end{enumerate}
\end{theorem}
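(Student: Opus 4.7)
The plan is to reduce part (ii) to part (i) via the argument of Section \ref{sec:gen_T}, so it suffices to prove part (i). The estimates \eqref{edge_G_result} and \eqref{edge_G_avg_result} are formally identical to those of Theorems \ref{thm:edge} and \ref{thm:bulk}, and I would re-run the same self-consistent comparison argument on the smaller domain $\f D^o$. The key observation is that the regularity conditions of Definition \ref{def:regular} enter the proofs of those theorems only as quantitative \emph{stability} of the self-consistent equation \eqref{m_sc_eq} near $\supp\varrho$; for $z \in \f D^o$, such stability is free of charge because $z$ stays at a fixed positive distance from the spectrum.

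First I would establish a deterministic stability lemma for $z \in \f D^o$, uniform in $N$:
\begin{equation*}
\abs{m(z)} \;\asymp\; 1\,, \qquad \normb{\Sigma(1 + m(z) \Sigma)^{-1}} \;\leq\; C\,, \qquad \abs{f'(m(z))} \;\geq\; c\,,
\end{equation*}
where $f$ is the function from \eqref{def_fx}. This is elementary complex analysis on $f$: referring to Figures \ref{fig:plot_f}--\ref{fig:plot_f2}, the map $m$ sends $\f D^o$ into a compact region of $\C$ that avoids both the critical points $x_k$ of $f$ (yielding the lower bound on $f'$) and the poles $-\sigma_i^{-1}$ (yielding the bound on $\Pi$); the assumptions \eqref{bound_Sigma}--\eqref{Sigma_gap} ensure that all these distances are uniformly positive.

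With this stability in hand, I would run on $\f D^o$ the three-step program sketched in Section 1.4: (A) the entrywise law for Gaussian $X$ and diagonal $\Sigma$, (B) the anisotropic law for Gaussian $X$ and general $\Sigma$ (both obtained by adapting \cite{BEKYY}), and (C) the self-consistent comparison of Section \ref{sec:comparison1} passing from Gaussian to general $X$. Each of these steps uses stability of \eqref{m_sc_eq} only through the three bounds above, so no edge or bulk regularity is needed. The bootstrap in $\eta$ descends from $\eta \asymp 1$ to $\eta \asymp N^{-1+\tau}$ and supplies, at each scale, the a priori bounds on $\abs{G_{\f v \f w}}$ and $\im G_{\f w \f w}$ required by the comparison; the monotonicity of $\eta \mapsto \eta \im G_{\f w \f w}$ plays the same role as in the proof of Theorem \ref{thm:LL_G}.

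The point requiring care is the uniformity of all error estimates in Sections \ref{sec:comparison1}--\ref{sec:self-const_II} over $z \in \f D^o$, without any appeal to edge or bulk regularity. Inspection of those arguments shows that the constants hidden in $\prec$ depend on $\Pi(z)$ and $m(z)$ only through their norms, both of which are $O(1)$ on $\f D^o$ by the stability lemma above. Granting this verification, \eqref{edge_G_result} and \eqref{edge_G_avg_result} hold on $\f D^o$, and the passage to general $T$ via Section \ref{sec:gen_T} yields part (ii).
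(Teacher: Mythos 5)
Your proposal is correct and follows essentially the same route as the paper: establish \eqref{1+xm} and (strong) stability of \eqref{def_m} on $\f D^o$ without any regularity assumption — this is precisely Lemma \ref{lem:stab_outside}, whose proof rests on the same lower bound $\abs{\beta}=\abs{m^2 f'(m)}\geq c$ you identify — and then feed this into the general black-box Theorems \ref{thm:gen_iso} and \ref{thm:diag_gen} (which encapsulate Steps (A)--(C) so that one does not literally re-run the comparison argument), finally passing from (i) to (ii) via \eqref{G_M_ident}, \eqref{G_N_ident} and the removal of \eqref{T_diag} in Section \ref{sec:gen_T}.
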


\begin{remark}
As in Section \ref{sec:res_full}, if $\dist(E, \supp \varrho) \geq N^{-2/3 + \tau}$ then the error parameters $\Psi(z)$ on the right-hand sides of \eqref{edge_G_result}, \eqref{R_N_edge}, and \eqref{R_M_edge} in (i) and (ii) of Theorems \ref{thm:edge} and \ref{thm:outside} can be replaced with the smaller quantity $\sqrt{\im m(z) /(N \eta)}$ and the lower bound $\eta \geq N^{-1 + \tau}$ relaxed to $\eta > 0$. (See Theorem \ref{thm:G_outside}.) We omit the detailed statement.
\end{remark}

Finally, as a concrete application of the anisotropic local law, we establish the edge universality of $Q$. For a regular edge $k = 1, \dots, 2p$ we define $\varpi_k \deq (\abs{f''(x_k)}/2)^{1/3}$, which has the interpretation of the curvature of the density of $\varrho$ near the edge $k$. In Lemma \ref{lem:edge_tools} below we prove that if $k$ is a regular edge then $\varpi_k \asymp 1$. For any fixed $l \in \N$ and bulk component $k = 1, \dots, p$ we define the random vectors
\begin{align*}
\f q_{2k - 1,l} &\;\deq\; \frac{N^{2/3}}{\varpi_{2k - 1}} \pb{\lambda_{k,1} - a_{2k - 1}\,,\dots, \,\lambda_{k,l} - a_{2k - 1}}\,,
\\
\f q_{2k,l} &\;\deq\; -\frac{N^{2/3}}{\varpi_{2k}} \pb{\lambda_{k,N_k} - a_{2k}\,, \dots,\, \lambda_{k,N_k - l+1} - a_{2k}}\,.
\end{align*}
The interpretation of $\f q_k$ is the appropriately centred and normalized family of $l$ extreme eigenvalues of $Q$ near the edge $k$.

\begin{theorem}[Edge universality] \label{thm:edge_univ}
Fix $\tau > 0$ and $l \in \N$. Suppose that Assumption \ref{ass:main} holds. Suppose that the edge $k = 1, \dots, 2p$ is regular in the sense of Definition \ref{def:regular} (i). Then the distribution of $\f q_{k,l}$ depends asymptotically only on $\pi$. More precisely, for any fixed continuous bounded function $h \in C_b(\R^l)$ there exists $b(h,\pi) \equiv b_N(h,\pi)$, depending only on $\pi$, such that $\lim_{N \to \infty} (\E h(\f q_{k,l}) - b(h,\pi)) = 0$.
\end{theorem}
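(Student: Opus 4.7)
The plan is to decouple the roles of $X$ and of the eigenbasis of $\Sigma$ via a two-part comparison. By the reduction of Section \ref{sec:gen_T}, I may assume $T = \Sigma^{1/2}$, so that $Q$ shares its nonzero eigenvalues with $X^* \Sigma X$. Let $X^G$ denote a Gaussian matrix with the same first two moments as $X$. It then suffices to establish \textbf{(R1)}: for Gaussian $X^G$, the distribution of $\f q_{k,l}$ depends on $\Sigma$ only through its spectral measure $\pi$; and \textbf{(R2)}: for any $X$ satisfying Assumption \ref{ass:main} and any fixed $\Sigma$, the law of $\f q_{k,l}$ matches asymptotically the one produced by $X^G$.

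Reduction (R1) is immediate from orthogonal (resp.\ unitary) invariance of the Gaussian distribution: writing $\Sigma = U D U^*$ with $D$ diagonal, one has $U^* X^G \eqdist X^G$, so
\begin{equation*}
(X^G)^* \Sigma X^G \;=\; (U^* X^G)^* D (U^* X^G) \;\eqdist\; (X^G)^* D X^G\,,
\end{equation*}
and the Gaussian edge statistics depend on $\Sigma$ only through $D$, i.e.\ through $\pi$. Reduction (R2) is a Green function comparison at the edge. First, the joint CDF of $\f q_{k,l}$ is expressed, modulo $N^{-c}$ errors absorbed by the rigidity estimate of Theorem \ref{thm:edge}(iii), through smooth bounded functionals of the eigenvalue counting function $\mathcal N(E) \deq \absb{\h{i \col \lambda_{k,i} \geq E}}$ evaluated at the energies $E_j = a_k + \varpi_k^{-1} s_j N^{-2/3}$, $j = 1, \dots, l$. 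Each $\mathcal N(E_j)$ is in turn well approximated by a smoothed integral of $\im \tr G(\,\cdot\, + \ii \eta)$ at spectral scale $\eta \asymp N^{-2/3 - \epsilon}$ against a compactly supported cutoff localizing near $a_k$. Proving (R2) thus reduces to showing $\E F(X) - \E F(X^G) = o(1)$ for a smooth bounded function $F$ of finitely many such smoothed traces of $G$.

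This last comparison is carried out by interpolating $X^0 \deq X^G$ and $X^1 \deq X$ via \eqref{bern_interpol} and differentiating along $\theta \in [0,1]$ using the identity \eqref{interpol25}. The terms $n \in \h{1,2}$ vanish because the first two moments of $X^0_\alpha$ and $X^1_\alpha$ coincide, so $K^\theta_{n,\alpha} = 0$ there; only $n \geq 3$ survives, with $K^\theta_{n,\alpha} = O(N^{-n/2})$ by \eqref{moments of X-1}. Each resulting $n$-th derivative of $F$ is a polynomial in generalized entries of $G$ at spectral scale $\eta \approx N^{-2/3-\epsilon}$, summed over $\alpha \in \cal I_M \times \cal I_N$. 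The anisotropic local law of Theorem \ref{thm:edge}(i) supplies the crucial input $G - \Pi = O_\prec(\Psi)$ with $\Psi \asymp N^{-1/3}$ at the edge, so that each entry of $G$ is $O_\prec(1)$ instead of the trivial $\eta^{-1} \asymp N^{2/3}$, and moreover the cross-block entries $G_{i\mu}$ are $O_\prec(N^{-1/3})$. Combining this with an index-counting argument that produces additional factors of $\im m /(N \eta)$ or $1/(N\eta)$ from each loose summation index --- precisely the mechanism behind the paper's self-consistent comparison argument for the local law itself (cf.\ Figure \ref{fig:bootstrap}) --- each contribution is $O(N^{-c})$ uniformly in $\theta$, so integration over $\theta$ gives $\E F(X^1) - \E F(X^0) = o(1)$. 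The main obstacle is this final bookkeeping step: at the edge scale $\eta \approx N^{-2/3}$ the naive operator-norm bound on $G$ is useless, and one must carefully track the combinatorial structure of the differentiated terms to extract enough smallness from the anisotropic local law and the isotropic bound on $\im G$ to beat the $N^2$ coming from the summation over $\alpha$.
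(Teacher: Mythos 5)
Your proposal is correct and follows essentially the same route as the paper: a Green function comparison near the edge (with the anisotropic local law of Theorem \ref{thm:LL_G}, the rigidity from Theorem \ref{thm:edge}(iii), and the resolvent identities of Lemma \ref{lem:idG} as inputs) to reduce to Gaussian $X$, followed by orthogonal/unitary invariance of the Gaussian ensemble to reduce $\Sigma$ to diagonal form, whence dependence only on $\pi$. The only cosmetic difference is that you implement the comparison via the Bernoulli interpolation \eqref{bern_interpol}--\eqref{interpol25}, whereas the paper's proof cites the Lindeberg-type argument of \cite[Section 6]{EYY3}, and the paper's introduction already remarks that the Bernoulli interpolation is a simplification of precisely that argument.
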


Theorem \ref{thm:edge_univ} says in particular that the asymptotic distribution of $\f q_{k,l}$ does not depend on the distribution of the entries of $X$, on the left and right singular vectors of $T$, or on the dimensions of $T$. In particular, we may choose the entries of $X$ to be Gaussian and $T = T^*$ to be diagonal. We remark that Theorem \ref{thm:edge_univ} also holds for the joint distribution at several regular edges, i.e.\ for the random vector $(\f q_{k,l})_{k \in K}$, where each $k \in K$ is a regular edge.

Combining Theorem \ref{thm:edge_univ} with the works \cite{HHN, ElK2, Ona, SL} on the Gaussian case, we deduce convergence of $\f q_{k,l}$ to the \emph{Tracy-Widom-Airy random vector} $\f q_l^\beta$, where $\beta = 1,2$ is the customary symmetry index of random matrix theory, equal to $1$ in the real symmetric case and $2$ in the complex Hermitian case   (see Section \ref{sec:def_model}). \nc Explicitly, $\f q_l^\beta$ may be expressed as the limit in distribution of the appropriately centred and scaled top $l$ eigenvalues of a GOE/GUE matrix:
\begin{equation*}
N^{2/3} (\mu^\beta_1 - 2, \dots, \mu^\beta_l - 2) \;\todist\; {\f q}_l^\beta \qquad (N \to \infty)\,,
\end{equation*}
where $\mu^\beta_1 \geq \mu^\beta_2 \geq \cdots \geq \mu^\beta_N$ denote the eigenvalues of GOE ($\beta = 1$) or GUE ($\beta = 2$). For instance, $\f q_1^\beta \in \R$ is a random variable with Tracy-Widom-$\beta$ distribution \cite{TW1,TW2}.

\begin{corollary}[Tracy-Widom-Airy statistics near a regular edge] \label{cor:TWA}
Fix $\tau > 0$ and $l \in \N$. Suppose that Assumption \ref{ass:main} holds.
\begin{enumerate}
\item
Suppose that the rightmost edge $k = 1$ is regular in the sense of Definition \ref{def:regular} (i).
Then, for real symmetric $Q$ ($\beta = 1$) or complex Hermitian $Q$ ($\beta = 2$), the random vector $\f q_{1,l}$ converges in distribution to the Tracy-Widom-Airy random vector $\f q_l^\beta$.
\item
Suppose that the edge $k = 1, \dots, 2p$ is regular in the sense of Definition \ref{def:regular} (i). Then, for complex Hermitian $Q$ ($\beta = 2$), the random vector $\f q_{k,l}$ converges in distribution to the Tracy-Widom-Airy random vector $\f q_l^2$.
\end{enumerate}
\end{corollary}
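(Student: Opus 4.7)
The plan is to combine the edge universality result of Theorem \ref{thm:edge_univ} with the known Tracy-Widom-Airy asymptotics in the Gaussian case. Concretely, by Theorem \ref{thm:edge_univ}, the limit $\lim_{N\to\infty}\E h(\f q_{k,l})$ (if it exists) depends only on $\pi$. So it suffices to exhibit \emph{one} reference model with the same empirical spectral measure $\pi$ of $\Sigma$ for which $\f q_{k,l}\todist \f q_l^\beta$ is already established in the literature, and identify $b(h,\pi)$ with $\E h(\f q_l^\beta)$.

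First I would fix a reference model satisfying the hypotheses of Theorem \ref{thm:edge_univ}: take the entries of $X$ to be Gaussian (real for $\beta=1$, complex for $\beta=2$) with the correct variance $N^{-1}$, take $\wh M = M$, and take $T=T^*=\Sigma^{1/2}$ to be diagonal with entries $\sigma_1^{1/2},\dots,\sigma_M^{1/2}$. This is a legitimate instance of the sample covariance matrix $Q$, it satisfies Assumption \ref{ass:main} (by construction it has the same $\Sigma$ and hence the same $\pi$), and the edge regularity of the edge $k$ is inherited since it depends only on $\pi$ and $\phi$. Thus for any $h\in C_b(\R^l)$,
\begin{equation*}
\lim_{N\to\infty}\E h(\f q_{k,l}) \;=\; b(h,\pi) \;=\; \lim_{N\to\infty}\E h(\f q_{k,l}^{\mathrm{ref}})\,,
\end{equation*}
where the superscript $\mathrm{ref}$ refers to the Gaussian diagonal-$T$ model, provided the right-hand limit exists.

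Second I would cite the appropriate prior works for the reference model to identify this limit with $\E h(\f q_l^\beta)$. For part (i), i.e.\ the rightmost edge $k=1$: in the complex Gaussian case ($\beta=2$) the joint Tracy-Widom-Airy convergence of the top $l$ scaled eigenvalues was established by \cite{ElK2, Ona} (and extended to general complex $X$ with diagonal $\Sigma$ in \cite{BaoPanZhou}); in the real Gaussian case ($\beta=1$) it was established in \cite{SL}. For part (ii), i.e.\ a general regular edge $k=1,\dots,2p$ in the complex Hermitian case, the joint convergence of the $l$ extreme scaled eigenvalues of the reference model was established in \cite{HHN}. In each case the cited results give precisely $\f q_{k,l}^{\mathrm{ref}}\todist \f q_l^\beta$ with the curvature normalization $\varpi_k=(|f''(x_k)|/2)^{1/3}$, matching our definition of $\f q_{k,l}$. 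Combining this with the previous display gives $\lim_{N\to\infty}\E h(\f q_{k,l})=\E h(\f q_l^\beta)$ for every $h\in C_b(\R^l)$, which is exactly convergence in distribution.

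The main obstacle is purely bookkeeping: matching conventions between this paper and the cited Gaussian works. One has to check that the normalization constants (shifts by $a_k$ and scaling by $N^{2/3}/\varpi_k$) agree with those used in \cite{HHN,ElK2,Ona,SL}, and that the results in those papers do cover the \emph{joint} law of the top $l$ eigenvalues rather than only the largest one; for the cited sources this joint statement is either explicit or follows by the same determinantal/integrable argument used for $l=1$. Once these identifications are in place, no further random matrix input is required: Theorem \ref{thm:edge_univ} has already done all the work of transferring from the Gaussian diagonal model to arbitrary $X$ and $T$.
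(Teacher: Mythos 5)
Your proposal is correct and matches the paper's own (unstated but clearly implied) argument: invoke Theorem~\ref{thm:edge_univ} to reduce to the Gaussian, diagonal-$\Sigma$ reference model with the same empirical spectral density $\pi$, then cite \cite{ElK2, Ona, BaoPanZhou, SL} for part (i) and \cite{HHN} for part (ii) to identify $b(h,\pi)$ with $\E h(\f q_l^\beta)$. The paper itself presents Corollary~\ref{cor:TWA} as an immediate consequence of Theorem~\ref{thm:edge_univ} together with these references, so there is nothing to add beyond the bookkeeping remark you already make.
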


Like Theorem \ref{thm:edge_univ}, Corollary \ref{cor:TWA} (ii) also holds for the joint distribution at several regular edges. In particular, for the case $\beta = 2$ and under the assumption that the top and bottom edges of $\varrho$ are regular, we obtain the universality of the condition number of $Q$.

Corollary \ref{cor:TWA} (i) for $\beta = 1,2$ was previously established in \cite{SL} under the assumption that $\Sigma$ is diagonal, corresponding to uncorrelated population entries. Before that, Corollary \ref{cor:TWA} (i) for $\beta = 2$ and diagonal $\Sigma$ was established in \cite{BaoPanZhou}, following the results of \cite{ElK2, Ona} in the complex Gaussian case. Corollary \ref{cor:TWA} (ii) was recently established for Gaussian $X$ in \cite{HHN}.

\subsection{The general anisotropic local law} \label{sec:general_results}

In this subsection we conclude the statement of our results with a general anisotropic local law, which takes the form of a black box yielding the anisotropic local law for $Q$ assuming it has been established for a much simpler matrix. This latter result may be proved independently. In particular, this black box formulation may be used to establish the anisotropic local law in cases where the regularity assumptions from Definition \ref{def:regular} fail; we do not pursue such generalizations here. Aside from its great generality, this black box formulation also makes precise the three Steps (A) -- (C) mentioned in the introduction, which constitute the basic strategy of our proof.

We begin by introducing some basic terminology.
\begin{definition}[Local laws] \label{def:local_laws}
We call a subset $\f S \equiv \f S(N) \subset \f D(\tau, N)$ a \emph{spectral domain} if for each $z \in \f S$ we have $\h{w \in \f D \col \re w = \re z, \im w \geq \im z} \subset \f S$.

Let $\f S \subset \f D$ be a spectral domain.
\begin{enumerate}
\item
We say that the \emph{entrywise local law holds with parameters $(X, \Sigma, \f S)$} if
\begin{equation} \label{entrywise law}
\pB{\ul \Sigma^{-1} \pb{G(z) - \Pi(z)} \ul \Sigma^{-1}}_{st} \;=\; O_\prec(\Psi(z))
\end{equation}
uniformly in $z \in \f S$ and $s,t \in \cal I$.
\item
We say that the \emph{anisotropic local law holds with parameters $(X, \Sigma, \f S)$} if
\begin{equation} \label{isotropic law}
\ul \Sigma^{-1} \pb{G(z) - \Pi(z)} \ul \Sigma^{-1} \;=\; O_\prec(\Psi(z))
\end{equation}
uniformly in $z \in \f S$.
\item
We say that the \emph{averaged local law holds with parameters $(X, \Sigma, \f S)$} if
\begin{equation*}
m_N(z) - m(z) \;=\; O_\prec \pbb{\frac{1}{N \eta}}
\end{equation*}
uniformly in $z \in \f S$.
\end{enumerate}
\end{definition}

The main conclusion of this paper is that the anisotropic local law holds for general $X$ and $T$ provided that the entrywise local law holds for Gaussian $X$ and  diagonal $T$. This latter case may be established independently, as we illustrate in Section \ref{sec:diag_Sigma} and Appendix \ref{sec:varrho}.

Aside from Assumption \ref{ass:main}, the only assumption that we shall need is
\begin{equation} \label{1+xm}
\abs{1 + m(z) \sigma_i} \;\geq\; \tau \qquad \txt{ for all } z \in \f S \txt{ and } i \in \cal I_M\,.
\end{equation}
This assumption holds for instance under the regularity assumptions of Definition \ref{def:regular} (see Lemmas \ref{lem:stab_edge1}, \ref{lem:stab_bulk}, and \ref{lem:stab_outside} below). Clearly, we always have $\abs{1 + m(z) \sigma_i} > 0$ (see \eqref{def_m}), and \eqref{1+xm} is a uniform version of this bound. Generally, the assumption \eqref{1+xm} is necessary to guarantee that the generalized matrix entries of $(Q - z)^{-1}$ (or, alternatively, of $G(z)$) remain bounded. Indeed, in Corollary \ref{cor:GM_GN} we saw that the generalized entries of $(Q - z)^{-1}$ are close to those of $-z^{-1} (1 + m(z) \Sigma)^{-1}$.

\begin{theorem}[General local law] \label{thm:gen_iso}
Fix $\tau > 0$. Suppose that $X$ and $\Sigma$ satisfy \eqref{T_diag} and Assumption \ref{ass:main}. Let $X^{\txt{Gauss}}$ be a Gaussian matrix satisfying \eqref{cond on entries of X}.  Let $\f S \subset \f D$ be a spectral domain, and suppose that \eqref{1+xm} holds. Define the diagonalization of $\Sigma$ through
\begin{equation} \label{def_D_Sigma}
D \;\equiv\; D(\Sigma) \;\deq\; \diag(\sigma_1, \sigma_2, \dots, \sigma_M)\,.
\end{equation}
\begin{enumerate}
\item
If the entrywise local law holds with parameters $(X^{\txt{Gauss}}, D, \f S)$, then the anisotropic local law holds with parameters $(X, \Sigma, \f S)$.
\item
If the entrywise local law and the averaged local law hold with parameters $(X^{\txt{Gauss}}, D, \f S)$, then the averaged local law holds with parameters $(X, \Sigma, \f S)$.
\end{enumerate}
\end{theorem}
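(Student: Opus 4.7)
The plan is to follow the three-step decomposition sketched in the introduction. The hypothesis gives Step (A): the entrywise local law for $(X^{\txt{Gauss}}, D, \f S)$. From this I would first deduce the anisotropic local law for $(X^{\txt{Gauss}}, \Sigma, \f S)$ (Step (B)), and then the anisotropic local law for $(X, \Sigma, \f S)$ (Step (C)). Step (C) is the novel contribution of the paper and carries the bulk of the work; part (ii) for the averaged law is handled by the same scheme with a different target function.

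For Step (A)$\to$(B), I would combine two ingredients. First, in the Gaussian diagonal case the upgrade from entrywise to anisotropic is standard: polynomialize $\scalar{\f v}{\ul D^{-1}(G^D - \Pi^D) \ul D^{-1} \f w}$, evaluate high moments via Gaussian integration by parts, and reduce everything to the entrywise bound in the spirit of \cite{BEKYY, KY2}. Second, to pass from diagonal $D$ to general $\Sigma = O D O^*$, orthogonal invariance of the Gaussian ensemble gives
\begin{equation*}
\diag(O^*, I_N) \, G^\Sigma(X^{\txt{Gauss}}, z) \, \diag(O, I_N) \;\eqdist\; G^D(X^{\txt{Gauss}}, z)\,,
\end{equation*}
with the analogous identities for $\Pi$ and $\ul \Sigma$. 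Testing against arbitrary unit vectors $\f v, \f w$ and setting $\tilde{\f v} \deq \diag(O^*, I_N) \f v$ etc.\ transfers the anisotropic bound verbatim.

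For Step (B)$\to$(C), I would run the self-consistent comparison method. Use the Bernoulli interpolation \eqref{bern_interpol} between $X^0 \deq X^{\txt{Gauss}}$ and $X^1 \deq X$, together with the differentiation identity \eqref{interpol25}. For fixed even $p$ and $\f v \in \bb S$, target
\begin{equation*}
F_{\f v}^p(X^\theta, z) \;\deq\; \absb{\scalar{\f v}{\ul \Sigma^{-1}(G^{\theta} - \Pi) \ul \Sigma^{-1} \f v}}^p\,, \qquad G^{\theta} \;\deq\; G^\Sigma(X^\theta, z)\,.
\end{equation*}
Expanding the derivatives $(\partial / \partial X^\theta_\alpha)^n F_{\f v}^p$ yields a sum of monomials in generalized entries of $G^\theta$ and $\Pi$, which I would organize via a family of words as outlined in Sections \ref{sec:comparison1} and \ref{sec:self-const_II}. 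Each monomial is then shown either to contribute at most $N^\epsilon \Psi^p$ (using off-diagonal summation structure) or to be dominated by $C \sup_{\f w \in \bb S} \E F_{\f w}^p$; this establishes the self-consistent bound \eqref{intr_sc}, and Gr\"onwall, Markov, plus a polarization argument then yield the desired anisotropic law. For part (ii), one runs the same comparison with $F \deq |m_N - m|^p$, the averaged input from $(X^{\txt{Gauss}}, D)$ passing through orthogonal invariance since $\tr R_N$ is invariant.

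The principal obstacle is that certain monomials in the derivative expansion can only be estimated once one already has some a priori control of generalized entries of $G^\theta$ at the target spectral scale $\eta$ itself — exactly the quantity one is trying to bound. This circularity is broken by the bootstrap in $\eta$ sketched in the introduction: starting at $\eta = 1$ (where the trivial bound $\abs{G^\theta_{st}} \leq \eta^{-1} = 1$ suffices) and proceeding downwards in multiplicative increments of $N^{-\delta}$, one feeds the anisotropic law already obtained at scale $\tilde \eta = N^\delta \eta$, together with the monotonicity of $\eta \mapsto \eta \im G^\theta_{\f w \f w}(E + \ii \eta)$, to produce rough a priori bounds on $\absb{G^\theta_{\f v \f w}}$ and $\im G^\theta_{\f w \f w}$ at scale $\eta$ — just strong enough to close the self-consistent estimate at that scale. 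Since $\delta > 0$ is fixed, the bootstrap terminates after $O(1)$ steps and reaches the target $\eta \geq N^{-1 + \tau}$, concluding the proof.
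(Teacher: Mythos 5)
Your proposal follows essentially the same route as the paper: the (A)$\to$(B)$\to$(C) decomposition, orthogonal invariance together with the polynomialization method of \cite{BEKYY} for the Gaussian step, and the Bernoulli-interpolation self-consistent comparison with the $\eta$-bootstrap (and Gr\"onwall closure) for the main step, while part (ii) reuses the freshly proved anisotropic law as a priori input. The only cosmetic difference is the order of operations in Step (B) — you upgrade entrywise to anisotropic for diagonal $D$ first and then rotate, whereas Lemma \ref{lem:Gaussian} and Proposition \ref{prop:Gaussian} rotate $\Sigma$ to $O_M\Sigma O_M^*$ first and then prove the entrywise law for the rotated covariance — which is an equivalent rearrangement of the same ingredients.
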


The next theorem shows that the hypotheses in (i) and (ii) of Theorem \ref{thm:gen_iso} may be verified under a \emph{stability condition} on the spectrum of $\Sigma$, made precise in Definition \ref{def:stability} below.

\begin{theorem}[General conditions for local law with diagonal $\Sigma$] \label{thm:diag_gen}
Fix $\tau > 0$. Let $\f S \subset \f D$ be a spectral domain. Suppose that $\Sigma$ is diagonal and that \eqref{T_diag}, \eqref{1+xm}, and Assumption \ref{ass:main} hold. Moreover, suppose that the equation \eqref{def_m} is stable on $\f S$ in the sense of Definition \ref{def:stability} below.
Then the entrywise local law holds with parameters $(X, \Sigma, \f S)$, and the averaged local law holds with parameters $(X, \Sigma, \f S)$.
\end{theorem}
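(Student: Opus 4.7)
The plan is to adapt the now-standard strategy for entrywise local laws to the linearized block matrix $G(z)$. The crucial structural feature---ensured by diagonal $\Sigma$ and independence of the entries of $X$---is that for each $i \in \cal I_M$ (resp.\ $\mu \in \cal I_N$) the $i$-th row (resp.\ $\mu$-th column) of the block matrix whose inverse defines $G$ is independent of the minor obtained by removing it. This permits a direct resolvent analysis driven by Schur's complement formula and large-deviation estimates for quadratic forms. The stability hypothesis in Definition \ref{def:stability} then converts a perturbed version of \eqref{m_sc_eq} into the desired bound on $\abs{m_N - m}$, and a fluctuation-averaging argument upgrades the error to obtain the averaged law.

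Concretely, applying Schur's complement formula to \eqref{def_G} yields
\begin{align*}
G_{ii}^{-1} &\;=\; -\sigma_i^{-1} - \sum_{\mu,\nu \in \cal I_N} X_{i\mu}\, (G^{(i)})_{\mu\nu}\, X_{i\nu}\,, \\
G_{\mu\mu}^{-1} &\;=\; -z - \sum_{i,j \in \cal I_M} X_{i\mu}\, (G^{(\mu)})_{ij}\, X_{j\mu}\,,
\end{align*}
where $G^{(T)}$ denotes the resolvent with rows and columns indexed by $T$ removed. Using the independence structure, Ward's identity $\sum_{\mu \in \cal I_N} \abs{G_{s \mu}}^2 = \eta^{-1} \im G_{ss}$, and a Hanson--Wright type large-deviation inequality, each bilinear form concentrates on its conditional mean with fluctuation $O_\prec(\Psi(z))$. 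Minor-removal errors are also $O_\prec(\Psi)$, so the diagonal entries satisfy the coupled system
\begin{equation*}
G_{ii}^{-1} \;=\; -\sigma_i^{-1} - m_N(z) + O_\prec(\Psi)\,, \qquad G_{\mu\mu}^{-1} \;=\; -z - \frac{1}{N}\sum_i G_{ii} + O_\prec(\Psi)\,,
\end{equation*}
which, after eliminating $G_{ii}$, produces a perturbed form of \eqref{m_sc_eq} satisfied by $m_N$. The stability assumption together with \eqref{1+xm} transfers this to $\abs{m_N(z) - m(z)} \prec \Psi$, and feeding this back into the individual identities yields the diagonal entrywise bound. Off-diagonal entries $G_{st}$ are then controlled via a further resolvent identity expressing them in terms of diagonal entries and single-index sums of quadratic forms in $X$, each again estimated by large deviations. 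The averaged law is obtained by averaging the perturbed self-consistent equation over $\mu$ and invoking the standard fluctuation-averaging cancellation among centered quadratic forms, which upgrades the $\Psi$ error to $1/(N\eta)$.

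The main obstacle is the apparent circularity of the large-deviation step: the bounds on the quadratic forms require an a priori control of $\im G_{ss}$, which is close to what we are trying to prove. This will be resolved by a continuity bootstrap in $\eta$, beginning at $\eta \asymp 1$ where the trivial bound $\abs{G_{ss}} \leq \eta^{-1}$ is already sufficient, and descending in small multiplicative steps down to $\eta \geq N^{-1 + \tau}$. At each step the bound from the previous scale serves as the a priori input for the large-deviation estimates, the self-consistent equation then refines the bound, and Lipschitz continuity of $G$ in $\eta$ combined with a polynomial net argument makes the implication uniform in $z \in \f S$. The hypothesis that \eqref{def_m} is stable uniformly on $\f S$ is exactly what guarantees that this descent succeeds without degeneration of the self-consistent equation, even as $z$ approaches the edges of $\supp \varrho$.
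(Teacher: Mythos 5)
Your proposal matches the paper's proof in Section \ref{sec:diag_Sigma}: both derive the coupled self-consistent system from Schur's complement formula \eqref{Gaa}, \eqref{Gii}, control the quadratic forms by large-deviation estimates (Lemma \ref{lem:Lambda_o_Z}), use the stability hypothesis of Definition \ref{def:stability} to solve the perturbed form of \eqref{def_m}, bootstrap in $\eta$ from the trivial scale $\eta \geq 1$ via a stochastic continuity argument, and invoke fluctuation averaging (Lemma \ref{lem:fluct_avg}) to upgrade the error to $1/(N\eta)$. The only small imprecision is your blanket invocation of a Ward identity for all $s \in \cal I$; since $H(z)$ is not Hermitian, the paper must treat the two blocks separately (estimates \eqref{GMaa4}--\eqref{GMaa2} in Lemma \ref{lemma: basic}), but this does not change the architecture of the argument.
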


Thus, the Steps (A) -- (C) outlined in the introduction may be expressed as follows. Let $\f S$ be a spectral domain (e.g.\ $\f S = \f D^e_k$ for some edge $k$). First, we prove that \eqref{1+xm} holds and that \eqref{def_m} is stable on $\f S$ in the sense of Definition \ref{def:stability} below. Then we use Theorem \ref{thm:diag_gen} to obtain the entrywise and averaged local laws with parameters $(X^{\txt{Gauss}}, D(\Sigma), \f S)$. We then feed this result into Theorem \ref{thm:gen_iso} to obtain the anisotropic and averaged local laws with parameters $(X, \Sigma, \f S)$.

\subsection{Outline of the paper} \label{sec:outline}
The bulk of this paper is devoted to the proof of the anisotropic local law, which is the content of Sections \ref{sec:tools}--\ref{sec:self-const_II}. For clarity of presentation, we first give all proofs under the assumption \eqref{T_diag}, and subsequently explain how to remove it. In Section \ref{sec:tools} we collect the basic tools that we shall use throughout the proofs; they consist of basic identities and estimates for the matrix $G$. In Section \ref{sec:diag_Sigma} we perform Step (A) of the proof, by proving the entrywise local law under the assumption that $\Sigma$ is diagonal (Theorem \ref{thm:diag_gen}). In Section \ref{sec:iso_gauss} we perform Step (B) of the proof, by proving the anisotropic local law for general $\Sigma$ and Gaussian $X$ (Theorem \ref{thm:gen_iso} (i) for Gaussian $X$). The main step, Step (C), of the proof is the content of Sections \ref{sec:comparison1} and \ref{sec:self-const_II}. In Section \ref{sec:comparison1} we explain the main ideas of the self-consistent comparison method and complete the proof under the additional assumption that $\E X_{i \mu}^3 = 0$. In Section \ref{sec:self-const_II} we give the additional arguments needed to complete the proof for general $X$. Together, these two sections complete the proof of Theorem \ref{thm:gen_iso} (i) for general $X$.

Having completed the proof of the anisotropic local law, we prove the averaged local law (Theorem \ref{thm:gen_iso} (ii)) in Section \ref{sec:avg_proof}. This will conclude the proof of Theorems \ref{thm:gen_iso} and \ref{thm:diag_gen}. At the end of Section \ref{sec:avg_proof}, we explain how to deduce Theorem \ref{thm:LL_G}, Corollary \ref{cor:GM_GN}, and Theorems \ref{thm:edge} (i)--(ii), \ref{thm:bulk} (i)--(ii), and \ref{thm:outside}.

In Section \ref{sec:rigidity} we prove the rigidity of the eigenvalues (Theorems \ref{thm:rigidity}, \ref{thm:edge} (iii), and \ref{thm:bulk} (iii)) and the universality of their joint distribution near the edges (Theorem \ref{thm:edge_univ}). Next, in Section \ref{sec:generalizations} we explain how to remove the assumption \eqref{T_diag} and how to extend all of our results from the matrix $Q$ to the matrix $\dot Q$.

In Section \ref{sec:Wig}, as a further illustration of the self-consistent comparison method, we present and prove analogous results for deformed Wigner matrices.

The first part of Appendix \ref{sec:varrho} is devoted to the proof the basic properties of the asymptotic eigenvalue density $\varrho$ stated in Section \ref{sec:structure of rho}. In the second part of Appendix \ref{sec:varrho}, we establish the two key assumptions of Theorem \ref{thm:diag_gen} -- \eqref{1+xm} and the stability of \eqref{def_m} -- on each subdomain $\f D^e_k$, $\f D^b_k$, and $\f D^o$ separately, under the regularity assumptions from Definition \ref{def:regular}.

\section{Basic tools} \label{sec:tools}

The rest of this paper is devoted to the proofs. In this preliminary section we collect various identities from linear algebra and simple estimates that we shall use throughout the paper.

We always use the following convention for matrix multiplication.

\begin{definition}[Matrix multiplication] \label{def:matrix_m}
We use matrices of the form $A = (A_{st} \col s \in l(A), t \in r(A))$, whose entries are indexed by arbitrary finite subsets of $l(A), r(A) \subset \N$. Matrix multiplication $A B$ is defined for $s \in l(A)$ and $t \in r(B)$ by
\begin{equation*}
(A B)_{st} \;\deq\; \sum_{q \in r(A) \cap l(B)} A_{sq} B_{qt}\,.
\end{equation*}
\end{definition}

\begin{definition}
Suppose \eqref{T_diag}.
Define the $\cal I \times \cal I$ matrices
\begin{equation*}
G(z) \;\deq\; H(z)^{-1}\,, \qquad H(z) \;\deq\; \begin{pmatrix}
-\Sigma^{-1} & X
\\
X^* & -z
\end{pmatrix}\,,
\end{equation*}
as well as the 
$\cal I_M \times \cal I_M$ matrix
\begin{equation} \label{defGM}
G_M(z) \;\deq\; \pb{-\Sigma^{-1}+z^{-1} XX^*}^{-1} \;=\; z \Sigma^{1/2} \pb{\Sigma^{1/2}XX^*\Sigma^{1/2}-z}^{-1}\Sigma^{1/2}
\end{equation}
and the $\cal I_N \times \cal I_N$ matrix
\begin{equation} \label{defGN}
G_N(z) \;\deq\; ( X^*\Sigma X - z)^{-1}\,.
\end{equation}
Throughout the following we frequently omit the argument $z$ from our notation.
\end{definition}

Since $H(z)$ and $G(z)$ are only defined under the assumption \eqref{T_diag}, we shall always tacitly assume \eqref{T_diag} whenever we use them. Note that under the assumption \eqref{T_diag} we have $R_N = G_N$.

\begin{definition}[Minors] \label{def:minors}
For $S \subset \cal I$ we define the minor
$H^{(S)} \deq (H_{st} \col s,t \in \cal I \setminus S)$.
We also write
$G^{(S)} \deq (H^{(S)})^{-1}$.
The matrices $G_N^{(S)}$ and $G_M^{(S)}$ are defined similarly. We abbreviate $(\{s\}) \equiv (s)$ and $(\{s,t\}) \equiv (st)$.
\end{definition}

\bigskip

\begin{lemma}[Resolvent identities]\label{lem:idG}
\begin{enumerate}
\item
We have
\begin{equation}\label{G=G}
G \;=\;
\begin{pmatrix}
\Sigma X G_N X^* \Sigma - \Sigma & \Sigma X G_N
\\
G_N X^* \Sigma & G_N
\end{pmatrix} 
\;=\;
\begin{pmatrix}
G_M & z^{-1} G_M X
\\
z^{-1} X^* G_M & z^{-2} X^* G_M X - z^{-1}
\end{pmatrix}\,.
\end{equation}
\item
For $\mu \in \cal I_N$ we have
\be
\label{Gii}
\frac{1}{G_{\mu \mu}} \;=\; -z - \pb{X^* G^{(\mu)} X}_{\mu\mu}\,,
\ee
and for $\mu \neq \nu \in \cal I_N$
\begin{equation} \label{Gij}
G_{\mu \nu} \;=\; -G_{\mu\mu} \pb{X^* G^{(\mu)}}_{\mu \nu}  \;=\; -G_{\nu \nu} \pb{G^{(\nu)} X}_{\mu \nu} \;=\; G_{\mu \mu}G_{\nu \nu}^{(\mu)} \pb{X^* G^{(\mu \nu)} X}_{\mu \nu}\,.
\end{equation}
\item
Suppose that $\Sigma$ is diagonal.
Then for $i \in \cal I_M$ we have
\begin{equation} \label{Gaa}
\frac{1}{G_{i i}} \;=\; - \frac{1}{\sigma_i} - \pb{X G^{(i)} X^*}_{i i}\,,
\end{equation}
and for $i \neq j \in \cal I_M$
\begin{equation} \label{Gab}
G_{ij} \;=\; -G_{ii} \pb{X G^{(i)}}_{i j} \;=\; -G_{jj} (G^{(j)} X^*)_{ij} \;=\; G_{ii} G_{jj}^{(i)} \pb{X G^{(ij)} X^*}_{ij}\,.
\end{equation}
\item
For $i \in \cal I_M$ and $\mu \in \cal I_N$ we have
\be\label{Gia}
G_{i \mu} \;=\; - G_{\mu \mu} \pb{G^{(\mu)} X}_{i \mu}\,, \qquad G_{\mu i} \;=\; - G_{\mu \mu}  \pb{X^* G^{(\mu)}}_{\mu i}\,.
\ee
In addition, if $\Sigma$ is diagonal, we have
\begin{align} \label{Gia2}
G_{i \mu} &\;=\; -G_{ii} (X G^{(i)})_{i \mu} \;=\; G_{ii} G_{\mu \mu}^{(i)} \pB{-X_{i \mu} + \pb{X G^{(i \mu)} X}_{i \mu}} \,,
\\ \label{Gia3}
G_{\mu i} &\;=\; - G_{ii} (G^{(i)} X)_{\mu i} \;=\; G_{\mu \mu} G_{ii}^{(\mu)} \pB{- X_{\mu i}^* + \pb{X^* G^{(\mu i)} X^*}_{\mu i}}\,.
\end{align}

\item
For $r \in \cal I$ and $s,t \in \cal I \setminus \{k\}$ we have
\be\label{Gstu}
G_{st}^{(r)}=G_{st}-\frac{G_{sr}G_{rt}}{G_{rr}}
\ee
\item
All of the identities from (i)--(v) hold for $G^{(S)}$ instead of $G$ if $S \subset \cal I_N$ or $S \subset \cal I$ and $\Sigma$ is diagonal.
\end{enumerate}
\end{lemma}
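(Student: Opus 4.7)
The plan is to derive every statement of the lemma from a single tool: the block inversion (Schur complement) formula applied to the matrix $H$, iterated in different partitions.

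For part (i), I view $H$ as a $2\times 2$ block matrix with respect to the partition $\cal I = \cal I_M \sqcup \cal I_N$ and compute $G = H^{-1}$ in two dual ways. Eliminating the lower-right block $-zI_N$ by Schur complement gives the $(1,1)$ block as $(-\Sigma^{-1} + z^{-1} XX^*)^{-1} = G_M$; the standard block-inversion formulas then produce the off-diagonal blocks $z^{-1} G_M X$ and $z^{-1} X^* G_M$, and the $(2,2)$ block $z^{-2} X^* G_M X - z^{-1}$, yielding the second expression in \eqref{G=G}. Dually, eliminating the upper-left block $-\Sigma^{-1}$ gives the $(2,2)$ block as $(-z + X^* \Sigma X)^{-1} = G_N$ and the remaining blocks $\Sigma X G_N$, $G_N X^* \Sigma$, and $\Sigma X G_N X^* \Sigma - \Sigma$, which is the first expression in \eqref{G=G}.

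For parts (ii)--(iv), I apply the same Schur formula with a finer partition that separates one or two distinguished indices from the rest. For $\mu \in \cal I_N$, partitioning $\cal I$ as $\{\mu\}$ versus $\cal I \setminus \{\mu\}$ yields $1/G_{\mu\mu} = H_{\mu\mu} - H_{\mu,(\mu)} \, G^{(\mu)} \, H_{(\mu),\mu}$. Since the $\mu$-th row of $H$ has $X^*_{\mu i}$ in the $\cal I_M$ entries, $-z$ at the diagonal, and $0$ at the remaining $\cal I_N$ entries, this immediately evaluates to $-z - (X^* G^{(\mu)} X)_{\mu\mu}$, which is \eqref{Gii}. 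The off-diagonal formula \eqref{Gij} comes from the companion block-inversion identity $G_{\mu\nu} = -G_{\mu\mu} (H_{\mu,(\mu)} G^{(\mu)})_\nu$, into which the same row-structure of $H$ is substituted to collapse the sum to $-G_{\mu\mu} (X^* G^{(\mu)})_{\mu\nu}$; the third expression in \eqref{Gij} is then obtained by expanding $G^{(\mu)}_{\nu\nu}$ via the Schur formula applied one level deeper, separating the index $\nu$ inside $G^{(\mu)}$. The parallel formulas in (iii) follow identically with the roles of the two blocks swapped, the diagonality of $\Sigma$ being what ensures that the $i$-th row of $H$ restricted to $\cal I_M \setminus \{i\}$ vanishes. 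The mixed identities (iv) use the same cofactor expression, this time with the off-diagonal block $X$ (or $X^*$) playing the role of $H_{\mu,(\mu)}$; the refined formulas \eqref{Gia2}--\eqref{Gia3} use the diagonality of $\Sigma$ once more to isolate the direct $-X_{i\mu}$ contribution.

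Finally, part (v) is the standard rank-one update formula for the inverse of a principal minor, which follows from one more application of Schur complement with the singleton partition $\{r\}$ versus $\cal I \setminus \{r\}$: it gives $G^{(r)}_{st} = G_{st} - G_{sr} G_{rr}^{-1} G_{rt}$ directly. Part (vi) requires no new computation. Every derivation above used only (a) the $2\times 2$ block structure of $H$ with the distinguished indices removed, and (b) the fact that the rows and columns of $H$ indexed by $\cal I_N$ (or by $\cal I_M$ when $\Sigma$ is diagonal) are purely diagonal inside their own block. Both properties are preserved under passing to a principal minor $H^{(S)}$ with $S \subset \cal I_N$, and also for arbitrary $S \subset \cal I$ when $\Sigma$ is diagonal, so the identities transfer verbatim to $G^{(S)}$. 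There is no real obstacle in this lemma; the only mild care required is the bookkeeping of signs and verifying that the iterated Schur/minor operations commute as claimed, especially in the chained identities like the third equality in \eqref{Gij}.
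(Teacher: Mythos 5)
Your proposal is correct and takes essentially the same approach as the paper: the paper's proof simply states that \eqref{G=G}, \eqref{Gii}, and \eqref{Gaa} follow from Schur's complement formula, and then cites prior works for the remaining identities (which themselves derive from iterated Schur/minor arguments of exactly the kind you spell out). You have reconstructed in full what the paper delegates to the references, with the same underlying tool and the same case-by-case bookkeeping, so there is no substantive difference.
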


\begin{proof}
The identities \eqref{G=G}, \eqref{Gii}, and \eqref{Gaa} follow from Schur's complement formula. The remaining identities follow easily from resolvent identities that have been previously derived in \cite{EYY1, EKYY2}; they are summarized e.g.\ in \cite[Lemma 4.5]{EKYY4}.
\end{proof}

Next, we introduce the spectral decomposition of $G$.
We use the notation
\begin{equation*}
\Sigma^{1/2} X \;=\; \sum_{k=1}^{M \wedge N}\sqrt{\lambda_k} \, \f \xi_k \f \zeta_k^*
\end{equation*}
for the singular value decomposition of $\Sigma^{1/2} X$, where
\begin{equation*}
\lambda_1 \;\geq\; \lambda_2 \;\geq\; \cdots \;\geq\; \lambda_{M \wedge N}  \;\geq\; 0 \;=\; \lambda_{M \wedge N + 1} \;=\; \cdots\;=\; \lambda_{N \vee M}\,,
\end{equation*}
and $\{\f \xi_k\}_{k = 1}^M$ and $\{\f \zeta_k\}_{k = 1}^N$ are orthonormal bases of $\R^{\cal I_M}$ and $\R^{\cal I_N}$ respectively.
Then for $\mu, \nu \in \cal I_N$ we have
\begin{equation} \label{GN_spec}
 G_{\mu \nu} \;=\; \sum_{k=1}^N\frac{\f \zeta_k(\mu) \ol{\f \zeta_k(\nu)}}{\lambda_k-z}\,,
\end{equation}
 and for $i,j \in \cal I_M$
\begin{equation} \label{GM_spec}
G_{ij} \;=\; z \sum_{k=1}^M \frac{  (\Sigma^{1/2}\f \xi_k)(i) \ol{(\Sigma^{1/2} \f \xi_k) (j)}}{\lambda_k-z} \;=\; -\Sigma_{ij} + \sum_{k = 1}^M \frac{ \lambda_k (\Sigma^{1/2}\f \xi_k)(i)\ol{(\Sigma^{1/2}\f \xi_k) (j)}}{\lambda_k-z}\,.
\end{equation}
Moreover, for $i \in \cal I_M$ and $\mu \in \cal I_N$ we have
\begin{equation} \label{Gial}
G_{i \mu} \;=\; \sum_{k = 1}^{M \wedge N} \frac{\sqrt{\lambda_k} (\Sigma^{1/2} \f \xi_k)(i) \ol{\f \zeta_k(\mu)}}{\lambda_k - z}\,, \qquad  G_{\mu i} \;=\; \sum_{k = 1}^{M \wedge N} \frac{\sqrt{\lambda_k} \f \zeta_k(\mu) \ol{(\Sigma^{1/2}\f \xi_k)(i)}}{\lambda_k-z}\,.
\end{equation}
Summarizing, defining
\begin{equation*}
\f u_k \;\deq\; \binom{\ind{k \leq M}\sqrt{\lambda_k} \, \f \xi_k}{\ind{k \leq N} \, \f \zeta_k} \;\in\; \R^{\cal I}\,,
\end{equation*}
we have
\be\label{decompfullG}
G \;=\;
- \ul \Sigma
+ \ul \Sigma^{1/2} \sum_{k = 1}^{N \vee M} \frac{\f u_k \f u_k^*}{\lambda_k-z} \ul \Sigma^{1/2}\,.
\ee

\begin{definition}[Generalized entries]
For $\f v, \f w \in \R^{\cal I}$, $s \in \cal I$, and an $\cal I \times \cal I$ matrix $A$, we abbreviate
\begin{equation*}
A_{\f v \f w} \;\deq\; \scalar{\f v}{A \f w} \,, \qquad 
A_{\f v s} \;\deq\; \scalar{\f v}{A \f e_s}\,, \qquad
A_{s \f v} \;\deq\; \scalar{\f e_s}{A \f v}\,,
\end{equation*}
where $\f e_s$ denotes the standard unit vector in the coordinate direction $s$.
\end{definition}

We sometimes identify vectors $\f v \in \R^{\cal I_M}$ and $\f w \in \R^{\cal I_N}$ with their natural embeddings $\binom{\f v}{0}$ and $\binom{0}{\f w}$ in $\R^{\cal I}$.
The following result is our fundamental tool for estimating entries of $G$.

\begin{lemma}\label{lemma: basic}
Fix $\tau > 0$. Then the following estimates hold for any $z \in \f D$.
We have
\be\label{opbound}
\normb{\ul \Sigma^{-1/2} G \ul \Sigma^{-1/2}} \;\leq\; C \eta^{-1}\,, \qquad  \normb{\ul \Sigma^{-1/2} \partial_z G \, \ul \Sigma^{-1/2}} \;\leq\; C \eta^{-2}\,.
\ee
Furthermore, let $\f v \in \R^{\cal I_M}$ and $\f w \in \R^{\cal I_N}$.
Then we have the bounds
\begin{align}
\label{GMaa4}
\sum_{\mu \in \cal I_N} \abs{G_{\f w \mu}}^2 &\;=\; \frac{\im G_{\f w \f w}}{\eta}\,,
\\
\label{GMaa}
\sum_{i \in \cal I_M} \abs{G_{\f v i}}^2 &\;\leq\;  \frac{C \norm{X^* X}}{\eta} \im G_{\f v \f v} + 2 (\Sigma^2)_{\f v \f v}\,,
\\
\label{GMaa3}
\sum_{i \in \cal I_M} \abs{G_{\f w i}}^2
&\;\leq\; C \norm{X^* X} \sum_{\mu \in \cal I_N} \abs{G_{\f w \mu}}^2\,,
\\
\label{GMaa2}
\sum_{\mu \in \cal I_N} \abs{G_{\f v \mu}}^2 &\;\leq\;  C \norm{X^* X} \sum_{i \in \cal I_M} \abs{G_{\f v i}}^2\,.
\end{align}
Finally, the estimates \eqref{opbound}--\eqref{GMaa2} remain true for $G^{(S)}$ instead of $G$ if $S \subset \cal I_N$ or $S \subset \cal I$ and $\Sigma$ is diagonal.
\end{lemma}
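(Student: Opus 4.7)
The plan is to prove each of the five estimates using three basic tools: (i) the explicit block representations of $G$ from \eqref{G=G}; (ii) Ward-type identities obtained from $\im H^{-1} = -H^{-1}(\im H) H^{-*}$ applied to Hermitian resolvents; and (iii) operator-norm estimates that absorb deterministic factors via $\norm{\Sigma} \leq \tau^{-1}$.

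For \eqref{opbound}, I will first compute
\begin{equation*}
\ul \Sigma^{-1/2} G \ul \Sigma^{-1/2} \;=\; \begin{pmatrix} z R_M & R_M Y \\ Y^* R_M & R_N \end{pmatrix}\,, \qquad Y \;\deq\; \Sigma^{1/2} X\,,
\end{equation*}
by applying the identity $\Sigma^{-1/2} G_M \Sigma^{-1/2} = z R_M$ from \eqref{defGM} to the diagonal block and the cyclic identity $Y^* R_M Y = I + z R_N$ to simplify the bottom-right block. The diagonal blocks are then bounded by $C/\eta$ via the standard resolvent estimate $\norm{R_M}, \norm{R_N} \leq 1/\eta$ and $|z| \leq C$ on $\f D$. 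The singular values of $R_M Y$ equal $\sqrt{\lambda_k}/|\lambda_k - z|$, whose maximum over $\lambda \geq 0$ is attained at $\lambda = \sqrt{E^2 + \eta^2}$ and equals $(2(\sqrt{E^2+\eta^2} - E))^{-1/2}$; an elementary case analysis separating $E \leq 0$ and $E > 0$ shows that this is $\leq C/\eta$ on $\f D$. For the derivative, differentiating $H G = I$ gives $\partial_z G = G P_N G$ with $P_N$ the projection onto $\cal I_N$; since $\ul \Sigma^{1/2} P_N \ul \Sigma^{1/2} = P_N$ (because $\ul \Sigma$ is the identity on $\cal I_N$), the bound $C/\eta^2$ follows from the already proven bound on $\ul \Sigma^{-1/2} G \ul \Sigma^{-1/2}$.

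The Ward identity \eqref{GMaa4} reduces to $G_N G_N^* = \im G_N / \eta$, which follows from the normality of $G_N = (X^*\Sigma X - z)^{-1}$ and its spectral decomposition; evaluating at $(\f w, \f w)$ gives the claim. For \eqref{GMaa2} and \eqref{GMaa3}, I use \eqref{G=G} to write $\sum_\mu |G_{\f v \mu}|^2 = |z|^{-2}(G_M XX^* G_M^*)_{\f v \f v}$ and $\sum_i |G_{\f w i}|^2 = (G_N X^* \Sigma^2 X G_N^*)_{\f w \f w}$. Each is then bounded by the operator-norm inequalities $XX^* \leq \norm{X^*X} I$ and $X^* \Sigma^2 X \leq \norm{\Sigma}^2 X^*X$ (as quadratic forms), combined with the identities $\sum_i |G_{\f v i}|^2 = (G_M G_M^*)_{\f v \f v}$ and $\sum_\mu |G_{\f w \mu}|^2 = (G_N G_N^*)_{\f w \f w}$.

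The main obstacle is \eqref{GMaa}: the analogue of Ward for $G_M$ only yields $G_M XX^* G_M^* = (|z|^2/\eta) \im G_M$, which does not dominate $G_M G_M^*$ when $XX^*$ has a kernel. My plan is to exploit the first representation in \eqref{G=G}, writing $G_M = -\Sigma + \Sigma X G_N X^* \Sigma$, so that
\begin{equation*}
\sum_i |G_{\f v i}|^2 \;=\; (G_M G_M^*)_{\f v \f v} \;\leq\; 2(\Sigma^2)_{\f v \f v} + 2|\Sigma X G_N X^* \Sigma \f v|^2\,.
\end{equation*}
Setting $\f w \deq X^* \Sigma \f v$ and using $X^* \Sigma^2 X \leq \norm{\Sigma}^2 X^* X$ reduces the second term to $C \norm{X^* X}(G_N G_N^*)_{\f w \f w} = C \norm{X^* X}\, \im (G_N)_{\f w \f w}/\eta$ via the Ward identity from the previous paragraph. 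The key step is then the identity $(G_N)_{\f w \f w} = (\Sigma X G_N X^* \Sigma)_{\f v \f v} = G_{\f v \f v} + \Sigma_{\f v \f v}$, whence $\im (G_N)_{\f w \f w} = \im G_{\f v \f v}$ since $\Sigma_{\f v \f v} \in \R$, producing \eqref{GMaa}. Finally, the estimates for $G^{(S)}$ follow by applying the same arguments to the minor $X^{(S)}$, which retains the same structural form under the hypotheses on $S$.
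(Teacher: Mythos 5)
Your proof is correct, and for \eqref{GMaa4}--\eqref{GMaa2} it follows essentially the same path as the paper: the block identities from \eqref{G=G}, the Ward identity $G_N G_N^* = \im G_N / \eta$, and the quadratic-form bound $X^* \Sigma^2 X \leq C\,X^*X$. In particular your treatment of \eqref{GMaa} --- isolating $2(\Sigma^2)_{\f v\f v}$ from $G_M = \Sigma X G_N X^*\Sigma - \Sigma$ and then converting the remaining piece to $\im G_{\f v\f v}/\eta$ via the Ward identity for $G_N$ together with $\Sigma X G_N X^* \Sigma = G_M + \Sigma$ --- is the same chain of identities that appears in the paper, merely written with an auxiliary vector $\f w' = X^*\Sigma\f v$ instead of inline.

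The one place where you take a genuinely different route is \eqref{opbound}. The paper reads the bound directly off the spectral decomposition \eqref{decompfullG}, using the orthogonality of the $\f u_k$ and the observation that $\abs{\lambda_k}/\abs{\lambda_k - z} \leq C\eta^{-1}$ on $\f D$ (separating $\lambda_k > 2\abs{z}$ from $\lambda_k \leq 2\abs{z}$). You instead compute the block form $\ul\Sigma^{-1/2} G \ul\Sigma^{-1/2} = \begin{pmatrix} z R_M & R_M Y \\ Y^* R_M & R_N \end{pmatrix}$, bound the diagonal blocks by $C/\eta$ via the trivial resolvent bound, and bound the off-diagonal block by an explicit optimization of the singular values $\sqrt{\lambda}/\abs{\lambda - z}$, which requires the case split on $\sgn E$ and the restriction $\abs{z} \asymp 1$ in $\f D$ to conclude $\sqrt{E^2+\eta^2} - E \gtrsim \eta^2$. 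Both routes are correct; yours is more hands-on and self-contained in that it does not invoke \eqref{decompfullG}, while the paper's is shorter once \eqref{decompfullG} is available (which it already is, since it is also used elsewhere). The derivative bound via $\partial_z G = G P_N G$ and $\ul\Sigma^{\pm 1/2} P_N \ul\Sigma^{\pm 1/2} = P_N$ is a clean alternative to differentiating the spectral sum; it is equivalent but avoids re-estimating the second-order pole.
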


\begin{proof}

The estimates \eqref{opbound} follow from \eqref{decompfullG}, using $\norm{A} \;=\; \sup \hb{\abs{\scalar{\f x}{A \f y}} \col \abs{\f x}, \abs{\f y} \leq 1}$, \eqref{bound_Sigma}, and $\abs{\lambda_k}/{\abs{\lambda_k - z}} \leq C \eta^{-1}$ which follows from the bound $\abs{z} \asymp 1$. Moreover, \eqref{GMaa4} easily follows from \eqref{defGN}, \eqref{G=G}, and the spectral decomposition \eqref{GN_spec}.

In order to prove \eqref{GMaa}, we use \eqref{G=G} to write
\begin{multline*}
\sum_{i \in \cal I_M} \abs{G_{\f v i}}^2 \;=\; \sum_{i \in \cal I_M} \absb{(\Sigma X G X^* \Sigma)_{\f v i} - \Sigma_{\f vi}}^2
\;\leq\; 2 (\Sigma X G X^* \Sigma^2 X G^* X^* \Sigma)_{\f v \f v} + 2 (\Sigma^2)_{\f v \f v}
\\
\;\leq\; C \norm{X^* X} (\Sigma X G_N G_N^* X^* \Sigma)_{\f v \f v} + 2 (\Sigma^2)_{\f v \f v}
\;=\; \frac{C \norm{X^* X}}{\eta} \im (\Sigma X G X^* \Sigma)_{\f v \f v} + 2 (\Sigma^2)_{\f v \f v}
\\
\;=\; \frac{C \norm{X^* X}}{\eta} \im G_{\f v \f v} + 2 (\Sigma^2)_{\f v \f v}\,,
\end{multline*}
which is the claim.

In order to prove \eqref{GMaa3}, we use \eqref{G=G} and \eqref{GN_spec} to get
\begin{equation*}
\sum_{i \in \cal I_M} \abs{G_{\f w i}}^2 \;=\; (G X^* \Sigma^2 X G^*)_{\f w \f w} \;\leq\; C (G X^* X G^*)_{\f w \f w} \;\leq\; C \norm{X^* X} (G_N G^*_N)_{\f w \f w}\,.
\end{equation*}
The estimate \eqref{GMaa2} is proved similarly:
\begin{equation*}
\sum_{\mu \in \cal I_N} \abs{G_{\f v \mu}}^2 \;=\; \abs{z}^{-2} \pb{G XX^*G^*}_{\f v \f v} \;\leq\; C \norm{X^* X} (G_M G_M^*)_{\f v \f v}\,.
\end{equation*}
Finally, the same estimates for $G^{(S)}$ instead of $G$ follow using a trivial modification of the above argument.
\end{proof}

\begin{definition} \label{def:high_prob}
We say that an event $\Xi$ \emph{holds with high probability} if $1 - \ind{\Xi} \prec 0$.
\end{definition}

The following result may be used to estimate the factors $\norm{X^* X}$ in Lemma \ref{lemma: basic} with high probability. It follows from \cite[Theorem 2.10]{BEKYY}.
\begin{lemma} \label{lem:norm_XX}
Under the assumptions \eqref{MsimN}, \eqref{cond on entries of X}, and \eqref{moments of X-1}, there exists a constant $C > 0$ such that $\norm{X^* X}\leq C$ with high probability.
\end{lemma}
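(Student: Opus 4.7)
The plan is to invoke the cited result [BEKYY, Theorem 2.10] directly. The hypotheses of that theorem---comparability of dimensions (our \eqref{MsimN}), mean-zero entries of variance $1/N$ (our \eqref{cond on entries of X}), and uniformly bounded polynomial moments of all orders (our \eqref{moments of X-1})---coincide exactly with ours, and its conclusion is that the top eigenvalue of $XX^*$, which equals $\norm{X^*X}$, is bounded above by a constant with the required high-probability control.

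If instead one wanted a self-contained argument, the natural route is the moment method. First, I would expand $\E\tr(XX^*)^k$ as a sum over closed walks of length $2k$ in a bipartite graph with vertex classes $\cal I_M$ and $\cal I_N$. Using independence and the normalization $\E|X_{i\mu}|^2 = 1/N$, only walks in which every edge is traversed at least twice contribute, and the leading combinatorics is that of non-crossing pair partitions. Contributions from walks re-using an edge more than twice are controlled by \eqref{moments of X-1}, with each extra visit costing a factor of $N^{-1/2}$ that is not fully compensated by the reduced vertex count. A standard count then gives $\E\tr(XX^*)^k \leq C_0^k\, N$ for some numerical constant $C_0$ and any fixed $k$, uniformly in $N$.

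The second step is Markov's inequality: for any $C > C_0$ and any integer $k \geq 1$,
\begin{equation*}
\P\pb{\norm{X^*X} > C} \;\leq\; \P\pb{\tr(XX^*)^k > C^k} \;\leq\; C^{-k}\, \E\tr(XX^*)^k \;\leq\; N\,(C_0/C)^k\,.
\end{equation*}
Choosing $k$ of order a power of $\log N$ makes the right-hand side decay faster than any fixed negative power of $N$, which is precisely the high-probability bound required by Definition \ref{def:high_prob}.

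The main (and only) subtle point in the self-contained route is the combinatorial estimate on the walks: one must see that $\E\tr(XX^*)^k$ grows only geometrically in $k$ with an $N$-independent base, so that the Markov estimate above is usable. Since we do not require the optimal constant (the Marchenko--Pastur edge), one can afford a crude counting argument and lose constants freely; the higher-moment bounds from \eqref{moments of X-1} are exactly what makes the contributions of non-pair-partition walks negligible.
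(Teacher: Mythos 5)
Your primary plan---to invoke \cite[Theorem~2.10]{BEKYY}---is exactly what the paper does, and the hypothesis-matching you describe is correct. The supplementary moment-method sketch is a reasonable alternative in spirit, but the step that needs care is the one you flag: the bound $\E\tr(XX^*)^k \leq C_0^k N$ must hold \emph{uniformly} for $k$ growing like a power of $\log N$ (not just for fixed $k$), which under \eqref{moments of X-1} alone requires attention to the possible growth of $C_p$ in $p$; since you identify this as the crux and the primary route is a clean citation, the proposal stands.
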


Using Lemma \ref{lem:norm_XX}, we observe that we may improve \eqref{opbound} provided we settle for a high-probability instead of a deterministic statement.

\begin{lemma} \label{lem:opnorm}
We have the bounds
\begin{equation*}
\normb{\ul \Sigma^{-1} \, (G + \ul \Sigma) \, \ul \Sigma^{-1}} \;\leq\; C \norm{X^* X} \eta^{-1}\,, \qquad
\normb{\ul \Sigma^{-1} \, \partial_z G \, \ul \Sigma^{-1}} \;\leq\; C \norm{X^* X} \eta^{-2}
\end{equation*}
for all $z \in \f D$.
\end{lemma}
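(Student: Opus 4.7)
The plan is to use the block decomposition of $G$ from \eqref{G=G} to express $G + \ul\Sigma$ explicitly in terms of $X$ and $G_N$, and then conjugate by $\ul\Sigma^{-1}$ to remove the outer $\Sigma$ factors. Writing $\ul\Sigma = \diag(\Sigma, I_N)$ from \eqref{def_ul_Sigma}, the first form in \eqref{G=G} would yield
\begin{equation*}
\ul\Sigma^{-1} (G + \ul\Sigma) \ul\Sigma^{-1} \;=\; \begin{pmatrix} X G_N X^* & X G_N \\ G_N X^* & G_N + I_N \end{pmatrix}.
\end{equation*}
The first three blocks are then bounded using the trivial resolvent estimate $\|G_N\| \leq \eta^{-1}$ (since $G_N(z) = (X^* \Sigma X - z)^{-1}$ is the resolvent of a self-adjoint operator at spectral parameter with imaginary part $\eta$) together with $\|X\|^2 = \|X^*X\|$: this gives $\|X G_N X^*\| \leq \|X^* X\|/\eta$ and $\|X G_N\|, \|G_N X^*\| \leq \|X^* X\|^{1/2}/\eta$, each controlled by $C\|X^*X\|/\eta$ under the generic-constant convention (together with Lemma \ref{lem:norm_XX}, which places us in the regime where $\|X^*X\|$ is bounded away from $0$).

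The main obstacle is the $(N,N)$-block $G_N + I_N$, whose naive bound $1 + \eta^{-1}$ carries no $\|X^*X\|$ factor. I would handle it via the algebraic identity
\begin{equation*}
G_N + I_N \;=\; G_N \pb{X^* \Sigma X + (1-z) I_N},
\end{equation*}
which follows by rearranging the defining relation $G_N (X^*\Sigma X - z) = I_N$. Combined with $\|\Sigma\| \leq \tau^{-1}$ from \eqref{bound_Sigma} and $|1 - z| \leq C$ on $\f D$ (using the constraints $|E|\leq \tau^{-1}$ and $\eta \leq \tau^{-1}$), this produces $\|G_N + I_N\| \leq C (1 + \|X^* X\|)/\eta$, finishing the first estimate.

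For the derivative estimate I would use the Duhamel-type identity $\partial_z G = -G (\partial_z H) G = G \diag(0, I_N) G$ with $\partial_z H = \diag(0, -I_N)$. Applying \eqref{G=G} twice to compute this block product gives
\begin{equation*}
\ul\Sigma^{-1} (\partial_z G) \ul\Sigma^{-1} \;=\; \begin{pmatrix} X G_N^2 X^* & X G_N^2 \\ G_N^2 X^* & G_N^2 \end{pmatrix},
\end{equation*}
and each block is controlled exactly as in the first estimate, with $\|G_N^2\| \leq \eta^{-2}$ in place of $\|G_N\| \leq \eta^{-1}$; for the $(N,N)$-block the analogous identity $z G_N^2 = G_N X^* \Sigma X G_N - G_N$, obtained by multiplying $G_N X^*\Sigma X = I_N + z G_N$ on the right by $G_N$, extracts the required $\|X^*X\|$ factor, completing the bound.
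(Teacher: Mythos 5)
Your proof is correct and takes the same route the paper intends: compute the block form of $\ul\Sigma^{-1}(G+\ul\Sigma)\ul\Sigma^{-1}$ (and of $\ul\Sigma^{-1}(\partial_z G)\ul\Sigma^{-1}$) from the first identity of \eqref{G=G} together with \eqref{defGN}, then bound each block via $\|G_N\|\leq\eta^{-1}$ and $\|X\|^2=\|X^*X\|$. One caveat: Lemma~\ref{lem:norm_XX} gives only an \emph{upper} bound on $\|X^*X\|$, so it does not ``place us in the regime where $\|X^*X\|$ is bounded away from $0$''; what your computation actually establishes is $\|\ul\Sigma^{-1}(G+\ul\Sigma)\ul\Sigma^{-1}\|\leq C(1+\|X^*X\|)\eta^{-1}$, and the additive $1$ cannot be dropped in general (take $X=0$, so that the $(N,N)$-block is $G_N+I_N=(1-z^{-1})I_N$ while $\|X^*X\|=0$), but this is an imprecision already present in the lemma's statement, and the bound $C(1+\|X^*X\|)\eta^{-k}$ serves every downstream application.
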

\begin{proof}
The claim is an easy consequence of the first identity of \eqref{G=G} combined with \eqref{defGN}.
\end{proof}

We conclude this section with the following basic properties of $m$, which can be proved as in \cite{BaoPanZhou} and the references therein.
\begin{lemma}[General properties of $m$] \label{lem:gen_prop_m}
Fix $\tau > 0$ and suppose that \eqref{phi_bounded}, \eqref{bound_Sigma}, and \eqref{Sigma_gap} hold. Then there exists a constant $C > 0$ such that
\begin{equation} \label{m_sim_1}
C^{-1} \;\leq\; \abs{m(z)} \;\leq\; C
\end{equation}
and
\begin{equation} \label{Im_m_geq_c}
\im m(z) \;\geq\; C^{-1} \, \eta
\end{equation}
for all $z \in \C_+$ satisfying $\tau \leq \abs{z} \leq \tau^{-1}$.
\end{lemma}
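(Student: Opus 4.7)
\emph{The plan.} I would prove the three bounds in sequence: the lower bound $|m(z)| \geq C^{-1}$ first, the lower bound $\im m(z) \geq C^{-1}\eta$ as an immediate consequence of it, and finally the upper bound $|m(z)| \leq C$, which is the main obstacle. The two lower bounds use only the fixed-point equation \eqref{m_sc_eq} and the Stieltjes representation $m(z) = \int (x-z)^{-1}\,d\varrho(x)$; the upper bound requires the structural results for $\varrho$ developed in Section \ref{sec:structure of rho} and Appendix \ref{sec:varrho}.

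\emph{Easy lower bounds.} Since $\supp \pi \subset [0, \tau^{-1}]$ by \eqref{bound_Sigma}, the condition $|m(z)| \leq \tau/2$ forces $|m(z)|\, x \leq 1/2$ for every $x \in \supp \pi$, and hence $|1 + m(z) x| \geq 1/2$. Inserting this into \eqref{m_sc_eq} gives
\[
|m(z)|^{-1} \;\leq\; |z| + \phi \int \frac{x}{|1 + m(z) x|} \, d\pi(x) \;\leq\; \tau^{-1} + 2\phi \tau^{-1} \;\leq\; \tau^{-1}(1 + 2\tau^{-1}),
\]
so $|m(z)| \geq \tau^2/(\tau + 2)$; combined with the alternative $|m(z)| > \tau/2$, this yields a uniform positive lower bound depending only on $\tau$. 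Cauchy--Schwarz applied to the Stieltjes representation then gives $|m(z)|^2 \leq \int |x-z|^{-2}\,d\varrho(x) = \eta^{-1}\im m(z)$, and combining with the previous step delivers $\im m(z) \geq \eta |m(z)|^2 \geq C^{-1}\eta$.

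\emph{Upper bound on $|m|$ --- the main obstacle.} This cannot be read off directly from \eqref{m_sc_eq}, which \emph{a priori} permits $|m|$ to blow up along sequences $z_n$ with $\im z_n \to 0$. I would instead use that $m\vert_{\C_+}$ is inverse to the explicit function $f$ of \eqref{def_fx}, whose structure on $\overline \R$ was analysed in Lemmas \ref{lem:crit points}--\ref{lem:rho_struct}. Lemma \ref{lem:a_order} gives $\supp \varrho \subset [0, C_0]$ with $C_0 = C_0(\tau)$, the density of $\varrho$ has square-root behaviour at each edge $a_k$, and it is real-analytic and bounded on the interior of each bulk component. Consequently $m$ extends continuously from $\C_+$ to $\overline{\C_+}$ as a map to $\C \cup \{\infty\}$, with poles only at those $z \in \R$ lying in the $f$-image of a vertical asymptote of $f$. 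The only such $z \in [0, \infty)$ is $z = 0$ (corresponding to the point mass of $\varrho$ at the origin that arises when $\phi \geq 1$); since $|z| \geq \tau$ excludes this point, $m$ is continuous and finite on the compact set $\{z \in \overline{\C_+} : \tau \leq |z| \leq \tau^{-1}\}$, hence bounded there. The uniformity of the bound over all $(\pi, \phi)$ satisfying \eqref{phi_bounded}--\eqref{Sigma_gap} then follows by tracking constants through the proofs in Appendix \ref{sec:varrho}, all of which are quantitative in $\tau$.
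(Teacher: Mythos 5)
Your two lower bounds are correct and elementary, and they are the right way to proceed: splitting on $|m| \lessgtr \tau/2$ and using \eqref{m_sc_eq} gives $|m| \geq C^{-1}$, and Cauchy--Schwarz on the Stieltjes representation gives $\im m \geq \eta|m|^2 \geq C^{-1}\eta$.

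The upper bound, however, has two problems as you have argued it. First, you invoke "square-root behaviour at each edge $a_k$", but Lemma~\ref{lem:gen_prop_m} does not assume any edge or bulk regularity (Definition~\ref{def:regular}); nothing in \eqref{phi_bounded}--\eqref{Sigma_gap} rules out degenerate critical points of $f$, at which the square-root law fails. Second, and more fundamentally, the structural argument is circular within the paper's own logic: the continuous extension of $m$ to the real axis and the boundedness of the density of $\varrho$ (which you need to conclude "continuous and finite on a compact set, hence bounded") are derived in Appendix~\ref{sec:varrho} \emph{from} the upper bound in \eqref{m_sim_1} -- see the sentence "From Lemma~\ref{lem:gen_prop_m} we find that the measure $\varrho$ has a bounded density\ldots" placed just before the proof of Lemma~\ref{lem:a_order}. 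So you cannot use Lemma~\ref{lem:a_order} and its surrounding apparatus to prove Lemma~\ref{lem:gen_prop_m}.

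Your premise that the upper bound "cannot be read off directly from \eqref{m_sc_eq}" is also wrong; the direct route is in fact the simplest. Taking imaginary parts of \eqref{m_sc_eq} and using $\im m > 0$ gives
\begin{equation*}
\frac{\im m}{|m|^2} \;=\; \eta + \phi \,\im m \int \frac{x^2}{|1+mx|^2}\, \pi(\dd x)\,,
\end{equation*}
so dividing by $\im m$ and dropping $\eta/\im m \geq 0$ yields $\phi \int \frac{x^2}{|1+mx|^2}\,\pi(\dd x) \leq |m|^{-2}$. Since $\pi$ is a probability measure, Cauchy--Schwarz gives
\begin{equation*}
\absbb{\phi \int \frac{x}{1+mx}\,\pi(\dd x)} \;\leq\; \phi \pbb{\int \frac{x^2}{|1+mx|^2}\,\pi(\dd x)}^{1/2} \;\leq\; \frac{\sqrt{\phi}}{|m|}\,,
\end{equation*}
and then from $z = -m^{-1} + \phi\int \frac{x}{1+mx}\,\pi(\dd x)$ we get $\tau \leq |z| \leq (1+\sqrt{\phi})/|m|$, hence $|m| \leq (1+\tau^{-1/2})/\tau$. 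This is the standard argument (essentially the one in \cite{BaoPanZhou}, to which the paper defers), it is manifestly uniform in $(\pi,\phi)$ under \eqref{phi_bounded} and \eqref{bound_Sigma}, and it uses no facts about $\varrho$ beyond what is already encoded in \eqref{m_sc_eq}.
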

In particular, from the upper bound in \eqref{m_sim_1} we deduce that $\varrho$ has a bounded density on $[\tau,\infty)$.

\section{Entrywise local law for diagonal $\Sigma$} \label{sec:diag_Sigma}

In this section we prove Theorem \ref{thm:diag_gen}, hence performing Step (A) of the proof mentioned in the introduction.
The proof of Theorem \ref{thm:diag_gen} is similar to previous proofs of local entrywise laws, such as \cite{PY, BEKYY}. We follow the basic approach of \cite[Section 4]{BEKYY}, and only give the details where the argument departs significantly from that of \cite{BEKYY}.

The main novel observation of this section is that the equation \eqref{def_m} arises very easily from the random matrix model by a double application of Schur's complement formula. Heuristically, this may be seen using the identities \eqref{Gii} and \eqref{Gaa}. Indeed, suppose that $G_{\mu \mu} \approx m$ for $\mu \in \cal I_N$. We ignore the random fluctuations in \eqref{Gii} to get
\begin{equation} \label{approx_1}
\frac{1}{m} \;\approx\; \frac{1}{G_{\mu \mu}} \;=\; - z - \pb{X^* G^{(\mu)} X}_{\mu\mu} \;\approx\; - z - \frac{1}{N} \sum_{i \in \cal I_M} G_{ii}^{(\mu)} \;\approx\; - z - \frac{1}{N} \sum_{i \in \cal I_M} G_{ii}\,.
\end{equation}
Similarly, ignoring the random fluctuations in \eqref{Gaa}, we get
\begin{equation} \label{approx_2}
\frac{1}{G_{ii}} \;=\; - \frac{1}{\sigma_i} - \pb{X G^{(i)} X^*}_{i i} \;\approx\; -\frac{1}{\sigma_i} - \frac{1}{N} \sum_{\mu \in \cal I_N} G^{(i)}_{\mu \mu} \;\approx\; -\frac{1}{\sigma_i} - \frac{1}{N} \sum_{\mu \in \cal I_N} G_{\mu \mu} \;\approx\; -\frac{1}{\sigma_i} - m\,.
\end{equation}
Plugging \eqref{approx_2} into \eqref{approx_1} yields \eqref{def_m}. In this section we give a rigorous justification of these approximations.

\subsection{Weak entrywise law}
In this subsection we establish the following weaker version of Theorem \ref{thm:diag_gen}.  It is analogous to \cite[Proposition 4.2]{BEKYY}.
\begin{proposition}[Weak entrywise law] \label{prop:weak law}
Suppose that the assumptions of Theorem \ref{thm:diag_gen} hold. Then $\Lambda \prec (N \eta)^{-1/4}$ uniformly in $z \in \f S$.
\end{proposition}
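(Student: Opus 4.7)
The plan is to adapt the Schur-complement strategy of \cite{PY, BEKYY}, using the linearizing matrix $G$ and the observation (illustrated in the heuristic \eqref{approx_1}--\eqref{approx_2}) that \eqref{def_m} arises from a double application of Schur's complement formula, the second one relying on the hypothesis that $\Sigma$ is diagonal (so that \eqref{Gaa} is available). Define the control parameters
$$
\Lambda_d \;\deq\; \max_{s \in \cal I} \absb{\pb{\ul\Sigma^{-1}(G-\Pi)\ul\Sigma^{-1}}_{ss}}\,, \qquad
\Lambda_o \;\deq\; \max_{s \neq t} \absb{\pb{\ul\Sigma^{-1} G \ul\Sigma^{-1}}_{st}}\,,
$$
$\Lambda \deq \Lambda_d + \Lambda_o$, and $\theta \deq |m_N - m|$. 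I would first establish the bound on the a priori event $\Xi \deq \{\Lambda \leq (\log N)^{-1}\}$, which holds trivially at $\eta \asymp 1$ by the deterministic estimate $\normb{\ul\Sigma^{-1/2} G \ul\Sigma^{-1/2}} \leq C\eta^{-1}$ from Lemma \ref{lemma: basic}.

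On $\Xi$, the Schur identity \eqref{Gii} reads
$$
\frac{1}{G_{\mu\mu}} \;=\; -z - \frac{1}{N} \sum_{i \in \cal I_M} G^{(\mu)}_{ii} - Z_\mu\,,
$$
where $Z_\mu \deq (X^* G^{(\mu)} X)_{\mu\mu} - N^{-1}\sum_{i\in \cal I_M} G^{(\mu)}_{ii}$. Standard Hanson--Wright large-deviation estimates for quadratic forms, together with the Ward identity \eqref{GMaa4} applied to $G^{(\mu)}$ and the bounds of Lemma \ref{lemma: basic}, yield $Z_\mu \prec \Psi$. The analogous treatment of \eqref{Gaa} produces $1/G_{ii} = -\sigma_i^{-1} - N^{-1}\sum_\mu G^{(i)}_{\mu\mu} - \tilde Z_i$ with $\tilde Z_i \prec \Psi$, and the minor removal formula \eqref{Gstu} costs only $O_\prec(\Lambda_o^2)$ when passing from $G^{(\mu)}$ and $G^{(i)}$ back to $G$. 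Combining these two identities in the manner of \eqref{approx_1}--\eqref{approx_2} gives
$$
G_{ii} \;=\; \Pi_{ii} + O_\prec(\theta + \Psi + \Lambda^2)\,, \qquad \frac{1}{G_{\mu\mu}} \;=\; -z + \phi \int \frac{x}{1 + m_N x} \, \pi(\dd x) + O_\prec(\Psi + \Lambda^2)\,,
$$
so that averaging the second identity over $\mu$ yields the perturbed scalar equation $f(m_N) = z + O_\prec(\Psi + \Lambda^2)$. The stability assumption for \eqref{def_m} assumed in Theorem \ref{thm:diag_gen} then converts this into $\theta \prec \Psi + \Lambda^2$, and feeding this back gives $\Lambda_d \prec \Psi + \Lambda^2$. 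The off-diagonal identities \eqref{Gij}, \eqref{Gab}, \eqref{Gia2}, \eqref{Gia3}, analyzed with the same large-deviation machinery (and the Ward-type estimates \eqref{GMaa4}--\eqref{GMaa2} to bound the relevant quadratic forms), produce $\Lambda_o \prec \Psi + \Lambda^2$ on $\Xi$ as well.

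Collecting these estimates yields the self-improving inequality $\Lambda \prec \Psi + \Lambda^2$ on $\Xi$; since $\Psi \prec (N\eta)^{-1/2}$ by \eqref{m_sim_1} and $\Lambda \leq (\log N)^{-1}$ on $\Xi$, iterating this inequality once gives $\Lambda \prec \Psi^{1/2} + \Lambda^{3/2}$, whence $\Lambda \prec (N\eta)^{-1/4}$ on $\Xi$. To remove the a priori restriction to $\Xi$ and propagate the bound from $\eta \asymp 1$ down to $\eta = N^{-1+\tau}$, I would run the standard continuity argument on a polynomial-in-$N$ grid in $\f S$, using the Lipschitz estimate $\normb{\ul\Sigma^{-1}\partial_z G\, \ul\Sigma^{-1}} \leq C\norm{X^*X}\eta^{-2}$ of Lemma \ref{lem:opnorm} and $\norm{X^*X} \prec 1$ from Lemma \ref{lem:norm_XX}; at each descent step the conclusion $\Lambda \prec (N\eta)^{-1/4}$ already proven at the previous scale is much stronger than what is needed to reinstate $\Xi$ at the next scale (note that $\f S$ is closed upward in $\eta$, by the definition of spectral domain). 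The main technical obstacle, as in \cite{BEKYY}, is to arrange the bookkeeping so that the off-diagonal error enters the self-consistent equation for $m_N$ quadratically (as $\Lambda_o^2$) rather than linearly, as any linear contribution would destroy the fixed-point inequality; this is precisely what the Ward identity \eqref{GMaa4} accomplishes by converting the sum of squared off-diagonals into an imaginary part of a diagonal entry and hence into the parameter $\Psi$.
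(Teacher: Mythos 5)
Your proposal follows the same overall architecture as the paper's proof---Schur complement identities for $G_{\mu\mu}$ and $G_{ii}$, large deviations for $Z_s$, the stability condition to close the scalar self-consistent equation, and a continuity descent in $\eta$---but two steps do not hold up. First, the claim $Z_\mu \prec \Psi$ on $\Xi$ is wrong. After the Ward identity the large deviation bound yields $Z_\mu \prec \sqrt{\im G_{\mu\mu}/(N\eta)}$, and on $\Xi$ one only knows $\im G_{\mu\mu} \leq \im m + O(\Lambda)$; near a spectral edge $\im m$ can be far smaller than $\Lambda$, so the $\Lambda$-term cannot be discarded. The paper handles this by introducing $\Psi_\Theta = \sqrt{(\im m + \Theta)/(N\eta)}$ with $\Theta$ the \emph{averaged} error, and proving $\ind{\Xi}(\abs{Z_s} + \Lambda_o) \prec \Psi_\Theta$ (Lemma~\ref{lem:Lambda_o_Z}). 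Working with $\Theta$ rather than $\Lambda$ is crucial, since $\Theta$ feeds into a scalar self-consistent equation through $m_N$ that the stability hypothesis can control, whereas a $\Lambda$-dependent fluctuation bound does not close.

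Second, the stability condition (Definition~\ref{def:stability}) gives $\abs{u-m} \leq C\delta/(\im m + \sqrt{\delta})$, not $\abs{u-m} \leq C\delta$; your step ``$\theta \prec \Psi + \Lambda^2$'' is therefore only valid when $\im m \asymp 1$. Near the edge the stability estimate degrades to $\abs{u-m} \leq C\sqrt{\delta}$, and this square-root loss, together with the $\Theta$-dependence inside $\Psi_\Theta$, is the actual source of the $(N\eta)^{-1/4}$ exponent. Your route to the exponent---``iterating $\Lambda \prec \Psi + \Lambda^2$ once gives $\Lambda \prec \Psi^{1/2} + \Lambda^{3/2}$''---is not algebraically coherent: if $\Lambda \prec \Psi + \Lambda^2$ and $\Lambda \leq (\log N)^{-1}$ both held on $\Xi$, one would immediately conclude $\Lambda \prec \Psi$, which is the full strong entrywise law and would render the subsequent fluctuation averaging step superfluous. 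The fact that the argument appears to prove too much signals that the intermediate estimates cannot be right.
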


The rest of this subsection is devoted to the proof of Proposition \ref{prop:weak law}. For each $i \in \cal I_M$ we define
\begin{equation} \label{def_m_i}
m_i \;\deq\; \frac{- \sigma_i}{1 + m \sigma_i}\,.
\end{equation}
Recalling \eqref{def_m}, we find that the functions $m$ and $m_i$ satisfy
\begin{equation*}
\frac{1}{m} \;=\; -z - \frac{1}{N} \sum_{i \in \cal I_M} m_i \,, \qquad \frac{1}{m_i} \;=\; - \frac{1}{\sigma_i} - m\,.
\end{equation*}
Note that \eqref{1+xm} implies
\begin{equation} \label{m_i_bound}
\abs{m_i} \;\leq\; C \sigma_i \qquad \text{for } z \in \f S \text{ and } i \in \cal I_M\,.
\end{equation}

Next, we define the random control parameters
\begin{equation*}
\Lambda \;\deq\; \max_{s,t \in \cal I} \, \absb{\pb{\ul \Sigma^{-1} \pb{G(z) - \Pi(z)} \ul \Sigma^{-1}}_{st}}\,,
\qquad
\Lambda_o \;\deq\; \max_{s \neq t \in \cal I} \, \absb{\pb{\ul \Sigma^{-1} G(z) \ul \Sigma^{-1}}_{st}}\,.
\end{equation*}
We extend the definitions of $\sigma_i$ and $m_i$ for $i \in \cal I_M$ by setting $\sigma_\mu \deq 1$ and $m_\mu \deq m$ for $\mu \in \cal I_N$.
We may therefore write
\begin{equation*}
\Lambda \;=\; \max_{s,t \in \cal I} \frac{\abs{G_{st} - \delta_{st} m_s}}{\sigma_s \sigma_t}\,, \qquad
\Lambda_o \;=\; \max_{s \neq t \in \cal I} \frac{\abs{G_{st}}}{\sigma_s \sigma_t}\,.
\end{equation*}
Moreover, we define the averaged control parameters
\begin{equation*}
\Theta \;\deq\; \Theta_M + \Theta_N \,, \qquad \Theta_M \;\deq\; \absbb{\frac{1}{M} \sum_{i \in \cal I_M} (G_{ii} - m_i)} \,, \qquad \Theta_N \;\deq\; \absbb{\frac{1}{N} \sum_{\mu \in \cal I_N} (G_{\mu \mu} - m)} \;=\; \abs{m_N - m}
\,.
\end{equation*}
We have the trivial bound
\begin{equation} \label{Theta_leq_Lambda}
\Theta \;\leq\; C \Lambda\,.
\end{equation}

For $s \in \cal I$ we introduce the conditional expectation
\begin{equation} \label{cond_exp}
\E_s[\,\cdot\,] \;\deq\; \E\qb{ \,\cdot\, \vert H^{(s)}}\,.
\end{equation}
Using \eqref{Gaa} we get for $i \in \cal I_M$
\begin{equation} \label{Gii_exp}
\frac{1}{G_{ii}} \;=\; -\frac{1}{\sigma_i} - \frac{1}{N} \tr G_N^{(i)} - Z_i \,, \qquad Z_i \;\deq\; (1 - \E_i) \pb{X G^{(i)} X^*}_{ii}\,,
\end{equation}
and using \eqref{Gii} we get for $\mu \in \cal I_N$
\begin{equation} \label{Gmumu_exp}
\frac{1}{G_{\mu \mu}} \;=\; - z - \frac{1}{N} \tr G^{(\mu)}_{M} - Z_\mu\,, \qquad Z_\mu \;\deq\; (1 - \E_\mu) \pb{X^* G^{(\mu)} X}_{\mu \mu}\,.
\end{equation}

In analogy to \cite[Section 4]{BEKYY}, we define the $z$-dependent event
$\Xi \deq \hb{\Lambda \leq (\log N)^{-1}}$
and the control parameter
\begin{equation*}
\Psi_\Theta \;\deq\; \sqrt{\frac{\im m + \Theta}{N \eta}}\,.
\end{equation*}

The following estimate is analogous to \cite[Lemma 4.4]{BEKYY}.
\begin{lemma} \label{lem:Lambda_o_Z}
Suppose that the assumptions of Theorem \ref{thm:diag_gen} hold.
Then for $s \in \cal I$ and $z \in \f S$ we have
\begin{equation} \label{Lambda_o_Z_bound}
\ind{\Xi} \pb{\abs{Z_s} + \Lambda_o} \;\prec\; \Psi_\Theta
\end{equation}
as well as
\begin{equation} \label{Lambda_o_Z_bound_initial}
\ind{\eta \geq 1} \pb{\abs{Z_s} + \Lambda_o} \;\prec\; \Psi_\Theta\,.
\end{equation}
\end{lemma}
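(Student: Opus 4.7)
I will prove \eqref{Lambda_o_Z_bound} and \eqref{Lambda_o_Z_bound_initial} in parallel, since their arguments differ only in how they supply a priori control on the diagonal entries of $G$: on $\Xi$ we use $\abs{G_{ss}-m_s} \leq \sigma_s^2 \Lambda \leq C(\log N)^{-1}$ together with the bound $\abs{m_s}\leq C$ from \eqref{m_i_bound} to obtain $\abs{G_{ss}}\leq C$; when $\eta\geq 1$ the same bound follows deterministically from $\norm{G+\ul\Sigma}\leq C\norm{X^*X}\eta^{-1}\prec 1$ of Lemma~\ref{lem:opnorm}. The overall strategy is standard: apply large deviation estimates (LDE) to the quadratic/bilinear forms produced by Schur's complement formulas \eqref{Gii}, \eqref{Gaa}, \eqref{Gij}, \eqref{Gab}, \eqref{Gia2}, \eqref{Gia3}; convert the resulting sums of squared minor entries into sums of imaginary parts of diagonal entries via the Ward-type estimates of Lemma~\ref{lemma: basic}; and close the estimate using the signed-average control parameter $\Theta$.

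\textbf{Bounding $Z_\mu$ for $\mu\in\cal I_N$.} Because $G^{(\mu)}$ is independent of the $\mu$-th column $X_\mu$ of $X$, we have $Z_\mu = X_\mu^*G^{(\mu)}_M X_\mu - \tfrac{1}{N}\tr G^{(\mu)}_M$. The standard LDE for quadratic forms of centred independent random variables (whose hypotheses follow from \eqref{cond on entries of X} and \eqref{moments of X-1}) yields
\begin{equation*}
\abs{Z_\mu} \;\prec\; \frac{1}{N}\Bigl(\sum_{i,j\in\cal I_M}\abs{G^{(\mu)}_{ij}}^2\Bigr)^{1/2}.
\end{equation*}
Applying the $G^{(\mu)}$-version of \eqref{GMaa} with $\f v=\f e_j$, summing over $j$, and invoking Lemma~\ref{lem:norm_XX} gives $\sum_{i,j}\abs{G^{(\mu)}_{ij}}^2 \prec \eta^{-1}\sum_j \im G^{(\mu)}_{jj}+M$. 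Using the resolvent identity \eqref{Gstu} to write $\sum_j G^{(\mu)}_{jj} = \sum_j G_{jj}-\sum_j G_{j\mu}G_{\mu j}/G_{\mu\mu}$, then taking the imaginary part of the sum (rather than summing absolute values), we exploit the cancellation $\abs{\im\sum_j(G_{jj}-m_j)}\leq \abs{\sum_j(G_{jj}-m_j)}=M\Theta_M$. Combined with $\sum_j \im m_j \leq C\sum_j \sigma_j^2 \im m \leq CN\im m$ (using $\im m_j=\sigma_j^2 \im m/\abs{1+m\sigma_j}^2$ together with \eqref{bound_Sigma} and \eqref{1+xm}) and with Cauchy--Schwarz plus \eqref{GMaa3}, \eqref{GMaa4} to bound the correction term by $C\im G_{\mu\mu}/\eta$, this gives $\sum_j \im G^{(\mu)}_{jj}\prec N(\im m+\Theta)$ on $\Xi$, and hence $\abs{Z_\mu}\prec \Psi_\Theta$. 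The bound on $Z_i$ ($i\in\cal I_M$) is completely analogous, replacing \eqref{Gii} by \eqref{Gaa} and using the Ward identity \eqref{GMaa4} for the $\cal I_N$-block of $G^{(i)}$ together with $\Theta_N$.

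\textbf{Bounding $\Lambda_o$.} Each off-diagonal entry $G_{st}$ ($s\neq t$) is factored by \eqref{Gij}, \eqref{Gab}, \eqref{Gia2}, \eqref{Gia3} as a product of one or two diagonal $G$-entries (bounded using the a priori input) and a bilinear form in two \emph{independent} rows or columns of $X$. For instance $G_{\mu\nu}=G_{\mu\mu}G^{(\mu)}_{\nu\nu}(X^*G^{(\mu\nu)}X)_{\mu\nu}$ with $\mu\neq\nu\in\cal I_N$, and the LDE for bilinear forms yields $\abs{(X^*G^{(\mu\nu)}X)_{\mu\nu}}\prec N^{-1}(\sum_{i,j}\abs{G^{(\mu\nu)}_{ij}}^2)^{1/2}$, which is controlled by exactly the Ward argument above. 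The explicit $\sigma$-dependence in the resolvent identities \eqref{Gij}--\eqref{Gia3} accounts for the $\sigma_s\sigma_t$ normalization built into the definition of $\Lambda_o$.

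\textbf{Main obstacle.} The technical heart of the argument is ensuring that the bound on $\sum_j \im G^{(\mu)}_{jj}$ produces the \emph{averaged} parameter $\Theta$ rather than the \emph{pointwise} parameter $\Lambda$; a naive pointwise use of $\abs{G_{jj}-m_j}\leq \sigma_j^2\Lambda$ inside the sum would give $M\Lambda$ instead of $M\Theta$, and $\Lambda\leq(\log N)^{-1}$ on $\Xi$ is not small enough in the $\prec$-sense to be absorbed into $\Psi_\Theta$. The correct bookkeeping---replacing the pointwise estimate by the signed sum $\abs{\im\sum_j(G_{jj}-m_j)}\leq M\Theta_M$ and carefully treating the correction term from \eqref{Gstu} (where a residual $\Lambda$ unavoidably appears, but multiplied by a factor of $\eta^{-1}$ that is dominated by $N(\im m+\Theta)$ on $\f S$ after using $\im m\geq c\eta$ from Lemma~\ref{lem:gen_prop_m})---has to be executed uniformly across all four identities \eqref{Gii}, \eqref{Gaa}, \eqref{Gij}, \eqref{Gab}, \eqref{Gia}, and constitutes the bulk of the proof.
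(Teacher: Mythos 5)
The overall skeleton of your argument — large deviation estimates for the quadratic and bilinear forms produced by Schur's complement, Ward-type identities from Lemma~\ref{lemma: basic}, the switch from pointwise $\Lambda$ to the averaged parameter $\Theta$ — is the right one and matches the paper's strategy. However, there is a genuine gap at exactly the step you flag as ``the main obstacle'': your treatment of the correction term from \eqref{Gstu} does not close.

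Concretely, after applying Cauchy--Schwarz together with \eqref{GMaa3}, \eqref{GMaa4}, you bound
\[
\absbb{\sum_j \frac{G_{j\mu}G_{\mu j}}{G_{\mu\mu}}} \;\prec\; \frac{\im G_{\mu\mu}}{\eta}\,,
\]
so the residual is of size $(\im m + \Lambda)/\eta$, and inside $\abs{Z_\mu}^2 = \frac{1}{N^2}\sum_{i,j}\abs{G^{(\mu)}_{ij}}^2$ this produces a term $\asymp \frac{\Lambda}{N^2\eta^2}$. You then claim $\Lambda/\eta$ is dominated by $N(\im m+\Theta)$ via $\im m\geq c\eta$, i.e.\ that $\Lambda\prec N\eta^2$. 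On $\Xi$ one only knows $\Lambda\leq(\log N)^{-1}$, while for $\eta$ near the bottom of $\f D$ (say $\eta\asymp N^{-1+\tau}$) one has $N\eta^2 \asymp N^{-1+2\tau}\ll(\log N)^{-1}$, so the absorption fails for the small-$\eta$ regime that the lemma is specifically meant to reach. The paper avoids this entirely by \emph{not} using Cauchy--Schwarz plus Ward on the correction: it estimates each factor pointwise using the definition of $\Lambda_o$ and the a~priori bound $\absb{G_{tt}^{(S)}}\asymp\sigma_t$ on $\Xi$ (proved by induction from \eqref{Gstu}), yielding $\abs{G_{jj}^{(\mu)}-G_{jj}}\leq C\sigma_j^2\Lambda_o^2$, hence a correction $\prec\Lambda_o^2/(N\eta)$. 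Since $\Lambda_o$ itself satisfies a self-improving estimate of the form $\ind\Xi\Lambda_o\prec\bigl(\frac{\im m+\Theta+\Lambda_o^2}{N\eta}\bigr)^{1/2}$, one can absorb $\Lambda_o/(N\eta)^{1/2}$ using $N\eta\geq N^{\tau}$, conclude $\ind\Xi\Lambda_o\prec\Psi_\Theta$, and only then feed this back into the $Z_s$ bound. Your version, in which the correction instead carries $\Lambda$ (not $\Lambda_o$), lacks both the quadratic smallness and the self-improving structure, and cannot be repaired by the monotonicity $\im m\geq c\eta$ alone.
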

\begin{proof}
The proof relies on the identities from Lemma \ref{lem:idG} and large deviation estimates, like that of \cite[Lemma 4.4]{BEKYY} and \cite[Theorems 6.8 and 6.9]{PY}. Note first that \eqref{m_i_bound}, combined with \eqref{m_sim_1} and \eqref{def_m_i}, yields
\begin{equation} \label{m_asymp_1}
\abs{m_s} \;\asymp\; \sigma_s \qquad (s \in \cal I)\,.
\end{equation}
Using \eqref{Gstu} and a simple induction argument, it is not hard to conclude that
\begin{equation} \label{Gtt_bound}
\ind{\Xi} \absb{G_{tt}^{(S)}} \;\asymp\; \sigma_t
\end{equation}
for any $S \subset \cal I$ and $t \in \cal I \setminus S$ satisfying $\abs{S} \leq C$.

Let us first estimate $\Lambda_o$ in \eqref{Lambda_o_Z_bound}. We shall in fact prove that
\begin{equation} \label{Gst_step1}
\ind{\Xi} \abs{G_{st}} \;\prec\; \sigma_s \sigma_t \pbb{\frac{\im m + \Theta + \Lambda_o^2}{N \eta}}^{1/2}
\end{equation}
for all $s \neq t \in \cal I$. From \eqref{Gst_step1} it is easy to deduce that $\ind{\Xi} \Lambda_o \prec \Psi_\Theta$.

Let us start with $G_{ij}$ for $i \neq j \in \cal I_M$. Using \eqref{Gab}, \eqref{Gtt_bound}, and a large deviation estimate (see \cite[Lemma 3.1]{BEKYY}), we find
\begin{equation} \label{Gij_estimate}
\ind{\Xi} \abs{G_{ij}} \;\leq\; \ind{\Xi} \absb{G_{ii} G_{jj}^{(i)}} \absBB{\sum_{\mu,\nu \in \cal I_N}  X_{i \mu} G^{(ij)}_{\mu \nu} X^*_{\nu j}} \;\prec\; \ind{\Xi} \sigma_i \sigma_j\pBB{\frac{1}{N^2} \sum_{\mu,\nu \in \cal I_N} \absb{G_{\mu \nu}^{(ij)}}^2}^{1/2}\,.
\end{equation}
The term in parentheses is
\begin{equation*}
\ind{\Xi} \frac{1}{N^2} \sum_{\mu,\nu \in \cal I_N} \absb{G_{\mu \nu}^{(ij)}}^2 \;=\; \ind{\Xi} \frac{1}{N^2 \eta} \sum_{\mu \in \cal I_N} \im G_{\mu \mu}^{(ij)} \;\prec\; \frac{1}{N^2 \eta} \sum_{\mu \in \cal I_N} \im G_{\mu \mu} + \frac{\Lambda_o^2}{N \eta} \;\leq\; \frac{\im m + \Theta_N + \Lambda_o^2}{N \eta}\,,
\end{equation*}
where in the first step we used \eqref{Gstu} and \eqref{Gtt_bound}. This yields \eqref{Gst_step1} for $s,t \in \cal I_M$.

Next, $\ind{\Xi} G_{\mu \nu}$ for $\mu \neq \nu$ is estimated similarly, using \eqref{Gij}, \eqref{GMaa}, \eqref{Im_m_geq_c}, and the bound $\im m_i \leq C \sigma_i^2 \im m$
for all $i \in \cal I_M$, as follows easily from \eqref{def_m_i}. Finally, $\ind{\Xi} G_{i \mu}$ with $i \in \cal I_M$ and $\mu \in \cal I_N$ is estimated similarly, using \eqref{Gia2}, \eqref{GMaa3}, Lemma \ref{lem:norm_XX}, and \eqref{GMaa4}. This concludes the estimate of $\ind{\Xi} \Lambda_o$. 

An analogous argument for $Z_s$   (see e.g.\ \cite[Lemma 5.2]{EKYY4}) \nc completes the proof of \eqref{Lambda_o_Z_bound}.

In order to prove \eqref{Lambda_o_Z_bound_initial}, we proceed similarly. For $\eta \geq 1$, we proceed as above to get $\abs{Z_s} \prec N^{-1/2}$, where we used that $\norm{G^{(s)}} \leq C$ by \eqref{opbound}. Similarly, as in \eqref{Gij_estimate} we get
\begin{equation*}
\abs{G_{ij}} \;\leq\; \absb{G_{ii} G_{jj}^{(i)}} \absBB{\sum_{\mu,\nu \in \cal I_N}  X_{i \mu} G^{(ij)}_{\mu \nu} X^*_{\nu j}} \;\prec\; \absb{G_{ii} G_{jj}^{(i)}} N^{-1/2}\,,
\end{equation*}
where in the last step we used that $\im G_{\mu \mu}^{(ij)} \leq C$, by \eqref{opbound}. Moreover, from \eqref{defGM} and \eqref{G=G} we get immediately that $\abs{G_{ii}} \leq C \sigma_i$, and a similar argument for $G^{(i)}$ implies that $\abs{G_{jj}^{(i)}} \leq C \sigma_j$. This concludes the proof.
\end{proof}

Recall the definition of $m_N$ from \eqref{def_m_N}.
From \eqref{Gii_exp} combined with \eqref{Gstu} and Lemma \ref{lem:Lambda_o_Z} we get, for $i \in \cal I_M$ and $z \in \f S$,
\begin{equation} \label{Gii_2}
\ind{\Xi} G_{ii} \;=\; \ind{\Xi} \frac{-\sigma_i}{1 + m_N \sigma_i + \sigma_i Z_i + O_\prec(\sigma_i \Psi_\Theta^2)}\,.
\end{equation}
Similarly, from \eqref{Gmumu_exp} we get, for $\mu \in \cal I_N$ and $z \in \f S$,
\begin{equation} \label{Gmumu_2}
\ind{\Xi} \frac{1}{G_{\mu \mu}} \;=\; \ind{\Xi} \pBB{-z - \frac{1}{N} \sum_{i \in \cal I_M} G_{ii} - Z_\mu + O_\prec(\Psi_\Theta^2)}\,.
\end{equation}

As in \cite[Lemma 4.7]{BEKYY}, it is easy to derive from \eqref{Gmumu_2} and Lemma \ref{lem:Lambda_o_Z} that
\begin{equation} \label{Gmumu-m}
\ind{\Xi} \abs{G_{\mu \mu} - m_N} \;\prec\; \Psi_\Theta
\end{equation}
for $\mu \in \cal I_N$ and $z \in \f S$. Hence, expanding $G_{\mu \mu} = m_N + (G_{\mu \mu} - m_N)$ and using \eqref{m_sim_1} yields
\begin{equation*}
\ind{\Xi} \frac{1}{N} \sum_{\mu \in \cal I_N} \frac{1}{G_{\mu \mu}} \;=\; \ind{\Xi} \frac{1}{m_N} + O_\prec(\Psi_\Theta^2)\,.
\end{equation*}
Plugging this and \eqref{Gii_2} into \eqref{Gmumu_2} yields
\begin{equation} \label{m_N_1}
\ind{\Xi} \frac{1}{m_N} \;=\; \ind{\Xi}\pBB{-z + \frac{1}{N} \sum_{i \in \cal I_M} \frac{\sigma_i}{1 + m_N \sigma_i + \sigma_i Z_i + O_\prec(\sigma_i \Psi_\Theta^2)} - \frac{1}{N} \sum_{\mu \in \cal I_N} Z_\mu + O_\prec(\Psi_\Theta^2)}\,.
\end{equation}
From \eqref{m_N_1}, Lemma \ref{lem:Lambda_o_Z}, and the estimate $\ind{\Xi} \abs{1 + m_N \sigma_1} \geq c$ (as follows from \eqref{1+xm}), we conclude the following result.

\begin{lemma} \label{lem:calD_est}
Suppose that the assumptions of Theorem \ref{thm:diag_gen} hold.
Define
\begin{equation*}
[Z]_N \;\deq\; \frac{1}{N} \sum_{\mu \in \cal I_N} Z_\mu\,, \qquad [Z]_M \;\deq\; \frac{1}{N} \sum_{i \in \cal I_M} \frac{\sigma_i^2}{(1 + m_N \sigma_i)^2} Z_i\,.
\end{equation*}
Then for $z \in \f S$ we have
\begin{equation} \label{calD_bound}
\ind{\Xi} (f(m_N) - z) \;=\; \ind{\Xi} \pb{[Z]_N  + [Z]_M + O_\prec(\Psi_\Theta^2)}\,,
\end{equation}
where $f$ was defined in \eqref{def_fx}.
\end{lemma}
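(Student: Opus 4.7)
The statement is essentially an algebraic consequence of equation \eqref{m_N_1}, combined with the definition \eqref{def_fx} of $f$ and a first-order expansion of the perturbed denominators $1 + m_N \sigma_i + \sigma_i Z_i + O_\prec(\sigma_i \Psi_\Theta^2)$ around $1 + m_N \sigma_i$. Using $\phi = M/N$ and $\pi = M^{-1} \sum_{i \in \cal I_M} \delta_{\sigma_i}$, we may rewrite
\begin{equation*}
f(m_N) - z \;=\; - \frac{1}{m_N} + \frac{1}{N} \sum_{i \in \cal I_M} \frac{\sigma_i}{1 + m_N \sigma_i} - z\,,
\end{equation*}
so the plan is to substitute the expression for $-\frac{1}{m_N} - z$ supplied by \eqref{m_N_1} and then to compare the two sums of the form $\frac{1}{N} \sum_i \frac{\sigma_i}{1 + m_N \sigma_i + (\text{small})}$.

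First I would record, on the event $\Xi$, that $\abs{m_N - m} \prec \Psi_\Theta \leq (\log N)^{-1/2}$ (from $\ind{\Xi}\Lambda \leq (\log N)^{-1}$ together with \eqref{Gmumu-m}), so that the hypothesis \eqref{1+xm} yields the quantitative bound $\abs{1 + m_N \sigma_i} \geq \tau/2$ for all $i \in \cal I_M$ and $z \in \f S$, once $N$ is large enough. By Lemma \ref{lem:Lambda_o_Z} we also have $\ind{\Xi} \abs{Z_i} \prec \Psi_\Theta$, hence $\ind{\Xi}\abs{\sigma_i Z_i + O_\prec(\sigma_i \Psi_\Theta^2)} \prec \sigma_i \Psi_\Theta$, so that on $\Xi$ the perturbed denominator appearing in \eqref{m_N_1} is itself bounded below in absolute value with high probability.

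The key algebraic step is the scalar resolvent identity $\frac{1}{a} - \frac{1}{a+b} = \frac{b}{a^2} - \frac{b^2}{a^2(a+b)}$ applied with $a = 1 + m_N \sigma_i$ and $b = \sigma_i Z_i + O_\prec(\sigma_i \Psi_\Theta^2)$. Multiplying by $\sigma_i$ gives
\begin{equation*}
\frac{\sigma_i}{1 + m_N \sigma_i} - \frac{\sigma_i}{1 + m_N \sigma_i + \sigma_i Z_i + O_\prec(\sigma_i \Psi_\Theta^2)} \;=\; \frac{\sigma_i^2 Z_i}{(1 + m_N \sigma_i)^2} + O_\prec(\sigma_i^2 \Psi_\Theta^2) + O_\prec(\sigma_i^3 \Psi_\Theta^2)\,,
\end{equation*}
where in the two error terms I have used the lower bound on $\abs{1 + m_N \sigma_i}$ and the boundedness $\sigma_i \leq \tau^{-1}$ from \eqref{bound_Sigma}. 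Averaging over $i \in \cal I_M$ and using $\phi \leq \tau^{-1}$ produces exactly $[Z]_M + O_\prec(\Psi_\Theta^2)$.

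Combining this with \eqref{m_N_1} yields
\begin{equation*}
\ind{\Xi} \pbb{- \frac{1}{m_N} - z} \;=\; \ind{\Xi} \pbb{- \frac{1}{N} \sum_{i \in \cal I_M} \frac{\sigma_i}{1 + m_N \sigma_i} + [Z]_M + [Z]_N + O_\prec(\Psi_\Theta^2)}\,,
\end{equation*}
which, added to $\frac{1}{N} \sum_i \frac{\sigma_i}{1 + m_N \sigma_i}$ on both sides, gives precisely \eqref{calD_bound}. I do not anticipate any genuine obstacle here; the only subtlety is the first paragraph's quantitative control of $\abs{1 + m_N \sigma_i}$ on $\Xi$, which is needed to legitimize the division by this quantity (and its square) in the Taylor expansion and to absorb the second-order term $\sigma_i b^2 / (a^2 (a + b))$ into $O_\prec(\Psi_\Theta^2)$ uniformly in $i$ and in $z \in \f S$.
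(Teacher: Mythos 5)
Your argument is correct and fills in exactly what the paper's one-line justification ("From \eqref{m_N_1}, Lemma \ref{lem:Lambda_o_Z}, and the estimate $\ind{\Xi}\abs{1+m_N\sigma_1}\geq c$ ... we conclude") intends: rewrite $f(m_N)-z$ as $-\tfrac1{m_N}-z+\tfrac1N\sum_i\tfrac{\sigma_i}{1+m_N\sigma_i}$, substitute \eqref{m_N_1}, and expand each perturbed denominator to first order using the uniform lower bound on $\abs{1+m_N\sigma_i}$ (obtained from \eqref{1+xm} and $\abs{m_N-m}\leq C\Lambda\leq C(\log N)^{-1}$ on $\Xi$) to identify $[Z]_M$ and absorb the quadratic remainder into $O_\prec(\Psi_\Theta^2)$. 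This is the same approach as the paper, just spelled out explicitly.
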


Next, we give the precise stability condition for the equation \eqref{def_m}. Roughly, it says that if $f(u(z)) - z$ is small and $u(\tilde z) - m(\tilde z)$ is small for $\tilde z \deq z + \ii N^{-5}$, then $u(z) - m(z)$ is small. (Recall that $f(m(z)) - z = 0$.)

\begin{definition}[Stability of \eqref{def_m} on $\f S$] \label{def:stability}
Let $\f S \subset \f D$ be a spectral domain (see Definition \ref{def:local_laws}). We say that \eqref{def_m} is \emph{stable on} $\f S$ if the following holds for some large enough constant $C > 0$.
Suppose that $\delta \col \f S \to (0,\infty)$ satisfies $N^{-2} \leq \delta(z) \leq (\log N)^{-1}$ for $z \in \f S$ and that $\delta$ is Lipschitz continuous with Lipschitz constant $N^2$. Suppose moreover that for each fixed $E$, the function $\eta \mapsto \delta(E + \ii \eta)$ is nonincreasing for $\eta > 0$. Suppose that $u \col \f S \to \C$ is the Stieltjes transform of a probability measure supported in $[0,C]$. Let $z \in \f S$ and suppose that
\begin{equation*}
\absb{f(u(z)) - z} \;\leq\; \delta(z)\,.
\end{equation*}
If $\im z < 1$ suppose also that
\begin{equation} \label{u-m_stability}
\abs{u - m} \;\leq\; \frac{C \delta}{\im m +  \sqrt{\delta}}
\end{equation}
holds at $z + \ii N^{-5}$. Then \eqref{u-m_stability} holds at $z$.
\end{definition}

This condition has previously appeared, in somewhat different guises, in the works \cite{PY, BEKYY, BaoPanZhou}, where it was established under various assumptions on $\pi$. For instance, in \cite{BaoPanZhou}, it was established for $\f S = \f D^e_1$ under the assumption that the edge $k = 1$ is regular (see Definition \ref{def:regular} (i)).
In Appendix \ref{sec:varrho}, we establish it for each of the subdomains $\f D^e_k$, $\f D^b_k$, and $\f D^o$ separately, under the regularity assumptions from Definition \ref{def:regular}.

In accordance with the assumptions of Theorem \ref{thm:diag_gen}, we suppose throughout this section that \eqref{def_m} is stable on $\f S$. Using \eqref{Lambda_o_Z_bound_initial} and \eqref{opbound}, it is easy to obtain the following result, which is analogous to \cite[Lemma 4.6]{BEKYY}.
\begin{lemma} \label{lem:initial}
Suppose that the assumptions of Theorem \ref{thm:diag_gen} hold.
Then we have $\Lambda \prec N^{-1/4}$ uniformly in $z \in \f S$ satisfying $\eta \geq 1$.
\end{lemma}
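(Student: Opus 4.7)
The plan is to combine the unconditional fluctuation bound \eqref{Lambda_o_Z_bound_initial}, the deterministic operator norm control from Lemmas \ref{lem:opnorm}--\ref{lem:norm_XX}, and the stability of the self-consistent equation \eqref{def_m}, which at $\im z \geq 1$ applies without the base-case hypothesis on $z + \ii N^{-5}$.

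First I would set up a priori bounds that come essentially for free at $\eta \geq 1$. From Lemma \ref{lem:opnorm} and Lemma \ref{lem:norm_XX} one obtains $|G_{ii}| \prec \sigma_i$, $|G_{\mu\mu}| \prec 1$, hence $\Theta \prec 1$, so $\Psi_\Theta \leq C N^{-1/2}$. The spectral representations \eqref{GN_spec}--\eqref{GM_spec}, together with the crude bound $|\lambda_k - z| \leq C$ (from Lemma \ref{lem:norm_XX} and $|z| \leq C$), also provide \emph{lower} bounds $\im m_N \geq c$ and $\im G_{\mu\mu} \geq c$. A short case distinction on the size of $\sigma_i$ then gives $|1 + m_N \sigma_i| \geq c'$ for all $i \in \cal I_M$ directly from $\im m_N \geq c$, avoiding any circular appeal to $|m_N - m|$. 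Combining these with \eqref{Lambda_o_Z_bound_initial} yields $|Z_s| + \Lambda_o \prec N^{-1/2}$.

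Next I would derive the self-consistent equation. Inserting these bounds into the Schur identities \eqref{Gii_exp}, \eqref{Gmumu_exp} and using \eqref{Gstu} to replace each $G^{(s)}$ by $G$ at cost $O_\prec(N^{-1})$, one obtains
\begin{equation*}
G_{ii} \;=\; -\frac{\sigma_i}{1 + m_N \sigma_i} + O_\prec(\sigma_i N^{-1/2})\,, \qquad \frac{1}{G_{\mu\mu}} \;=\; -z - \frac{1}{N}\tr G_M + O_\prec(N^{-1/2})\,.
\end{equation*}
Averaging the first display over $i \in \cal I_M$ gives $\frac{1}{N}\tr G_M = -\phi \int s(1 + m_N s)^{-1} \pi(\dd s) + O_\prec(N^{-1/2})$; averaging the second over $\mu$ and dividing by $m_N$ (valid since $|m_N| \geq \im m_N \geq c$) produces $f(m_N) - z = O_\prec(N^{-1/2})$.

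Finally, since $\im z \geq 1$, the stability of \eqref{def_m} (Definition \ref{def:stability}) applies unconditionally to $u = m_N$ with $\delta \asymp N^{-1/2}$ up to $\log N$ factors, giving $|m_N - m| \prec N^{-1/2}/(\im m + N^{-1/4}) \leq N^{-1/4}$. Feeding this back into the expressions from the previous step, telescoping yields $G_{ii} - m_i = O_\prec(\sigma_i^2 N^{-1/4})$ and $G_{\mu\mu} - m = O_\prec(N^{-1/4})$, which together with $\Lambda_o \prec N^{-1/2}$ closes the estimate $\Lambda \prec N^{-1/4}$. The main obstacle to anticipate is justifying the division by $1 + m_N \sigma_i$ in the derivation of the self-consistent equation without prior quantitative control on $m_N - m$; this is overcome by the lower bound $\im m_N \geq c$ available at $\eta \geq 1$, which sidesteps any bootstrap in $\eta$.
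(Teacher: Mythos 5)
Your approach is the one the paper intends: its proof is a one-sentence pointer to \eqref{Lambda_o_Z_bound_initial}, \eqref{opbound}, and the BEKYY analogue, and your unpacking—a priori bounds from Lemmas~\ref{lem:opnorm}--\ref{lem:norm_XX}, the observation $\im m_N \geq c$ at $\eta\geq 1$ to control the denominators $1+m_N\sigma_i$ and $G_{ii}$ without invoking $\Xi$, derivation of $f(m_N)-z = O_\prec(N^{-1/2})$ via \eqref{Gii_exp}--\eqref{Gmumu_exp} and \eqref{Gstu}, and application of Definition~\ref{def:stability} with the $z+\ii N^{-5}$ hypothesis vacuous since $\im z\geq 1$—is exactly what is needed. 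One small slip worth fixing: the error term in your first display should be $O_\prec(\sigma_i^2 N^{-1/2})$, not $O_\prec(\sigma_i N^{-1/2})$ (this is the factor $\sigma_i^2\Psi_\Theta$ recorded after \eqref{m_N_1} in the paper, obtained by expanding $-\sigma_i/(1+m_N\sigma_i + \sigma_i\cdot O_\prec(N^{-1/2}))$); with the weaker $\sigma_i N^{-1/2}$ the estimate $|G_{ii}-m_i|/\sigma_i^2 \prec N^{-1/4}$ does not close when $\sigma_i$ is small, whereas the extra factor of $\sigma_i$ is exactly what makes it work, and it does follow from \eqref{Gii_exp} by the computation you sketch.
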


Exactly as in \cite[Section 4]{BEKYY}, we use a stochastic continuity argument to estimate $\Lambda$, using Lemmas \ref{lem:calD_est} and \ref{lem:initial}. The major input is the stability of \eqref{def_m} on $\f S$ in the sense of Definition \ref{def:stability}, which is analogous to \cite[Lemma 4.5]{BEKYY}. Proposition \ref{prop:weak law} now follows by estimating the right-hand side of \eqref{calD_bound} by $O_\prec(\Psi_\Theta)$, as follows from Lemma \ref{lem:Lambda_o_Z}. In its proof, the error $\ind{\Xi} (G_{\mu \mu} - m)$ is controlled using \eqref{Gmumu-m} by $\Theta + \Psi_\Theta$; similarly, the error $\ind{\Xi} (G_{ii} - m_i)$ is controlled using \eqref{Gii_2} by $\sigma_i^2 \Psi_\Theta$. We omit further details. This concludes the proof of Proposition \ref{prop:weak law}.

\subsection{Fluctuation averaging and proof of Theorem \ref{thm:diag_gen}}

The weak law, Proposition \ref{prop:weak law}, may be upgraded to the strong law, Theorem \ref{thm:diag_gen}, using improved estimates for the averaged quantities $[Z]_M$ and $[Z]_N$. We follow the arguments of \cite[Section 4.2]{BEKYY} to the letter. The key input is the following result, which is very similar to \cite[Lemma 4.9]{BEKYY}, combined with the observation that $Z_s = (1 - \E_s) \frac{1}{G_{ss}}$ for $s \in \cal I$, as follows from \eqref{Gii_exp} and \eqref{Gmumu_exp}.

\begin{lemma}[Fluctuation averaging] \label{lem:fluct_avg}
Suppose that the assumptions of Theorem \ref{thm:diag_gen} hold.
Suppose that $\Upsilon$ is a positive, $N$-dependent, deterministic function on $\f S$ satisfying $N^{-1/2} \leq \Upsilon \leq N^{-c}$ for some constant $c > 0$. Suppose moreover that $\Lambda \prec N^{-c}$ and $\Lambda_o \prec \Upsilon$ on $\f S$. Then on $\f S$ we have
\begin{equation} \label{avg_conclusion}
\frac{1}{N} \sum_{\mu \in \cal I_N} \pb{1 - \E_\mu} \frac{1}{G_{\mu \mu}} \;=\; O_\prec(\Upsilon^2)
\end{equation}
and
\begin{equation} \label{avg_conclusion2}
\frac{1}{M} \sum_{i \in \cal I_M} \frac{\sigma_i^2}{(1 + m_N \sigma_i)^2} \pb{1 - \E_i} \frac{1}{G_{ii}} \;=\; O_\prec(\Upsilon^2) \,.
\end{equation}
\end{lemma}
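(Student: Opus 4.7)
The plan is to follow the high-moment/combinatorial argument of \cite[Lemma 4.9]{BEKYY}, suitably modified to accommodate the random weight in \eqref{avg_conclusion2}.

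Since $-1/\sigma_i - N^{-1}\tr G_N^{(i)}$ is $H^{(i)}$-measurable, identity \eqref{Gii_exp} yields $(1-\E_i) G_{ii}^{-1} = -Z_i$; similarly \eqref{Gmumu_exp} gives $(1-\E_\mu) G_{\mu\mu}^{-1} = -Z_\mu$. It therefore suffices to prove that $N^{-1}\sum_{\mu} Z_\mu$ and $M^{-1}\sum_i w_i Z_i$, with $w_i \deq \sigma_i^2/(1+m_N\sigma_i)^2$, are $O_\prec(\Upsilon^2)$. By Markov's inequality this reduces to high-moment bounds of the form
\[
\E\absBB{\frac{1}{N}\sum_{\mu\in\cal I_N} Z_\mu}^{2p} \;\leq\; N^\epsilon \Upsilon^{4p}
\]
for arbitrary fixed $p\in\N$ and $\epsilon>0$, and similarly for the weighted sum.

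To bound the $2p$-th moment, expand it as a sum over $(\mu_1,\dots,\mu_{2p})\in\cal I_N^{2p}$ of $\E\prod_{k} Z_{\mu_k}^{\sharp_k}$, where $\sharp_k$ alternates between identity and conjugation. Each factor $Z_{\mu_k}$ is a centered quadratic form in the $\mu_k$-th column of $X$, with coefficients given by entries of $G^{(\mu_k)}$. Iterating \eqref{Gstu} expresses every such entry in terms of entries of the common minor $G^{(\mu_1,\dots,\mu_{2p})}$ plus Schur corrections of the form $G_{sr}G_{rt}/G_{rr}$, each of which is $O_\prec(\Upsilon^2)$ thanks to $\Lambda_o\prec\Upsilon$ and the lower bound \eqref{Gtt_bound}. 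The remaining coefficients are measurable with respect to the common $\sigma$-algebra generated by $H^{(\mu_1,\dots,\mu_{2p})}$, so one may take the conditional expectation in the independent columns $X_{\cdot\mu_k}$. Any index $\mu_k$ appearing only once among $\{\mu_1,\dots,\mu_{2p}\}$ kills the contribution, since $\E_{\mu_k} Z_{\mu_k}=0$. Hence the surviving terms are indexed by partitions of $\{1,\dots,2p\}$ whose blocks all have size $\geq 2$; a partition with $q\leq p$ blocks yields at most $N^q$ summation multiplicities, while each coincidence of indices forces additional off-diagonal $G$-factors or higher conditional moments of $X$ beyond the baseline ones. Combining the large-deviation bounds of \cite[Lemma 3.1]{BEKYY} for polynomials in $X$, the bound $\Lambda_o\prec\Upsilon$, and the hypothesis $\Upsilon\geq N^{-1/2}$, the contribution of each partition is shown to be $\leq \Upsilon^{4p}$, exactly as in \cite[Section 4.2]{BEKYY}. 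This yields \eqref{avg_conclusion}.

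For \eqref{avg_conclusion2} the only new feature is that $w_i$ is random and depends on all columns of $X$ through $m_N$. I would replace $m_N$ in the weight by the $H^{(i)}$-measurable proxy $m_N^{(i)} \deq N^{-1}\tr G_N^{(i)}$; by \eqref{Gstu} and $\Lambda_o\prec\Upsilon$ one has $|m_N - m_N^{(i)}|\prec N^{-1}$, and on the domain of interest $|1+m_N^{(i)}\sigma_i|\geq c$ by \eqref{1+xm} together with $\Lambda\prec N^{-c}$, so the weight's denominator remains bounded below. The resulting substitution error in the averaged sum is $\prec N^{-1}\cdot M^{-1}\sum_i |Z_i| \prec N^{-1}\Upsilon$, which is negligible compared to $\Upsilon^2$. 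With the weight now $H^{(i)}$-measurable, it commutes with $\E_i$ and the argument of the preceding paragraph applies verbatim to the weighted sum. The main obstacle throughout is the combinatorial bookkeeping in the third paragraph: one must verify that every coincidence $\mu_k=\mu_l$ generates at least one additional factor of $\Upsilon^2$. This is where the bilinear, centered structure of $Z_\mu$ and $Z_i$ interacts with the a priori control $\Lambda_o\prec\Upsilon$, and is the heart of the fluctuation-averaging argument; the accounting is purely algebraic and combinatorial, and transfers from \cite[Section 4.2]{BEKYY} without essential change.
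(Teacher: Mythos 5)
Your proposal follows the same high-level route as the paper's proof, which is itself little more than a citation to \cite[Lemma 4.9]{BEKYY} and \cite[Theorem 4.7 and Appendix~B]{EKYY4} for \eqref{avg_conclusion}, together with a remark that the random coefficients $\sigma_i^2/(1+m_N\sigma_i)^2$ in \eqref{avg_conclusion2} are handled by peeling minor indices off $m_N$ via \eqref{Gstu}. Two points deserve attention. First, your bound $\abs{m_N - m_N^{(i)}}\prec N^{-1}$ does not follow from $\Lambda_o\prec\Upsilon$ alone: writing $m_N - m_N^{(i)}=N^{-1}\sum_\mu G_{\mu i}G_{i\mu}/G_{ii}$ and using $\abs{G_{\mu i}}\prec\sigma_i\Upsilon$ together with $\abs{G_{ii}}\asymp\sigma_i$ gives $\prec\Upsilon^2$ (a Ward-identity argument gives $\prec(N\eta)^{-1}$, but not $N^{-1}$ in general). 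This is still enough to render the substitution error negligible against $\Upsilon^2$, so the conclusion survives. Second, and more substantively: making the weight $H^{(i)}$-measurable by the single replacement $m_N\to m_N^{(i)}$ is only the first move, not the whole reduction. In the $2p$-th moment expansion the weight $\widehat w_{i_k}$ still depends on the rows $X_{i_l\,\cdot}$ for $l\neq k$, so the EKYY4 maximal expansion cannot be applied ``verbatim''; one must continue expanding $m_N^{(i)}$ with further minor indices $j,k,\dots$ drawn from the summation indices, which is exactly what the paper spells out (``continuing in this manner with any further indices''). Your phrase ``the argument applies verbatim'' glosses over this iteration, which is the very point the paper flags as the one genuine complication. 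With that iteration made explicit, your proposal and the paper's are essentially the same.
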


\begin{proof}
The estimate \eqref{avg_conclusion} is a trivial extension of \cite[Lemma 4.9]{BEKYY} and \cite[Theorem 4.7]{EKYY4}. The estimate \eqref{avg_conclusion2} may be proved using exactly the same method, explained in \cite[Appendix B]{EKYY4}. The only complication in the proof is that the coefficients $\frac{\sigma_i^2}{(1 + m_N \sigma_i)^2}$ are random and depend on $i$. Using \eqref{Gstu}, this is dealt with in the proof of \cite[Appendix B]{EKYY4} by writing, for any $j \in \cal I_N$,
\begin{equation*}
m_N \;=\; \frac{1}{N} \sum_{\mu \in \cal I_N} G^{(j)}_{\mu \mu} + \frac{1}{N} \sum_{\mu \in \cal I_N} \frac{G_{\mu j} G_{j \mu}}{G_{jj}}\,,
\end{equation*}
and continuing in this manner with any further indices $k,l,\dots \in \cal I_N$ that we wish to include as superscripts of $G_{\mu \mu}$. We refer to \cite[Appendix B]{EKYY4} for the full details.
\end{proof}

Using Lemmas \ref{lem:fluct_avg} and \eqref{lem:Lambda_o_Z} combined with \eqref{calD_bound} we get $\ind{\Xi} \abs{f(m_N) - z} \prec \Psi_\Theta^2$. Then we may follow the argument \cite[Section 4.2]{BEKYY} verbatim to get $\Theta \prec (N \eta)^{-1}$ on $\f S$. This concludes the proof of the averaged local law in Theorem \ref{thm:diag_gen}. Moreover, the entrywise local law follows immediately from Proposition \ref{prop:weak law}, Lemma \ref{lem:Lambda_o_Z}, \eqref{Gii_2}, and \eqref{Gmumu-m}. This concludes the proof of Theorem \ref{thm:diag_gen}. 

\section{Anisotropic local law for Gaussian $X$} \label{sec:iso_gauss}

We now begin the proof of Theorem \ref{thm:gen_iso}, which consists of Sections \ref{sec:iso_gauss}--\ref{sec:avg_proof}. In this section we perform the first step of the proof, by  establishing Theorem \ref{thm:gen_iso} for the special case that $X = X^{\txt{Gauss}}$ is Gaussian. This corresponds to Step (B) of the proof mentioned in the introduction.

\begin{proposition} \label{prop:Gaussian}
Theorem \ref{thm:gen_iso} holds if $X = X^{\txt{Gauss}}$ is Gaussian.
\end{proposition}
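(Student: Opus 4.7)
The plan is to bootstrap the entrywise local law for $(X^{\txt{Gauss}}, D, \f S)$ up to the anisotropic local law for $(X^{\txt{Gauss}}, \Sigma, \f S)$ in two stages: first use the rotational invariance of Gaussian ensembles to reduce to the diagonal case $\Sigma = D$, and then promote the entrywise law to an anisotropic law via a Gaussian integration-by-parts moment argument.

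\textbf{Stage 1: Reduction to diagonal $\Sigma$.} Write $\Sigma = U D U^*$ with $U$ orthogonal and $D$ as in \eqref{def_D_Sigma}. A direct inspection of \eqref{def_G} yields the conjugation identity
\begin{equation*}
G^\Sigma(X, z) \;=\; \begin{pmatrix} U & 0 \\ 0 & 1 \end{pmatrix} G^D(U^* X, z) \begin{pmatrix} U^* & 0 \\ 0 & 1 \end{pmatrix},
\end{equation*}
and the analogous conjugation identities hold for $\Pi$ (see \eqref{def_Pi}) and $\ul \Sigma$ (see \eqref{def_ul_Sigma}). Since $X^{\txt{Gauss}}$ has i.i.d.\ centered Gaussian entries, $U^* X^{\txt{Gauss}} \eqdist X^{\txt{Gauss}}$, and hence for any deterministic unit vectors $\f v, \f w$
\begin{equation*}
\scalarb{\f v}{\ul \Sigma^{-1}(G^\Sigma - \Pi^\Sigma) \ul \Sigma^{-1} \f w} \;\eqdist\; \scalarb{\tilde{\f v}}{\ul D^{-1}(G^D(X^{\txt{Gauss}}) - \Pi^D) \ul D^{-1} \tilde{\f w}},
\end{equation*}
where $\tilde{\f v}, \tilde{\f w}$ are the unit vectors obtained by applying $\mathrm{diag}(U^*, 1)$ to $\f v, \f w$. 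Thus it suffices to prove the proposition with $\Sigma = D$ diagonal. The averaged local law reduces similarly, since $m_N$ is the normalized trace of the $\cal I_N \times \cal I_N$ block of $G$ and is manifestly invariant under the above rotation.

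\textbf{Stage 2: From entrywise to anisotropic for diagonal $D$.} Fix a unit vector $\f v$. By polarization and Markov's inequality, it suffices to prove, for every fixed $p \in \N$ and every $\epsilon > 0$,
\begin{equation*}
\E \absb{\scalarb{\f v}{\ul D^{-1}(G^D - \Pi^D) \ul D^{-1} \f v}}^{2p} \;\leq\; N^\epsilon \Psi(z)^{2p}.
\end{equation*}
Split the quadratic form into its diagonal part $A \deq \sum_s \abs{v_s}^2 \sigma_s^{-2}(G_{ss}-\Pi_{ss})$ and its off-diagonal part $F \deq \sum_{s\neq t} \bar v_s \sigma_s^{-1} G_{st} \sigma_t^{-1} v_t$. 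The entrywise local law for $(X^{\txt{Gauss}}, D, \f S)$ controls $\abs{A} \prec \Psi$ directly. For $F$, we expand $\E \abs{F}^{2p} = \E F^p \bar F^p$ and apply Gaussian integration by parts (Stein's lemma) iteratively in the $X_{i\mu}$ variables. Each differentiation of a resolvent entry produces, via the resolvent identities \eqref{Gab}, \eqref{Gij}, \eqref{Gia2}, and \eqref{Gia3}, additional off-diagonal factors of $G$ (or of its minors $G^{(S)}$, which remain under control via \eqref{Gstu} and the entrywise law). The quantitative gain originates from the summed-index identities \eqref{GMaa4}, \eqref{GMaa}, \eqref{GMaa3}, and \eqref{GMaa2} of Lemma \ref{lemma: basic}, which produce the $\im m/(N\eta)$ smallness in $\Psi^2$, together with Lemma \ref{lem:norm_XX} to absorb factors of $\norm{X^* X}$.

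\textbf{Main obstacle.} The core difficulty lies in the combinatorial bookkeeping of the Gaussian integration-by-parts expansion: one must organize the resulting tree-sum so that every surviving term contains sufficiently many off-diagonal or averaged $G$-entries to yield the overall $\Psi^{2p}$ smallness, while every a priori bound on a generic entry $\abs{G_{st}^{(S)}}$ can be safely replaced by $O_\prec(1)$ (courtesy of the entrywise law). For the uncorrelated case $D = I_M$ this has been carried out in detail in \cite[Section 5]{BEKYY}; in our setting the only modifications are the weights $\sigma_s^{-1}$, harmless thanks to \eqref{bound_Sigma}, \eqref{1+xm}, and the relation $\abs{m_s} \asymp \sigma_s$ from \eqref{m_asymp_1}, and the presence of mixed $(M,N)$-type entries of $G$, handled by the extended identities \eqref{Gia2}--\eqref{Gia3}. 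No genuinely new ideas are required beyond those of \cite{BEKYY}, which is why the paper defers the details.
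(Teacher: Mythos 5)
Your proposal is correct and follows essentially the same two-step strategy as the paper: Gaussian rotation invariance to absorb the eigenvectors of $\Sigma$, followed by the moment (polynomialization) method of \cite[Section 5]{BEKYY} to upgrade entrywise control to anisotropic control. The only presentational difference is that the paper phrases the second step as ``the entrywise local law for $(X,D,\f S)$ implies the entrywise local law for $(X,\wt\Sigma,\f S)$ for every $\wt\Sigma$ with the same spectrum'' (Lemma \ref{lem:Gaussian}), and also allows an independent rotation $O_N$ in the $\cal I_N$-block, whereas you reduce directly to the anisotropic law for diagonal $D$; the underlying calculation is the same. One small caveat: calling the moment estimate ``Gaussian integration by parts'' is a slight misnomer — \cite[Section 5]{BEKYY}, which you (and the paper) cite for the bookkeeping, is a resolvent-expansion polynomialization argument (cf.\ \eqref{iso_exp} and the expansion of $G_{ii}^{(T)}$) rather than a Stein-type cumulant expansion, and indeed works for non-Gaussian entries; but since you explicitly defer to \cite{BEKYY} for the details, this does not affect the validity of the argument.
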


The rest of this section is devoted to the proof of Proposition \ref{prop:Gaussian}. We shall in fact prove the following result.
\begin{lemma} \label{lem:Gaussian}
Suppose that the assumptions of Theorem \ref{thm:gen_iso} hold and that $X = X^{\txt{Gauss}}$ is Gaussian.
If the entrywise local law holds with parameters $(X, D, \f S)$, then the entrywise local law holds with parameters $(X, \Sigma, \f S)$. (Recall the definition of $D \equiv D(\Sigma)$ from \eqref{def_D_Sigma}.)
\end{lemma}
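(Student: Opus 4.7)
The plan is to exploit the rotational invariance of the Gaussian $X$ to transport the claim for $\Sigma$ back to its diagonalization. Write $\Sigma = U D U^*$ with $U$ orthogonal, and set $V \deq \diag(U, I_N)$. A direct block computation using \eqref{def_G} gives $H^\Sigma(X) = V H^D(U^* X) V^*$, hence $G^\Sigma(X) = V G^D(U^* X) V^*$; moreover $\Pi^\Sigma = V \Pi^D V^*$ and $\ul\Sigma = V \ul D V^*$. Since the entries of $X$ are i.i.d.\ Gaussians with variance $1/N$, we have $U^* X \eqdist X$, so
\begin{equation*}
\ul\Sigma^{-1}\pb{G^\Sigma(X) - \Pi^\Sigma} \ul\Sigma^{-1}
\;\eqdist\; V\ul D^{-1}\pb{G^D(X) - \Pi^D}\ul D^{-1} V^*\,,
\end{equation*}
and reading off the $(s,t)$-entry of both sides shows that the entrywise local law with parameters $(X, \Sigma, \f S)$ is equivalent to
\begin{equation*}
\scalarB{\f v_s}{\ul D^{-1}\pb{G^D - \Pi^D}\ul D^{-1} \f v_t} \;\prec\; \Psi(z) \qquad (s, t \in \cal I)
\end{equation*}
uniformly in $z \in \f S$, where $\f v_s \deq V^* \f e_s$ is a deterministic unit vector. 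When $s \in \cal I_N$ one has $\f v_s = \f e_s$, and this is the assumed entrywise law for $(X, D, \f S)$; the nontrivial cases are those in which at least one of $s, t$ lies in $\cal I_M$, when $\f v_s$ is a general deterministic unit vector supported in the $\cal I_M$ block.

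The task therefore reduces to upgrading the assumed entrywise local law for $(X, D, \f S)$ to an \emph{anisotropic} statement in deterministic unit vectors of the $\cal I_M$ block. I would adapt the approach of \cite[Section 5]{BEKYY}: by polarization it suffices to control $\scalar{\f v}{\ul D^{-1}(G^D - \Pi^D)\ul D^{-1} \f v}$ for each fixed unit vector $\f v$. The diagonal contribution $\sum_i \abs{\f v(i)}^2 \sigma_i^{-2}(G^D_{ii} - \Pi^D_{ii})$ is handled directly by the entrywise hypothesis, while the off-diagonal part is rewritten via \eqref{Gab} and \eqref{Gstu} as
\begin{equation*}
\sum_{i \neq j} \bar{\f v}(i) \f v(j) \, \sigma_i^{-1} \sigma_j^{-1} \, G^D_{ii} G^{D,(i)}_{jj} \, \pb{X G^{D,(ij)} X^*}_{ij}\,,
\end{equation*}
together with analogous expansions for the mixed and $\cal I_N$ blocks coming from \eqref{Gia2}--\eqref{Gia3}. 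Each such sum is a quadratic form in the independent variables $X_{i\mu}$; the standard large-deviation bound, combined with the estimates $\abs{G^D_{ii}},\abs{G^{D,(i)}_{jj}}\asymp \sigma_i, \sigma_j$ (a consequence of the entrywise law and of \eqref{1+xm}) and the Ward-type identity \eqref{GMaa4}, controls the expression by $\abs{\f v}^2 \Psi$ up to $N^{o(1)}$ factors.

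The main obstacle is ensuring that these off-diagonal sums concentrate at the \emph{optimal} scale $\Psi$, without losing a factor of $N^{1/2}$ or more from summing over $M^2$ pairs $(i,j)$; this is exactly the fluctuation-averaging mechanism of \cite{BEKYY}, which relies on a high-moment estimate exploiting cancellations among the independent quadratic forms indexed by $(i,j)$. The key structural input is the uniform lower bound \eqref{1+xm}, which keeps all coefficients $\sigma_i^{-1} G^D_{ii}$ uniformly bounded and allows the whole argument to be carried out with constants independent of the spectrum of $\Sigma$. Once this anisotropic law for $(X, D, \f S)$ is in hand, the rotational identity of the first paragraph immediately yields the entrywise law for $(X, \Sigma, \f S)$.
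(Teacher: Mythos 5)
Your proposal matches the paper's proof. Both rest on the same two ingredients: using orthogonal invariance of the Gaussian $X$ to conjugate the linearizing matrix and reduce to the diagonal case ($G^\Sigma(X) \eqdist V G^D(X) V^*$, giving exactly the paper's displayed reduction to the three block estimates \eqref{UGU_ij}--\eqref{G_munu_bound}), and then upgrading the entrywise law for $(X,D,\f S)$ to an anisotropic one via the polynomialization / high-moment method of \cite[Section 5]{BEKYY}, controlling coefficients through \eqref{1+xm} (equivalently \eqref{m_i_bound}) and using the resolvent identities \eqref{Gab}, \eqref{Gstu}, \eqref{Gia2}--\eqref{Gia3}. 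The paper gives a little more detail on the specific expansion used for $G_{ii}^{(T)}$ (the geometric-series truncation after \eqref{iso_exp}), but your outline captures the same mechanism.
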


Before proving Lemma \ref{lem:Gaussian}, we show how it implies Proposition \ref{prop:Gaussian}. In order to prove the anisotropic local law, we have to estimate the left-hand side of \eqref{isotropic law}. We split $\f v = \f v_M + \f v_N$ and $\f w = \f w_M + \f w_N$, where $\f v_M, \f w_M \in \R^{\cal I_M}$ and $\f v_N, \f w_N \in \R^{\cal I_N}$. Plugging this into the left-hand side of \eqref{isotropic law}, we find that it suffices to control,  for arbitrary deterministic orthogonal matrices $O_M \in \r O(M)$ and $O_N \in \r O(N)$,  the entries of the matrix
\begin{equation} \label{rotation of matrix}
\begin{pmatrix}
O_M & 0
\\
0 & O_N
\end{pmatrix}
G^\Sigma
\begin{pmatrix}
O_M^* & 0
\\
0 & O_N^*
\end{pmatrix}
\;\eqdist\; G^{O_M \Sigma O_M^*}\,,
\end{equation}
where we used that $X \eqdist O_M X O_N^*$ since $X$ is Gaussian. Applying Lemma \ref{lem:Gaussian} to the matrices $\wt \Sigma = O_M \Sigma O_M^*$ and $D = D(\Sigma) = D(\wt \Sigma)$, we obtain the anisotropic local law with parameters $(X, \Sigma, \f S)$.  Moreover, the averaged local law follows by writing $\Sigma = U D U^*$ and setting $O_M = U^*$ and $O_N = 1$ in \eqref{rotation of matrix}. This concludes the proof of Proposition \ref{prop:Gaussian}.

\begin{proof}[Proof of Lemma \ref{lem:Gaussian}]
In this proof we abbreviate $G^D \equiv G$. Using \eqref{rotation of matrix} with $O_M = U^*$ and $O_N = 1$, we find that it suffices to prove
\begin{equation*}
\qBB{
\begin{pmatrix}
U & 0
\\
0 & 1
\end{pmatrix}
\begin{pmatrix}
D^{-1} & 0
\\
0 & 1
\end{pmatrix}
\qBB{G -
\begin{pmatrix}
-D (1 + mD)^{-1} & 0
\\
0 & m
\end{pmatrix}
}
\begin{pmatrix}
D^{-1} & 0
\\
0 & 1
\end{pmatrix}
\begin{pmatrix}
U^* & 0
\\
0 & 1
\end{pmatrix}
}_{st} \;=\; O_\prec(\Psi)
\end{equation*}
for all $s,t \in \cal I$.
In components, this reads
\begin{align} \label{UGU_ij}
\absBB{\sum_{k,l \in \cal I_M} U_{ik} \frac{G_{kl} - \delta_{kl} m_k}{\sigma_k \sigma_l} U^*_{lj}} &\;\prec\; \Psi\,,
\\ \label{UG_imu}
\absBB{\sum_{k \in \cal I_M} U_{ik} \frac{G_{k \mu}}{\sigma_k}} + \absBB{\sum_{k \in \cal I_M} \frac{G_{\mu k}}{\sigma_k} U_{ki}} &\;\prec\; \Psi\,,
\\ \label{G_munu_bound}
\absb{G_{\mu \nu} - \delta_{\mu \nu} m_\mu} &\;\prec\; \Psi\,,
\end{align}
for $i,j \in \cal I_M$ and $\mu, \nu \in \cal I_N$.

The estimate \eqref{G_munu_bound} is trivial by assumption. What remains is the proof of \eqref{UGU_ij} and \eqref{UG_imu}. It is based on the polynomialization method developed in \cite[Section 5]{BEKYY}. The argument is very similar to that of \cite{BEKYY}, and we only outline the differences.

Let us begin with \eqref{UGU_ij}. By the assumption $\abs{G_{kk} - m_k} \prec \Psi \sigma_k^2$ and orthogonality of $U$, we have
\begin{equation*}
\sum_{k,l} U_{ik} \frac{G_{kl} - \delta_{kl} m_k}{\sigma_k \sigma_l} U^*_{lj} \;=\; \sum_{k} U_{ik} \frac{G_{kk} - m_k}{\sigma_k^2} U^*_{kj} + \sum_{k \neq l} U_{ik} \frac{G_{kl}}{\sigma_k \sigma_l} U_{lj} \;=\; O_\prec(\Psi) + \cal Z\,,
\end{equation*}
where we defined $\cal Z \deq \sum_{k \neq l} (\sigma_k \sigma_l)^{-1} U_{ik} G_{kl} U_{lj}$. We need to prove that $\abs{\cal Z} \prec \Psi$, which, following \cite[Section 5]{BEKYY}, we do by estimating the moment $\E \abs{\cal Z}^p$ for fixed $p \in 2 \N$. The argument from \cite[Section 5]{BEKYY} may be taken over with minor changes. We use the identities \eqref{Gstu},
\begin{equation} \label{iso_exp}
\sum_{k \neq l \in \cal I_M \setminus T} (\sigma_k \sigma_l)^{-1} U_{ik} G_{kl}^{(T)} U_{lj} \;=\;  \sum_{k \neq l \in \cal I_M \setminus T} (\sigma_k \sigma_l)^{-1} U_{ik} U_{lj} G_{kk}^{(T)} G_{ll}^{(kT)} \pb{X G^{(klT)} X^*}_{kl}
\end{equation}
for $T \subset \cal I_M$ (which follows from \eqref{Gab}), \eqref{Gaa}, and
\begin{equation*}
G_{ii}^{(T)} \;=\; \frac{-\sigma_i}{1 + m \sigma_i + \sigma_i ((X G^{(i T)} X^*)_{ii} - m)} \;=\; \sum_{\ell = 0}^{L - 1} m_i^{\ell + 1} \pb{(X G^{(i T)} X^*)_{ii} - m}^\ell + O_\prec \pb{\sigma_i^{3 L + 1} \Psi^L}\,,
\end{equation*}
as follows from \eqref{Gaa}, \eqref{m_i_bound}, and $(X G^{(i T)} X^*)_{ii} - m = O_\prec(\sigma_i^2 \Psi)$ (which may itself be deduced from \eqref{Gaa}). We omit further details.

Finally, the proof of \eqref{UG_imu} is similar to that of \eqref{UGU_ij}. Writing $\cal Z' \deq \sum_{k \in \cal I_M} \sigma_k^{-1} U_{ik} G_{k \mu}$, we estimate $\E \abs{\cal Z'}^p$ for $p \in 2\N$ using the method of \cite[Section 5]{BEKYY}. Instead of \eqref{iso_exp} we use
\begin{equation*}
\sum_{k \in \cal I_M \setminus T} \sigma_k^{-1} U_{ik} G_{k \mu}^{(T)} \;=\; - \sum_{k \in \cal I_M \setminus T} \sigma_k^{-1} U_{ik} G_{kk}^{(T)} (X G^{(kT)})_{k \mu}\,.
\end{equation*}
The rest of the argument is the same as before.
\end{proof}

\section{Self-consistent comparison I: the main argument} \label{sec:comparison1}

In this section we establish Theorem \ref{thm:gen_iso} (i) under the additional assumption that the third moment of all entries of $X$ is zero.

\begin{proposition} \label{prop:comparison_1}
Suppose that the assumptions of Theorem \ref{thm:gen_iso} hold. Suppose moreover that $X$ satisfies the additional condition
\begin{equation} \label{zero_third_moment}
\E X_{i \mu}^3 \;=\; 0\,.
\end{equation}
If the anisotropic local law holds with parameters $(X^{\txt{Gauss}}, \Sigma, \f S)$, then the anisotropic local law holds with parameters $(X, \Sigma, \f S)$.
\end{proposition}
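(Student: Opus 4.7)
The plan is a self-consistent Green function comparison between $X^0 \deq X^{\txt{Gauss}}$, for which the anisotropic local law is assumed as input, and $X^1 \deq X$, coupled with a discrete bootstrap on the spectral scale $\eta$, following the scheme sketched around \eqref{bern_interpol}--\eqref{intr_sc}. Fix a large even integer $p$, and for a deterministic unit vector $\f v \in \bb S$ set $F_{\f v}^p(X,z) \deq \absb{\scalar{\f v}{(G(z) - \Pi(z)) \f v}}^p$. By hypothesis $\E F_{\f v}^p(X^0,z) \leq N^\epsilon \Psi(z)^p$; by Markov's inequality, polarization, and a standard $\epsilon$-net argument over $\f v$ and $z \in \f S$, it suffices to propagate the bound $\sup_{\f v \in \bb S} \E F_{\f v}^p(X^1,z) \leq N^{2\epsilon} \Psi(z)^p$ and then take $p$ large.

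To that end, interpolate via \eqref{bern_interpol} and apply \eqref{interpol25} with $F = F_{\f v}^p$ and $\ell$ large depending on $p$. The hypotheses \eqref{cond on entries of X} and \eqref{zero_third_moment} force the first three moments of $X_\alpha^0$ and $X_\alpha^1$ to match, so the coefficients defined by \eqref{K_n_def} satisfy $K_{n,\alpha}^\theta = 0$ for $n \in \{1,2,3\}$, and the sum in \eqref{interpol25} effectively starts at $n = 4$. The moment bound \eqref{moments of X-1} gives $\abs{K_{n,\alpha}^\theta} \leq C_n N^{-n/2}$. Using the identity $\partial_{X_\alpha} G = -G(\partial_{X_\alpha} H)G$, each derivative $(\partial_{X_\alpha})^n F_{\f v}^p$ expands into a polynomial in the generalized entries $G_{\f v s}$, $G_{\f v \f v}$, and $G_{st}$, where $s,t$ run over the coordinate directions determined by $\alpha = (i,\mu)$. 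The bookkeeping for these polynomials, organized by a word formalism to be developed in Section~\ref{sec:self-const_II}, classifies them into admissible monomial types.

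To estimate these monomials I run a bootstrap on the spectral scale. Partition the admissible range $[N^{-1+\tau}, 1]$ into $O(\delta^{-1})$ levels $\eta_k = N^{-k\delta}$ for a small fixed $\delta > 0$. At the base $\eta_0 = 1$, Lemma~\ref{lem:opnorm} together with \eqref{m_sim_1} gives the anisotropic local law trivially. Inductively, assume it holds at scale $\tilde \eta = \eta_{k-1} = N^\delta \eta_k$. Then the monotonicity of $\eta \mapsto \eta \im G_{\f w \f w}(E + \ii \eta)$ and an elementary Cauchy estimate along the vertical segment between $E + \ii \tilde \eta$ and $E + \ii \eta_k$ furnish rough a priori bounds at scale $\eta = \eta_k$ of the form $\abs{(G - \Pi)_{\f v \f w}} \prec N^{2\delta} \Psi(\tilde z)$ and $\im G_{\f w \f w} \prec \im m + N^{2\delta} \Psi(\tilde z)$. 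Inserting these into the derivative expansion, and collapsing summations $\sum_\alpha \abs{G_{\f v \alpha}}^2$ to factors of $\im G_{\f v \f v}/\eta \asymp \Psi^2 + \im m / (N\eta)$ via the Ward-type identities \eqref{GMaa4}--\eqref{GMaa2}, one verifies the self-consistent inequality
\[
\frac{\dd}{\dd \theta} \E F_{\f v}^p(X^\theta, z) \;\leq\; N^{C\delta}\, \Psi(z)^p \;+\; C \sup_{\f w \in \bb S} \E F_{\f w}^p(X^\theta, z)
\]
uniformly for $\theta \in [0,1]$ and $z$ on the spectral scale $\eta_k$. Taking the supremum over $\f v$ and applying Gr\"onwall's inequality in $\theta$ transfers the anisotropic local law from $\theta = 0$ to $\theta = 1$ at scale $\eta_k$. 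Choosing $\delta$ small enough (so that $N^{C\delta} \leq N^\epsilon$) and iterating through all $O(\delta^{-1})$ scales $k$ closes the bootstrap and proves the proposition.

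The main obstacle is the combinatorial accounting in the self-consistent bound: one must show that in every monomial surviving the $n \geq 4$ truncation of \eqref{interpol25}, the factor $N^{-n/2}$ from $K_{n,\alpha}^\theta$, the $N^2$ from summing over $\alpha \in \cal I_M \times \cal I_N$, the factors of $\Psi$ gained from differentiating occurrences of $(G - \Pi)_{\f v \f v}$, and the squared off-diagonal factors $\abs{G_{\f v \alpha}}^2$ contracted through \eqref{GMaa4}--\eqref{GMaa2}, combine to produce either an admissible $N^{C\delta} \Psi(z)^p$ contribution or one bounded by $\sup_{\f w} \E F_{\f w}^p$. This balance is precisely why the sum must start at $n \geq 4$: already at $n = 3$ the prefactor $N^{-3/2}$ is insufficient against the $N^2$ from the summation unless an additional factor is extracted, which requires the finer word analysis of Section~\ref{sec:self-const_II}. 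Under \eqref{zero_third_moment} the $n = 3$ term is absent, and the proof closes by the scheme above.
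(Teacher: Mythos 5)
Your high-level scheme is the right one and matches the paper: Bernoulli interpolation $X^\theta_\alpha = \chi^\theta_\alpha X^1_\alpha + (1 - \chi^\theta_\alpha) X^0_\alpha$, vanishing of $K_{n,\alpha}^\theta$ for $n \leq 3$ by moment matching, a self-consistent Gr\"onwall estimate of the form \eqref{intr_sc}, and a bootstrap in $\eta$ in multiplicative increments of $N^{-\delta}$. The deferred combinatorial bookkeeping (the ``word'' analysis) is also the right thing to defer. But two of the steps you do spell out are wrong in a way that breaks the bootstrap.

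First, the base case. You assert that Lemma~\ref{lem:opnorm} and \eqref{m_sim_1} ``give the anisotropic local law trivially'' at $\eta = 1$. They do not. At $\eta = 1$, $\Psi(z) \asymp N^{-1/2}$, while Lemma~\ref{lem:opnorm} only yields $\norm{\ul\Sigma^{-1}(G - \Pi)\ul\Sigma^{-1}} \prec 1$ — orders of magnitude weaker than $\Psi$. The paper's Lemma~\ref{lem:start_induction} does not claim the anisotropic local law at $\eta = 1$; it establishes only the much weaker property $(\f A_0)$, namely $\im G_{\bar{\f v}\bar{\f v}}(z) \prec \im m(z) + N^{C_0\delta}\Psi(z)$, which is trivial at $\eta = 1$ because both sides are $\asymp 1$ there (Lemma~\ref{lem:gen_prop_m} gives $\im m \gtrsim \eta$). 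The bootstrap is then $(\f A_{m-1}) \Rightarrow (\f C_m) \Rightarrow (\f A_m) \Rightarrow \cdots$: the anisotropic local law at the top scale $\eta \asymp 1$ is \emph{itself} a product of the comparison argument (it is $(\f C_1)$), not an input. Your induction hypothesis is the anisotropic local law, but your base case does not establish it, so the chain never starts.

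Second, the a priori bound at the smaller scale. You claim a ``Cauchy estimate along the vertical segment'' between $E + \ii \tilde\eta$ and $E + \ii \eta$ yields $\abs{(G - \Pi)_{\f v \f w}(z)} \prec N^{2\delta}\Psi(\tilde z)$. This fails for two reasons. (i) The derivative bound from Lemma~\ref{lem:opnorm} is $\norm{\ul\Sigma^{-1}\partial_z G\,\ul\Sigma^{-1}} \prec \eta^{-2}$, so a mean-value estimate over the segment of length $\tilde\eta - \eta \asymp \tilde\eta$ produces an error $\prec \eta^{-2}\tilde\eta = N^{\delta}/\eta$, which is astronomically larger than what you need on spectral scales $\eta \ll 1$. (ii) Even setting that aside, the rough bound that is actually achievable — and that the paper uses — is the weaker $O_\prec(N^{2\delta})$, not $O_\prec(N^{2\delta}\Psi(\tilde z))$; see \eqref{a_priori_1} in Lemma~\ref{cor:rough_bound}. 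The correct derivation is the spectral-decomposition argument of Lemma~\ref{lem:rough_bound}: decompose $G - \Pi$ using the eigenpairs of $H$, split the sum over eigenvalues into dyadic-type shells $|\lambda_k - E| \in [\eta_{l-1}, \eta_l)$, and bound each shell by $\im G_{\f x\f x}(E + \ii \eta_{l})$ evaluated at a scale $\geq \eta$. Only the bound on $\im G_{\f w \f w}$ propagates cleanly by the monotonicity of $\eta \mapsto \eta\,\im G_{\f w\f w}(E+\ii\eta)$, and that is the one the bootstrap hypothesis $(\f A_{m-1})$ actually supplies; the off-diagonal a priori bound is a separate, weaker, and more delicate fact. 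Your subsequent combinatorial accounting then has to be recalibrated to work with $O_\prec(N^{2\delta})$ for generic entries and gain the missing powers of $\Psi$ only from the summed Ward-type quantities \eqref{GMaa4}--\eqref{GMaa2}, which is exactly what makes the counting in Section~\ref{sec:words} nontrivial.
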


The rest of this section is devoted to the proof of Proposition \ref{prop:comparison_1}. This is the heart of the proof of the anisotropic local law.

\subsection{Sketch of the proof} \label{sec:sketch_compar}
Before giving the full proof of Proposition \ref{prop:comparison_1}, we outline the key ideas of the self-consistent comparison argument on which it relies. For simplicity, we ignore the factors $\ul \Sigma^{-1}$ on the left-hand side of \eqref{isotropic law} (hence obtaining a weaker bound; see \eqref{bound_Sigma}), so that by polarization it suffices to estimate $G_{\f v \f v} - \Pi_{\f v \f v}$ for a deterministic unit vector $\f v \in \R^{\cal I}$.

We introduce a family of interpolating matrices $(X^\theta)_{\theta \in [0,1]}$ satisfying $X^0 = G^{\txt{Gauss}}$ and $X^1 = X$. In order to prove $\abs{G_{\f v \f v}(X^1) - \Pi_{\f v \f v}} \prec \Psi$, it suffices to prove a high moment bound $\E \absb{G_{\f v \f v}(X^1) - \Pi_{\f v \f v}}^p \leq (N^{C \delta} \Psi)^p$ for any fixed $p \in \N$ and $\delta > 0$, and large enough $N$. Since, by assumption, this moment bound holds for $X^1$ replaced by $X^0$, using Gr\"onwall's inequality it suffices to prove that
\begin{equation} \label{self_const_sketch}
\frac{\dd}{ \dd \theta} \E \absb{G_{\f v \f v}(X^\theta) - \Pi_{\f v \f v}}^p \;\leq\; (N^{C \delta} \Psi)^p + C \, \E \absb{G_{\f v \f v}(X^\theta) - \Pi_{\f v \f v}}^p\,.
\end{equation}
Note that we estimate the derivative of the quantity $\E \absb{G_{\f v \f v}(X^\theta) - \Pi_{\f v \f v}}^p$ in terms of itself. It is therefore  important that the arguments $X^\theta$ are the same on both sides. In particular, a Lindeberg-type replacement of the matrix entries one by one would not work, and a continuous interpolation is necessary.  A common choice of continuous interpolation of random matrices, arising for example from Dyson Brownian motion, is $X^\theta = \sqrt{\theta} X^1 + \sqrt{1 - \theta} X^0$, where $X^0$ and $X^1$ are defined on a common probability space and are independent. With this choice of interpolation, however, the differentiation on the left-hand side of \eqref{self_const_sketch} leads to very complicated expressions that are hard to control. Instead, we interpolate by setting
$X^\theta_{i \mu} \deq \chi^\theta_{i\mu} X^1_{i \mu} + (1 - \chi^\theta_{i \mu}) X^0_{i \mu}$,
where $(\chi^\theta_{i \mu})$ is a family of i.i.d.\ Bernoulli random variables, independent of $X^0$ and $X^1$, satisfying $\P(\chi^\theta_{i \mu} = 1) = \theta$ and $\P(\chi^\theta_{i \mu} = 0) = 1 - \theta$. This may also be interpreted as a linear interpolation between the laws of $X^0_{i \mu}$ and $X^1_{i \mu}$. It gives rise to formulas that are simple enough for our comparison argument to work. In fact, after some calculations we find that it suffices to prove
\begin{equation} \label{sketch_main_est}
N^{-n/2} \sum_{i \in \cal I_M} \sum_{\mu \in \cal I_N} \absBB{\E \pbb{\frac{\partial}{\partial X^\theta_{i \mu}}}^n \absb{G_{\f v \f v}(X^\theta) - \Pi_{\f v \f v}}^p} \;\leq\; (N^{C \delta} \Psi)^p + C \, \E \absb{G_{\f v \f v}(X^\theta) - \Pi_{\f v \f v}}^p
\end{equation}
for all $n = 4, \dots, 4p$.

Computing the derivatives on the left-hand side of \eqref{sketch_main_est} leads to a product of terms of the form
\begin{equation}
\txt{(i)} \quad \abs{G_{\f v \f v} - \Pi_{\f v \f v}}\,, \qquad
\txt{(ii)} \quad G_{\f v i}, G_{\f v \mu}, G_{i \f v}, G_{\mu \f v}\,, \qquad
\txt{(iii)} \quad G_{i \mu}, G_{\mu i}\,,
\end{equation}
and their complex conjugates
(here we omit the argument $X^\theta$).
Terms of type (i) are simply kept as they are; they will be put into the second term on the right-hand side of \eqref{sketch_main_est}. Terms of type (ii) are the key to the gain that allows us to compensate losses from several other terms, such as the terms of type (iii) (see blow). The gain is obtained in combination with the summation over $i$ and $\mu$ on the left-hand side of \eqref{sketch_main_est}, according to estimates of the form
\begin{equation} \label{sketch_est_sum_i}
\frac{1}{N} \sum_{i \in \cal I_M} \abs{G_{\f v i}}^2 \;\prec\; \frac{\im G_{\f v \f v} + \eta}{N \eta}\,.
\end{equation}
Note that the right-hand side of \eqref{sketch_est_sum_i} cannot be estimated by either $N^{C \delta}\Psi$ or $\abs{G_{\f v \f v} - \Pi_{\f v \f v}}$, so that a further ingredient is required to obtain \eqref{sketch_main_est}. This ingredient takes the form of an a priori bound
\begin{equation} \label{sketch_a_priori_1}
\im G_{\f v \f v} \;\prec\; \im m + N^{C \delta} \Psi\,,
\end{equation}
which allows us to estimate the right-hand side of \eqref{sketch_est_sum_i} by $N^{C \delta} \Psi^2$. The a priori bound \eqref{sketch_a_priori_1} will be obtained from a bootstrapping, as explained below.

An estimate similar to \eqref{sketch_est_sum_i} may be obtained for $\frac{1}{N} \sum_{i \in \cal I_M} \abs{G_{\f v i}}$ by a simple application of Cauchy-Schwarz to \eqref{sketch_est_sum_i}. However, for $\frac{1}{N} \sum_{i \in \cal I_M} \abs{G_{\f v i}}^d$ with $d \geq 3$, we have to estimate $d - 2$ factors $\abs{G_{\f v i}}$ pointwise to recover \eqref{sketch_est_sum_i}. The trivial bound $\abs{G_{\f vi}} \leq \eta^{-1}$ is far too rough for small $\eta$. A more reasonable attempt is to write $\abs{G_{\f v i}} \leq \abs{G_{\f v i} - \Pi_{\f v i}} + \abs{\Pi_{\f v i}} \prec \max_{\abs{\f w} = 1} \abs{G_{\f w \f w} - \Pi_{\f w \f w}} + 1$, by polarization and $\norm{\Pi} \leq C$. This, however, leads to bounds of the form $\E \max_{\abs{\f w} = 1} \abs{G_{\f w \f w} - \Pi_{\f w \f w}}^p$, which are not affordable. Note that, as explained in \eqref{intr_sc}, bounds of the form $\max_{\abs{\f w} = 1} \E \abs{G_{\f w \f w} - \Pi_{\f w \f w}}^p$ would be affordable, and in fact we shall need such more general bounds in Section \ref{sec:self-const_II}.

The correct way to deal with the pointwise estimate of $\abs{G_{\f v i}}$ is a second type of a priori bound,
\begin{equation} \label{sketch_a_priori_2}
\abs{G_{\f x \f y}} \;\prec\; N^{2 \delta}\,,
\end{equation}
where $\f x$ and $\f y$ are deterministic unit vectors. As \eqref{sketch_a_priori_1}, the estimate \eqref{sketch_a_priori_2} will follow from the bootstrapping explained below. The need for an a priori bound of the form \eqref{sketch_a_priori_2} on the entries of $G$ is also apparent for terms of type (iii), which are also estimated using \eqref{sketch_a_priori_2}.

The preceding discussion was rather cavalier with the various constants $C$ in the factors $N^{C \delta}$. In fact, some care has to be taken to ensure that the factors of $N^{C \delta}$ arising from the use of the a priori bounds \eqref{sketch_a_priori_1} and \eqref{sketch_a_priori_2} are compensated by a sufficiently large number of factors of the form \eqref{sketch_est_sum_i}. Moreover, in order to be able to iterate this argument from large to small scales, as explained in the next paragraph, we need an explicit bound of the constant $C$ in \eqref{sketch_main_est} in terms of the constant $C$ in \eqref{sketch_a_priori_1}.

Finally, we outline the bootstrapping. The above argument can be carried out for $\eta = 1$ using the trivial a priori bounds where the right-hand sides of \eqref{sketch_a_priori_1} and \eqref{sketch_a_priori_2} are replaced by $1$. The idea of the bootstrapping is to fix a small exponent $\delta > 0$ and to proceed from larger scales $\eta$ to smaller scales in multiplicative increments of $N^{-\delta}$, i.e.\ $\eta = 1, N^{-\delta}, N^{-2 \delta}, \dots, N^{-1}$. Suppose that we have proved the anisotropic local law at the scale $\eta = N^{-\delta l}$. In particular, since $\abs{\Pi_{\f x \f y}} \leq C$, we have proved for $\eta = N^{-\delta l}$ that $\abs{G_{\f x \f y}(X, E + \ii \eta)} \prec 1$ for any $X$ satisfying the assumptions \eqref{cond on entries of X} and \eqref{moments of X-1}. Starting from this bound, it is not hard to derive the a priori bound \eqref{sketch_a_priori_2} at $z = E + \ii N^{-\delta} \eta$. Similarly, by using simple monotonicity properties of the Stieltjes transform, we obtain the other a priori bound \eqref{sketch_a_priori_1} at $z = E + \ii N^{-\delta} \eta$. These estimates hold for $X = X^\theta$ for all $\theta \in [0,1]$, since $X^\theta$ satisfies the assumptions \eqref{cond on entries of X} and \eqref{moments of X-1} uniformly in $\theta$. Note that this bootstrapping on $\eta$ is different from the stochastic continuity argument commonly used in establishing local laws (see e.g.\ \cite[Section 4.1]{BEKYY}), since the multiplicative steps of size $N^{-\delta}$ are much too large for a continuity argument to work; all that they provide are the rather crude a priori bounds, \eqref{sketch_a_priori_1} and \eqref{sketch_a_priori_2}.

Since $\delta > 0$ is fixed, the bootstrapping consists of $O(\delta^{-1})$ steps. The resulting estimates contain an extra factor $N^{C \delta}$. Since $\delta > 0$ can be made arbitrarily small, the claim will follow on all scales.

\subsection{Bootstrapping on the spectral scale}

We now move on to the proof of Proposition \ref{prop:comparison_1}. By polarization, it suffices to prove that for any deterministic unit vector $\f v \in \R^{\cal I}$ we have
\begin{equation} \label{claim_prop_comp1}
\scalarb{\f v}{ \ul \Sigma^{-1} \pb{G(z) - \Pi(z)} \ul \Sigma^{-1} \f v} \;=\; O_\prec(\Psi(z))
\end{equation}
for $z \in \f S$. In fact, we shall prove \eqref{claim_prop_comp1} for all $z \in \wh{\f S}$ in some discrete subset $\wh{\f S} \subset \f S$. We take $\wh{\f S}$ to be an $N^{-10}$-net in $\f S$ (i.e.\ for every $z \in \f S$ there is a $w \in \wh{\f S}$ such that $\abs{z - w} \leq N^{-10}$) satisfying $\abs{\wh{\f S}} \leq N^{20}$. The function $z \mapsto \Sigma^{-1} \pb{G(z) - \Pi(z)} \ul \Sigma^{-1}$ is Lipschitz continuous (with respect to the operator norm) in $\f S$ with Lipschitz constant $(C + \norm{X^* X}) N^2$, as follows from Lemma \ref{lem:opnorm} and the bound \eqref{1+xm}. Using Lemma \ref{lem:norm_XX} it is therefore not hard to see that \eqref{claim_prop_comp1} for all $z \in \f S$ follows provided we can prove \eqref{claim_prop_comp1} for all $z \in \wh{\f S}$.

The proof consists of a bootstrap argument from larger scales to smaller scales in multiplicative increments of $N^{-\delta}$. Here
\begin{equation} \label{cond_delta}
\delta \;\in\; \pbb{0, \frac{\tau}{2 C_0}}\,,
\end{equation}
is fixed, where $C_0 > 0$ is a universal constant that will be chosen large enough in the proof ($C_0 = 25$ will work).
For any $\eta \geq N^{-1}$ we define $\eta_0 \leq \eta_1 \leq \dots \leq \eta_L$, where
\begin{equation} \label{def_L_eta}
L \;\equiv\; L(\eta) \;\deq\; \max \hb{l \in \N \col \eta N^{\delta (l -1)} < 1}\,,
\end{equation}
through
\begin{equation} \label{def_eta_k}
\eta_l \;\deq\; \eta N^{\delta l} \qquad (l = 0, \dots, L - 1), \qquad \eta_L \;\deq\; 1\,.
\end{equation}
Note that $L \leq \delta^{-1} + 1$.

We shall always work with a net $\wh{\f S}$ satisfying the following condition.
\begin{definition}
Let $\wh{\f S}$ be an $N^{-10}$-net of $\f S$ satisfying $\abs{\wh{\f S}} \leq N^{20}$ and the condition
\begin{equation*}
E + \ii \eta \in \wh{\f S} \qquad \Longrightarrow \qquad E + \ii \eta_l \in \wh{\f S} \quad \txt{for} \quad l = 1, \dots, L(\eta)\,.
\end{equation*}
\end{definition}

The bootstrapping is formulated in terms of two scale-dependent properties, \textbf{($\f A_m$)} and \textbf{($\f C_m$)}, formulated on the subsets
\begin{equation*}
\wh{\f S}_m \;\deq\; \hb{z \in \wh{\f S} \col \im z \geq N^{-\delta m}}\,.
\end{equation*}
We denote by $\bb S \deq \h{\f v \in \R^{\cal I} \col \abs{\f v} = 1}$ the unit sphere of $\R^{\cal I}$, and abbreviate
\begin{equation} \label{def_bar_v}
\bar {\f v} \;\deq\; \ul \Sigma^{-1} \f v
\end{equation}
for $\f v \in \bb S$.

\begin{itemize}
\item[\textbf{($\f A_m$)}]
For all $z \in \wh{\f S}_m$, all deterministic $\f v \in \bb S$, and all $X$ satisfying \eqref{cond on entries of X} and \eqref{moments of X-1}, we have
\begin{equation} \label{bound_assumption}
\im G_{\bar {\f v} \bar {\f v}}(z) \;\prec\; \im m(z) + N^{C_0 \delta} \Psi(z)\,.
\end{equation}
\item[\textbf{($\f C_m$)}]
For all $z \in \wh{\f S}_m$, all deterministic $\f v \in \bb S$, and all $X$ satisfying \eqref{cond on entries of X} and \eqref{moments of X-1}, we have
\begin{equation} \label{bound_goal}
\absb{G_{\bar {\f v} \bar {\f v}}(z) - \Pi_{\bar {\f v} \bar {\f v}}(z)} \;\prec\; N^{C_0 \delta} \Psi(z)\,.
\end{equation}
\end{itemize}

The bootstrapping is started by the following result.
\begin{lemma} \label{lem:start_induction}
Property \textbf{($\f A_0$)} holds.
\end{lemma}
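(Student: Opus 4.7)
The plan is to verify property $(\mathbf{A}_0)$ by a direct and essentially deterministic calculation, using only the operator-norm bound from Lemma~\ref{lem:opnorm} together with the trivial observation that $\im m$ is macroscopically large on the domain $\widehat{\mathbf{S}}_0 = \{\im z \geq 1\}$. No serious obstacle is expected; the point is simply that at the macroscopic scale $\eta \ge 1$, the quantity $\im G_{\bar{\mathbf{v}}\bar{\mathbf{v}}}$ is bounded by a deterministic constant, while the right-hand side of \eqref{bound_assumption} is already of order one.

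First I would exploit the identity $\ul\Sigma^{-1}\ul\Sigma\,\ul\Sigma^{-1}=\ul\Sigma^{-1}$ to write, for any deterministic real $\mathbf{v}\in\mathbb{S}$ and $\bar{\mathbf{v}}=\ul\Sigma^{-1}\mathbf{v}$,
\begin{equation*}
G_{\bar{\mathbf{v}}\bar{\mathbf{v}}} \;=\; \scalarb{\mathbf{v}}{\ul\Sigma^{-1}G\,\ul\Sigma^{-1}\mathbf{v}} \;=\; -\scalarb{\mathbf{v}}{\ul\Sigma^{-1}\mathbf{v}} \;+\; \scalarb{\mathbf{v}}{\ul\Sigma^{-1}(G+\ul\Sigma)\ul\Sigma^{-1}\mathbf{v}}.
\end{equation*}
Because $\ul\Sigma^{-1}$ is Hermitian positive and $\mathbf{v}$ is real, the first term $\scalar{\mathbf{v}}{\ul\Sigma^{-1}\mathbf{v}}$ is a nonnegative real number. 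Taking imaginary parts therefore yields
\begin{equation*}
\im G_{\bar{\mathbf{v}}\bar{\mathbf{v}}} \;=\; \im \scalarb{\mathbf{v}}{\ul\Sigma^{-1}(G+\ul\Sigma)\ul\Sigma^{-1}\mathbf{v}}.
\end{equation*}

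Next I would invoke Lemma~\ref{lem:opnorm}, which gives the deterministic bound $\norm{\ul\Sigma^{-1}(G+\ul\Sigma)\ul\Sigma^{-1}} \leq C\norm{X^*X}\eta^{-1}$, together with Lemma~\ref{lem:norm_XX}, which ensures $\norm{X^*X} \prec 1$ uniformly over all $X$ satisfying \eqref{cond on entries of X} and \eqref{moments of X-1}. Combined with $|\mathbf{v}|=1$, this produces
\begin{equation*}
\abs{\im G_{\bar{\mathbf{v}}\bar{\mathbf{v}}}} \;\prec\; \eta^{-1}
\end{equation*}
uniformly in $z\in\widehat{\mathbf{S}}_0$ and $\mathbf{v}\in\mathbb{S}$.

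Finally, I would use that $z\in\widehat{\mathbf{S}}_0$ forces $\eta\geq 1$, so the above yields $\im G_{\bar{\mathbf{v}}\bar{\mathbf{v}}}\prec 1$. Since $z\in\mathbf{D}$ satisfies $\tau\leq|z|\leq\tau^{-1}$, Lemma~\ref{lem:gen_prop_m} (equation~\eqref{Im_m_geq_c}) gives $\im m(z) \geq C^{-1}\eta \geq C^{-1}$. Thus $\im G_{\bar{\mathbf{v}}\bar{\mathbf{v}}} \prec \im m(z)$, which is stronger than \eqref{bound_assumption}. Uniformity in $\mathbf{v}\in\mathbb{S}$ and in admissible $X$ follows from the uniformity of the bounds used, completing the verification of $(\mathbf{A}_0)$.
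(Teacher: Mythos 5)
Your proof is correct and follows essentially the same route as the paper: at the macroscopic scale $\eta \ge 1$, control $\im G_{\bar{\f v}\bar{\f v}}$ deterministically via the operator-norm bound of Lemma~\ref{lem:opnorm} (together with $\norm{X^*X}\prec 1$ from Lemma~\ref{lem:norm_XX}), then absorb the resulting $O_\prec(1)$ bound into $\im m \gtrsim 1$ via Lemma~\ref{lem:gen_prop_m}. The only difference is the choice of shift: the paper writes $G = \Pi + (G-\Pi)$ and therefore needs the definition of $\Pi$ and \eqref{1+xm} to bound $\im\Pi_{\bar{\f v}\bar{\f v}}$, whereas you write $G = -\ul\Sigma + (G+\ul\Sigma)$ and observe that the real symmetric shift $\ul\Sigma$ contributes nothing to the imaginary part, so that Lemma~\ref{lem:opnorm} applies directly without invoking \eqref{1+xm} at all. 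This is a small but genuine streamlining. One cosmetic inaccuracy: the domain $\f D$ only gives $\abs{z}\le\abs{E}+\eta\le 2\tau^{-1}$ rather than $\abs{z}\le\tau^{-1}$, but this is immaterial since Lemma~\ref{lem:gen_prop_m} holds for any fixed (renamed) $\tau$.
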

\begin{proof}
Using the definition of $\Pi$, the assumption \eqref{1+xm}, and Lemmas \ref{lem:opnorm} and \ref{lem:norm_XX}, we find for $z \in \wh {\f S}_0$
\begin{equation*}
\im G_{\bar {\f v} \bar {\f v}}(z) \;\leq\; \im \Pi_{\bar {\f v} \bar {\f v}}(z) + \normb{\ul \Sigma^{-1} \pb{G(z) - \Pi(z)} \ul \Sigma^{-1}} \;\prec\; 1\,.
\end{equation*}
The claim now follows from Lemma \ref{lem:gen_prop_m}.
\end{proof}

The key step is the following result.

\begin{lemma} \label{lem:induction}
Under the assumptions of Proposition \ref{prop:comparison_1}, for any $1 \leq m \leq \delta^{-1}$, property \textbf{($\f A_{m - 1}$)} implies property \textbf{($\f C_m$)}.
\end{lemma}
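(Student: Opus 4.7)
The plan is to implement the self-consistent comparison scheme sketched in Section \ref{sec:sketch_compar}. Fix a deterministic $\f v \in \bb S$ and a large even integer $p$, and set $F(X,z) \deq \abs{G_{\bar{\f v}\bar{\f v}}(X,z) - \Pi_{\bar{\f v}\bar{\f v}}(z)}^p$. Markov's inequality combined with a union bound over an $N^{-10}$-net of $\wh{\f S}_m$ (the map $z \mapsto G(z) - \Pi(z)$ is polynomially Lipschitz by Lemmas \ref{lem:opnorm} and \ref{lem:norm_XX}) reduces property $(\f C_m)$ to the single-$z$ moment estimate $\E F(X,z) \leq (N^{C_0\delta}\Psi(z))^p$. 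I interpolate between the Gaussian ensemble $X^0 \deq X^{\txt{Gauss}}$, for which $\E F(X^0,z) \leq (N^\e \Psi)^p$ by Proposition \ref{prop:Gaussian}, and $X^1 \deq X$ via the Bernoulli interpolation $X^\theta_{i\mu} \deq \chi^\theta_{i\mu} X^1_{i\mu} + (1 - \chi^\theta_{i\mu}) X^0_{i\mu}$, where $(\chi^\theta_\alpha)$ are i.i.d.\ Bernoulli-$\theta$ variables independent of $X^0, X^1$. By Grönwall's inequality in $\theta$ it then suffices to prove the self-consistent estimate
\begin{equation*}
\frac{\dd}{\dd\theta}\,\E F(X^\theta,z) \;\leq\; (N^{C_0\delta}\Psi(z))^p + C\,\E F(X^\theta,z)\,.
\end{equation*}

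Applying the interpolation identity \eqref{interpol25}--\eqref{K_n_def}, the assumption \eqref{zero_third_moment} together with \eqref{cond on entries of X} guarantees that the first three moments of $X^0$ and $X^1$ match, so the coefficients $K^\theta_{n,\alpha}$ vanish for $n \leq 3$ and satisfy $\absb{K^\theta_{n,\alpha}} \leq C_p N^{-n/2}$ for $n \geq 4$; terms with $n > 4p$ are absorbed into the residual error $\cal E_\ell$ using the crude bound $\norm{G} \leq C/\eta$ from Lemma \ref{lem:opnorm}. The task therefore reduces to bounding, for each $4 \leq n \leq 4p$,
\begin{equation*}
\cal T_n \;\deq\; N^{-n/2} \sum_{i \in \cal I_M,\,\mu \in \cal I_N} \absB{\E\, \pB{\partial_{X^\theta_{i\mu}}}^n F(X^\theta,z)}
\end{equation*}
by the right-hand side of the self-consistent estimate. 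Differentiating using \eqref{Gii}--\eqref{Gia3} expands $(\partial_{X_{i\mu}})^n F$ into polynomials whose factors fall into three types: (i) $G_{\bar{\f v}\bar{\f v}} - \Pi_{\bar{\f v}\bar{\f v}}$, to be absorbed into $C\,\E F$; (ii) mixed entries $G_{\bar{\f v},\,i}$, $G_{\bar{\f v},\,\mu}$, $G_{i,\,\bar{\f v}}$, $G_{\mu,\,\bar{\f v}}$, which are paired and summed against the $(i,\mu)$ summation; and (iii) $(i,\mu)$-indexed entries $G_{ii}, G_{\mu\mu}, G_{i\mu}, G_{\mu i}$, which must be estimated pointwise.

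The a priori bounds that make this estimate feasible are extracted from $(\f A_{m-1})$ applied at the larger scale $z' \deq E + \ii\eta_1$, $\eta_1 \deq N^\delta \eta$; since $\eta \geq N^{-\delta m}$, one has $z' \in \wh{\f S}_{m-1}$. The monotonicity of $y \mapsto y\,\im G_{\bar{\f w}\bar{\f w}}(E + \ii y)$, which is immediate from the spectral decomposition \eqref{decompfullG}, then transfers the bound \eqref{bound_assumption} at $z'$ down to the working scale as
\begin{equation*}
\im G_{\bar{\f w}\bar{\f w}}(z) \;\leq\; N^\delta\,\im G_{\bar{\f w}\bar{\f w}}(z') \;\prec\; \im m(z) + N^{(C_0+C)\delta}\,\Psi(z)\,,
\end{equation*}
uniformly in $\f w \in \bb S$; a Ward-type inequality for $G + \ul\Sigma$ obtained from \eqref{decompfullG} then converts this into a pointwise bound on generic $\abs{G_{\f x\f y}(z)}$ (rough, but sufficient). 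With these inputs, the type-(ii) factors contract in pairs via Lemma \ref{lemma: basic}, producing the desired gain $\sum_i \abs{G_{\bar{\f v},i}}^2 + \sum_\mu \abs{G_{\bar{\f v},\mu}}^2 \prec \im G_{\bar{\f v}\bar{\f v}}/\eta \prec N^{O(\delta)}\Psi^2$, while any surviving undifferentiated factors of $\abs{G_{\bar{\f v}\bar{\f v}} - \Pi_{\bar{\f v}\bar{\f v}}}^{p-k}$ are handled by Hölder.

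The main obstacle, and the bulk of the remaining work in this section and Section \ref{sec:self-const_II}, is the combinatorial bookkeeping of the polynomial expansion of $(\partial_{X_{i\mu}})^n F$: one must verify that in every monomial enough pairs of summed type-(ii) factors appear to produce sufficient $\Psi$-gain to dominate the accumulated pointwise losses from type-(iii) factors, and --- crucially for the $\delta^{-1}$-fold bootstrap in $m$ to close --- that the final exponent $C_0$ is an absolute constant independent of $m$ and $\delta$. This will be achieved by encoding each term through an appropriate family of words, from which the number of summed factors and the overall power of $N^\delta$ can be read off directly.
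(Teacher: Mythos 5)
Your overall scheme---Markov plus a net, Bernoulli interpolation, Gr\"onwall, differentiating $F_{\f v}^p$ and reading the monomials off a word expansion, pairing the mixed entries $G_{\bar{\f v} i}, G_{\bar{\f v}\mu}$ against the $(i,\mu)$ summation---matches the paper's architecture closely, and the reduction to a single-$z$ moment bound and the self-consistent differential inequality are stated correctly for the case at hand (the $\sup_{\f w}$ may indeed be dropped under \eqref{zero_third_moment}). The step where your proposal breaks down is the derivation of the second a priori bound, the analogue of \eqref{a_priori_1}, namely $\ul\Sigma^{-1}(G(z)-\Pi(z))\ul\Sigma^{-1}=O_\prec(N^{2\delta})$ for $z\in\wh{\f S}_m$.

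You claim that transferring \eqref{bound_assumption} one step from $z'=E+\ii\eta_1$ down to $z$ via the monotonicity of $y\mapsto y\,\im G(E+\ii y)$, together with a ``Ward-type inequality'' from \eqref{decompfullG}, gives a pointwise bound on generic $\abs{G_{\f x\f y}(z)}$. It does not: the spectral decomposition and Cauchy--Schwarz yield at best
\begin{equation*}
\absb{(G+\ul\Sigma)_{\f x\f y}(z)} \;\leq\; \sqrt{\frac{\im G_{\f x\f x}(z)}{\eta}}\sqrt{\frac{\im G_{\f y\f y}(z)}{\eta}}\,,
\end{equation*}
and even with the imaginary parts controlled by $O_\prec(N^{2\delta}(\im m+N^{C_0\delta}\Psi))=O_\prec(N^{O(\delta)})$ the right-hand side is of order $N^{O(\delta)}\eta^{-1}$. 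In the bulk ($\im m\asymp1$) and for $\eta$ near $N^{-1+\tau}$ this is $\sim N^{1-\tau+O(\delta)}$, vastly larger than the required $N^{2\delta}$. Such a loss of $\eta^{-1}$ per undifferentiated $G$-factor is not recoverable: in the word estimate \eqref{w_est} one would then accumulate $\eta^{-O(p)}$, which cannot be offset by the $\Psi$-gains from the $(i,\mu)$-sums. The correct derivation (the paper's Lemma \ref{lem:rough_bound}) is genuinely multiscale: it decomposes the spectral sum $\sum_k\abs{\scalar{\f x}{\f u_k}}^2/\abs{\lambda_k-z}$ into dyadic shells $\eta_{l-1}\leq\abs{\lambda_k-E}<\eta_l$ for $l=0,\dots,L+1$, bounds the $l$-th shell by $O(N^{2\delta})\im G_{\f x\f x}(E+\ii\eta_{l\vee1})$, and uses that \emph{every} point $E+\ii\eta_l$ with $l\geq1$ lies in $\wh{\f S}_{m-1}$ (the spectral-domain property of $\f S$ is crucial here), so that $(\f A_{m-1})$ gives $\im G(E+\ii\eta_l)\prec1$ at each of the $L\leq\delta^{-1}+1$ shells. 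Summing over $l$ then yields $O_\prec(N^{2\delta})$, since $L$ is $N$-independent. Without this decomposition---or some equivalent device to remove the $\eta^{-1}$---the self-consistent estimate cannot close. The remainder of your sketch (which you defer to the word combinatorics, and is indeed where the detailed work lies) I have no quarrel with at this level of detail, but it is conditional on having \eqref{a_priori_1}, so the gap above must be filled first.
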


Using the definition of $\Pi$ and the assumption \eqref{1+xm}, it is easy to see that property \textbf{($\f C_m$)} implies property \textbf{($\f A_m$)}. We therefore conclude from Lemmas \ref{lem:start_induction} and \ref{lem:induction} that \eqref{bound_goal} holds for all $z \in \wh{\f S}$. Since $\delta$ can be chosen arbitrarily small under the condition \eqref{cond_delta}, we conclude that \eqref{claim_prop_comp1} follows for all $z \in \wh{\f S}$, and the proof of Proposition \ref{prop:comparison_1} is complete.

What remains is the proof of Lemma \ref{lem:induction}.
We shall estimate the random variable
\begin{equation} \label{def_Fp}
F_{\f v}(X, z) \;\deq\; \absb{G_{\bar{\f v} \bar{\f v}}(z) - \Pi_{\bar{\f v} \bar{\f v}}(z)}\,,
\end{equation}
where $\f v \in \bb S$ is deterministic.
By Markov's inequality and Definition \ref{def:stocdom}, we conclude that \eqref{bound_goal} follows provided we can prove the following result.
\begin{lemma} \label{lem:moment_bound_1}
Fix $p \in 2 \N$ and $m \leq \delta^{-1}$. Suppose that \eqref{zero_third_moment} and \textbf{($\f A_{m - 1}$)} hold. Then we have
\begin{equation*}
\E F_{\f v}^p(X,z) \;\leq\; (N^{C_0 \delta}\Psi(z))^p
\end{equation*}
for all $z \in \wh{\f S}_m$ and all deterministic $\f v \in \bb S$.
\end{lemma}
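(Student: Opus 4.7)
The plan is to prove the moment bound by a Bernoulli interpolation between the Gaussian reference $X^0 \deq X^{\txt{Gauss}}$ and the target $X^1 \deq X$ as in \eqref{bern_interpol}, combined with a self-consistent Gr\"onwall estimate in the interpolation parameter $\theta$. By Proposition~\ref{prop:Gaussian} we already control the Gaussian end: $\E F_{\f w}^p(X^0, z) \leq (N^\epsilon \Psi(z))^p$ for every $\epsilon > 0$, uniformly in $\f w \in \bb S$. Since $X^\theta$ obeys \eqref{cond on entries of X} and \eqref{moments of X-1} uniformly in $\theta$, property $(\f A_{m-1})$ applies to every $X^\theta$. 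Writing $\Phi_p(\theta, z) \deq \sup_{\f w \in \bb S} \E F_{\f w}^p(X^\theta, z)$, it therefore suffices to establish the self-consistent estimate
\be \label{plan_gronwall}
\frac{\dd}{\dd \theta} \E F_{\f v}^p(X^\theta, z) \;\leq\; \pb{N^{C_0 \delta} \Psi(z)}^p + C \, \Phi_p(\theta, z)\,,
\ee
uniformly in $\theta \in [0,1]$, $\f v \in \bb S$, and $z \in \wh{\f S}_m$; integrating in $\theta$ and applying Gr\"onwall to $\Phi_p$ finishes the argument.

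To attack \eqref{plan_gronwall} I would apply the interpolation identity \eqref{interpol25} to $F = F_{\f v}^p(\,\cdot\,, z)$ with $\ell$ a sufficiently large multiple of $p$, so that the remainder $\cal E_\ell$ is negligible by \eqref{opbound} and \eqref{moments of X-1}. The vanishing first and second cumulants \eqref{cond on entries of X} and the third-moment hypothesis \eqref{zero_third_moment} force $K_{1,\alpha}^\theta = K_{2,\alpha}^\theta = K_{3,\alpha}^\theta = 0$ in \eqref{K_n_def}, while \eqref{moments of X-1} gives $\abs{K_{n,\alpha}^\theta} \leq C_n N^{-n/2}$. Hence \eqref{plan_gronwall} reduces to proving, for each $n \in \qq{4, 4p}$,
\be \label{plan_main}
N^{-n/2} \sum_{i \in \cal I_M} \sum_{\mu \in \cal I_N} \absBB{\E \pbb{\frac{\partial}{\partial X_{i\mu}^\theta}}^n F_{\f v}^p(X^\theta, z)} \;\leq\; \pb{N^{C_0 \delta} \Psi(z)}^p + C \, \Phi_p(\theta, z)\,.
\ee
Using the block structure $\partial H^\theta / \partial X_{i\mu}^\theta = \f e_i \f e_\mu^* + \f e_\mu \f e_i^*$ I obtain $\partial G_{ab}/\partial X_{i\mu} = -G_{ai} G_{\mu b} - G_{a\mu} G_{ib}$, and $n$-fold Leibniz expansion of $F_{\f v}^p = \pb{(G - \Pi)_{\bar{\f v} \bar{\f v}} \overline{(G - \Pi)_{\bar{\f v} \bar{\f v}}}}^{p/2}$ produces a finite sum of monomials, each a product of factors of three types:
\begin{itemize}
\item[(I)] \emph{diagonal}: $(G - \Pi)_{\bar{\f v} \bar{\f v}}$ or its conjugate;
\item[(II)] \emph{mixed}: $G_{\bar{\f v} s}$, $G_{s \bar{\f v}}$ with $s \in \{i, \mu\}$;
\item[(III)] \emph{internal}: $G_{st}$ with $s, t \in \{i, \mu\}$.
\end{itemize}
Monomials retaining at least one type-(I) factor are absorbed into $\Phi_p(\theta, z)$ using H\"older's inequality; this accounts for the second term in \eqref{plan_main}.

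For the remaining monomials I would exploit the summation over $(i, \mu)$: pairs of type-(II) factors summed over $i \in \cal I_M$ or $\mu \in \cal I_N$ are converted by the Ward-type identities \eqref{GMaa4}--\eqref{GMaa2} of Lemma~\ref{lemma: basic} into $\im G_{\bar{\f v} \bar{\f v}}/\eta$. To estimate this and any unpaired type-(II) or type-(III) factor, I would transfer the a priori bounds from the slightly larger spectral scale $z_1 \deq E + \ii N^\delta \eta \in \wh{\f S}_{m-1}$: property $(\f A_{m-1})$ gives $\im G_{\bar{\f w} \bar{\f w}}(z_1) \prec \im m(z_1) + N^{C_0 \delta} \Psi(z_1) \prec 1$ uniformly in $\f w \in \bb S$ (since \eqref{cond_delta} forces $N^{C_0 \delta} \Psi(z_1)$ bounded), and monotonicity of $\eta \mapsto \eta \im G_{\bar{\f w} \bar{\f w}}(E + \ii \eta)$, which follows from the spectral decomposition \eqref{decompfullG}, upgrades this to $\im G_{\bar{\f v} \bar{\f v}}(z) \prec N^\delta \pb{\im m(z) + N^{C_0 \delta} \Psi(z)}$. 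Combining with $1/(N\eta) \leq \Psi(z)$ gives that each paired type-(II) contribution is at most $N^{(C_0+1)\delta} \Psi(z)^2$. The pointwise bound $\abs{G_{\f x \f y}(z)} \prec N^{2\delta}$, needed for unpaired type-(II) and all type-(III) factors, is obtained from the inductively available $(\f C_{m-1})$ at $z_1$ via polarization and a resolvent-identity transfer from scale $\eta_1$ to $\eta$.

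The main obstacle is the combinatorial bookkeeping that ensures every surviving monomial contains enough paired type-(II) factors for the factor $N^{-n/2}$ and the $\asymp N^2$ cardinality of the summation index set to combine into a bound of size $\pb{N^{C_0 \delta} \Psi(z)}^p$. This is cleanly organized by encoding each monomial by an appropriately chosen word over the alphabet of factor types and tracking which words can actually arise under the Leibniz expansion of $F_{\f v}^p$; I expect this accounting, rather than any single analytic estimate, to be the technically delicate step. The hypothesis \eqref{zero_third_moment}, which restricts the sum to $n \geq 4$, is essential here as it provides two extra factors of $N^{-1/2}$ beyond the naive dimensional count, just enough to cover the losses from the a priori bounds. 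Once $C_0$ is fixed large enough (the sketch indicates $C_0 = 25$ suffices) under \eqref{cond_delta}, every monomial is estimated by the right-hand side of \eqref{plan_main}, establishing \eqref{plan_gronwall} and hence the lemma.
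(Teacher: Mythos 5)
Your proposal follows the paper's strategy faithfully: Bernoulli interpolation, a Gr\"onwall/self-consistent bound in $\theta$ with the supremum $\sup_{\f w}\E F_{\f w}^p(X^\theta,z)$ on the right-hand side, reduction to moment orders $n\geq 4$ by matching the first three moments, Ward identities to convert paired mixed entries into $\im G_{\bar{\f v}\bar{\f v}}/\eta$, and a priori bounds transferred from scale $\eta_1=N^\delta\eta\in\wh{\f S}_{m-1}$. Two remarks on the a priori transfer: the exponent in your $\im G_{\bar{\f v}\bar{\f v}}(z)\prec N^\delta(\im m(z)+N^{C_0\delta}\Psi(z))$ should read $N^{2\delta}$ (one factor $N^\delta$ from $\im G(z)\leq N^\delta\im G(z_1)$, another from $\im m(z_1)\leq N^\delta\im m(z)$), and for the pointwise bound $\abs{G_{\f x\f y}(z)}\prec N^{2\delta}$ the paper avoids invoking $(\f C_{m-1})$ altogether: it uses only $(\f A_{m-1})$ together with a dyadic decomposition of the spectral sum $\sum_k\scalar{\f x}{\f u_k}^2/\abs{\lambda_k-z}$ over scales $\eta_0<\eta_1<\cdots<\eta_L$ (Lemma \ref{lem:rough_bound}); your resolvent-identity route can also be made to close (the deterministic piece $\ul\Sigma^{-1}(\Pi(z_1)-\Pi(z))\ul\Sigma^{-1}$ is bounded because $m$ is bounded and the $\Sigma^{-1}$ factors cancel), but it is worth being aware that the weaker hypothesis $(\f A_{m-1})$ already suffices.

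The genuine gap is the combinatorial accounting that you flag but defer, and it is not a side issue: it is where the paper spends the bulk of Section \ref{sec:words}. What makes the counting close is the following pigeonhole. Writing $q\leq n$ for the number of factors in the monomial with at least one mixed or internal entry (your types II--III), one has $q$ gains of a factor $R_i$ or $R_\mu$ from \eqref{w_est_2}, but only the short words ($n(w)=1$) give the full product $R_iR_\mu$ as in \eqref{w_est_1}. If $n\leq 2q-2$, at least two words have length one, so one extracts $R_i^2R_\mu^2$ and converts it via the Ward bound to $(N^{(C_0/2+1)\delta}\Psi)^4$; if $n\geq 2q-1$ (and $n\geq 4$, hence $n\geq q+2$) one extracts $R_i^2+R_\mu^2$ giving $(N^{(C_0/2+1)\delta}\Psi)^2$, and the surplus $N^{-n/2}\cdot N^2$ is paid by $\Psi^{n-2}\geq N^{-(n-2)/2}$, which is exactly why $n\geq 4$ is needed. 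The remaining powers of $N^{2\delta}$ from the pointwise estimates are absorbed provided $C_0$ is large and $\delta$ satisfies \eqref{cond_delta}. Without this case split and the balance between the exponents $n$, $q$, and the number of length-one words, the final bound $(N^{C_0\delta}\Psi)^q$ appearing in \eqref{sc_step 8} does not come out, so you should carry the bookkeeping through explicitly rather than leave it as an expectation.
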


\subsection{Interpolation} \label{sec:interpolation}

We use the interpolation outlined in Section \ref{sec:sketch_compar}.
\begin{definition}[Interpolating matrices]
Introduce the notations $X^0 \deq G^{\txt{Gauss}}$ and $X^1 \deq X$.
For $u \in\{0,1\}$, $i \in \cal I_M$, and $\mu \in \cal I_N$, denote by $\rho^u_{i \mu}$ the law of $X^u_{i \mu}$. For $\theta \in [0,1]$ we define the law
\begin{equation*}
\rho^\theta_{i \mu} \;\deq\; \theta \rho^1_{i \mu} + (1 - \theta) \rho^0_{i \mu}\,.
\end{equation*}
Let $(X^0, X^\theta, X^1)$ be a triple of independent $\cal I_M \times \cal I_N$ random matrices, where for $u \in \{0,\theta, 1\}$ the matrix $X^u = (X^u_{i \mu})$ has law
\begin{equation*}
\prod_{i \in \cal I_M} \prod_{\mu \in \cal I_N} \rho^u_{i \mu} (\dd X_{i \mu}^u)\,.
\end{equation*}
For $i \in \cal I_M$, $\mu \in \cal I_N$, and $\lambda \in \R$ we define the matrix
\begin{equation*}
\pb{X^{\theta,\lambda}_{(i\mu)}}_{j \nu} \;\deq\;
\begin{cases}
\lambda & \txt{if } (j,\nu) = (i,\mu)
\\
X^\theta_{j \nu} & \txt{if } (j,\nu) \neq (i,\mu)\,.
\end{cases}
\end{equation*}
We also introduce the matrices
\begin{equation*}
G^\theta(z) \;\deq\; G^\Sigma(X^\theta, z)
\,, \qquad
G^{\theta, \lambda}_{(i \mu)}(z) \;\deq\; G^\Sigma\pb{X^{\theta,\lambda}_{(i \mu)}, z}\,,
\end{equation*}
(recall the notation \eqref{def_G}).
\end{definition}

Throughout the following we shall need to deduce bounds of the form $\E \xi \prec \Gamma$ from $\xi \prec \Gamma$. This is the content of the following lemma, whose proof is a simple application of Cauchy-Schwarz.
\begin{lemma} \label{lem:E_prec}
Let $\Gamma$ be deterministic and satisfy $\Gamma \geq N^{-C}$ for some constant $C > 0$. Let $\xi$ be a random variable satisfying $\xi \prec \Gamma$ and $\E \xi^2 \leq N^C$. Then $\E \xi \prec \Gamma$.
\end{lemma}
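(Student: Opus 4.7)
The plan is to split $\E \xi$ into contributions from a good event and a bad event, controlling the good part pointwise and the bad part via Cauchy--Schwarz together with the polynomial-tail decay built into stochastic domination. First I would fix an arbitrary $\epsilon > 0$ and introduce the event $A \deq \h{\abs{\xi} \leq N^{\epsilon/2} \Gamma}$. Since $\xi \prec \Gamma$ by hypothesis, for any $D > 0$ and all sufficiently large $N$ we have $\P(A^c) \leq N^{-D}$.

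Next I would decompose
\begin{equation*}
\abs{\E \xi} \;\leq\; \E \qb{\abs{\xi} \indb{A}} + \E \qb{\abs{\xi} \indb{A^c}}\,,
\end{equation*}
where the first term is bounded by $N^{\epsilon/2} \Gamma$ directly from the definition of $A$, and the second is controlled by Cauchy--Schwarz and the second-moment hypothesis as
\begin{equation*}
\E \qb{\abs{\xi} \indb{A^c}} \;\leq\; \pb{\E \abs{\xi}^2}^{1/2} \P(A^c)^{1/2} \;\leq\; N^{C/2} N^{-D/2}\,.
\end{equation*}

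The crucial point is that the polynomial lower bound $\Gamma \geq N^{-C}$ lets us absorb the factor $N^{C/2}$ into $\Gamma$: taking $D \geq 3C$ yields $N^{C/2 - D/2} \leq N^{-C} \leq \Gamma \leq N^{\epsilon/2}\Gamma$, and combining the two estimates gives $\abs{\E \xi} \leq 2 N^{\epsilon/2} \Gamma \leq N^\epsilon \Gamma$ for $N$ large. Since $\epsilon$ was arbitrary and $\E \xi$ is deterministic (so the probability condition in Definition \ref{def:stocdom} is trivially satisfied), this is exactly $\E \xi \prec \Gamma$. I do not anticipate any real obstacle here; this is the standard device for converting a high-probability bound into an expectation bound, relying only on a weak (polynomial) second-moment control and the assumption that the target scale $\Gamma$ is not super-polynomially small. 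The same argument works, with $\E \abs{\xi}^{2q}$ replacing $\E \abs{\xi}^2$, if one ever needs to bound higher moments $\E \abs{\xi}^q$ by $\Gamma^q$.
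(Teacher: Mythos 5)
Your proof is correct and is precisely the argument the paper has in mind: it labels this a ``simple application of Cauchy--Schwarz,'' and your good/bad event split with Cauchy--Schwarz on the bad event, using $\Gamma \geq N^{-C}$ to absorb the polynomial factor $N^{C/2}$ by choosing $D$ large, is exactly that device.
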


In the following applications of Lemma \ref{lem:E_prec}, we shall often not mention the assumption $\E \xi^2 \leq N^C$; it may be always easily verified using rough bounds from Lemma \ref{lem:opnorm}.

We shall prove Lemma \ref{lem:moment_bound_1} by interpolation between the ensembles $X^0$ and $X^1$. The bound for the Gaussian case $X^0$ is given by the following result.
\begin{lemma} \label{lem:comparison_0}
Lemma \ref{lem:moment_bound_1} holds if $X$ is replaced with $X^0$.
\end{lemma}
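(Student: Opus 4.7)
The plan is to derive this lemma directly from the standing hypothesis of Proposition \ref{prop:comparison_1}, which states that the anisotropic local law holds with parameters $(X^{\text{Gauss}}, \Sigma, \f S)$; none of the extra ingredients of Lemma \ref{lem:moment_bound_1} (neither the vanishing third moment \eqref{zero_third_moment} nor the bootstrap hypothesis \textbf{($\f A_{m-1}$)}) are needed here, since $X^0 = X^{\text{Gauss}}$ is already covered by assumption. By Definition \ref{def:local_laws} (ii) combined with Definition \ref{def:stocdom} (iii), applied in the weak operator sense with $\f w = \f v$ on the unit sphere, the hypothesis yields
\begin{equation*}
F_{\f v}(X^0, z) \;=\; \absb{\scalar{\f v}{\ul \Sigma^{-1}(G(X^0,z) - \Pi(z))\ul \Sigma^{-1} \f v}} \;\prec\; \Psi(z)
\end{equation*}
uniformly in $z \in \f S$ and $\f v \in \bb S$.

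The remaining task is to upgrade this stochastic domination to a bound on the $p$-th moment. For this I first note a crude deterministic bound on $F_{\f v}(X^0,z)$. From \eqref{opbound} of Lemma \ref{lemma: basic} we get $\normb{\ul \Sigma^{-1/2} G \, \ul \Sigma^{-1/2}} \leq C \eta^{-1}$, hence $\abs{G_{\bar{\f v}\bar{\f v}}} \leq C \norm{\ul \Sigma^{-1}} \eta^{-1}$; a similar pointwise bound on $\Pi_{\bar{\f v}\bar{\f v}}$ follows from the definition of $\Pi$ and the standing assumption \eqref{1+xm}. Invoking the regularization discussed after \eqref{T_diag} we may assume $\sigma_M \geq N^{-K}$ for some constant $K$; combined with the lower bound $\eta \geq N^{-1}$ on $\wh{\f S}_m$, this yields the crude deterministic bound $F_{\f v}(X^0, z) \leq N^C$ for some $C$.

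With this deterministic bound in hand, Lemma \ref{lem:E_prec} applied to $\xi = F_{\f v}(X^0,z)^p$ and $\Gamma = \Psi(z)^p$ gives $\E F_{\f v}(X^0,z)^p \prec \Psi(z)^p$. Since $\E F_{\f v}(X^0,z)^p$ is deterministic, the definition of $\prec$ entails that for any fixed $\e > 0$,
\begin{equation*}
\E F_{\f v}(X^0,z)^p \;\leq\; N^{\e} \Psi(z)^p
\end{equation*}
for all $N$ sufficiently large. Choosing $\e = p C_0 \delta$ yields the desired inequality $\E F_{\f v}(X^0,z)^p \leq (N^{C_0 \delta}\Psi(z))^p$, uniformly in $z \in \wh{\f S}_m$ and $\f v \in \bb S$. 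There is no serious obstacle in this argument; the only mild subtlety is ensuring a polynomial deterministic bound on $F_{\f v}$ in order to apply Lemma \ref{lem:E_prec}, which as explained is handled by the qualitative positivity regularization of $\Sigma$ together with \eqref{opbound} and \eqref{1+xm}.
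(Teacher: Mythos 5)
Your overall strategy is the same as the paper's: start from the hypothesized anisotropic local law for $X^0 = X^{\txt{Gauss}}$ to get $F_{\f v}(X^0,z) \prec \Psi(z)$, then use Lemma \ref{lem:E_prec} to upgrade this to a $p$-th moment bound. The gap lies in how you justify the crude second-moment input for Lemma \ref{lem:E_prec}. Your deterministic bound $F_{\f v}(X^0,z) \leq N^C$ uses $\abs{G_{\bar{\f v}\bar{\f v}}} \leq C\normb{\ul\Sigma^{-1}}\eta^{-1}$ together with the claim that, by the regularization discussed after \eqref{T_diag}, one may assume $\sigma_M \geq N^{-K}$. This is not a legitimate assumption. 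The model hypotheses impose an upper bound $\sigma_1 \leq \tau^{-1}$ via \eqref{bound_Sigma} but no quantitative lower bound on $\sigma_M$, and the assumption $\Sigma > 0$ in \eqref{T_diag} is explicitly stated to be purely qualitative. The regularization $\Sigma \mapsto \Sigma + \epsilon I_M$ with $\epsilon \downarrow 0$ at fixed $N$ requires all estimates to be uniform in $\epsilon$; with your bound the exponent $K$, and hence the threshold $N_0(\e,D)$ hidden in the conclusion $\E F_{\f v}^p \prec \Psi^p$, blows up as $\epsilon \downarrow 0$. The same objection applies to your claimed bound on $\Pi_{\bar{\f v}\bar{\f v}}$: the upper-left block of $\ul\Sigma^{-1}\Pi\ul\Sigma^{-1}$ is $-\Sigma^{-1}(1+m\Sigma)^{-1}$, whose norm is $\asymp \sigma_M^{-1}$ and is not controlled by \eqref{1+xm} alone. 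Indeed, the presence of the two $\ul\Sigma^{-1}$ factors in \eqref{claim_prop_comp1} is precisely why no naive operator-norm bound on $G$ and $\Pi$ separately can work here.

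The paper obtains the rough bound in a way that is uniform in $\Sigma$ by exploiting a cancellation, using the first identity of \eqref{G=G}. From that identity the upper-left block of $\ul\Sigma^{-1}G\ul\Sigma^{-1}$ equals $XG_NX^* - \Sigma^{-1}$; subtracting the upper-left block $-\Sigma^{-1}(1+m\Sigma)^{-1}$ of $\ul\Sigma^{-1}\Pi\ul\Sigma^{-1}$ cancels the singular $\Sigma^{-1}$ completely, since $-\Sigma^{-1}+\Sigma^{-1}(1+m\Sigma)^{-1} = -m(1+m\Sigma)^{-1}$, which is the bound $\normb{\Sigma^{-1}\pb{-\Sigma+\Sigma(1+m\Sigma)^{-1}}\Sigma^{-1}} \leq C$ quoted in the paper's proof and is controlled by \eqref{1+xm} and \eqref{m_sim_1}. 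The remaining blocks of $\ul\Sigma^{-1}(G-\Pi)\ul\Sigma^{-1}$ are $XG_N$, $G_NX^*$, and $G_N - m$, all free of $\Sigma^{-1}$ and bounded in operator norm by $\norm{X}\eta^{-1}$ or $\eta^{-1} + C \leq CN$; combined with the moment bounds on $X$ from \eqref{moments of X-1}, this gives $\E(F_{\f v}^p(X^0,z))^2 \leq N^{C_p}$ uniformly over all admissible $\Sigma > 0$, which is what Lemma \ref{lem:E_prec} requires. Your argument as written would need to be replaced by this (or an equivalent cancellation) to close the gap.
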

\begin{proof}
By assumption of Proposition \ref{prop:comparison_1}, we have $F_{\f v}^p(X^0,z) \prec \Psi^p$. In order to conclude the proof using Lemma \ref{lem:E_prec}, we need a rough bound of the form $\E (F_{\f v}^p(X^0,z))^2 \leq N^{C_p}$ for some constant $C_p$ depending on $p$. This easily follows from the first identity in \eqref{G=G} combined with the bound
\begin{equation*}
\normb{\Sigma^{-1} \pb{-\Sigma + \Sigma (1 + m \Sigma)^{-1}} \Sigma^{-1}} \;\leq\; C\,.
\end{equation*}
We omit further details.
\end{proof}

The basic interpolation formula is given by the following lemma, which follows from the fundamental theorem of calculus.

\begin{lemma} \label{lem:interpol}
For $F \col \R^{\cal I_M \times \cal I_N} \to \C$ we have
\begin{equation} \label{interpol}
\E F(X^1) - \E F(X^0) \;=\; \int_0^1 \dd \theta \, \sum_{i \in \cal I_M} \sum_{\mu \in \cal I_N} \qB{ \E F \pB{X^{\theta,X^{1}_{i \mu}}_{(i \mu)}} - \E F \pB{X^{\theta,X^{0}_{i \mu}}_{(i \mu)}}}
\end{equation}
provided all the expectations exist.
\end{lemma}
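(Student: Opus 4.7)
The strategy is elementary: exploit the product structure of the law of $X^\theta$ together with the linear dependence $\rho^\theta_{i\mu} = \theta \rho^1_{i\mu} + (1-\theta)\rho^0_{i\mu}$, and apply the fundamental theorem of calculus. The key observation is that since the entries of $X^\theta$ are independent, the law of $X^\theta$ is the product measure, and $\partial_\theta \rho^\theta_{i\mu} = \rho^1_{i\mu} - \rho^0_{i\mu}$ is independent of $\theta$.

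First, I would write
\[
\E F(X^\theta) \;=\; \int F(x)\prod_{i \in \cal I_M, \mu \in \cal I_N}\rho^\theta_{i\mu}(\dd x_{i\mu})
\]
and differentiate in $\theta$. By the product rule for product measures, differentiation replaces exactly one factor $\rho^\theta_{i\mu}(\dd x_{i\mu})$ by $(\rho^1_{i\mu} - \rho^0_{i\mu})(\dd x_{i\mu})$ and sums over the pairs $(i,\mu)$, yielding
\[
\frac{\dd}{\dd\theta}\E F(X^\theta) \;=\; \sum_{i,\mu}\int F(x)(\rho^1_{i\mu} - \rho^0_{i\mu})(\dd x_{i\mu})\prod_{(j,\nu)\neq(i,\mu)}\rho^\theta_{j\nu}(\dd x_{j\nu}).
\]
Interchanging differentiation with the integral is legitimate: the difference quotient $h^{-1}[\rho^{\theta+h}_{i\mu} - \rho^\theta_{i\mu}]$ is uniformly bounded (in $h$ and $\theta$) in total variation by $\rho^1_{i\mu} + \rho^0_{i\mu}$, so dominated convergence applies under the standing hypothesis that all expectations exist.

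Second, I would identify the summand with the bracket on the right-hand side of \eqref{interpol}. By the independence of the triple $(X^0, X^\theta, X^1)$, integrating out $X^1_{i\mu}$ first gives
\[
\E F\pB{X^{\theta,X^1_{i\mu}}_{(i\mu)}} \;=\; \int F(x)\rho^1_{i\mu}(\dd x_{i\mu})\prod_{(j,\nu)\neq(i,\mu)}\rho^\theta_{j\nu}(\dd x_{j\nu}),
\]
and analogously with $0$ in place of $1$. Subtracting the two matches exactly the $(i,\mu)$-th summand in the derivative identity.

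Third, I would apply the fundamental theorem of calculus. Since $X^0$ has law $\prod_{i,\mu}\rho^0_{i\mu}$ and $X^1$ has law $\prod_{i,\mu}\rho^1_{i\mu}$, the endpoint values $\E F(X^\theta)|_{\theta=0}$ and $\E F(X^\theta)|_{\theta=1}$ coincide with $\E F(X^0)$ and $\E F(X^1)$ respectively, so integrating the derivative formula from $0$ to $1$ yields \eqref{interpol}. There is no essential obstacle: the argument is a one-variable calculus identity wrapped in independence, and the only minor point requiring care is the justification of differentiation under the integral sign, which is routine as indicated above.
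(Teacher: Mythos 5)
Your proof is correct and is exactly the argument the paper intends; the paper's own ``proof'' is the single sentence that the lemma follows from the fundamental theorem of calculus, which your computation spells out. Your treatment of the differentiation-under-the-integral step (noting the affine dependence of $\rho^\theta_{i\mu}$ on $\theta$, so the difference quotient equals $\rho^1_{i\mu}-\rho^0_{i\mu}$ exactly) is in fact slightly more careful than the paper's.
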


We shall apply Lemma \ref{lem:interpol} with $F(X) = F_{\f v}^p(X,z)$, where $F_{\f v}(X,z)$ was defined in \eqref{def_Fp}. The main work is to derive the following self-consistent estimate for the right-hand side of \eqref{interpol}. We emphasize that our estimates are always uniform in quantities, such as $\f v$, $\Sigma$, $\theta$, and $z$, that are not explicitly fixed.

\begin{lemma} \label{lem:main_comparison}
Fix $p \in 2 \N$ and $m \leq \delta^{-1}$. Suppose that \eqref{zero_third_moment} and \textbf{($\f A_{m - 1}$)} hold.
Then we have
\begin{equation} \label{main_comparison}
\sum_{i \in \cal I_M} \sum_{\mu \in \cal I_N} \qB{ \E F_{\f v}^p \pB{X^{\theta,X^1_{i \mu}}_{(i \mu)}, z} - \E F_{\f v}^p \pB{X^{\theta,X^0_{i \mu}}_{(i \mu)}, z}} \;=\; O \pB{(N^{C_0 \delta}\Psi)^p + \sup_{\f w \in \bb S} \E F_{\f w}^p(X^\theta, z)}
\end{equation}
for all $\theta \in [0,1]$, all $z \in \wh{\f S}_m$, and all deterministic $\f v \in \bb S$.
\end{lemma}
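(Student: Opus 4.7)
\medskip

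The plan is to combine the interpolation formula of Lemma~\ref{lem:interpol} with a Taylor expansion in the single entry $X^\theta_{i\mu}$. For each $(i,\mu)$ I would expand
\begin{equation*}
F_{\mathbf v}^p\pb{X^{\theta,\lambda}_{(i\mu)},z} \;=\; \sum_{n=0}^{\ell} \frac{\lambda^n}{n!}\pbb{\frac{\partial}{\partial X_{i\mu}}}^n F_{\mathbf v}^p\pb{X^{\theta,0}_{(i\mu)},z} \;+\; R_\ell(\lambda)
\end{equation*}
with $\ell$ large enough that \eqref{moments of X-1} renders the remainder $R_\ell$ negligible, and then take expectation with $\lambda=X^u_{i\mu}$. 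Subtracting the two expansions corresponding to $u=0,1$, the coefficient of the $n$-th derivative is $\tfrac{1}{n!}(\E (X^1_{i\mu})^n - \E (X^0_{i\mu})^n)$, which vanishes for $n=0,1,2$ by \eqref{cond on entries of X} and for $n=3$ by the Gaussianity of $X^0$ together with the extra hypothesis \eqref{zero_third_moment}. Hence the left-hand side of \eqref{main_comparison} reduces to
\begin{equation*}
\sum_{n=4}^\ell N^{-n/2}\sum_{i\in\cal I_M}\sum_{\mu\in\cal I_N}\absBB{\E\pbb{\frac{\partial}{\partial X^\theta_{i\mu}}}^n F_{\mathbf v}^p(X^\theta,z)} + O\pb{N^{-100}},
\end{equation*}
where the moment bound $|\E (X^u_{i\mu})^n|\leq C_n N^{-n/2}$ from \eqref{moments of X-1} was used.

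Next I would Leibniz-expand the $n$-th derivative. From $\partial H/\partial X_{i\mu}$ and $\partial G = -G(\partial H)G$ one has $\partial G_{st}/\partial X_{i\mu} = -G_{si}G_{\mu t}-G_{s\mu}G_{it}$, so each derivative acting on a $G$-entry produces a sum of two products of two $G$-entries, each carrying one extra external leg of type $i$ or $\mu$. After differentiating the $2p$ factors making up $F_{\mathbf v}^p=|G_{\bar{\mathbf v}\bar{\mathbf v}}-\Pi_{\bar{\mathbf v}\bar{\mathbf v}}|^p$, every resulting monomial is (schematically) a product of $2p+2n$ generalized $G$-entries whose legs belong to one of three categories: (i) undifferentiated factors $G_{\bar{\mathbf v}\bar{\mathbf v}}-\Pi_{\bar{\mathbf v}\bar{\mathbf v}}$; (ii) off-diagonal factors of the form $G_{\bar{\mathbf v} i}$, $G_{\bar{\mathbf v}\mu}$, $G_{i\bar{\mathbf v}}$, $G_{\mu\bar{\mathbf v}}$ (at least $2n$ such legs are produced by the derivatives, since the $\bar{\mathbf v}$-slots from $F_{\mathbf v}^p$ can each host one derivative leg); and (iii) purely summation-index factors $G_{ii}, G_{\mu\mu}, G_{i\mu}, G_{\mu i}$.

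The decisive input is the a priori control provided by property $(\mathbf A_{m-1})$ together with the elementary monotonicity $\eta\mapsto \eta\,\im G_{\bar{\mathbf v}\bar{\mathbf v}}(E+\ii\eta)$ (clear from \eqref{decompfullG}): since $z\in\wh{\mathbf S}_m$ satisfies $\eta\geq N^{-\delta m}$ and $z+\ii(N^\delta-1)\eta\in\wh{\mathbf S}_{m-1}$ by the definition of $\wh{\mathbf S}$, I obtain
\begin{equation*}
\im G_{\bar{\mathbf v}\bar{\mathbf v}}(z)\;\prec\; N^\delta\pb{\im m(z)+N^{C_0\delta}\Psi(z)},
\end{equation*}
and by polarization a pointwise bound $|G_{\bar{\mathbf x}\bar{\mathbf y}}|\prec N^{2C_0\delta}$ for unit vectors $\bar{\mathbf x},\bar{\mathbf y}$. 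Combining these with the exact identities \eqref{GMaa4}--\eqref{GMaa2} gives the crucial summation gain
\begin{equation*}
\frac{1}{N}\sum_{\mu\in\cal I_N}\absb{G_{\bar{\mathbf v}\mu}}^2 \;\prec\; N^{(C_0+1)\delta}\Psi^2,\qquad \frac{1}{N}\sum_{i\in\cal I_M}\absb{G_{\bar{\mathbf v} i}}^2 \;\prec\; N^{(C_0+1)\delta}\Psi^2,
\end{equation*}
where $\|X^*X\|\prec 1$ by Lemma~\ref{lem:norm_XX}. Pairing the off-diagonal legs two by two and summing over the relevant index, each pair contributes a factor $\Psi^2 N^{(C_0+1)\delta}$; any excess off-diagonal leg not paired up is handled by Cauchy-Schwarz and the pointwise a priori bound, while category-(iii) factors are bounded using $|G_{ii}|\asymp\sigma_i$, $|G_{\mu\mu}|\prec 1$, and the same pointwise bound for $G_{i\mu}$. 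Terms whose number of type-(i) factors equals $p'\leq p$ are estimated by H\"older's inequality and Lemma~\ref{lem:E_prec} by $\sup_{\mathbf w}\E F_{\mathbf w}^p(X^\theta,z)$ (times affordable powers of $N^{C_0\delta}\Psi$), yielding the second term on the right-hand side of \eqref{main_comparison}; the remaining terms give $(N^{C_0\delta}\Psi)^p$.

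The main obstacle is the combinatorial bookkeeping in the second step: one must show that the $2n$ off-diagonal legs generated by the $n$ derivatives supply exactly enough factors $\Psi$ (after pairwise summation against the prefactor $N^{-n/2}\cdot N$ from the double sum over $i$ and $\mu$) so that, together with the $p-p'$ factors of $G-\Pi$ demoted to $(N^{C_0\delta}\Psi)$ via the pointwise a priori bound, the total scaling is bounded by $(N^{C_0\delta}\Psi)^p$, uniformly in $n\geq 4$; the condition $n\geq 4$ (guaranteed by the third-moment matching) is exactly what is needed to produce enough paired off-diagonal legs to overwhelm the $N/\eta$-type losses. Choosing $C_0$ large enough (but absolute) and $\delta$ small enough that $C_0\delta<\tau/2$ absorbs all arising $N^{O(\delta)}$ factors into the single $N^{C_0\delta}$, so that the same $C_0$ that appears in $(\mathbf A_{m-1})$ also closes the induction via $(\mathbf C_m)$.
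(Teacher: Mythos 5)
Your overall strategy matches the paper's: Taylor-expand in the single entry $X^\theta_{i\mu}$, kill the $n\leq 3$ coefficients by moment matching (using \eqref{zero_third_moment} for $n=3$), Leibniz-expand the $n$-th derivative into products of generalized resolvent entries, invoke the a priori bounds supplied by $(\f A_{m-1})$, and exploit the Ward-type summation gains \eqref{GMaa4}--\eqref{GMaa2}. However, there are three concrete gaps. First, your Taylor expansion yields derivatives evaluated at $X^{\theta,0}_{(i\mu)}$ (the entry $(i,\mu)$ set to zero), whereas the self-consistent error term $\sup_{\f w}\E F_{\f w}^p(X^\theta,z)$ on the right-hand side of \eqref{main_comparison} must involve the full matrix $X^\theta$; reinstating the entry requires a second Taylor/resummation step that you do not supply (the paper does this in Lemma~\ref{lem:0_to_theta}, reducing to \eqref{sc_step 3}). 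Second, your count of ``at least $2n$ off-diagonal legs'' is incorrect: only the \emph{first} derivative that lands on a given factor of $F_{\f v}^p$ produces two off-diagonal entries $G_{\bar{\f v}[\f t_1]}$, $G_{[\f s_{n+1}]\bar{\f v}}$; every subsequent derivative on the same factor merely inserts an \emph{internal} entry $G_{i\mu},G_{\mu i},G_{ii},G_{\mu\mu}$ carrying no $\bar{\f v}$ leg. Hence the number of off-diagonal $G$-factors equals $2q$, where $q\leq\min(p,n)$ is the number of differentiated factors, and the summation gain from pairing caps at $\Psi^4$ (one $i$-pair, one $\mu$-pair) rather than growing with $n$ as you suggest.

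Third---and this is the combinatorial crux which you flag but do not resolve---closing the estimate requires the pigeonhole dichotomy on $n$ versus $q$ used in \eqref{sc_step 7}: if $n\geq 2q-1$, one extracts a single pair $R_i^2+R_\mu^2$, uses \eqref{bound_sum_im} to convert it to $\Psi^2$ after summation, and converts the surplus prefactor $N^{2-n/2}$ into $\Psi^{n-4}$ via $\Psi\geq cN^{-1/2}$; then $n\geq 4$ together with $n\geq 2q-1$ gives $n-2\geq q$, which is exactly what is needed for the total power of $\Psi$ to reach $q$ before Young's inequality yields $(N^{C_0\delta}\Psi)^p+\E F^p$. If instead $n\leq 2q-2$, pigeonholing shows at least two of the $q$ differentiated factors received exactly one derivative each, and for these $\abs{A_{\f v,i,\mu}(w_r)}\prec R_iR_\mu$ exactly (one leg of each type), producing the stronger $R_i^2 R_\mu^2$ gain that compensates for the smaller $n-2$. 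Your ``pair the legs, handle excess by Cauchy--Schwarz'' heuristic does not by itself produce this case split, and without it the power of $\Psi$ falls short of $q$ precisely when $q$ is large relative to $n$.
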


Combining Lemmas \ref{lem:comparison_0}, \ref{lem:interpol}, and \ref{lem:main_comparison} with a Gr\"onwall argument, we conclude the proof of Lemma \ref{lem:moment_bound_1}, and hence of Proposition \ref{prop:comparison_1}. Note that for this Gr\"onwall argument to work, it is essential that the error term $\sup_{\f w \in \bb S} \E F_{\f w}^p(X^\theta, z)$ on the right-hand side of \eqref{main_comparison} be multiplied by a factor that is bounded (as is implied by the notation $O(\cdot)$). Even a factor $\log N$ multiplying $\sup_{\f w \in \bb S} \E F_{\f w}^p(X^\theta, z)$ would render \eqref{main_comparison} useless.

We remark that, in the proof Proposition \ref{prop:comparison_1} developed in the rest of this section, we in fact obtain a stronger version of \eqref{main_comparison} where the quantity $\sup_{\f w \in \bb S} \E F_{\f w}^p(X^\theta, z)$ on the right-hand side of \eqref{main_comparison} is replaced with $\E F_{\f v}^p(X^\theta, z)$. However, we keep the more general form \eqref{main_comparison} because it will be needed for the proof of Proposition \ref{prop:comparison_2} in Section \ref{sec:self-const_II}.

In order to prove Lemma \ref{lem:main_comparison}, we compare the ensembles $X^{\theta,X^0_{i \mu}}_{(i \mu)}$ and $X^{\theta,X^1_{i \mu}}_{(i \mu)}$ via $X^{\theta,0}_{(i \mu)}$. Clearly, it suffices to prove the following result.
\begin{lemma} \label{lem:main_comparison2}
Fix $p \in 2 \N$ and $m \leq \delta^{-1}$. Suppose that \eqref{zero_third_moment} and \textbf{($\f A_{m - 1}$)} hold.
Then there exists some function $A(\cdot, z)$ such that for $u \in \{0,1\}$ we have
\begin{equation} \label{main_comparison2}
\sum_{i \in \cal I_M} \sum_{\mu \in \cal I_N} \qB{ \E F_{\f v}^p \pB{X^{\theta,X^u_{i \mu}}_{(i \mu)}, z} - \E A \pB{X^{\theta,0}_{(i \mu)}, z}} \;=\; O \pB{(N^{C_0 \delta}\Psi)^p + \sup_{\f w \in \bb S} \E F_{\f w}^p(X^\theta, z)}
\end{equation}
for all $\theta \in [0,1]$, all $z \in \wh{\f S}_m$, and all deterministic $\f v \in \bb S$.
\end{lemma}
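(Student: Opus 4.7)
My first step is a Taylor expansion in the entry at position $(i,\mu)$. Abbreviating $F^{(n)}_{i\mu}(X)\deq\pb{\partial/\partial X_{i\mu}}^n F_{\f v}^p(X,z)$, Taylor's theorem applied to $\lambda\mapsto F_{\f v}^p\pb{X^{\theta,\lambda}_{(i\mu)},z}$, together with the independence of $X^u_{i\mu}$ from $X^{\theta,0}_{(i\mu)}$, yields
\[
\E F_{\f v}^p\pb{X^{\theta,X^u_{i\mu}}_{(i\mu)},z}\;=\;\sum_{n=0}^{\ell-1}\frac{\E(X^u_{i\mu})^n}{n!}\,\E F^{(n)}_{i\mu}\pb{X^{\theta,0}_{(i\mu)}}+\E R_\ell^u
\]
for $\ell$ large. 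The natural choice of $A$ is then the third-order Taylor polynomial
\[
A\pb{X^{\theta,0}_{(i\mu)},z}\;\deq\;\sum_{n=0}^{3}\frac{m_n}{n!}\,F^{(n)}_{i\mu}\pb{X^{\theta,0}_{(i\mu)}},\qquad m_n\deq\E(X^0_{i\mu})^n=\E(X^1_{i\mu})^n\;\;(n\leq 3),
\]
where the identity of moments uses mean zero, variance $1/N$, and vanishing third moment (by Gaussianity and~\eqref{zero_third_moment}). The Taylor remainder $\E R_\ell^u$ becomes negligible for $\ell$ chosen large enough, thanks to the crude deterministic bounds of Lemma~\ref{lem:opnorm} combined with Lemma~\ref{lem:norm_XX} and~\eqref{moments of X-1}. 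The task therefore reduces to bounding $\sum_{i,\mu}\sum_{n=4}^{\ell-1}N^{-n/2}\absb{\E F^{(n)}_{i\mu}\pb{X^{\theta,0}_{(i\mu)}}}$ by $(N^{C_0\delta}\Psi)^p+\sup_{\f w\in\bb S}\E F_{\f w}^p(X^\theta,z)$.

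Next I compute and estimate the derivatives. Using $\partial_{X_{i\mu}}G=-G(\f e_i\f e_\mu^*+\f e_\mu\f e_i^*)G$ and Leibniz's rule applied to $F_{\f v}^p=\abs{G_{\bar{\f v}\bar{\f v}}-\Pi_{\bar{\f v}\bar{\f v}}}^p$, each $F^{(n)}_{i\mu}$ becomes a finite sum of monomials whose factors are undifferentiated copies of $G_{\bar{\f v}\bar{\f v}}-\Pi_{\bar{\f v}\bar{\f v}}$ (or its conjugate), type-II entries in $\{G_{\bar{\f v}i},G_{\bar{\f v}\mu},G_{i\bar{\f v}},G_{\mu\bar{\f v}}\}$, type-III entries in $\{G_{i\mu},G_{\mu i}\}$, and diagonals $\{G_{ii},G_{\mu\mu}\}$. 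A straightforward combinatorial count from the derivative formula shows that every monomial in which $q\geq 1$ of the $p$ factors of $G_{\bar{\f v}\bar{\f v}}-\Pi_{\bar{\f v}\bar{\f v}}$ are differentiated contains at least $2q$ type-II entries. To exploit this I invoke the bootstrap hypothesis $(\f A_{m-1})$ at the inflated scale $\eta_1\deq N^\delta\eta$, which lies in $\wh{\f S}_{m-1}$ because $\eta\geq N^{-\delta m}$. The monotonicity of $\eta\mapsto\eta\,\im G_{\bar{\f w}\bar{\f w}}(E+\ii\eta)$, immediate from~\eqref{decompfullG}, transfers the bound at scale $\eta_1$ down to scale $\eta$ at a cost $N^\delta$, and combined with the sum identities of Lemma~\ref{lemma: basic} yields
\[
\sum_{i\in\cal I_M}\abs{G_{\bar{\f w}i}}^2+\sum_{\mu\in\cal I_N}\abs{G_{\bar{\f w}\mu}}^2\;\prec\;N^{(C_0+1)\delta}N\Psi^2
\]
for every unit vector $\f w$. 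Polarization gives the pointwise bound $\abs{G_{\bar{\f x}\bar{\f y}}}\prec N^{C_0\delta}$ for deterministic unit $\f x,\f y$, which also covers the type-III entries via the choice $\f x=\f e_i,\f y=\f e_\mu$; diagonal entries are controlled by \eqref{Gtt_bound}. These a priori estimates are uniform in $X$ satisfying~\eqref{cond on entries of X} and~\eqref{moments of X-1}, so they apply to the perturbed ensemble $X^{\theta,0}_{(i\mu)}$ as well.

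Putting these ingredients together for a monomial with $q$ differentiated factors, the $2q$ type-II entries combine with $\sum_{i,\mu}$ (via iterated applications of the sum identities, with Cauchy--Schwarz when a type-II entry occurs to an odd power in a single summation variable) to produce a factor $(N\Psi^2)^q$. The remaining $G$-entries are bounded pointwise by $N^{C(p+n)\delta}$, and the undifferentiated factors $\abs{G_{\bar{\f v}\bar{\f v}}-\Pi_{\bar{\f v}\bar{\f v}}}^{p-q}$ are pulled out and handled with Young's inequality, contributing at most $\E F_{\f v}^p+\Psi^p$. With the prefactor $N^{-n/2}$ from the moment and the $N^2$ from the summation range, the order-$n$ contribution becomes
\[
N^{2-n/2}\cdot(N\Psi^2)^q\cdot N^{C(p+n)\delta}\cdot\pb{\E F_{\f v}^p+\Psi^p},
\]
which is of the desired form provided $2q\geq n-2$---exactly what the combinatorial bound on type-II entries delivers for $n\geq 4$. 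The main obstacle will be the bookkeeping behind this combinatorial claim: verifying rigorously that every Leibniz monomial carries at least $2q$ type-II entries, so that the $\Psi$-gain from the sum identities compensates the $N^{n-2q}$ deficit in the prefactor. This is handled systematically by encoding each differentiation as the insertion of the pair $\{i,\mu\}$ into a word listing the sequence of generalized $G$-entries, and the universal constant $C_0$ in $(\f A_{m-1})$ must be chosen large enough to absorb the accumulated factors of $N^\delta$ from the a priori bounds across the finite number of orders $n\leq\ell$.
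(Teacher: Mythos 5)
Your plan tracks the paper's architecture faithfully through the Taylor expansion, the choice of $A$ as the common third-order polynomial, the word encoding of the Leibniz monomials, and the bootstrap a priori bounds from \textbf{($\f A_{m-1}$)} together with the sum identities of Lemma~\ref{lemma: basic}. The gap is in the final power counting, and it is a real one.

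First, the claimed gain $(N\Psi^2)^q$ from $\sum_{i,\mu}$ is unattainable. The double sum is over only two independent indices $i$ and $\mu$, so the best improvement the sum identities can yield is one factor $N\Psi^2$ per summation variable, hence $(N\Psi^2)^2$ in total (and only $(N\Psi^2)^1$ when all type-II entries happen to carry the same special index). Concretely, if the $2q$ type-II entries split as $a$ carrying index $i$ and $b = 2q-a$ carrying index $\mu$, then
$\sum_i \abs{G_{\bar{\f v} i}}^a \prec N^{C\delta}\max_i \abs{G_{\bar{\f v}i}}^{a-2}\sum_i\abs{G_{\bar{\f v}i}}^2 \prec N^{C\delta}N\Psi^2$
for $a\geq 2$; iterating does not compound the gain, since there is only one sum over $i$. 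The same holds for $\mu$. Hence the quantity you wrote as $(N\Psi^2)^q$ is really at most $N^{C\delta}(N\Psi^2)^{2}$, independently of $q$.

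Second, the exponent condition "$2q\geq n-2$, delivered by the combinatorics for $n\geq 4$" is false: the combinatorics only give that each differentiated factor contributes exactly two type-II entries (so the total is $2q$), but it places no lower bound on $q$ itself. For instance all $n$ derivatives can land on one factor, giving $q = 1$, so $2q = 2 < n - 2$ whenever $n \geq 5$. Your final display $N^{2-n/2}(N\Psi^2)^q N^{C(p+n)\delta}(\E F^p_{\f v} + \Psi^p)$ therefore does not close the argument.

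The missing ingredient is the paper's interplay between the $N^{2-n/2}$ prefactor, the modest summation gain of $\Psi^2$ (or $\Psi^4$), and the crucial lower bound $\Psi \geq c N^{-1/2}$ from the definition \eqref{def_Psi}. The paper splits into two cases. If $n \leq 2q - 2$, a pigeonhole shows at least two of the words have $n(w_r) = 1$; by \eqref{w_est_1} those two contribute a factor $R_i^2 R_\mu^2$, the sum gives $\Psi^4$, and combining with $N^{2-n/2}\leq (c^{-1}\Psi)^{n-4}$ yields $\Psi^n \leq \Psi^q$ since $n \geq q$. If $n \geq 2q - 1$, one word contributes $R_i^2 + R_\mu^2$ via \eqref{w_est_2}, the sum gives $\Psi^2$, and $N^{2-n/2}\Psi^2 \leq (c^{-1}\Psi)^{n-4}\Psi^2 = c^{-(n-4)}\Psi^{n-2}$, which is $\leq \Psi^q$ because $n \geq 4$ and $n \geq 2q-1$ together force $n - 2 \geq q$. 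You need the case split and the conversion of $N^{-1/2}$ into powers of $\Psi$; the inequality $2q \geq n-2$ plays no role and does not even hold.
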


In the remainder of this section, we prove Lemma \ref{lem:main_comparison2} for $u = 1$. In order to make use of the assumption \textbf{($\f A_{m - 1}$)}, which involves spectral parameters in $\wh {\f S}_{m - 1}$, to obtain estimates for spectral parameters in $\wh {\f S}_m$, we use the following rough bound.

\begin{lemma} \label{lem:rough_bound}
For any $z  = E + \ii \eta\in \f S$ and $\f x, \f y \in \R^{\cal I}$ we have
\begin{equation*}
\absb{G_{\f x \f y}(z) - \Pi_{\f x \f y}(z)} \;\prec\; N^{2 \delta} \sum_{l = 1}^{L(\eta)} \pB{\im G_{\f x \f x}(E + \ii \eta_l) + \im G_{\f y \f y}( E + \ii \eta_l)} + \abs{\ul \Sigma \f x} \abs{\ul \Sigma \f y}\,,
\end{equation*}
where we recall the definitions of $L(\eta)$ and $\eta_l$ from \eqref{def_L_eta} and \eqref{def_eta_k}.
\end{lemma}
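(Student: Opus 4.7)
\medskip

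\noindent\textbf{Proof proposal for Lemma \ref{lem:rough_bound}.}

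The plan is to start from the spectral decomposition \eqref{decompfullG} and split the eigenvalue sum into dyadic shells centred at $E$, at the scales $\eta_l$ defined in \eqref{def_eta_k}. Writing $\alpha_k \deq \scalar{\f u_k}{\ul \Sigma^{1/2} \f x}$ and $\beta_k \deq \scalar{\f u_k}{\ul \Sigma^{1/2} \f y}$, \eqref{decompfullG} gives
\begin{equation*}
(G(z) + \ul \Sigma)_{\f x \f y} \;=\; \sum_k \frac{\alpha_k \ol{\beta_k}}{\lambda_k - z}\,, \qquad \im G_{\f x \f x}(E + \ii \eta_l) \;=\; \sum_k \frac{\eta_l \abs{\alpha_k}^2}{(\lambda_k - E)^2 + \eta_l^2}\,,
\end{equation*}
and the analogous formula for $\f y$. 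Using $|\alpha_k \ol{\beta_k}| \leq (|\alpha_k|^2 + |\beta_k|^2)/2$ it therefore suffices to estimate $\sum_k |\alpha_k|^2/|\lambda_k - z|$ (and the analogous sum for $|\beta_k|^2$) by $N^{2\delta} \sum_l \im G_{\f x \f x}(z_l)$, and then to bound $|(\Pi + \ul \Sigma)_{\f x \f y}|$ by $|\ul \Sigma \f x||\ul \Sigma \f y|$. The key elementary ingredient is that for any $\eta_l$ and $l \geq 1$,
\begin{equation*}
\sum_{\abs{\lambda_k - E} \leq \eta_l} \abs{\alpha_k}^2 \;\leq\; 2 \eta_l \im G_{\f x \f x}(E + \ii \eta_l)\,,
\end{equation*}
obtained by bounding $\eta_l / ((\lambda_k - E)^2 + \eta_l^2)$ from below by $1/(2\eta_l)$ on the indicated region.

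For $l = 1$ we use $\abs{\lambda_k - z} \geq \eta = \eta_0$ on $\abs{\lambda_k - E} \leq \eta_1$ to get a bound $\leq 2 (\eta_1/\eta_0) \im G_{\f x \f x}(z_1) = 2 N^\delta \im G_{\f x \f x}(z_1)$. For $2 \leq l \leq L$, on the dyadic shell $\eta_{l-1} < \abs{\lambda_k - E} \leq \eta_l$ we use $\abs{\lambda_k - z} \geq \eta_{l-1}$ to bound the corresponding sum by $2 (\eta_l / \eta_{l-1}) \im G_{\f x \f x}(z_l) = 2 N^\delta \im G_{\f x \f x}(z_l)$. Summing over $l$ yields $\sum_{|\lambda_k - E| \leq \eta_L} \abs{\alpha_k}^2/\abs{\lambda_k - z} \leq 2 N^\delta \sum_{l=1}^L \im G_{\f x \f x}(z_l)$.

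For the tail contribution from $\abs{\lambda_k - E} > \eta_L \asymp 1$, Lemma \ref{lem:norm_XX} combined with the Cauchy interlacing/Weyl inequality implies $\lambda_k \leq \|\Sigma\|\|X X^*\| \leq C$ with high probability, so that $\abs{\lambda_k - E}$ is uniformly bounded on a high-probability event. On this event, $\im G_{\f x \f x}(z_L) \geq c \sum_k \abs{\alpha_k}^2$ for some $c > 0$, and $\abs{\lambda_k - z} \geq \eta_L \asymp 1$, hence the tail sum is $\prec \im G_{\f x \f x}(z_L)$, which is absorbed into the $l = L$ term of the dyadic sum. The same estimates hold with $\f x$ replaced by $\f y$, producing the first term on the right-hand side of the claim.

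It remains to bound the deterministic part $(\Pi + \ul \Sigma)_{\f x \f y}$. A direct computation gives
\begin{equation*}
\Pi + \ul \Sigma \;=\; \begin{pmatrix} m \Sigma^2 (1 + m \Sigma)^{-1} & 0 \\ 0 & m + 1 \end{pmatrix}\,,
\end{equation*}
and since $\Sigma$ commutes with $(1 + m \Sigma)^{-1}$ one has $\Sigma^2 (1 + m \Sigma)^{-1} = \Sigma (1 + m \Sigma)^{-1} \Sigma$. Combining with the self-adjointness of $\Sigma$, splitting $\f x = \f x_M + \f x_N$ and $\f y = \f y_M + \f y_N$, and invoking \eqref{1+xm} and \eqref{m_sim_1} produces $|(\Pi + \ul \Sigma)_{\f x \f y}| \leq C (|\Sigma \f x_M||\Sigma \f y_M| + |\f x_N||\f y_N|) \leq C |\ul \Sigma \f x||\ul \Sigma \f y|$. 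Combining everything via $|G_{\f x \f y} - \Pi_{\f x \f y}| \leq |(G + \ul \Sigma)_{\f x \f y}| + |(\Pi + \ul \Sigma)_{\f x \f y}|$ gives the lemma. The main subtlety is the tail bound, which requires passing to a high-probability event to control the support of the spectrum of $\Sigma^{1/2} X X^* \Sigma^{1/2}$; this is the reason that the conclusion holds only in the sense of stochastic domination rather than deterministically, and is why \eqref{bound_Sigma} and Lemma \ref{lem:norm_XX} both enter.
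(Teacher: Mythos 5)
Your proof is correct and follows essentially the same route as the paper's: the spectral decomposition \eqref{decompfullG}, a dyadic splitting of the eigenvalue sum around $E$ at the scales $\eta_l$, a tail estimate resting on $\lambda_k \prec 1$ (via Lemma \ref{lem:norm_XX} and \eqref{bound_Sigma}), and a deterministic bound on $(\Pi + \ul\Sigma)_{\f x\f y}$ using \eqref{1+xm} and \eqref{m_sim_1}. The one small deviation is that you bound each annulus $\eta_{l-1} < \abs{\lambda_k - E} \leq \eta_l$ directly by $\im G_{\f x\f x}(E + \ii\eta_l)$ at the outer scale via your ``key elementary ingredient,'' whereas the paper first produces $\im G_{\f x\f x}(E+\ii\eta_{l-1})$ at the inner scale and then shifts up with the monotonicity of $y \mapsto y \im G_{\f x\f x}(E + \ii y)$; your version saves a factor of $N^\delta$, which is harmless since the claimed bound already carries $N^{2\delta}$.
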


\begin{proof}
From \eqref{def_Pi} and \eqref{decompfullG} we get
\begin{equation*}
G - \Pi \;=\; \sum_{k} \frac{\f u_k \f u_k^*}{\lambda_k - z} -
\begin{pmatrix}
m \Sigma^2 (1 + m \Sigma)^{-1} & 0
\\
0 & I_N
\end{pmatrix}\,.
\end{equation*}
Using \eqref{1+xm} we therefore get
\begin{equation*}
\absb{G_{\f x \f y}(z) - \Pi_{\f x \f y}(z)} \;\leq\; \sum_{k} \frac{\scalar{\f x}{\f u_k}^2}{\abs{\lambda_k - z}} + \sum_{k} \frac{\scalar{\f y}{\f u_k}^2}{\abs{\lambda_k - z}}
+ C \abs{\ul \Sigma \f x} \abs{\ul \Sigma \f y}\,.
\end{equation*}
It suffices to estimate the first term. Setting $\eta_{-1} \deq 0$ and $\eta_{L + 1} \deq \infty$, we define the subsets of indices
\begin{equation*}
U_l \;\deq\; \hb{k \col \eta_{l - 1} \leq \abs{\lambda_k - E} < \eta_l} \qquad (l = 0,1, \dots, L+1)\,.
\end{equation*}
We split the summation
\begin{equation*}
\sum_{k} \frac{\scalar{\f x}{\f u_k}^2}{\abs{\lambda_k - z}} \;=\; \sum_{l = 0}^{L + 1} \sum_{k \in U_l} \frac{\scalar{\f x}{\f u_k}^2}{\abs{\lambda_k - z}}
\end{equation*}
and treat each $l$ separately. For $l = 1, \dots, L$ we find
\begin{multline*}
\sum_{k \in U_l} \frac{\scalar{\f x}{\f u_k}^2}{\abs{\lambda_k - z}} \;\leq\; \sum_{k \in U_l} \frac{\scalar{\f x}{\f u_k}^2 \eta_l}{(\lambda_k - E)^2} \;\leq\; 2 \sum_{k \in U_l} \frac{\scalar{\f x}{\f u_k}^2 \eta_l}{(\lambda_k - E)^2 + \eta_{l - 1}^2} 
\\
\leq\; \frac{2 \eta_l}{\eta_{l - 1}} \im G_{\f x \f x}(E + \ii \eta_{l - 1}) \;\leq\; 2 N^\delta \im G_{\f x \f x}(E + \ii \eta_{l - 1})\,,
\end{multline*}
where the third step easily follows from \eqref{decompfullG}. Using that the map $y \mapsto y \im G_{\f x \f x}(E + \ii y)$ is nondecreasing, we find for $l = 1, \dots, L$ that
\begin{equation*}
\sum_{k \in U_l} \frac{\scalar{\f x}{\f u_k}^2}{\abs{\lambda_k - z}} \;\leq\;  2 N^{2 \delta} \im G_{\f x \f x}(E + \ii \eta_{l \vee 1})\,,
\end{equation*}
as desired.

Next, we estimate
\begin{equation*}
\sum_{k \in U_0} \frac{\scalar{\f x}{\f u_k}^2}{\abs{\lambda_k - z}} \;\leq\; \sum_{k \in U_0} \frac{\scalar{\f x}{\f u_k}^2 \, 2  \eta}{(\lambda_k - E)^2 + \eta^2} \;=\; 2 \im G_{\f x \f x} (E + \ii \eta) \;\leq\; 2 N^\delta \im G_{\f x \f x} (E + \ii \eta_1)\,.
\end{equation*}
Finally, we estimate
\begin{equation*}
\sum_{k \in U_{L+1}} \frac{\scalar{\f x}{\f u_k}^2}{\abs{\lambda_k - z}} \;\leq\; 2\sum_{k \in U_{L+1}} \frac{\scalar{\f x}{\f u_k}^2 \abs{\lambda_k - E} \eta_L}{(\lambda_k - E)^2 + \eta_L^2} \;\prec\; \sum_{k \in U_{L+1}} \frac{\scalar{\f x}{\f u_k}^2 \eta_L}{(\lambda_k - E)^2 + \eta_L^2} \;\leq\; \im G_{\f x \f x}(E + \ii \eta_L)\,,
\end{equation*}
where in the second step we used that $\lambda_k \prec 1$, as follows from \eqref{bound_Sigma} and Lemma \ref{lem:norm_XX}. This concludes the proof.
\end{proof}

\begin{lemma} \label{cor:rough_bound}
Suppose that \textbf{($\f A_{m - 1}$)} holds. Then we have
\begin{equation} \label{a_priori_1}
\ul \Sigma^{-1} \pb{G(z) - \Pi(z)} \ul \Sigma^{-1} \;=\; O_\prec(N^{2 \delta})
\end{equation}
for all $z \in \wh{\f S}_m$.
Moreover, for any deterministic $\f v \in \bb S$ we have
\begin{equation} \label{a_priori_2}
\im G_{\bar {\f v} \bar {\f v}}(z) \;\prec\; N^{2 \delta} \pb{\im m(z) + N^{C_0 \delta} \Psi(z)}
\end{equation}
for all $z \in \wh{\f S}_m$.
\end{lemma}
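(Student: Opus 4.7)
The plan is to combine Lemma~\ref{lem:rough_bound} with the inductive hypothesis $\textbf{($\f A_{m-1}$)}$ applied at larger spectral scales, together with two standard monotonicity properties: $\eta\mapsto\eta\im G_{\bar{\f v}\bar{\f v}}(E+\ii\eta)$ is nondecreasing, and $\eta\mapsto\im m(E+\ii\eta)/\eta$ is nonincreasing. Both follow directly from the spectral decomposition~\eqref{decompfullG} (and the analogous integral representation of $m$), since each summand $\eta^2|c_k|^2/((\lambda_k-E)^2+\eta^2)$ is nondecreasing in $\eta$ while each $|c_k|^2/((\lambda_k-E)^2+\eta^2)$ is nonincreasing in $\eta$.

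For \eqref{a_priori_1}, I would apply Lemma~\ref{lem:rough_bound} with $\f x = \bar{\f v}$ and $\f y = \bar{\f w}$ for $\f v,\f w \in \bb S$, bounding the boundary term by $|\ul\Sigma\bar{\f v}||\ul\Sigma\bar{\f w}| = |\f v||\f w| \leq 1$. For each $l = 1,\dots,L(\eta)$ one has $\eta_l \geq N^{-(m-1)\delta}$, so $E+\ii\eta_l \in \wh{\f S}_{m-1}$, and $\textbf{($\f A_{m-1}$)}$ gives $\im G_{\bar{\f v}\bar{\f v}}(E+\ii\eta_l) \prec \im m(E+\ii\eta_l) + N^{C_0\delta}\Psi(E+\ii\eta_l)$. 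Since $z \in \f D$ forces $\eta_l \geq N^{-1+\tau}$, Lemma~\ref{lem:gen_prop_m} yields $\im m(E+\ii\eta_l) \leq C$ and the definition of $\Psi$ gives $\Psi(E+\ii\eta_l) \leq CN^{-\tau/2}$. The constraint~\eqref{cond_delta} ensures $C_0\delta < \tau/2$, hence $N^{C_0\delta}\Psi(E+\ii\eta_l) \leq C$, so each summand is $\prec 1$. Since $L(\eta) \leq \delta^{-1}+1 = O(1)$, the full sum is $\prec 1$, and multiplication by $N^{2\delta}$ produces \eqref{a_priori_1}.

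For \eqref{a_priori_2}, set $z_1 \deq E + \ii N^\delta\eta$, which lies in $\wh{\f S}_{m-1}$. The monotonicity of $\eta\im G_{\bar{\f v}\bar{\f v}}$ gives $\im G_{\bar{\f v}\bar{\f v}}(z) \leq N^\delta\im G_{\bar{\f v}\bar{\f v}}(z_1)$, and $\textbf{($\f A_{m-1}$)}$ applied at $z_1$ yields $\im G_{\bar{\f v}\bar{\f v}}(z_1) \prec \im m(z_1) + N^{C_0\delta}\Psi(z_1)$. The monotonicity of $\im m/\eta$ gives $\im m(z_1) \leq N^\delta\im m(z)$, while a direct substitution into the definition of $\Psi$ shows $\Psi(z_1) \leq \Psi(z)$. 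Combining these inequalities produces $\im G_{\bar{\f v}\bar{\f v}}(z) \prec N^{2\delta}\im m(z) + N^{(C_0+1)\delta}\Psi(z) \leq N^{2\delta}\pb{\im m(z) + N^{C_0\delta}\Psi(z)}$, which is the desired bound.

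The only real subtlety will be the careful use of the constraint~\eqref{cond_delta} to convert the unbounded factor $N^{C_0\delta}$ appearing in $\textbf{($\f A_{m-1}$)}$ into the trivial bound $O(1)$ at the larger scales $\eta_l \geq N^{-1+\tau}$: the decay $\Psi \leq CN^{-\tau/2}$ at these scales combines with $\delta < \tau/(2C_0)$ to give $N^{C_0\delta}\Psi \leq C$. Everything else is elementary spectral calculus plus the trivial observation that $L(\eta) = O(1)$ for fixed $\delta$.
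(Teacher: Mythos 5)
Your proposal is correct and follows essentially the same route as the paper's own proof: use Lemma~\ref{lem:rough_bound} together with \textbf{($\f A_{m-1}$)} at the larger scales $\eta_l$ to conclude $\im G_{\bar{\f v}\bar{\f v}}(E+\ii\eta_l)\prec 1$, then (for~\eqref{a_priori_2}) apply the two standard Stieltjes-transform monotonicities to pass from $z_1=E+\ii N^\delta\eta$ down to $z$, picking up a factor $N^\delta$ at each step. The one place where you spell out more than the paper does is the reduction $\im m(E+\ii\eta_l)+N^{C_0\delta}\Psi(E+\ii\eta_l)\leq C$ via Lemma~\ref{lem:gen_prop_m}, the lower bound $\eta_l\geq N^{-1+\tau}$, and the constraint~\eqref{cond_delta}; the paper states the same conclusion more tersely (bounding the right side of \eqref{bound_assumption} by a constant through $\im\Pi_{\bar{\f v}\bar{\f v}}$ and $\abs{\ul\Sigma\bar{\f v}}^2$), so your explicit accounting is a small improvement in readability rather than a different argument.
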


\begin{proof}
Let $E + \ii \eta \in \wh {\f S}_m$. Then we have $E + \ii \eta_l \in \wh{\f S}_{m - 1}$ for $l = 1, \dots, L(\eta)$. Therefore \eqref{bound_assumption} yields, for any deterministic $\f v \in \bb S$,
\begin{equation*}
\im G_{\bar {\f v} \bar {\f v}}(E + \ii \eta_l) \;\prec\; \abs{\ul \Sigma \bar {\f v}}^2 + \im \scalarb{\bar {\f v}}{\Pi(E + \ii \eta_l) \bar {\f v}} \;\leq\; C \abs{\ul \Sigma \bar {\f v}}^2 \;=\; C\,,
\end{equation*}
where in the second step we used \eqref{1+xm} and the definition of $\Pi$ from \eqref{def_Pi}. The estimate \eqref{a_priori_1} now follows easily using Lemma \ref{lem:rough_bound}.

In order to prove \eqref{a_priori_2}, we remark that if $s(z)$ is the Stieltjes transform of any probability measure, the map $\eta \mapsto \eta \im s(E + \ii \eta)$ is nondecreasing and the map $\eta \mapsto \eta^{-1} \im s(E + \ii \eta)$ is nonincreasing. Abbreviating $z = E + \ii \eta \in \wh{\f S}_m$ and $z_1 = E + \ii \eta_1 \in \wh{\f S}_{m-1}$, we therefore find
\begin{equation*}
\im G_{\bar {\f v} \bar {\f v}}(z) \;\leq\; N^\delta \im G_{\bar {\f v} \bar {\f v}}(z_1) \;\prec\; N^\delta \pb{\im m(z_1) + N^{C_0 \delta} \Psi(z_1)} \;\leq\;  N^{2 \delta} \pb{\im m(z) + N^{C_0 \delta} \Psi(z)}\,,
\end{equation*}
where in the second step we used the assumption \textbf{($\f A_{m - 1}$)}.
\end{proof}

\subsection{Expansion} \label{sec:expansion}
We now develop the main expansion which underlies the proof of Lemma \ref{lem:main_comparison2}. Throughout the rest of this section we suppose that \textbf{($\f A_{m - 1}$)} holds. The rest of the proof is performed at a single $z \in \wh{\f S}_m$, and from now on we therefore consistently omit the argument $z$ from our notation.

Let $i \in \cal I_M$ and $\mu \in \cal I_N$. Define the $\cal I \times \cal I$ matrix $\Delta^\lambda_{(i \mu)}$ through
\begin{equation*}
\pb{\Delta^\lambda_{(i \mu)}}_{st} \;\deq\; \lambda \delta_{i s} \delta_{\mu t} + \lambda \delta_{it} \delta_{\mu s}\,.
\end{equation*}
Thus we get the resolvent expansion, for $\lambda, \lambda' \in \R$ and any $K \in \N$,
\begin{equation} \label{res_exp}
G^{\theta,\lambda'}_{(i \mu)} \;=\; G^{\theta,\lambda}_{(i \mu)} + \sum_{k = 1}^K G^{\theta,\lambda}_{(i \mu)} \pb{\Delta_{(i \mu)}^{\lambda - \lambda'} G^{\theta,\lambda}_{(i \mu)}}^k + G^{\theta,\lambda'}_{(i \mu)} \pb{\Delta_{(i \mu)}^{\lambda - \lambda'} G^{\theta, \lambda}_{(i \mu)}}^{K+1}\,.
\end{equation}
The following result provides a priori bounds for the entries of $G^{\theta,\lambda}_{(i \mu)}$.

\begin{lemma} \label{lem:rough bound}
Suppose that $y$ is a random variable satisfying $\abs{y} \prec N^{-1/2}$. Then
\begin{equation} \label{rough_bound2}
\ul \Sigma^{-1} \pb{G^{\theta, y}_{(i \mu)} - \Pi} \ul \Sigma^{-1} \;=\; O_\prec(N^{2 \delta})
\end{equation}
for all $i \in \cal I_M$ and $\mu \in \cal I_N$.
\end{lemma}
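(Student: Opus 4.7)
The strategy is to compare $G^{\theta,y}_{(i\mu)}$ with the unperturbed resolvent $G^\theta$. Since $X^\theta$ satisfies \eqref{cond on entries of X}--\eqref{moments of X-1} uniformly in $\theta$, Lemma \ref{cor:rough_bound} already gives $\ul\Sigma^{-1}(G^\theta - \Pi)\ul\Sigma^{-1} = O_\prec(N^{2\delta})$, so it suffices to show $\ul\Sigma^{-1}(G^{\theta,y}_{(i\mu)} - G^\theta)\ul\Sigma^{-1} = O_\prec(N^{2\delta})$. Setting $\alpha \deq X^\theta_{i\mu} - y$ (so that $\abs{\alpha} \prec N^{-1/2}$ by \eqref{moments of X-1} and the assumption on $y$), the resolvent expansion \eqref{res_exp} yields, for any $K \in \N$,
\begin{equation*}
G^{\theta,y}_{(i\mu)} - G^\theta \;=\; \sum_{k = 1}^K G^\theta \pb{\Delta^\alpha_{(i \mu)} G^\theta}^k + G^{\theta,y}_{(i\mu)} \pb{\Delta^\alpha_{(i \mu)} G^\theta}^{K+1}.
\end{equation*}

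The central observation is the rank-two factorization $\Delta^\alpha_{(i\mu)} = \alpha (e_i e_\mu^\top + e_\mu e_i^\top)$: for deterministic unit vectors $\f v, \f w$, the $(\bar{\f v},\bar{\f w})$-entry of the $k$-th summand in the sum above expands into $2^k$ monomials of the form
\begin{equation*}
\alpha^k \, G^\theta_{\bar{\f v} a_1}\, G^\theta_{b_1 a_2} \cdots G^\theta_{b_{k-1} a_k}\, G^\theta_{b_k \bar{\f w}}, \qquad (a_r, b_r) \in \hb{(i, \mu), (\mu, i)}.
\end{equation*}
The first step is to show that every entry of $G^\theta$ appearing here is $\prec N^{2\delta}$. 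Writing $\chi_s \deq \ul\Sigma e_s$, we have $\abs{\chi_s} \leq C$ by \eqref{bound_Sigma} and $\ul\Sigma^{-1} \chi_s = e_s$, so that
\begin{equation*}
G^\theta_{s t} \;=\; \scalarb{\chi_s}{\ul\Sigma^{-1} G^\theta \ul\Sigma^{-1} \chi_t}, \qquad G^\theta_{\bar{\f v} s} \;=\; \scalarb{\f v}{\ul\Sigma^{-1} G^\theta \ul\Sigma^{-1} \chi_s},
\end{equation*}
with an analogous formula for $G^\theta_{s \bar{\f w}}$. The $(G^\theta - \Pi)$-contribution of each such inner product is $O_\prec(N^{2\delta})$ by Lemma \ref{cor:rough_bound}; the $\Pi$-contribution simplifies via $\ul\Sigma^{-1}\chi_s = e_s$ and direct computation from \eqref{def_Pi} to $-(1 + m \Sigma)^{-1} e_i$ (for $s = i \in \cal I_M$, in which case the potentially singular $\Sigma^{-1}$ cancels against the $\Sigma$ appearing in $\Pi$) or $m e_\mu$ (for $s = \mu \in \cal I_N$), both of norm $\leq C$ by \eqref{1+xm} and \eqref{m_sim_1}. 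Consequently each monomial is bounded by $N^{2\delta}(C N^{2\delta - 1/2})^k$, which is summable over $k = 1, \dots, K$ under \eqref{cond_delta}.

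The main obstacle is the remainder term, in which one of the $K+2$ resolvent entries involves $G^{\theta,y}_{(i\mu)}$ for which no $O_\prec(N^{2\delta})$ bound is available at this stage. For this single entry I would use the crude a priori bound from Lemma \ref{lem:opnorm}: since $X^{\theta,y}_{(i\mu)} - X^\theta$ has rank one and norm $O_\prec(N^{-1/2})$, Lemma \ref{lem:norm_XX} yields $\norm{(X^{\theta,y}_{(i\mu)})^* X^{\theta,y}_{(i\mu)}} \prec 1$, whence $\absb{(G^{\theta,y}_{(i\mu)})_{\bar{\f v} a_1}} \prec 1/\eta \leq N$ after applying Lemma \ref{lem:opnorm} sandwich-style and using $\abs{\chi_{a_1}} \leq C$. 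The remainder is therefore bounded by $N \cdot (N^{-1/2})^{K+1}(N^{2\delta})^{K+1}$, which is $\prec N^{2\delta}$ once $K$ is chosen large enough (depending on $\delta$) so that $(K+1)(\tfrac{1}{2} - 2\delta) > 1 - 2\delta$. Combining the three contributions --- the bound for $G^\theta - \Pi$, the resolvent sum, and the remainder --- completes the proof.
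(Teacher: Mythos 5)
Your proposal is correct and follows essentially the same route as the paper's proof: decompose $G^{\theta,y}_{(i\mu)} - \Pi = (G^\theta - \Pi) + (G^{\theta,y}_{(i\mu)} - G^\theta)$, apply Lemma \ref{cor:rough_bound} (valid for $X^\theta$ since it satisfies \eqref{cond on entries of X}--\eqref{moments of X-1}) to the first summand, expand the second via \eqref{res_exp} with $\lambda = X^\theta_{i\mu}$ and $\lambda' = y$, bound the summed terms using the $O_\prec(N^{2\delta})$ control on $\ul\Sigma^{-1}G^\theta$ and the boundedness of $\ul\Sigma^{-1}\Pi$, and handle the remainder with the crude $O_\prec(N)$ bound from Lemmas \ref{lem:opnorm} and \ref{lem:norm_XX} (the latter applied to the rank-one-perturbed matrix $X^{\theta,y}_{(i\mu)}$). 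Your writeup merely spells out more explicitly the entry-by-entry bookkeeping (via $\chi_s = \ul\Sigma e_s$ and the cancellation $\ul\Sigma^{-1}\Pi e_i = -(1+m\Sigma)^{-1}e_i$) and parameterizes $K$ in terms of $\delta$ rather than fixing $K=10$, neither of which is a genuine departure.
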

\begin{proof}
We use \eqref{res_exp} with $K \deq 10$, $\lambda' \deq y$, and $\lambda \deq X_{i \mu}^\theta$, so that $G_{(i \mu)}^{\theta, \lambda} = G^\theta$.
By \eqref{1+xm}, we have
\begin{equation} \label{Pi_bounded}
\norm{\ul \Sigma^{-1} \Pi} + \norm{\Pi \ul \Sigma^{-1}} \;\leq\; C\,.
\end{equation}
By assumption \textbf{($\f A_{m - 1}$)}, the conclusions of Lemma \ref{cor:rough_bound} hold for the matrix ensemble $X^\theta$ (since it satisfies \eqref{cond on entries of X} and \eqref{moments of X-1}). Hence, \eqref{rough_bound2} holds for $y = \lambda$, and in particular $\ul \Sigma^{-1} G_{(i \mu)}^{\theta, \lambda} = O_\prec(N^{2 \delta})$ and $G_{(i \mu)}^{\theta, \lambda} \ul \Sigma^{-1} = O_\prec(N^{2 \delta})$. Plugging these estimates into \eqref{res_exp} and using that $\abs{\lambda - \lambda'} \prec N^{-1/2}$, it is easy to estimate the contribution to \eqref{rough_bound2} of all terms of \eqref{res_exp} except the rest term. In order to handle the rest term, we use the rough bounds $\ul \Sigma^{-1} G_{(i \mu)}^{\theta, \lambda'} = O_\prec(N)$ and $G_{(i \mu)}^{\theta, \lambda'} \ul \Sigma^{-1} = O_\prec(N)$, which may be deduced from Lemma \ref{lem:opnorm} and a simple modification of Lemma \ref{lem:norm_XX}.
\end{proof}

To simplify notation, we introduce the function
\begin{equation} \label{def_f}
f_{(i \mu)}(\lambda) \;\deq\; F_{\f v}^p\pb{X_{(i \mu)}^{\theta, \lambda}}\,,
\end{equation}
where we omit the dependence on $\theta$, $p$, $\f v$, and $z$ from our notation. (Recall that $p$ is fixed and all estimates are uniform in $z \in \wh {\f S}_m$,$\f v \in \bb S$, and $\theta \in [0,1]$.) We denote by $f^{(n)}_{(i \mu)}$ the $n$-th derivative of $f_{(i \mu)}$.

The following result is easy to deduce from \eqref{res_exp} and Lemma \ref{lem:rough bound}.

\begin{lemma} \label{lem:taylor_exp}
Suppose that $y$ is a random variable satisfying $\abs{y} \prec N^{-1/2}$. Then for any fixed $n \in \N$ we have
\begin{equation}
\absb{f_{(i \mu)}^{(n)}(y)} \;\prec\; N^{2 \delta (p + n)}\,.
\end{equation}
By Taylor expansion, we therefore have
\begin{equation*}
f_{(i \mu)}(y) \;=\; \sum_{n = 1}^{4p} \frac{y^n}{n!} f_{(i \mu)}^{(n)}(0) + O_\prec(\Psi^p)\,.
\end{equation*}
\end{lemma}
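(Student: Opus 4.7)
The plan is to bound $f_{(i\mu)}^{(n)}$ via repeated resolvent differentiation, controlling the resulting matrix entries with Lemma \ref{lem:rough bound}, and then to apply Taylor's theorem with remainder. Writing $g(\lambda) \deq (\ul\Sigma^{-1} G^{\theta,\lambda}_{(i\mu)} \ul\Sigma^{-1})_{\f v \f v}$ and $a \deq \Pi_{\bar{\f v}\bar{\f v}}$, we have $f_{(i\mu)}(\lambda) = ((g(\lambda) - a)\,\overline{(g(\lambda) - a)})^{p/2}$, a polynomial of total degree $p$ in $g-a$ and $\overline{g-a}$ (using $p \in 2\N$). Since $\partial_\lambda H^{\theta,\lambda}_{(i\mu)} = \Delta^1_{(i\mu)}$, the resolvent identity gives $\partial_\lambda G^{\theta,\lambda}_{(i\mu)} = -G^{\theta,\lambda}_{(i\mu)} \Delta^1_{(i\mu)} G^{\theta,\lambda}_{(i\mu)}$; iterating, $g^{(n)}(\lambda)$ is a sum of at most $2^n n!$ products of $n+1$ factors of the form $(\ul\Sigma^{-1}G^{\theta,\lambda}_{(i\mu)}\ul\Sigma^{-1})_{\f x\f y}$ where either $\f x = \f v$ or $\f x = \ul\Sigma \f e_s$ for some $s \in \{i,\mu\}$, and similarly for $\f y$.

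Applying Lemma \ref{lem:rough bound} at $y = \lambda$, combined with $\norm{\ul\Sigma^{-1}\Pi\ul\Sigma^{-1}} \leq C$ (from \eqref{1+xm} and the definition of $\Pi$) and $\abs{\ul\Sigma \f e_s} \leq C$ (from \eqref{bound_Sigma}), each such factor is $\prec N^{2\delta}$. Hence $\abs{g(\lambda) - a} \prec N^{2\delta}$, and $\abs{g^{(n)}(\lambda)} \prec N^{2\delta(n+1)}$ for each fixed $n \geq 1$. By the general Leibniz rule applied to the $p$-fold product defining $f_{(i\mu)}$, the derivative $f^{(n)}_{(i\mu)}$ is a sum of $O(1)$ terms, each a product of $p$ factors of the form $(g-a)^{(d_j)}$ or $(\bar g - \bar a)^{(d_j)}$ with $d_j \geq 0$ and $d_1 + \cdots + d_p = n$. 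Bounding each such product yields $\prod_{j: d_j = 0} N^{2\delta} \prod_{j: d_j \geq 1} N^{2\delta(d_j+1)} = N^{2\delta(p+n)}$ since $\#\{j : d_j = 0\} + \sum_{j: d_j \geq 1}(d_j + 1) = p + n$. This proves $\abs{f^{(n)}_{(i\mu)}(y)} \prec N^{2\delta(p+n)}$ uniformly for $\abs{y} \prec N^{-1/2}$.

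For the Taylor expansion, I would apply Taylor's theorem with integral remainder of order $4p+1$ around $\lambda = 0$: the remainder is bounded by $\abs{y}^{4p+1}/(4p)! \cdot \sup_{t \in [0,y]} \abs{f^{(4p+1)}_{(i\mu)}(t)} \prec N^{-(4p+1)/2} \cdot N^{2\delta(5p+1)}$, which for $\delta$ sufficiently small (as enforced by \eqref{cond_delta} with large $C_0$) is bounded by $N^{-p}$. Since $\Psi \geq cN^{-1}$ on $\f D$, this yields a remainder of size $O_\prec(\Psi^p)$, as claimed (the $n = 0$ contribution $f_{(i\mu)}(0)$, independent of $X^\theta_{i\mu}$, plays the role of the function $A$ absorbing it in the comparison argument of Lemma \ref{lem:main_comparison2}). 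The main subtlety lies in the bookkeeping of the $\ul\Sigma$ factors, since Lemma \ref{lem:rough bound} controls $\ul\Sigma^{-1}(G - \Pi)\ul\Sigma^{-1}$ in the weak operator sense but not $G$ itself, and $\ul\Sigma^{-1}$ may have unbounded operator norm; this is resolved by observing that every coordinate vector $\f e_s$ produced by $\Delta^1_{(i\mu)}$ is paired with a $\ul\Sigma$-factor from the wrapping around $G$, and $\abs{\ul\Sigma \f e_s} \leq C$ by \eqref{bound_Sigma}, so all generalized entries encountered are indeed controllable by Lemma \ref{lem:rough bound}.
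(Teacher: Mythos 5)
Your approach — resolvent differentiation via $\partial_\lambda G = -G\Delta^1_{(i\mu)}G$, control of each factor by Lemma \ref{lem:rough bound}, Leibniz for the $p$-fold product, and Taylor with remainder of order $4p+1$ — is the same route the paper has in mind when it says the claim is ``easy to deduce from \eqref{res_exp} and Lemma \ref{lem:rough bound}''; you have simply filled in the details, and the power counting $N^{2\delta(p+n)}$ and the remainder estimate are both correct.

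One inaccuracy: the assertion $\norm{\ul\Sigma^{-1}\Pi\ul\Sigma^{-1}} \leq C$ is false in general. The upper-left block of $\ul\Sigma^{-1}\Pi\ul\Sigma^{-1}$ is $-\Sigma^{-1}(1+m\Sigma)^{-1}$, and the assumption \eqref{T_diag} only requires $\Sigma > 0$, not $\Sigma \geq c$, so $\norm{\Sigma^{-1}}$ (and hence this block) may blow up. The bound the paper actually proves and uses is \eqref{Pi_bounded}, namely $\norm{\ul\Sigma^{-1}\Pi}+\norm{\Pi\ul\Sigma^{-1}}\leq C$, which is weaker but sufficient: every generalized entry of $\ul\Sigma^{-1}\Pi\ul\Sigma^{-1}$ arising in your expansion has at least one of its two indices equal to $\ul\Sigma\f e_s$ for $s\in\{i,\mu\}$ (and the other either $\f v$ or another $\ul\Sigma\f e_t$), so each such entry reduces to an entry of $\ul\Sigma^{-1}\Pi$, $\Pi\ul\Sigma^{-1}$, or $\Pi$, all of which are $O(1)$. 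You in fact articulate exactly this point in your final paragraph, where you correctly note that $\ul\Sigma^{-1}$ may have unbounded operator norm and that the coordinate vectors are always paired with a $\ul\Sigma$; this contradicts the norm bound you quote earlier, so it should be dropped in favour of the pairing argument you already give. With that correction the proof is complete and matches the paper's intent.
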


From Lemma \ref{lem:taylor_exp} and Lemma \ref{lem:E_prec}, we get
\begin{align}
\qB{ \E F_{\f v}^p \pB{X^{\theta,X^1_{i \mu}}_{(i \mu)}} - \E F_{\f v}^p \pB{X^{\theta,0}_{(i \mu)}}} &\;=\;
\E \qb{f_{(i \mu)}\pb{X_{i \mu}^1} - f_{(i \mu)}(0)}
\notag \\ \label{moment_matching}
&\;=\; \E f_{(i \mu)}(0) + \frac{1}{2 N} \E f^{(2)}_{(i \mu)}(0) + \sum_{n = 4}^{4p} \frac{1}{n!} \E f_{(i \mu)}^{(n)}(0) \E (X_{i \mu}^1)^n + O_\prec(\Psi^p)\,,
\end{align}
where we used that $X_{i \mu}^1$ has vanishing first and third moments, by \eqref{zero_third_moment}, and its variance is equal to $1/N$. Recalling our goal \eqref{main_comparison2}, we therefore find that we only have to prove
\begin{equation} \label{sc_step 1}
N^{-n/2} \sum_{i \in \cal I_M} \sum_{\mu \in \cal I_N} \absb{\E f_{(i \mu)}^{(n)}(0)} \;=\; O \pB{(N^{C_0 \delta}\Psi)^p+ \sup_{\f w \in \bb S}\E F^p_{\f w}(X^\theta)}
\end{equation}
for $n = 4, \dots, 4p$. Here we used the bounds \eqref{moments of X-1}.

In order to obtain a self-consistent estimate in terms of the matrix $X^\theta$ on the right-hand side of \eqref{sc_step 1}, we need to replace the matrix $X^{\theta,0}_{(i \mu)}$ in $f_{(i \mu)}(0) = F_{\f v}^p\pb{X_{(i \mu)}^{\theta, 0}}$ (and its derivatives) with $X^\theta = X^{\theta, X^\theta_{i \mu}}_{(i \mu)}$. We shall do this by an other application of \eqref{res_exp}.

\begin{lemma} \label{lem:0_to_theta}
Suppose that
\begin{equation} \label{sc_step 2}
N^{-n/2} \sum_{i \in \cal I_M} \sum_{\mu \in \cal I_N} \absb{\E f_{(i \mu)}^{(n)}(X_{i \mu}^\theta)} \;=\; O \pB{(N^{C_0 \delta}\Psi)^p + \sup_{\f w \in \bb S}\E F^p_{\f w}(X^\theta)}
\end{equation}
holds for $n = 4, \dots, 4p$. Then \eqref{sc_step 1} holds for $n = 4, \dots, 4 p$.
\end{lemma}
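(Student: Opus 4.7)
The plan is to relate $\E f^{(n)}_{(i\mu)}(0)$ to $\E f^{(n+k)}_{(i\mu)}(X^\theta_{i\mu})$ by Taylor-expanding the function $\lambda \mapsto f^{(n)}_{(i\mu)}(\lambda)$ around $\lambda = 0$, evaluating at $\lambda = X^\theta_{i\mu}$, and exploiting a crucial independence property. Explicitly, for a truncation order $J$ to be chosen large depending on $p$, I would write
\begin{equation*}
f^{(n)}_{(i\mu)}(X^\theta_{i\mu}) \;=\; \sum_{k=0}^J \frac{(X^\theta_{i\mu})^k}{k!}\, f^{(n+k)}_{(i\mu)}(0) + R_{n,J,(i\mu)}\,,
\end{equation*}
with the remainder controlled by $\absb{X^\theta_{i\mu}}^{J+1} \prec N^{-(J+1)/2}$ (from \eqref{moments of X-1}) together with the a priori derivative bound from Lemma \ref{lem:taylor_exp}; choosing $J \geq 2p$ and invoking Lemma \ref{lem:E_prec} makes $N^{-n/2}\sum_{i,\mu}\absb{\E R_{n,J,(i\mu)}}$ smaller than $\Psi^p$ uniformly for $n \geq 4$.

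The key observation is that $f^{(n+k)}_{(i\mu)}(0) = \pb{\partial_\lambda^{n+k} F_{\f v}^p(X^{\theta,\lambda}_{(i\mu)})}\vert_{\lambda = 0}$ depends only on the entries of $X^\theta$ other than $(i,\mu)$; since the entries of $X^\theta$ are independent, $f^{(n+k)}_{(i\mu)}(0)$ is independent of the scalar $X^\theta_{i\mu}$, so the expectation factorizes: $\E\qb{(X^\theta_{i\mu})^k f^{(n+k)}_{(i\mu)}(0)} = \E(X^\theta_{i\mu})^k\cdot \E f^{(n+k)}_{(i\mu)}(0)$. Inserting $\E X^\theta_{i\mu} = 0$, $\E(X^\theta_{i\mu})^2 = N^{-1}$, $\E(X^\theta_{i\mu})^3 = 0$ from \eqref{zero_third_moment}, and the bounds $\absb{\E(X^\theta_{i\mu})^k} \leq C_k N^{-k/2}$ from \eqref{moments of X-1}, then rearranging to isolate $\E f^{(n)}_{(i\mu)}(0)$, summing over $(i,\mu)$, and multiplying by $N^{-n/2}$, I obtain (with $A(n) \deq N^{-n/2}\sum_{i,\mu}\absb{\E f^{(n)}_{(i\mu)}(0)}$ and $B(n)$ the analogous quantity with $X^\theta_{i\mu}$ in place of $0$) the recursive inequality
\begin{equation*}
A(n) \;\leq\; B(n) + \tfrac{1}{2}\, A(n+2) + \sum_{k=4}^{J} c_k\, A(n+k) + O\pb{(N^{C_0\delta}\Psi)^p}\,,
\end{equation*}
where the $c_k$ are absolute constants and the $k = 3$ term is absent precisely because of \eqref{zero_third_moment}.

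To close the iteration I would combine two ingredients. The hypothesis \eqref{sc_step 2} directly bounds $B(n)$ by the desired right-hand side for $n \in \{4,\dots,4p\}$, while a second application of Lemma \ref{lem:taylor_exp} at $\lambda = 0$ (which trivially satisfies $\abs{\lambda} \prec N^{-1/2}$), combined with Lemma \ref{lem:E_prec}, yields the crude bound $A(n) \prec N^{2 - n/2 + O(\delta p)}$ valid for every $n$; this crude bound is already $\leq \Psi^p$ as soon as $n \geq 4 + p + O(\delta p)$. Starting the recursion at such a large $n$ where the crude bound alone closes the chain, and iterating downward through the bounded number of steps needed to reach $n = 4$, the geometric factor $1/2$ (together with the smaller factors for $k \geq 4$) ensures that accumulated contributions remain uniformly bounded, so $A(n)$ is controlled by a finite linear combination of $B(n')$ for $n' \in \{4,\dots,4p\}$ plus $O((N^{C_0 \delta}\Psi)^p)$. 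The main delicate point is the joint choice of $J$: it must be simultaneously large enough to drive the Taylor remainder below $\Psi^p$ and small enough for the finite iteration to close with a bounded number of $B(n')$-contributions; this is a routine accounting, but is the only place where the interplay between the truncation order, the moment matching to order three, and the a priori derivative estimates needs careful tracking.
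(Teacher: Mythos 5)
Your proof is essentially correct and takes the same route as the paper: Taylor-expand $f^{(n)}_{(i\mu)}$ around $0$ evaluated at $\xi = X^\theta_{i\mu}$, use the independence of $\xi$ and $f^{(n+k)}_{(i\mu)}(0)$ to factor the expectation, and iterate. The paper's proof (see its identity \eqref{Taylo_id1}) is precisely this; where you solve a recursive inequality $A(n) \leq B(n) + \tfrac12 A(n+2) + \cdots$ by seeding it with a crude bound at large $n$ and iterating downward, the paper instead unrolls the same substitution into a closed-form sum over compositions $n + k_1 + \cdots + k_q \leq 4p$, with everything beyond $4p$ absorbed into the remainder $O_\prec(N^{n/2}\Psi^p)$. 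Two cosmetic remarks: the phrase "the geometric factor $1/2$ ensures that accumulated contributions remain uniformly bounded" isn't quite the right justification — what matters is that each step advances the index by at least $2$ (so the unrolling terminates after $O(p)$ iterations) and that every coefficient $|\E\xi^k| N^{k/2}/k!$ is bounded, giving an $O_p(1)$ total with no geometric decay required; and there is no need for $J$ to be "small enough for the iteration to close" — once $J$ is fixed as a function of $p$, the recursion always terminates in boundedly many steps, so the only constraint on $J$ is the lower bound needed to kill the Taylor remainder.
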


\begin{proof}
To simplify notation, we abbreviate $f_{(i \mu)} \equiv f$ and $X^\theta_{i \mu} \equiv \xi$. The proof consists of a repeated application of the identity
\begin{equation} \label{Taylo_id1}
\E f^{(l)}(0) \;=\; \E f^{(l)}(\xi) - \sum_{k = 1}^{4p - l} \E f^{(l + k)}(0) \frac{\E \xi^k}{k!} + O_\prec(N^{l/2} \Psi^p)
\end{equation}
for $l \leq 4p$,
which follows from Lemmas \ref{lem:E_prec} and \ref{lem:taylor_exp}.
Fix $n = 4, \dots, 4p$. Using \eqref{Taylo_id1} we get
\begin{align*}
\E f^{(n)}(0) &\;=\; \E f^{(n)}(\xi) -  \sum_{k_1 \geq 1} \ind{n + k_1 \leq 4p} \E f^{(n + k_1)}(0) \frac{\E \xi^{k_1}}{k_1!} + O_\prec(N^{n/2} \Psi^p)
\\
&\;=\; \E f^{(n)}(\xi) -  \sum_{k_1 \geq 1} \ind{n + k_1 \leq 4p} \E f^{(n + k_1)}(\xi) \frac{\E \xi^{k_1}}{k_1!}
\\
&\qquad
+ \sum_{k_1,k_2 \geq 1} \ind{n + k_1 + k_2 \leq 4p} \E f^{(n + k_1 + k_2)}(0) \frac{\E \xi^{k_1}}{k_1!} \frac{\E \xi^{k_2}}{k_2!}
 + O_\prec(N^{n/2} \Psi^p)
 \\
 &\;=\; \cdots \;=\; \sum_{q = 0}^{4p - n} (-1)^q \sum_{k_1, \dots, k_q \geq 1} \indbb{n + \sum_j k_j \leq 4p} \E f^{(n + \sum_j k_j)}(\xi) \prod_{j=1}^q \frac{\E \xi^{k_j}}{k_j!}  + O_\prec(N^{n/2} \Psi^p)\,.
\end{align*}
The claim now follows easily using \eqref{moments of X-1}.
\end{proof}

What therefore remains is to prove \eqref{sc_step 2}. Since it only involves the matrix ensemble $X^\theta$, for the remainder of the proof we abbreviate $X^\theta \equiv X$. Recalling the notation \eqref{def_f}, we find from Lemma \ref{lem:0_to_theta} that it suffices to prove the following result.

\begin{lemma} \label{lem:sc_step 3}
for any $n = 4, \dots, 4p$ we have
\begin{equation} \label{sc_step 3}
N^{-n/2} \sum_{i \in \cal I_M} \sum_{\mu \in \cal I_N} \absbb{\E \pbb{\frac{\partial}{\partial X_{i \mu}}}^n F^p_{\f v}(X)} \;=\; O \pB{(N^{C_0 \delta}\Psi)^p+\sup_{\f w \in \bb S}\E F^p_{\f w}(X)}\,.
\end{equation}
\end{lemma}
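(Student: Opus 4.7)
The plan is to expand $(\partial/\partial X_{i\mu})^n F^p_{\f v}(X)$ via Leibniz and the resolvent identity $\partial_{X_{i\mu}} G_{ab} = -G_{ai}G_{\mu b} - G_{a\mu}G_{ib}$ (a direct consequence of $\partial_{X_{i\mu}} G = - G (\partial_{X_{i\mu}} H) G$), then estimate each resulting term by combining pointwise a priori bounds on entries of $G$ (Lemma \ref{cor:rough_bound}) with the $L^2$-summation identities of Lemma \ref{lemma: basic}, and finally apply Young's inequality to peel off the $|Z|^{p-k}$-factor. Here $Z \deq G_{\bar{\f v}\bar{\f v}} - \Pi_{\bar{\f v}\bar{\f v}}$, so $F^p_{\f v} = Z^{p/2}\overline{Z}^{p/2}$.

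First I would expand $(\partial_{X_{i\mu}})^n F^p_{\f v}$ into a sum (of size bounded by a constant depending only on $n$ and $p$) of terms, each a product of $p - k$ factors $|Z|$ and $k$ ``chains'' $(\partial_{X_{i\mu}})^{n_j} G_{\bar{\f v}\bar{\f v}}$ (or conjugate) with $1 \leq k \leq \min(n,p)$ and $\sum_{j=1}^k n_j = n$. Iterating the resolvent identity, each chain of length $n_j + 1$ is a sum of products of the form $G_{\bar{\f v} x_1} G_{y_1 x_2}\cdots G_{y_{n_j} \bar{\f v}}$ with $x_s, y_s \in \{i, \mu\}$. So each term carries $2k$ ``terminal'' $G$-entries (with one $\bar{\f v}$-index) and $n - k$ ``internal'' $G$-entries (both indices in $\{i, \mu\}$).

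The next step is to deploy the a priori bounds from property $(\f A_{m-1})$ via Lemma \ref{cor:rough_bound}. The pointwise bound $|G_{\f x \f y} - \Pi_{\f x \f y}| \prec N^{2\delta} |\ul\Sigma \f x| |\ul\Sigma \f y|$ controls every internal and terminal $G$-entry by $N^{O(\delta)}$, and the improved $\im G_{\bar{\f v}\bar{\f v}} \prec N^{2\delta}(\im m + N^{C_0\delta}\Psi)$, together with Lemma \ref{lemma: basic}, Lemma \ref{lem:norm_XX}, and the elementary inequalities $\im m/\eta \leq N\Psi^2$ and $\Psi/\eta \leq N\Psi^2$, gives the key summation bounds
\begin{equation*}
\sum_{i \in \cal I_M}|G_{\bar{\f v} i}|^2 + \sum_{\mu \in \cal I_N}|G_{\bar{\f v}\mu}|^2 \;\prec\; N^{(C_0 + C_1)\delta} N \Psi^2
\end{equation*}
for some constant $C_1$ independent of $C_0$. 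Factorizing the sum over $(i,\mu)$ of the $2k$ terminal factors as $\bigl(\sum_i |G_{\bar{\f v} i}|^{q_M}\bigr)\bigl(\sum_\mu |G_{\bar{\f v}\mu}|^{q_N}\bigr)$ with $q_M + q_N = 2k$, bounding each inner sum via pointwise estimates for its ``extra'' powers and the $L^2$-bound above for its two ``core'' powers, and using the pointwise bound on internal factors, a short case analysis yields
\begin{equation*}
N^{-n/2} \sum_{i,\mu} |T| \;\prec\; N^{2 - n/2 + O(C_0\delta)} \, |Z|^{p-k} \Psi^r,
\end{equation*}
where $r = \min(q_M, 2)\ind{q_M \geq 1} + \min(q_N, 2)\ind{q_N \geq 1} \in \{2, 3, 4\}$. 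Crucially, the worst case $r = 2$ can arise only when $q_M = 0$ or $q_N = 0$, which forces every chain to have $n_j \geq 2$ and hence $n \geq 2k$, providing the compensating factor $N^{2 - n/2} \leq N^{-(k-2)}$.

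Finally I would take expectation, apply Hölder ($\E|Z|^{p-k} \leq (\E|Z|^p)^{(p-k)/p}$) and Young with conjugate exponents $(p/k, p/(p-k))$ on the deterministic factor $D \deq N^{2-n/2 + O(C_0\delta)}\Psi^r$, obtaining
\begin{equation*}
\E|Z|^{p-k} D \;\leq\; \frac{k}{p} D^{p/k} + \frac{p-k}{p} \E|Z|^p.
\end{equation*}
The second term is bounded by $\sup_{\f w \in \bb S}\E F^p_{\f w}(X)$ with constant $\leq 1$, so it fits the target RHS. Showing that the first term is at most $(N^{C_0\delta}\Psi)^p$ reduces, using $\Psi \geq N^{-\tau}$ (which follows from $\eta \geq N^{-1+\tau}$), to the algebraic inequality $C_0\delta p \geq (2-n/2)p/k + \tau p(k-r)/k + O(C_0\delta)$, which I will verify by a short case analysis on $(n, k, r)$. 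The main obstacle is precisely this bookkeeping: $C_0$ must be chosen large enough to simultaneously dominate the $\tau(k-r)/k$ loss from the lower bound $\Psi \geq N^{-\tau}$ and the $O(C_0\delta)$ accumulated through iterated use of the a priori bounds, for every admissible $(n,k,p,r)$ with $n \geq 4$, $1 \leq k \leq \min(n,p)$, while preserving the bootstrapping constraint $\delta < \tau/(2C_0)$. It is only thanks to the compensating factor $N^{-(k-2)}$ appearing exactly in the worst case $r = 2$ that Young's inequality ultimately closes.
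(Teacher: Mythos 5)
Your proposal follows the same overall strategy as the paper's proof: Leibniz expansion combined with the resolvent derivative identity, pointwise a priori bounds from property \textbf{($\f A_{m-1}$)} for the ``internal'' resolvent entries, $L^2$-summation bounds of the type $\frac{1}{N}\sum_i \abs{G_{\bar{\f v} i}}^2 \prec \im G_{\bar{\f v}\bar{\f v}}/(N\eta)$ for the ``terminal'' entries, and finally Young's inequality to close the self-consistent estimate. The bookkeeping differs slightly: the paper encodes the derivative structure via a formal family of words (Definition \ref{def:words}) and closes the counting with a pigeonhole argument on words of length one, distinguishing $n\geq 2q-1$ from $n\leq 2q-2$ (cf.\ \eqref{sc_step 7}), which plays the same role as your classification by $(q_M,q_N,r)$. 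Both approaches hinge on the compensation between the volume factor $N^{2-n/2}$ and powers of $\Psi$, and both use $\Psi \gtrsim N^{-1/2}$ in the final step.

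There are, however, two concrete errors in your sketch. First, the claim that ``$r=2$ can arise only when $q_M=0$ or $q_N=0$'' is false: for $k=1$ one can perfectly well have $q_M=q_N=1$ (this happens precisely for a single chain of odd length, whose two terminals carry one index $i$ and one index $\mu$ by the alternation constraint), and this also gives $r=2$. The downstream conclusion $n\geq 2k$ is still true in that case, but only because $n\geq 4\geq 2k$ is assumed for $k=1$; your stated implication does not deliver it. Second, the lower bound $\Psi\geq N^{-\tau}$ is false. From $\eta\geq N^{-1+\tau}$ one gets an \emph{upper} bound of the form $\Psi\leq CN^{-\tau/2}$ (used in the paper to ensure $N^{(C_0/2+12)\delta}\Psi\leq 1$); the correct \emph{lower} bound is $\Psi\geq cN^{-1/2}$, which follows from $\im m\geq C^{-1}\eta$ (Lemma \ref{lem:gen_prop_m}). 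Consequently the term $\tau\,p(k-r)/k$ in your final algebraic inequality should read $\tfrac12\,p(k-r)/k$, and the verification then reduces to $n\geq k-r+4$ for all admissible $(n,k,r)$ with $n\geq 4$. This inequality does hold, but checking it requires a genuine case analysis that exploits the alternation constraint on the words (in particular to show $r=3\Rightarrow n\geq k+1$ and $r=2\Rightarrow n\geq k+2$); you flag this as ``a short case analysis'' but it is exactly where the argument is won or lost, and the paper's word formalism is designed to make it transparent.
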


\begin{remark} \label{rem:Bernoulli}
We conclude this subsection with a general digression on the algebra of the Bernoulli interpolation method underlying the proof of Proposition \ref{prop:comparison_1}, and in particular establish the formula \eqref{K_n_def} from the introduction. Let $(X_\alpha^0)_{\alpha}$ and $(X_\alpha^1)_{\alpha}$ be arbitrary independent finite families of independent random variables. Define $X^\theta = (X^\theta_\alpha)_{\alpha}$ as in \eqref{bern_interpol}. Then we get
\begin{equation*}
\frac{\dd}{\dd \theta} \E F(X^\theta) \;=\; \sum_{\alpha} \qB{\E F\pB{X^{\theta,X_\alpha^1}_{(\alpha)}} - \E F\pB{X^{\theta,X_\alpha^0}_{(\alpha)}}}\,,
\end{equation*}
where $X^{\theta,x}_{(\alpha)}$ denotes the family obtained from $X^\theta$ by replacing $X_\alpha^\theta$ with $x$. Fix $\alpha$ and abbreviate $f(x) \deq F\pb{X^{\theta,x}_{(\alpha)}}$, $\zeta \deq X_\alpha^1$, $\zeta' \deq X_\alpha^0$, and $\xi \deq X_\alpha^\theta$. We may assume that $\zeta$,  $\zeta'$, and $\xi$ are independent. We want to compute the difference
\begin{equation*}
\E \q{f(\zeta) - f(\zeta')} \;=\; \E \q{f(\zeta) - f(0)}  - \E \q{f(\zeta') - f(0)}\,.
\end{equation*}
Since we are only interested in the algebra of the interpolation, we assume for simplicity that all random variables have finite exponential moments and that $f$ is analytic. (Otherwise, as in the computations above, the expansions have to be truncated.)
Repeating the steps of the proof of Lemma \ref{lem:0_to_theta}, we find
\begin{equation*}
\E (f(\zeta) - f(0))
\;=\; \sum_{q \geq 0} (-1)^q \sum_{n, k_1, \dots, k_q \geq 1} \E f^{(n + k_1 + \cdots + k_q)}(\xi) \, \frac{\E \zeta^{n}}{n!} \prod_{j = 1}^q \frac{\E \xi^{k_j}}{k_j!}
\;=\; \sum_{m \geq 1} K_m(\zeta,\xi) \, \E f^{(m)}(z)\,,
\end{equation*}
where
\begin{equation*}
K_m(\zeta,\xi) \;\deq\; \sum_{q \geq 0} (-1)^q \sum_{n, k_1, \dots, k_q \geq 1} \ind{n + k_1 + \cdots + k_q = m} \frac{\E \zeta^n}{n!}\prod_{j = 1}^q \frac{\E \xi^{k_j}}{k_j!} \;=\; \frac{1}{m!} \pbb{\frac{\dd}{\dd t}}^m \bigg\vert_{t = 0} \frac{\E \me^{t \zeta} - 1}{\E \me^{t \xi}}\,.
\end{equation*}
Together with the same computation for the difference $\E (f(\zeta') - f(0))$, this yields the identity \eqref{K_n_def} from the introduction, with $K_{m,\alpha}^\theta \deq K_m(X_\alpha^1,X_\alpha^\theta) - K_m(X_\alpha^0,X_\alpha^\theta)$.
\end{remark}

\subsection{Introduction of words and conclusion of the proof} \label{sec:words}
In order to prove \eqref{sc_step 3}, we shall have to exploit the detailed structure of the derivatives in on the left-hand side of \eqref{sc_step 3}. The following definition introduces the basic algebraic objects that we shall use.

\begin{definition}[Words] \label{def:words}
We consider words $w \in \cal W$ of even length in the two letters $\{\f i, \f \mu\}$. We denote by $2 n(w)$ the length of the word $w$, where $n(w) = 0,1,2, \dots$. We always use bold symbols to denote the letters of words. We use the notation
\begin{equation} \label{word_letters}
w \;=\; \f t_1 \f s_2 \f t_2 \f s_3 \cdots \f t_n \f s_{n+1}
\end{equation}
for a word of length $2n = 2n(w)$. For $n = 0,1,2, \dots$ we introduce the subset $\cal W_n \deq \{w \in \cal W \col n(w) = n\}$ of words of length $2n$. We require that each word $w \in \cal W_n$ satisfy $\f t_l \f s_{l+1} \in \{\f i \f \mu, \f \mu \f i\}$ for all $1 \leq l \leq n$.

Next, we assign to each letter $*$ its value $[*] \equiv [*]_{i,\mu} \in \cal I$ through
\begin{equation*}
[\f i] \;\deq\; i\,, \qquad [\f \mu] \;\deq\; \mu\,.
\end{equation*}
(Our choice of the names of the two letters is suggestive of their value. Note, however, that it is important to distinguish the abstract letter from its value, which is an index in $\cal I$ and may be used as a summation index.)

Finally, to each word $w \in \cal W$ we assign a random variable $A_{\f v, i,\mu}(w)$ as follows. If $n(w) = 0$ (i.e.\ $w$ is the empty word) we define
\begin{equation*}
A_{\f v,i,\mu}(w) \;\deq\; G_{\bar {\f v} \bar {\f v}} - \Pi_{\bar {\f v} \bar {\f v}}\,.
\end{equation*}
If $n(w) \geq 1$ with $w$ as in \eqref{word_letters} we define
\begin{equation} \label{def_w_1}
A_{\f v,i,\mu}(w) \;\deq\; G_{\bar{\f v} [\f t_1]} G_{[\f s_2] [\f t_2]} \cdots G_{[\f s_n] [\f t_n]} G_{[\f s_{n+1}] \bar {\f v}}\,.
\end{equation}
\end{definition}

Definition \ref{def:words} is constructed so that
\begin{equation*}
\pbb{\frac{\partial}{\partial X_{i \mu}}}^n \pb{G_{\bar {\f v} \bar {\f v}} - \Pi_{\bar {\f v} \bar {\f v}}} \;=\; (-1)^n \sum_{w \in \cal W_n} A_{\f v,i,\mu}(w)
\end{equation*}
for any $n = 0,1,2,\dots$. This may be easily deduced from \eqref{res_exp}. Using Leibnitz's rule \nc we conclude that
\begin{multline*}
\pbb{\frac{\partial}{\partial X_{i \mu}}}^n F^p_{\f v}(X) \;=\; (-1)^n \sum_{n_1, \dots, n_{p/2} \geq 0} \sum_{\tilde n_1, \dots, \tilde n_{p/2} \geq 0} \indbb{\sum_r (n_r + \tilde n_r) = n}\frac{n!}{\prod_r n_r !\tilde n_r!}
\\
\times \prod_{r} \pBB{\sum_{w_r \in \cal W_{n_r}} \sum_{\tilde w_r \in \cal W_{\tilde n_r}} A_{\f v,i, \mu}(w_r) \ol{A_{\f v,i, \mu}(\tilde w_r)}}\,.
\end{multline*}
To prove \eqref{sc_step 3}, it therefore suffices to prove that
\begin{equation} \label{sc_step 4}
N^{-n/2} \sum_{i \in \cal I_M} \sum_{\mu \in \cal I_N} \absbb{\E \prod_{r = 1}^{p/2} \pb{A_{\f v,i, \mu}(w_r) \ol{A_{\f v,i, \mu}(\tilde w_r)}}} \;=\; O \pB{(N^{C_0 \delta}\Psi)^p+\sup_{\f w \in \bb S} \E F^p_{\f w}(X)}
\end{equation}
for $4 \leq n \leq 4p$ and words $w_r, \tilde w_r \in \cal W$ satisfying $\sum_{r} \pb{n(w_r) + n(\tilde w_r)} = n$. To avoid irrelevant notational complications arising from the complex conjugates, we in fact prove that
\begin{equation} \label{sc_step 5}
N^{-n/2} \sum_{i \in \cal I_M} \sum_{\mu \in \cal I_N} \absbb{\E \prod_{r = 1}^{p} A_{\f v,i, \mu}(w_r)} \;=\; O \pB{(N^{C_0 \delta}\Psi)^p+\sup_{\f w \in \bb S} \E F^p_{\f w}(X)}
\end{equation}
for $4 \leq n \leq 4p$ and words $w_r \in \cal W$ satisfying $\sum_{r} n(w_r) = n$. (The proof of \eqref{sc_step 4} is the same with slightly heaver notation.) Treating words $w_r$ with $n(w_r) = 0$ separately, we find that it suffices to prove
\begin{equation} \label{sc_step 6}
N^{-n/2} \sum_{i \in \cal I_M} \sum_{\mu \in \cal I_N} \E \absBB{A_{\f v,i, \mu}(w_0)^{p - q} \prod_{r = 1}^{q} A_{\f v,i, \mu}(w_r)} \;=\; O \pB{(N^{C_0 \delta}\Psi)^p+\sup_{\f w \in \bb S} \E F^p_{\f w}(X)}
\end{equation}
for $4 \leq n \leq 4p$, $1 \leq q \leq p$, and words $w_r \in \cal W$ satisfying $\sum_{r} n(w_r) = n$, $n(w_0) = 0$, and $n(w_r) \geq 1$ for $r \geq 1$. Note that we also have the bound $q \leq n$.

In order to estimate \eqref{sc_step 6}, we introduce the quantity
\begin{equation*}
R_s \;\deq\; \abs{G_{\bar {\f v} s}} + \abs{G_{s \bar {\f v}}}\,.
\end{equation*}

\begin{lemma} \label{lem:w_est}
For $w \in \cal W$ we have the rough bound
\begin{equation} \label{w_est}
\abs{A_{\f v,i, \mu}(w)} \;\prec\; N^{2 \delta (n(w) + 1)}\,.
\end{equation}
Moreover, for $n(w) \geq 1$ we have
\begin{equation} \label{w_est_2}
\abs{A_{\f v,i, \mu}(w)} \;\prec\; (R_i^2 + R_\mu^2) N^{2 \delta (n(w) - 1)}\,.
\end{equation}
Finally, for $n(w) = 1$ we have the sharper bound
\begin{equation} \label{w_est_1}
\abs{A_{\f v,i, \mu}(w)} \;\prec\; R_i R_\mu\,.
\end{equation}
\end{lemma}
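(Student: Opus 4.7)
The plan is to derive Lemma \ref{lem:w_est} as a direct consequence of Lemma \ref{cor:rough_bound} (which is available since we are operating under assumption \textbf{($\f A_{m-1}$)}) together with boundedness of $\Pi$ and of $\ul\Sigma$. The first step is to establish two pointwise a priori bounds: $|G_{st}| \prec N^{2\delta}$ for any $s,t \in \{i,\mu\}$, and $R_s \prec N^{2\delta}$ for any $s \in \{i,\mu\}$. For the entry bound I would write $(G-\Pi)_{st} = \scalar{\ul\Sigma\f e_s}{\ul\Sigma^{-1}(G-\Pi)\ul\Sigma^{-1}\ul\Sigma\f e_t}$ and apply \eqref{a_priori_1}, using that $\norm{\ul\Sigma\f e_s} \leq C$ by \eqref{bound_Sigma} (trivial when $s=\mu \in \cal I_N$) and that $\norm{\Pi} \leq C$ by \eqref{1+xm} and Lemma \ref{lem:gen_prop_m}. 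For $R_s$ I would split $G_{\bar{\f v}s} = (G-\Pi)_{\bar{\f v}s} + \Pi_{\bar{\f v}s}$ and bound the first piece by \eqref{a_priori_1} applied to $\f v$ and $\ul\Sigma\f e_s$, and the second by $\norm{\Pi\ul\Sigma^{-1}} \leq C$.

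Once these entry bounds are in hand, all three estimates are obtained by factoring the product \eqref{def_w_1} into its two \emph{edge factors} $G_{\bar{\f v}[\f t_1]}$ and $G_{[\f s_{n+1}]\bar{\f v}}$ and its $n-1$ \emph{bulk factors} $G_{[\f s_l][\f t_l]}$ for $2 \leq l \leq n$. For \eqref{w_est} with $n(w)=0$ the claim $|G_{\bar{\f v}\bar{\f v}} - \Pi_{\bar{\f v}\bar{\f v}}| \prec N^{2\delta}$ is precisely \eqref{a_priori_1} evaluated at $\f v$, while for $n(w) \geq 1$ I would estimate the two edge factors by $R_{[\f t_1]}, R_{[\f s_{n+1}]} \prec N^{2\delta}$ and the $n-1$ bulk factors each by $N^{2\delta}$ using the first step, giving a total of $N^{2\delta(n+1)}$. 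For \eqref{w_est_2} I would retain the edge factors in their $R$-form and apply AM--GM,
\begin{equation*}
|G_{\bar{\f v}[\f t_1]} G_{[\f s_{n+1}]\bar{\f v}}| \;\leq\; R_{[\f t_1]} R_{[\f s_{n+1}]} \;\leq\; \tfrac{1}{2}\pb{R_{[\f t_1]}^2 + R_{[\f s_{n+1}]}^2} \;\leq\; R_i^2 + R_\mu^2\,,
\end{equation*}
since $[\f t_1], [\f s_{n+1}] \in \{i,\mu\}$, and multiply by the bulk contribution $N^{2\delta(n-1)}$. For \eqref{w_est_1} with $n(w)=1$ the product is exactly $G_{\bar{\f v}[\f t_1]} G_{[\f s_2]\bar{\f v}}$, and the word constraint $\f t_1\f s_2 \in \{\f i\f\mu, \f\mu\f i\}$ forces $\{[\f t_1],[\f s_2]\} = \{i,\mu\}$, yielding the sharper bound $R_i R_\mu$.

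The proof is essentially bookkeeping: there is no substantive obstacle beyond keeping track of which factors in \eqref{def_w_1} are \emph{edge} versus \emph{bulk} and which pairs of letters are forced by the word constraint to straddle both values $i$ and $\mu$. The only point that requires care is ensuring that the $\ul\Sigma^{-1}$ factors inside $\bar{\f v}$ are compatible with the operator-norm form of \eqref{a_priori_1}, which is why the estimates on $|G_{st}|$ and $R_s$ must be performed first rather than attempting to bound individual entries of $G$ directly.
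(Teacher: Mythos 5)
Your proposal is correct and takes the same approach as the paper: the paper's own proof of \eqref{w_est} and \eqref{w_est_2} is just a one-line appeal to Lemma \ref{cor:rough_bound} and the definition \eqref{def_w_1}, and your argument spells out exactly the intended bookkeeping (each factor $|G_{st}| \prec N^{2\delta}$ for $s,t\in\{i,\mu\}$ and $R_s \prec N^{2\delta}$, both deduced from \eqref{a_priori_1} together with $\norm{\ul\Sigma}\leq C$ and $\norm{\ul\Sigma^{-1}\Pi}+\norm{\Pi\ul\Sigma^{-1}}\leq C$, then split the product into two edge factors and $n-1$ bulk factors). Your derivation of \eqref{w_est_1} from the constraint $\f t_1\f s_2 \in \{\f i\f\mu, \f\mu\f i\}$ is precisely the paper's justification as well.
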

\begin{proof}
The estimates \eqref{w_est} and \eqref{w_est_2} follow easily from Lemma \ref{cor:rough_bound} and the definition \eqref{def_w_1}. The estimate \eqref{w_est_1} follows from the constraint $\f t_1 \neq \f s_2$ in Definition \ref{def:words}.
\end{proof}

In addition to the high-probability bounds from Lemma \ref{lem:w_est}, we have the rough estimate
\begin{equation} \label{A_rough}
\E \abs{A_{\f v,i,\mu}(w)}^2 \;\leq\; N^C
\end{equation}
for some $C > 0$ and all $w \in \cal W$ satisfying $n(w) \leq 4p$; this follows easily from Definition \ref{def:words} and Lemma \ref{lem:opnorm}.

By pigeonholing  on the words appearing on the left-hand side of \eqref{sc_step 6}, \nc if $n \leq 2q - 2$ then there exist at least two words $w_r$ satisfying $n(w_r) = 1$. Using Lemma \ref{lem:w_est} we therefore get
\begin{multline} \label{sc_step 7}
\absbb{A_{\f v,i, \mu}(w_0)^{p - q} \prod_{r = 1}^{q} A_{\f v,i, \mu}(w_r)}
\\
\prec\; N^{2 \delta (n + q)} F_{\f v}^{p - q}(X) \pB{\ind{n \geq 2q - 1}  (R_i^2 + R_\mu^2) + \ind{n \leq 2q - 2} R_i^2 R_\mu^2}\,.
\end{multline}
From Lemma \ref{lemma: basic} combined with Lemma \ref{lem:norm_XX} we get
\begin{equation} \label{bound_sum_im}
\frac{1}{N}\sum_{i \in \cal I_M} R_i^2 + \frac{1}{N} \sum_{\mu \in \cal I_N }R_\mu^2 \;\prec\; \frac{\im G_{\bar {\f v} \bar {\f v}} + \eta}{N \eta} \;\prec\; N^{2 \delta} \frac{\im m + N^{C_0 \delta} \Psi}{N \eta} \;\leq\; N^{(C_0 + 2) \delta} \Psi^2\,,
\end{equation}
where in the second step we used \eqref{a_priori_2}, and in the last step the definition of $\Psi$.
Inserting \eqref{bound_sum_im} into \eqref{sc_step 7}, we get using Lemma \ref{lem:E_prec} and \eqref{A_rough} that the left-hand side of \eqref{sc_step 6} is bounded by
\begin{equation*}
N^{-n/2 + 2} N^{3 \delta (n + q)} \E F_{\f v}^{p - q}(X) \pB{\ind{n \geq 2q - 1} \pb{N^{C_0 \delta / 2}\Psi}^2 + \ind{n \leq 2q - 2} \pb{N^{C_0 \delta / 2}\Psi}^4}\,.
\end{equation*}
Using that $\Psi \geq c N^{-1/2}$, we find that the left-hand side of \eqref{sc_step 6} is bounded by
\begin{multline*}
N^{3 \delta (n + q)} \E F_{\f v}^{p - q}(X) \pB{\ind{n \geq 2q - 1} \pb{N^{C_0 \delta / 2}\Psi}^{n - 2} + \ind{n \leq 2q - 2} \pb{N^{C_0 \delta / 2}\Psi}^n}
\\
\leq\; \E F_{\f v}^{p - q}(X) \pB{\ind{n \geq 2q - 1} \pb{N^{(C_0 /2 + 12) \delta}\Psi}^{n - 2} + \ind{n \leq 2q - 2} \pb{N^{(C_0/2 + 12) \delta}\Psi}^n}\,,
\end{multline*}
where we used that $q \leq n$ and $n \geq 4$. Choose $C_0 \geq 25$. Then, by assumption \eqref{cond_delta} on $\delta$ we have $N^{(C_0/2 + 12) \delta}\Psi \leq 1$. Moreover, if $n \geq 4$ and $n \geq 2q - 1$ then $n \geq q + 2$. We conclude that the left-hand side of \eqref{sc_step 6} is bounded by
\begin{equation} \label{sc_step 8}
\E F_{\f v}^{p - q}(X) \pb{N^{C_0 \delta}\Psi}^{q}\,.
\end{equation}
Now \eqref{sc_step 6} follows using Young's inequality. This concludes the proof of \eqref{sc_step 3}, and hence of Lemma \ref{lem:main_comparison2}. The proof of Proposition \ref{prop:comparison_1} is therefore complete.

\section{Self-consistent comparison II: general $X$} \label{sec:self-const_II}

In this section and the next we prove the following result, which is Proposition \ref{prop:comparison_1} without the condition \eqref{zero_third_moment}.

\begin{proposition} \label{prop:comparison_2}
Suppose that the assumptions of Theorem \ref{thm:gen_iso} hold. If the anisotropic local law holds with parameters $(X^{\txt{Gauss}}, \Sigma, \f S)$, then the anisotropic local law holds with parameters $(X, \Sigma, \f S)$.
\end{proposition}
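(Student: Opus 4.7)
I will follow the scheme from the proof of Proposition~\ref{prop:comparison_1}: the same Bernoulli interpolation between $X^0 = X^{\txt{Gauss}}$ and $X^1 = X$ combined with the $\eta$-bootstrap of Section~\ref{sec:comparison1}. Since \eqref{zero_third_moment} was used only to eliminate the $n = 3$ term $\tfrac{1}{6}\E f^{(3)}_{(i\mu)}(0)\,\E (X^1_{i\mu})^3$ from \eqref{moment_matching}, every step from Section~\ref{sec:interpolation} through Lemma~\ref{lem:sc_step 3} carries over verbatim; using the bound $\abs{\E (X^1_{i\mu})^3} \leq C N^{-3/2}$ from \eqref{moments of X-1}, the only additional task is to establish \eqref{sc_step 3} also for $n = 3$. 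By the word decomposition of Section~\ref{sec:words}, this reduces to proving \eqref{sc_step 6} at $n = 3$, for which the only possible partitions are $(q,\{n(w_r)\}) \in \{(1,\{3\}),\,(2,\{1,2\}),\,(3,\{1,1,1\})\}$.

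For $q \in \{1,2\}$ at least one of the words has length $\geq 2$, so \eqref{w_est_2} contributes a factor $R_i^2 + R_\mu^2$; combined with \eqref{w_est_1} for the remaining length-one word (if any) and the averaged estimate \eqref{bound_sum_im}, this yields a bound of the form $N^{C\delta}\,\E F_{\f v}^{p-q}(X)\,\Psi^{n-2}$. Because $n = 3 \geq 2q-1$ holds automatically for $q \leq 2$, the Young's inequality argument of \eqref{sc_step 7}--\eqref{sc_step 8} closes these cases exactly as in the proof of Lemma~\ref{lem:sc_step 3}.

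The critical case is the third partition: three words of length one, each bounded only by $R_i R_\mu$ via \eqref{w_est_1}. The best direct estimate,
\begin{equation*}
N^{-3/2} \sum_{i,\mu} (R_i R_\mu)^3 \, F_{\f v}^{p-3} \;\prec\; N^{-3/2}\pbb{\sum_i R_i^3}\pbb{\sum_\mu R_\mu^3}\, F_{\f v}^{p-3} \;\prec\; N^{1/2 + C\delta}\,\Psi^2\, F_{\f v}^{p-3}\,,
\end{equation*}
falls short of the target $(N^{C_0\delta}\Psi)^3 F_{\f v}^{p-3}$ by a factor of $N^{1/2}/\Psi$, and no combinatorial refinement of the $\ell^2$ bound \eqref{bound_sum_im} and the pointwise a priori bounds on $R_i, R_\mu$ alone can close this gap. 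This is the main obstacle, and it is precisely why Lemma~\ref{lem:main_comparison} was formulated with $\sup_{\f w \in \bb S}\E F_{\f w}^p(X^\theta, z)$ on its right-hand side rather than the weaker $\E F_{\f v}^p(X^\theta, z)$, as already flagged in the remark following \eqref{main_comparison2}. The plan is to exploit this supremum: in one of the three length-one words, say $G_{\bar{\f v}i}\,G_{\mu\bar{\f v}}$, isolate the factor $G_{\bar{\f v}i}$, write $G_{\bar{\f v}i} = \Pi_{\bar{\f v}i} + (G-\Pi)_{\bar{\f v}i}$, and use polarization on the off-diagonal remainder to express $(G - \Pi)_{\bar{\f v}i}$ as a linear combination of diagonal quadratic forms $(G-\Pi)_{\bar{\f w}\bar{\f w}}$ evaluated at auxiliary unit vectors $\f w \in \bb S$ built from $\f v$ and $\f e_i / \abs{\ul\Sigma\f e_i}$. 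Each such quadratic form is precisely $\pm F_{\f w}(X^\theta,z)$ up to sign; after H\"older's inequality, the factor so extracted is absorbed into $\sup_{\f w \in \bb S}\E F_{\f w}^p(X^\theta,z)$ while the deterministic piece $\Pi_{\bar{\f v}i}$ produces the missing $\Psi$ through an improved $\ell^2$ summation. The key technical verifications are (a) that the choice of $\f w$ needed in the polarization depends on the summation index $i$ but can be absorbed into the deterministic supremum via conditioning on $i$ and Definition~\ref{def:stocdom}~(iii), and (b) that the resulting exponents in Young's inequality balance so as to reproduce the right-hand side of \eqref{sc_step 6}; this is the genuinely new technical content of the section.
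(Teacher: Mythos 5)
Your diagnosis of where the difficulty lies for $n=3$ is partly right, but both your case analysis and your proposed remedy break down.

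First, the claim that the cases $q\in\{1,2\}$ carry over from Section~\ref{sec:words} is false. The argument closing \eqref{sc_step 7}--\eqref{sc_step 8} uses, crucially, that $n\geq 4$: the final step reads ``if $n\geq 4$ and $n\geq 2q-1$ then $n\geq q+2$.'' For $n=3$ this inference fails. Explicitly, for $q=1$, $n=3$ (a single word of length~$3$) the bound \eqref{w_est_2} together with \eqref{bound_sum_im} yields only
\[
N^{-3/2}\sum_{i,\mu}(R_i^2+R_\mu^2)N^{4\delta}F_{\f v}^{p-1}\;\prec\;N^{1/2+C\delta}\Psi^2 F_{\f v}^{p-1}\,,
\]
and one needs $\Psi^1 F_{\f v}^{p-1}$; the surplus factor $N^{1/2}\Psi$ is $\gg 1$ for $\eta$ near $N^{-1+\tau}$. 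The case $q=2$ fails similarly. The Section~\ref{sec:words} machinery does not close \emph{any} of the $n=3$ partitions, which is why the paper treats the entire $n=3$ term with a new argument (Lemma~\ref{lem:71}) rather than only the $q=3$ case.

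Second, and more seriously, the polarization-plus-supremum device you propose for $q=3$ is the exact strategy that is explicitly ruled out in the sketch in Section~\ref{sec:sketch_compar}. Polarizing $(G-\Pi)_{\bar{\f v}i}$ produces diagonal quadratic forms $(G-\Pi)_{\bar{\f w}\bar{\f w}}$ with $\f w$ depending on the summation index $i$; after taking the sum (or sup) over $i$ and then the expectation, one obtains a bound in terms of $\E\max_{\f w\in\bb S}\abs{(G-\Pi)_{\bar{\f w}\bar{\f w}}}^p$. This is not controlled by $\max_{\f w\in\bb S}\E\abs{(G-\Pi)_{\bar{\f w}\bar{\f w}}}^p$ --- the sup and the expectation cannot be commuted, and the paper says so in as many words: ``This, however, leads to bounds of the form $\E\max_{\abs{\f w}=1}\abs{G_{\f w\f w}-\Pi_{\f w\f w}}^p$, which are not affordable.'' Conditioning on $i$ does not help because the only a priori pointwise bound available at this stage of the bootstrap is $\abs{(G-\Pi)_{\bar{\f w}\bar{\f w}}}\prec N^{2\delta}$ (from \eqref{sec7_assumpt}), which gives a factor $N^{2\delta}$, not $\Psi$, in place of the term you hope to absorb. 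The quantity $\sup_{\f w}\E F_{\f w}^p$ on the right side of \eqref{main_comparison} is there for a different reason entirely (to accommodate the $F_{\f e_\mu}$ terms that arise when the upper index $(\mu)$ is removed via Lemma~\ref{lem:idG}; see \eqref{whA_rough4}), not because the word decomposition can be polarized onto it.

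What the paper actually does is qualitatively different and is the genuine content of Section~\ref{sec:self-const_II}. One isolates the dependence on the $\mu$-th column $X_\mu = (X_{i\mu})_{i\in\cal I_M}$ via the decomposition identity \eqref{main_identity}, expands each factor into $X^{(\mu)}$-measurable coefficients times a polynomial in $X_\mu$ (Definitions~\ref{def:G_brack}, \ref{def:4-words}), and takes the conditional expectation $\E_\mu$. The missing power of $N^{-1/2}$ then comes from a parity argument (Lemma~\ref{lem:d odd}): since the polynomial degree $d_{\f X}$ is odd when $d_{\f v}=0$, a perfect pairing of the Gaussian-like moments is impossible, and one of the matched summation indices forces an extra $N^{-1/2}$. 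Closing the estimate to the required power of $\Psi$ then rests on the degree-counting in Section~\ref{sec:deg_counting}, which balances the factors $\frac{1}{N}\sum_j\abs{G^{(\mu)}_{\f x j}}^d$ against the rough $N^{2\delta}$ bounds on generic entries of $G^{(\mu)}$. None of this is present in your plan, and a fix along your proposed lines would need to reproduce it.
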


\subsection{Overview of the proof of Proposition \ref{prop:comparison_2}} \label{sec:sketch_proof_7}
The proof of Proposition \ref{prop:comparison_2} builds on that of Proposition \ref{prop:comparison_1}. Throughout this section, we take over the notations of Section \ref{sec:comparison1} without further comment. In particular, as in \eqref{cond_delta}, $C_0$ is some large enough constant (which may depend only on $\tau$) and $\delta > 0$ is fixed and small, satisfying \eqref{cond_delta}.

As in Section \ref{sec:expansion}, we suppose that \textbf{($\f A_{m - 1}$)} holds, and we fix $z \in \wh{\f S}_m$ which we consistently omit from our notation. From the assumption \textbf{($\f A_{m - 1}$)} and Lemma \ref{cor:rough_bound} we get the bounds \eqref{a_priori_1} and \eqref{a_priori_2}, which we summarize here for easy reference:
\begin{equation} \label{sec7_assumpt}
\ul \Sigma^{-1} \pb{G - \Pi} \ul \Sigma^{-1} \;=\; O_\prec(N^{2 \delta}) \,, \qquad
\im G_{\bar {\f v} \bar {\f v}} \;\prec\; N^{2 \delta} \pb{\im m + N^{C_0 \delta} \Psi}
\end{equation}
for all deterministic $\f v \in \bb S$.

The assumption \eqref{zero_third_moment} was used in the proof of Proposition \ref{prop:comparison_1} only in \eqref{moment_matching}, where it ensured that the summation over $n$ starts from $4$ instead of $3$. Without the assumption \eqref{zero_third_moment}, we in addition have to estimate the term $n = 3$ in \eqref{moment_matching}. It therefore suffices to prove the following result.

\begin{lemma} \label{lem:71}
Let $z \in \wh {\f S}_m$, and suppose that \eqref{sec7_assumpt} holds at $z$. There exists a constant $C_0$ such that, for any $\delta$ satisfying \eqref{cond_delta}, the estimate \eqref{sc_step 1} holds for $n = 3$.
\end{lemma}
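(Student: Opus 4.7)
The plan is to extend the self-consistent comparison argument of Section \ref{sec:comparison1} to the $n = 3$ term of the moment-matching expansion \eqref{moment_matching}, which no longer vanishes without the assumption \eqref{zero_third_moment}. Using $\absb{\E (X^1_{i\mu})^3} \leq C N^{-3/2}$ from \eqref{moments of X-1}, the $n = 3$ contribution reduces to controlling $N^{-3/2}\sum_{i,\mu}\absb{\E f^{(3)}_{(i\mu)}(0)}$. Applying the Taylor-expansion step of Lemma \ref{lem:0_to_theta}, I would rewrite this in terms of $\E f^{(3+k)}_{(i\mu)}(X^\theta_{i\mu})$ multiplied by moments of $X^\theta_{i\mu}$; the terms with $k \geq 1$ have effective derivative order $\geq 4$ and are already controlled by the analysis of Section \ref{sec:comparison1}, leaving the main task of bounding $N^{-3/2}\sum_{i,\mu}\absb{\E(\partial/\partial X_{i\mu})^3 F_{\f v}^p(X^\theta)}$.

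Expanding the triple derivative via the word formalism of Definition \ref{def:words}, one obtains a finite sum indexed by configurations $(w_0; w_1, \dots, w_q)$ with $n(w_0) = 0$, $n(w_r) \geq 1$, and $\sum_{r = 1}^q n(w_r) = 3$. For $q = 1, 2$ at least one word has length $\geq 2$, so \eqref{w_est_2} supplies a factor $R_i^2 + R_\mu^2$ whose summation via \eqref{bound_sum_im} combines with Young's inequality to close the bound essentially as in the proof of Lemma \ref{lem:sc_step 3}. The delicate case is $q = 3$: three words of length $1$, each bounded by $R_i R_\mu$ via \eqref{w_est_1}, yielding the sum $\sum_{i,\mu} R_i^3 R_\mu^3$.

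The main obstacle is that the natural bound $\sum_{i,\mu} R_i^3 R_\mu^3 \prec N^{2 + O(\delta)} \Psi^4$ (obtained from the Ward-type estimate \eqref{bound_sum_im} and the pointwise $\max_s R_s \prec N^{2\delta}$), once multiplied by $N^{-3/2}$, leaves an uncompensated factor $N^{1/2}$. This missing $N^{-1/2}$ coincides with the gap between the sizes of the third and second cumulants of $X^\theta_{i\mu}$, and I would recover it through an additional Stein-type integration by parts. Concretely, after rewriting one of the three length-$1$ word values as an affine function of $X^\theta_{i\mu}$ via the resolvent identity \eqref{res_exp}, I would apply the cumulant expansion
\begin{equation*}
\E \qb{X^\theta_{i\mu}\,g(X^\theta)} \;=\; \sum_{k \geq 1} \frac{\kappa_{k+1}(X^\theta_{i\mu})}{k!}\, \E \pbb{\pbb{\frac{\partial}{\partial X_{i\mu}}}^k g(X^\theta)}
\end{equation*}
with $\abs{\kappa_{k+1}(X^\theta_{i\mu})} \leq C N^{-(k+1)/2}$. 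This trades one factor $R_i R_\mu$ for either a pointwise-controlled diagonal entry of $G$ or a factor of the form $R_s^2$ (summable via \eqref{bound_sum_im}), weighted by $N^{-1/2}$, which supplies exactly the missing smallness. The higher-derivative terms produced by the cumulant expansion have effective derivative order $\geq 4$ and feed back into the Section \ref{sec:comparison1} framework.

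The hard part will be executing this extra integration-by-parts step while tracking the algebraic bookkeeping carefully, so that each newly generated term either enjoys the $N^{-1/2}$ gain or rejoins the $n \geq 4$ regime already handled. Once this is in hand, a final application of Young's inequality in the form $\Psi^q \E F^{p-q}_{\f v}(X^\theta) \leq \Psi^p + \E F^p_{\f v}(X^\theta)$ (valid since $\Psi \leq 1$), combined with $N^{O(\delta)} \Psi^p \leq (N^{C_0\delta}\Psi)^p$ for $C_0$ sufficiently large, concludes the proof of Lemma \ref{lem:71}.
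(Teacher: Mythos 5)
Your diagnosis of where the difficulty lies is off, and the proposed remedy is both different from the paper's mechanism and not clearly workable.

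First, the claim that $q=1,2$ are ``already controlled by the analysis of Section \ref{sec:comparison1}'' is incorrect: the $N^{-1/2}$ deficit is present for \emph{all} $q\in\{1,2,3\}$ when $n=3$, not just $q=3$. For $q=1$ (one word of length $3$), \eqref{w_est_2} gives $A(w_1)\prec (R_i^2+R_\mu^2)N^{O(\delta)}$, and after summation via \eqref{bound_sum_im} one finds $N^{-3/2}\sum_{i,\mu}(R_i^2+R_\mu^2)\prec N^{1/2+O(\delta)}\Psi^2$, which is not bounded by $N^{C_0\delta}\Psi$: since $\Psi$ can be as large as $N^{-\tau/2}$, the leftover $N^{1/2}\Psi$ can be of size $N^{1/2-\tau/2}$, far larger than any $N^{C_0\delta}$ allowed by \eqref{cond_delta}. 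The same failure occurs for $q=2$. So the extra $N^{-1/2}$ must be extracted uniformly across all three values of $q$.

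Second, the paper's mechanism for producing this $N^{-1/2}$ is not a further integration by parts in the single entry $X_{i\mu}$. Instead it decomposes each entry of $G$ via \eqref{main_identity} into $G^{(\mu)}$-entries and explicit polynomials in the \emph{entire column} $X_\mu$ (encoded by the tagged words of Definition \ref{def:4-words}, after the bookkeeping reductions in Lemma \ref{lem:A wt A} and \eqref{Gmumu_exp2}), takes the conditional expectation $\E_\mu$, and then invokes a parity argument (Lemma \ref{lem:d odd}): since the degree $d_{\f X}$ of the resulting polynomial in $X_\mu$ is odd, a perfect pairing of the $X_\mu$-variables is impossible, and the partition sum loses a factor $N^{-1/2}$ (Lemma \ref{lem:87}). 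Closing the estimate then requires the degree-counting inequality \eqref{main_comb_est}, established case by case in Section \ref{sec:deg_counting}. Your Stein-type integration by parts in $X_{i\mu}$ does not reproduce this. The step ``rewrite one of the three length-$1$ word values as an affine function of $X^\theta_{i\mu}$'' is not available --- entries of $G$ are rational, not affine, in $X_{i\mu}$; the resolvent expansion \eqref{res_exp} gives a power series with a rest term that must be controlled, not an exact affine form. And even granting a clean $X_{i\mu}\cdot g$ structure, the assertion that the cumulant expansion ``trades one factor $R_iR_\mu$ for a factor $R_s^2$'' is unjustified: $\partial_{X_{i\mu}}g$ produces additional off-diagonal $G$-entries of the form $G_{\cdot i}G_{\mu\cdot}$, not diagonal or Ward-summable ones, so there is no obvious mechanism by which the derivative improves summability. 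Finally, you also do not anticipate that removing the superscripts $(\mu)$ generates error terms $F_{\f e_\mu}$, which forces the self-consistent bound \eqref{sc_step 1} to carry the supremum $\sup_{\f w\in\bb S}\E F^p_{\f w}$ rather than just $\E F^p_{\f v}$; this is not cosmetic, since the Gr\"onwall argument has to close over the enlarged family.
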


We shall in fact prove a slightly stronger bound than \eqref{sc_step 1}, where the term $\sup_{\f w \in \bb S} \E F_{\f w}^p(X^\theta, z)$ on the right-hand side of \eqref{sc_step 1} is replaced with $\sup \hb{\E F_{\f w}^p(X^\theta, z)\col \f w \in \{\f v\} \cup \{\f e_\mu \col \mu \in \cal I_N\}}$. However, \eqref{sc_step 1} is simpler to state and strong enough for our purposes.

As in Lemma \ref{lem:0_to_theta}, one may easily replace the matrix $X^{\theta,0}_{i \mu}$ in the definition of $f^{(3)}_{i \mu}(0)$ with $X^\theta$. As in Lemma \ref{lem:sc_step 3}, from now on we abbreviate $X^\theta \equiv X$. Thus, we find that in order to prove Lemma \ref{lem:71} it suffices to prove the following result, which complements Lemma \ref{lem:sc_step 3}.

\begin{lemma} \label{lem:73}
Let $z \in \wh {\f S}_m$, and suppose that \eqref{sec7_assumpt} holds at $z$. There exists a constant $C_0$ such that, for any $\delta$ satisfying \eqref{cond_delta}, the estimate \eqref{sc_step 3} holds for $n = 3$.
\end{lemma}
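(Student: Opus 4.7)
The plan is to adapt the self-consistent comparison framework of Section~\ref{sec:comparison1} to the $n = 3$ case. Applying the reductions of Lemmas \ref{lem:71}--\ref{lem:0_to_theta} verbatim, the task reduces to bounding, for every $q \in \{1,2,3\}$ and every admissible word profile $(n(w_1), \dots, n(w_q))$ with $n(w_r) \geq 1$ and $\sum_r n(w_r) = 3$, the quantity
\begin{equation*}
N^{-3/2} \sum_{i \in \cal I_M} \sum_{\mu \in \cal I_N} \absBB{\E \qB{A_{\f v, i, \mu}(w_0)^{p - q} \prod_{r = 1}^{q} A_{\f v, i, \mu}(w_r)}}
\end{equation*}
by $C \, \E F_{\f v}^{p - q}(X) (N^{C_0 \delta} \Psi)^q + C (N^{C_0 \delta} \Psi)^p$; Young's inequality then completes the proof exactly as at the end of Section~\ref{sec:words}.

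The obstruction, relative to Section~\ref{sec:comparison1}, is that the step rewriting $N^{2 - n/2}$ as $\Psi^{n - 4} \cdot O(1)$ via $\Psi \geq c N^{-1/2}$ requires $n \geq 4$; for $n = 3$ the naive application of \eqref{sc_step 7} leaves an uncompensated factor of $N^{1/2}$ that cannot be absorbed into any positive power of $\Psi$. The plan is therefore to establish a strictly sharper estimate on the word products, gaining one additional factor of $\Psi$ compared to \eqref{sc_step 7}. For each word $w_r$ (with $n(w_r) \geq 1$) we write
\begin{equation*}
A_{\f v, i, \mu}(w_r) \;=\; G_{\bar{\f v} [\f t_1^{(r)}]} \, G_{[\f s_2^{(r)}][\f t_2^{(r)}]} \cdots G_{[\f s_{n(w_r) + 1}^{(r)}] \bar{\f v}}\,,
\end{equation*}
control the interior factors pointwise by $O_\prec(N^{2 \delta})$ via \eqref{a_priori_1}, and pair the two boundary factors $G_{\bar{\f v} \cdot}$ and $G_{\cdot \bar{\f v}}$ with the summations $\sum_i$ and $\sum_\mu$ through the $\ell^2$ identities \eqref{GMaa}, \eqref{GMaa4} of Lemma~\ref{lemma: basic}. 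Using the a priori bound $\im G_{\bar{\f v} \bar{\f v}} \prec N^{2 \delta}(\im m + N^{C_0 \delta} \Psi)$ from \eqref{a_priori_2} together with the definition of $\Psi$ and the relation $1/\eta \leq N \Psi$, each such pairing yields a factor of $N \Psi^2 N^{C \delta}$, and an iterated Cauchy--Schwarz across the $q$ words and the two summation variables produces an improved bound of the form
\begin{equation*}
\sum_{i, \mu} \absBB{\prod_{r = 1}^{q} A_{\f v, i, \mu}(w_r)} \;\prec\; N^{3/2 + C \delta} \Psi^{q}\,,
\end{equation*}
which after multiplication by $F_{\f v}^{p-q}(X) \cdot N^{-3/2}$ and a final Young absorption yields the required estimate for $C_0$ chosen sufficiently large.

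The main obstacle will be the case $q = 1$ with a single length-$3$ word: only one word contributes boundary factors, so the two $\ell^2$ identities must each act at peak efficiency and the two interior factors can only be estimated pointwise by $N^{2\delta}$. A case analysis over the eight admissible length-$3$ words is needed, exploiting the constraint $\f t_l \neq \f s_{l + 1}$ of Definition~\ref{def:words} to ensure that in every case both summations $\sum_i$ and $\sum_\mu$ produce a full $N \Psi^2$ gain. In the cases in which the absolute-value bound $|\E \cdot | \leq \E |\cdot|$ still proves too lossy, the plan is to exploit additional cancellations inside the expectation---using independence of $X^\theta_{j \nu}$ from $X^\theta_{i \mu}$ for $(j, \nu) \neq (i, \mu)$ and the relations \eqref{Gia2}--\eqref{Gia3} to separate the $X_{i \mu}$-dependence from the remaining randomness---which is the extra ingredient that was unnecessary in Section~\ref{sec:comparison1} thanks to the condition $\E X_{i \mu}^3 = 0$.
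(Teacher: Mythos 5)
Your identification of the obstruction is correct: the $N^{1/2}$ left over from $N^{-3/2}\sum_{i,\mu}$ cannot be absorbed by $\Psi$ as it was for $n \geq 4$. But the proposed fix does not close the gap. The intermediate estimate you claim, $\sum_{i,\mu}\bigl|\prod_{r=1}^{q}A_{\f v,i,\mu}(w_r)\bigr| \prec N^{3/2+C\delta}\Psi^q$, is not achievable by the $\ell^2$-pairing and iterated Cauchy--Schwarz you describe. Pairing a boundary factor with a sum via \eqref{GMaa} or \eqref{GMaa4} costs a Cauchy--Schwarz step $\sum_i\abs{G_{\bar{\f v}i}}\leq N^{1/2}\bigl(\sum_i\abs{G_{\bar{\f v}i}}^2\bigr)^{1/2}$, so each full pairing yields a factor $N\Psi\cdot N^{C\delta}$ per sum, and the best bound you can reach is of the form $N^{2+C\delta}\Psi^2$ (or $N^{2+C\delta}\Psi^4$ if additional factors are absorbed into the $\ell^2$ identities). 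Multiplying by $N^{-3/2}$ leaves $N^{1/2+C\delta}\Psi^2$ (resp.\ $N^{1/2+C\delta}\Psi^4$), which exceeds the required $N^{C_0\delta}\Psi^q$ by precisely a factor $N^{1/2}\Psi$ (or a positive power thereof); this factor can be as large as $N^{(1-\tau)/2}$ when $\eta\asymp N^{-1+\tau}$, so the argument fails for small $\tau$. In addition, the constraint $\f t_l\neq\f s_{l+1}$ does \emph{not} force the two boundary factors to carry different summation indices: e.g.\ the admissible word $\f i\f\mu\f\mu\f i\f\mu\f i$ yields $A(w)=G_{\bar{\f v}i}G_{\mu\mu}G_{i\mu}G_{i\bar{\f v}}$, where both boundary factors involve $i$, so the claim that ``both summations produce a full $N\Psi^2$ gain'' in every case is incorrect.

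The missing ingredient, which your final paragraph gestures at but misidentifies, is not a separation of the dependence on the single entry $X_{i\mu}$ via \eqref{Gia2}--\eqref{Gia3}, but a separation of the dependence on the entire $\mu$-th column $X_\mu=(X_{j\mu})_{j\in\cal I_M}$ via the identity \eqref{main_identity}, followed by the conditional expectation $\E_\mu(\,\cdot\,)=\E[\,\cdot\mid X^{(\mu)}]$. The paper expresses each $A_{\f v,i,\mu}(w,\sigma)$ as a polynomial in $X_\mu$ with $X^{(\mu)}$-measurable coefficients (Definitions \ref{def:G_brack} and \ref{def:4-words}), and the crucial observation (Lemma \ref{lem:d odd}) is a parity argument: since the index $\mu$ appears exactly $n=3$ times in $\prod_r A_{\f v,i,\mu}(w_r)$, the degree $d_{\f X}$ of the polynomial in $X_\mu$ is odd (when $d_{\f v}=0$). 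A perfect matching of the $X_\mu$-factors under $\E_\mu$ is then impossible, forcing at least one block of size $\geq 3$ in the partition and yielding the extra $N^{-1/2}$ in Lemma \ref{lem:87}. This must then be combined with a delicate power-counting argument (the inequality \eqref{main_comb_est} and its proof in Section \ref{sec:deg_counting}) to produce a high enough power of $\Psi+F_{\f v}$. None of this machinery---tagged words, the column expansion, the parity argument, the degree counting---appears in your proposal, and the iterated-Cauchy--Schwarz approach you take in its place cannot substitute for it.
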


Next, recall the definition of words from Definition \ref{def:words}. As in \eqref{sc_step 4}--\eqref{sc_step 6}, Lemma \ref{lem:73} is proved provided we can show the following result, which is analogous to \eqref{sc_step 6}.

\begin{lemma} \label{lem:76}
Let $z \in \wh {\f S}_m$, and suppose that \eqref{sec7_assumpt} holds at $z$.
Let $1 \leq q \leq 3$ and choose words $w_1, \dots, w_p \in \cal W$ satisfying $n(w_r) \geq 1$ for $r \leq q$, $n(w_r) = 0$ for  $r \geq q + 1$, and $\sum_r n(w_r) = 3$.
Then \eqref{sc_step 5} holds with $n = 3$.
\end{lemma}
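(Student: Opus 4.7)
The target is the $n = 3$ analogue of Lemma \ref{lem:sc_step 3}. I would proceed by the same case split as in Section \ref{sec:words}, by the number $q \in \{1,2,3\}$ of nontrivial words $w_r$ with $n(w_r) \geq 1$ in the product $A_{\f v,i,\mu}(w_0)^{p-q} \prod_{r=1}^q A_{\f v,i,\mu}(w_r)$ (subject to $\sum_r n(w_r) = 3$). The naive application of the word bounds from Lemma \ref{lem:w_est} combined with the averaging estimate \eqref{bound_sum_im} only yields \eqref{sc_step 5} up to an excess factor $N^{1/2}$: the sum $\sum_{i \in \cal I_M} \sum_{\mu \in \cal I_N}$ ranges over $MN \asymp N^2$ terms, and the $N^{-3/2}$ prefactor together with the $\Psi^2$-type gains from \eqref{bound_sum_im} and \eqref{GMaa}--\eqref{GMaa4} leaves a residual $N^{1/2}$. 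This $N^{1/2}$ excess is precisely the reason for the third-moment matching condition \eqref{zero_third_moment} in Section \ref{sec:comparison1}.

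To remove the $N^{1/2}$ loss, I would further expand one well-chosen off-diagonal factor $G_{\bar{\f v} i}$ (or $G_{\bar{\f v} \mu}$) appearing in $\prod_{r=1}^q A_{\f v,i,\mu}(w_r)$ via the row identity that follows from $H G = I_{\cal I}$: for $i \in \cal I_M$,
\begin{equation*}
G_{i \bar{\f v}} \;=\; \pb{\Sigma X G}_{i \bar{\f v}} - \f v(i)\,,
\end{equation*}
together with an analogous column identity for $G_{\mu \bar{\f v}}$. Substituting this, and then using \eqref{Gstu} to pass the remaining $G$-factors to the minor $G^{(i)}$ (which is independent of row $i$ of $X$), one obtains an expression of the form $\sum_{\nu \in \cal I_N} X_{i \nu} a_\nu$, where $a_\nu$ is measurable with respect to the sigma-algebra generated by $X$ with row $i$ removed. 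Conditioning on that sigma-algebra and applying a standard large deviation estimate yields
\begin{equation*}
\absBB{\sum_{\nu \in \cal I_N} X_{i \nu} a_\nu} \;\prec\; \pBB{\frac{1}{N} \sum_{\nu \in \cal I_N} \abs{a_\nu}^2}^{1/2}\,,
\end{equation*}
and the right-hand side is controlled by $N^{O(\delta) + 1/2} \Psi$ thanks to \eqref{GMaa4}, property \textbf{($\f A_{m - 1}$)}, and \eqref{Gstu}. The treated off-diagonal factor thus gains a multiplicative factor $N^{-1/2} \Psi \cdot N^{O(\delta)}$ compared with its naive pointwise bound $R_i \prec N^{2\delta}$, which is exactly the smallness needed to cancel the $N^{1/2}$ excess.

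After the substitution, the sum over $(i, \mu)$ decomposes into contributions indexed by the word configurations $q = 1, 2, 3$, each of which is treated by the template of Section \ref{sec:words} and finally absorbed into $(N^{C_0 \delta} \Psi)^p + \sup_{\f w \in \bb S} \E F_{\f w}^p(X)$ via Young's inequality, as in \eqref{sc_step 8}. The main obstacles will be (a) separating the diagonal contribution $\nu = \mu$ in the new sum $\sum_\nu$, which falls outside the large deviation regime and must be handled by the pointwise bound \eqref{sec7_assumpt} together with the explicit moment smallness $\E \abs{X_{i \mu}}^2 = 1/N$, and (b) the combinatorial bookkeeping needed to identify the correct off-diagonal factor on which to perform the substitution for each of the three word configurations. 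The hardest configuration, $q = 3$ with three length-one words, requires the substitution on a single well-chosen factor while the other two are retained in their original form.
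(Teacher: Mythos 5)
Your overall instinct is right: the naive bound from Section \ref{sec:words} loses a factor $N^{1/2}$, and the rescue is a parity argument on the degree of a polynomial in one row or column of $X$ whose conditional expectation therefore picks up an extra $N^{-1/2}$. That is indeed the organizing idea of the paper's proof. But the specific implementation you propose has a structural flaw that makes it fail precisely in the regime that Proposition \ref{prop:comparison_2} is designed to cover.

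You propose to expand an off-diagonal factor $G_{i\bar{\f v}}$ via $G_{i\bar{\f v}} = (\Sigma X G)_{i\bar{\f v}} - \f v(i)$ and then condition on the sigma-algebra of $X$ with row $i$ removed. However, $(\Sigma X G)_{i\bar{\f v}} = \sum_{j,\nu}\Sigma_{ij} X_{j\nu} G_{\nu\bar{\f v}}$ is \emph{not} a linear form in $(X_{i\nu})_\nu$ for non-diagonal $\Sigma$: the sum ranges over all rows $j$, and only the term $j=i$ is tied to the variable you want to isolate. The same obstruction appears if you try to pass to the minor $G^{(i)}$: items (iii), (iv) and (vi) of Lemma \ref{lem:idG} are stated \emph{only} for diagonal $\Sigma$, because removing an index $i\in\cal I_M$ from $H$ cuts into the dense block $-\Sigma^{-1}$. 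By contrast, removing an index $\mu\in\cal I_N$ cuts only into $-zI_N$, which is why the unconditional identity $G_{i\mu} = -G_{\mu\mu}(G^{(\mu)}X)_{i\mu}$ holds, and why the paper conditions on the $\mu$-th \emph{column} of $X$ rather than the $i$-th row. The expansion \eqref{main_identity} (removal of column $\mu$) is the correct analogue of your substitution, and its recursive structure is stabilized by truncating $G_{\mu\mu}$ as in \eqref{Gmumu_exp2}, not by a one-shot minor identity.

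Two further gaps: (a) You expand only one factor and invoke a ``standard large deviation estimate'' $|\sum_\nu X_{i\nu}a_\nu| \prec (\frac{1}{N}\sum_\nu|a_\nu|^2)^{1/2}$, but the remaining factors $\prod_{r\neq r_0}A_{\f v,i,\mu}(w_r)$ and, crucially, the $(p-q)$ factors $A_{\f v,i,\mu}(w_0)$ also depend on the conditioning variables. The conditional expectation therefore acts on a genuine polynomial in $X_\mu$ of degree $d_{\f X}$, not a linear form, and the $N^{-1/2}$ gain is proved via the parity statement Lemma \ref{lem:d odd} ($d_{\f v}=0 \Rightarrow d_{\f X}$ odd), not a large deviation bound. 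The $w_0$ factors must be replaced by $\wh A$ as in Lemma \ref{lem:A wt A} and decomposed through the tagged words of Definition \ref{def:4-words} precisely so the conditional expectation has the correct measurable/random split. (b) Your claim that the $q=3$ configuration is the hardest and ``requires the substitution on a single well-chosen factor while the other two are retained'' does not match the actual proof: every factor is expanded into tagged words uniformly, and the difficulty is a combinatorial power counting over the indices $d_{\f X}, d_{\f v}, d_{\f \mu}, f_{\f i}$ (Section \ref{sec:deg_counting}), of which the $q=1$, $d_{\f v}=0$ cases are in fact the ones that saturate \eqref{main_comb_est}. The bookkeeping in Lemmas \ref{lem:87}--\ref{main_comb_est} is the bulk of the proof and is not recoverable by the Section \ref{sec:words} template plus Young's inequality.
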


We now explain some key ingredients of the proof of Lemma \ref{lem:76}. The first main difficulty is to extract an additional factor $N^{-1/2}$ from the left-hand side of \eqref{sc_step 5} with $n = 3$, so as to obtain in total a factor $N^{-2}$ that will cancel the number of terms in the summation. The second main difficulty is to extract a factor $\Psi^q$ from the expectation, so as to apply estimates of the form $\E \abs{A_{\f v,i, \mu}(w_0)^{p - q}} \Psi^q \leq (N^{C \delta} \Psi)^p + \E F^p_{\f v}(X)$. In order to extract the factor $N^{-1/2}$, we analyse the dependence of each factor $A_{\f v,i, \mu}(w_r)$ on the $\mu$-th column of $X$, i.e.\ the variables $X_\mu \deq (X_{i \mu})_{i \in \cal I_M}$. We express each term, up to negligible error terms, as a polynomial in $X_\mu$ whose coefficients are independent of $X_\mu$ (i.e.\ $X^{(\mu)}$-measurable). Hence we may take the conditional expectation $\E(\,\cdot\, \vert X^{(\mu)})$ of the product of the variables $X_\mu$, which results in a partition of all entries of $X_\mu$. The extra factor of $N^{-1/2}$ then follows using a parity argument similar to the one first used in \cite{BEKYY}: the degree of the polynomial is odd, so that a perfect matching, which would give a contribution of order $N^0$, is not possible.

Beyond the factor of $N^{-1/2}$, the need to obtain the bound from \eqref{sc_step 5} that is strong enough to close the self-consistent estimate presents significant difficulties, which we outline briefly. These difficulties are roughly of two types. (a) The off-diagonal entries of $G^{(\mu)}$ are in general not small; only entries of $G^{(\mu)} - \Pi$ are small. (b) A priori, using the estimate \eqref{a_priori_1}, the entries of $G^{(\mu)}$ are not bounded by $O_\prec(1)$ but by $O_\prec(N^{2 \delta})$. This would yield a bound on the coefficients of the polynomial in $X_\mu$ that is a high power of $N^{2 \delta}$, which may become too large to conclude the proof.

We deal with difficulty (a) not by estimating individual entries $G_{\f v t}^{(\mu)}$ but by exploiting the extra summations generated by the polynomial expansion, expressing our error bounds in terms of as many summed entries of the form
\begin{equation} \label{sum_j_G}
\frac{1}{N} \sum_{j \in \cal I_M} \abs{G_{\f v j}^{(\mu)}}^d
\end{equation}
as possible, where $d \in \N$. For instance, similarly to \eqref{sketch_est_sum_i}, if $d = 2$ then \eqref{sum_j_G} may be estimated by $\frac{\im G_{\f v \f v} + \eta}{N \eta}$, which is much smaller than the naive bound $N^{4 \delta}$. (See \eqref{bound_sum_hat} below for a precise statement.) The factor $\im G_{\f v \f v}$ may be estimated using the a priori bound \eqref{a_priori_2}. The proof relies on a careful balance of the integers $d$ from \eqref{sum_j_G} versus the number of entries of $G$ that cannot be estimated in this way. We also note that, while $d = 2$ gives a stronger bound than $d = 1$ in \eqref{sum_j_G}, for $d \geq 3$ the bound \eqref{sum_j_G} cannot be improved over the corresponding bound for $d = 2$. In this sense, powers of $d$ larger than $2$ lead to wasted factors of $\Psi$, and we have to make sure that the final combined power of $\Psi$ and $F_{\f v}$ is large enough despite the possibility of indices $d$ larger than $2$.

Next, we deal with difficulty (b) by showing that polynomials whose coefficients consist of many entries of $G^{(\mu)}$, each estimated by $O_\prec(N^{2 \delta})$, also give rise to many factors of the form \eqref{sum_j_G} with large enough $d$, which compensate the powers of $O_\prec(N^{2 \delta})$. Hence, controlling the number of factors of the form \eqref{sum_j_G} and the corresponding indices $d$ is crucial to obtain a sufficiently high power of $\Psi + F_{\f v}$ and a sufficiently small prefactor. The details of this counting are explained in Section \ref{sec:deg_counting}.

We conclude this sketch by noting that, in order to get rid of the omnipresent upper indices $(\mu)$ generated by the expansion in $X_\mu$, we use identities from Lemma \ref{lem:idG}, which generate error terms of the form $F_{\f e_\mu}(X)$ in addition to $F_{\f v}(X)$ (see \eqref{whA_rough4} below). Since $\mu$ may take any value in $\cal I_N$, we need the general self-consistent error term $\sup_{\f w \in \bb S} \E F_{\f w}^p(X)$ on the right-hand side of \eqref{sc_step 5} instead of the simple $\E F_{\f v}^p(X)$.

\subsection{Tagged words}
We now move on to the actual proof of Lemma \ref{lem:76}. The index $\mu \in \cal I_N$ will play a distinguished role.
In a preliminary step, we show that, up to an affordable error, the random variables $A_{\f v,i,\mu}(w)$ in \eqref{sc_step 5} may be replaced with
\begin{equation*}
\wh A_{\f v,i,\mu}(w) \;\deq\;
\begin{cases}
G_{\bar {\f v} \bar {\f v}} - \Pi_{\bar {\f v} \bar {\f v}} - \bar {\f v}(\mu)^2 \pb{G_{\mu \mu} - \Pi_{\mu \mu}} & \text{if } n(w) = 0
\\
A_{\f v,i,\mu}(w) & \text{if } n(w) \geq 1\,.
\end{cases}
\end{equation*}
The motivation behind the definition of $\wh A_{\f v,i,\mu}(w)$ stems from expansion formula \eqref{main_identity} below, which will play a key role in the proof of Proposition \ref{prop:comparison_2}.

\begin{lemma} \label{lem:A wt A}
Lemma \ref{lem:76} holds provided it holds with $A$ replaced by $\wh A$ in \eqref{sc_step 5}.
\end{lemma}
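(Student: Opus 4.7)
The approach is to expand the difference $\prod_r A_{\f v,i,\mu}(w_r) - \prod_r \wh A_{\f v,i,\mu}(w_r)$ via the binomial formula and verify that each resulting error term is absorbed by the right-hand side of \eqref{sc_step 5}. The case $q = p$ is trivial, since then every $w_r$ has $n(w_r) \geq 1$ and so $\wh A_{\f v,i,\mu}(w_r) = A_{\f v,i,\mu}(w_r)$ for every $r$; assume therefore $q < p$. Since $\wh A_{\f v,i,\mu}(w_r) = A_{\f v,i,\mu}(w_r)$ whenever $n(w_r) \geq 1$ and
\begin{equation*}
A_{\f v,i,\mu}(w_0) - \wh A_{\f v,i,\mu}(w_0) \;=\; \bar{\f v}(\mu)^2 (G_{\mu\mu} - \Pi_{\mu\mu})
\end{equation*}
for the empty word $w_0$, the binomial expansion yields
\begin{equation*}
\prod_{r=1}^{p} A_{\f v,i,\mu}(w_r) - \prod_{r=1}^{p} \wh A_{\f v,i,\mu}(w_r) \;=\; \sum_{k=1}^{p-q} \binom{p-q}{k} \prod_{r=1}^{q} A_{\f v,i,\mu}(w_r) \cdot \wh A_{\f v,i,\mu}(w_0)^{p-q-k} \qB{\bar{\f v}(\mu)^2 (G_{\mu\mu} - \Pi_{\mu\mu})}^k.
\end{equation*}
It therefore suffices to bound, for each $1 \leq k \leq p-q$, the contribution of the corresponding term to the double sum appearing in \eqref{sc_step 5}.

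The crucial observation is that since $\ul\Sigma_{\mu\mu} = 1$ for every $\mu \in \cal I_N$, we have $\bar{\f e_\mu} = \f e_\mu$ and $\bar{\f v}(\mu) = \f v(\mu)$. Consequently $|G_{\mu\mu} - \Pi_{\mu\mu}| = F_{\f e_\mu}(X)$, which is exactly of the form appearing in $\sup_{\f w \in \bb S} \E F_{\f w}^p(X)$ on the right-hand side of \eqref{sc_step 5}; moreover $\sum_{\mu \in \cal I_N} \bar{\f v}(\mu)^{2k} \leq \sum_{\mu} \f v(\mu)^2 \leq 1$ for every $k \geq 1$. Thus the weight $\bar{\f v}(\mu)^{2k}$ effectively eliminates the summation over $\mu$ in the error term, providing an implicit gain of $N^{-1}$ over the naive unweighted estimate --- this is the essential mechanism that makes the discrepancy between $A$ and $\wh A$ negligible.

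The remaining estimate is analogous to, and simpler than, the argument leading to \eqref{sc_step 6} in Section \ref{sec:words}. For the factor $\prod_{r=1}^{q} A_{\f v,i,\mu}(w_r)$ we apply Lemma \ref{lem:w_est}, distributing the total word length $\sum_r n(w_r) = 3$ across the $q \leq 3$ factors; the resulting powers of $R_i$ and $R_\mu$ are controlled by the summed estimate \eqref{bound_sum_im} together with the pointwise bound $R_i, R_\mu \prec N^{2\delta}$ implied by \eqref{sec7_assumpt}. For the factor $\wh A_{\f v,i,\mu}(w_0)^{p-q-k}$ we use $|\wh A_{\f v,i,\mu}(w_0)| \leq F_{\f v}(X) + \bar{\f v}(\mu)^2 F_{\f e_\mu}(X)$, and Young's inequality then converts the resulting mixed moments involving $F_{\f v}(X)$ and $F_{\f e_\mu}(X)$ into a bound of the desired form $(N^{C_0\delta}\Psi)^p + \sup_{\f w \in \bb S}\E F_{\f w}^p(X)$. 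The principal obstacle is the bookkeeping needed to check that the accumulated powers of $N^{2\delta}$ from the a priori bounds in \eqref{sec7_assumpt}, combined with the $N^{-1}$ gain from the weighted $\mu$-sum, yield a total factor controlled by $N^{C_0\delta}$ provided $C_0$ is taken sufficiently large and $\delta$ satisfies \eqref{cond_delta}; this parallels the conclusion of the proof of Lemma \ref{lem:sc_step 3}.
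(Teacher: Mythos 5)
Your argument is correct and follows essentially the same strategy as the paper: both isolate the discrepancy $A_{\f v,i,\mu}(w_0) - \wh A_{\f v,i,\mu}(w_0) = \bar{\f v}(\mu)^2(G_{\mu\mu}-\Pi_{\mu\mu})$, observe that $\abs{G_{\mu\mu}-\Pi_{\mu\mu}} = F_{\f e_\mu}(X)$ while $\sum_{\mu}\bar{\f v}(\mu)^2 \leq 1$ removes the $\mu$-summation, and then close with Lemma \ref{lem:w_est}, \eqref{bound_sum_im}, and Young's inequality. The only cosmetic difference is that you expand $A(w_0)^{p-q}$ binomially in powers of $\bar{\f v}(\mu)^2(G_{\mu\mu}-\Pi_{\mu\mu})$, whereas the paper factors $A(w_0)^{p-q} - \wh A(w_0)^{p-q}$ via the elementary bound $\abs{a^m-b^m}\leq m(\abs{a}+\abs{b})^{m-1}\abs{a-b}$; both isolate the same single factor of $\bar{\f v}(\mu)^2 F_{\f e_\mu}(X)$ that drives the estimate.
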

\begin{proof}
We write, dropping the subscripts from $A$ and using $w_0$ to denote the empty word,
\begin{align*}
\absBB{\prod_{r = 1}^{p} A(w_r) - \prod_{r = 1}^{p} \wh A(w_r)} &\;=\; \absBB{\pb{A(w_0)^{p - q} - \wh A(w_0)^{p - q}} \prod_{r = 1}^q A(w_r)}
\\
&\;\leq\; p \pb{\abs{A(w_0)} + \abs{\wh A(w_0)}}^{p - q - 1} \abs{G_{\mu \mu} - \Pi_{\mu \mu}} \bar {\f v}(\mu)^2 \prod_{r = 1}^q \abs{A(w_r)}
\\
&\;\prec\; \pb{F_{\f v}(X) + F_{\f e_\mu}(X)}^{p - q - 1} F_{\f e_\mu}(X) \bar {\f v}(\mu)^2 \prod_{r = 1}^q \abs{A(w_r)}\,.
\end{align*}
 
Next, from Definition \ref{def:words} we conclude, analogously to the proof of Lemma \ref{lem:w_est}, that if $n(w_r) \geq 1$ for $1 \leq r \leq q$ and $\sum_{r = 1}^q n(w_r) = 3$ \nc then
\begin{equation*}
\sum_{i \in \cal I_M} \prod_{r = 1}^q \abs{A(w_r)} \;\prec\;  \sum_{i \in \cal I_M} N^{C \delta} R_i^{q - 1} \;\prec\; N N^{C \delta} (N^{C_0/2} \Psi)^{q - 1}\,,
\end{equation*}
where in the last step we used \eqref{bound_sum_im}. We therefore conclude using Lemma \ref{lem:E_prec} that
\begin{align*}
&\mspace{-30mu}N^{-3/2} \sum_{i \in \cal I_M} \sum_{\mu \in \cal I_N} \absBB{\E \qBB{\prod_{r = 1}^{p} A(w_r) - \prod_{r = 1}^{p} \wh A(w_r)}}
\\
&\;\leq\;
N^{C \delta} N^{-1/2} \sum_{\mu \in \cal I_N} \bar {\f v}(\mu)^2 (N^{C_0/2} \Psi)^{q - 1} \E \pb{F_{\f v}(X) + F_{\f e_\mu}(X)}^{p - q - 1} F_{\f e_\mu}(X)
\\
&\;\leq\; \pb{N^{(C + C_0/2) \delta} \Psi}^p + \sup_{\f w \in \bb S} \E F_{\f w}^p(X)\,,
\end{align*}
where in the last step we used Young's inequality and the estimate $N^{-1/2} \leq \Psi$. Here we also used that $\sum_{\mu \in \cal I_N} \bar {\f v}(\mu)^2 \leq 1$, as follows from the definition of $\bar {\f v}$ from \eqref{def_bar_v} and \eqref{def_ul_Sigma}. This concludes the proof.
\end{proof}

Next, we introduce a family of \emph{tagged words}, which refine the words $w$ from Definition \ref{def:words}. They are designed to encode formulas which refine the definition of $A_{\f v,i,\mu}(w)$ from Definition \ref{def:words}. In order to state them, we shall need a decomposition of entries $G_{\f x \f y}$ into three pieces, denoted by $[G_{\f x \f y}]^0$, $[G_{\f x \f y}]^1$, and $[G_{\f x \f y}]^2$. The identity behind this decomposition is
\begin{equation}  \label{main_identity}
G_{\f x \f y} \;=\; \f x(\mu) \f y(\mu) G_{\mu \mu} + G_{\f x\f y}^{(\mu)} + G_{\mu \mu} (G^{(\mu)} X)_{\f x \mu} (X^* G^{(\mu)})_{\mu \f y} - \f x(\mu) G_{\mu \mu} (X^* G^{(\mu)})_{\mu \f y} - \f y(\mu) G_{\mu \mu} (G^{(\mu)} X)_{\f x \mu}\,,
\end{equation}
which follows from Lemma \ref{lem:idG}.
(Recall that an expression of the form $G_{\f x \f y}^{(\mu)}$ is by definition equal to $\f x^* G^{(\mu)} \f y$, where the matrix multiplication is performed according to Definition \ref{def:matrix_m}.)

\begin{definition} \label{def:G_brack}
Let $\f x, \f y \in \{\bar {\f v}, \f e_\mu, \f e_i\}$ with $\f x$ and $\f y$ not both equal to $\bar {\f v}$. Then the quantities $[G_{\f x \f y}]^0$, $[G_{\f x \f y}]^1$, and $[G_{\f x \f y}]^2$ are uniquely defined by requiring that they do not contain factors of $G_{\mu \mu}$ or $\bar {\f v}(\mu)$, and that \eqref{main_identity} reads
\begin{equation} \label{G_split}
G_{\f x \f y} \;=\; [G_{\f x\f y}]^0 + G_{\mu \mu} [G_{\f x \f y}]^1  + \bar {\f v}(\mu) G_{\mu \mu} [G_{\f x \f y}]^2\,.
\end{equation}
In particular, $[G_{\f x\f y}]^0$ is $X^{(\mu)}$-measurable.
\end{definition}
Explicitly, this decomposition reads
\begin{align*}
G_{\bar {\f v} \mu} &\;=\; \bar{\f v}(\mu) G_{\mu \mu} - G_{\mu \mu} (G^{(\mu)} X)_{\bar{\f v} \mu}
\;=\; \bar {\f v}(\mu) G_{\mu \mu} [G_{\bar {\f v} \mu}]^2 + G_{\mu \mu} [G_{\bar {\f v} \mu}]^1\,,
\\
G_{\bar {\f v} i} &\;=\; G_{\bar {\f v} i}^{(\mu)} + G_{\mu \mu} (G^{(\mu)} X)_{\bar {\f v} \mu} (X^* G^{(\mu)})_{\mu i} - \bar {\f v}(\mu) G_{\mu \mu} (X^* G^{(\mu)})_{\mu i} \;=\; [G_{\bar {\f v} i}]^0 + G_{\mu \mu} [G_{\bar {\f v} i}]^1  + \bar {\f v}(\mu) G_{\mu \mu} [G_{\bar {\f v} i}]^2\,,
\\
G_{i \mu} &\;=\; - G_{\mu \mu} (G^{(\mu)} X)_{i \mu} \;=\; G_{\mu \mu} [G_{i \mu}]^1\,,
\\
G_{i i} &\;=\; G_{ii}^{(\mu)} + G_{\mu \mu} (G^{(\mu)} X)_{i \mu} (X^* G^{(\mu)})_{\mu i} \;=\; [G_{ii}]^0 + G_{\mu \mu} [G_{ii}]^1\,.
\end{align*} 
Throughout the following we abbreviate $\Z_k \deq \qq{0,k-1}$. \nc

\begin{definition}[Tagged words] \label{def:4-words}
Let $w \in \cal W$ be a word from Definition \ref{def:words}. A \emph{tagged word} is a pair $(w, \sigma)$, where $\sigma = (\sigma(l))_{l = 1}^{n(w) + 1} \in \Z_3^{n(w) + 1}$. We assign to each tagged word $(w,\sigma)$ with $w \in \cal W_n$ of the form \eqref{word_letters} a random variable $A_{\f v,i,\mu}(w,\sigma)$ according to the following construction.
\begin{enumerate}
\item
If $n = 0$ (i.e.\ $w$ is the empty word) we define
\begin{align*}
A_{\f v,i,\mu}(w,0) &\;\deq\; G_{\bar {\f v} \bar {\f v}}^{(\mu)} - \Pi_{\bar {\f v} \bar {\f v}}^{(\mu)}\,,
\\
A_{\f v,i,\mu}(w,1) &\;\deq\; (G^{(\mu)} X)_{\bar {\f v} \mu} (X^* G^{(\mu)})_{\mu \bar {\f v}}\,,
\\
A_{\f v,i,\mu}(w,2) &\;\deq\; - (X^* G^{(\mu)})_{\mu \bar {\f v}} - (G^{(\mu)} X)_{\bar {\f v} \mu}\,,
\end{align*}
where we introduced the matrix $\Pi^{(\mu)} \deq \pb{\Pi_{st} \col s,t \in \cal I \setminus \{\mu\}}$.

\item
If $n \geq 1$ we define
\begin{equation} \label{Angeq1def}
A_{\f v,i,\mu}(w,\sigma) \;\deq\; \qb{{  G}_{\bar{\f v} [\f t_1]}}^{\sigma(1)} \qb{G_{[\f s_2] [\f t_2]}}^{\sigma(2)} \cdots \qb{G_{[\f s_n] [\f t_n]}}^{\sigma(n)} \qb{{ G}_{[\f s_{n+1}] \bar{\f v}}}^{\sigma(n+1)}\,.
\end{equation}
\item
We define
\begin{equation*}
\abs{\sigma}_{\f \mu} \;\deq\; \sum_l \ind{\sigma(l) \geq 1}\,, \qquad 
\abs{\sigma}_{\f v} \;\deq\; \sum_{l} \ind{\sigma(l) = 2}\,.
\end{equation*}
(Note that the choice of indices $\f \mu$ and $\f v$ is merely suggestive of the meaning of $\abs{\, \cdot \,}_{\f \mu}$ and $\abs{\, \cdot\,}_{\f v}$; these quantities clearly do not depend on $\mu$ and $\f v$.)
\item
By construction, $A_{\f v,i,\mu}(w,\sigma)$ is a homogeneous polynomial in the variables $\{X_{k \mu}\}_{k \in \cal I_M}$, whose coefficients are $X^{(\mu)}$-measurable. We use $\deg \pb{A_{\f v,i,\mu}(w,\sigma)} \in \N$ to denote its (total) degree.
\end{enumerate}
\end{definition}

From \eqref{G_split}, \eqref{main_identity}, and Definition \ref{def:4-words} we find that for $w \in \cal W_n$ we have
\begin{equation} \label{wh_A_identity}
\wh A_{\f v,i,\mu}(w) \;=\; \sum_{\sigma \in \Z_3^{n+1}} A_{\f v,i,\mu}(w,\sigma) \, (G_{\mu\mu})^{\abs{\sigma}_{\f \mu}} \, \bar{\f v}(\mu)^{\abs{\sigma}_{\f v}}\,.
\end{equation}

Recalling Lemma \ref{lem:A wt A}, we conclude that, in order to prove Lemma \ref{lem:76}, it suffices to prove the following result.

\begin{lemma} \label{lem:82}
Suppose that \eqref{sec7_assumpt} holds. Let $1 \leq q \leq 3$ and choose words $w_1, \dots, w_p \in \cal W$ satisfying $n(w_r) \geq 1$ for $r \leq q$, $n(w_r) = 0$ for  $r \geq q + 1$, and $\sum_r n(w_r) = 3$. For $r = 1, \dots, p$ let $\sigma_r \in \Z_3^{n(w_r) + 1}$.
Then we have for all $\f v \in \bb S$
\begin{equation} \label{sec8 2}
N^{-3/2} \sum_{i \in \cal I_M} \sum_{\mu \in \cal I_N} \absBB{\E \pBB{(G_{\mu\mu})^{d_{\f \mu}} \, \bar{\f v}(\mu)^{d_{\f v}} \prod_{r = 1}^{p} A_{\f v,i, \mu}(w_r, \sigma_r)}} \;=\; O \pB{(N^{C_0 \delta}\Psi)^p+\sup_{\f w \in \bb S}\E F^p_{\f w}(X)}\,,
\end{equation}
were we abbreviated
\begin{equation} \label{def_d_star}
d_* \;\equiv\; d_*(\sigma_1, \dots, \sigma_p) \;\deq\; \sum_{r = 1}^p \abs{\sigma_r}_*
\end{equation}
for $* = \f \mu, \f v$.
\end{lemma}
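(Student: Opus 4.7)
Setup and polynomial structure. I will fix $\mu \in \cal I_N$ and condition on $X^{(\mu)}$, treating the column $X_\mu = (X_{j\mu})_{j \in \cal I_M}$ as the free random variables. By Definition \ref{def:4-words}, each factor $A_{\f v,i,\mu}(w_r,\sigma_r)$ is a polynomial in $X_\mu$ with $X^{(\mu)}$-measurable coefficients of some explicit degree $D_r$. To put $(G_{\mu\mu})^{d_{\f \mu}}$ on the same footing, I will use the identity $G_{\mu\mu}^{-1} = -z - (X^* G^{(\mu)} X)_{\mu\mu}$ and expand in the fluctuation $\xi \deq (X^* G^{(\mu)} X)_{\mu\mu} - N^{-1}\tr G_M^{(\mu)}$, which by the large-deviation argument underlying Lemma \ref{lem:Lambda_o_Z} satisfies $\abs{\xi} \prec N^{C\delta}\Psi$ under the a priori bounds \eqref{sec7_assumpt}. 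Truncating the resulting geometric series expresses $(G_{\mu\mu})^{d_{\f \mu}}$ as a sum of $X^{(\mu)}$-measurable coefficients multiplied by even-degree monomials in $X_\mu$, modulo a remainder negligible compared to $\Psi^p$.

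Parity argument. The key new ingredient is a parity identity for $D + d_{\f v}$, where $D \deq \sum_r D_r$. Inspecting the three pieces in the decomposition \eqref{G_split} case by case, one verifies that
\begin{equation*}
\deg \qb{G_{[\f s_l][\f t_l]}}^{\sigma_r(l)} + \indb{\sigma_r(l) = 2} \;\equiv\; \alpha_l \pmod 2\,,
\end{equation*}
where $\alpha_l \in \{0,1,2\}$ counts the number of $\f e_\mu$-indices in the $l$-th factor; crucially, $\qb{G_{\f x \f y}}^0$ vanishes whenever $\alpha_l \geq 1$, so in any non-trivial tagged word $\sigma_r(l) \geq 1$ whenever $\alpha_l \geq 1$. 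The constraint $\f t_l \f s_{l+1} \in \h{\f i \f \mu, \f \mu \f i}$ forces exactly $n(w_r)$ occurrences of $\f \mu$ among the word letters, so $\sum_l \alpha_l = n(w_r)$ and therefore $D_r + \abs{\sigma_r}_{\f v} \equiv n(w_r) \pmod 2$; a direct computation gives the same identity in the empty-word case. Summing over $r$ yields $D + d_{\f v} \equiv \sum_r n(w_r) = 3 \equiv 1 \pmod 2$. Thus either (a) $D$ is odd, and taking $\E_\mu$ of the $X_\mu$-polynomial forces some $X_{j\mu}$ to appear with an odd exponent $\geq 3$ (since a perfect Wick pairing would require even total degree), yielding an extra $N^{-1/2}$ beyond the standard pairing bound; or (b) $d_{\f v}$ is odd, in which case $\sum_\mu \abs{\bar{\f v}(\mu)}^{d_{\f v}} \leq N^{1/2}$ (by Cauchy--Schwarz combined with $\sum_\mu \bar{\f v}(\mu)^2 \leq 1$) furnishes the same $N^{-1/2}$ gain in the outer $\mu$-summation.

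Summation estimates and conclusion. Each matched pair of $X_\mu$-variables with common summation index $j \in \cal I_M$ contributes a factor $\Psi^2$ via the Ward-type bound
\begin{equation*}
\frac{1}{N} \sum_{j \in \cal I_M} \absb{G^{(\mu)}_{\bar{\f w} j}}^2 \;\prec\; N^{C\delta}\Psi^2\,,
\end{equation*}
which follows from Lemma \ref{lemma: basic} combined with \eqref{sec7_assumpt} (after transferring the $(\mu)$ superscript using \eqref{Gstu}); unsummed entries of $G^{(\mu)}$ are bounded pointwise by $N^{2\delta}$ via \eqref{a_priori_1}, while $\abs{G_{\mu\mu}}^{d_{\f \mu}} \prec N^{C\delta d_{\f \mu}}$. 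The empty-word factors $A(w_r,0) = G^{(\mu)}_{\bar{\f v}\bar{\f v}} - \Pi^{(\mu)}_{\bar{\f v}\bar{\f v}}$ are rewritten via \eqref{Gstu} as $G_{\bar{\f v}\bar{\f v}} - \Pi_{\bar{\f v}\bar{\f v}}$ plus an error of size $F_{\f e_\mu}(X)$, which explains why the bound on the right of \eqref{sec8 2} features $\sup_{\f w \in \bb S}\E F^p_{\f w}(X)$ rather than only $\E F^p_{\f v}(X)$. Assembling all the bounds, using the $N^{-1/2}$ parity gain to cancel the $N^{-3/2}$ prefactor against the $N^2$ double summation over $(i,\mu)$, and applying Young's inequality to separate the $(F_{\f v} + F_{\f e_\mu})^{p-q}$ factors from the resulting $\Psi^q$ factors yields the claimed bound. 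The main obstacle will be the careful bookkeeping of cumulative polynomial $N^{C\delta}$ prefactors, which must be absorbed into $N^{C_0 \delta}$ by choosing $C_0$ sufficiently large, and the verification that enough $\Psi$-factors are generated (from paired summations plus the parity gain) to reach the total power $p$ on the right-hand side.
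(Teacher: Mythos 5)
Your proposal follows essentially the same route as the paper's proof: expand $(G_{\mu\mu})^{d_{\f\mu}}$ in a geometric series with $X^{(\mu)}$-measurable coefficients, condition on $X^{(\mu)}$ and exploit an odd total degree in $X_\mu$ to extract a factor $N^{-1/2}$, convert matched pairs into $\Psi^2$ factors by Ward-type summation, bound unmatched $G^{(\mu)}$-entries by $N^{2\delta}$, and absorb the zero-length words into $F_{\f v}+F_{\f e_\mu}$. Your parity identity $D_r + \abs{\sigma_r}_{\f v} \equiv n(w_r) \pmod 2$ is a clean strengthening of the paper's Lemma on odd $d_{\f X}$ (which only treats $d_{\f v}=0$), and your unified dichotomy ($D$ odd vs.\ $d_{\f v}$ odd, with Cauchy--Schwarz on $\bar{\f v}(\mu)$ in the latter case) is an equivalent but slightly different way of organizing the $N^{-1/2}$ gain.

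However, there is a genuine gap where you write "applying Young's inequality to separate the $(F_{\f v}+F_{\f e_\mu})^{p-q}$ factors from the resulting $\Psi^q$ factors yields the claimed bound," and acknowledge the remaining "bookkeeping" as the "main obstacle." The needed inequality is not that $q$ powers of $\Psi$ suffice: the empty-word factors with $\sigma_r \neq 0$ reduce the $F$-power from $p-q$ to $p-q-\sum_{r>q}\abs{\sigma_r}_{\f\mu}$, so one must produce at least $q+\sum_{r>q}\abs{\sigma_r}_{\f\mu}$ powers of $\Psi$ from the conditional expectation, the $i$-summation of the $[G_{\bar{\f v}i}]^0$ factors, and the $\ind{d_{\f v}\geq 2}$ gain. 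Moreover, the conditional expectation does not simply give $\Psi^{d_{\f X}}$: whenever a pairing block has size $\geq 3$, the per-$X_\mu$-factor count of $\Psi$'s drops (the Ward bound saturates at a power of $2$ per summation index, and extra factors cost $N^{2\delta}$ each); the precise output is $N^{-\ind{d_{\f v}=0}/2}\Psi^{d_{\f X}-\ind{d_{\f X}\geq 3}\ind{d_{\f v}=0}}$, because the odd-block forced by parity consumes one $\Psi$. The resulting combinatorial inequality (the paper's \eqref{main_comb_est}, proved through \eqref{mX_mi} and \eqref{main_comb_est_3}) requires a case analysis in $q \in \{1,2,3\}$, the value of $d_{\f v}$, and the number $f_{\f i}$ of $[G_{\bar{\f v}i}]^0$-type factors, and it is precisely here that your sketch stops. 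Without this verification it is not clear that the powers of $\Psi$ and $F$ combine correctly under Young's inequality, so the argument as written does not close.
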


\subsection{Preliminary estimates and the entries $G_{\mu \mu}$}
We shall require the following rough bounds, which refine those of Lemma \ref{lem:w_est}.

\begin{lemma}[Rough bounds on $A_{\f v,i,\mu}(w,\sigma)$] \label{lem:whA_rough2}
Suppose that \eqref{sec7_assumpt} holds. Let $(w,\sigma)$ be a tagged word. Then
\begin{equation} \label{whA_rough3}
\abs{A_{\f v,i,\mu}(w,\sigma)} \;\prec\; N^{2 \delta (n(w) + 1)}\,.
\end{equation}
Moreover, if $n(w) = 0$ then
\begin{equation} \label{whA_rough4}
\abs{A_{\f v,i,\mu}(w,\sigma)} \;\prec\; N^{(C_0/2 + 1) \delta} \Psi + F_{\f v}(X) + F_{\f e_\mu}(X)\,.
\end{equation}
\end{lemma}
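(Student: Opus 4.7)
The bounds will be established by a case analysis on the explicit formulas for the brackets $[G_{\f x\f y}]^\sigma$ from Definition \ref{def:G_brack}, combined with the a priori estimate \eqref{sec7_assumpt}, the resolvent identity \eqref{Gstu}, and the standard large-deviation estimate applied conditionally on $X^{(\mu)}$.

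The first step is to transfer the a priori bounds from $G$ to the minor $G^{(\mu)}$. Since $G^{(\mu)}$ is the resolvent of the analogous block model built from the matrix $X^{(\mu)}$ (obtained by removing the $\mu$-th column of $X^\theta$), which itself satisfies Assumption \ref{ass:main}, the hypothesis \textbf{($\f A_{m-1}$)} applied to $X^{(\mu)}$ combined with Lemma \ref{cor:rough_bound} yields the analogues of \eqref{sec7_assumpt} for $G^{(\mu)}$; in particular $|G^{(\mu)}_{\f a\f b}| \prec N^{2\delta}$ for $\f a, \f b \in \{\bar{\f v}, \f e_i, \f e_\nu\}$ with $\nu \neq \mu$, and $\im G^{(\mu)}_{\bar{\f v}\bar{\f v}} \prec N^{2\delta}(\im m + N^{C_0\delta}\Psi)$. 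The $X$-linear forms appearing in the brackets, such as $(G^{(\mu)}X)_{\bar{\f v}\mu} = \sum_{j\in\cal I_M} G^{(\mu)}_{\bar{\f v}j}X_{j\mu}$, are then controlled via the large-deviation estimate together with Lemma \ref{lemma: basic} applied to $G^{(\mu)}$ (valid since $\{\mu\}\subset\cal I_N$):
\begin{equation*}
\absb{(G^{(\mu)}X)_{\bar{\f v}\mu}}^2 \;\prec\; \frac{1}{N}\sum_{j\in\cal I_M}\absb{G^{(\mu)}_{\bar{\f v}j}}^2 \;\prec\; \frac{\im G^{(\mu)}_{\bar{\f v}\bar{\f v}}}{N\eta} \;\prec\; N^{(C_0+2)\delta}\Psi^2,
\end{equation*}
so $|(G^{(\mu)}X)_{\bar{\f v}\mu}| \prec N^{(C_0/2+1)\delta}\Psi$, and analogous bounds hold for $(X^*G^{(\mu)})_{\mu\bar{\f v}}$, $(G^{(\mu)}X)_{i\mu}$, and $(X^*G^{(\mu)})_{\mu i}$. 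Since $\Psi \prec N^{-\tau/2}$ on $\wh{\f S}_m$ and \eqref{cond_delta} ensures $(C_0/2+1)\delta \leq \tau/2$, every bracket $[G_{\f x\f y}]^\sigma$ is $\prec N^{2\delta}$.

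The bound \eqref{whA_rough3} is now immediate: for $n(w)\geq 1$, formula \eqref{Angeq1def} expresses $A_{\f v,i,\mu}(w,\sigma)$ as a product of $n(w)+1$ brackets, each $\prec N^{2\delta}$; the case $n(w)=0$ is handled directly from the three explicit formulas in Definition \ref{def:4-words}(i). For \eqref{whA_rough4}, when $n(w)=0$ we treat the three values of $\sigma$. For $\sigma=0$, substituting $G^{(\mu)}_{\bar{\f v}\bar{\f v}} = G_{\bar{\f v}\bar{\f v}} - G_{\bar{\f v}\mu}^2/G_{\mu\mu}$ together with $G_{\bar{\f v}\mu} = \bar{\f v}(\mu) G_{\mu\mu} - G_{\mu\mu}(G^{(\mu)}X)_{\bar{\f v}\mu}$ (which causes the division by $G_{\mu\mu}$ to disappear) and using $\Pi^{(\mu)}_{\bar{\f v}\bar{\f v}} = \Pi_{\bar{\f v}\bar{\f v}} - \bar{\f v}(\mu)^2 m$ yields
\begin{equation*}
A_{\f v,i,\mu}(w,0) \;=\; (G_{\bar{\f v}\bar{\f v}} - \Pi_{\bar{\f v}\bar{\f v}}) + \bar{\f v}(\mu)^2(m - G_{\mu\mu}) + 2\bar{\f v}(\mu) G_{\mu\mu}(G^{(\mu)}X)_{\bar{\f v}\mu} - G_{\mu\mu}\pb{(G^{(\mu)}X)_{\bar{\f v}\mu}}^2.
\end{equation*}
The first two terms are controlled by $F_{\f v}(X)$ and $F_{\f e_\mu}(X)$ respectively (using $|\bar{\f v}(\mu)|\leq 1$), while the last two, thanks to $|G_{\mu\mu}| \prec N^{2\delta}$ and the bound on $(G^{(\mu)}X)_{\bar{\f v}\mu}$ established above, are bounded by $N^{(C_0/2+1)\delta}\Psi$. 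The cases $\sigma=1,2$ follow directly from the same bounds on the linear and bilinear $X$-forms, since the corresponding $A_{\f v,i,\mu}(w,\sigma)$ is nothing but such a form.

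The principal technical obstacle is the careful bookkeeping of $\delta$-exponents: the factor $N^{(C_0/2+1)\delta}$ in the bound on $(G^{(\mu)}X)_{\bar{\f v}\mu}$ must match the target in \eqref{whA_rough4} exactly, and the quadratic cross-term $|G_{\mu\mu}||(G^{(\mu)}X)_{\bar{\f v}\mu}|^2$ naïvely produces $N^{(C_0+4)\delta}\Psi^2$, which is absorbed into $N^{(C_0/2+1)\delta}\Psi$ only via the inequality $\Psi \leq N^{-(C_0/2+3)\delta}$ valid under \eqref{cond_delta} for $C_0\geq 6$. A secondary point is the rigorous identification of $G^{(\mu)}$ as a $G$-matrix for the smaller model $X^{(\mu)}$, which is what permits us to apply \textbf{($\f A_{m-1}$)} to it; the induced shifts in $N$, $m$, and $\Psi$ are $O(1/N)$ and harmless.
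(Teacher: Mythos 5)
Your proposal follows the same overall skeleton as the paper — control the $X$-linear forms $(G^{(\mu)}X)_{\f x\mu}$ by large deviations and Lemma \ref{lemma: basic}, deduce \eqref{whA_rough3} from a bound $[G_{\f x\f y}]^\sigma \prec N^{2\delta}$ on each bracket, and derive \eqref{whA_rough4} from the explicit expansion of $A_{\f v,i,\mu}(w,0)$ — but it diverges at the crucial preliminary step of controlling $\im G^{(\mu)}_{\bar{\f w}\bar{\f w}}$ and $\absb{G^{(\mu)}_{\f a \f b}}$.

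You propose to treat $G^{(\mu)}$ as the resolvent of the block model built from the $\wh M \times (N-1)$ matrix $X^{(\mu)}$ and apply property \textbf{($\f A_{m-1}$)} (via Lemma \ref{cor:rough_bound}) directly to that smaller model. This is where the argument has a genuine gap. Property \textbf{($\f A_{m-1}$)} is a statement quantified over matrices $X$ with exactly $N$ columns satisfying $\E\abs{X_{i\mu}}^2 = N^{-1}$; it is phrased in terms of $m(z)$, $\Psi(z)$ and $\wh{\f S}_{m-1}$, all of which depend on $N$ through $\phi = M/N$, the variance normalization, and the bootstrap lattice. The minor $X^{(\mu)}$ has $N-1$ columns with variance $N^{-1}$ (not $(N-1)^{-1}$), so \textbf{($\f A_{m-1}$)} does not apply to it as stated; what you would get from the $(N-1)$-model is a bound featuring $m^{(N-1)}$, $\Psi^{(N-1)}$, etc. Your parenthetical that the induced shifts are ``$O(1/N)$ and harmless'' is the heart of the matter, and it is plausible, but it requires a stability-in-$N$ argument (Lipschitz dependence of $m$ and $\Psi$ on $\phi$ and on the variance, and a matching statement for the bootstrap hypothesis) that you do not supply and that the paper deliberately avoids. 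Instead, the paper derives the needed a priori bounds on $G^{(\mu)}$ entirely within the fixed-$N$ model: it applies the resolvent identity \eqref{main_identity} with $\f x = \f y = \bar{\f w}$ to express $G_{\bar{\f w}\bar{\f w}}$ in terms of $G^{(\mu)}_{\bar{\f w}\bar{\f w}}$, $G_{\mu\mu}$, and the linear forms, producing a self-consistent inequality $\im G^{(\mu)}_{\bar{\f w}\bar{\f w}} \leq \im G_{\bar{\f w}\bar{\f w}} + O_\prec(\im G_{\mu\mu} + N^{2\delta}\sqrt{(\im G^{(\mu)}_{\bar{\f w}\bar{\f w}} + \eta)/(N\eta)})$, which is closed using only \eqref{sec7_assumpt} on $G$ itself. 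This is cleaner precisely because it never leaves the original $N$-dimensional model.

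Two smaller points. First, your explicit expansion of $A_{\f v,i,\mu}(w,0)$ is correct (it is exactly the $\sigma=0$ part of \eqref{G_split} combined with $\Pi^{(\mu)}_{\bar{\f v}\bar{\f v}} = \Pi_{\bar{\f v}\bar{\f v}} - \bar{\f v}(\mu)^2 m$), and matches what \eqref{main_identity} gives. Second, in bounding the two cross-terms you invoke $\abs{G_{\mu\mu}} \prec N^{2\delta}$, which gives $\prec N^{(C_0/2+3)\delta}\Psi$ rather than the stated $N^{(C_0/2+1)\delta}\Psi$; to actually hit the exponent in \eqref{whA_rough4} you should split $G_{\mu\mu} = m + (G_{\mu\mu}-m)$, with $\abs{m}\leq C$ and $\abs{G_{\mu\mu}-m} = F_{\f e_\mu}(X)$, so that the $m$-part yields $N^{(C_0/2+1)\delta}\Psi$ (up to constants) and the fluctuation-part is absorbed into $F_{\f e_\mu}(X)$ using $N^{(C_0/2+1)\delta}\Psi \leq 1$. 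Your bound is ``morally'' sufficient for the downstream argument (only the $O(\delta)$ structure of the exponent matters), but as written it does not quite prove the stated estimate.
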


\begin{proof}
Using a large deviation estimate (see \cite[Lemma 3.1]{BEKYY}) combined with Lemmas \ref{lemma: basic} and \ref{lem:norm_XX} we get
\begin{equation} \label{BGX_est}
\absb{(G^{(\mu)} X)_{\bar {\f w} \mu}}^2 + \absb{(X^* G^{(\mu)})_{\mu \bar {\f w}}}^2 \;\prec\; \frac{1}{N} \sum_{j \in \cal I_M} \absb{G^{(\mu)}_{\bar {\f w} j}}^2 + \frac{1}{N} \sum_{j \in \cal I_M} \absb{G^{(\mu)}_{j \bar {\f w}}}^2 \;\prec\; \frac{\im G^{(\mu)}_{\bar {\f w} \bar {\f w}} + \eta}{N \eta}
\end{equation}
for any deterministic $\f w$ satisfying $\abs{\f w} \leq C$.
Using \eqref{main_identity} with $\f x = \f y = \bar {\f w}$,
\eqref{BGX_est}, \eqref{sec7_assumpt}, $\Psi \geq cN^{-1/2}$, and the trivial estimate $\abs{\bar {\f w}(\mu)} \leq 1$, we find
\begin{equation*}
\im G^{(\mu)}_{\bar {\f w} \bar {\f w}} \;=\; \im G_{\bar {\f w} \bar {\f w}} + O_\prec \pBB{\im G_{\mu \mu} + N^{2 \delta} \sqrt{\frac{\im G^{(\mu)}_{\bar {\f w} \bar {\f w}} + \eta}{N \eta}}}\,,
\end{equation*}
We conclude that
\begin{equation*}
\im G^{(\mu)}_{\bar {\f w} \bar {\f w}} \;\prec\; \im G_{\bar {\f w} \bar {\f w}} + \im G_{\mu \mu} + N^{3 \delta} \Psi \;\prec\; N^{2 \delta} \im m + N^{(C_0 + 2) \delta} \Psi\,,
\end{equation*}
where in the second step we used \eqref{sec7_assumpt}. Therefore,
\begin{equation} \label{im_BGB}
\frac{\im G^{(\mu)}_{\bar {\f w} \bar {\f w}}}{N \eta} \;\prec\; N^{(C_0 + 2) \delta} \Psi^2\,.
\end{equation}
Now \eqref{whA_rough4} with $\sigma \geq 1$ follows easily from \eqref{BGX_est} and \eqref{im_BGB} with $\f w = \f v$. Moreover, \eqref{whA_rough4} with $\sigma = 0$ follows using \eqref{main_identity} and \eqref{sec7_assumpt}.

Next, in order to prove \eqref{whA_rough3}, we deduce Definition \ref{def:G_brack} and \eqref{sec7_assumpt}, as well as from \eqref{BGX_est} and \eqref{im_BGB} with $\f w = \f v, \f e_\mu, \Sigma \f e_i$, that
\begin{equation} \label{G_sigma_bound}
\absb{[G_{\bar {\f v} i}]^\sigma} + \absb{[G_{i \bar {\f v}}]^\sigma} + \absb{[G_{\bar {\f v} \mu}]^\sigma} + \absb{[G_{\mu \bar {\f v}}]^\sigma} + \absb{[G_{i \mu}]^\sigma} + \absb{[G_{\mu i}]^\sigma} \;\prec\; N^{2 \delta}\,,
\end{equation}
for all $\sigma = 0, 1,2$. This concludes the proof.
\end{proof}

In order to analyse the left-hand side of \eqref{sec8 2}, we need to exhibit the precise $X$-dependence of the factor $(G_{\mu \mu})^{d_{\f \mu}}$. To that end, we write, using \eqref{Gii}, $G_{\mu \mu} = \p{-z - Y_\mu - Z_\mu}^{-1}$,
where we defined
\begin{equation*}
Z_\mu \;\deq\; (X^* G^{(\mu)} X)_{\mu \mu} - Y_\mu\,, \qquad
Y_\mu \;\deq\; \E \qB{(X^* G^{(\mu)} X)_{\mu \mu} \big\vert X^{(\mu)}} \;=\; \frac{1}{N} \sum_{j \in \cal I_M} G^{(\mu)}_{jj}\,.
\end{equation*}
Using the large deviation estimates from \cite[Lemma 3.1]{BEKYY}, we find $\abs{Z_\mu} \prec N^{-\tau/2 + 2 \delta}$. Since $\abs{G_{\mu \mu}} \prec N^{2 \delta}$ by \eqref{sec7_assumpt}, we therefore deduce using $-\tau/2 + 2 \delta < - 2 \delta$ \nc that
\begin{equation} \label{z-Y_bound}
\abs{-z - Y_\mu}^{-1} \prec N^{2 \delta}\,.
\end{equation}
Hence there exists a constant $K \equiv K(\tau)$ such that
\begin{equation*}
G_{\mu \mu} \;=\; \sum_{k = 0}^K (-z - Y_\mu)^{-k - 1} Z_\mu^k + O_\prec(N^{-10})\,.
\end{equation*}
Plugging in the definition of $Z_\mu$, we find
\begin{equation} \label{Gmumu_exp2}
G_{\mu \mu} \;=\; \sum_{k = 0}^K \cal Y_{\mu,k} (X^* G^{(\mu)} X)_{\mu \mu}^k + O_\prec(N^{-10})\,,
\end{equation}
where the coefficients $\cal Y_{\mu,k}$ are $X^{(\mu)}$-measurable and satisfy the bound $\abs{\cal Y_{\mu, k}} \prec N^{C \delta}$.
Here we used \eqref{z-Y_bound} and the estimate $\abs{G_{jj}^{(\mu)}} \prec N^{2 \delta}$, which follows from \eqref{sec7_assumpt}, \eqref{main_identity}, \eqref{BGX_est}, and \eqref{im_BGB} with $\bar {\f w} = \f x = \f y = \f e_j$.
The precise form of $\cal Y_{\mu,k}$ is unimportant. In order to apply Lemma \ref{lem:E_prec} in the following, we shall also need the rough bound
\begin{equation} \label{cal_Y_rough}
\E \abs{\cal Y_{\mu,k}}^q \;\leq\; N^{C_{p,q}}
\end{equation}
for all $q \in \N$,
which may be easily deduced from $\E (\abs{Y_\mu} + \abs{-z - Y_\mu}^{-1})^{\ell} \leq N^{C_p}$ for any $\ell \leq 16 K$. This latter estimate follows easily from the deterministic estimate $\abs{Y_\mu} + \abs{-z - Y_\mu}^{-1} \leq CN$, where we used that $\im (-z - Y_\mu) \leq \im (-z) \leq -N^{-1}$.

Now we replace the factors $G_{\mu \mu}$ on the left-hand side of \eqref{sec8 2} with the leading term in \eqref{Gmumu_exp2}. From \eqref{Gmumu_exp2}, \eqref{a_priori_1}, \eqref{cal_Y_rough}, and Lemma \ref{lem:E_prec} we get
\begin{multline*}
\E \prod_{r = 1}^{p} \abs{A_{\f v,i,\mu}(w_r, \sigma_r)} \absBB{(G_{\mu \mu})^{d_{\f \mu}} - \pBB{\sum_{k = 0}^K \cal Y_{\mu,k} (X^* G^{(\mu)} X)_{\mu \mu}^k}^{d_{\f \mu}}} \;\prec\; N^{-10} N^{2 \delta d_{\f \mu}} \E \prod_{r = 1}^{p} \abs{A_{\f v,i,\mu}(w_r, \sigma_r)}
\\
\prec\; N^{-7} \pB{\pb{N^{(C_0/2 + 4) \delta}\Psi}^p+\sup_{\f w \in \bb S} \E F^p_{\f w}(X)}\,,
\end{multline*}
where in the last step we used $d_{\f \mu} \leq p$, Lemma \ref{lem:whA_rough2} and that at most three $r \in \qq{1,p}$ satisfy $n(w_r) \geq 1$. We conclude that, to prove Lemma \ref{lem:82}, it suffices to prove under the assumptions of Lemma \ref{lem:82} that
\begin{multline} \label{sec 8 25}
N^{-3/2} \sum_{i \in \cal I_M} \sum_{\mu \in \cal I_N} \bar{\f v}(\mu)^{d_{\f v}} \absa{\E \qa{\prod_{r = 1}^{p} A_{\f v,i,\mu}(w_r, \sigma_r) \pBB{\sum_{k = 0}^K \cal Y_{\mu,k} (X^* G^{(\mu)} X)_{\mu \mu}^k}^{d_{\f \mu}}}}
\\
=\; O \pB{(N^{C_0 \delta}\Psi)^p+\sup_{\f w \in \bb S} \E F^p_{\f w}(X)}\,.
\end{multline}

Next, we expand
\begin{equation*}
\pBB{\sum_{k = 0}^K \cal Y_{\mu,k} (X^* G^{(\mu)} X)_{\mu \mu}^k}^{d_{\f \mu}} \;=\; \sum_{k = 0}^{K d_{\f \mu}} \cal Z_{\mu,k} (X^* G^{(\mu)} X)_{\mu \mu}^k\,,
\end{equation*}
where the coefficients $\cal Z_{\mu,k}$ are $X^{(\mu)}$-measurable and satisfy the bound
\begin{equation} \label{calZ_bound}
\abs{\cal Z_{\mu,k}} \;\prec\; N^{C d_{\f \mu} \delta}\,.
\end{equation}
Moreover, for any tagged word $(w,\sigma)$ we split
\begin{equation} \label{whA01}
A_{\f v,i,\mu}(w,\sigma) \;=\; A_{\f v,i,\mu}^0(w,\sigma) A_{\f v,i,\mu}^+(w,\sigma)\,,
\end{equation}
where $A_{\f v,i,\mu}^0(w,\sigma)$ is equal to $A_{\f v,i,\mu}^0(w,0)$ if $n(w) = 0$ and contains all factors $[G_{\f x \f y}]^0$ from \eqref{Angeq1def} if $n(w) \geq 1$. Hence, $A_{\f v,i,\mu}^0(w,\sigma)$ is $X^{(\mu)}$-measurable and $A_{\f v,i,\mu}^+(w,\sigma)$ is a product of terms of the form
\begin{equation} \label{A1_form}
(G^{(\mu)} X)_{\f x \mu} \quad \txt{or}  \quad (X^* G^{(\mu)})_{\mu \f x} \quad \txt{where} \quad \f x \in \{\bar {\f v}, \f e_i\}\,.
\end{equation}

From \eqref{sec 8 25}, Lemma \ref{lem:E_prec} and \eqref{calZ_bound}, we conclude that, to prove Lemma \ref{lem:82}, it suffices to prove under the assumptions of Lemma \ref{lem:82} and for any nonnegative $k \leq C d_{\f \mu}$ that
\begin{multline} \label{sec8 3}
N^{-3/2} N^{C d_{\f \mu} \delta} \sum_{i \in \cal I_M} \sum_{\mu \in \cal I_N} \bar{\f v}(\mu)^{d_{\f v}} \E \pa{\absBB{\prod_{r = 1}^{p} A_{\f v,i,\mu}^0(w_r, \sigma_r)} \absBB {\E_\mu \pBB{\prod_{r = 1}^{p} A_{\f v,i,\mu}^+(w_r, \sigma_r)} (X^* G^{(\mu)} X)_{\mu \mu}^k}}
\\
=\; O \pB{(N^{C_0 \delta}\Psi)^p+\sup_{\f w \in \bb S} \E F^p_{\f w}(X)}\,,
\end{multline}
where we recall the definition of the conditional expectation $\E_\mu$ from \eqref{cond_exp}.
(Here the applicability of Lemma \ref{lem:E_prec} may be checked using the estimates $\norm{G_M} \leq CN$ and $\E \abs{\cal Z_{\mu,k}}^8 \leq N^{C_p}$. The letter estimate follows from \eqref{cal_Y_rough}.)

\subsection{Degree counting}

For the following we choose $q$, $w_1, \dots, w_p$, and $\sigma_1, \dots, \sigma_r$ as in Lemma \ref{lem:82}. We introduce the abbreviation
\begin{equation} \label{def_d_X}
d_{\f X} \;\equiv\; d_{\f X}(w_1, \sigma_1, \dots, w_p, \sigma_p) \;\deq\; \sum_{r = 1}^p \deg (A_{\f v,i,\mu}(w_r,\sigma_r)) \;=\; \sum_{r = 1}^p \deg (A_{\f v,i,\mu}^+(w_r,\sigma_r))\,.
\end{equation}
(Recall the definition of $\deg (\,\cdot\,)$ from Definition \ref{def:4-words} (iv).)
By definition, the polynomial given by $\prod_{r = 1}^p A_{\f v,i,\mu}^+(w_r, \sigma_r)$ is a product of $d_{\f X}$ factors from the list
\begin{equation} \label{A1_form2}
(G^{(\mu)} X)_{i \mu}\,, \qquad (X^* G^{(\mu)})_{\mu i} \,, \qquad \p{G^{(\mu)} X}_{\bar{\f v} \mu}\,, \qquad \pb{X^* G^{(\mu)}}_{\mu \bar {\f v}}\,.
\end{equation}

\begin{lemma} \label{lem:d odd}
If $d_{\f v} = 0$ then $d_{\f X}$ is odd.
\end{lemma}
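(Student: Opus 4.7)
The plan is to reduce to a purely combinatorial parity count; the only analytic input is a direct reading of the identity \eqref{main_identity}. Because $d_{\f v} = 0$, every tag $\sigma_r(l)$ lies in $\{0,1\}$, so I would first determine the parity in $X_\mu$ of each factor $\qb{G_{\f x \f y}}^\sigma$ for $\sigma \in \{0,1\}$ and $\f x, \f y \in \{\bar{\f v}, \f e_i, \f e_\mu\}$. From \eqref{main_identity} and Definition \ref{def:G_brack} one reads that $\qb{G_{\f x \f y}}^0 = G^{(\mu)}_{\f x \f y}$, which vanishes when either of $\f x, \f y$ equals $\f e_\mu$ and otherwise has degree $0$ in $X_\mu$; and that $\qb{G_{\f x \f y}}^1$ is always nonzero, of degree $2$ minus the number of slots among $(\f x,\f y)$ equal to $\f e_\mu$. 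In every nonzero case the parity of $\deg \qb{G_{\f x \f y}}^\sigma$ therefore equals the number of $\f e_\mu$-slots in $(\f x, \f y)$ modulo $2$.

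Next, I would apply multiplicativity: the degree of $A_{\f v,i,\mu}(w,\sigma)$ is the sum of its factor degrees, so its parity equals the total number of $\f e_\mu$-slots summed over all $G$-factors. Every letter of $w$ occupies exactly one such slot, the only remaining slots being the two $\bar{\f v}$ endpoints (which contribute $0$). Hence
\[
\deg A_{\f v,i,\mu}(w,\sigma) \;\equiv\; \#\h{\text{letters } \f \mu \text{ in } w} \pmod 2\,,
\]
and the empty-word case, where $\deg A_{\f v,i,\mu}(w,\sigma) \in \{0,2\}$, is consistent with $w$ containing no letters.

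The crucial combinatorial step is to exploit the constraint $\f t_l \f s_{l+1} \in \{\f i \f \mu,\, \f \mu \f i\}$ for $l = 1, \ldots, n(w)$: it partitions the $2 n(w)$ letters of $w$ into the $n(w)$ disjoint pairs $(\f t_l, \f s_{l+1})$, each of which contains exactly one $\f \mu$. Thus the number of $\f \mu$-letters in $w$ is precisely $n(w)$. Summing over the family of tagged words then produces
\[
d_{\f X} \;\equiv\; \sum_{r = 1}^p n(w_r) \;=\; 3 \;\equiv\; 1 \pmod 2\,,
\]
so $d_{\f X}$ is odd, as claimed. The only subtlety, rather than a genuine obstacle, is that a tagged word may produce an identically zero polynomial (e.g.\ when $\sigma_r(l) = 0$ but a slot of the $l$-th factor is $\f e_\mu$); in that situation $d_{\f X}$ is not meaningful, but this is harmless since such tagged words contribute $0$ to the left-hand side of \eqref{sec8 2}, and the lemma is only invoked when the polynomial in question is genuinely nonzero.
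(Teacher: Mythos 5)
Your proof is correct and follows essentially the same route as the paper's: you track the parity of $\deg [G_{\f x \f y}]^\sigma$ factor by factor, observe it matches the parity of the number of $\f e_\mu$-slots, and count $\sum_r n(w_r) = 3$ such slots via the constraint $\f t_l \f s_{l+1} \in \{\f i\f\mu, \f\mu\f i\}$, which is exactly the paper's key observation. Your explicit treatment of the identically-zero case (e.g.\ $[G_{i\mu}]^0 \equiv 0$) is a welcome refinement that the paper leaves implicit.
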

\begin{proof}
This may be checked directly using Definitions \ref{def:G_brack} and \ref{def:4-words}. The condition $d_{\f v} = 0$ means that we only take factors $[\,\cdot\,]^0$ and $[\,\cdot\,]^1$ in \eqref{Angeq1def}. The key observation is that the parity of $\deg (A_{\f v,i,\mu}(w_r,\sigma_r))$ for $n \geq 1$ is the same as that of the number of indices $\mu$ appearing on the right-hand side of \eqref{Angeq1def}. The claim then follows from the observation that in $\prod_{r = 1}^p A_{\f v,i,\mu}(w_r)$, which is a product of entries of $G$, the index $\mu$ appears exactly three times.
\end{proof}

We may now estimate the conditional expectation $\E_\mu(\, \cdot\,)$ in \eqref{sec8 3}.
\begin{lemma} \label{lem:87}
Under the assumptions of Lemma \ref{lem:82} and for nonnegative $k \leq C d_{\f \mu}$ we have
\begin{equation} \label{lem_87_est}
\absBB{\E_\mu \pBB{\prod_{r = 1}^{p} A_{\f v,i,\mu}^+(w_r, \sigma_r)} (X^* G^{(\mu)} X)_{\mu \mu}^k} \;\prec\; N^{-\ind{d_{\f v} = 0}/2} \pb{N^{(C_0/2 + C) \delta} \Psi}^{d_{\f X} - \ind{d_{\f X} \geq 3} \ind{d_{\f v} = 0}}\,.
\end{equation}
\end{lemma}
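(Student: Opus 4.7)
The plan is to expand the integrand as an explicit polynomial in the column entries $\{X_{j\mu}\}_{j \in \cal I_M}$ and compute $\E_\mu$ block by block, using the independence of $X_\mu$ from $X^{(\mu)}$ together with the moment bounds on its entries. By Definition \ref{def:4-words} and the list \eqref{A1_form2}, the product $\prod_r A^+_{\f v,i,\mu}(w_r,\sigma_r)$ is a homogeneous polynomial of degree $d_{\f X}$ in $X_\mu$ whose $X^{(\mu)}$-measurable coefficients consist of entries $G^{(\mu)}_{\f x j}$ or $G^{(\mu)}_{j \f x}$ with $\f x \in \{\bar{\f v}, \f e_i\}$ and $j \in \cal I_M$; similarly $(X^* G^{(\mu)} X)_{\mu\mu}^k$ is homogeneous of degree $2k$, with each pair of $X$-factors multiplied by an entry $G^{(\mu)}_{jj'}$ for arbitrary $j,j' \in \cal I_M$. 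The full integrand is thus a polynomial of degree $D \deq d_{\f X} + 2k$ in $X_\mu$, with coefficients that are polynomially large in $N^{2\delta}$ by the pointwise bound $|G^{(\mu)}_{st}| \prec N^{2\delta}$, which follows from \eqref{sec7_assumpt} combined with \eqref{main_identity}.

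Conditional on $X^{(\mu)}$, the entries of $X_\mu$ are independent with mean zero, variance $N^{-1}$, and bounded higher moments by \eqref{moments of X-1}, so the expectation of a monomial $\prod_{l=1}^D X_{j_l \mu}$ vanishes unless every value among $\{j_1,\ldots,j_D\}$ occurs with multiplicity at least two, and otherwise factorizes over the partition $\Pi$ of $\{1,\ldots,D\}$ grouping equal indices, with bound $\leq C N^{-D/2}$. I would then sum over the $s$ free block indices $j_B$ using two tools: the pointwise bound above, and the sum-of-squares estimate $\frac{1}{N}\sum_{j \in \cal I_M} |G^{(\mu)}_{\f x j}|^2 \prec N^{C\delta}\Psi^2$ for $\f x \in \{\bar{\f v}, \f e_i\}$, which follows from Lemma \ref{lemma: basic} together with the bound \eqref{im_BGB} already established in the proof of Lemma \ref{lem:whA_rough2}. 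For a block $B$ of size $r_B$, I would apply the sum-of-squares bound to two factors whose index $j_B$ occupies a $\bar{\f v}$- or $\f e_i$-slot and the pointwise bound to the remaining $r_B - 2$ factors, giving a contribution of at most $N \cdot (N^{2\delta})^{r_B - 2} \cdot N^{C\delta}\Psi^2$ per block.

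Combining over all $s$ blocks with the $N^{-D/2}$ moment factor, a partition contributes at most $N^{-(D - 2s)/2}\Psi^{2s} N^{C(D+s)\delta}$; using $N^{-1/2} \leq c^{-1}\Psi$ this is bounded by $N^{-\ind{D - 2s \geq 1}/2}\Psi^{D - \ind{D - 2s \geq 1}}$ modulo $N^{C\delta}$ corrections. When $d_{\f v} = 0$, Lemma \ref{lem:d odd} forces $d_{\f X}$ to be odd, hence $D$ is odd, so $D - 2s \geq 1$ for every admissible partition and at least one block of size $\geq 3$ must appear; this is precisely the mechanism producing the extra factor of $N^{-1/2}$ together with one fewer power of $\Psi$. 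To pass from the exponent $D$ to the claimed $d_{\f X}$, I would note that the $2k$ factors contributed by $(X^* G^{(\mu)} X)_{\mu\mu}^k$ contract among themselves to at worst $\E_\mu (X^*G^{(\mu)}X)_{\mu\mu}^k = Y_\mu^k + \text{smaller} = O_\prec(N^{C\delta})$, which is harmlessly absorbed into the overall $N^{C\delta}$ prefactor and does not degrade the $\Psi^{d_{\f X}}$-count coming from the $A^+$-factors alone.

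The main obstacle will be the combinatorial bookkeeping when partitions cross-pair $A^+$-factors with $(X^* G^{(\mu)} X)^k$-factors, since such cross-pairings propagate the block index $j_B$ through chains $G^{(\mu)}_{\f x j_B} G^{(\mu)}_{j_B j'} G^{(\mu)}_{j' j_{B'}} \cdots$ whose interior $G^{(\mu)}$-entries carry neither $\bar{\f v}$ nor $\f e_i$, so the sum-of-squares bound does not apply directly. These chains must be reduced by iterated Cauchy--Schwarz to products of pure sum-of-squares factors (each yielding a $\Psi^2$) and pointwise $N^{2\delta}$ factors, while preserving the total count of summation indices that yield sum-of-squares savings; this is what ensures that the sharp exponent $d_{\f X} - \ind{d_{\f X} \geq 3}\ind{d_{\f v} = 0}$ in the statement is actually attained.
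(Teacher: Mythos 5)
Your overall strategy matches the paper's: expand the integrand as a polynomial in $X_\mu$, take $\E_\mu$ to reduce to a sum over partitions whose blocks have size at least two, apply the moment bound $N^{-(\text{block size})/2}$ per block, sum over free block indices using the bound \eqref{bound_sum_hat} on $\sum_j |G^{(\mu)}_{\bar{\f w}j}|^d$, and use Lemma \ref{lem:d odd} when $d_{\f v}=0$ to force $D$ odd and hence a block of size $\geq 3$, which is the source of the $N^{-1/2}$. So the skeleton is right. However, your block accounting ``$N\cdot(N^{2\delta})^{r_B-2}\cdot N^{C\delta}\Psi^2$ per block'' presupposes that every block carries at least two $\bar{\f v}$- or $\f e_i$-slots, which is false: a block may have $d_l=0$ or $d_l=1$ of its elements coming from the $A^+$-factors, the rest coming from $(X^*G^{(\mu)}X)^k$. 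The paper's \eqref{bound_sum_hat} therefore yields only $\Psi^{d_l\wedge 2}$ (plus $N^{2\delta[d_l-2]_+}$ corrections) for block $l$, and the final exponent of $\Psi$ is $\sum_l(2\wedge d_l)$, not $2\times(\text{number of blocks})$; the deficit $\sum_l[d_l-2]_+$ is recovered from the slack in the moment count via $N^{-\frac12\sum_l[d_l+k_l-2]_+}\leq \Psi^{\sum_l[d_l+k_l-2]_+}$, and it is exactly this bookkeeping that produces the precise exponent $d_{\f X}-\ind{d_{\f X}\geq 3}\ind{d_{\f v}=0}$ rather than a cruder one.

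Your flagged difficulty with ``chains'' produced by cross-pairings between $A^+$-slots and $(X^*G^{(\mu)}X)^k$-slots, and the proposed fix by iterated Cauchy--Schwarz, are both misdirected. Your intermediate claim that the $2k$ factors ``contract among themselves to $\E_\mu(X^*G^{(\mu)}X)_{\mu\mu}^k$'' is incorrect precisely because cross-pairings occur, but the resolution is simpler than you suggest: the paper pulls the entire $X^{(\mu)}$-measurable coefficient $\wt{\cal G}_{j_{d+1}\dots j_{d+2k}}$ out of the $j$-sums using the pointwise bound $|\wt{\cal G}|\prec N^{2\delta k}$ and makes no attempt to gain $\Psi$-factors from the interior entries $G^{(\mu)}_{jj'}$. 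The moment factors $N^{-(d_l+k_l)/2}$, together with $d_l+k_l\geq 2$ in each block, exactly balance the summation volume of the $k$-indices; and since $k\leq Cd_{\f \mu}\leq C(d_{\f X}+6)$ by \eqref{sigma_leq_d 2}, the factor $N^{2\delta k}$ is absorbed into the $N^{C\delta}$ prefactor. No Cauchy--Schwarz reduction of chains is needed, and indeed one would not expect it to work since the off-diagonal entries $G^{(\mu)}_{jj'}$ carry no smallness on their own.
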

\begin{proof}

To streamline notation, we abbreviate $d \deq d_{\f X}$.
Recalling the form of $A^+(w_r,\sigma_r)$ from \eqref{A1_form}, we find that
\begin{equation} \label{prod_pA1}
\pBB{\prod_{r = 1}^{p} A^+_{\f v, i,\mu}(w_r, \sigma_r)} (X^* G^{(\mu)} X)_{\mu \mu}^k \;=\; \sum_{j_1, \dots, j_{d + 2k} \in \cal I_N} \cal G_{j_1 \dots j_d} \wt {\cal G}_{j_{d+1} \dots j_{d + 2k}}  \prod_{l = 1}^{d + 2 k} X_{j_l \mu}\,,
\end{equation}
where $\cal G_{j_1 \dots j_d}$ is the product of $d$ terms in $\hb{G^{(\mu)}_{j_l \f x}, G^{(\mu)}_{\f x j_l} \col l \in \qq{1,d}, \f x \in \{\bar{\f v},\f e_i\}}$ and $\wt {\cal G}_{j_{d+1} \dots j_{d + 2k}}$ is the product of $k$ terms in $\hb{G^{(\mu)}_{j_{l} j_{l'}} \col l,l' \in \qq{d+1, d_k}}$. In particular, $\wt {\cal G}_{j_{d+1} \dots j_{d + 2k}}$ is $H^{(\mu)}$-measurable and satisfies the bound $\absb{\wt {\cal G}_{j_{d+1} \dots j_{d + 2k}}} \prec N^{2 \delta k}$.
Since $\E_\mu X_{j \mu} = 0$, the contribution of the indices $j_1, \dots, d_{d + 2k}$ after taking the conditional expectation $\E_\mu(\, \cdot\,)$ is nonzero only if each index appears at least twice. We classify the summation on the right-hand side of \eqref{prod_pA1} according to the coincidences of the indices, which results in a sum over partitions of the set $\{1, \dots, d + 2k\}$ whose blocks have size at least two. We denote by $L$ the number of blocks (i.e.\ independent summation indices). For a block $b \subset \{1, \dots, d + 2k\}$ we define $d_b \deq \abs{b \cap \qq{1,d}}$ and $k_b \deq \abs{b \cap \qq{d+1, d+2k}}$. The number $d_b$ denotes the number of coinciding summation indices in the block $b$ that originate from the first factor on the left-hand side of \eqref{prod_pA1}, and the number $d_b$ the number of coinciding summation indices in the block $b$ that originate from the second factor. Indexing the blocks as $\qq{1,L}$ (and hence writing $d_l$ and $k_l$ with $l \in \qq{1,L}$ instead of $d_b$ and $k_b$) and renaming the independent summation indices $j_1, \dots, j_L$, we therefore get
\begin{multline} \label{727}
\absBB{\E_\mu \pBB{\prod_{r = 1}^{p} A^+_{\f v, i,\mu}(w_r, \sigma_r)} (X^* G^{(\mu)} X)_{\mu \mu}^k}
\\
\leq\; C_p N^{2 \delta k} \max_{L} \max_{\{d_l\}} \max_{\{k_l\}} \sum_{j_1, \dots, j_L} \prod_{l = 1}^L \pBB{ \E \abs{X_{j_l \mu}}^{d_l + k_l} \pB{\absb{G^{(\mu)}_{\bar{\f v} j_l}} + \absb{G^{(\mu)}_{j_l \bar{\f v}}} + \absb{G^{(\mu)}_{j_l i}} + \absb{G^{(\mu)}_{i j_l}}}^{d_l}}\,,
\end{multline}
where the maxima are taken over $L \in \N$ and $d_l,k_l \geq 0$ satisfying
\begin{equation} \label{max_cond}
\sum_{l = 1}^L d_l \;=\; d \,, \qquad \sum_{l = 1}^L k_l \;=\; 2k \,, \qquad d_l + k_l \;\geq\; 2\,.
\end{equation}
Here the constant $C_p$ accounts for the immaterial constants depending on $p$ arising from the combinatorics of all partitions. For $L$, $\{d_l\}$, and $\{k_l\}$ as above, we may estimate
\begin{multline*}
C_p N^{2 \delta k} \sum_{j_1, \dots, j_L} \prod_{l = 1}^L \pBB{ \E \abs{X_{j_l \mu}}^{d_l + k_l} \pB{\absb{G^{(\mu)}_{\bar{\f v} j_l}} + \absb{G^{(\mu)}_{j_l \bar{\f v}}} + \absb{G^{(\mu)}_{j_l i}} + \absb{G^{(\mu)}_{i j_l}}}^{d_l}}
\\
\leq\; C_p N^{2 \delta k} N^{-d/2 - k} \prod_{l = 1}^L \pBB{ \sum_j \pB{\absb{G^{(\mu)}_{\bar{\f v} j}} + \absb{G^{(\mu)}_{j \bar{\f v}}} + \absb{G^{(\mu)}_{j i}} + \absb{G^{(\mu)}_{i j}}}^{d_l}}\,,
\end{multline*}
where we used \eqref{moments of X-1}.
Now we use the estimate
\begin{equation} \label{bound_sum_hat}
\sum_{j \in \cal I_M} \absb{G^{(\mu)}_{\bar{\f w} j}}^d \;\prec\; N \pb{N^{(C_0/2 + 1) \delta} \Psi}^{d \wedge 2} N^{2 \delta [d - 2]_+}\,,
\end{equation}
which follows from \eqref{BGX_est}, \eqref{im_BGB}, and \eqref{sec7_assumpt}. Hence we get
\begin{multline} \label{730}
\absBB{\E_\mu \pBB{\prod_{r = 1}^{p} A^+(w_r, \sigma_r)} (X^* G^{(\mu)} X)_{\mu \mu}^k}
\\
\prec\;
\max_{L} \max_{\{d_l\}} \max_{\{k_l\}} N^{-d/2 - k + L} N^{2 \delta k} N^{2 \delta \sum_l [d_l - 2]_+} \pb{N^{(C_0/2 + 1) \delta} \Psi}^{\sum_l (2 \wedge d_l)}\,,
\end{multline}
where the maxima are subject to the same conditions as above.

Next, from $\sum_l (d_l + k_l) = 2k + d$ and $d_l + k_l \geq 2$ we deduce that
\begin{equation*}
L \;=\; k + \frac{d}{2} - \frac{1}{2} \sum_l [d_l + k_l - 2]_+\,.
\end{equation*}
Together with $\sum_l [d_l - 2]_+ \leq \sum_l d_l = d$, this gives
\begin{multline}
N^{-d/2 - k + L} N^{2 \delta k} N^{2 \delta \sum_l [d_l - 2]_+} \pb{N^{(C_0/2 + 1) \delta} \Psi}^{\sum_l (2 \wedge d_l)}
\\
\leq\; N^{2 \delta (d+k)} \pb{N^{(C_0/2 + 1) \delta} \Psi}^{\sum_l (2 \wedge d_l)} N^{-\frac{1}{2} \sum_l [d_l + k_l - 2]_+}\,.
\end{multline}

Suppose first that $d_{\f v} = 0$.
From Lemma \ref{lem:d odd} and \eqref{max_cond} we find that $\sum_l [d_l + k_l - 2]_+ \geq 1$. Using $\Psi \geq N^{-1/2}$, we therefore get
\begin{align*}
&\mspace{-30mu} N^{-d/2 - k + L} N^{2 \delta k} N^{2 \delta \sum_l [d_l - 2]_+} \pb{N^{(C_0/2 + 1) \delta} \Psi}^{\sum_l (2 \wedge d_l)}
\\
&\;\leq\;
N^{2 \delta (d+k)} N^{-1/2} \pb{N^{(C_0/2 + 1) \delta} \Psi}^{\sum_l (2 \wedge d_l) + \sum_l [d_l + k_l - 2]_+ - 1}
\\
&\;\prec\;
N^{2 \delta (d+k)} N^{-1/2} \pb{N^{(C_0/2 + 1) \delta} \Psi}^{d - \ind{d \geq 3}}
\end{align*}
where in the last step we used that $[d_l + k_l  - 2]_+ \geq [d_l - 2]_+$, and $\sum_l (2 \wedge d_l) + \sum_l [d_l - 2]_+ = d$. For the case $d = 1$, we also used that $\sum_l (2 \wedge d_l) + \sum_l [d_l + k_l - 2]_+ - 1 \geq 1$.

Next, we claim that
\begin{equation} \label{sigma_leq_d 2}
d_{\f \mu} \;\leq\; d + 6\,.
\end{equation}
This follows from $\abs{\sigma}_{\f \mu} \leq \deg \pb{A_{\f v,i,\mu}(w,\sigma)} + 2 \, \ind{n(w) \geq 1}$, which may be checked using Definition \ref{def:4-words}.
Since $k \leq C d_{\f \mu}$, we deduce that $k \leq C d$, and therefore conclude the proof of \eqref{lem_87_est} for the case $d_{\f v} = 0$.

Finally, if $d_{\f v} \geq 1$ then we only have the trivial lower bound $\sum_l [d_l + k_l  - 2]_+ \geq 0$, from which \eqref{lem_87_est} may be obtained by a minor modification of the above argument.
\end{proof}

We conclude that the left-hand side of \eqref{sec8 3} is bounded by
\begin{multline} \label{sec8 4}
N^{-3/2 -\ind{d_{\f v} = 0}/2} \sum_{\mu \in \cal I_N} \abs{\bar{\f v}(\mu)}^{d_{\f v}} \sum_{i \in \cal I_M}  \E \absBB{\prod_{r = 1}^{p} A_{\f v,i,\mu}^0(w_r, \sigma_r)} \pb{N^{(C_0/2 + C) \delta} \Psi}^{d_{\f X} - \ind{d_{\f X} \geq 3} \ind{d_{\f v} = 0}}
\\
\leq\;
\max_{\mu \in \cal I_N} N^{-1} \sum_{i \in \cal I_M} \E \absBB{\prod_{r = 1}^{p} A_{\f v,i,\mu}^0(w_r, \sigma_r)} \pb{N^{(C_0/2 + C) \delta} \Psi}^{d_{\f X} - \ind{d_{\f X} \geq 3} \ind{d_{\f v} = 0} + \ind{d_{\f v} \geq 2}}\,,
\end{multline}
where we used \eqref{sigma_leq_d 2} with $d = d_{\f X}$ and $\Psi \geq N^{-1/2}$.

Next, we define $f_{\f i}$ to be the number of factors of the form $[G_{\bar{\f v} i}]^0$ or $[G_{i \bar {\f v}}]^0$ in the polynomial $\prod_{r = 1}^q A_{\f v,i,\mu}(w_r, \sigma_r)$ (recall Definition \ref{def:4-words} (ii)). (Here we use the letter $f$ instead of $d$ to emphasize that the definition of $f_{\f i}$ is not analogous to that of $d_{\f \mu}$.) Then we get from \eqref{whA_rough3}, \eqref{BGX_est}, and \eqref{im_BGB} that
\begin{equation} \label{818}
N^{-1} \sum_{i \in \cal I_M} \absBB{\prod_{r = 1}^{q} A_{\f v,i,\mu}^0(w_r, \sigma_r)} \;\prec\; N^{C \delta} N^{-1}\sum_{i \in \cal I_M} \pb{\abs{G^{(\mu)}_{i \bar{\f v}}}^{f_{\f i} \wedge 2} + \abs{G_{\bar {\f v} i}^{(\mu)}}^{f_{\f i} \wedge 2}} \;\prec\;   N^{C \delta} \nc \pb{N^{(C_0/2 + 1) \delta} \Psi}^{f_{\f i} \wedge 2}\,.
\end{equation} 
Moreover, by \eqref{whA_rough4} we have
\begin{equation} \label{819}
\absBB{\prod_{r = q+1}^{p} A_{\f v,i,\mu}^0(w_r, \sigma_r)} \;\prec\; \pB{N^{(C_0/2 + 1) \delta} \Psi + F_{\f v}(X) + F_{\f e_\mu}(X)}^{p - q - \sum_{r = q+1}^p \abs{\sigma_r}_{\f \mu}}
\end{equation}
Plugging \eqref{818} and \eqref{819} into \eqref{sec8 4}, and recalling Lemma \ref{lem:E_prec}, yields
\begin{multline} \label{sec8 5}
\txt{LHS of \eqref{sec8 3}} \;\leq\; \pb{N^{(C_0/2 + C) \delta} \Psi}^{d_{\f X} - \ind{d_{\f X} \geq 3} \ind{d_{\f v} = 0} + \ind{d_{\f v} \geq 2} + f_{\f i} \wedge 2}
\\
\times  \sup_{\f w \in \bb S} \E \pB{N^{(C_0/2 + 1) \delta} \Psi + F_{\f w}(X)}^{p - q - \sum_{r = q+1}^p \abs{\sigma_r}_{\f \mu}}\,,
\end{multline}
  where we absorbed the factor $N^{C \delta}$ on the right-hand side of \eqref{818} into the parentheses on the first line using that $d_{\f X} + f_{\f i} > 0$, as follows from Defnition \ref{def:4-words} and the assumptions of Lemma \ref{lem:82}. \nc

Recalling that $N^{(C_0/2 + C) \delta} \Psi + N^{-3 \delta} F_{\f w}(X) \prec N^{-\delta}$ for small enough $\delta$ by \eqref{sec7_assumpt}, we find using Young's inequality that in order to prove \eqref{sec8 3}, it suffices to prove
\begin{equation} \label{main_comb_est}
d_{\f X} - \ind{d_{\f X} \geq 3} \ind{d_{\f v} = 0} + \ind{d_{\f v} \geq 2} + f_{\f i} \wedge 2 - q - \sum_{r = q+1}^p \abs{\sigma_r}_{\f \mu} \;\geq\; 0\,.
\end{equation}

\subsection{Power counting: proof of \eqref{main_comb_est}} \label{sec:deg_counting}
The proof of \eqref{main_comb_est} requires precise information about the structure of the factors of $A_{\f v,i,\mu}(w_r, \sigma_r)$. To that end, we introduce the quantities
\begin{equation*}
d_{\f X}^< \;\deq\; \sum_{r = 1}^q \deg (A_{\f v,i,\mu}(w_r,\sigma_r)) \,, \qquad d_{\f X}^> \;\deq\; \sum_{r = q+1}^p \deg (A_{\f v,i,\mu}(w_r,\sigma_r))
\end{equation*}
and
\begin{equation*}
d_*^< \;\deq\; \sum_{r = 1}^q \abs{\sigma_r}_* \,, \qquad
d_*^> \;\deq\; \sum_{r = q + 1}^p \abs{\sigma_r}_*
\end{equation*}
for $* = \f \mu, \f v$. Hence, $d_* = d_*^< + d_*^>$ for $* = \f \mu, \f v, \f X$. (Recall the definitions \eqref{def_d_star} and \eqref{def_d_X}.) Moreover,
\begin{equation}
2 d_{\f \mu}^> \;=\; d_{\f X}^> + d_{\f v}^>\,, \qquad d_{\f \mu}^> \;\geq\; d_{\f v}^>\,,
\end{equation}
as may be easily checked from Definition \ref{def:4-words}. (Recall that $n(w_r) = 0$ for $r \geq q + 1$.) We conclude that \eqref{main_comb_est} holds provided we can show that
\begin{equation} \label{main_comb_est_2}
d_{\f X}^< + f_{\f i} \wedge 2 - q
- \ind{d_{\f X} \geq 3} \ind{d_{\f v} = 0} + \ind{d_{\f v} \geq 2} \;\geq\; 0\,.
\end{equation}
The following arguments rely on the algebraic structure of $A_{\f v,i,\mu}(w_r)$ and $A_{\f v,i,\mu}(w_r, \sigma_r)$ from Definitions \ref{def:words} and \ref{def:4-words} respectively, to which we refer tacitly throughout the rest of the proof.

We claim that
\begin{equation} \label{mX_mi}
d_{\f X}^< + f_{\f i} \;\geq\; 1\,.
\end{equation}
To see this, we note that each factor $[G_{\bar {\f v} i}]^1$ contributes one to $d_{\f X}^<$ and each factor $[G_{\bar {\f v} i}]^0$ contributes one to $f_{\f i}$; the same holds for $[G_{i \bar {\f v}}]^*$ (where $* = 0,1$). Since there are exactly three indices $i$ in the factors of $\prod_{r = 1}^q A_{\f v,i,\mu}(w_r)$, we find that $\prod_{r = 1}^q A_{\f v,i,\mu}(w_r)$ must contain at least one of the factors $G_{\bar {\f v} i}$, $G_{i\bar {\f v}}$, $G_{i \mu}$, or $G_{\mu i}$. We therefore conclude that, no matter the choice of $\sigma_1, \dots, \sigma_r$, we always have \eqref{mX_mi}.

Furthermore, if $q = 3$ then there are exactly three factors $G_{\bar {\f v} i}$ or $G_{i \bar {\f v}}$ in $\prod_{r = 1}^q A_{\f v,i,\mu}(w_r)$. We deduce that \eqref{mX_mi} may be improved to
\begin{equation*}
d_{\f X}^< + f_{\f i} \wedge 2 \;\geq\; 1 + \ind{q = 3}\,.
\end{equation*}
We conclude that \eqref{main_comb_est_2} holds provided that (i) $d_{\f v} \geq 2$ or (ii) $d_{\f v} = 1$ and $q = 1$.

Next, as noted above, each factor $G_{\bar {\f v} i}$ or $G_{i \bar {\f v}}$ of $\prod_{r = 1}^q A_{\f v,i,\mu}(w_r)$ contributes one to $d_{\f X}^< + f_{\f i}$. Moreover, each factor $G_{\bar {\f v} \mu}$ or $G_{\mu \bar {\f v}}$ contributes one to $d_{\f X}^< + d_{\f v}^<$. Since the number of factors $G_{\bar {\f v} i}$, $G_{i \bar {\f v}}$, $G_{\bar {\f v} \mu}$ or $G_{\mu \bar {\f v}}$ in $\prod_{r = 1}^q A_{\f v,i,\mu}(w_r)$ is exactly $2 q$, we find
\begin{equation*}
d_{\f X}^< + f_{\f i} \wedge 2 + \ind{f_{\f i} = 3} + d_{\f v}^< \;\geq\; 2q\,,
\end{equation*}
where we used that $f_{\f i} \leq 3$. We conclude that \eqref{main_comb_est_2} holds provided that
\begin{equation} \label{main_comb_est_3}
q - d_{\f v}^< - \ind{f_{\f i} = 3}
- \ind{d_{\f X} \geq 3} \ind{d_{\f v} = 0} \;\geq\; 0\,,
\end{equation}
which is easy to check in the case $d_{\f v} \leq 1$ and $q \geq 2$.

All that remain therefore is the case $d_{\f v} = 0$ and $q = 1$. In that case we have $f_{\f i} \leq 2$ and $d_{\f v}^< = 0$, so that \eqref{main_comb_est_3} holds also in this case. This concludes the proof of \eqref{main_comb_est}.

The proof of \eqref{sec8 3}, and hence of Lemma \ref{lem:82}, is therefore complete. This concludes the proof of Lemma \ref{lem:76}.

We have hence proved Proposition \ref{prop:comparison_2}. Recalling Proposition \ref{prop:Gaussian}, we conclude the proof of Theorem \ref{thm:gen_iso} (i).

\section{Averaged local law} \label{sec:avg_proof}

In this section we complete the proof of Theorem \ref{thm:gen_iso} by proving its part (ii). Bearing applications to eigenvalue rigidity in Section \ref{sec:rigidity} below in mind, we in fact prove a somewhat stronger result. Recall the definition of $m_N$ from \eqref{def_m_N}. We generalize Definition \ref{def:local_laws} (iii) by saying that the \emph{averaged local law holds with parameters $(X, \Sigma, \f S, \Phi)$} if
$
\abs{m_N(z) - m(z)} \prec \Phi(z)$
uniformly in $z \in \f S$. Here $\Phi \col \f S \to (0,\infty)$ is a deterministic control parameter satisfying
\begin{equation} \label{Phi_cond}
c \Psi^2 \;\leq\; \Phi \;\leq\; N^{-\tau/2}\,.
\end{equation}
for some constant $c > 0$.
In particular, we recover Definition \ref{def:local_laws} (iii) by setting $\Phi = (N \eta)^{-1}$.
In this section we prove the following result, which also completes the proof of Theorem \ref{thm:gen_iso}.

\begin{proposition} \label{prop:gen_avg}
Suppose that the assumptions of Theorem \ref{thm:gen_iso} hold. Let $\Phi$ satisfy \eqref{Phi_cond}. Suppose that the entrywise local law holds with parameters $(X^{\txt{Gauss}}, D, \f S)$ and that the averaged local law holds with parameters $(X^{\txt{Gauss}}, D, \f S, \Phi)$. Then the averaged local law holds with parameters $(X, \Sigma, \f S, \Phi)$.
\end{proposition}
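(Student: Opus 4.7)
The plan is to adapt the comparison strategy of Sections \ref{sec:comparison1}--\ref{sec:self-const_II} to the averaged quantity $m_N - m$, exploiting the fact that at this point Theorem \ref{thm:gen_iso}(i) is already established, so the anisotropic local law with parameters $(X, \Sigma, \f S)$ may be used as a uniform \emph{a priori input}. This means $G - \Pi = O_\prec(\Psi)$ and in particular $|m_N - m| \prec \Psi$ are known from the outset, eliminating the need for the bootstrap in the spectral scale $\eta$ that was required in Sections \ref{sec:comparison1}--\ref{sec:self-const_II}. The goal is to improve the error bound from $\Psi$ to the possibly much smaller parameter $\Phi$. As a preliminary step, I would transfer the averaged law from $(X^{\txt{Gauss}}, D)$ to $(X^{\txt{Gauss}}, \Sigma)$ by rotational invariance: writing $\Sigma = U D U^*$ and using that a Gaussian $X$ satisfies $X \eqdist U^* X$, the quantity $m_N = N^{-1} \tr (X^* \Sigma X - z)^{-1}$ has the same law for $\Sigma$ as for $D$, so the averaged law with parameters $(X^{\txt{Gauss}}, \Sigma, \f S, \Phi)$ follows.

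Fix $p \in 2 \N$ and $z$ in an $N^{-10}$-net of $\f S$. Set $F(X) \deq |m_N(X,z) - m(z)|^p$. By Markov's inequality, Lemma \ref{lem:E_prec}, and a net argument as in Section \ref{sec:comparison1}, it suffices to prove $\E F(X) \leq (N^\epsilon \Phi)^p$ for every $\epsilon > 0$. Since $\E F(X^{\txt{Gauss}}) \leq (N^\epsilon \Phi)^p$ by the previous paragraph, Gr\"onwall's inequality applied to $\theta \mapsto \E F(X^\theta)$ with $X^\theta$ the Bernoulli interpolation of Section \ref{sec:interpolation} reduces the problem to the self-consistent estimate
$$\frac{\dd}{\dd \theta} \E F(X^\theta) \;\leq\; (N^\epsilon \Phi)^p + C \, \E F(X^\theta).$$
Applying Lemma \ref{lem:interpol} and then the Taylor expansion of Lemmas \ref{lem:taylor_exp} and \ref{lem:0_to_theta}, using the anisotropic bound $G - \Pi = O_\prec(\Psi)$ as the uniform a priori estimate in place of \textbf{($\f A_{m-1}$)}, the target further reduces to showing
$$N^{-n/2} \sum_{i \in \cal I_M} \sum_{\mu \in \cal I_N} \absbb{\E \pbb{\frac{\partial}{\partial X_{i\mu}}}^n F(X^\theta)} \;\leq\; (N^\epsilon \Phi)^p + C \E F(X^\theta) \qquad (n = 3, \dots, 4p),$$
the terms $n = 1, 2$ cancelling since the first two moments of $X^0_{i\mu}$ and $X^1_{i\mu}$ agree.

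Each differentiation of a factor $m_N - m = N^{-1} \sum_{\nu} (G_{\nu\nu} - m)$ introduces a new summation $\frac{1}{N}\sum_\nu$ and turns one diagonal entry $G_{\nu\nu}$ into a product of two $G$-entries indexed by $\nu, i, \mu$. Higher derivatives therefore generate words closely analogous to those of Definition \ref{def:words}, but equipped with an additional internal average $\frac{1}{N}\sum_\nu$ per differentiated $m_N - m$. This extra averaging is the source of the improvement: summation estimates of the type in Lemma \ref{lemma: basic} combined with the anisotropic a priori bound give $\frac{1}{N}\sum_\nu |G_{\nu s}|^2 \prec \frac{\im m + \Psi}{N \eta} \leq C \Phi$ for any fixed $s$. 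For $n \geq 4$, counting these summed index pairs as in Section \ref{sec:words} and absorbing the surviving powers of $|m_N - m|$ into $F(X^\theta)$ via Young's inequality produces the desired bound directly.

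The main obstacle is the $n = 3$ contribution, which, without a vanishing third moment assumption, falls short of the target by a factor of $N^{1/2}$. Here I would follow the tagged-word decomposition and parity argument of Section \ref{sec:self-const_II}: decompose the polynomial dependence of $\partial^3_{X_{i\mu}} F$ on the column $(X_{j\mu})_{j \in \cal I_M}$ through the identity \eqref{main_identity}, take the conditional expectation $\E_\mu$, and exploit that the total degree in those variables has the wrong parity to admit a perfect matching. This yields the missing $N^{-1/2}$. The parity analysis is in fact somewhat simpler than in Section \ref{sec:self-const_II}, because the extra summation $\frac{1}{N}\sum_\nu$ attached to each differentiation of $m_N$ supplies the additional smallness directly; the delicate balancing of the indices $d$ in \eqref{sum_j_G} carried out in Section \ref{sec:deg_counting} is not needed, as the averaged structure of $m_N - m$ provides enough summed pairs to meet the combinatorial requirements of the parity argument.
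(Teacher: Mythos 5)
Your proposal follows essentially the same route as the paper's proof: use Theorem~\ref{thm:gen_iso}~(i) as a uniform a priori input to bypass the bootstrap, interpolate with the Bernoulli scheme, bound the derivatives in terms of words with an internal $\frac1N\sum_\nu$ average, use the anisotropic bound to show $\frac1N\sum_\nu A_{\f e_\nu,i,\mu}(w) \prec \Psi^2 \leq C\Phi$ for $n(w)\geq 1$, and handle $n=3$ via the tagged-word decomposition, conditional expectation $\E_\mu$, and parity. The rotation step from $(X^{\txt{Gauss}},D)$ to $(X^{\txt{Gauss}},\Sigma)$ is also as in the paper.

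The one place where your sketch is overly optimistic is the claim that the power counting of Section~\ref{sec:deg_counting} ``is not needed'' for $n=3$. The paper's proof of Proposition~\ref{prop:gen_avg} still carries out a nontrivial combinatorial verification: after establishing the sharpened analogue \eqref{A+_avg} of Lemma~\ref{lem:87} (with prefactor $\Psi^{d_{\f X}-\ind{d_{\f X,\f i}=3}}$ rather than the $N^{2\delta}$-contaminated bound of Section~\ref{sec:self-const_II}) and the pointwise bounds \eqref{A+_avg_2}--\eqref{A+_avg_3}, the paper must prove the inequality \eqref{cond_avg_pf}, which requires classifying the possible values of $d_{\f X,\f i,r}$ case by case and exploiting the identity $d_{\f X} = \sum_r d_{\f X,r}$. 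What the anisotropic a priori input genuinely buys you is that the $G^{(\mu)}$-entries are now bounded by $O_\prec(1)$ and the off-diagonal ones by $O_\prec(\Psi)$ rather than $O_\prec(N^{2\delta})$, so the counting is less delicate and the $N^{C\delta}$-losses vanish — but a counting of powers of $\Psi$ against $q$ and the $|\sigma_r|$'s is still required to close the argument. Beyond this overstatement, your strategy is sound and matches the paper.
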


\begin{proof}
The proof is similar to that of Propositions \ref{prop:comparison_1} and \ref{prop:comparison_2}, and we only explain the differences.  Note that now there is no bootstrapping, since the necessary a priori bounds are obtained from the anisotropic local law. In analogy to \eqref{def_Fp}, we define
\begin{equation*}
\wt F(X,z) \;\deq\; \abs{m_N(z) - m(z)} \;=\; \absbb{\frac{1}{N} \sum_{\nu \in \cal I_N} G_{\nu \nu}(z) - m(z)}\,.
\end{equation*}
Following the argument leading up to \eqref{sc_step 3} and Lemma \ref{lem:73} to the letter, we find that it suffices to prove that 
\begin{equation*}
N^{-n/2} \sum_{i \in \cal I_M} \sum_{\mu \in \cal I_N} \absBB{\E \pbb{\frac{\partial}{\partial X_{i \mu}}}^n \wt F^p(X)} \;=\; O \pB{\pb{N^{\delta} \Psi^2}^p+\E \wt F^p(X)}
\end{equation*}
for any $n = 3, \dots, 4p$ and $z \in \f S$. Here $\delta > 0$ is an arbitrary positive constant. We use the words $w$ from Definition \ref{def:words}.
Analogously to \eqref{sc_step 5} and Lemma \ref{lem:76}, it suffices to prove, for all $n = 3, \dots, 4p$, that
\begin{equation} \label{avg_pf_claim}
N^{-n/2} \sum_{i \in \cal I_M} \sum_{\mu \in \cal I_N} \absBB{\E \prod_{r = 1}^{p} \pBB{\frac{1}{N} \sum_{\nu \in \cal I_N} A_{\f e_\nu,i, \mu}(w_r)}}  \;=\; O \pB{\pb{N^{\delta}\Psi^2}^p+\E \wt F^p(X)} \qquad \txt{for} \quad \sum_r n(w_r) = n\,.
\end{equation}

Next, from the part (i) of Theorem \ref{thm:gen_iso} we get $\abs{G_{st} - \Pi_{st}} \prec \Psi$ for $s,t \in \cal I$ and $z \in \f S$. From \eqref{def_w_1} we deduce that
\begin{equation} \label{wt_A_estimate}
\absBB{\frac{1}{N} \sum_{\nu \in \cal I_N} A_{\f e_\nu,i, \mu}(w)} \;\prec\; \Psi^2 \qquad \txt{for} \quad n(w) \geq 1\,.
\end{equation}
For $n \geq 4$, the claim \eqref{avg_pf_claim} easily follows from \eqref{wt_A_estimate}  and an application of Young's inequality to the factors $r$ satisfying $n(w_r) = 0$ and $n(w_r) \geq 1$; we omit the details. \nc What remains, therefore, is to verify \eqref{avg_pf_claim} for $n = 3$.

Let $n = 3$. As in Lemma \ref{lem:A wt A}, it is easy to check that it suffices to prove \eqref{avg_pf_claim} with $A_{\f e_\nu,i, \mu}(w_r)$ replaced by $\wh A_{\f e_\nu,i, \mu}(w_r)$. This means that we replace the sum $\sum_{\nu \in \cal I_N}$ with $\sum_{\nu \in \cal I_N \setminus \{\mu\}}$. From \eqref{wh_A_identity} and Definition \ref{def:4-words}, we conclude that it suffices to prove that
\begin{equation*}
N^{-3/2} \sum_{i \in \cal I_M} \sum_{\mu \in \cal I_N} \absBB{\frac{1}{N^p} \sum_{\nu_1, \dots, \nu_p \in \cal I_M \setminus \{\mu\}} \E \prod_{r = 1}^{p} A_{\f e_{\nu_r},i, \mu}(w_r, \sigma_r) (G_{\mu \mu})^{d_{\f \mu}}}  \;=\; O \pB{\pb{N^{\delta}\Psi^2}^p+\E \wt F^p(X)}\,,
\end{equation*}
where $\sum_r n(w_r) = 3$, $n(w_r) \geq 1$ for $r \in \qq{1,q}$ and $n(w_r) = 0$ for $r \geq q+1$, and $\sigma_r \in \Z_2^{n(w_r) + 1}$. Here we recall the index $d_{\f \mu}$ defined in Lemma \ref{lem:82}. Note that, since $\nu_r \neq \mu$, in \eqref{G_split} we have $\bar {\f v}(\mu) = \delta_{\mu \nu_r} = 0$, so that we only need to consider $\sigma_r \in \Z_2^{n(w_r) + 1}$ instead of $\sigma_r \in \Z_3^{n(w_r) + 1}$. In other words, we always have $d_{\f v} = 0$. As in \eqref{sec8 3}, we find that it suffices to prove, under the same assumptions and for any $k \leq C d_{\f \mu}$,
\begin{multline} \label{avg_step 1}
N^{-3/2} N^{d_{\f \mu} \delta} \sum_{i \in \cal I_M} \sum_{\mu \in \cal I_N} \E \absBB{\frac{1}{N^p} \sum_{\nu_1, \dots, \nu_p \in \cal I_M \setminus \{\mu\}} \pBB{\prod_{r = 1}^{p} A^0(r)} \E_\mu \pBB{\prod_{r = 1}^{p} A^{+}(r)} (X^* G^{(\mu)} X)_{\mu \mu}^k}
\\
=\; O \pB{\pb{N^{\delta}\Psi^2}^p+\E \wt F^p(X)}\,,
\end{multline}
where we abbreviated $A^{*}(r) \equiv A^{*}_{\f e_{\nu_r},i,\mu}(w_r, \sigma_r)$ for $* = 0,+$.

Next, note that each factor $A^+(r)$ is a product of factors $(G^{(\mu)} X)_{s \mu}$ and $(X^* G^{(\mu)})_{\mu s}$ where $s \in \{i,\nu_r\}$. We denote by $d_{\f X, \f i, r}$ the number of factors of $A^+(r)$ for which $s = i$, and abbreviate $d_{\f X, \f i} \deq \sum_{r = 1}^p d_{\f X, \f i, r}$. It is not hard to check that $d_{\f X, \f i} \leq 3$. Recall the definition of $d_{\f X}$ from \eqref{def_d_X}. We claim that
\begin{equation} \label{A+_avg}
\absBB{\E_\mu \pBB{\prod_{r = 1}^{p} A^{+}(r)} (X^* G^{(\mu)} X)_{\mu \mu}^k} \;\prec\; N^{-1/2} \Psi^{d_{\f X} - \ind{d_{\f X, \f i} = 3}}\,,
\end{equation}
which may be regarded as an improvement of Lemma \ref{lem:87}.
The proof of \eqref{A+_avg} is similar to that of Lemma \ref{lem:87}, and we merely give a sketch. We have to estimate an expression of the form
\begin{equation*}
\E_\mu \pBB{\prod_{l = 1}^{d_{\f X}} (G^{(\mu)} X)_{s_l \mu}^{b_i} (X^* G^{(\mu)})_{\mu s_l}^{1 - b_i}} (X^* G^{(\mu)} X)_{\mu \mu}^k\,,
\end{equation*}
where $s_l \in \{i,\nu_1, \dots, \nu_p\}$ and $b_i \in \{0,1\}$. (This is simply the general form of a polynomial in $X$ of the correct degree.) The proof of \eqref{A+_avg} relies on the crucial observations that, by Theorem \ref{thm:gen_iso} (i),
\begin{equation*}
\absb{G^{(\mu)}_{\nu j}} + \absb{G^{(\mu)}_{j \nu}} \;\prec\; \Psi \qquad\txt{and} \qquad  \frac{1}{N} \sum_{j \in \cal I_M} \pb{\absb{G^{(\mu)}_{i j}}^a + \absb{G^{(\mu)}_{ji}}^a} \;\prec\; \Psi^{a \wedge 2}
\end{equation*}
for $a \in \N$. Using that $d_{\f X}$ is odd by Lemma \ref{lem:d odd} (recall that $d_{\f v} = 0$), it is not hard to conclude \eqref{A+_avg}. (Note that, thanks to the a priori information provided by Theorem \ref{thm:gen_iso} (i), the estimate \eqref{A+_avg}, including the exponent on the right-hand side, is sharper than Lemma \ref{lem:87}. This improvement will prove crucial for the conclusion of the argument.)

Next, let $r \geq q + 1$ satisfy $\sigma_r = 0$, so that the left-hand side of \eqref{A+_avg} does not depend on $\nu_r$. There are $p - q - \sum_{s = q+1}^p \abs{\sigma_s}$ such indices $r$, and for each such $r$ we find
\begin{equation} \label{A+_avg_2}
\frac{1}{N} \sum_{\nu_r \in \cal I_N \setminus \{\mu\}} A^0(r) \;=\;  \wt F(X) + O_\prec(\Psi^2)\,.
\end{equation}
Here we wrote $A^0(r) = G_{\nu_r \nu_r} - \Pi_{\nu_r \nu_r} - A_{\f e_{\nu_r}, i,\mu}(w_r, 1)$ and estimated the term $A_{\f e_{\nu_r}, i,\mu}(w_r, 1)$ using \eqref{BGX_est} and \eqref{im_BGB}.
For the remaining $\sum_{s = q+1}^p \abs{\sigma_s}$ indices $r \geq q + 1$ we have $A^0(r) = 1$.
Moreover, for $r \leq q$ it is easy to verify directly using Definition \ref{def:4-words} that
\begin{equation} \label{A+_avg_3}
\absb{A^0(r)} \;\prec\; \Psi^{(2 - d_{\f X, r})_+}\,,
\end{equation}
where we abbreviated $d_{\f X, r} \deq \deg(A^+(r))$. From \eqref{A+_avg}, \eqref{A+_avg_2}, and \eqref{A+_avg_3} we find that the left-hand side of \eqref{avg_step 1} is bounded by
\begin{equation*}
N^{d_{\f \mu} \delta} \, \Psi^{d_{\f X} - \ind{d_{\f X, \f i} = 3}} \, \Psi^{\sum_{r = 1}^q (2 - d_{\f X, r})_+} \, \E (\wt F(X) + \Psi^2)^{p - q - \sum_{r = q+1}^p \abs{\sigma_r}}\,.
\end{equation*}
As in the argument following Lemma \ref{lem:87}, we conclude that the claim holds provided that
\begin{equation} \label{cond_avg_pf}
d_{\f X} - \ind{d_{\f X, \f i} = 3} + \sum_{r = 1}^q (2 - d_{\f X, r})_+ - 2q - 2 \sum_{r = q+1}^p \abs{\sigma_r} \;\geq\; 0\,.
\end{equation}

In order to establish \eqref{cond_avg_pf}, we make the following observations about $d_{\f X, \f i, r}$, which may be checked case by case directly from Definition \ref{def:4-words}. First, if $n(w_r) = 0$ then $d_{\f X, \f i, r} = 0$. Second, if $n(w_r) \in \qq{1,3}$ we have the implication
\begin{equation*}
d_{\f X, r} \leq 2 \quad \Longrightarrow \quad d_{\f X, \f i, r} \leq \ind{n(w_r) \geq 2}\,.
\end{equation*}
We conclude that if $d_{\f X, \f i} = 3$ then there exists an $r \leq q$ such that $d_{\f X, r} \geq 3$. Hence, to establish \eqref{cond_avg_pf} it is enough to establish
\begin{equation*}
d_{\f X} + \sum_{r = 1}^q (2 - d_{\f X, r}) - 2q - 2 \sum_{r = q+1}^p \abs{\sigma_r} \;\geq\; 0\,,
\end{equation*}
which is trivial by the identity
\begin{equation*}
d_{\f X} \;=\; \sum_{r = 1}^p d_{\f X,r} \;=\; \sum_{r = 1}^q d_{\f X,r} + 2 \sum_{r = q + 1}^p \abs{\sigma_r}\,.
\end{equation*}
This concludes the proof.
\end{proof}

This concludes the proof of Theorem \ref{thm:gen_iso}. We conclude this section by drawing consequences from Theorems \ref{thm:gen_iso} and \ref{thm:diag_gen}.

\begin{proof}[Proof of Theorems \ref{thm:edge} {\normalfont(i)}, \ref{thm:bulk} {\normalfont(i)}, and \ref{thm:outside} {\normalfont(i)}]
From Lemma \ref{lem:stab_edge1} we find that \eqref{m_sq_root} and \eqref{mz_reg} hold on $\f D^e_k$. From \eqref{mz_reg}, after a possible decrease of $\tau$, we find that \eqref{1+xm} holds. Moreover, from Lemma \ref{lem:stab_edge2} we find that \eqref{def_m} is strongly stable on $\f D^e_k$ in the sense of Definition \ref{def:strong_stab}. Using \eqref{m_sq_root}, we deduce that \eqref{def_m} is stable on $\f D^e_k$ in the sense of Definition \ref{def:stability}. We may therefore invoke Theorems \ref{thm:gen_iso} and \ref{thm:diag_gen} to conclude Theorem \ref{thm:edge} (i).

Similarly, Theorems \ref{thm:bulk} (i) and \ref{thm:outside} (i) follow from Lemmas \ref{lem:stab_bulk} and \ref{lem:stab_outside} respectively, combined with Theorems \ref{thm:gen_iso} and \ref{thm:diag_gen}
\end{proof}

\begin{proof}[Proof of Theorem \ref{thm:LL_G}]
Let $\tau' > 0$ be chosen so that the conclusions of Theorem \ref{thm:edge} (i) hold for $k = 1, \dots, 2p$. Since $\f D = \bigcup_{k = 1}^{2p} \f D^e_k \cup \bigcup_{k = 1}^p \f D^b_k \cup \f D^o$, we obtain Theorem \ref{thm:LL_G} from Theorems \ref{thm:edge} (i), \ref{thm:bulk} (i), and \ref{thm:outside} (i).
\end{proof}

\begin{proof}[Proof of Corollary \ref{cor:GM_GN} and Theorems \ref{thm:edge} {\normalfont(ii)}, \ref{thm:bulk} {\normalfont(ii)}, and \ref{thm:outside} {\normalfont(ii)}  assuming \eqref{T_diag}]
These follow immediately from Theorem \ref{thm:LL_G} and Theorems \ref{thm:edge} (i), \ref{thm:bulk} (i), and \ref{thm:outside} (i) respectively, combined with \eqref{G_M_ident}, \eqref{G_N_ident}, and \eqref{def_Pi}.
\end{proof}

How to remove the assumption \eqref{T_diag} in the above proof is explained in Section \ref{sec:gen_T}.

\section{Eigenvalue rigidity and edge universality} \label{sec:rigidity}

In the first part of this section we establish eigenvalue rigidity (Theorems \ref{thm:rigidity}, \ref{thm:edge} (iii), and \ref{thm:bulk} (iii)).  As a consequence of Theorem \ref{thm:LL_G} we prove Theorem \ref{thm:G_outside}. In the second part of this section we establish the edge universality from Theorem \ref{thm:edge_univ}. We assume \eqref{T_diag} throughout this section; how to remove it is explained in Section \ref{sec:gen_T} below.

\subsection{Eigenvalue rigidity assuming \eqref{T_diag}}
For simplicity, we first prove Theorem \ref{thm:rigidity}, i.e.\ we assume that all edges and bulk components are regular. The proof consists of three steps. First, we prove that with high probability there are no eigenvalues at a distance greater than $N^{-2/3 + \epsilon}$ from the support of $\varrho$. Second, we prove that a neighbourhood of the $k$-th bulk component of $\varrho$ contains with high probability exactly $N_k$ eigenvalues (recall the definition \eqref{def_N_k}). Third, we use the averaged local law from Theorem \ref{thm:LL_G} together with the first two steps to complete the proof.

We begin with the first step. We define the \emph{distance to the spectral edge} through 
\begin{equation} \label{def_kappa}
\kappa \;\equiv\; \kappa(E) \;\deq\; \dist (E, \partial \supp \varrho)\,.
\end{equation}

\begin{lemma} \label{lem:gap}
Fix $\epsilon \in (0,2/3)$. Under \eqref{T_diag} and the assumptions of Theorem \ref{thm:rigidity} we have
\begin{equation*}
\spec(Q) \cap \hb{E \geq \tau \col \dist(E, \supp \varrho) \geq N^{-2/3 + \epsilon}} \;=\; \emptyset
\end{equation*}
with high probability (recall Definition \ref{def:high_prob}).
\end{lemma}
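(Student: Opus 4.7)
The plan is to reduce the claim to a fixed-$E$ statement and then apply a union bound. Choose $\eta_0 \deq N^{-1+\tau_0}$ for a small $\tau_0 \in (0, \epsilon/8)$, and let $\{E_j\}$ be a deterministic $(\eta_0/2)$-net of the forbidden region $\{E \geq \tau \col \dist(E, \supp \varrho) \geq N^{-2/3+\epsilon}\}$; such a net has polynomial cardinality in $N$. Since every eigenvalue in the forbidden region lies within $\eta_0/2$ of some $E_j$, by the union bound it suffices, for each fixed $E = E_j$, to prove that $\#\{i \col \lambda_i \in [E - \eta_0, E + \eta_0]\} = 0$ with high probability.

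For each such fixed $E$ I would use the deterministic inequality
\begin{equation*}
\#\{i \col \lambda_i \in [E - \eta_0, E + \eta_0]\} \;\leq\; 2 N\eta_0 \, \im m_N(E + \ii \eta_0),
\end{equation*}
and observe that since the left-hand side is a non-negative integer, any stochastic bound of the form $2 N\eta_0 \, \im m_N(E + \ii \eta_0) \prec N^{-c}$ for a fixed $c > 0$ forces it to vanish with high probability. To control the right-hand side I would combine two ingredients: first, the square-root edge behavior $\im m(E + \ii \eta_0) \leq C \eta_0/\sqrt\kappa$ (with $\kappa \deq \dist(E, \supp \varrho) \geq N^{-2/3+\epsilon}$), which follows from the regularity assumption in Definition \ref{def:regular}~(i) and is standard from Lemma \ref{lem:edge_tools} together with the analysis in Appendix \ref{sec:varrho}; and second, the sharper local law outside the spectrum from Theorem \ref{thm:G_outside}, whose averaged form (obtained by summing \eqref{G_outside} along the diagonal in $\cal I_N$) yields $|m_N(z) - m(z)| \prec \sqrt{\im m(z)/(N\eta)}$. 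Together these give $\im m_N(E + \ii \eta_0) \prec \eta_0/\sqrt\kappa + 1/\sqrt{N\sqrt\kappa}$, and substituting the values of $\kappa$ and $\eta_0$ into $2 N\eta_0 \cdot (\eta_0/\sqrt\kappa + 1/\sqrt{N\sqrt\kappa})$ produces an estimate of the form $\prec N^{-c}$ for some $c = c(\epsilon, \tau_0) > 0$.

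The main obstacle, and the reason for relying on Theorem \ref{thm:G_outside}, is that the averaged local law of Theorem \ref{thm:LL_G} alone, with error $O_\prec(1/(N\eta))$, only yields a count bound of order $\prec 1$, which is not strong enough to force an integer count to vanish with high probability: stochastic domination tolerates arbitrary factors of $N^\delta$, and $1\cdot N^\delta\ge 1$. The strict polynomial gain $N^{-c}$ comes only from the improved error $\sqrt{\im m/(N\eta)}$ available outside the spectrum, so in the logical flow of Section \ref{sec:rigidity} the derivation of Theorem \ref{thm:G_outside} from Theorem \ref{thm:LL_G} must be placed before this lemma, even though both are ultimately consequences of Theorem \ref{thm:LL_G}.
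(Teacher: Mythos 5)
Your scaffolding (a polynomial net plus union bound, the deterministic inequality $\#\{i \col \lambda_i \in [E-\eta_0,E+\eta_0]\} \leq 2N\eta_0 \im m_N(E+\ii\eta_0)$, and the observation that a non-negative integer stochastically dominated by $N^{-c}$ must vanish with high probability) is sound, and your arithmetic with $\eta_0 = N^{-1+\tau_0}$ and $\kappa \geq N^{-2/3+\epsilon}$ checks out \emph{provided} the improved averaged bound $|m_N - m| \prec \sqrt{\im m / (N\eta)}$ is genuinely available. But relying on Theorem \ref{thm:G_outside} for that bound is circular. The paper proves Theorem \ref{thm:G_outside} as "an easy consequence of Theorems \ref{thm:LL_G} and \ref{thm:rigidity}, following the proof of \cite[Theorem 3.12]{BEKYY}", and Theorem \ref{thm:rigidity}'s proof in Section \ref{sec:rigidity} \emph{begins} with Lemma \ref{lem:gap}. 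The dependence on rigidity is not an accident of exposition: the BEKYY argument bounds $\sum_k |\lambda_k - z|^{-1}\langle \f u_k, \f v\rangle^2$ for $z$ at distance $\kappa$ from $\supp\varrho$ using the spectral decomposition \eqref{decompfullG}, and this only works if one already knows that no eigenvalue lies near $z$ — which is precisely the content of the present lemma. Your suggestion to "re-order" so that Theorem \ref{thm:G_outside} comes first, on the grounds that it is "ultimately a consequence of Theorem \ref{thm:LL_G}", implicitly asserts a derivation of Theorem \ref{thm:G_outside} that bypasses rigidity, and no such derivation is offered.

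The paper breaks the cycle differently: it proves the sharpened averaged bound $|m_N - m| \prec (\kappa+\eta)^{-1/2}\Psi^2$ (Equation \eqref{improved_avg_G}) \emph{without} going through Theorem \ref{thm:G_outside}. The mechanism is Proposition \ref{prop:gen_avg} applied with the control parameter $\Phi = (\kappa+\eta)^{-1/2}\Psi^2$ — one checks that this $\Phi$ satisfies \eqref{Phi_cond} — which reduces the claim to the diagonal Gaussian case, and there the improved rate is obtained by rerunning the fluctuation-averaging argument of Section \ref{sec:diag_Sigma} using the \emph{strong} stability of \eqref{def_m} (Definition \ref{def:strong_stab}, established in Lemmas \ref{lem:stab_edge1}--\ref{lem:stab_outside}), rather than the weaker stability of Definition \ref{def:stability}. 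This yields the needed polynomial gain at the scale $\eta = N^{-1/2-\epsilon/4}\kappa^{1/4}$ directly from the self-consistent comparison, with no appeal to eigenvalue rigidity. To repair your proof you would need to substitute this direct argument (or the partial-fraction variant the paper also records in \eqref{tur1}--\eqref{tur2}) in place of the citation of Theorem \ref{thm:G_outside}.
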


\begin{proof}
The argument is similar to that of the previous works \cite{EYY3, PY}, and we only explain how to adapt it.
From \cite[Theorem 2.10]{BEKYY}, we find that $\norm{X^* X} \leq C_0$ with high probability for some large enough constant $C_0 > 0$ depending on $\tau$. From \eqref{bound_Sigma} we therefore deduce that $\lambda_1 \leq C$ with high probability for some constant $C > 0$ depending on $\tau$. It therefore suffices to prove that, after a possible decrease of $\tau$,
\begin{equation} \label{no eigenvalues outside}
\spec(Q) \cap \hb{E \in [\tau, \tau^{-1}] \col \dist(E, \supp \varrho) \geq N^{-2/3 + \epsilon}} \;=\; \emptyset
\end{equation}
with high probability. For the remainder of the proof we use a spectral parameter $z = E + \ii \eta$ where $E \in [\tau, \tau^{-1}]$ satisfies $\dist(E, \supp \varrho) \geq N^{-2/3 + \epsilon}$, and $\eta \deq N^{-1/2 - \epsilon/4} \kappa^{1/4}$. We omit $z$ from our notation.

In order to prove \eqref{no eigenvalues outside}, it suffices to prove that $\im m_N \prec N^{-\epsilon/2} (N \eta)^{-1}$. By \eqref{m_sq_root} and Lemmas \ref{lem:stab_edge1}, \ref{lem:stab_bulk}, and \ref{lem:stab_outside}, we have
\begin{equation} \label{im_m_asymp}
\im m \;\asymp\; \eta (\kappa + \eta)^{-1/2}\,.
\end{equation}
We conclude that it suffices to prove
\begin{equation} \label{improved_avg_G}
\abs{m_N - m} \;\prec\; (\kappa + \eta)^{-1/2} \, \Psi^2\,.
\end{equation}
It is easy to check that the control parameter on the right-hand side of \eqref{improved_avg_G} satisfies \eqref{Phi_cond}, so that by Proposition \ref{prop:gen_avg} and Theorem \ref{thm:LL_G} it suffices to prove \eqref{improved_avg_G} for diagonal $\Sigma$. The proof is identical to that of Section \ref{sec:diag_Sigma}, except that we use the strong stability of \eqref{def_m} from Definition \ref{def:strong_stab}, which is established in Lemmas \ref{lem:stab_edge2}, \ref{lem:stab_bulk}, and \ref{lem:stab_outside}. This concludes the proof of \eqref{improved_avg_G}, and hence of \eqref{no eigenvalues outside}.

With applications to the deformed Wigner matrices in Section \ref{sec:Wig} in mind, we give another proof of \eqref{no eigenvalues outside}, which is based on \cite[Section 6]{BS2}. Using a partial fraction decomposition, one easily finds that there exist universal constants $C_{n,k}$ such that
\be\label{tur1}
\frac1N\sum_i \prod_{k=1}^n \frac{1}{(\lambda_i-E)^2+k\eta^2} \;=\; \sum_{k=1}^n C_{n,k} \eta^{-2n+1} \im  m_N(z_k)\,, \qquad z_k \;\deq\; E+i\eta_k\,, \qquad \eta_k \;\deq\; \sqrt{k}\eta \,.
\ee
Similarly, we have (with the same $C_{n,k}$ and $z_k$)
\be\label{tur2}\int \prod_{k=1}^n \frac{1}{(x-E)^2+k\eta^2} \, \varrho(\dd x) \;=\; \sum_{k=1}^n C_{n,k} \eta^{-2n+1} \im  m (z_k)\,.
\ee
Let $E \geq \gamma_+ + N^{-2/3 + \epsilon}$ and set $\eta \deq N^{-\epsilon} \kappa$. It is not hard to deduce from \eqref{improved_avg_G} that, for any fixed $n \in \N$, we have
\begin{equation}
\absb{\im  m_N (z_k)-\im  m (z_k)} \;\prec\;  \frac{N^{-\epsilon/2}}{N \eta}
\end{equation}
for all $k = 1, \dots, n$. Setting $n \deq \ceil{2/\epsilon}$, we get from \eqref{tur1} and \eqref{tur2} that
\begin{align*}
 \frac1N\sum_i \prod_{k=1}^n \frac{1}{(\lambda_i-E)^2+k\eta^2} &\;=\; \int  \prod_{k=1}^n \frac{1}{(x-E)^2+k\eta^2} \, \varrho(\dd x)
 + O_\prec(N^{-1-\epsilon/2}\eta^{-2n})
 \\
 &\;=\; O_\prec(N^{-1-\epsilon/2}\eta^{-2n})\,, 
\end{align*}
where in the last step we used that $\abs{x - E} \geq \kappa$ for $x \in \supp \varrho$. This immediately implies that with high probability there is no eigenvalue in $[E-\eta, E+\eta]$. 
\end{proof}

The second step represents most of the work. It is a counting argument, based on a continuous deformation of the matrix $Q$ to another matrix for which the claim is obvious. Since the eigenvalues depend continuously on the deformation parameter and each intermediate matrix satisfies a gap condition from Lemma \ref{lem:gap}, we shall be able to conclude that the number of eigenvalues in a neighbourhood of the $i$-th component does not change under the deformation. We shall in fact need two deformations: one which deforms the original matrix $Q$ to a Gaussian one, $Q^{\txt{Gauss}}$, with the same expectation $\Sigma$ as $Q$, and another which deforms the Gaussian matrix $Q^{\txt{Gauss}}$ to another Gaussian matrix where some eigenvalues of $\Sigma$ have been increased.

For $k = 1, \dots, p-1$, we introduce the number of eigenvalues to the right of the $k$-th gap,
\begin{equation*}
\Upsilon_k \;\deq\; \sum_{i = 1}^{M \wedge N} \indbb{\lambda_i \geq \frac{a_{2k} + a_{2k+1}}{2}}\,.
\end{equation*}
\begin{proposition} \label{prop:counting}
Under \eqref{T_diag} and the assumptions of Theorem \ref{thm:rigidity} we have
$\Upsilon_k = \sum_{l \leq k} N_l$ with high probability.
\end{proposition}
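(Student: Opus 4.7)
The plan is to establish Proposition \ref{prop:counting} via a continuous deformation argument whose input is Lemma \ref{lem:gap}. The key observation is that, under the edge regularity assumption, the gap $(a_{2k+1}, a_{2k})$ has width bounded below by a positive constant $c(\tau)$; combined with Lemma \ref{lem:gap} applied at a fixed $\epsilon < 2/3$, this ensures that with high probability no eigenvalue of $Q$ lies within distance $c(\tau)/3$ of the midpoint $E_k \deq (a_{2k} + a_{2k+1})/2$. Consequently, $\Upsilon_k$ is invariant under any continuous deformation of the underlying matrices $(X, \Sigma)$ that preserves this spectral gap property, because $\Upsilon_k$ is integer-valued and eigenvalues depend continuously on the matrix entries.

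The first stage of the deformation removes the randomness of $X$: set $X^\theta \deq \sqrt{\theta}\, X + \sqrt{1-\theta}\, X^{\textnormal{Gauss}}$ for $\theta \in [0,1]$, where $X^{\textnormal{Gauss}}$ is an independent Gaussian matrix with matching first two moments. Every $X^\theta$ satisfies \eqref{cond on entries of X} and \eqref{moments of X-1} with uniform constants, and the asymptotic density $\varrho$ (hence the edges $a_k$) depends only on $\Sigma$, not on the law of the entries of $X^\theta$. Thus Lemma \ref{lem:gap} applies to $Q^\theta \deq T X^\theta (X^\theta)^* T^*$ with the same $a_k$'s. A union bound over a fine discretization of $[0,1]$, combined with Lipschitz continuity of the eigenvalues in $\theta$ (via Hoffman-Wielandt), implies that with high probability no eigenvalue of $Q^\theta$ crosses $E_k$ as $\theta$ varies, so $\Upsilon_k(Q) = \Upsilon_k(Q^{\textnormal{Gauss}})$. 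In the second stage, Gaussian orthogonal invariance reduces the problem to the case $T = D(\Sigma)^{1/2}$ diagonal. One then continuously deforms the diagonal $D$ along a path $s \mapsto D^{(s)}$ to a reference matrix $D^*$ whose spectrum is organized so that $\Upsilon_k(Q^*) = \sum_{l \leq k} N_l$ can be read off directly—for instance, a block-diagonal $D^*$ whose blocks $s_j^* I_{d_j}$ have multiplicities $d_j$ chosen so that, in the block decomposition $X^* D^* X = \sum_j s_j^* X_j^* X_j$, the classical counting in each bulk component can be verified by induction on the number $p$ of bulk components (with the base case being a pure Marchenko-Pastur regime where $p = 1$ and the statement is vacuous).

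The main obstacle, and the only place where genuine work is required, will be the construction of the path $s \mapsto D^{(s)}$ in the second stage. I must ensure that at every $s$, the matrix $D^{(s)}$ satisfies \eqref{bound_Sigma}, \eqref{Sigma_gap} and the regularity conditions of Definition \ref{def:regular}, so that Lemma \ref{lem:gap} applies uniformly and the bulk components and edges $a_k(D^{(s)})$ vary continuously in $s$ without collisions or degenerations. This can be handled by a stability analysis of the critical points of the function $f$ defined in \eqref{def_fx}: under the regularity of Definition \ref{def:regular}, all relevant critical points are nondegenerate and well separated, so by the implicit function theorem they persist under sufficiently small perturbations of $\pi$, and one may construct a piecewise-linear path from $\pi(D)$ to $\pi(D^*)$ that stays in the open set of admissible spectra. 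Stitching together the counting across the path then yields $\Upsilon_k = \sum_{l \leq k} N_l$ with high probability.
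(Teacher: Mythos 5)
Your first stage is essentially identical to the paper's Lemma \ref{lem:interp_Gauss}: interpolate $X^\theta = \sqrt{\theta}X + \sqrt{1-\theta}X^{\rm Gauss}$, use Lemma \ref{lem:gap} together with Lipschitz continuity of eigenvalues over a fine grid in $\theta$, and conclude that $\Upsilon_k$ is unchanged. This part is fine.

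The second stage has a genuine gap. You propose to deform the diagonal $D$ to a reference $D^*$ along a path that preserves all the regularity conditions of Definition \ref{def:regular}, and then to verify the count at $D^*$ ``by induction on the number $p$ of bulk components.'' This runs into a topological obstruction: the number of bulk components $p$ is a locally constant function of the spectrum $\pi$ on the set of regular configurations, and changing $p$ requires two edges $a_{2l}, a_{2l+1}$ to collide, i.e.\ passing through a degenerate critical point of $f$ where Definition \ref{def:regular} (i) fails. So there is no regularity-preserving path between configurations with different $p$, and your induction cannot be set up. Even among configurations with the same $p$, the assertion that ``one may construct a piecewise-linear path \dots that stays in the open set of admissible spectra'' would need a proof that this open set is path-connected, which is not obvious and is not addressed. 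Finally, the claim that $\Upsilon_k(Q^*) = \sum_{l\le k}N_l$ ``can be read off directly'' from a block-constant $D^* = \diag(s_j^* I_{d_j})$ is unjustified: for such $D^*$ the density $\varrho$ is still a nontrivial free multiplicative convolution, its bulk components are not in bijection with the blocks of $D^*$ in general, and the number of eigenvalues in each component requires a separate argument.

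The paper avoids all of this by choosing a deformation that is not required to preserve regularity of all edges: it scales the top $d_k \deq \sum_{l\le k}N_l$ eigenvalues of $\Sigma$, setting $\Sigma(t)=\diag(t\Sigma_1,\Sigma_2)$, and observes that along this path the single relevant gap $a_{2k}(t)-a_{2k+1}(t)$ is monotone increasing with $a_{2k}(t)\ge ct$ and $a_{2k+1}(t)\le C$. At large $t$ the count follows from pure operator-norm linear algebra on the block decomposition \eqref{interp_block}: exactly $d_k$ eigenvalues are of order $\asymp t$ while the rest are $O(\sqrt{t})$, with no induction on $p$ and no need for a global reference measure. If you want to repair your argument, replacing the vague path-to-$D^*$ construction by this explicit one-parameter scaling is the natural fix.
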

As explained above, the first step in the proof of Proposition \ref{prop:counting} is a deformation of the general matrix $X$ to a Gaussian one.

\begin{lemma} \label{lem:interp_Gauss}
Let $X$ be general and $X^{\txt{Gauss}}$ a Gaussian matrix of the same dimensions.
Suppose that \eqref{T_diag} and the assumptions of Theorem \ref{thm:rigidity} hold. If $\Upsilon_k = \sum_{l \leq k} N_l$ with high probability under the law of $X^{\txt{Gauss}}$, then $\Upsilon_k = \sum_{l \leq k} N_l$ with high probability under the law of $X$.
\end{lemma}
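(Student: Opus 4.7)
The strategy is to connect $X^{\txt{Gauss}}$ and $X$ by a continuous path of matrices and exploit the integrality of $\Upsilon_k$ together with the macroscopic gap supplied by Lemma \ref{lem:gap}. Concretely, let $X^0 \deq X^{\txt{Gauss}}$ and $X^1 \deq X$ be defined on a common probability space in an independent fashion, and set $X^\theta \deq \sqrt{\theta}\, X^1 + \sqrt{1 - \theta}\, X^0$ for $\theta \in [0,1]$. A short computation shows that $X^\theta$ has independent centred entries of variance $1/N$, and by Minkowski's inequality the moment bounds \eqref{moments of X-1} hold uniformly in $\theta$. Hence $X^\theta$ satisfies Assumption \ref{ass:main} uniformly in $\theta$, and the hypotheses of Theorem \ref{thm:rigidity} (which concern only $\Sigma = T T^*$, hence only $\pi$ and $\varrho$) are preserved. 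In particular, Lemma \ref{lem:gap} applies to $Q^\theta \deq T X^\theta (X^\theta)^* T^*$ for every $\theta$.

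Set $g_k \deq (a_{2k} + a_{2k+1})/2$; by regularity of the edges $2k$ and $2k+1$ we have $\dist(g_k, \supp \varrho) \geq \tau/2$. Fix any $\epsilon \in (0, 2/3)$. I would introduce a grid $\theta_j \deq j N^{-10}$ for $j = 0, 1, \dots, \lceil N^{10} \rceil$, and apply Lemma \ref{lem:gap} to each $X^{\theta_j}$ to obtain that, with high probability, no eigenvalue of $Q^{\theta_j}$ lies in the interval $(g_k - \tau/4, g_k + \tau/4)$. A union bound over the $O(N^{10})$ grid points preserves the high-probability statement, since $\prec$ allows for any polynomial number of events.

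To upgrade this to all $\theta \in [0,1]$, I would use Lipschitz continuity of the eigenvalues along the interpolation. By Lemma \ref{lem:norm_XX}, $\norm{X^0}$ and $\norm{X^1}$ are bounded by an absolute constant $C$ with high probability. Using the elementary bound $\abs{\sqrt{s} - \sqrt{t}} \leq \sqrt{\abs{s - t}}$ we get $\norm{X^\theta - X^{\theta_j}} \leq 2 C \sqrt{\abs{\theta - \theta_j}} \leq 2 C N^{-5}$ for $\abs{\theta - \theta_j} \leq N^{-10}$, and therefore
\begin{equation*}
\normb{Q^\theta - Q^{\theta_j}} \;\leq\; \norm{T}^2 \, \normb{X^\theta (X^\theta)^* - X^{\theta_j}(X^{\theta_j})^*} \;\leq\; C N^{-5}\,.
\end{equation*}
By Weyl's inequality, $\max_i \abs{\lambda_i(Q^\theta) - \lambda_i(Q^{\theta_j})} \leq C N^{-5} \ll \tau/8$. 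Consequently, on the intersection of the high-probability events above, no eigenvalue of $Q^\theta$ enters $(g_k - \tau/8, g_k + \tau/8)$ for any $\theta \in [0,1]$. Since $\theta \mapsto \lambda_i(Q^\theta)$ is continuous and $\Upsilon_k(\theta)$ is integer-valued, it can only change when some eigenvalue crosses $g_k$; hence $\Upsilon_k(\theta)$ is constant on $[0,1]$ with high probability, and in particular $\Upsilon_k(X) = \Upsilon_k(X^{\txt{Gauss}})$ with high probability.

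The only genuinely delicate point is the Lipschitz-in-$\theta$ step, because $\theta \mapsto \sqrt{\theta}$ fails to be Lipschitz near the endpoints. This is however harmless: the H\"older-$\tfrac12$ bound is still amply sufficient at scale $N^{-10}$, and one could alternatively refine the partition near $\theta = 0, 1$ or use a different smooth interpolation with the same variance. No additional ideas beyond Lemma \ref{lem:gap}, Lemma \ref{lem:norm_XX}, and Weyl's inequality are required.
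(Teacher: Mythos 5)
Your argument is correct and follows essentially the same route as the paper's own proof: both interpolate via $X^\theta = \sqrt{\theta}\,X + \sqrt{1-\theta}\,X^{\txt{Gauss}}$, verify the hypotheses uniformly in $\theta$, invoke Lemma~\ref{lem:gap} along a lattice in $\theta$, use the H\"older-$\tfrac12$ continuity of eigenvalues (via Weyl's inequality and $\norm{X^*X}=O(1)$), and conclude by the integrality of $\Upsilon_k$. The only cosmetic difference is your grid spacing $N^{-10}$ versus the paper's $O(1)$-point lattice; the latter makes explicit that only a bounded number of applications of Lemma~\ref{lem:gap} are needed, but your finer grid is equally valid since stochastic domination survives union bounds over polynomially many events, as you note.
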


\begin{proof}
Let $X_1 \deq X$ and $X_0 \deq X^{\txt{Gauss}}$ be independent. For $t \in [0,1]$ define $X(t) \deq \sqrt{t} X_1 + \sqrt{1 - t} X_0$ and denote by $\lambda_i(t)$ the eigenvalues of $X(t) \Sigma X(t)^*$. We also write $\Upsilon_k(t)$ for $\Upsilon_k$ defined in terms of $\lambda_i(t)$. Note that Lemmas \ref{lem:norm_XX} and \ref{lem:gap} hold for $X(t)$ uniformly in $t \in [0,1]$. Recalling \eqref{bound_Sigma}, we deduce that there exists a constant $C > 0$ such that
\begin{equation} \label{cont_lambda_t}
\abs{\lambda_i(t) - \lambda_i(s)} \;\leq\; C \sqrt{\abs{t - s}}
\end{equation}
with high probability, uniformly in $t,s \in [0,1]$ and $i$. The claim now follows by considering $t$ in the lattice $0, 1/K, 2/K, \dots, 1$, where $K$ is chosen large enough that $C K^{-1/2} \leq (a_{2k} - a_{2k+1})/10$ and $C$ is the constant from \eqref{cont_lambda_t}. Indeed, suppose $\Upsilon_k(t) = \sum_{l \leq k} N_l$ with high probability. Then we use \eqref{cont_lambda_t} and the gap from Lemma \ref{lem:gap} to deduce that $\Upsilon_k(t + K^{-1}) = \sum_{l \leq k} N_l$ with high probability. The claim for $t = 1 = K/K$ therefore follows by induction.
\end{proof}

Using Lemma \ref{lem:interp_Gauss}, in order to prove Proposition \ref{prop:counting} it suffices to prove the following result.

\begin{lemma} \label{lem:Gaussian counting}
Suppose that \eqref{T_diag} and the assumptions of Theorem \ref{thm:rigidity} hold, that $X$ is Gaussian, and that $\Sigma$ is diagonal. Then $\Upsilon_k = \sum_{l \leq k} N_l$ with high probability.
\end{lemma}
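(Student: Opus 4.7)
The plan is to perform a second interpolation, this time in $\Sigma$, within the Gaussian class, complementing the interpolation in $X$ carried out in Lemma \ref{lem:interp_Gauss}. First, I would construct a continuous path $t \mapsto \Sigma(t)$ of diagonal positive semidefinite matrices, $t \in [0,1]$, with $\Sigma(1) = \Sigma$, satisfying (i) each $\Sigma(t)$ obeys Assumption \ref{ass:main} and the regularity of Definition \ref{def:regular} with fixed number $p$ of bulk components and fixed integer counts $N_k$, uniformly in $t$; (ii) $t \mapsto \Sigma(t)$ is uniformly Lipschitz. Condition (ii) together with \eqref{bound_Sigma} implies a uniform Lipschitz estimate, analogous to \eqref{cont_lambda_t}, for the eigenvalues $\lambda_i(t)$ of $\Sigma(t)^{1/2} X X^* \Sigma(t)^{1/2}$. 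Condition (i) allows Lemma \ref{lem:gap} to be applied simultaneously to every $\Sigma(t)$, giving that with high probability no eigenvalue lies within distance $N^{-2/3+\epsilon}$ of the gap $(a_{2k+1}(t), a_{2k}(t))$, whose width is bounded below by a positive constant. Combining these two facts on a sufficiently fine lattice in $[0,1]$ forces the integer-valued quantity $\Upsilon_k(t) - \sum_{l \leq k} N_l$ to be constant in $t$.

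Second, I would identify an endpoint $\Sigma(0)$ for which the equality $\Upsilon_k(0) = \sum_{l \leq k} N_l$ can be verified directly. A natural choice is a diagonal $\Sigma(0)$ whose spectrum consists of $p$ very well-separated clusters, one per bulk component of $\varrho$, arranged so that asymptotically $\Sigma(0)^{1/2} X X^* \Sigma(0)^{1/2}$ splits, to leading order, into a block-diagonal form indexed by these clusters. The Gaussian structure of $X$ together with the explicit Marchenko--Pastur counting applied separately to each block then yields the desired identity at $t = 0$.

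The hardest part will be the construction in the first step: keeping $p$ and the integers $N_k$ constant while maintaining the regularity of Definition \ref{def:regular} is delicate, because the critical-point structure of the function $f$ from \eqref{def_fx} is sensitive to perturbations of $\pi$ and can develop degenerate critical points---where two bulk components merge---under continuous deformation. Remark \ref{rem:bulk_stab} provides stability of the bulk regularity under small perturbations, so the path can be built as a sequence of many small steps, each chosen small enough that the critical values $a_k$ move by much less than the minimal spectral gap $\tau$ from \eqref{reg: edge} and the density remains bounded below on the interior of each bulk component. Alternatively, one can bypass the construction of the path and the base case entirely by invoking the classical exact separation theorem of Bai and Silverstein \cite{BS2, BS3}, which establishes $\Upsilon_k = \sum_{l \leq k} N_l$ with high probability for sample covariance matrices with independent entries and diagonal $\Sigma$ under mild moment conditions; the Gaussian case considered here is a special instance.
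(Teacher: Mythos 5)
Your overall plan---a second deformation within the Gaussian class, in $\Sigma$ rather than $X$, propagating the count via Lipschitz continuity of eigenvalues and the gap estimate of Lemma \ref{lem:gap}, terminating at a configuration where the count can be verified directly---is the same idea as the paper's. The key difference is the choice of path. The paper does not deform toward a ``nice'' $\Sigma(0)$ with small, well-separated clusters; instead it sets $\Sigma(t) = \diag(t\Sigma_1, \Sigma_2)$, where $\Sigma_1$ is the $d_k \times d_k$ block of top eigenvalues, and increases $t$ from $1$ to a large constant $T$. Using \eqref{Ni_rep} it observes that the $k$-th gap $a_{2k}(t)-a_{2k+1}(t)$ is monotone increasing in $t$, with $a_{2k}(t) \geq ct$ and $a_{2k+1}(t) \leq C$; this immediately resolves the delicate point you flag---the gap relevant for $\Upsilon_k$ never closes along the deformation, so one never needs to re-verify a uniform regularity condition with a fixed set of components. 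At the endpoint $t=T$, writing $XX^* = \begin{pmatrix} E_{11} & E_{12} \\ E_{21} & E_{22}\end{pmatrix}$ and $Q(T) = \begin{pmatrix} T\Sigma_1^{1/2}E_{11}\Sigma_1^{1/2} & \sqrt{T}\Sigma_1^{1/2}E_{12}\Sigma_2^{1/2} \\ \sqrt{T}\Sigma_2^{1/2}E_{21}\Sigma_1^{1/2} & \Sigma_2^{1/2}E_{22}\Sigma_2^{1/2}\end{pmatrix}$, the top block has norm $\asymp T$ and eigenvalues bounded below (since $d_k \leq (1-c)N$ gives $E_{11} \geq c$ with high probability), while the remaining three blocks are $O(\sqrt{T})$; for $T$ large this yields exactly $d_k$ eigenvalues of order $T$ and the remainder of order $\sqrt{T}$, so the count above the gap is $d_k$ --- no per-block Marchenko--Pastur analysis is needed. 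Your alternative of invoking the Bai--Silverstein exact separation theorem \cite{BS2,BS3} directly is a legitimate shortcut (the paper cites them for the phenomenon but gives a self-contained argument, presumably to keep hypotheses and notation aligned with the rest of the paper). In short, your structure is right but the paper's specific scaling trick buys two things you were worried about for free: the gap at position $k$ stays open by monotonicity, and the endpoint verification becomes an elementary operator-norm comparison rather than a detailed clustering construction.
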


\begin{proof}
Abbreviate $d_k \deq \sum_{l \leq k} N_l$.
We write the diagonal matrix $\Sigma = \diag(\sigma_1, \sigma_2, \dots, \sigma_M)$ in the block form $\Sigma = \diag(\Sigma_1, \Sigma_2)$, where $\Sigma_1$ contains the $d_k$ top eigenvalues of $\Sigma$. We introduce the deformed covariance matrix $\Sigma(t) \deq \diag (t \Sigma_1, \Sigma_2)$. In particular, $\Sigma(1) = \Sigma$. The idea is to increase $t$ until the claim for $\Sigma$ replaced by $\Sigma(t)$ may be deduced from simple linear algebra. Then we use a continuity argument to compare $\Sigma(t)$ to $\Sigma$.

We add the argument $t$ to quantities to indicate that they are defined in terms of $\Sigma(t)$ instead of $\Sigma$. Using \eqref{Ni_rep} (see also Figure \ref{fig:plot_f}), it is not hard to find that the gap $a_{2k}(t) - a_{2k+1}(t)$ is increasing in $t$, and that
\begin{equation}
a_{2k}(t) \;\geq\; ct \,, \qquad a_{2k+1}(t) \;\leq\; C
\end{equation}
for some constants $c,C > 0$. Using that $\norm{\Sigma(t) - \Sigma(s)} \leq C \abs{t - s}$, we deduce from Lemma \ref{lem:norm_XX} that $\norm{Q(t) - Q(s)} \leq C \abs{t - s}$ with high probability. Now let $T$ be a fixed large time to be chosen later. By a continuity argument using Lemma \ref{lem:gap} we therefore find that there exists a constant $K \equiv K(T)$ such that for any $t \in [1,T]$ we have
\begin{equation} \label{deformation_arg}
\absb{\spec(Q(t)) \cap \pb{a_{2k}(t) - N^{-2/3 + \epsilon}, \infty}} \;=\; \absb{\spec(Q(t + K^{-1})) \cap \pb{a_{2k}(t + K^{-1}) - N^{-2/3 + \epsilon}, \infty}}
\end{equation}
with high probability.

It therefore remains to show that there exists a large enough $T$, depending only on $\tau$, such that for $t = T$ the left-hand side of \eqref{deformation_arg} is equal to $d_k$. Writing $X X^* = \begin{pmatrix}
E_{11} & E_{12}
\\
E_{21} & E_{22}
\end{pmatrix}$ as a block matrix, we find
\begin{equation} \label{interp_block}
Q(t) \;=\; \Sigma(t)^{1/2} X X^* \Sigma(t)^{1/2} \;=\; \begin{pmatrix}
t \Sigma_1^{1/2} E_{11} \Sigma_1^{1/2} & \sqrt{t} \Sigma_1^{1/2} E_{12} \Sigma_2^{1/2}
\\
\sqrt{t} \Sigma_2^{1/2} E_{21} \Sigma_1^{1/2} & \Sigma_2^{1/2} E_{22} \Sigma_2^{1/2}
\end{pmatrix}\,.
\end{equation}
From \eqref{Ni_sum} we deduce that $d_k \leq N - N_p \leq (1 - c) N$, by the assumption of Theorem \ref{thm:rigidity}. From \cite[Theorem 2.10]{BEKYY}, we therefore deduce that $c \leq E_{11} \leq C$ with high probability for some positive constants $c,C$. Moreover, from Lemma \ref{lem:norm_XX} we deduce that $\norm{E_{12}} + \norm{E_{21}}+ \norm{E_{22}} \leq C$ with high probability. We conclude that for large enough $t$ (depending on the constants $c$ and $C$ above) the matrix \eqref{interp_block} has with high probability exactly $d_k$ eigenvalues of order $\asymp t$, and all other eigenvalues are of order $O(\sqrt{t})$. Since $\spec(Q(t)) \cap \qb{a_{2k+1}(t) + N^{-2/3 + \tau}, a_{2k} - N^{-2/3 + \tau}} = \emptyset$ with high probability by Lemma \ref{lem:gap}, we conclude that for large enough $t$ the left-hand side of \eqref{deformation_arg} is equal to $d_k$ with high probability. This concludes the proof.
\end{proof}

Proposition \ref{prop:counting} follows immediately from Lemmas \ref{lem:interp_Gauss} and \ref{lem:Gaussian counting}. This concludes the second step outlined above.

Finally, the third step -- the conclusion of the proof of Theorem \ref{thm:rigidity} -- follows from Theorem \ref{thm:LL_G}, Lemma \ref{lem:gap}, and Proposition \ref{prop:counting} by repeating the analysis of \cite{EYY3, PY} with merely cosmetic changes, as explained in the proof of Theorem \ref{thm:rigidity}. This concludes the proof of Theorem \ref{thm:rigidity} under the assumption \eqref{T_diag}. Moreover, as explained in Section \ref{sec:gen_T}, the assumption \eqref{T_diag} may be easily removed.

Next, we note that the proof Theorem \ref{thm:G_outside} under the assumption \eqref{T_diag} is an easy consequence of Theorems \ref{thm:LL_G} and \ref{thm:rigidity}, following the proof of \cite[Theorem 3.12]{BEKYY}.  The key tool is the spectral decomposition \eqref{decompfullG}. We omit further details.

Finally, the proof of Theorems \ref{thm:edge} (iii) and \ref{thm:bulk} (iii) under the assumption \eqref{T_diag} is exactly the same as that of Theorem \ref{thm:rigidity}. Indeed, one can check that in the proof of Theorem \ref{thm:rigidity} only the weaker assumptions of Theorems \ref{thm:edge} (iii) and \ref{thm:bulk} (iii) are needed. We omit the details.

\subsection{Edge universality assuming \eqref{T_diag}}

Finally, we prove the edge universality from Theorem \ref{thm:edge_univ} under the assumption \eqref{T_diag}.

\begin{proof}[Proof of Theorem \ref{thm:edge_univ} assuming \eqref{T_diag}]
First, we claim that the joint asymptotic distribution of $\f q_{k,l}$ does not depend on the distribution of the entries of $X$, provided they satisfy \eqref{cond on entries of X} and \eqref{moments of X-1}. This is a routine application of the Green function comparison method near the edge, developed in \cite[Section 6]{EYY3}. The argument of \cite[Section 6]{EYY3} may be easily adapted to our case, using the linearizing block matrix $H(z)$ and its inverse $G(z)$ to write $m_N = \frac{1}{N} \sum_{\mu \in \cal I_N} G_{\mu \mu}$. The key technical inputs are the anisotropic local law from Theorem \ref{thm:LL_G}, the eigenvalue rigidity from Theorem \ref{thm:edge} (iii), and the resolvent identities from Lemma \ref{lem:idG}. We omit further details.

We may therefore without loss of generality assume that $X$ is Gaussian. By orthogonal / unitary invariance of the law of $X$, we may furthermore assume that $\Sigma$ is diagonal. This concludes the proof.
\end{proof}

\section{General matrices: removing \eqref{T_diag} and extension to $\dot Q$}  \label{sec:generalizations}

In this section we explain how our results, proved under the assumption \eqref{T_diag} and for the matrix $Q$, may be generalized to hold without the assumption \eqref{T_diag} and for the matrix $\dot Q$ from \eqref{def_Q} as well.

\subsection{How to remove \eqref{T_diag}} \label{sec:gen_T}
For simplicity, throughout the proofs up to now we made the assumption \eqref{T_diag}. As advertised, this assumption is not necessary. In this section we explain how to dispense with it. The argument relies on simple approximation and linear algebra. Roughly, if $\Sigma$ has a zero eigenvalue, we consider $\Sigma + \epsilon$ instead and let $\epsilon \downarrow 0$; if $T$ is not square, we augment it to a square matrix by padding it out with zeros. While this extension is simple, we emphasize that it relies crucially on the fact we do not assume that $\Sigma$ has a lower bound (the assumption \eqref{T_diag} only requires the qualitative bound $\Sigma > 0$).

We distinguish the cases $\wh M \geq M$ and $\wh M < M$. Suppose first that $\wh M \geq M$. We extend $T$ to an $\wh M \times \wh M$ matrix by setting
$\wh T \deq \binom{0}{T}
$.
Define the $\wh M \times \wh M$ matrices
\begin{equation*}
\wh \Sigma \;\deq\; \wh T \wh T^* \;=\; \begin{pmatrix}
0 & 0 \\ 0 & \Sigma
\end{pmatrix}\,,
\qquad \wh S \;\deq\; \wh T^* \wh T \;=\; T^* T\,.
\end{equation*}
By polar decomposition, we have
$\wh T = \wh U \wh S^{1/2}$,
where $\wh U$ is orthogonal. Therefore
$\wh \Sigma = \wh U \wh S \wh U^*$.
Moreover, from \eqref{def_m} we get $m \equiv m_{\Sigma, N} = m_{\wh \Sigma, N} = m_{\wh S, N}$, which we use tacitly in the following.

We define the $(\wh M + N) \times (\wh M + N)$ matrix
\begin{equation*}
\wh G \;\deq\; \lim_{\epsilon \downarrow 0} \begin{pmatrix}
\wh U & 0 \\ 0 & 1
\end{pmatrix}
\begin{pmatrix}
-(\wh S + \epsilon)^{-1} & X \\ X^* & -z
\end{pmatrix}^{-1}
\begin{pmatrix}
\wh U^* & 0 \\ 0 & 1
\end{pmatrix}
 \;=\;
\begin{pmatrix}
z \wh \Sigma^{1/2} \pb{\wh T X X^* \wh T^* - z}^{-1} \wh \Sigma^{1/2} & \wh T X R_N
\\
R_N X^* \wh T^* & R_N
\end{pmatrix}\,,
\end{equation*}
where we used \eqref{G=G}. Note that $\wh G$ has the block form
\begin{equation} \label{def_block_G}
\wh G \;=\;
\begin{pmatrix}
0 & 0
\\ 0 & G
\end{pmatrix}\,, \qquad
G \;\deq\; \begin{pmatrix}
z \Sigma^{1/2} R_M \Sigma^{1/2} & T X R_N
\\
R_N X^* T^* & R_N
\end{pmatrix}\,.
\end{equation}

Using this representation of $G$ as a block of $\wh G$, it is easy to drop the assumption \eqref{T_diag}. For example, 
suppose that Theorem \ref{thm:LL_G} has been proved under the assumption \eqref{T_diag}. Applying it to $\wh G$ and using a simple approximation argument in $\epsilon$, we find the following generalization of Theorem \ref{thm:LL_G}.

\begin{proposition}
Fix $\tau > 0$. Suppose that \eqref{T_diag} and Assumption \ref{ass:main} hold. Suppose moreover that every edge $k = 1, \dots, 2p$ satisfying $a_k \geq \tau$ and every bulk component $k = 1, \dots, p$ is regular in the sense of Definition \ref{def:regular}. Then \eqref{edge_G_result} and \eqref{edge_G_avg_result} hold for $G$ defined in \eqref{def_block_G} and $R_N$ defined in \eqref{def_RM_RN}.
\end{proposition}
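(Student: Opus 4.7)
\medskip

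\noindent\textbf{Proof proposal.} The plan is to reduce to Theorem \ref{thm:LL_G} by combining an orthogonal conjugation with an $\epsilon$-regularization, using the setup $\wh T, \wh \Sigma, \wh S = T^* T, \wh U$ already introduced. I would focus on the case $\wh M \geq M$; the case $\wh M < M$ is handled symmetrically by padding the rows of $T$ with zeros. The first elementary observation is that $\wh \Sigma$ and $\Sigma$ have identical nonzero spectra (with the same multiplicities), and a direct computation from \eqref{def_fx} shows $\wh \phi \, \wh \pi(\{s\}) = \phi\, \pi(\{s\})$ for every $s > 0$. Consequently $f$, $m$, $\varrho$, the critical points and edges, and the regularity conditions from Definition \ref{def:regular} are all unchanged when $\Sigma$ is replaced by $\wh \Sigma$.

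To be able to invoke Theorem \ref{thm:LL_G}, which requires a positive covariance matrix, I would next regularize by setting $T^{(\epsilon)} \deq (\wh S + \epsilon)^{1/2}$ for small $\epsilon > 0$. This $T^{(\epsilon)}$ satisfies \eqref{T_diag} with population covariance $\wh S + \epsilon$, whose associated density $\varrho^{(\epsilon)}$ and Stieltjes transform $m^{(\epsilon)}$ are small perturbations of $\varrho$ and $m$. By continuity of the critical points of $f$ in $\pi$ (Lemma \ref{lem:crit points}) and of the stability estimates of Appendix \ref{sec:varrho}, the regularity assumptions of Theorem \ref{thm:LL_G} are preserved for $\epsilon$ sufficiently small. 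Theorem \ref{thm:LL_G} therefore yields \eqref{edge_G_result} and \eqref{edge_G_avg_result} for
\begin{equation*}
G^{(\epsilon)}(z) \;\deq\; \begin{pmatrix} -(\wh S + \epsilon)^{-1} & X \\ X^* & -z \end{pmatrix}^{-1}\,.
\end{equation*}

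Then I would conjugate by the block-orthogonal matrix $\diag(\wh U, I_N)$ to pass from $G^{(\epsilon)}$ to $\wh G^{(\epsilon)}$. Since $\wh U$ is orthogonal, $\wh U (\wh S + \epsilon)^{1/2} \wh U^* = (\wh \Sigma + \epsilon)^{1/2}$, so the factor $\ul{\wh S+\epsilon}^{-1}$ in the local law for $G^{(\epsilon)}$ transforms into $\ul{\wh \Sigma + \epsilon}^{-1}$. A short calculation using \eqref{G=G} (applied with the substitution $\Sigma \mapsto \wh S + \epsilon$) shows that the blocks of $\wh G^{(\epsilon)}$ take the form of \eqref{def_block_G} but with $\Sigma$ replaced by $\wh \Sigma + \epsilon$ and $T$ by $\wh T_\epsilon \deq \wh U (\wh S+\epsilon)^{1/2}$; the corresponding deterministic equivalent is $\wh \Pi^{(\epsilon)} = \diag(-(\wh \Sigma + \epsilon)(1 + m^{(\epsilon)}(\wh \Sigma + \epsilon))^{-1}, m^{(\epsilon)} I_N)$. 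Taking $\epsilon \downarrow 0$ and restricting to the lower $(M+N)\times(M+N)$ block corresponding to $\cal I_M \cup \cal I_N$ yields exactly the $G$ of \eqref{def_block_G} and the $\Pi$ of \eqref{def_Pi}, since $\wh \Sigma = \diag(0, \Sigma)$ is block-diagonal and $\wh T_\epsilon \to \wh T$. The averaged law \eqref{edge_G_avg_result} is preserved by this procedure since $m_N = N^{-1} \tr R_N$ depends only on the $\cal I_N$ block.

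The main technical obstacle will be making the limit $\epsilon \downarrow 0$ rigorous in the stochastic domination sense, particularly because $\ul{\wh \Sigma+\epsilon}^{-1}$ blows up on the kernel of $\wh \Sigma$. To handle this I would choose $\epsilon = N^{-K}$ for a large constant $K = K(\tau, \epsilon', D)$ depending on the stochastic domination exponents in Definition \ref{def:stocdom}: the discrepancies $\abs{m^{(\epsilon)} - m}$, $\norm{\Pi^{(\epsilon)} - \Pi}$ on the relevant block, and $\Psi^{(\epsilon)} - \Psi$ are then all $O(N^{-K+C})$ by the Lipschitz dependence of $m$ on $\pi$ (via the stability of \eqref{def_m} on $\f S$ established in Appendix \ref{sec:varrho}), and hence are polynomially negligible and absorbed into the $O_\prec(\Psi)$ error. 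The blow-up of $(\wh \Sigma + \epsilon)^{-1}$ on the zero subspace is invisible in the lower block, where it reduces to $(\Sigma + \epsilon)^{-1}$ and can be replaced by $\Sigma^{-1}$ up to an error of size $O(\epsilon)$ in operator norm on any fixed deterministic direction where $\Sigma$ is bounded below (as needed for the claim in \eqref{edge_G_result}).
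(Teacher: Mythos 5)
The bulk of your proposal, covering the case $\wh M \geq M$, is correct and is essentially the paper's own argument: augment $T$ to the square matrix $\wh T = \binom{0}{T}$, observe that $m$, $\varrho$, the edges $a_k$, and the regularity conditions are unchanged by padding $\Sigma$ with zeros, use the polar decomposition $\wh T = \wh U \wh S^{1/2}$, regularize $\wh S \mapsto \wh S + \epsilon$ so that \eqref{T_diag} holds, apply Theorem \ref{thm:LL_G}, conjugate by $\diag(\wh U, I_N)$, and pass to the limit $\epsilon \downarrow 0$, reading off the lower $(M+N)\times(M+N)$ block. Your choice $\epsilon = N^{-K}$ is a workable alternative to what the paper actually does, which is simply to let $\epsilon \downarrow 0$ at fixed $N$: once the regularity constants are seen to degrade only by a factor independent of $\epsilon$ for $\epsilon < \epsilon_0(\tau)$, the stochastic-domination bound is uniform in $\epsilon$ and the limit at fixed $N$ is trivial; this avoids having to track explicit Lipschitz bounds on $m^{(\epsilon)} - m$, $\Pi^{(\epsilon)} - \Pi$, $\Psi^{(\epsilon)} - \Psi$ against $N^{-K}$.

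The one genuine gap is your treatment of the case $\wh M < M$, which you claim "is handled symmetrically by padding the rows of $T$ with zeros." This is not symmetric and does not work on its own. Setting $\wt T \deq (T, 0)$ gives a square $M \times M$ matrix, but then $\wt T X$ is dimensionally ill-defined since $X$ has only $\wh M < M$ rows. The missing ingredient is to augment $X$ to $\wt X \deq \binom{X}{Y}$, where $Y$ is an \emph{independent} $(M - \wh M) \times N$ random matrix with entries satisfying \eqref{cond on entries of X} and \eqref{moments of X-1}. Then $\wt T \wt X = T X$, so $Q$ is unchanged, and $\wt X$ has the independent, correctly normalized entries needed to apply the $\wh M = M$ case. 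Padding $X$ with a deterministic (e.g.\ zero) block would violate \eqref{cond on entries of X} and the independence assumption; without the random block $Y$ the reduction fails.
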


In particular, Corollary \ref{cor:GM_GN} and Theorems \ref{thm:rigidity}, \ref{thm:edge}, \ref{thm:bulk}, \ref{thm:outside}, and \ref{thm:edge_univ} follow easily from their counterparts proved under the assumption \eqref{T_diag}. This concludes the discussion for the case $\wh M \geq M$.

Finally, we consider the case $\wh M < M$. We set $\wt T \deq (T, 0)$ and $\wt X \deq \binom{X}{Y}$, 
where $Y$ is an $(M - \wh M) \times N$ matrix, independent of $X$, with independent entries satisfying \eqref{cond on entries of X} and \eqref{moments of X-1}. Hence, $\wt T$ is $M \times M$ and $\wt X$ is $M \times N$. Now we have $TX = \wt T \wt X$, and we have reduced the problem to the case $\wh M = M$, which was dealt with above.

\subsection{Extension to $\dot Q$} \label{sec:Q_dot}
In this section we explain how our results have to be modified for $\dot Q$ from \eqref{def_Q}. For simplicity of presentation, we make the assumption \eqref{T_diag}; it may be easily removed as explained in Section \ref{sec:gen_T}.

The matrix $\dot Q$ is obtained from $Q$ by replacing $X$ with $\dot X \deq X (1 - \f e \f e^*)$. Generally, a dot on any quantity depending on $X$ means that $X$ has been replaced by $\dot X$ in its definition. For example, we have
\begin{equation*}
\dot G \;=\; \begin{pmatrix}
-\Sigma^{-1} & \dot X
\\
\dot X^* & -z
\end{pmatrix}^{-1}\,.
\end{equation*}
As before it is easy to obtain $\dot R_M$ and $\dot R_N$ from $\dot G$.

Moreover, in analogy to \eqref{def_Pi} we define
$
\dot \Pi \deq \Pi -  (m + z^{-1}) \f e \f e^*$.
(Recall the convention that $\f e \in \R^{\cal I}$ is the natural embedding of $\f e \in \cal R^{\cal I_N}$ obtained by adding zeros.)
Let $\f S \subset \f D$ be a spectral domain.
As in Definition \ref{def:local_laws}, we say that the anisotropic local law holds for $\dot G$ if
\begin{equation} \label{isotropic law dot}
\ul \Sigma^{-1} \pb{\dot G(z) - \dot \Pi(z)} \ul \Sigma^{-1} \;=\; O_\prec(\Psi(z))
\end{equation}
uniformly in $z \in \f S$, and that averaged local law for $\dot G$ holds if
\begin{equation} \label{averaged law dot}
\dot m_N(z) - m(z) \;=\; O_\prec \pbb{\frac{1}{N \eta}}
\end{equation}
uniformly in $z \in \f S$.

The local law for $\dot G$ reads as follows.
\begin{theorem}[Local laws for $\dot G$] \label{thm:local_laws_dot}
Fix $\tau > 0$. Suppose that $X$ and $\Sigma$ satisfy \eqref{T_diag} and Assumption \ref{ass:main}. Let $\f S \subset \f D$ be a spectral domain. Then the entrywise and averaged local laws hold for $\dot G$ in the sense of \eqref{isotropic law dot} and \eqref{averaged law dot} provided they hold for $G$ in the sense of Definition \ref{def:local_laws} (ii) and (iii).
\end{theorem}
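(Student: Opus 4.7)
The plan is to realize $\dot H$ as a rank--two perturbation of $H$ and apply the Sherman--Morrison--Woodbury (SMW) formula, using the given local laws for $G$ as input. Set $\f a \deq \binom{X \f e}{0}$ and $\f b \deq \binom{0}{\f e}$ in $\R^{\cal I}$. A direct computation gives $\dot H = H - \f a \f b^T - \f b \f a^T$, and crucially
\begin{equation*}
H \f b \;=\; \binom{X \f e}{-z \f e} \;=\; \f a - z \f b\,, \qquad \text{hence} \qquad G \f a \;=\; \f b + z\, G \f b\,.
\end{equation*}
This identity is what makes the whole argument go through: although $\f a$ depends on $X$, every scalar and vector produced by the SMW formula can be re-expressed using only the deterministic vector $\f b$, together with $z$ and the single random scalar $\alpha \deq \f b^T G \f b = (R_N)_{\f e \f e}$.

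Applying SMW with $P \deq -(\f a, \f b)$ and $Q \deq (\f b, \f a)$, the identity reduces $I + Q^T G P$ to a $2 \times 2$ matrix whose entries are explicit polynomials in $z$ and $\alpha$ and whose determinant is $-z\alpha$. After inversion and a short calculation (whose terms again collapse thanks to $G \f a = \f b + z G \f b$), I obtain the clean formula
\begin{equation*}
\dot G \;=\; G - \alpha^{-1}\, (G \f b)(G \f b)^T - z^{-1}\, \f b \f b^T\,.
\end{equation*}
Combined with $\dot \Pi - \Pi = -(m + z^{-1}) \f b \f b^T$, this yields
\begin{equation*}
\dot G - \dot \Pi \;=\; (G - \Pi) + m\, \f b \f b^T - \alpha^{-1}\, (G \f b)(G \f b)^T\,.
\end{equation*}

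For the anisotropic law I test against $\ul \Sigma^{-1} \f x$ and $\ul \Sigma^{-1} \f y$ for deterministic unit $\f x, \f y$. Since $\ul \Sigma^{-1} \f b = \f b$, the assumed anisotropic local law for $G$ yields $\alpha = m + O_\prec(\Psi)$ and $\scalar{\f x}{\ul \Sigma^{-1} G \f b} = m \scalar{\f x}{\f b} + O_\prec(\Psi) \abs{\f x}$. Together with $m^2/\alpha = m + O_\prec(\Psi)$ (valid because $\abs{m} \geq c$ by Lemma \ref{lem:gen_prop_m}), the $m \f b \f b^T$ and $\alpha^{-1}(G \f b)(G \f b)^T$ contributions cancel at leading order, leaving the error $O_\prec(\Psi) \abs{\f x}\abs{\f y}$ required by \eqref{isotropic law dot}.

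For the averaged law, summing the $\cal I_N$ diagonal entries of the SMW identity gives
\begin{equation*}
\dot m_N - m_N \;=\; -\frac{1}{N \alpha}\, \f e^T G_N^2 \f e - \frac{1}{N z}\,.
\end{equation*}
The main obstacle is to control $\f e^T G_N^2 \f e$: the deterministic operator-norm bound from Lemma \ref{lem:opnorm} only gives $O(\eta^{-2})$, which is too weak by a factor of $\eta$. The fix is the Cauchy--Schwarz bound $\abs{\f e^T G_N^2 \f e} \leq \norm{G_N \f e}^2$ together with \eqref{GMaa4} applied to $\f w = \f e \in \R^{\cal I_N}$, which identifies $\norm{G_N \f e}^2 = \sum_\mu \abs{G_{\f e \mu}}^2 = \im \alpha / \eta = O(\eta^{-1})$, since $\abs{\alpha} \leq C$ with high probability. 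Hence $\dot m_N - m_N = O_\prec(1/(N \eta))$, and combining with the assumed averaged law for $G$ yields \eqref{averaged law dot}.
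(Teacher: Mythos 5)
Your proof is correct and takes a genuinely different route from the paper's. The paper proves Theorem \ref{thm:local_laws_dot} by treating the four blocks of $\dot G$ separately, using block-specific identities such as $\dot G_M = G_M + G_M X\f e\f e^* X^* G_M / (z - \f e^*X^*G_M X \f e)$ and a corresponding formula for $\dot G_N$ in terms of $\ol\pi G_N \ol\pi$, and then feeding the anisotropic local law for $G$ into each block one at a time. You instead observe that $\dot H = H - \f a \f b^T - \f b \f a^T$ is a rank-two perturbation of the linearizing matrix $H$, and that the algebraic identity $H\f b = \f a - z\f b$ (equivalently $G\f a = \f b + zG\f b$) allows the random vector $\f a$ to be eliminated entirely. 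The resulting Sherman--Morrison--Woodbury formula $\dot G = G - \alpha^{-1}(G\f b)(G\f b)^T - z^{-1}\f b\f b^T$ is a single, global identity that simultaneously covers all four blocks, and the comparison $\dot G - \dot\Pi = (G-\Pi) + m\f b\f b^T - \alpha^{-1}(G\f b)(G\f b)^T$ makes the cancellation at leading order manifest. I verified your $2\times 2$ SMW reduction, the determinant $-z\alpha$, the final formula for $\dot G$ (directly, by checking $\dot G \dot H = I$), the anisotropic error estimate, and the averaged law via the Ward identity $\norm{G_N\f e}^2 = \im \alpha / \eta$ from \eqref{GMaa4}; all steps hold. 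What your argument buys is uniformity and brevity: one linear-algebraic identity replaces four block computations, and the averaged law follows in one line from the Ward identity rather than by summing the diagonal block estimate over $\mu$. One small point worth stating explicitly: you need $\abs{\alpha}\asymp 1$ with high probability, which you correctly get from the assumed anisotropic law applied at $\f b = \binom{0}{\f e}$ (giving $\alpha = m + O_\prec(\Psi)$) together with \eqref{m_sim_1}; this both makes the denominator $\alpha^{-1}$ harmless and supplies the $\im\alpha \leq C$ bound used for $\norm{G_N\f e}^2$.
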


\begin{proof}
To simplify notation, we omit the factor $\frac{N}{N-1}$ in the definition of $\dot Q$. It may easily be put back by scaling the argument $z$.
We prove the anisotropic local law by estimating the four blocks of $\dot G$ individually.
Using simple linear algebra and \eqref{defGM} and \eqref{G=G}, we get
\begin{equation} \label{G_M dot id}
\dot G_M \;=\; G_M + \frac{G_M X \f e \f e^* X^* G_M}{z - \f e^* X^* G_M X \f e} \;=\; G_M + \frac{G_M X \f e \f e^* X^* G_M}{z^2 \f e^* G_N \f e}\,.
\end{equation}
Since $G$ satisfies the anisotropic local law by assumption, it is easy to deduce from Lemma \ref{lem:idG} and \eqref{m_sim_1} that
\begin{equation*}
\scalar{\f v}{\dot G_{M} \f w} \;=\; \scalar{\f v}{G_{M} \f w} + O_\prec(\Psi^2 \abs{\Sigma \f v} \abs{\Sigma \f w})
\end{equation*}
for $\f v , \f w \in \R^{\cal I_M}$.

Next, define the orthogonal projection $\pi \deq \f e \f e^*$ in the space $\R^{\cal I_N}$, as well as its orthogonal complement $\ol \pi \deq I_N - \pi$. Now the upper-right block of $\dot G$ is equal to
\begin{equation*}
z^{-1} \dot G_M \dot X \;=\; z^{-1} G_M X \ol \pi + \frac{G_M X \f e \f e^* X^* G_M X \ol \pi}{z^2 \f e^* G_N \f e}\,.
\end{equation*}
From \eqref{G=G} we get $X^* G_M X = z^2 G_N + z$, so that, using the anisotropic local law for $G$, we conclude
\begin{equation*}
\scalar{\f v}{z^{-1} \dot G_M \dot X \f w} \;=\; O_\prec(\Psi \abs{\Sigma \f v} \abs{\f w})
\end{equation*}
for $\f v \in \R^{\cal I_M}$ and $\f w \in \R^{\cal I_N}$. The lower-left block is dealt with analogously.

Finally, using \eqref{G=G} for $\dot G$ as well as \eqref{G_M dot id}, we get
\begin{equation*}
\dot G_N \;=\; z^{-2} \dot X^* \dot G_M \dot X - z^{-1} \;=\; \ol \pi G_N \ol \pi - \pi z^{-1} +  \frac{\ol \pi G_N \pi G_N \ol \pi}{\f e^* G_N \f e}\,.
\end{equation*}
 Using the anisotropic law for $G$ and Lemma \ref{lem:idG}, we therefore get \nc
\begin{equation} \label{G_N_dot_est}
\scalar{\f v}{\dot G_N \f w} \;=\; \scalar{\f v}{\pb{\ol \pi G_N \ol \pi - \pi z^{-1}} \f w}  + O_\prec(\Psi^2 \abs{\f v}\abs{\f w})
\end{equation}
for $\f v , \f w \in \R^{\cal I_N}$. This concludes the proof of the anisotropic local law.

Moreover, the averaged local law for $\dot G$ is easy to deduce from \eqref{G_N_dot_est} by setting $\f v = \f w = \f e_\mu$ and summing over $\mu \in \cal I_N$. In this way, the averaged local law for $\dot G$ follows from the averaged and anisotropic local laws for $G$.
\end{proof}

We also obtain eigenvalue rigidity for the eigenvalues $\dot \lambda_1 \geq \dot \lambda_2 \geq \cdots \geq \dot \lambda_M$ of $\dot Q$. For instance, Theorem \ref{thm:rigidity} has the following counterpart.

\begin{theorem}[Eigenvalue rigidity for $\dot Q$] \label{thm:rigidity_dot}
Theorem \ref{thm:rigidity} remains valid if $\lambda_k$ is replaced with $\dot \lambda_k$.
\end{theorem}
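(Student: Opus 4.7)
My plan is to follow the three-step program used to prove Theorem \ref{thm:rigidity}, with Theorem \ref{thm:local_laws_dot} replacing Theorem \ref{thm:LL_G} as input. The bridge between the two settings is the identity
\[
\dot m_N(z) \;=\; m_N(z) \,-\, \frac{1}{N}\scalar{\f e}{G_N(z)\, \f e} \,-\, \frac{1}{Nz} \,+\, O_\prec(\Psi(z)^2)\,,
\]
obtained by taking $\f v = \f w = \f e_\mu$ in \eqref{G_N_dot_est} and averaging over $\mu \in \cal I_N$. Combined with $\scalar{\f e}{G_N \f e} = m + O_\prec(\Psi)$ from Theorem \ref{thm:LL_G}, this gives $\dot m_N - m_N = O_\prec(\Psi^2)$, so any averaged estimate $\abs{m_N - m} \prec \Phi$ with $\Phi$ satisfying \eqref{Phi_cond} transfers verbatim to $\dot m_N$.

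For the first step I would prove the analogue of Lemma \ref{lem:gap}: with high probability, $\dot Q$ has no eigenvalue at distance $\geq N^{-2/3 + \epsilon}$ from $\supp \varrho \cap [\tau, \infty)$. Following the first proof of Lemma \ref{lem:gap}, this reduces to the refined bound $\abs{m_N - m} \prec (\kappa + \eta)^{-1/2} \Psi^2$ outside the spectrum, which itself follows from Proposition \ref{prop:gen_avg} applied with $\Phi = (\kappa+\eta)^{-1/2}\Psi^2$. By the displayed identity the same bound then holds for $\dot m_N$, which controls the local eigenvalue density of $\dot Q$ and yields the gap.

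For the second step I would establish $\dot\Upsilon_k = \sum_{l \leq k} N_l$ with high probability, in parallel with Proposition \ref{prop:counting}. The reduction to Gaussian ensembles uses the interpolation $\dot X(t) = (\sqrt{t}\,X_1 + \sqrt{1-t}\,X_0)(I - \f e \f e^*)$; the continuity $\abs{\dot \lambda_i(t) - \dot \lambda_i(s)} \leq C\sqrt{\abs{t-s}}$ follows from $\norm{\dot X(t) - \dot X(s)} \leq \norm{X(t) - X(s)}$ together with Lemma \ref{lem:norm_XX}, and the gap from Step 1 closes the discrete ladder argument of Lemma \ref{lem:interp_Gauss}. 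In the Gaussian case with diagonal $\Sigma$, the $\Sigma(t) = \diag(t\Sigma_1, \Sigma_2)$ deformation of Lemma \ref{lem:Gaussian counting} goes through essentially unchanged: the only analytic input used there was $c \leq E_{11} \leq C$ with high probability, and the corresponding block $\dot E_{11}$ of $\dot X\dot X^* = XX^* - (X\f e)(X\f e)^*$ differs from $E_{11}$ by a rank-one matrix of norm at most $\norm{X}^2 \leq C$ by Lemma \ref{lem:norm_XX}, so the same two-sided bounds hold.

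The third step---deducing $\abs{\dot\lambda_{k,i} - \gamma_{k,i}} \prec (i \wedge (N_k + 1 - i))^{-1/3} N^{-2/3}$ from the gap, the counting, and the averaged law \eqref{averaged law dot}---is the analysis of \cite{EYY3, PY}, transferred from $m_N$ to $\dot m_N$ using the identity above. The main obstacle is Step 1: the default averaged law of rate $1/(N\eta)$ is too weak to exclude eigenvalues at distance $N^{-2/3 + \epsilon}$ from $\supp \varrho$, so one needs the sharper $(\kappa+\eta)^{-1/2}\Psi^2$ rate. What makes the transfer from $Q$ to $\dot Q$ essentially free is precisely the displayed identity, whose remainder is already of size $\Psi^2$ and is therefore dominated by the required control parameter.
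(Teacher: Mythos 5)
Your overall strategy mirrors the paper's one-paragraph proof: re-run the three-step rigidity argument for $\dot Q$, using Theorem \ref{thm:local_laws_dot} in place of Theorem \ref{thm:LL_G}, with the key new input being that the improved averaged bound $\abs{m_N - m} \prec (\kappa+\eta)^{-1/2}\Psi^2$ transfers to $\dot m_N$. Your bridge identity
\[
\dot m_N \;=\; m_N - \tfrac{1}{N}\scalar{\f e}{G_N \f e} - \tfrac{1}{Nz} + O_\prec(\Psi^2)
\]
is exactly the right way to make the paper's phrase ``follows easily from \eqref{G_N_dot_est}'' explicit, and the subsequent claim $\dot m_N - m_N = O_\prec(\Psi^2)$ is correct once one notes $\Psi^2 \geq \im m /(N\eta) \geq c/N$ via \eqref{Im_m_geq_c}. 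Steps 1 and 3 as you sketch them are sound.

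The one place where your argument has a genuine gap is Step 2, in the adaptation of Lemma \ref{lem:Gaussian counting}. You write that $\dot E_{11} = E_{11} - (X_1\f e)(X_1\f e)^*$ differs from $E_{11}$ by a rank-one matrix of bounded norm, ``so the same two-sided bounds hold.'' The upper bound is fine (you are subtracting a PSD matrix), but the lower bound does not follow: a bounded-norm negative rank-one perturbation can push the smallest eigenvalue down arbitrarily far, even to zero (e.g.\ in the rank-one case $d_k = 1$, $\dot E_{11} = 0$ whenever $X_1 \parallel \f e^*$). So the conclusion $c \leq \dot E_{11}$ needs a separate argument. The fix is short, and uses exactly the Gaussian structure already in play in Lemma \ref{lem:Gaussian counting}: by orthogonal invariance of the Gaussian law one may replace $\f e$ by $\f e_N$, so that $\dot E_{11} = X_1'(X_1')^*$ where $X_1'$ is the $d_k \times (N-1)$ Gaussian matrix obtained by deleting the last column of $X_1$; since $d_k \leq (1-c)N \leq (1-c')(N-1)$, the lower bound $X_1'(X_1')^* \geq cI$ with high probability then follows from \cite[Theorem 2.10]{BEKYY} just as for $E_{11}$. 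With this repair, your Step 2 goes through.
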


\begin{proof}
The proof follows that of Theorem \ref{thm:rigidity} to the letter, using Theorem \ref{thm:local_laws_dot} as input. In fact, as explained around \eqref{improved_avg_G}, we need a stronger bound than \eqref{averaged law dot} outside of the spectrum; this stronger bound follows easily from \eqref{G_N_dot_est} and the analogous stronger bound for $G_N$ established in \eqref{improved_avg_G}.
\end{proof}

Finally, we obtain edge universality for $\dot Q$. The following result is proved exactly like Theorem \ref{thm:edge_univ}, using Theorems \ref{thm:local_laws_dot} and \ref{thm:rigidity_dot} as input.

\begin{theorem}[Edge universality for $\dot Q$]
Theorem \ref{thm:edge_univ} remains valid if $\lambda_i$ is replaced with $\dot \lambda_i$.
\end{theorem}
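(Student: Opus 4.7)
The plan is to adapt the proof of Theorem \ref{thm:edge_univ} line by line, replacing $Q$ by $\dot Q$ and substituting Theorems \ref{thm:local_laws_dot} and \ref{thm:rigidity_dot} for Theorems \ref{thm:LL_G} and \ref{thm:rigidity}. As in the proof of Theorem \ref{thm:edge_univ}, the argument splits into two parts: (a) a Green function comparison showing that the joint asymptotic distribution of the rescaled extreme eigenvalues of $\dot Q$ at any regular edge $a_k$ depends only on $\pi$ and not on the distribution of the entries of $X$ or on the singular vectors or dimensions of $T$; (b) a reduction, in the Gaussian case, of $\dot Q$ to a standard uncorrelated Gaussian sample covariance matrix with $N-1$ samples, for which the Tracy--Widom--Airy statistics have been established in \cite{HHN, ElK2, Ona, SL, BaoPanZhou}.

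For part (a) I would transcribe the Green function comparison method of \cite[Section~6]{EYY3} with $G$ replaced by $\dot G$. The distribution of the $l$ extreme eigenvalues near $a_k$ is expressed as a smooth functional of $\im \dot m_N(E + \ii \eta)$ at spectral parameters $E$ slightly outside $\supp\varrho$ and $\eta$ slightly below $N^{-2/3}$, the evaluation of which requires (i) the absence of outliers, provided by Theorem \ref{thm:rigidity_dot}, and (ii) a strengthened bound of the form $|\dot m_N - m| \prec (\kappa + \eta)^{-1/2}\,\Psi^2$ just outside the spectrum. This latter bound is the exact analog of \eqref{improved_avg_G} for $\dot m_N$, and it follows immediately from \eqref{G_N_dot_est} combined with the corresponding estimate for $m_N$ obtained in the proof of Lemma \ref{lem:gap}. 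The Lindeberg swapping of the entries $X_{i\mu}$ then proceeds as in \cite{EYY3}: since $\dot X = X(I_N - \f e\f e^*)$ depends linearly on $X$ through $\dot X_{i\mu} = X_{i\mu} - N^{-1}\sum_\nu X_{i\nu}$, each derivative $\partial_{X_{i\mu}}^k \im \dot m_N$ is a polynomial in entries of $\dot G$ (plus $O(N^{-1})$ rank-one corrections arising from the projection $\f e\f e^*$), each of which is controlled entrywise by Theorem \ref{thm:local_laws_dot} and Lemma \ref{lem:idG} applied to $\dot G$. Matching the first four moments of $X_{i\mu}$ with those of a Gaussian entry, the cumulative replacement error is $o(1)$, which yields the claimed universality of the distribution of $\dot{\f q}_{k,l}$.

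For part (b), take $X$ Gaussian and choose $O_N \in \r O(N)$ with $O_N \f e = \f e_N$. By right orthogonal invariance $X \eqdist X O_N^*$, and since $O_N(I_N - \f e\f e^*)O_N^* = \r{diag}(I_{N-1}, 0)$, the nonzero eigenvalues of $\dot Q$ coincide in distribution with those of $\frac{N}{N-1} T \tilde X \tilde X^* T^*$, where $\tilde X$ is an $\wh M \times (N-1)$ matrix of independent Gaussian entries of variance $1/N$. Up to the trivial rescaling $N/(N-1) \to 1$, this is a standard uncorrelated Gaussian sample covariance matrix with $N-1$ samples, so Corollary \ref{cor:TWA} (more precisely its inputs from \cite{HHN, ElK2, Ona, SL, BaoPanZhou}) applies; a further reduction by left orthogonal invariance $X \eqdist O_M X$ for $O_M \in \r O(M)$ allows $\Sigma$ to be taken diagonal.

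The principal, though essentially bookkeeping, obstacle is to track the rank-one defect introduced by $\f e\f e^*$ throughout the Green function comparison. This defect enters through identities such as \eqref{G_M dot id} and \eqref{G_N_dot_est}, and through the derivatives of $\dot G$ with respect to $X_{i\mu}$ via the $N^{-1}\sum_\nu$ term. In every case, the extra contribution is an $O(N^{-1})$ lower-order correction that is controlled by the anisotropic local law of Theorem \ref{thm:local_laws_dot}, and hence is absorbed into the replacement error without difficulty.
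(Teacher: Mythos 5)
The proposal is correct and follows the same route as the paper, which simply reruns the proof of Theorem~\ref{thm:edge_univ} with $\dot G$, $\dot m_N$ in place of $G$, $m_N$, taking Theorems~\ref{thm:local_laws_dot} and~\ref{thm:rigidity_dot} as the inputs for the Green function comparison and then invoking orthogonal invariance of Gaussian $X$. One remark: your part~(b), reducing Gaussian $\dot Q$ to an ordinary Gaussian $Q$ with $N-1$ samples via $O_N \f e = \f e_N$, is a clean and useful observation (it identifies the dotted limit with the undotted one, effectively giving the $\dot Q$ analogue of Corollary~\ref{cor:TWA}), but it is not required for the theorem as stated, which asserts only that the limiting distribution depends on $\pi$ alone; once $X$ is Gaussian and $\Sigma$ diagonal, that dependence is automatic even with the projector $I_N - \f e\f e^*$ left in place.
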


\section{Deformed Wigner matrices} \label{sec:Wig} 

In this section we apply our method to deformed Wigner matrices as a further illustration of its applicability. Since the statements and arguments are similar to those of the previous sections, we keep the presentation concise.

\subsection{Model and results}
Let $W = W^*$ be an $N \times N$ Wigner matrix whose upper-triangular entries $(W_{ij} \col 1 \leq i \leq j \leq N)$ are independent and satisfy the same conditions \eqref{cond on entries of X} and \eqref{moments of X-1} as $X_{i\mu}$. Let 
$A = A^*$ be a deterministic $N \times N$ matrix satisfying $\norm{A} \leq \tau^{-1}$. For definiteness, we suppose that $W$ and $A$ are real symmetric matrices, remarking that similar results also hold for complex Hermitian matrices.

The main result of this section is the anisotropic local law for the \emph{deformed Wigner matrix} $W + A$, analogous to Theorem \ref{thm:gen_iso}. As an application, we establish the edge universality of $W + A$. We remark that the entrywise local law and edge universality were previously established in \cite{LSY} under the assumption that $A$ is diagonal. Before that, edge universality was established in \cite{CP1,Joh2,Shc} under the assumption that $W$ is a GUE matrix. A somewhat different direction was pursued in \cite{BES15,BG15,Kar15}, where local laws for the sum of two matrices that are invariant under unitary or orthogonal conjugations were analysed.

In order to avoid confusion with similar quantities defined previously for sample covariance matrices, we use the superscript $W$ to distinguish quantities defined in terms of the deformed Wigner matrix $W+A$.
The Stieltjes transform $m^W$ of the asymptotic eigenvalue density of $W + A$ is defined as the unique solution of the equation
\begin{equation*}
m^W(z) \;=\; \frac { 1}N\tr\pb{-m^W(z) + A-z}^{-1}
\end{equation*}
satisfying $\im m^W(z) > 0$ for $\im z > 0$. (See e.g.\ \cite{Past} for details.) Note that $m^W$ only depends on the spectrum of $A$ and not on its eigenvectors.
For simplicity, following \cite{LSY} we assume that support of the asymptotic eigenvalue density $\varrho^W(E) \deq \lim_{\eta \downarrow 0} \pi^{-1} \im m^W(E + \ii \eta)$ is an interval, which we denote by $[L_-, L_+]$. This condition is however not necessary for our method, which may in particular easily be extended to the multi-cut case, using an argument similar to the one developed in the context of sample covariance matrices in Section \ref{sec:rigidity} and Appendix \ref{sec:varrho}.

We denote the eigenvalues of $W+A$ by $\lambda_1(W+A) \geq \lambda_2(W+A) \geq \cdots \geq \lambda_N(W+A)$.
Moreover, we define the resolvent $G^W(z) \deq (W+A - z)^{-1}$, as well as
\begin{equation*}
\Pi^W \;\deq\; \frac{1}{-m^W+A-z} \,, \qquad \Psi^W \;\deq\; \sqrt{\frac{\im m^W}{N\eta}}+\frac1{N\eta}\,.
\end{equation*}
The following definition is the analogue of Definition \ref{def:local_laws} for deformed Wigner matrices.

\begin{definition}[Local laws] \label{def:local_laws of Wigner} Define  $\f D^W \deq \hb{z \col \abs{E} \leq \tau^{-1} , N^{-1+\tau} \leq \im z \leq \tau^{-1}}$, and let $\f S \subset \f D^W$ be a spectral domain, i.e.\ for each $z \in \f S$ we have $\h{w \in \f D^W \col \re w = \re z, \im w \geq \im z} \subset \f S$.
\begin{enumerate}
\item\label{en}
We say that the \emph{entrywise local law holds with parameters $(W, A, \f S)$}  if
\begin{equation*}
\pb{G^W(z) - \Pi^W(z)}_{st} \;=\; O_\prec\pb{\Psi^W(z)}
\end{equation*}
uniformly in $z \in \f S$ and $1\leq s,t \leq N$.
\item
We say that the \emph{anisotropic local law holds with parameters $(W, A, \f S)$} if
\begin{equation*}
G^W(z) - \Pi^W(z) \;=\; O_\prec \pb{\Psi^W(z)}
\end{equation*}
uniformly in $z \in \f S$.
\item
We say that the \emph{averaged local law holds with parameters $(W, A, \f S)$} if
\begin{equation*}
\frac{1}{N} \tr G^W (z) - m^W(z) \;=\; O_\prec\pbb{\frac{1}{N \eta}}
\end{equation*}
uniformly in $z \in \f S$.
\end{enumerate}
\end{definition}

In analogy to \eqref{1+xm}, we always assume that
$\abs{-m^W(z) + a_i - z} \geq \tau$
for all $z \in \f S$ and $a_i \in \spec(A)$;
this assumption has been verified under general hypotheses on the spectrum of $A$ in \cite{LSY}. In particular, it implies that $\norm{\Pi} \leq \tau^{-1}$.

The following result is the analogue of Theorem \ref{thm:gen_iso}. Throughout the following we denote by $W^{\txt{Gauss}}$ a GOE matrix and by $D \equiv D_A$ the diagonalization of $A$.
\begin{theorem}[General local laws]\label{lem:general Wig}
Let $W$ and $A$ be as above. Fix $\tau > 0$ and let $\f S \subset \f D^W(\tau, N)$ be a spectral domain. Suppose that either (a) $\E W_{ij}^3=0$ for all $i,j$ or (b) there exists a constant $c_0 > 0$ such that $\Psi^W(z) \leq N^{-1/4-c_0}$ for all $z \in \f S$.
\begin{enumerate}
\item
If the entrywise local law holds with parameters $(W^{\txt{Gauss}}, D, \f S)$, then the anisotropic local law holds with parameters $(W, A, \f S)$.
\item
If the entrywise local law and the averaged local law hold with parameters $(W^{\txt{Gauss}}, D, \f S)$, then the averaged local law holds with parameters $(W, A, \f S) $.
\end{enumerate}
\end{theorem}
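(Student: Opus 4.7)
The plan is to adapt the three-step strategy of Sections \ref{sec:iso_gauss}--\ref{sec:avg_proof} to the deformed Wigner setting, which in fact turns out to be technically simpler because we have only one resolvent $G^W = (W+A-z)^{-1}$ indexed by a single set $\qq{1,N}$ rather than a block matrix. First I would upgrade the entrywise law with parameters $(W^{\mathrm{Gauss}}, D, \f S)$ to the anisotropic law with parameters $(W^{\mathrm{Gauss}}, A, \f S)$. Writing $A = UDU^*$ and using the orthogonal invariance $U^*W^{\mathrm{Gauss}}U \eqdist W^{\mathrm{Gauss}}$, one has $G^{W^{\mathrm{Gauss}}}_A \eqdist U\,G^{W^{\mathrm{Gauss}}}_D\, U^*$ and $\Pi^W_A = U\Pi^W_D U^*$, so that estimating $\scalar{\f v}{(G^W_A - \Pi^W_A)\f w}$ reduces to estimating $\scalar{U^*\f v}{(G^W_D - \Pi^W_D)U^*\f w}$. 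This generalized-entry bound is obtained from the entrywise one by the polynomialization method of \cite[Section 5]{BEKYY}, using the resolvent identity $G_{ij} = -G_{ii}(WG^{(i)})_{ij}$ and the a priori control $|G^{(S)}_{ii}| \asymp 1$ on the event $\Xi^W \deq \{\max_{s,t}|G^W_{st}-\Pi^W_{st}| \leq (\log N)^{-1}\}$.

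Next I would perform Step (C), the self-consistent comparison, following Section \ref{sec:comparison1} closely. Introduce the Bernoulli interpolation $W^\theta_{ij} \deq \chi^\theta_{ij}W^1_{ij} + (1-\chi^\theta_{ij})W^0_{ij}$ (sampling $\chi^\theta_{ij}$ only for $i \leq j$ to preserve symmetry) where $W^1 = W$ and $W^0$ is GOE, and by polarization it suffices to control $F_{\f v}(W,z) \deq |\scalar{\f v}{(G^W(z)-\Pi^W(z))\f v}|$ for unit $\f v \in \R^N$. Run the discrete bootstrap on the scale $\eta$ in multiplicative increments of $N^{-\delta}$, the inductive hypothesis being $\im G^W_{\f v\f v}(E+\ii\eta_{l-1}) \prec \im m^W + N^{C_0\delta}\Psi^W$, from which the analogues of Lemmas \ref{lem:rough_bound}--\ref{cor:rough_bound} yield the two a priori bounds $|G^W_{\f x\f y}| \prec N^{2\delta}$ and $\im G^W_{\f v\f v} \prec N^{2\delta}(\im m^W + N^{C_0\delta}\Psi^W)$ at the finer scale. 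After Taylor expansion of $F_{\f v}^p$ to order $4p$ and the substitution $W^{\theta,0}_{(ij)} \rightsquigarrow W^\theta$ (cf.\ Lemma \ref{lem:0_to_theta}), one reduces to showing
\begin{equation*}
N^{-n/2}\sum_{i,j}\absbb{\E\pbb{\frac{\partial}{\partial W_{ij}}}^n F_{\f v}^p(W^\theta,z)} \;=\; O\pB{(N^{C_0\delta}\Psi^W)^p + \sup_{\f w\in\bb S}\E F_{\f w}^p(W^\theta,z)}
\end{equation*}
for each $n = 3,\dots,4p$. Encode the derivatives by words analogous to Definition \ref{def:words} — now with a single letter — and count factors using the key estimate $\sum_{j}|G^W_{\f v j}|^2 = \eta^{-1}\im G^W_{\f v\f v}$, which yields a gain $\prec N^{(C_0+1)\delta}(\Psi^W)^2$ per summed index. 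The terms $n \geq 4$ close exactly as in Subsection \ref{sec:words} via the degree-counting of Subsection \ref{sec:deg_counting}.

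The main obstacle is the $n=3$ term, and the split into cases (a) and (b) in the hypothesis corresponds exactly to the two routes for controlling it. In case (a), $\E W_{ij}^3 = 0$ for $i \neq j$ kills the leading contribution in the analogue of \eqref{moment_matching}; the remaining diagonal contribution $i=j$ has only $N$ terms and is trivially absorbed. This is the direct analogue of Proposition \ref{prop:comparison_1} and requires no further input. In case (b), the third-moment term is kept, and naively it contributes roughly $N^{-3/2}\cdot N^2 \cdot (\Psi^W)^3 = N^{1/2}(\Psi^W)^3$; the hypothesis $\Psi^W \leq N^{-1/4-c_0}$ forces $N^{1/2}(\Psi^W)^3 \leq N^{-c_0}(\Psi^W)^2$, which after Young's inequality with the residual factor $F_{\f v}^{p-3}$ is absorbed into $(N^{C_0\delta}\Psi^W)^p + \sup_{\f w}\E F_{\f w}^p$. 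Thus (b) replaces the intricate polynomial expansion and parity argument of Section \ref{sec:self-const_II}, which exploited the extra factor $X^*_\mu G^{(\mu)}X_\mu$ coming from the block structure, by a clean a priori smallness assumption. Together with the Gr\"onwall argument this proves part (i).

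Finally, part (ii) follows the scheme of Section \ref{sec:avg_proof}. With part (i) already established, the anisotropic law provides the sharpened input $|G^W_{st} - \Pi^W_{st}| \prec \Psi^W$, and one averages the same Taylor expansion for $\wt F(W,z) \deq |N^{-1}\tr G^W(z) - m^W(z)|$. The extra summation $N^{-1}\sum_\nu$ combined with the parity of the degree (the analogue of Lemma \ref{lem:d odd}: the index $\mu$ is replaced by $\nu$ appearing an odd number of times) delivers the additional $N^{-1/2}$ that renders the $n=3$ term affordable without any third-moment or a priori smallness assumption, so that part (ii) holds under either of hypotheses (a) or (b) inherited from part (i). The entire argument therefore requires no new ideas beyond those of Sections \ref{sec:comparison1}--\ref{sec:avg_proof}, and I expect the bookkeeping to be strictly easier because of the absence of the $M/N$ block split.
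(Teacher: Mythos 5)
Your strategy is correct in outline, and the treatment of case (a) and of the $n\geq 4$ terms matches the paper. The genuine gap is in your treatment of case (b) of part (i), specifically the $n=3$ term. You estimate its contribution by ``$N^{-3/2}\cdot N^2 \cdot (\Psi^W)^3 = N^{1/2}(\Psi^W)^3$'' and then invoke $\Psi^W \leq N^{-1/4-c_0}$ to absorb it, but this bound presupposes that each factor $G^W_{\f v i}$ or $G^W_{ij}$ can be estimated by $\Psi^W$. At the stage of the argument where the $n=3$ term must be controlled, that is precisely what has \emph{not} been proved: the bootstrap in $\eta$ only supplies the much weaker a priori bound $G^W-\Pi^W = O_\prec(N^{2\delta})$ (the analogue of \eqref{a_priori_1}), and in particular a factor like $G_{ij}$ or $G_{ii}$ sitting between $G_{\f v i}$ and $G_{j\f v}$ in a $q=1$ word carries no $\Psi$-smallness at all. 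With only this a priori information, the naive bound is $N^{-3/2}\cdot N^2\cdot N^{C\delta}\,(\Psi^W)^2 \asymp N^{1/2+C\delta}(\Psi^W)^2$ (two factors of $\Psi^W$ from the two $\f v$-legs via the $\frac1N\sum_i|G_{\f v i}|^2$ gain, nothing from the middle entries), which is far too large and is not rescued by condition (b).

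The paper's actual mechanism here is a \emph{second iteration at fixed $z$}: one posits an a priori bound of the form $G^W-\Pi^W = O_\prec(N^{2\delta}\Phi)$ with $\Phi\leq 1$, proves a self-improving estimate \eqref{Wig_self_improv} in which the $n=3$ contribution is controlled by $(N^{C_0\delta}\Psi^W)^p + (N^{-c_0/2}\Phi)^p + \E F^p_{\f v}$, and iterates $O(1/c_0)$ times starting from $\Phi=1$. The crucial step in each iteration is to split middle entries as $G_{ab} = \Pi_{ab} + \wt G_{ab}$; the deterministic $\Pi$-piece gives a better $\ell^2$-bound in the summation index, while the fluctuation piece is bounded by $N^{2\delta}\Phi$ and contributes the $N^{-c_0}\Phi$ term that makes the recursion self-improving. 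This $\Phi$-iteration, which has no analogue in the sample covariance argument of Sections~\ref{sec:comparison1}--\ref{sec:self-const_II}, is the main new ingredient of the Wigner case and is entirely absent from your proposal. A similar remark applies to part (ii): your claim that the averaging ``delivers the additional $N^{-1/2}$... without any third-moment or a priori smallness assumption'' is not accurate --- the $n=3$ case of the averaged estimate (Lemma~\ref{lem:Wig_main_est_avg}) still requires $\Psi^W \leq N^{-1/4-c_0}$; what the averaging buys is the ability to work with $(G^2)_{ij}$ and $\tr|G|^4$ structures once the anisotropic law from part (i) is available as input, not a free parity gain analogous to Lemma~\ref{lem:d odd}.
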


Note that the assumptions of Theorem \ref{lem:general Wig} depend on both the deterministic matrix $A$ and the spectral domain $\f S$.

The following result guarantees the rigidity of the extreme eigenvalues of $H+A$. As explained below, the rigidity of all eigenvalues will be a simple consequence of Theorems \ref{lem:general Wig} (ii) and \ref{thm:wigner_rig}.

\begin{definition}
We say the \emph{extreme eigenvalues of $W+A$ are rigid} if
$\qb{\lambda_1(W+A)-L_+}_+ \prec N^{-2/3}$ and $\qb{\lambda_N(W+A)-L_-}_- \prec N^{-2/3}$.
\end{definition}
\begin{theorem} \label{thm:wigner_rig}
Suppose that, for any fixed and small enough $\tau > 0$, the entrywise local law and the averaged local law hold with parameters $(W^{\txt{Gauss}}, D, \f D^W(\tau,N) )$. Moreover, suppose that the extreme eigenvalues of $W^{\txt{Gauss}} + D$ are rigid. Then the extreme eigenvalues of $W+A$ are rigid.
\end{theorem}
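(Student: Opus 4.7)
The plan is to combine Theorem \ref{lem:general Wig} with a Green function comparison at the edge, in the spirit of \cite[Section 6]{EYY3}, to transfer extreme eigenvalue rigidity from $W^{\txt{Gauss}}+D$ to $W+A$. As a preliminary observation, if $A = O D O^*$ with $O$ orthogonal then $W^{\txt{Gauss}}+A$ and $O(W^{\txt{Gauss}}+D)O^*$ are equal in distribution by orthogonal invariance of $W^{\txt{Gauss}}$, so the hypothesized extreme rigidity of $W^{\txt{Gauss}}+D$ immediately passes to $W^{\txt{Gauss}}+A$.

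I would then apply Theorem \ref{lem:general Wig}~(i) and~(ii) with the assumed entrywise and averaged local laws for $(W^{\txt{Gauss}}, D, \f D^W(\tau, N))$ as input; this produces the anisotropic and averaged local laws with parameters $(W, A, \f D^W(\tau, N))$ for every small $\tau>0$. The same invocation with $W$ replaced by $W^{\txt{Gauss}}$ (which also satisfies the hypotheses on the noise entries) produces the anisotropic local law with parameters $(W^{\txt{Gauss}}, A, \f D^W(\tau, N))$ as well.

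The core of the proof is a Green function comparison at the rightmost edge $L_+$. Fix a small $\epsilon_1>0$ and set $\eta \deq N^{-2/3-\epsilon_1}$; for each $s \in \R$ and a smooth cutoff $F \col [0,\infty)\to[0,1]$, the observable
\begin{equation*}
\Theta_s(H) \;\deq\; F\pb{N\eta\cdot \im \tfrac{1}{N}\tr(H+A-L_+-sN^{-2/3}-\ii\eta)^{-1}}
\end{equation*}
encodes, up to negligible error, the event $\hb{\lambda_1(H+A) \leq L_+ + sN^{-2/3}}$, the error being controlled by the anisotropic local law as in \cite[Section 6]{EYY3}. I would then show $\E\Theta_s(W) - \E\Theta_s(W^{\txt{Gauss}}) \to 0$ as $N \to \infty$ by interpolating between $W$ and $W^{\txt{Gauss}}$ (e.g.\ via the Bernoulli scheme \eqref{bern_interpol}--\eqref{K_n_def}) and expanding the derivatives of $\Theta_s$ into products of entries of $(H+A-z)^{-1}$; these entries are controlled by the anisotropic local law together with one-index summation bounds analogous to those in Lemma \ref{lemma: basic}, which produce the extra factors of $\im m^W \asymp N^{-1/3}$ that are needed for the estimate to close. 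Combined with the Gaussian rigidity transferred to $W^{\txt{Gauss}}+A$, this yields $\qb{\lambda_1(W+A)-L_+}_+ \prec N^{-2/3}$; the symmetric argument at the left edge handles $\lambda_N$.

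The main obstacle is that the Green function comparison is performed at the sub-optimal scale $\eta \asymp N^{-2/3-\epsilon_1}$, where off-diagonal entries of $(W+A-z)^{-1}$ are bounded only by $\Psi^W \sim N^{-1/3+\epsilon_1/2}$ and a naive term-by-term estimate does not close. As in \cite[Section 6]{EYY3}, the closure relies on a careful bookkeeping of internal summations in the derivatives of $\Theta_s$, systematically extracting one factor of $\im m^W$ per summation---the same mechanism that drives the self-consistent comparison of Sections \ref{sec:comparison1} and \ref{sec:self-const_II}. No new ideas beyond these are required, and the argument is formally parallel to the proof of edge universality sketched after Theorem \ref{thm:edge_univ}.
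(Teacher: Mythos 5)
Your proposal diverges from the paper's proof in a way that introduces a genuine gap. The paper does \emph{not} use Green function comparison at the edge to prove this theorem; instead it (a) strengthens the averaged law outside the spectrum, observing that the self-consistent comparison (Lemma \ref{lem:Wig_main_est_avg}, in the spirit of Proposition \ref{prop:gen_avg}) yields the improved bound
\begin{equation*}
\absbb{\tfrac{1}{N} \tr G(z) - m(z)} \;\prec\; \frac{N^{-c}}{N\eta} + \Psi^2 \;\prec\; \frac{N^{-c}}{N\eta}
\end{equation*}
on a domain $\f S_\tau$ where $\dist(E,[L_-,L_+]) \geq N^{-2/3+\tau}$, and then (b) applies the partial-fraction/moment argument \eqref{tur1}--\eqref{tur2} to rule out eigenvalues outside the support. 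The paper explicitly remarks that the basic averaged local law with error $(N\eta)^{-1}$ is \emph{not} strong enough; the improved bound with the extra factor $N^{-c}$ (or $\Psi^2$, using that $\im m^W$ is very small outside $\supp\varrho^W$) is essential, since the contribution of a single outlier eigenvalue to $\im m_N$ is itself of size $(N\eta)^{-1}$ and would be swamped by the error otherwise.

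The gap in your Green function comparison plan is the claim that the observable $\Theta_s$ ``encodes, up to negligible error, the event $\{\lambda_1(H+A) \leq L_+ + sN^{-2/3}\}$, the error being controlled by the anisotropic local law.'' This encoding already requires knowing that $\lambda_1$ lies within $O(N^{-2/3+\epsilon})$ of $L_+$: the imaginary part of the Stieltjes transform at $z = L_+ + sN^{-2/3} + \ii\eta$ with $\eta \asymp N^{-2/3-\epsilon_1}$ is blind to an eigenvalue sitting at, say, $L_+ + 1/10$, and the anisotropic local law on $\f D^W$ (with $\eta \geq N^{-1+\tau}$ and error $\Psi^W$) does not exclude such an eigenvalue. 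In other words, rigidity of the extreme eigenvalues is a \emph{prerequisite} of the EYY3-style edge comparison, not a consequence of it; this is precisely why the paper proves Theorem \ref{thm:wigner_rig} first and then cites it as ``the key technical input'' in the proof of Theorem \ref{lem:comp Wig}. Your opening orthogonal-invariance observation and the invocation of Theorem \ref{lem:general Wig} are fine, but the core of your argument is circular. To fix it you would need to replace the edge comparison with the improved averaged law outside the spectrum plus an exclusion argument (either the moment method \eqref{tur1}--\eqref{tur2} or a stochastic continuity argument à la Lemma \ref{lem:gap}).
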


Together, Theorems \ref{lem:general Wig} (ii) and \ref{thm:wigner_rig} easily yield the rigidity of the eigenvalues, as explained in the proof of \cite[Theorem 2.2]{EYY3}. We may apply Theorem \ref{lem:general Wig} to extend the results of \cite{LSY} to arbitrary non-diagonal $A$. For instance, we obtain the following edge universality result.

\begin{theorem}[Edge universality] \label{lem:comp Wig}
Let $W$ and $A$ be as above. Suppose that the spectrum $D$ of the matrix $A$ satisfies the assumptions of Theorem \ref{thm:wigner_rig}. Then for any fixed $l$ and smooth $f \col \R^l \to \R$ there is a constant $c_0 > 0$ such that
\begin{equation} \label{Wigner_edge_univ}
f\qB{\pb{N^{2/3}(\lambda_i(W+A)-L_+)}_{i=1}^l} - f\qB{\pb{N^{2/3}(\lambda_i(W^{\txt{Gauss}} +D)-L_+)}_{i=1}^l} \;=\; O(N^{-c_0})\,.
\end{equation}
A similar result holds for the extreme eigenvalues near the left edge.
\end{theorem}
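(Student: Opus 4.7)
My plan closely mirrors the proof of Theorem~\ref{thm:edge_univ} for sample covariance matrices. The first observation is that orthogonal invariance of the GOE law makes the diagonalization of $A$ free: if $A = O D O^*$, then $W^{\txt{Gauss}} + A = W^{\txt{Gauss}} + O D O^* \eqdist O(W^{\txt{Gauss}} + D)O^*$, so $W^{\txt{Gauss}} + A$ and $W^{\txt{Gauss}} + D$ have the same eigenvalues. It therefore suffices to prove \eqref{Wigner_edge_univ} with $W^{\txt{Gauss}} + D$ replaced by $W^{\txt{Gauss}} + A$, i.e.\ to compare two deformed Wigner ensembles that share the same deterministic part $A$ but differ in the distribution of the Wigner entries.

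For this comparison I would use the Green function comparison method at the edge, developed in \cite{EYY3} and adapted in Section~6 there. Following the standard strategy, the quantity $f\pb{(N^{2/3}(\lambda_i(W+A) - L_+))_{i=1}^l}$ can be approximated, with error $O(N^{-c})$, by a smooth functional of $\im \frac{1}{N} \tr G^W(E + \ii \eta)$ for $E$ in a mesoscopic window around $L_+$ and $\eta = N^{-2/3-\epsilon}$ for some small $\epsilon > 0$. The approximation uses a smoothed indicator of the eigenvalue counting function, and its accuracy is guaranteed by the edge rigidity of $W+A$, which in turn follows from combining Theorems~\ref{lem:general Wig}(ii) and \ref{thm:wigner_rig} via the standard Helffer--Sj\"ostrand argument. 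The same approximation applied to $W^{\txt{Gauss}} + A$ reduces the problem to comparing expectations of smooth functions of such mesoscopic traces between the two ensembles.

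The comparison of these trace functionals is carried out either by a Lindeberg replacement of the matrix entries of $W$ by those of $W^{\txt{Gauss}}$ one pair at a time, or by the Bernoulli interpolation introduced in Remark~\ref{rem:Bernoulli}. Each replacement step produces, via a resolvent expansion in one entry $W_{ij}$, a polynomial expression in the entries of $G^W$ whose coefficients are moments of $W_{ij}$ and of $W^{\txt{Gauss}}_{ij}$. The two zeroth- and second-order contributions cancel exactly because the first two moments match; the first-order term vanishes by zero mean. What remains is an $n$-th order error term for $n \geq 3$, which I would control using the anisotropic local law (Theorem~\ref{lem:general Wig}(i)), so that at scale $\eta = N^{-2/3-\epsilon}$ near the edge we have $\abs{G^W_{ij} - \Pi^W_{ij}} \prec \Psi^W$ and $\abs{G^W_{ij}} \prec 1$, together with the two-sum improvement $\frac{1}{N}\sum_j \abs{G^W_{\f v j}}^2 \prec \frac{\im m^W + \eta}{N\eta}$.

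The main obstacle, and the point where the argument is genuinely delicate, is the $n=3$ term, since the third moments of $W_{ij}$ and $W^{\txt{Gauss}}_{ij}$ need not agree. The key gain is the edge behaviour $\im m^W(z) \asymp \sqrt{\kappa + \eta}$, which at $\kappa + \eta \sim N^{-2/3+\epsilon}$ gives $\Psi^W \sim N^{-1/3+\epsilon}$. Summing over the $N^2$ pairs $(i,j)$, with the prefactor $N^{-3/2}$ coming from $\E W_{ij}^3 = O(N^{-3/2})$, one finds that each third-order term carries a power $N^{1/2} \pb{\Psi^W}^{3 - r}$ for some $r$ counted by a power-counting argument analogous to the one in Section~\ref{sec:deg_counting}; the parity argument (as in Lemma~\ref{lem:d odd}) furnishes an extra factor $N^{-1/2}$, and choosing $\epsilon$ small enough yields an overall bound $N^{-c_0}$. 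Fourth and higher order terms are handled similarly and are easier since they come with stronger prefactors. This completes the reduction to \eqref{Wigner_edge_univ}.
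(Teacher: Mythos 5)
Your proposal is correct and follows the same route as the paper: first observe that $W^{\txt{Gauss}} + A \eqdist O(W^{\txt{Gauss}} + D)O^*$ by orthogonal invariance of the GOE, reducing the claim to comparing $W+A$ with $W^{\txt{Gauss}}+A$; then run the Green function comparison of \cite[Section 6]{EYY3} at spectral scale $\eta$ slightly below $N^{-2/3}$, with the anisotropic local law (Theorem~\ref{lem:general Wig}) and the edge rigidity (Theorem~\ref{thm:wigner_rig}) as inputs. The paper's own proof is exactly this, compressed to a few sentences, together with the observation that assumption (b) of Theorem~\ref{lem:general Wig} is verified because $\Psi^W \leq N^{-1/4-c_0}$ near the edge.

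One inaccuracy worth flagging: you invoke the parity argument (Lemma~\ref{lem:d odd}) and the degree counting of Section~\ref{sec:deg_counting} to control the $n=3$ term in the Green function comparison. Those devices belong to a different part of the paper --- they appear in the proof of the \emph{anisotropic local law} (and only in the sample-covariance version; in the deformed Wigner case the $n=3$ term of the local law is instead handled by the self-improving iteration of Section~\ref{sec:Wigner_ll} under condition~(b), not by parity). In the Green function comparison itself, no parity gain is needed: after Lindeberg replacement, the $n=3$ contribution carries a raw prefactor of $N^{-3/2}$ from $\E W_{ij}^3$, $N^2$ from the double sum, and $(\Psi^W)^{2}$ from the resolvent entries; near the edge with $\eta$ slightly below $N^{-2/3}$ one has $\Psi^W \sim N^{-1/3+O(\epsilon)}$, so the product is already $O(N^{-1/6+O(\epsilon)})$, which is negligible. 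Thus the key estimate is simply the smallness of $\Psi^W$ at the edge --- precisely the condition the paper highlights when it observes that assumption (b) of Theorem~\ref{lem:general Wig} holds on the relevant spectral domain. Your plan would go through without the parity step, and citing it here conflates two distinct uses of moment matching.
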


In \cite{LSY}, the assumptions of Theorem \ref{thm:wigner_rig} were verified for a large class of diagonal matrices $D$. Moreover, for such matrices $D$, it was proved that the limit of the second term on the left-hand side of \eqref{Wigner_edge_univ} is governed by the Tracy-Widom-Airy statistics. Theorem \ref{lem:comp Wig} therefore provides an extension of \cite[Theorem 2.8]{LSY} to non-diagonal matrices $A$. We refer to \cite{LSY} for the detailed statements about the distribution of the eigenvalues of $W^{\txt{Gauss}} +D$.

Further applications of Theorem \ref{lem:general Wig} include a study of the eigenvectors of $W+A$ and the outliers of finite-rank perturbations of $W+A$. We do not pursue these questions here.

The rest of this section is devoted to the proof of Theorems \ref{lem:general Wig}, \ref{thm:wigner_rig}, and \ref{lem:comp Wig}. To unburden notation, from now on we omit the superscripts $W$, with the understanding that all quantities in this section are defined in terms of deformed Wigner matrices.

\subsection{Proof of Theorem \ref{lem:general Wig} (i)} \label{sec:Wigner_ll}

Exactly as in Section \ref{sec:iso_gauss}, we first prove that the entrywise local law with parameters $(W^{\txt{Gauss}}, D, \f S)$ implies the anisotropic local law with parameters $(W^{\txt{Gauss}}, A, \f S)$. The proof is very similar to the one from Section \ref{sec:iso_gauss}. As explained there, the details may be taken over with trivial modifications from \cite[Sections 5 and 7]{BEKYY}, where the proof is given for $A=0$.

What remains, therefore, is the proof of the anisotropic local law with parameters $(W, A, \f S)$, starting from the anisotropic local law with parameters $(W^{\txt{Gauss}}, A, \f S)$.
The basic strategy is similar to the one in section \ref{sec:comparison1}.  Define $\wh {\f S}$ and $\wh {\f S}_m$ as in Section \ref{sec:comparison1}, and denote by $\bb S \subset \R^N$ the unit sphere of $\R^N$. The following definitions are analogous to \eqref{bound_assumption} and \eqref{bound_goal}.
\begin{itemize}
\item[\textbf{($\f A_m$)}]
For all $z \in \wh{\f S}_m$ and all deterministic $\bv \in \bb S$ we have
\begin{equation} \label{bound_assumption_W}
\im G_{\f v \f v}(z) \;\prec\;  \im m(z) + N^{C_0 \delta} \Psi(z)\,.
\end{equation}

\item[\textbf{($\f C_m$)}]
For all $z \in \wh{\f S}_m$ and all deterministic $\bv \in \bb S$ we have
\begin{equation} \label{bound_goal_W}
\absb{G_{\f v \f v}(z) - \Pi_{\f v \f v}(z)} \;\prec\; N^{C_0 \delta} \Psi(z)\,.
\end{equation}
\end{itemize}
Here $C_0$ is a constant that may depend only on $\tau$, and $\delta$ satisfies \eqref{cond_delta}.

Clearly, \textbf{($\f A_0$)} holds, and \textbf{($\f C_m$)} implies \textbf{($\f A_m$)}. As in Lemma \ref{lem:induction}, we only need to prove that $(\f A_{m-1}) $ implies  \textbf{($\f C_m$)}. The necessary a priori bound is summarized in the following result, which may be proved like Lemma \ref{cor:rough_bound}.

\begin{lemma}
If \textbf{($\f A_{m-1}$)} holds then
\begin{equation} \label{a_priori_wig}
G(z) \;=\; O_\prec(N^{2 \delta})\,, \qquad \frac{\im G_{\f v \f v}(z)}{N \eta} \;\prec\; N^{(C_0 + 1) \delta} \Psi^2
\end{equation}
for all $z \in \wh {\f S}_m$ and deterministic $\f v \in \bb S$.
\end{lemma}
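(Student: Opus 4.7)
I propose to mimic the deduction of Lemma \ref{cor:rough_bound} from property \textbf{($\f A_{m-1}$)} in the sample covariance setting, adapted to the Hermitian matrix $W + A$. Two simplifications appear relative to that setting: there is no matrix $\ul \Sigma$ to track, and the deterministic approximant is bounded in operator norm by the standing hypothesis $\norm{\Pi} \leq \tau^{-1}$.

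The main step is a Wigner analogue of Lemma \ref{lem:rough_bound}: for every $z = E + \ii \eta \in \f D^W$ and all deterministic unit vectors $\f x, \f y$,
\begin{equation*}
\absb{G_{\f x \f y}(z) - \Pi_{\f x \f y}(z)} \;\leq\; C N^{2 \delta} \sum_{l = 1}^{L(\eta)} \pb{\im G_{\f x \f x}(E + \ii \eta_l) + \im G_{\f y \f y}(E + \ii \eta_l)} + C\,,
\end{equation*}
where $\eta_l$ and $L(\eta)$ are as in \eqref{def_L_eta}--\eqref{def_eta_k}. I would start from the spectral decomposition $G(z) = \sum_k (\lambda_k - z)^{-1} \f u_k \f u_k^*$ of $W+A$, polarise via $\absb{\scalar{\f x}{\f u_k}\scalar{\f u_k}{\f y}} \leq \tfrac12 (\absb{\scalar{\f x}{\f u_k}}^2 + \absb{\scalar{\f u_k}{\f y}}^2)$, and partition the eigenvalues into dyadic shells $U_l \deq \{k \col \eta_{l-1} \leq \abs{\lambda_k - E} < \eta_l\}$. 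On each $U_l$ one uses the pointwise inequality $\abs{\lambda_k - z}^{-1} \leq 2 \eta_l / ((\lambda_k - E)^2 + \eta_{l-1}^2)$ to recognise the contribution as $2\eta_l/\eta_{l-1} \leq 2 N^\delta$ times $\im G_{\f x \f x}(E + \ii \eta_{l-1})$, exactly as in the proof of Lemma \ref{lem:rough_bound}; the terminal shell $U_{L+1}$ is harmless since $\abs{\lambda_k} \prec 1$ by a standard norm bound for $W + A$, and the subtraction of $\Pi$ is absorbed into the constant $C$ using $\norm{\Pi} \leq C$.

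Given this dyadic estimate, the first bound in \eqref{a_priori_wig} follows at once: if $z \in \wh{\f S}_m$ then $E + \ii \eta_l \in \wh{\f S}_{m-1}$ for every $l \geq 1$, so \textbf{($\f A_{m-1}$)} gives
\begin{equation*}
\im G_{\f x \f x}(E + \ii \eta_l) \;\prec\; \im m(E + \ii \eta_l) + N^{C_0 \delta} \Psi(E + \ii \eta_l) \;\leq\; C\,,
\end{equation*}
where I use that $\im m$ is bounded (a standard general property of the Stieltjes transform of a compactly supported measure) and $N^{C_0 \delta} \Psi(E + \ii \eta_l) \leq N^{C_0 \delta - \tau/2} \leq 1$ under the compatibility $C_0 \delta < \tau/2$ already contained in \eqref{cond_delta}. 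Since $L(\eta) \leq \delta^{-1} + 1 = O(1)$ and $\norm{\Pi} \leq C$, this yields $\abs{G_{\f x \f y}(z)} \prec N^{2 \delta}$ uniformly in unit vectors $\f x, \f y$, which is the first claim of \eqref{a_priori_wig}.

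For the second bound I would dispense with the dyadic decomposition and rely on monotonicity. Setting $z_1 \deq E + \ii \eta_1 \in \wh{\f S}_{m-1}$, each summand of the spectral decomposition of $\im G_{\f v \f v}$ makes $\eta \mapsto \eta \im G_{\f v \f v}(E + \ii \eta)$ nondecreasing; analogously, $\eta \mapsto \eta^{-1} \im m(E + \ii \eta)$ is nonincreasing and consequently $\Psi^2$ is nonincreasing in $\eta$. Combining these with \textbf{($\f A_{m-1}$)} applied at $z_1$ gives
\begin{equation*}
\im G_{\f v \f v}(z) \;\leq\; N^\delta \im G_{\f v \f v}(z_1) \;\prec\; N^\delta \pb{\im m(z_1) + N^{C_0 \delta} \Psi(z_1)} \;\leq\; N^{2 \delta} \im m(z) + N^{(C_0 + 1) \delta} \Psi(z)\,,
\end{equation*}
and dividing by $N \eta$ together with the trivial estimates $\im m(z)/(N \eta) \leq \Psi(z)^2$ and $\Psi(z)/(N \eta) \leq \Psi(z)^2$ yields the desired bound $\im G_{\f v \f v}(z)/(N \eta) \prec N^{(C_0 + 1) \delta} \Psi(z)^2$. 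I do not foresee a genuine obstacle; the only point requiring care is the combinatorial bookkeeping of the dyadic cover (counting the shells and verifying $\abs{\lambda_k - z}^{-1} \leq 2\eta_l/((\lambda_k - E)^2 + \eta_{l-1}^2)$ on each of them), which transcribes directly from the proof of Lemma \ref{lem:rough_bound}.
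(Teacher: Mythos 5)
Your proposal is correct and follows the paper's intended route: the paper simply remarks that this lemma ``may be proved like Lemma \ref{cor:rough_bound}'', and you have rendered that reference explicit by transcribing both the dyadic rough bound (Lemma \ref{lem:rough_bound}) and the monotonicity argument to the Hermitian setting, correctly replacing the role of $\ul\Sigma$ and the $\Pi$-boundary term by the standing bound $\norm{\Pi^W} \leq C$. The only refinement over the paper's analogous estimate \eqref{a_priori_2} is your observation that $\Psi(z_1) \leq \Psi(z)$ (rather than merely $\leq N^\delta\Psi(z)$), which is what produces the stated exponent $C_0 + 1$ after dividing by $N\eta$ and using $\im m/(N\eta) \leq \Psi^2$ and $1/(N\eta) \leq \Psi$; this is the right bookkeeping to recover the lemma as stated.
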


Making minor adjustments to the argument of Sections \ref{sec:interpolation} and \ref{sec:expansion}, we find that it suffices to prove the following result, which is analogous to Lemma \ref{sc_step 3}.

\begin{lemma} \label{lem:Wig_main_est}
For $\f v \in \bb S$ define
\begin{equation*}
F_{\f v}(W,z) \;\deq\; \absb{G_{\f v \f v}(z) - \Pi_{\f v \f v}(z)}\,.
\end{equation*}
Let $z \in \wh{\f S}$ and suppose that \eqref{a_priori_wig} holds. Then we have
\begin{equation}\label{lowz}
N^{-n/2}   \sum_{i\leq j} \E \absbb{ \pbb{\frac{\partial}{\partial W_{ij}}}^n F^p_{\f v}(W,z)} \;=\; O \pB{(N^{C_0 \delta}\Psi(z))^p+  \E F_{\f v}^p(W,z)}
\end{equation}
for any $n = 4, 5, \dots, 4p$. Moreover, if in addition $\Psi(z) \leq N^{-1/4-c_0}$ then \eqref{lowz} holds also for $n = 3$.
\end{lemma}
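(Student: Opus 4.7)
The plan is to adapt the word formalism of Section \ref{sec:words} to the Wigner setting. The presentation is lighter here because there is only one index set $\qq{1,N}$ instead of the two sets $\cal I_M,\cal I_N$, so no distinction between letter types or the ``$\mu$-distinguished'' row is needed. Following Definition \ref{def:words}, I would assign to each word $w$ of length $2n$ in two letters $\f i,\f j$ with alternation constraint $[\f t_l]\ne[\f s_{l+1}]$ a monomial
\begin{equation*}
A_{\f v,i,j}(w) \;\deq\; G_{\f v[\f t_1]}\,G_{[\f s_2][\f t_2]}\cdots G_{[\f s_n]\f v},
\end{equation*}
with $A_{\f v,i,j}(\emptyset)\deq G_{\f v\f v}-\Pi_{\f v\f v}$. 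Using $\partial_{W_{ij}} G = -G(E_{ij}+(1-\delta_{ij})E_{ji})G$, the derivative $(\partial_{W_{ij}})^n(G_{\f v\f v}-\Pi_{\f v\f v})$ is a signed sum of such monomials, and by Leibniz $(\partial_{W_{ij}})^n F_{\f v}^p$ becomes a sum of products $\prod_{r=1}^p A_{\f v,i,j}(w_r)$ with $\sum_r n(w_r)=n$.

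Next I would collect the a priori estimates that come directly from \eqref{a_priori_wig}, in analogy with Lemma \ref{lem:w_est}. First, $|A_{\f v,i,j}(w)|\prec N^{2\delta(n(w)+1)}$ from $G=O_\prec(N^{2\delta})$. Second, for $n(w)\ge 1$ one has $|A_{\f v,i,j}(w)|\prec(R_i^2+R_j^2)\,N^{2\delta(n(w)-1)}$ with $R_s\deq |G_{\f v s}|+|G_{s\f v}|$, and for $n(w)=1$ the sharper bound $|A_{\f v,i,j}(w)|\prec R_i R_j$. The crucial summation gain replacing \eqref{bound_sum_im} is
\begin{equation*}
\frac{1}{N}\sum_{s=1}^N R_s^2 \;\le\; \frac{2\,\im G_{\f v\f v}}{N\eta} \;\prec\; N^{(C_0+1)\delta}\Psi^2,
\end{equation*}
which is exactly the second bound in \eqref{a_priori_wig}. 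With these ingredients in place, the argument of \eqref{sc_step 6}--\eqref{sc_step 8} transfers with only cosmetic changes. For $n\ge 4$, the prefactor $N^{-n/2}$ combined with the sum over $(i,j)\in\qq{1,N}^2$ gives $N^{2-n/2}\le 1$; by pigeonholing on the number $q$ of nontrivial words, each word of length $1$ contributes a pair $R_iR_j$ which after summation yields $\Psi^2\cdot N^{C\delta}$, and each longer word contributes $(R_i^2+R_j^2)\cdot N^{C\delta}$. A careful exponent count, identical in structure to \eqref{main_comb_est}, shows that the resulting power of $N^{C_0\delta}\Psi$ is at least $\min(n,q)$, and Young's inequality then closes the bound by the right-hand side of \eqref{lowz}.

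The main obstacle is the $n=3$ case, where the prefactor $N^{-3/2}$ and the sum over $(i,j)$ leave a net factor $N^{1/2}$ which is not absorbed by the summation-gain mechanism alone: three derivatives produce at best a term of size $R_iR_j\cdot N^{C\delta}$, giving $N^{1/2}\cdot N^{C\delta}\Psi^2$, still a power of $\Psi$ short of what is needed. In the sample-covariance proof this deficit was made up by the parity cancellation of Section \ref{sec:self-const_II}, which exploited the distinguished row $\mu$; that structure is simply not present for Wigner matrices (the pair $(i,j)$ is symmetric and plays no analogous role). The replacement is the quantitative smallness hypothesis $\Psi\le N^{-1/4-c_0}$, which yields $N^{1/2}\Psi^3\le N^{-1/4-3c_0}$, a genuine gain. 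More precisely, the worst case in the pigeonholing gives a bound of the form
\begin{equation*}
N^{-3/2}\sum_{i\le j}\E\absBB{A_{\f v,i,j}(w_0)^{p-q}\prod_{r=1}^q A_{\f v,i,j}(w_r)} \;\prec\; N^{C\delta}\,\Psi^{q+1}\,\E F_{\f v}^{p-q}(W,z),
\end{equation*}
and using $\Psi\le N^{-1/4-c_0}$ to upgrade $\Psi^{q+1}$ to $\Psi^{p}\cdot N^{-c_0(p-q-1)}$ (when $q+1<p$) together with Young's inequality yields the required estimate, provided $\delta$ is chosen small enough that $N^{C_0\delta}$ is dominated by $N^{c_0}$. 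The bookkeeping is mildly delicate but essentially a one-parameter adaptation of Section \ref{sec:self-const_II} in which the parity-generated factor $N^{-1/2}$ is replaced by the a priori factor $N^{-1/4-c_0}$ per extra off-diagonal entry.
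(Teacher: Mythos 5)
Your treatment of $n \geq 4$ is essentially the paper's -- a cosmetic adaptation of the word formalism of Section \ref{sec:words} using the a priori bounds \eqref{a_priori_wig} -- but the $n=3$ case contains a genuine gap. The displayed bound $N^{-3/2}\sum_{i\leq j}\E\absb{A(w_0)^{p-q}\prod_{r=1}^q A(w_r)}\prec N^{C\delta}\Psi^{q+1}\E F_{\f v}^{p-q}$ that you call the ``worst case'' is not achievable from the ingredients you list, and is in fact off by a factor of $N^{1/2}$: for $q=1$ the single nontrivial word of length six gives only $\absb{A_{\f v,i,j}(w_1)}\prec(R_i^2+R_j^2)N^{C\delta}$, hence $N^{-3/2}\sum_{i\leq j}\absb{A(w_1)}\prec N^{-3/2}\cdot N\cdot\sum_i R_i^2\cdot N^{C\delta}\prec N^{1/2}\Psi^2 N^{C\delta}$, and similarly a residual $N^{1/2}\Psi^{q+1}$ for $q=2,3$. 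This $N^{1/2}$ is not absorbed by the smallness hypothesis: $\Psi\leq N^{-1/4-c_0}$ only gives $N^{1/2}\Psi^2\leq N^{-2c_0}$, which is \emph{not} dominated by $N^{C_0\delta}\Psi$ since $\Psi$ can be as small as $cN^{-1/2}$ while $c_0,\delta$ are small, so the self-consistent estimate does not close.

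What is missing are the two devices the paper introduces specifically for $n=3$. First, a further bootstrapping \emph{at fixed $z$} in an auxiliary parameter $\Phi\leq 1$ tracking the precision $G-\Pi=O_\prec(N^{2\delta}\Phi)$, via the self-improving inequality \eqref{Wig_self_improv}; starting from $\Phi=1$ and iterating reduces $\Phi$ by $N^{-c_0/2}$ per step until it drops below $\Psi$. Second, to prove \eqref{Wig_self_improv} every ``middle'' resolvent entry is split as $G_{ab}=\Pi_{ab}+\wt G_{ab}$. The $\wt G$-containing terms pick up the extra small factor $\Phi$, which combined with $N^{1/2}\Psi^2\leq N^{-2c_0}$ yields $O_\prec(N^{-c_0}\Phi)$ -- this is the only place the hypothesis $\Psi\leq N^{-1/4-c_0}$ is used, and it improves $\Phi$ rather than closing the estimate directly. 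The pure-$\Pi$ term is treated \emph{without} taking absolute values inside the $i$-sum, viewing $\sum_i G_{\f v i}\Pi_{ab}\Pi_{cd}$ as a generalized resolvent entry and using $\norm{G}\prec N^{2\delta}$ together with the deterministic boundedness of $\Pi$ to get $\absb{\sum_{i:i\leq j}G_{\f v i}\Pi_{ab}\Pi_{cd}}\prec N^{1/2+2\delta}$, an $N^{1/2}$ gain over the naive sum that exactly cancels the deficit. It is this deterministic $N^{1/2}$ gain plus the $\Phi$-iteration, not the raw smallness of $\Psi$, that plays the role of the parity cancellation of Section \ref{sec:self-const_II}; without them the $n=3$ estimate does not close.
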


Here we also used that the function $\eta \mapsto \Psi(E + \ii \eta)$ is decreasing, so that if the assumption $\Psi \leq N^{-1/4-c_0}$ holds at $z \in \f S$ then it also holds for all $w \in \f S$ satisfying $\re w = \re z$ and $\im w \geq \im z$.

The rest of this subsection is devoted to the proof of Lemma \ref{lem:Wig_main_est}. We note that the word structure describing the derivatives on the left-hand side of \eqref{lowz} is very similar to that of \eqref{sc_step 3} (given in Definition \ref{def:words}). Hence the proof of Lemma \ref{lem:Wig_main_est} is similar to that of Lemma \ref{lem:sc_step 3}.

The proof of \eqref{lowz} for $n \geq 4$ is a trivial modification of the argument given in Section \ref{sec:words}, whose details we omit. What remains therefore is the proof of \eqref{lowz} for $n = 3$ under the assumption $\Psi \leq N^{-1/4-c_0}$. From now on we omit the arguments $z$.

The main new ingredient of the proof for deformed Wigner matrices is a further iteration step at fixed $z$. Suppose that
\begin{equation} \label{oldPsi}
G - \Pi \;=\; O_\prec(N^{2 \delta} \Phi)
\end{equation}
for some $\Phi \leq 1$. Since $\norm{\Pi} \leq C$, the estimate \eqref{oldPsi} is stronger than the first estimate of \eqref{a_priori_wig}. Note that, by assumption \eqref{a_priori_wig}, the estimate \eqref{oldPsi} holds for $\Phi = 1$. Assuming \eqref{oldPsi}, we shall prove a self-improving bound of the form
\begin{equation} \label{Wig_self_improv}
N^{-3/2}   \sum_{i\leq j} \E \absbb{\pbb{\frac{\partial}{\partial W_{ij}}}^3 F^p_{\f v}(W)} \;=\; O \pB{(N^{C_0 \delta}\Psi)^p+(N^{-c_0/2} \Phi)^p+\E F_{\f v}^p(W,z)}\,.
\end{equation}
Once \eqref{Wig_self_improv} is proved, we may use it iteratively to obtain increasingly accurate bounds for the left-hand side of \eqref{bound_goal_W}. After each step, we obtain an improved a priori bound \eqref{oldPsi}, whereby $\Phi$ is reduced by powers of $N^{-c_0/2}$. After $O(1/c_0)$ iterations, \eqref{lowz} for $n = 3$ follows.

It therefore suffices to prove \eqref{Wig_self_improv} under the assumptions \eqref{a_priori_wig} and \eqref{oldPsi}. As in Section \ref{sec:words}, it suffices to prove
\begin{equation} \label{main Wig} 
N^{-3/2}\absBB{\sum_{i \leq j} A_{\f v,i, j}(w_0)^{p - q} \prod_{r = 1}^{q} A_{\f v,i,j }(w_r)} 
\;\prec\; F_{\f v}^{p-q}(W) \pb{N^{(C_0 -1) \delta} \Psi +N^{-c_0} \Phi}^q\,,
\end{equation}
where the words $w$ and their values $A_{\f v,i,j}(\,\cdot\,)$ are defined similarly to Definition \ref{def:words}. More precisely, abbreviate $\wt G \deq G - \Pi$. Then for $n(w) = 0$ we have $A_{\f v,i, j}(w) = \wt G_{\f v \f v}$, and for $n(w) \geq 1$ the random variable $A_{\f v,i,j }(w)$ is a product of entries $G_{\f v i}$, $G_{\f v j}$, $G_{i j}$, $G_{ji}$, $G_{i \f v}$, and $G_{j \f v}$, as in \eqref{def_w_1}. Clearly, to prove \eqref{main Wig} it suffices to prove
\begin{equation} \label{main Wig 2} 
N^{-3/2}\absBB{\sum_{i \leq j} \prod_{r = 1}^{q} A_{\f v,i,j }(w_r)} 
\;\prec\; \pb{N^{(C_0 -1) \delta} \Psi +N^{-c_0} \Phi}^q\,.
\end{equation}
We discuss the three cases $q = 1,2,3$ separately.

\subsubsection*{The case $q = 1$}
The single factor $A_{\f v,i,j }(w_r)$ is of the form
\begin{equation} \label{Wig_A_form}
G_{\f v i} G_{a b} G_{cd} G_{j \f v} \qquad \txt{or} \qquad G_{\f v i} G_{a b} G_{cd} G_{i \f v}
\end{equation}
or a term obtained from one of these two by exchanging $i$ and $j$;
here $a,b,c,d \in \{i,j\}$. We deal first with the first expression; the others are deal with analogously. We split it according to
\begin{equation} \label{q1_wig_split}
G_{\f v i} G_{a b} G_{cd} G_{j \f v} \;=\; G_{\f v i} \Pi_{a b} \Pi_{cd} G_{j \f v}
+ G_{\f v i} \wt G_{a b} \Pi_{cd} G_{j \f v}
+ G_{\f v i} \Pi_{a b} \wt G_{cd} G_{j \f v}
+ G_{\f v i} \wt G_{a b} \wt G_{cd} G_{j \f v}\,.
\end{equation}
We deal with the summation over $i$ using the estimate
\begin{equation} \label{shale}
\frac{1}{N} \sum_{i} \abs{G_{\f v i}}^2 \;=\; \frac{\im G_{\f v \f v}}{N \eta} \;\prec\; N^{(C_0 + 1) \delta} \Psi^2\,,
\end{equation}
where in the last step we used \eqref{a_priori_wig}; a similar estimate holds for the summation over $j$. Using \eqref{shale}, \eqref{oldPsi}, and $\norm{\Pi} \leq \tau^{-1}$, we may estimate the second term on the right-hand side of \eqref{q1_wig_split} as
\begin{equation*}
N^{-3/2} \sum_{i,j} \absb{G_{\f v i} \wt G_{a b} \Pi_{cd} G_{j \f v}} \;\prec\; N^{-3/2} \sum_{i,j} N^{2 \delta} \Phi \absb{G_{\f v i} G_{j \f v}} \;\prec\; N^{1/2} N^{(C_0 + 3) \delta} \Psi^2 \Phi \;\leq\; N^{-c_0} \Phi\,,
\end{equation*}
provided $\delta$ is chosen small enough, depending on $\tau$ and $c_0$. The third and fourth terms on the right-hand side of \eqref{q1_wig_split} are estimated in exactly the same way.

What remains is the first term on the right-hand side of \eqref{q1_wig_split}. Here taking the absolute value inside the sum is not affordable. Instead, we use the first A priori bound of \eqref{a_priori_wig} to estimate
$
\absb{\sum_{i : i \leq j} G_{si} \Pi_{ab} \Pi_{cd}} \prec N^{1/2 + 2 \delta}\,,
$
where we used that $\Pi$ is deterministic and satisfies the bound $\sum_{i} \abs{\Pi_{ab} \Pi_{cd}} \leq \tau^{-2} N$. Combining this estimate with $\sum_j \abs{G_{j \f v}} \prec N^{1 + (C_0/2 + 1) \delta} \Psi$ from \eqref{shale}, we find
\begin{equation*}
N^{-3/2} \absbb{\sum_{i \leq j} G_{\f v i} \Pi_{a b} \Pi_{cd} G_{j \f v}} \;\prec\; N^{(C_0 / 2 + 3) \delta} \Psi \;\leq\; N^{(C_0 - 1) \delta} \Psi\,,
\end{equation*}
provided $C_0 \geq 8$.

Finally, if $A_{\f v,i,j }(w_r)$ is the second expression of \eqref{Wig_A_form}, the argument is analogous. In this case at least one of the terms $G_{ab}$ and $G_{cd}$ is of the form $G_{ij}$ or $G_{ji}$, so that, in the analogue of the first term on the right-hand side of \eqref{q1_wig_split}, we may use the improved estimate $\sum_{i} \abs{\Pi_{ab} \Pi_{cd}} \leq \tau^{-1} \sum_i \abs{\Pi_{ij}}\leq \tau^{-2} N^{1/2}$, as follows from $\norm{\Pi} \leq \tau^{-1}$.

\subsubsection*{The case $q = 2$}
In this case the product $\prod_{r = 1}^{q} A_{\f v,i,j }(w_r)$ is of the form
\begin{equation} \label{Wig_A_form_2}
G_{\f v i} (GG_{j \f v}^*)_{jt} G_{\f v i}  G_{ji}G_{j \f v} \qquad \txt{or} \qquad  G_{\f v i}G_{j \f v} G_{\f v i}  G_{jj} G_{i \f v}\,,
\end{equation}
or an expression obtained from one of these two by exchanging $i$ and $j$. The contribution of the first expression of \eqref{Wig_A_form_2} is estimated using \eqref{a_priori_wig} and \eqref{shale} by
\begin{equation*}
N^{-3/2} \sum_{i,j} \absb{G_{\f v i}G_{j \f v} G_{\f v i}  G_{ji}G_{j \f v}} \;\prec\; N^{1/2 + (2 C_0 + 4) \delta} \Psi^4 \;\leq\; N^{(2 C_0 + 4) \delta - 2 c_0} \Psi^2 \;\leq\; \Psi^2\,,
\end{equation*}
provided $\delta$ is chosen small enough, depending on $\tau$ and $c_0$.

Next, in order to estimate the contribution of the second expression of \eqref{Wig_A_form_2}, we split
\begin{equation} \label{wq2a}
\sum_jG_{j \f v} G_{jj} \;=\; \sum_jG_{j \f v}\Pi_{jj} + \sum_jG_{j \f v}\wt G_{jj}
 \;=\; O_\prec(N^{1/2+2\delta})+O_\prec \pb{N^{1+(C_0 / 2 + 1) \delta} \Psi \Phi}\,,
\end{equation}
where we used \eqref{a_priori_wig}, \eqref{oldPsi}, and \eqref{shale}. Similarly, using \eqref{a_priori_wig} and \eqref{shale} we get
\begin{equation} \label{wq2b}
\absbb{\sum_i G_{\f v i}  G_{\f v i}G_{i \f v}} \;\prec\; N^{1 + (C_0 + 3) \delta} \Psi^2 \;\leq\; N^{1/2 - c_0}
\end{equation}
for small enough $\delta$. Putting \eqref{wq2a} and \eqref{wq2b} together, it is easy to deduce \eqref{main Wig 2}.
 
\subsubsection*{The case $q = 3$}
Now $\prod_{r = 1}^q A_{\f v,i,j }(w_r)$ is of the form $\pb{G_{\f v i} G_{j \f v}}^3$, or an expression obtained by exchanging $i$ and $j$ in some of the three factors. We estimate, using \eqref{oldPsi} and \eqref{shale},
\begin{multline*}
\absbb{\sum_i G_{\f v i}^3} \;\leq\; 4 \sum_i \abs{\wt G_{\f v i}}^3 + 4 \sum_i \abs{\Pi_{\f v i}}^3
\\
\prec\; \sum_i \pb{\abs{G_{\f v i}}^2 + \abs{\Pi_{\f v i}}^2} N^{2 \delta} \Phi + 1 \;\prec\; N^{1 + (C_0 + 1) \delta} \Psi^2 \Phi + N^{2 \delta} \Phi + 1\,,
\end{multline*}
where we used that $\sum_i \abs{\Pi_{\f v i}}^2 \leq \tau^{-2}$. Now \eqref{main Wig 2} is easy to conclude using $\Psi \geq N^{-1/2 + \tau/2}$.

\subsection{Proof of Theorem \ref{lem:general Wig} (ii)}
The proof is similar to that from Section \ref{sec:avg_proof}. As in Section \ref{sec:Wigner_ll}, it suffices to prove the following result.

\begin{lemma} \label{lem:Wig_main_est_avg}
Let $z \in \f S$ and suppose that the anisotropic local law holds at $z$. Define $\wt F(W) \deq \absb{\frac 1N  \sum_i G_{ii} - m}$. Then we have
\begin{equation}\label{lowzYY}
N^{-n/2}   \sum_{i\leq j} \absBB{\E \pbb{\frac{\partial}{\partial W_{ij}}}^n \wt F^p(W)}
\;=\; O \pBB{(N^{\delta}\Psi^2)^{p}+ \pbb{\frac{N^{-c_0/2}}{N\eta}}^{ p} +\E \wt F^p(W) }
\end{equation}
for any $n = 4, 5, \dots, 4p$. Moreover, if in addition $\Psi(z) \leq N^{-1/4-c_0}$ then \eqref{lowzYY} holds also for $n = 3$.
\end{lemma}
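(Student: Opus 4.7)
The plan is to run the averaged-local-law self-consistent comparison from Section \ref{sec:avg_proof} (specifically the proof of Proposition \ref{prop:gen_avg}) in the simpler Wigner setting, feeding in as a priori input the anisotropic local law for $W+A$ proved in the previous subsection. Roughly, the extra averaging in $\wt F = |\frac{1}{N}\sum_i G_{ii} - m|$ should upgrade each occurrence of $\Psi$ arising in the anisotropic bound to $\Psi^2 \sim 1/(N\eta)$, and a parity argument will supply the additional $N^{-1/2}$ needed to treat $n=3$.

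First I would expand $(\partial/\partial W_{ij})^n \wt F^p$ using Leibniz and the resolvent identity $\partial G_{kl}/\partial W_{ij} = -(G_{ki}G_{jl}+G_{kj}G_{il})/(1+\delta_{ij})$, exactly as in Definition \ref{def:words} and in the treatment of \eqref{main Wig 2}, but now with each factor of the form $\frac{1}{N}\sum_{\nu} A_{\f e_\nu, i,j}(w_r)$. A word $w$ with $n(w) \geq 1$ contributes a product of entries $G_{\f e_\nu s}$, $G_{s \f e_\nu}$ and $G_{st}$ with $s,t\in\{i,j\}$, and by the anisotropic local law $|G_{st}-\Pi_{st}|\prec \Psi$ combined with the identity $\frac{1}{N}\sum_\nu G_{\f e_\nu s}G_{t \f e_\nu} = (GG^*)_{ts}/N + \text{trace-like terms}$, one obtains
\begin{equation*}
\absBB{\frac{1}{N}\sum_{\nu} A_{\f e_\nu, i,j}(w_r)} \;\prec\; \Psi^2 \qquad \text{whenever } n(w_r)\geq 1.
\end{equation*}
This is the analog of \eqref{wt_A_estimate} and is the gain produced by averaging. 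For $n\geq 4$, it suffices to combine this with Young's inequality on the $p-q$ factors with $n(w_r)=0$ (which contribute $\wt F(W)$), and sum over $i\leq j$ picking up a factor $N^2/N^{n/2}\leq 1$. This closes the case $n\geq 4$ with room to spare.

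The delicate case is $n=3$. Here simple counting only gives $N^{2-3/2}\Psi^2 = N^{1/2}\Psi^2$ per word, which is not summable. As in Section \ref{sec:avg_proof}, I would extract an extra $N^{-1/2}$ via a parity argument: split each sum according to whether a distinguished index $j$ (say) yields factors in $A^0$ (which are $W^{(j)}$-measurable after performing the decomposition of Lemma \ref{lem:idG} analogous to \eqref{main_identity}) or in $A^+$ (which depends linearly on the column $W_{\cdot j}$). Conditioning on $W^{(j)}$ and taking $\E_j(\,\cdot\,)$, the total degree in the Gaussian-like variables $\{W_{kj}\}_k$ arising from the three derivatives is odd (analog of Lemma \ref{lem:d odd}), so no perfect matching exists and one gains a factor $N^{-1/2}$. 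Combined with $\Psi\leq N^{-1/4-c_0}$, the remaining factors of $\Psi$ or of the a priori bound $\wt F \prec \Phi$ can be absorbed by a self-improving bootstrap on $\Phi$: starting from the trivial bound $\Phi=1$ coming from the anisotropic local law (which gives $\wt F\prec \Psi$), each iteration of \eqref{lowzYY} at $n=3$ reduces $\Phi$ by a factor $N^{-c_0/2}$, and after $O(1/c_0)$ iterations one reaches the desired final bound $(N^{-c_0/2}/(N\eta))^p$.

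The main obstacle will be carrying out the parity / conditional-expectation argument cleanly in the presence of the deformation $A$, which makes $\Pi$ non-diagonal and therefore couples indices in a nontrivial way through the decomposition \eqref{G_split}; concretely, each sub-word produces terms of the form $[G_{\f e_\nu i}]^\sigma$ with $\sigma\in\{0,1,2\}$, and one must verify that for every admissible assignment $\sigma_1,\dots,\sigma_p$ the resulting bookkeeping of ``$\f v$-tags'' and ``$\f \mu$-tags'' yields a combinatorial inequality analogous to \eqref{main_comb_est_2}. I expect this to go through because the structure of words for $\wt F$ is strictly simpler than for the sample covariance case (there are no off-diagonal $X$-blocks to mediate between $\cal I_M$ and $\cal I_N$), so that the inequality to be verified contains fewer parameters and can be checked by a short case analysis on $q\in\{1,2,3\}$, mirroring the case analysis used in the proof of \eqref{main Wig 2} above. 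Once this combinatorial step is done, Young's inequality and the a priori bounds from the anisotropic local law close \eqref{lowzYY}.
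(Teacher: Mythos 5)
Your plan for $n \geq 4$ matches the paper. For $n = 3$, however, you propose a genuinely different route from what the paper actually does, and the step you identify as "the main obstacle" is precisely where the work would be, so the proposal is incomplete.

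The paper does \emph{not} use a conditioning/parity argument for Lemma \ref{lem:Wig_main_est_avg}, nor a $\Phi$-bootstrap. Those devices are used elsewhere: the parity argument (Lemma \ref{lem:d odd}) in the sample-covariance averaged law (Proposition \ref{prop:gen_avg}), and the $\Phi$-bootstrap in the proof of the \emph{anisotropic} Wigner law (Lemma \ref{lem:Wig_main_est}). For the \emph{averaged} Wigner law at $n=3$, the paper instead observes that the extra $\frac{1}{N}\sum_{s_r}$-averaging converts products of entries $G_{s i}\cdots G_{j s}$ directly into matrix entries $(G^2)_{ij}$, $(G^2)_{ii}$, etc., and then estimates these using $\abs{(G^2)_{ij}} \prec N\Psi^2$, $\absb{\sum_j G_{ij}\Pi_{jj}}\prec N^{1/2}$, and the spectral identity $\tr\abs{G}^4 \leq \eta^{-3}\sum_k \eta/((\lambda_k-E)^2+\eta^2) \prec N^2(\im m + \Psi)/(N\eta)^3$. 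This yields the needed $N^{-1/2}$ and the $(N^{-c_0}/(N\eta))^q$ factors in a short case analysis over $q\in\{1,2,3\}$, with no conditioning on a minor, no tagged-word bookkeeping, and no iteration in $\Phi$.

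Your route is plausible in spirit but carries real additional difficulty that you have left unresolved: in the Wigner setting the derivative $\partial/\partial W_{ij}$ distinguishes \emph{two} indices $i,j$, and because $W$ is symmetric, conditioning on $W^{(j)}$ leaves not an independent column but the symmetric pair $(W_{kj})_k=(W_{jk})_k$; you would need to redo the analogue of the decomposition \eqref{main_identity} and the tagged-word calculus of Section \ref{sec:self-const_II} in this setting with a non-diagonal $A$, and then verify the degree-parity and the combinatorial inequality replacing \eqref{main_comb_est_2}. You explicitly flag this as the main obstacle and only \emph{expect} it to go through, so as written the proof has a genuine gap at its central step. Moreover, the $\Phi$-bootstrap you append is unnecessary overhead: once the anisotropic local law is available as a priori input (which is the hypothesis of the lemma), the paper's direct trace estimates already close \eqref{lowzYY} in one pass. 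I would recommend replacing the conditioning/parity plan with the trace-identity computation, which is both shorter and avoids the symmetry and non-diagonality issues you anticipate.
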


The case $n\geq 4$ can be easily proved as in covariance case. We therefore focus on the case $n = 3$ in Lemma \ref{lem:Wig_main_est_avg}. The proof is similar to the discussion below \eqref{main Wig}. The main difference is that for each $q$ we have some extra averaging
$N^{-q}\sum_{s_1, \dots, s_q} (\,\cdot\,)$, and we need to extract an extra factor $\Psi^q$ (or, alternatively, $(N^{-1-c_0/2}\eta^{-1}\Psi^{-1})^q$) from this average. We take over the notations from Sections \ref{sec:words} and \ref{sec:Wigner_ll} without further comment. We consider the three cases $q = 1,2,3$ separately, and tacitly use the anisotropic local law from Theorem \ref{lem:general Wig} (i).

\subsubsection*{The case $q = 1$}
Consider first the case $A_{s,s,i,j }(w_1) = G_{si}  G_{jj}G_{ij}G_{is}$. We estimate
\begin{equation*}
\absbb{\sum_{j} G_{ij}G_{jj}} \;\leq\;  \absbb{\sum_{j} G_{ij}\Pi_{jj}}+O_\prec (N\Psi^2) \;\prec\; N^{1/2}
\,, \qquad \sum_i \abs{G_{si}G_{is}} \;\prec\; N \Psi^2\,.
\end{equation*}
This gives
 $$
\absbb{\frac{1}{N}\sum_s\sum_{i, j}G_{si}  G_{jj}G_{ij}G_{is}} \;\prec\; N^{3/2}\Psi^2\,,
 $$
 as desired. Next, in the case $A_{s,s,i,j }(w_1) = G_{si} G_{ji} G_{ji}G_{js}$ we estimate
 $$
\absbb{\sum_{i, j}G_{si} G_{ji} G_{ji}G_{js}} \;\leq\; \sum_{i, j}(\abs{G_{si}}^2+\abs{G_{sj}}^2) \abs{G_{ji}}^2 \;\prec\; N^2\Psi^4 \;\prec\; N^{3/2}\Psi^2\,.
 $$
 Finally, in the case $A_{s,s,i,j }(w_1) = G_{si} G_{jj} G_{ii} G_{js}$ we estimate
\begin{equation*}
\absbb{\sum_i G_{si} G_{ii}} \;\leq\; \absbb{\sum_i G_{si} \wt G_{ii}} + \absbb{\sum_i G_{si} \Pi_{ii}} \;\prec\; N \Psi^2 + N^{1/2} \Psi \;\leq\; C N \Psi^2\,.
\end{equation*}
Using a similar bound for the sum over $j$, we find $\frac{1}{N} \sum_s \sum_{i,j} G_{si} G_{jj} G_{ii} G_{js} = O_\prec(N^2 \Psi^4) = O_\prec(N^{3/2} \Psi^2)$. All other terms are obtained from these three by exchanging $i$ and $j$.

\subsubsection*{The case $q = 2$}
In this case $N^{-1} \sum_{s_1, s_2} \prod_{r = 1}^2 A_{s_r,s_r,i,j }(w_r)$ is of the form
\begin{equation}
\frac{1}{N^2} \sum_{s_1, s_2} G_{s_1 i} G_{j s_1} G_{s_2 i} G_{ji} G_{j s_2} \qquad \txt{or} \qquad \frac{1}{N^2} \sum_{s_1, s_2} G_{s_1 i} G_{j s_1} G_{s_2 i} G_{jj} G_{i s_2}\,,
\end{equation}
or an expression obtained from one of these two by exchanging $i$ and $j$. These may be written as
\begin{equation} \label{piece}
\frac{1}{N^2} (G^2)_{ji}^2 G_{ji} \qquad \txt{or} \qquad  \frac{1}{N^2} (G^2)_{ji} (G^2)_{ii} G_{jj}\,.
\end{equation}
We estimate the contribution of the first expression by
\begin{multline*}
\absbb{\sum_{i,j}  \frac{1}{N^2} (G^2)_{ji}^2 G_{ji}} \;\prec\; \frac{1}{N^2} \tr|G|^4 \;=\; \frac{1}{N^2} \sum_k \frac{1}{\p{(\lambda_k - E)^2 + \eta^2}^2} \;\leq\; \frac{1}{N^2 \eta^3} \sum_k \frac{\eta}{(\lambda_k - E)^2 + \eta^2}
\\
\;\prec\; N^2 \frac{\im m + \Psi}{(N\eta)^3} \;\leq\; 2 N^2 \frac{1}{(N \eta)^2} \Psi^2 \;\leq\; 2 N^{3/2} \pbb{\frac{N^{-c_0}}{N \eta}}^2\,.
\end{multline*}
Next, we split the contribution of the second expression of \eqref{piece} as
  $$
 \sum_{i,j}  \frac{1}{N^2} (G^2)_{ji}   (G^2)_{ii}  G_{jj} \;=\;
  \sum_{i,j}   \frac{1}{N^2} (G^2)_{ji}   (G^2)_{ii}  \wt G_{jj}+  
  \sum_{i,j}   \frac{1}{N^2} \Pi_{jj}(G^2)_{ji}   (G^2)_{ii}\,.
  $$
Using the anisotropic local law, it is easy to prove that $\abs{(G^2)_{ij}} \prec N\Psi^2$ and $\absb{\sum_j \Pi_{jj} (G^2)_{ji}} \prec N^{3/2}\Psi^2$. Therefore 
\begin{multline*}
\absbb{\sum_{i,j}  \frac{1}{N^2} (G^2)_{ji}   (G^2)_{ii}  G_{jj}}
\;\prec\; \Psi^3(\tr|G|^4)^{1/2}+ N^{3/2}\Psi^4
\\
\prec\; N^2\frac{1}{N \eta}\Psi^4+ N^{3/2}\Psi^4 \;\leq\; N^{3/2} \Psi^2 \frac{N^{- c_0}}{N \eta} + N^{3/2} \Psi^4\,,
\end{multline*}
where we estimate $\tr \abs{G}^4$ as above. This concludes the proof in the case $q = 2$.

\subsubsection*{The case $q = 3$}
In this case $\sum_{s_1, s_2, s_3} \prod_{r = 1}^3 A_{s_r, s_r,i,j }(w_r)$ is of the form $(G^2)_{ij}^3$, or an expression obtained by exchanging $i$ and $j$ in some of the three factors. We estimate its contribution by
 $$
\absbb{\frac{1}{N^3} \sum_{i,j} (G^2)_{ij}^3} \;\prec\; N^{-2}\tr |G|^4\Psi^2
\;\leq\; N^2 \frac{1}{(N\eta)^2} \Psi^4 \;\prec\; N^{3/2} \pbb{\frac{N^{-c_0}}{N \eta}}^2 \Psi^2\,,
 $$
 which concludes the proof.
  
\subsection{Proof of Theorem \ref{thm:wigner_rig}}
The proof is analogous to that of Theorem \ref{thm:rigidity} in Section \ref{sec:rigidity}. Define the domain
$$
\f S_\tau \;\deq\; \hB{ z\col N^{-2/3+\tau} \leq \dist(E, [L_-, L_+]) \leq \tau^{-1} \,,\, N^{-\tau} \kappa(E) \leq \eta \leq  N^{-\delta/2} \kappa(E)}\,.
$$
From the assumptions on $W^{\txt{Gauss}} + D$, it is not hard to deduce the estimate
$$
\frac{1}{N} \tr \pb{W^{\txt{Gauss}} + D - z}^{-1} - m(z) \;=\; O_\prec \pbb{\frac{N^{-c}}{N\eta}}
$$
for all $z \in \f S_\tau$, where $c > 0$ is a positive constant that depends only on $\tau$.

Next, from Theorem \ref{lem:general Wig} (ii), we have $N^{-1} \tr G (z)- m(z) = O_\prec((N\eta)^{-1})$ for $z\in\f S _\tau$, which is not enough to establish the rigidity of the extreme eigenvalues. However, analogously to Proposition \ref{prop:gen_avg}, our proof of Theorem \ref{lem:general Wig} (ii) in fact yields a stronger result. Indeed, Lemma \ref{lem:Wig_main_est_avg} implies that
$$
\absbb{\frac{1}{N} \tr G (z) - m(z)} \;\prec\;  \frac{N^{-c}}{N\eta} + \Psi^2 \;\prec\; \frac{N^{-c}}{N\eta}
$$
for $z\in\f S _\tau$. We may now repeat the argument starting at \eqref{tur1}, with trivial modifications, to deduce that $[\lambda_1 (W+A)-L_+]_+\prec N^{-2/3}$. This concludes the proof of Theorem \ref{thm:wigner_rig}.

\subsection {Proof of Theorem \ref{lem:comp Wig}}
Analogously to the proof of Theorem \ref{thm:edge_univ}, the proof is a routine application of the Green function comparison method near the edge \cite[Section 6]{EYY3}. The key technical inputs are Theorems \ref{lem:general Wig} and \ref{thm:wigner_rig}. Note that Theorem \ref{lem:general Wig} is applicable since the Green function comparison argument only involves $z$ satisfying $\abs{\Psi(z)} \leq N^{-1/3-c}$ with some small constant $c>0$. Hence the assumption (b) from Theorem \ref{lem:general Wig} is satisfied.

\appendix

\section{Properties of $\varrho$ and Stability of \eqref{def_m}} \label{sec:varrho}

This appendix is devoted the proofs of the basic properties of $\varrho$ and the stability of \eqref{def_m} in the sense of Definition \ref{def:stability}. In this appendix we abbreviate $r_i \deq \phi \pi(\{s_i\})$.

\subsection{Properties of $\varrho$ and proof of Lemmas \ref{lem:crit points}--\ref{lem:rho_struct}}

In this subsection we establish the basic properties of $\varrho$ and prove Lemmas \ref{lem:crit points}--\ref{lem:rho_struct}.

\begin{proof}[Proof of Lemma \ref{lem:crit points}]
We have
\begin{equation} \label{f_der}
f'(x) \;=\; \frac{1}{x^2} - \sum_{i = 1}^n \frac{r_i}{(x + s_i^{-1})^2}\,, \qquad f''(x) \;=\; \frac{-2}{x^3} + 2 \sum_{i = 1}^n \frac{r_i}{(x + s_i^{-1})^3}\,.
\end{equation}
From \eqref{f_der} we find that $(x^3 f''(x))' > 0$ on $I_0 \cup \cdots \cup I_n$. Therefore for $i = 2, \dots, n$ there is at most one point $x \in I_i$ such that $f''(x) = 0$. We conclude that $I_i$ has at most two critical points of $f$. Using the boundary conditions of $f$ on $\partial I_i$, we conclude the proof of $\abs{\cal C \cap I_i} \in \{0,2\}$ for $i = 2, \dots n$.

From \eqref{f_der} we also find that $(x^2 f'(x))' < 0$ for $x \in I_1$. We conclude that there exists at most one point $x \in I_1$ such that $f'(x) = 0$. Using the boundary conditions of $f'$ on $\partial I_1$, we deduce that $\abs{\cal C \cap I_1} = 1$.

Finally, if $\phi \neq 1$ we have $f(x) = (\phi - 1) x^{-1} + O(x^{-2})$ as $x \to \infty$. From the boundary conditions of $f$ on $\partial I_0$ we therefore deduce that $\abs{\cal C \cap I_0} = 1$. Moreover, if $\phi = 1$ we find from \eqref{f_der} that $f'(x) \neq 0$ for all $x \in I_0 \setminus \{\infty\}$. This concludes the proof.
\end{proof}

By multiplying both sides of the equation $z = f(m)$ in \eqref{def_m} with the product of all denominators on the right-hand side of \eqref{def_fx}, we find that $z = f(m)$ may be also written as $P_z(m) = 0$, where $P_z$ is a polynomial of degree $n+1$, whose coefficients are affine linear functions of $z$. (Here we used that all $s_i$ are distinct and that $m + s_i^{-1} \neq 0$.) This polynomial characterization of $m$ is useful for the proofs of Lemmas \ref{lem:a_order} and \ref{lem:rho_struct}.

From Lemma \ref{lem:gen_prop_m} we find that the measure $\varrho$ has a bounded density in $[\tau,\infty)$ for any fixed $\tau > 0$. Hence, we may extend the definition of $m$ down to the real axis by setting $m(E) \deq \lim_{\eta \downarrow 0} m(E + \ii \eta)$.

\begin{proof}[Proof of Lemma \ref{lem:a_order}]
For $i = 0, \dots, n$ define the subset $J_i \deq \h{x \in I_i \col f'(x) > 0}$. The graph of $f$ restricted to $J_0 \cup \cdots \cup J_n$ is depicted in red in Figures \ref{fig:plot_f} and \ref{fig:plot_f2}. From Lemma \ref{lem:crit points} we deduce that if $i = 2, \dots, n$ then $J_i \neq \emptyset$ if and only if $I_i$ contains two distinct critical points of $f$, in which case $J_i$ is an interval. Moreover, we always have $J_1 \neq \emptyset \neq J_0$.

Next, we observe that for any $0 \leq i < j \leq n$ we have $f(J_i) \cap f(J_j) = \emptyset$. Indeed, if there were $E \in f(J_i) \cap f(J_j)$ then we would have $\abs{\h{x \col f(x) = E}} > n+1$ (see Figure \ref{fig:plot_f}). Since $f(x) = E$ is equivalent to $P_E(x) = 0$ and $P_E$ has degree $n+1$, we arrive at the desired contradiction. We conclude that the sets $f(J_i)$, $0 \leq i \leq n$, may be strictly ordered.

The claim $a_1 \geq a_2 \geq \cdots \geq a_{2p}$ may now be reformulated as
\begin{equation} \label{f_incr}
f(J_i) \;>\; f(J_j) \;>\; f(J_0) \qquad \txt{if } 1 \leq i < j \leq n\,.
\end{equation}

In order to show \eqref{f_incr}, we use a continuity argument in $\phi$. Let $t \in (0,1]$ and denote by $f^t$ the right-hand side of \eqref{def_fx} with $\phi$ is replaced by $t \phi$. We use the superscript $t$ to denote quantities defined using $f^t$ instead of $f = f^1$. Using directly the definition of $f^t$, it is easy to check that \eqref{f_incr} holds for small enough $t > 0$.

We focus on the first inequality of \eqref{f_incr}; the proof of the second one is similar. We have to show that $f(J_i) > f(J_j)$ whenever $1 \leq i < j \leq n$ and $J_i \neq \emptyset \neq J_j$. First, we claim that for $1 \leq i \leq n$ we have
\begin{equation} \label{J_remains}
J_i \neq \emptyset \quad \Longrightarrow \quad J_i^t \neq \emptyset \txt{ for all } t \in (0,1]\,.
\end{equation}
To prove \eqref{J_remains}, we remark that if $2 \leq i \leq n$ then $J_i^t \neq \emptyset$ is equivalent to $I_i$ containing two distinct critical points. Moreover, $\partial_t \partial_x f^t(x) < 0$ in $I_2 \cup \cdots \cup I_n$, from which we deduce that the number of distinct critical points in each $I_i$, $i = 2, \dots, n$, does not decrease as $t$ decreases. Recalling that $I_1^t \neq \emptyset$, we deduce \eqref{J_remains}.

Next, suppose that there exist $1 \leq i < j \leq n$ satisfying $J_i \neq \emptyset \neq J_j$ and $f(J_i) < f(J_j)$. From \eqref{J_remains} we deduce that $J_i^t \neq \emptyset \neq J_j^t$ for all $t \in (0,1]$. Moreover, by a simple continuity argument and the fact that for each $t$ we have either $f^t(J_i^t) < f^t(J_j^t)$ or  $f^t(J_i^t) > f^t(J_j^t)$, we conclude that $f^t(J_i^t) < f^t(J_j^t)$ for all $t \in (0,1]$. As explained after \eqref{f_incr}, this is impossible for small enough $t > 0$. This concludes the proof of \eqref{f_incr}, and hence also of the first sentence of Lemma \ref{lem:a_order}.

Next, we prove the second sentence of Lemma \ref{lem:a_order}. Suppose that $\phi \neq 1$ and that $E \neq 0$ lies in the (open) set $\bigcup_{i = 0}^n f(J_i) = f \pb{\bigcup_{i = 0}^n J_i}$. Then the set $\h{x \col f(x) = E} = \h{x \col P_E(x) = 0} \subset \R$ has $n+1$ points. Recall that $m(E)$ is a solution of $P_{E}(m(E)) = 0$. Since $n+1 = \deg P_E$, we deduce that $m(E)$ is real. Since $m$ is the Stieltjes transform of $\varrho$, we conclude that $E \notin \supp \varrho$ and that $m'(E) > 0$. This means that $m \vert_{f(J_i)}$ is increasing, and is therefore the inverse function of $f \vert_{J_i}$. We conclude that $f$ is a bijection from $\bigcup_{i = 0}^n J_i$ onto $\bigcup_{i = 0}^n f(J_i)$ with inverse function $m$ (here we use the convention that for $\phi \leq 1$ we have $m(0) = \infty$ and $f(\infty) = 0$). By definition, we have $a_k = f(x_k)$ and the inverse relation $x_k = m(a_k)$ follows by a simple limiting argument from $\C_+$ to $\R$.
The case $\phi = 1$ is handled similarly; we omit the minor modifications.

Finally, in order to prove the third sentence of Lemma \ref{lem:a_order} we have to show that $a_{2p} \geq 0$ and $a_1 \leq C$. It is easy to see that if $a_{2p} < 0$ then there is an $E < 0$ with $\abs{\{x \col f(x) = E\}} > n+1$, which is impossible since $\{x \col f(x) = E\} = \{x \col P_E(x) = 0\}$ and $\deg P_E = n+1$. Moreover, the estimate $a_1 \leq C$ is easy to deduce from the definition of $f$ and the estimates $\phi \leq \tau^{-1}$ and $s_1^{-1} \geq \tau$.
\end{proof}

\begin{proof}[Proof of Lemma \ref{lem:rho_struct}]
In the proof of Lemma \ref{lem:a_order} we showed that if $E \in (0,\infty) \cap \bigcup_{i = 0}^n f(J_i)$ then $E \notin \supp \varrho$. It therefore suffices to show that if $E \in \bigcup_{k = 1}^p (a_{2k}, a_{2k - 1})$ then $\im m(E) > 0$. In that case the set of real preimages $\{x \col f(x) = E\} = \{x \col P_E(x) = 0\}$ has $n - 1$ points (see Figure \ref{fig:plot_f}). Since $P_E$ has degree $n+1$ and real coefficients, we conclude that $P_E$ has a root with positive imaginary part. By the uniqueness of the root of $P_{E + \ii \eta}$ in $\C_+$ (see \eqref{def_m}) and the continuity of the roots of $P_{E + \ii \eta}$ in $\eta$, we therefore conclude by taking $\eta \downarrow 0$ that $\im m(E) > 0$.
\end{proof}

For the following counting argument, in order to avoid extraneous complications we assume that $\sigma_i > 0$ for all $i$. Recall the definition of $N_k$ from \eqref{def_N_k}.

\begin{lemma} \label{lem:N_i}
Suppose that $\sigma_j > 0$ for all $j$.
Then $N \varrho(\{0\}) = (N-M)_+$ and
\begin{equation} \label{Ni_sum}
\sum_{k = 1}^p N_k \;=\; N - (N - M)_+ \;=\; N \wedge M\,.
\end{equation}
Moreover, for $k = 1, \dots, p - 1$ we have
\begin{equation} \label{Ni_rep}
N_k \;=\; \sum_{i \in \cal I_M} \ind{x_{2 k} \leq - \sigma_i^{-1} \leq x_{2k - 1}}\,.
\end{equation}
\end{lemma}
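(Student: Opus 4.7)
The statement decomposes into three assertions: (a) $N\varrho(\{0\})=(N-M)_+$, (b) $\sum_k N_k = N \wedge M$, and (c) the pole-count formula \eqref{Ni_rep}. My plan is to prove them in that order.

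For (a), I would expand $f(\xi)=(\phi-1)/\xi+O(\xi^{-2})$ as $\xi\to\infty$, using that $\sum_j r_j=\phi$, which is a consequence of the hypothesis $\sigma_j>0$ (forcing $\pi(\{0\})=0$). Setting $z=\ii\eta$ in \eqref{def_m} and using that $m(\ii\eta)$ is the unique solution of $\ii\eta=f(m)$ lying in $\C_+$, one reads off that when $\phi<1$ the asymptotic $m(\ii\eta)\sim\ii(1-\phi)/\eta$ is forced, while when $\phi\geq 1$ no asymptotic of the form $m\sim c/\eta$ with $c\in\C_+$ is compatible with the sign of $\phi-1$, so $m(\ii\eta)$ stays $O(1)$ for $\phi>1$ and $O(\eta^{-1/2})$ for $\phi=1$. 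Stieltjes inversion $\varrho(\{0\})=\lim_{\eta\downarrow 0}\eta\im m(\ii\eta)$ then yields (a). For (b), combining (a) with Lemma~\ref{lem:rho_struct} gives $\sum_k \varrho([a_{2k},a_{2k-1}])=1-\varrho(\{0\})$; any overlap between the atom at $0$ and a bulk component can occur only if $a_{2p}=0$, and an elementary analysis of $f$ on $I_0$ shows this happens only for $\phi=1$, where the atom is absent, so there is no double-counting. Multiplying by $N$ gives (b).

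The bulk of the proof is (c), which I would obtain via contour integration. By Stieltjes inversion, $N_k=\tfrac{N}{\pi}\im\int_{a_{2k}}^{a_{2k-1}}m(E+\ii 0^+)\,\dd E$. Lemma~\ref{lem:a_order} guarantees that $E\mapsto m(E+\ii 0^+)$ extends continuously to $[a_{2k},a_{2k-1}]$, with interior values in $\C_+$ (by the proof of Lemma~\ref{lem:rho_struct}) and boundary values $x_{2k},x_{2k-1}\in\R$, defining a curve $\Gamma_k\subset\overline{\C_+}$ from $x_{2k}$ to $x_{2k-1}$. Since $x_{2k-1}\geq x_{2k}$ and the restriction of $m$ to $f(J_i)$ is a bijection (proof of Lemma~\ref{lem:a_order}), changing variables via $E=f(\xi)$ converts the integral to $\int_{\Gamma_k}\xi f'(\xi)\,\dd\xi$. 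Integrating by parts produces the real boundary term $x_{2k-1}a_{2k-1}-x_{2k}a_{2k}$ minus $\int_{\Gamma_k}f(\xi)\,\dd\xi$, and the latter splits via \eqref{def_fx} into simple contour integrals $\int_{\Gamma_k}\dd\xi/(\xi+s_j^{-1})$ and $-\int_{\Gamma_k}\dd\xi/\xi$. Each such integral is evaluated by tracking the branch of the logarithm: since $\Gamma_k$ passes through $\C_+$ from left to right above any real singularity lying strictly between its endpoints, it contributes a real part plus $-\ii\pi$ times the indicator that the singularity lies in $(x_{2k},x_{2k-1})$. Because $x_{2k},x_{2k-1}\in I_1\cup\cdots\cup I_n\subset(-\infty,0)$ for $k\leq p-1$, the pole at $0$ never contributes, and only the poles $-s_j^{-1}\in(x_{2k},x_{2k-1})$ remain. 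Equating imaginary parts and using $Nr_j=M\pi(\{s_j\})=\#\{i\col\sigma_i=s_j\}$ yields \eqref{Ni_rep}; the open/closed distinction at the endpoints is irrelevant since critical points of $f$ do not coincide with its poles.

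The main obstacle is the careful branch-tracking in the contour argument: one must justify that $\Gamma_k$ is a simple rectifiable curve that may be deformed within $\C_+$ to a path running along the real axis with a small semicircular detour above each pole lying in $(x_{2k},x_{2k-1})$, so that the winding contribution of $\log(\xi+s_j^{-1})$ along $\Gamma_k$ equals $-\ii\pi$ precisely when $-s_j^{-1}\in(x_{2k},x_{2k-1})$. No step is individually hard once this geometry is in place, but the bookkeeping of branches and signs is the most delicate ingredient.
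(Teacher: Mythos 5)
Your proof is correct and attacks the same central computation as the paper---a contour integral combined with the change of variables $\xi = m(E)$, $E = f(\xi)$---but by a genuinely different route. The paper encircles $[a_{2k},a_{2k-1}]$ with a closed rectangular contour $\gamma$, transforms it to a rectangle $\tilde\gamma$ around $[x_{2k},x_{2k-1}]$ in the $w$-plane, and applies the residue theorem to $-\frac{N}{2\pi\ii}\oint_{\tilde\gamma} w f'(w)\,\dd w$, extracting residue $-r_i$ from each double pole $-s_i^{-1}$ enclosed; you instead apply Stieltjes inversion on the real interval, pass to the open path $\Gamma_k = m([a_{2k},a_{2k-1}])\subset\overline{\C_+}$, integrate by parts to trade $\xi f'(\xi)$ for $-f(\xi)$ plus a real boundary term, and read off the imaginary part of each $\int_{\Gamma_k}(\xi+s_i^{-1})^{-1}\,\dd\xi$ from the change of $\arg(\xi+s_i^{-1})$. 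Both give $N r_i = M\pi(\{s_i\})$ per pole strictly between $x_{2k}$ and $x_{2k-1}$, and the open/closed distinction is, as you note, vacuous since poles and critical points of $f$ are disjoint. The deformation step that you flag as the main obstacle is in fact automatic: $(\xi+s_i^{-1})^{-1}$ is holomorphic on $\C_+$ and continuous up to $\R\setminus\{-s_i^{-1}\}$, so the integral over $\Gamma_k$ already equals that over any other rectifiable path in $\overline{\C_+}\setminus\{-s_i^{-1}\}$ with the same endpoints; the only real work is the argument bookkeeping, which you carried out correctly. You also reverse the logical order: the paper proves \eqref{Ni_rep} for all $k\leq p$ in the case $N>M$ and sums over $k$ to deduce $N\varrho(\{0\})=N-M$ and \eqref{Ni_sum}, then treats $N\leq M$ separately; you derive $N\varrho(\{0\})=(N-M)_+$ directly from $f(\xi)=(\phi-1)/\xi+O(\xi^{-2})$ and $\varrho(\{0\})=\lim_{\eta\downarrow 0}\eta\,\im m(\ii\eta)$, get \eqref{Ni_sum} via Lemma~\ref{lem:rho_struct}, and then need \eqref{Ni_rep} only for $k\leq p-1$, which keeps $0$ out of $[x_{2k},x_{2k-1}]$ with no $N$-vs-$M$ dichotomy and no need to extend \eqref{Ni_rep} to $k=p$. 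One minor slip: the bijection $m\vert_{f(J_i)}\colon f(J_i)\to J_i$ from the proof of Lemma~\ref{lem:a_order} concerns $\R\setminus\supp\varrho$; what the substitution $E=f(\xi)$ actually requires on $[a_{2k},a_{2k-1}]$ is the continuous extension of $m$ to the closed interval together with the identity $E=f(m(E))$ there, which holds (take $\eta\downarrow 0$ in \eqref{def_m}, using that $\varrho$ has a bounded density on $[\tau,\infty)$ by Lemma~\ref{lem:gen_prop_m}) but is a different fact from the bijection you cite.
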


\begin{proof}
Suppose first that $N > M$. Then $x_{2p} < 0$ and $a_{2p} > 0$ (see Figure \ref{fig:plot_f}). Let $1 \leq k \leq p$ and define $\gamma$ as the positively oriented rectangular contour joining the points $a_{2k - 1} \pm \ii$ and $a_{2k} \pm \ii$. Then we have $N_k = - \frac{N}{2 \pi \ii} \oint_{\gamma} m(z) \, \dd z$. We now do a change of coordinates by writing $w = m(z)$. Define $\tilde \gamma$ as the positively oriented rectangular contour joining the points $x_{2k - 1} \pm \ii$ and $x_{2k} \pm \ii$. Since the imaginary parts of $w$ and $z$ have the same sign, we conclude using Cauchy's theorem that
\begin{multline*}
N_k \;=\; - \frac{N}{2 \pi \ii} \oint_{\gamma} m(z) \, \dd z \;=\; -\frac{N}{2 \pi \ii} \oint_{f(\tilde \gamma)} m(z) \, \dd z
\\
=\; - \frac{N}{2 \pi \ii} \oint_{\tilde \gamma} w \, f'(w) \, \dd w \;=\; \sum_{i = 1}^n \ind{x_{2 k} \leq - s_i^{-1} \leq x_{2k - 1}} \, \pi (\{s_i\}) M\,,
\end{multline*}
where in the last step we used the definition of $f$ from \eqref{def_fx}. Hence \eqref{Ni_rep} follows. Note that we proved \eqref{Ni_rep} also for $k = p$. Now \eqref{Ni_sum} follows easily using $\sigma_i > 0$ for all $i$. Since $\supp \varrho \subset \{0\} \cup [a_{2p}, a_{2p - 1}] \cup \cdots \cup [a_2,a_1]$ (recall Lemma \ref{lem:rho_struct}), we get $N = N \varrho(\{0\}) + \sum_{k = 1}^p N_k$. Therefore summing over $k = 1, \dots, p$ in \eqref{Ni_rep} yields $N \varrho(\{0\}) = N - M$, as desired. This concludes the proof in the case $N > M$.

Next, suppose that $N \leq M$. Then we deduce directly from \eqref{def_fx} that $\varrho(\{0\}) = 0$ (see Figure \ref{fig:plot_f2}). Hence, \eqref{Ni_sum} follows. Finally, exactly as above, we get \eqref{Ni_rep} for $i = 1, \dots, p-1$. This concludes the proof.
\end{proof}

\subsection{Stability near a regular edge}
The rest of this appendix is devoted to the proofs of the assumption \eqref{1+xm} and the stability of \eqref{def_m} in the sense of Definition \ref{def:stability}. In fact, we shall prove the following stronger notation of stability, which will be needed to establish eigenvalue rigidity. Recall the definition of $\kappa$ from \eqref{def_kappa}.
\begin{definition}[Strong stability of \eqref{def_m} on $\f S$] \label{def:strong_stab}
We say that \eqref{def_m} is \emph{strongly stable on} $\f S$ if Definition \ref{def:stability} holds with \eqref{u-m_stability} replaced by
\begin{equation} \label{strong_stab}
\abs{u - m} \;\leq\; \frac{C \delta}{\sqrt{\kappa + \eta} +  \sqrt{\delta}}\,.
\end{equation}
\end{definition}
That the notion of stability from Definition \ref{def:strong_stab} it is stronger than that of Definition \ref{def:stability} follows from the estimate
\begin{equation} \label{m_sq_root}
\im m \;\asymp\; 
\begin{cases}
\sqrt{\kappa + \eta} & \text{if $E \in \supp \varrho$}
\\
\frac{\eta}{\sqrt{\kappa + \eta}} & \text{if $E \notin \supp \varrho$}
\end{cases}
\end{equation}
for $z \in \f S$, which we shall establish under the regularity assumptions of Definition \ref{def:regular}. Under the same regularity assumptions, we shall also prove
\begin{equation} \label{mz_reg}
\min_i \abs{m + s_i^{-1}} \;\geq\; c
\end{equation}
for $z \in \f S$ and some constant $c > 0$ depending only on $\tau$. Using Lemma \ref{lem:gen_prop_m}, we find that \eqref{mz_reg} implies $\abs{1 + m \sigma_i} \geq c$, which is the estimate from \eqref{1+xm} (after a possible renaming of the constant $\tau$).

In this subsection we deal with a regular edge; the case of a regular bulk component is dealt with in the next subsection.
We begin with basic estimates on the behaviour of $f$ near a critical point.

\begin{lemma} \label{lem:edge_tools}
Fix $\tau > 0$. Suppose that the edge $k$ is regular in the sense of Definition \ref{def:regular} (i). Then there exists a $\tau' > 0$, depending only on $\tau$, such that
\begin{equation} \label{f_der_est}
\abs{x_k} \;\asymp\; 1\,, \qquad \abs{f''(x_k)} \;\asymp\; 1\,, \qquad \abs{f'''(\zeta)} \;\leq\; C \quad \txt{for} \quad \abs{\zeta - x_k} \;\leq\; \tau'\,.
\end{equation}
\end{lemma}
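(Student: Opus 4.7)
The proof has three ingredients, one per assertion, and each exploits a different part of the regularity condition from Definition \ref{def:regular}~(i).

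First I would establish $|x_k| \asymp 1$. For the upper bound on $|x_k|$, note that $|f(x)| \leq 1/|x| + \phi \sum_i \pi(\{s_i\})/|x + s_i^{-1}|$, and since $\sum_i \pi(\{s_i\}) \leq 1$ together with $\phi \leq \tau^{-1}$, a direct estimate gives $|f(x)| \leq C/|x|$ once $|x|$ is larger than a constant depending only on $\tau$. Combined with $f(x_k) = a_k \geq \tau$, this forces $|x_k| \leq C$. For the lower bound I use the pole of $f$ at $0$: if $|x_k|$ were very small, the term $-1/x_k$ would dominate, since the remaining terms $\phi \pi(\{s_i\})/(x_k + s_i^{-1})$ are controlled by $\phi \tau^{-1}\sum_i \pi(\{s_i\}) \leq \tau^{-2}$ thanks to the third condition of \eqref{reg: edge}. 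Thus $|f(x_k)|$ would blow up, contradicting the upper bound $a_k \leq C$ from Lemma~\ref{lem:a_order}.

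Next, the upper bound on $|f''(x_k)|$ and the bound $|f'''(\zeta)| \leq C$ on $|\zeta - x_k| \leq \tau'$ are a direct computation from \eqref{f_der}: the bounds $|x_k| \geq c$, $\min_i |x_k + s_i^{-1}| \geq \tau$ and $\sum_i \phi \pi(\{s_i\}) \leq \tau^{-1}$ make every term in the explicit formulas for $f''(x_k)$ and $f'''(\zeta)$ uniformly bounded. One picks $\tau' > 0$ small enough (depending only on $\tau$) so that the lower bounds $|\zeta| \geq c/2$ and $\min_i|\zeta + s_i^{-1}| \geq \tau/2$ persist throughout the neighborhood.

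The main obstacle is the lower bound $|f''(x_k)| \geq c$. The idea is to use the separation of critical values, $\min_{l \neq k} |a_k - a_l| \geq \tau$: a very small $|f''(x_k)|$ would make $x_k$ nearly degenerate, producing another critical point with critical value close to $a_k$. Quantitatively, Taylor expanding $f'$ at $x_k$ and using $f'(x_k) = 0$ together with the bound on $|f'''|$ from the previous step, the equation $f'(x) = 0$ must have another root $x_l$ within $O(|f''(x_k)|)$ of $x_k$ (provided $|f'''(x_k)|$ is not too small). Integrating $f'$ between $x_k$ and $x_l$ with $f'(x_k) = f'(x_l) = 0$ and applying the $f'''$ bound then gives $|a_k - a_l| = |f(x_l) - f(x_k)| \leq C |f''(x_k)|^{3}$, so $|f''(x_k)| < (c\tau)^{1/3}$ would contradict the edge-separation assumption.

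The only subtlety is the scenario in which $|f'''(x_k)|$ is itself very small, so that the two-term Taylor expansion no longer locates the second critical point. The cleanest way around this is a compactness-contradiction argument: if the desired lower bound failed, one could extract a limit (in the weak topology on $\pi$, using $\phi \in [\tau, \tau^{-1}]$ and $\sigma_1 \leq \tau^{-1}$) of configurations with $f''(x_k) \to 0$ while \eqref{reg: edge} continues to hold; in the limit, $x_k$ would be a degenerate critical point of $f$, which (by Lemma~\ref{lem:crit points} and the explicit rational form of $f'$) is counted twice in $\cal C$ and so produces another critical value equal to $a_k$, contradicting $\min_{l\neq k}|a_k - a_l| \geq \tau$ in the limit.
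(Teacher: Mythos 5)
Your argument for the upper bound $\abs{x_k} \leq C$ does not go through. The estimate $\abs{f(x)} \leq C/\abs{x}$ requires $\abs{x + s_i^{-1}} \gtrsim \abs{x}$, which holds for $x > 0$ but fails for $x < 0$: the poles of $f$ sit at $-s_i^{-1}$, and $s_i^{-1}$ can be arbitrarily large (the paper bounds $\sigma_1 \leq \tau^{-1}$, hence $s_i^{-1} \geq \tau$, but there is no upper bound on $s_i^{-1}$). Recall that $x_k \in (-s_n^{-1},0)$ for every $k < 2p$, so the relevant critical points are precisely in the range where your estimate is invalid. Even invoking the regularity bound $\min_i\abs{x_k + s_i^{-1}} \geq \tau$ only gives $\abs{f(x_k)} \leq 1/\abs{x_k} + \tau^{-2}$, which is compatible with arbitrarily large $\abs{x_k}$. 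The paper instead uses the identity $x_k = m(a_k)$ (Lemma~\ref{lem:a_order}) together with the uniform bound $\abs{m(z)} \leq C$ for $\tau \leq \abs{z} \leq \tau^{-1}$ from Lemma~\ref{lem:gen_prop_m}; this is what closes the argument.

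Your lower bound on $\abs{x_k}$ and the upper bounds on $\abs{f''(x_k)}$ and $\abs{f'''}$ are fine and match the paper's direct computations.

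For the lower bound $\abs{f''(x_k)} \geq c$, your two-pronged plan (Taylor expansion when $f'''$ is not small, compactness otherwise) is genuinely different from the paper's argument, but as written it has gaps. The paper exploits the specific structure of $f$: the monotonicity $(-x^3 f''(x))' < 0$ (proved in Lemma~\ref{lem:crit points}) gives the pointwise bound $0 \leq f'(x) \leq C f''(x_k)(x_{k-1} - x_k)$ on $[x_k, x_{k-1}]$, and integrating yields $\tau \leq a_{k-1} - a_k \leq C f''(x_k)(x_{k-1}-x_k)^2 \leq C f''(x_k)$ directly, with no case split and no dependence on $f'''$. Your compactness fallback has several unaddressed issues: (a) the number $n$ of distinct atoms of $\pi$ is $N$-dependent and can tend to infinity, so the weak limit $\pi_\infty$ need not be purely atomic and Lemma~\ref{lem:crit points} (as stated, for rational $f$) does not apply to $f_\infty$ without extension; (b) you need to show that the second critical point $x_l^{(N)}$ in the same interval $I_j$ actually converges to $x_{k,\infty}$ so that $\abs{a_k^{(N)} - a_l^{(N)}} \to 0$, which requires ruling out $x_l^{(N)}$ escaping to $\partial I_j$ (this can be done using the a priori bound $a_l \leq C$, but you do not say so); (c) for the extreme edges $k \in \{1,2p\}$, the adjacent critical point is not in the same interval $I_j$ as $x_k$, so the contradiction must instead come from the count $\abs{\cal C \cap I_1} = \abs{\cal C \cap I_0} = 1$, not from the separation of critical values -- the paper handles these two cases separately via the same-sign observation in \eqref{f''_expr}.
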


\begin{proof}
From Lemma \ref{lem:a_order} we get $x_k = m(a_k)$. Since by assumption $a_k \in [\tau,C]$ for some constant $C$, we find from Lemma \ref{lem:gen_prop_m} that $\abs{x_k} \leq C$. The lower bound $\abs{x_i} \geq c$ follows from $f'(x_k) = 0$ and \eqref{f_der}, using that $\abs{s_i} \leq C$. This proves the first estimate of \eqref{f_der_est}.

Next, from $f'(x_k)$ and \eqref{f_der} we find
\begin{equation} \label{f''_expr}
f''(x_k) \;=\; - \frac{2}{x_k} \sum_i \frac{s_i^{-1} r_i}{(x_k + s_i^{-1})^3}\,.
\end{equation}
Using the first estimate of \eqref{f_der_est} and \eqref{reg: edge} we find from \eqref{f''_expr} that $\abs{f''(x_k)} \leq C$. Moreover for $k \in \{1, 2p\}$ (i.e.\ $x_k \in I_0 \cup I_1$) all terms in \eqref{f''_expr} have the same sign, and it is easy to deduce that $\abs{f''(x_k)} \geq c$.

Next, we prove the lower bound $\abs{f''(x_k)} \geq c$ for $k = 2, \dots, 2p - 1$. Suppose for definiteness that $k$ is odd. (The case of even $k$ is handled in exactly the same way.) For $x \in [x_k, x_{k-1}]$ we have $f'(x) \geq 0$ and
\begin{equation*}
f'(x) \;=\; \int_{x_k}^x (-y^{-3}) (- y^3 f''(y)) \, \dd y \;\leq\; \int_{x_k}^x (-y^{-3}) (- x_k^3 f''(x_k)) \, \dd y \;\leq\; C f''(x_k) (x_{k-1} - x_k)\,,
\end{equation*}
where in the second step we used that $(-x^3 f''(x))' < 0$ as follows from \eqref{f_der}, and in the last step the estimate $\abs{x_k} \asymp \abs{x_{k-1}} \asymp 1$. We therefore find
\begin{equation*}
a_{k-1} - a_k \;=\; \int_{x_k}^{x_{k-1}} f'(x) \, \dd x \;\leq\; C f''(x_k) (x_{k-1} - x_k)^2\,.
\end{equation*}
Since $a_{k-1} - a_k \geq \tau$ by the assumption in Definition \ref{def:regular} (i) and $x_{k-1} - x_k \leq \abs{x_{k-1}} + \abs{x_k} \leq C$, we deduce the lower bound $f''(x_k) \geq c$.

Finally, the last estimate of \eqref{f_der_est} follows easily from the assumption \eqref{reg: edge} and by differentiating \eqref{f_der}.
\end{proof}

We record the following easy consequence of \eqref{f_der_est}. Recalling that $f(x_k) = a_k$ and $f'(x_k) = 0$, we find, using \eqref{f_der_est} and the fact that $f$ is continuous in a neighbourhood of $x_k$, that after choosing $\tau' > 0$ small enough (depending on $\tau$) we have
\begin{equation} \label{z-a_taylor}
z - a_k \;=\; \frac{f''(x_k)}{2} (m(z) - x_k)^2 + O \pb{\abs{m(z) - x_k}^3} \qquad \txt{for} \quad \abs{z - a_k} \leq \tau'\,.
\end{equation}

\begin{lemma} \label{lem:stab_edge1}
Fix $\tau > 0$. Suppose that the edge $k$ is regular in the sense of Definition \ref{def:regular} (i). Then there exists a constant $\tau' > 0$, depending only on $\tau$, such that for $z \in \f D_k^e$ we have \eqref{m_sq_root} and \eqref{mz_reg}.
\end{lemma}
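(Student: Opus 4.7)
The plan is to use the quadratic Taylor expansion \eqref{z-a_taylor} as our main tool: for $z \in \f D_k^e$ with $\tau'$ chosen small, this expansion inverts (after choosing the right branch of the square root) to give
\begin{equation*}
m(z) - x_k \;=\; \gamma_k \sqrt{a_k - z} \pb{1 + O(\sqrt{\abs{a_k - z}})}\,, \qquad \gamma_k \;\deq\; \sqrt{-2/f''(x_k)}\,,
\end{equation*}
where the square roots are defined by analytic continuation from $\{a_k - z \col z \in \C_+\}$ with the branch chosen so that $\im m(z) > 0$. Note $\gamma_k \in \R \cup i\R$ depending on the sign of $f''(x_k)$, and by Lemma \ref{lem:edge_tools} we have $\abs{\gamma_k} \asymp 1$. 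All of \eqref{m_sq_root} and \eqref{mz_reg} are then easy consequences.

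First I would show that $m(z) \to x_k$ as $z \to a_k$ in $\C_+$, and more quantitatively that $\abs{m(z) - x_k} \leq C \sqrt{\abs{z - a_k}}$ for $z \in \f D_k^e$ with $\tau'$ small enough. The existence of the limit $m(a_k)$ follows from Lemma \ref{lem:gen_prop_m} (giving boundedness) and the polynomial characterization of $m$ discussed after Lemma \ref{lem:crit points} (giving continuity up to the real axis); and $m(a_k) = x_k$ is part of Lemma \ref{lem:a_order}. Once continuity at $a_k$ is established, the quantitative bound follows from \eqref{z-a_taylor}, which upon rearranging yields
\begin{equation*}
(m(z) - x_k)^2 \;=\; \frac{2(z - a_k)}{f''(x_k)}\pB{1 + O\pb{\abs{m(z) - x_k}}}\,,
\end{equation*}
so for $\abs{m(z) - x_k}$ small the bound $\abs{m(z) - x_k}^2 \leq C \abs{z - a_k}$ follows.

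Next, extracting square roots on both sides (and fixing the branch by continuity from large $\eta$, where $m(z) \sim -1/z$ forces $\im m > 0$), I would obtain
\begin{equation*}
m(z) - x_k \;=\; \gamma_k \sqrt{a_k - z}\pb{1 + O(\sqrt{\abs{a_k - z}})}\,,
\end{equation*}
with the appropriate branch of $\sqrt{a_k - z}$. For $E = \re z$ close to $a_k$, writing $a_k - z = -(E - a_k) - \ii \eta$ and using $\abs{a_k - z} \asymp \kappa + \eta$, a direct computation of the imaginary part of the square root yields $\im \sqrt{a_k - z} \asymp \sqrt{\kappa + \eta}$ when $E$ is on the spectrum side of $a_k$, and $\im \sqrt{a_k - z} \asymp \eta/\sqrt{\kappa + \eta}$ when $E$ is on the gap side. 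This gives \eqref{m_sq_root}, where the identification of which side of $a_k$ lies in $\supp \varrho$ follows from Lemma \ref{lem:rho_struct} and the ordering $a_1 \geq \cdots \geq a_{2p}$.

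Finally, \eqref{mz_reg} follows from the bound $\abs{m(z) - x_k} = O(\sqrt{\tau'})$ (from the first step) combined with the third condition in the edge regularity assumption \eqref{reg: edge}, which gives $\min_i \abs{x_k + s_i^{-1}} \geq \tau$; shrinking $\tau'$ if necessary yields $\min_i \abs{m(z) + s_i^{-1}} \geq \tau/2$. The main obstacle is the bookkeeping for the branch of the square root: one must verify that choosing the branch that is consistent with $\im m > 0$ throughout $\C_+$ yields the claimed behaviour on both sides of $a_k$, and in particular that the sign of $f''(x_k)$ (positive for left edges, negative for right edges of a bulk component, as can be read off from the monotonicity properties of $f$ in the proof of Lemma \ref{lem:a_order}) produces the correct geometry. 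Once the branch issue is handled, all remaining estimates are elementary.
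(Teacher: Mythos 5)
Your proof takes a genuinely different route from the paper's. The paper applies \eqref{z-a_taylor} at real $z = E$ to derive the square-root behaviour of the density, $\im m(E) \asymp \sqrt{\kappa(E)}\,\ind{E\in\supp\varrho}$, and then obtains \eqref{m_sq_root} for complex $z$ by estimating the resulting Poisson-type integral in the Stieltjes transform representation of $m$. You instead invert \eqref{z-a_taylor} directly at complex $z$ and read off $\im m(z)$ from $m(z) - x_k = \gamma_k\sqrt{a_k - z}\,\pb{1 + O(\sqrt{\abs{a_k - z}})}$. Your route is more direct and avoids the integral estimate, at the cost of the branch bookkeeping you flag as the main obstacle; both are valid, and the treatment of \eqref{mz_reg} is essentially the same in both.

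There is, however, a concrete sign error to correct: $f''(x_k) > 0$ at right edges (odd $k$) and $f''(x_k) < 0$ at left edges (even $k$), not the other way around. This is forced by \eqref{E-a_exp}: for $E \in \R$ on the gap side of $a_k$ one has $m(E) \in \R$, so $(m(E) - x_k)^2 > 0$ and $E - a_k$ has the sign of $f''(x_k)$; for odd $k$ the gap is $\{E > a_k\}$, hence $f''(x_k) > 0$. (A direct check on $f(x) = -1/x + \phi/(x+1)$, i.e.\ the case $\Sigma = I_M$, confirms this.) As a result $\gamma_k$ is imaginary at right edges and real at left edges, so $\im m$ is governed by $\re\sqrt{a_k - z}$ at right edges and by $\im\sqrt{a_k - z}$ at left edges; the asymptotics you state for $\im\sqrt{a_k - z}$ are the relevant ones only at left edges, while at right edges you need the corresponding estimates for the real part. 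Once these signs are straightened out, the argument does yield \eqref{m_sq_root} on both sides of $a_k$, so the approach is sound.
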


\begin{proof}
We first prove \eqref{m_sq_root}.
From \eqref{z-a_taylor} we find that there exists a $\tau' > 0$ such that for $\abs{E - a_k} \leq \tau'$ we have
\begin{equation} \label{E-a_exp}
E - a_k \;=\; \frac{f''(x_k)}{2} \pb{m(E) - m(a_k)}^2 \pb{1 + O(\abs{m(E) - m(a_k)})}\,.
\end{equation}
From \eqref{f_der_est} we find that $\abs{f''(x_k)} \geq c$. For definiteness, suppose that $k$ is odd. Then $\im m(E) = 0$ for $E \geq a_k$ (i.e.\ $E \notin \supp \varrho$). From \eqref{E-a_exp} we therefore deduce that for $E \leq a_k$ we have $\im m(E) \asymp \sqrt{\abs{E - a_k}}$. We conclude that for $\abs{E - a_k} \leq \tau'$ we have the square root behaviour $\im m(E) \asymp \sqrt{\kappa(E)} \ind{E \in \supp \varrho}$ for the density of $\varrho$. Now \eqref{m_sq_root} easily follows from the definition of the Stieltjes transform.

Finally, \eqref{mz_reg} follows from the assumption \eqref{reg: edge} and \eqref{z-a_taylor}.
\end{proof}

\begin{lemma}\label{lem:stab_edge2}
Fix $\tau > 0$. Suppose that the edge $k$ is regular in the sense of Definition \ref{def:regular} (i). Then there exists a constant $\tau'$, depending only on $\tau$, such that \eqref{def_m} is strongly stable on $\f D^e_k$ in the sense of Definition \ref{def:strong_stab}.
\end{lemma}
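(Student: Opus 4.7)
The plan is to invoke the Taylor expansion \eqref{z-a_taylor} together with Lemma \ref{lem:edge_tools} to control $f'(m(z))$, and then to expand the equation $f(u(z)) = z + O(\delta)$ around the point $m(z)$. Shrinking $\tau'$ so that $|m(z) - x_k| \leq \tau''$ for $z \in \f D^e_k$ (which is possible by \eqref{z-a_taylor} and continuity), Lemma \ref{lem:edge_tools} gives $|f'''(\zeta)| \leq C$ for all $\zeta$ on the segment from $m(z)$ to $u(z)$, provided $|u(z) - m(z)|$ is small enough; this can be ensured a priori using that $u$ is the Stieltjes transform of a probability measure on $[0,C]$, together with the assumed smallness of $\delta$ and, for small $\im z$, the a priori bound at $z + \ii N^{-5}$.

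The first key step is to show $|f'(m(z))| \asymp \sqrt{\kappa(E) + \eta}$ for $z \in \f D^e_k$. From $f'(x_k) = 0$, \eqref{f_der_est}, and Taylor expansion we get $f'(m(z)) = f''(x_k)(m(z) - x_k) + O(|m(z) - x_k|^2)$, hence $|f'(m(z))| \asymp |m(z) - x_k|$. Combining with \eqref{z-a_taylor} yields $|m(z) - x_k| \asymp \sqrt{|z - a_k|}$, and the square-root estimate \eqref{m_sq_root} then gives $|m(z) - x_k| \asymp \sqrt{\kappa + \eta}$, as desired.

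The second step is a quadratic expansion. Writing $\Delta \deq u(z) - m(z)$, Taylor expanding $f$ around $m(z)$ and using $f(m(z)) = z$, we obtain
\begin{equation*}
f(u(z)) - z \;=\; f'(m(z)) \, \Delta + \tfrac{1}{2} f''(m(z)) \, \Delta^2 + O(|\Delta|^3)\,.
\end{equation*}
Since $|f''(m(z))| \asymp 1$ by \eqref{f_der_est} and $|f'(m(z))| \asymp \sqrt{\kappa + \eta}$, the assumption $|f(u(z)) - z| \leq \delta$ implies
\begin{equation*}
|\Delta|\bigl( \sqrt{\kappa + \eta} + |\Delta| \bigr) \;\leq\; C \delta\,,
\end{equation*}
(after absorbing the cubic error using that $|\Delta|$ is small). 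Solving this quadratic inequality gives the two possibilities $|\Delta| \leq C\delta/(\sqrt{\kappa+\eta} + \sqrt{\delta})$ or $|\Delta| \geq c(\sqrt{\kappa+\eta} + \sqrt{\delta})$, with a gap between them of order $\sqrt{\kappa + \eta} + \sqrt{\delta}$.

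The third and main step is a continuity argument to rule out the large branch. For $\im z \geq 1$ both $u(z)$ and $m(z)$ are $O(1)$ and in fact close (by direct inspection of their definitions as Stieltjes transforms with bounded imaginary part), so the small branch holds there. The hypothesis provides the small branch at $z + \ii N^{-5}$, and both $u$ and $m$ are Lipschitz continuous on the segment from $z$ to $z + \ii N^{-5}$ with Lipschitz constant bounded by $N^C$ (using $\eta \geq N^{-1+\tau}$ and the Stieltjes transform representation). The Lipschitz continuity of $\delta$ and of $\sqrt{\kappa + \eta}$ means that the gap between the two branches changes by at most a fixed constant on this short segment, so the branch cannot jump. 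The expected main obstacle is precisely ensuring that this gap argument works uniformly in $z$ near the edge, where $\sqrt{\kappa + \eta}$ may itself be small; here we use that $\sqrt{\delta}$ provides a uniform lower bound on the gap (since $\delta \geq N^{-2}$), which is where the specific form of the error $\delta/(\sqrt{\kappa+\eta}+\sqrt{\delta})$ in \eqref{strong_stab} is essential. This concludes the strong stability of \eqref{def_m} on $\f D^e_k$.
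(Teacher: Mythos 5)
Your proposal follows essentially the same route as the paper: reduce to a quadratic relation for $\Delta = u - m$, establish that the linear coefficient is $\asymp \sqrt{\kappa+\eta}$ and the quadratic coefficient is $\asymp 1$, and then select the correct branch by a continuity argument down from $z + \ii N^{-5}$ (with the $\im z \geq \tau'$ case handled separately). The one technical difference is how the quadratic is obtained: you Taylor-expand $f$ around $m(z)$, which leaves an $O(|\Delta|^3)$ error to control (and requires shrinking $\tau'$ so that $m(z)$ and the segment to $u(z)$ stay in the region where $|f'''|$ is bounded); the paper instead multiplies through by denominators to get the \emph{exact} rational identity $\alpha(u-m)^2 + \beta(u-m) = um(w-z)$, with $\alpha, \beta$ as in \eqref{def_al_be}, which has no cubic remainder but whose coefficient $\alpha$ depends on $u$ and therefore needs a small fixed-point argument. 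Both are workable; the key estimate $|\beta| \asymp \sqrt{\kappa+\eta}$ in the paper is the same content as your $|f'(m(z))| \asymp \sqrt{\kappa+\eta}$ since $\beta = m^2 f'(m)$ and $|m| \asymp 1$.

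One slip in your write-up: the displayed inequality $|\Delta|\bigl(\sqrt{\kappa+\eta} + |\Delta|\bigr) \leq C\delta$ is \emph{not} a consequence of $|f(u)-z| \leq \delta$. If $\Delta$ were near the second root $-2f'(m)/f''(m)$ of the quadratic part, the linear and quadratic terms would cancel and the left-hand side would be $\asymp \kappa+\eta$, which can be much larger than $\delta$. The correct statement is that $\bigl|f'(m)\Delta + \tfrac12 f''(m)\Delta^2\bigr| = O(\delta + |\Delta|^3)$, and it is \emph{this} that forces $\Delta$ into one of the two branches with a gap of order $\sqrt{\kappa+\eta}+\sqrt{\delta}$ between them. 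You then correctly state the dichotomy, so this is a wording error rather than a fatal gap, but as written the first inequality already implies only the small branch and the dichotomy you invoke afterward would be vacuous. Also, your claim that \eqref{m_sq_root} gives $|m(z) - x_k| \asymp \sqrt{\kappa+\eta}$ is a misattribution; what you actually need is the elementary observation that $|z - a_k| \asymp \kappa + \eta$ on $\f D^e_k$ for small enough $\tau'$ (which follows from edge separation, not from \eqref{m_sq_root}), combined with \eqref{z-a_taylor}.
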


\begin{proof}
We take over the notation of Definition \ref{def:stability}. Abbreviate $w(z) \deq f(u(z))$, so that $\abs{w(z) - z} \leq \delta(z)$.

Suppose first that $\im z \geq \tau'$ for some constant $\tau' > 0$. Then, from the assumption $\delta(z) \leq (\log N)^{-1}$ we get $\im w(z) \geq \tau'/2$, and therefore from the uniqueness of \eqref{def_m} we find $u(z) = m(w(z))$. Hence,
\begin{equation*}
\abs{u(z) - m(z)} \;=\; \abs{m(w(z)) - m(z)} \;\leq\; \int_{w(z)}^z \abs{m'(\zeta)} \, \abs{\dd \zeta} \;\leq\; 2 (\tau')^{-2} \delta(z)\,,
\end{equation*}
where we used the trivial bound $\abs{m'(\zeta)} \leq (\im \zeta)^{-2}$. This yields \eqref{strong_stab} at $z$.

What remains therefore is the case $\abs{z - a_k} \leq 2 \tau'$, which we assume for the rest of the proof. We drop the arguments $z$ and write the equation $f(u) - f(m) = w - z$ as
\begin{equation} \label{quadr_edge}
\alpha \, (u - m)^2 + \beta \, (u - m) \;=\; u \, m \, (w-z)\,,
\end{equation}
where
\begin{equation} \label{def_al_be}
\alpha \;\deq\; - \sum_i \frac{m s_i^{-1} r_i}{(m + s_i^{-1})^2 (u + s_i^{-1})}\,, \qquad
\beta \;\deq\; 1 - \sum_i \frac{m^2 r_i}{(m + s_i^{-1})^2}\,.
\end{equation}
Note that $\alpha$ and $\beta$ depend on $z$, and $\alpha$ also depends on $u$. We suppose that
\begin{equation} \label{edge_assump}
\abs{z - a_k} \;\leq\; 2 \tau' \,, \qquad \abs{u - m} \;\leq\; (\log N)^{-1/3}\,.
\end{equation}
Then we claim that for small enough $\tau'$ we have
\begin{equation} \label{al_be_claim}
\abs{\alpha} \;\asymp\; 1 \,, \qquad \abs{\beta} \;\asymp\; \sqrt{\kappa + \eta}.
\end{equation}

In order to prove \eqref{al_be_claim}, we note that, by Lemma \ref{lem:edge_tools}, the statement of \eqref{z-a_taylor} holds under the assumption \eqref{edge_assump} provided $\tau' > 0$ is chosen small enough.
Using \eqref{edge_assump}, \eqref{mz_reg} (see Lemma \ref{lem:edge_tools}) , and \eqref{f''_expr} we get
\begin{equation*}
\alpha \;=\; - \sum_i \frac{x_k s_i^{-1} r_i}{(x_k + s_i^{-1})^3} + O \pb{\abs{m - x_k} + (\log N)^{-1}} = \frac{x_k^2}{2} f''(x_k) + O \pb{\abs{m - x_k} + (\log N)^{-1}}\,.
\end{equation*}
Using \eqref{f_der_est} and \eqref{z-a_taylor} we conclude that for small enough $\tau' > 0$ we have $\abs{\alpha} \asymp 1$. This concludes the proof of the first estimate of \eqref{al_be_claim}.

In order to prove the second estimate of \eqref{al_be_claim}, we note that $\beta = m^2 f'(m)$, so that for small enough $\tau' > 0$ we get from \eqref{z-a_taylor} and Lemma \ref{lem:edge_tools} that
\begin{equation*}
m^{-2} \beta \;=\; \int_{x_k}^m f''(\zeta) \, \dd \zeta \;=\; \int_{x_k}^m \pb{f''(x_k) + O(\abs{m - x_k})} \, \dd \zeta \;=\; (m - x_k) f''(x_k) + O(\abs{m - x_k}^2)\,.
\end{equation*}
Using \eqref{z-a_taylor} and Lemmas \ref{lem:edge_tools} and \eqref{lem:gen_prop_m}, we conclude for small enough $\tau' > 0$ that
\begin{equation*}
\abs{\beta} \;\asymp\; \abs{m - x_k} \;\asymp\; \sqrt{\abs{z - a_k}} \;\asymp\; \sqrt{\kappa + \eta}\,,
\end{equation*}
This concludes the proof of the second estimate of \eqref{al_be_claim}.

Using \eqref{m_sim_1} and the estimate $\abs{w - z} \leq \delta$, we write \eqref{quadr_edge} as $\alpha (u - m)^2 + \beta (u - m) = O(\delta)$. We now proceed exactly as in the proof of \cite[Lemma 4.5]{BEKYY}, by solving the quadratic equation for $u - m$ explicitly. We select the correct solution by a continuity argument using that \eqref{strong_stab} holds by assumption at $z + \ii N^{-5}$. The second assumption of \eqref{edge_assump} is obtained by continuity from the estimate on $\abs{u - m}$ at the neighbouring value $z + \ii N^{-5}$. We refer to \cite[Lemma 4.5]{BEKYY} for the full details of the argument. (Note that the fact $\alpha$ depends on $u$ does not change this argument. This dependence may be easily dealt with using a simple fixed point argument whose details we omit.) This concludes the proof.
\end{proof}

\subsection{Stability in a regular bulk component}

In this subsection we establish the basic estimates \eqref{m_sq_root} and \eqref{mz_reg} as well as the stability of \eqref{def_m} in a regular bulk component $k$, i.e.\ for $z \in \f D^b_k$.

\begin{lemma} \label{lem:stab_bulk}
Fix $\tau, \tau' > 0$ and suppose that the bulk component $k = 1, \dots, p$ is regular in the sense of Definition \ref{def:regular} (ii). Then for $z \in \f D^b_k$ we have \eqref{m_sq_root} and \eqref{mz_reg}. Moreover, \eqref{def_m} is strongly stable on $\f D^b_k$ in the sense of Definition \ref{def:strong_stab}.
\end{lemma}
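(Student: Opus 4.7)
The plan is to adapt the proof of the edge-case Lemma \ref{lem:stab_edge2}, with the simplification that in the bulk all relevant scales (including $\sqrt{\kappa+\eta}$ and the quadratic coefficient $\beta$) are of order one.

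First I would establish $\im m(z) \asymp 1$ on $\f D^b_k$. Apply Definition \ref{def:regular} (ii) with the parameter $\tau'/2$ to obtain a constant $c>0$ such that the density of $\varrho$ is at least $c$ on $[a_{2k}+\tau'/2,\, a_{2k-1}-\tau'/2]$. For every $E \in [a_{2k}+\tau',\, a_{2k-1}-\tau']$ this interval contains $[E-\tau'/2, E+\tau'/2]$, so the Poisson representation of $\im m$ gives
\begin{equation*}
\im m(E+\ii\eta) \;=\; \int\frac{\eta\,\varrho(\dd x)}{(x-E)^2+\eta^2} \;\geq\; c\int_{-\tau'/2}^{\tau'/2}\frac{\eta\,\dd y}{y^2+\eta^2} \;=\; 2c\arctan\!\pbb{\frac{\tau'}{2\eta}}\,,
\end{equation*}
which is bounded below by a positive constant for $\eta\leq\tau^{-1}$. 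Combined with the upper bound $\im m\leq C$ from Lemma \ref{lem:gen_prop_m} this yields $\im m(z)\asymp 1$. Since $\kappa(E)\geq\tau'$ and $\eta\leq\tau^{-1}$ on $\f D^b_k$, we also have $\sqrt{\kappa+\eta}\asymp 1$, so \eqref{m_sq_root} holds (with $E\in\supp\varrho$). The bound \eqref{mz_reg} is immediate from $|m+s_i^{-1}|\geq\im m\geq c$ since $s_i^{-1}\in\R$.

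For the stability I would follow Lemma \ref{lem:stab_edge2} line by line: the a priori bound $|u-m|\leq(\log N)^{-1/3}$ is propagated from $z+\ii N^{-5}$ by continuity, and $f(u)-f(m)=w-z$ is rewritten as
\begin{equation*}
\alpha(u-m)^2 + \beta(u-m) \;=\; u m (w-z)\,,
\end{equation*}
with $\alpha,\beta$ as in \eqref{def_al_be}. The bound $|\alpha|\leq C$ is immediate from \eqref{mz_reg} applied to both $m$ and $u$, since the denominators $u+s_i^{-1}$ remain bounded below in absolute value once $|u-m|$ is small. The heart of the matter is the lower bound $|\beta|\asymp 1$, equivalently $|f'(m(z))|\asymp 1$ uniformly on $\f D^b_k$, since $\beta=m^2f'(m)$ and $|m|\asymp 1$. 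Non-vanishing of $f'(m(z))$ is automatic: differentiating $f\circ m=\id$ on $\bb C_+$ gives $f'(m(z))\,m'(z)=1$, so $f'$ does not vanish on the open image $m(\bb C_+)$, which contains $m(z)$ for every $z\in\f D^b_k$ since $\eta>0$. To upgrade this to a uniform lower bound I would use compactness: $m$ extends continuously to the compact set $\overline{\f D^b_k}\cup\{E+\ii 0\col E\in[a_{2k}+\tau',a_{2k-1}-\tau']\}$, and its image is a compact subset of $\{w\col\im w\geq c\}$. The only potential obstruction is that $f'$ could vanish at some boundary point $m(E_0+\ii 0)$ with $E_0$ in the closed bulk interval; this would force a square-root branch point of $z\mapsto m(z)$ at $z=E_0$ and a corresponding square-root cusp of $\varrho$, i.e.\ the coalescence scenario discussed in the remark following Lemma \ref{lem:rho_struct}, which is incompatible with the uniform lower bound on the density provided by bulk regularity. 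Hence $f'\neq 0$ on the compact image and attains a positive minimum there.

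With $|\alpha|\leq C$ and $|\beta|\asymp 1$ in hand, the quadratic equation is solved exactly as in the proof of Lemma \ref{lem:stab_edge2}; the correct branch is selected by continuity from the assumed strong stability at $z+\ii N^{-5}$, yielding $|u-m|\leq C\delta/|\beta|\leq C'\delta$. Since $\sqrt{\kappa+\eta}+\sqrt{\delta}\asymp 1$ on $\f D^b_k$, this is exactly \eqref{strong_stab}. The main obstacle is the uniform lower bound $|\beta|\gtrsim 1$, which is the only step where bulk regularity is used in an essential way; once it is in place, the remaining arguments are a direct and in fact notably simpler analogue of those in Lemma \ref{lem:stab_edge2}.
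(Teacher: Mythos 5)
Your treatment of \eqref{m_sq_root}, \eqref{mz_reg}, and the upper bound $\abs{\alpha}\leq C$ is fine and essentially matches the paper: the Poisson-kernel computation for $\im m\asymp 1$ is exactly what the paper means by ``trivially follows,'' and $\abs{m+s_i^{-1}}\geq\im m\geq c$ is the right observation for \eqref{mz_reg}.

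The lower bound $\abs{\beta}\gtrsim 1$ is where there is a genuine gap. Your compactness argument shows that $f'$ does not vanish on the closure of $m(\f D^b_k)$, and hence $\abs{f'}$ attains a positive minimum there---but that minimum depends on $N$. Everything in sight (the measure $\pi$, the function $f$, the critical values $a_k$, the domain $\f D^b_k$, and the continuous extension $m(E+\ii 0)$) changes with $N$, so ``attains a positive minimum'' only yields a constant $c_N>0$; nothing in the argument prevents $c_N\to 0$. The qualitative cusp/coalescence observation rules out $f'(m(E_0))=0$ exactly, but does not convert a pointwise density lower bound $\varrho\geq c$ into a quantitative lower bound on $\abs{f'(m)}$ (the density being bounded below says nothing about $\varrho'$, and $\im m'(E)=\pi\varrho'(E)$ is what controls how small $\abs{f'(m)}=1/\abs{m'}$ can get). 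The paper avoids this entirely with an explicit computation: taking the imaginary part of $f(m)=z$ gives the identity $\sum_i r_i\abs{m}^2/\abs{m+s_i^{-1}}^2 = 1 - \abs{m}^2\eta/\im m\leq 1$, and then---using $\im m\geq c$, $\abs{m}\asymp 1$, and $s_i^{-1}\geq\tau$---each ratio $w_i\deq m/(m+s_i^{-1})$ has $\im w_i$ bounded below, so $\arg(w_i^2)$ is bounded away from $0\!\pmod{2\pi}$; hence $\re(r_i w_i^2)\leq(1-c')r_i\abs{w_i}^2$ for each $i$, and summing gives $\re\beta=1-\re\sum_i r_i w_i^2\geq c'$, which is the required uniform bound. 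To repair your proof you should replace the compactness step with this kind of direct phase estimate fed by $\im m\geq c$ (which you had already established but did not use at the crucial point).
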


\begin{proof}
The estimates \eqref{m_sq_root} and \eqref{mz_reg} follow trivially from the assumption in Definition \ref{def:regular} (ii), using $\kappa \leq C$ as follows from Lemma \ref{lem:a_order}.

In order to show the strong stability of \eqref{def_m}, we proceed as in the proof of Lemma \ref{lem:stab_edge2}. Mimicking the argument there, we find that it suffices to prove that $\abs{\alpha} \leq C$ and $\abs{\beta} \asymp 1$ for $z \in \f D^e_k$, where $\alpha$ and $\beta$ were defined in \eqref{def_al_be} and $u$ satisfies $\abs{u - m} \leq (\log N)^{-1/3}$.

Note that Definition \ref{def:regular} (ii) immediately implies that $\im m \geq c$ for some constant $c > 0$.
The upper bound $\abs{\alpha} + \abs{\beta} \leq C$ easily follows from the definition \eqref{def_al_be} combined with Lemma \ref{lem:gen_prop_m}.  What remains is the proof of the lower bound $\abs{\beta} \geq c$. To that end, we take the imaginary part of \eqref{def_m} to get
\begin{equation} \label{im_def_m}
\sum_i \frac{r_i \abs{m}^2}{\abs{m + s_i^{-1}}^2} \;=\; 1 - \frac{\abs{m}^2 \eta}{\im m}\,.
\end{equation}
Using that $\im m \geq c$, a simple analysis of the arguments of the expressions $(m + s_i^{-1})^2$ on the left-hand side of \eqref{im_def_m} yields
\begin{equation*}
\absBB{\sum_i \frac{r_i m^2}{(m + s_i^{-1})^2}} \;\leq\; \sum_i \frac{r_i \abs{m}^2}{\abs{m + s_i^{-1}}^2} - c \;\leq\; 1 - c\,,
\end{equation*}
where in the last step we used \eqref{im_def_m}. Recalling the definition of $\beta$ from \eqref{def_al_be}, we conclude that $\abs{\beta} \geq c$. This concludes the proof.
\end{proof}

\begin{remark} \label{rem:bulk_stab}
The bulk regularity condition from Definition \ref{def:regular} (ii) is stable under perturbation of $\pi$. To see this, define the shifted empirical density $\pi_t \deq M^{-1} \sum_{i = 1}^M \delta_{\sigma_i + t}$, and the associated Stieltjes transform $m_t(E)$ and function $f_t(x)$. Differentiating $f_t(m_t(E)) = E$ in $t$ yields $(\partial_t m)_t(E) = - (\partial_t f)_t(m_t(E)) / (\partial_x f)_t(m_t(E))$. Using that
\begin{equation*}
(\partial_t f)_0(m(E)) \;=\; \frac{1}{N} \sum_{i = 1}^M \frac{\sigma_i^{-2}}{(m(E) + \sigma_i^{-1})^2} \;=\; O(1)
\end{equation*}
by \eqref{mz_reg}, and $(\partial_x f)_t(m_t(E)) = m(E)^{-2} \beta(E)$, we find from $\abs{\beta(E)} \asymp \abs{m(E)} \asymp 1$ that $(\partial_t m)_0(E) = O(1)$ for $E \in [a_{2k} + \tau', a_{2k - 1} - \tau']$. A simple extension of this argument shows that if Definition \ref{def:regular} (ii) holds for $\pi$ then it holds for all $\pi_t$ with $t$ in some ball of fixed radius around zero.
\end{remark}

\subsection{Stability outside of the spectrum}

In this subsection we establish the basic estimates \eqref{m_sq_root} and \eqref{mz_reg} as well as the stability of \eqref{def_m} outside of the spectrum, i.e.\ for $z \in \f D^o$.

\begin{lemma} \label{lem:stab_outside}
Fix $\tau, \tau' > 0$. Then for $z \in \f D^o$ we have \eqref{m_sq_root} and \eqref{mz_reg}. Moreover, \eqref{def_m} is strongly stable on $\f D^o$ in the sense of Definition \ref{def:strong_stab}.
\end{lemma}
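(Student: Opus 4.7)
For \eqref{m_sq_root}, the approach is to observe that on $\f D^o$ the bounds $\kappa \geq \tau'$ and $\eta \leq \tau^{-1}$ give $\sqrt{\kappa + \eta} \asymp 1$ with constants depending only on $\tau, \tau'$, so the claim reduces to $\im m(z) \asymp \eta$. The lower bound $\im m(z) \geq C^{-1} \eta$ is immediate from Lemma \ref{lem:gen_prop_m}. For the upper bound, I would apply the Stieltjes representation
\begin{equation*}
\im m(z) \;=\; \eta \int \frac{\varrho(\dd x)}{(x-E)^2 + \eta^2}
\end{equation*}
together with $|x - E| \geq \tau'$ for all $x \in \supp \varrho$ to obtain $\im m(z) \leq \eta / (\tau')^2$.

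For \eqref{mz_reg}, my plan is to first handle $z = E \in \R \setminus \supp \varrho$ and then extend to complex $z$. On the real axis, I would split the indices according to the magnitude of $\sigma_i$: when $\sigma_i$ is below a threshold $c_0$ depending only on $\tau$ (arising from the upper bound $|m(E)| \leq C$ of Lemma \ref{lem:gen_prop_m}), the estimate $|m(E) + s_i^{-1}| \geq s_i^{-1} - |m(E)| \geq 1$ is immediate. For $\sigma_i \geq c_0$ bounded below, Lemma \ref{lem:a_order} places $m(E)$ in the open interval $J_j$ on which $f$ is a bijection, and I would show separation from both types of endpoints of $J_j$: separation from each critical-point endpoint $x_\ell$ follows from $\kappa(E) \geq \tau'$ via the Taylor expansion \eqref{z-a_taylor}, while separation from each pole endpoint $-s_k^{-1}$ follows from the boundedness $|E| \leq \tau^{-1}$ combined with the pole behavior $f(m) \sim \phi \pi(\{s_k\})/(m + s_k^{-1})$. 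The extension to complex $z$ is then immediate: I would use $|m(z) - m(E)| \leq \eta / (\tau')^2$ (from a second application of the Stieltjes representation) when $\eta$ is small, and invoke $\im m(z) \geq C^{-1}\eta$ directly when $\eta$ is bounded below.

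For the strong stability, I would adapt the argument from the proof of Lemma \ref{lem:stab_edge2}. Writing $f(u) - f(m) = w - z$ as
\begin{equation*}
\alpha(u-m)^2 + \beta(u-m) \;=\; u\, m\, (w-z)
\end{equation*}
with $\alpha, \beta$ as in \eqref{def_al_be}, the main step is to establish $|\alpha| \leq C$ and $|\beta| \asymp 1$ uniformly in $z \in \f D^o$ for $u$ in a suitable neighborhood of $m$. The bound on $|\alpha|$ is straightforward given \eqref{mz_reg}. For $|\beta| \asymp 1$, I would use the identity $\beta = m^2 f'(m)$ together with the separation of $m$ from both poles (via \eqref{mz_reg}) and critical points of $f$ (established in the proof of \eqref{mz_reg}). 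Inverting the quadratic as in the proof of Lemma \ref{lem:stab_edge2}, supplemented by a continuity argument from $z + \ii N^{-5}$ to select the correct branch, then yields $|u - m| \leq C\delta$, which matches \eqref{strong_stab} since $\sqrt{\kappa + \eta} \asymp 1$ on $\f D^o$.

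The main technical hurdle will be ensuring that the separation estimates in the proof of \eqref{mz_reg} are truly uniform: when a pole $-s_k^{-1}$ has small weight $\phi\pi(\{s_k\})$, the naive pole-behavior estimate yields only a lower bound of order $\phi\pi(\{s_k\})/|E|$. The key observation needed to close the argument is that a weak pole produces a correspondingly narrow component of $\R \setminus \supp\varrho$ adjacent to it, so the constraint $\dist(E, \supp\varrho) \geq \tau'$ excludes $E$ from that narrow component and forces $m(E)$ into a different piece of $\bigcup_j J_j$ bounded away from the weak pole by a margin determined by $\tau$ and $\tau'$ alone.
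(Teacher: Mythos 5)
Your outline has the right three-part structure, and the \eqref{m_sq_root} argument (Stieltjes representation, $|x-E|\geq\tau'$) matches the paper's. But there are two genuine gaps in the remaining steps, both stemming from the fact that \emph{no regularity assumption is imposed in Lemma \ref{lem:stab_outside}}, so anything requiring Definition \ref{def:regular} (i) or Lemma \ref{lem:edge_tools} is off the table here.

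First, for \eqref{mz_reg} you invoke the Taylor expansion \eqref{z-a_taylor} to separate $m(E)$ from the critical points $x_\ell$. But \eqref{z-a_taylor} is established only under the edge regularity hypothesis of Definition \ref{def:regular} (i), which guarantees $\abs{f''(x_k)}\asymp 1$; on $\f D^o$ an adjacent edge may be degenerate ($f''(x_k)$ arbitrarily small), and the expansion collapses. The paper's argument avoids this entirely: from \eqref{f_der} one has the one-sided bound $f'(y)\leq y^{-2}\leq C$ for $y$ in the relevant $J_i$ (away from $0$, using $\abs{m(E)}\asymp 1$ from Lemma \ref{lem:gen_prop_m}), and then the mean value theorem gives $\tau'\leq\abs{E-f(x_\ell)}\leq C\,\abs{m(E)-x_\ell}$, i.e.\ $\dist(m(E),\partial J_i)\geq\tau'/C$ with no regularity needed. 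This also disposes of your ``weak pole'' concern at a stroke: since $m(E)\in J_i$ and the critical-point endpoints $\partial J_i$ sit strictly between $m(E)$ and the pole endpoints $\partial I_i$, the bound $\dist(m(E),\partial J_i)\geq c$ automatically gives $\dist(m(E),\partial I_i)\geq c$. There is no need to analyze pole residues at all, and the tension you flag (small $\phi\pi(\{s_k\})$) never enters.

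Second, your route to $\abs{\beta}\asymp 1$ via $\beta=m^2 f'(m)$ and ``separation of $m$ from critical points'' is not a complete argument. Separation of $m$ from the zeros of $f'$ does not by itself yield a \emph{uniform} lower bound on $\abs{f'(m)}$: without a two-sided bound on $f''$ (again, unavailable without regularity), $f'$ can be tiny on a whole neighborhood of a near-degenerate critical point, and a compactness argument would give a constant depending on the particular spectrum of $\Sigma$, not uniform in $N$. The paper obtains $\abs{\beta}\geq c$ differently, by taking the imaginary part of \eqref{def_m} (identity \eqref{im_def_m}) and using $\im m/\eta\asymp 1$ (from \eqref{im_m_outside}) together with $\abs{m}\asymp 1$: this gives $\sum_i r_i\abs{m}^2/\abs{m+s_i^{-1}}^2\leq 1-c$, hence $\absb{\sum_i r_i m^2/(m+s_i^{-1})^2}\leq 1-c$ and thus $\abs{\beta}\geq c$ directly from \eqref{def_al_be}, with no reference to $f'$ or to critical points. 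You should replace the $f'(m)$ step with this imaginary-part argument (it is the same mechanism already used in Lemma \ref{lem:stab_bulk}).
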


\begin{proof}
Since $m$ is the Stieltjes transform of a measure $\varrho$ with bounded density (see Lemma \ref{lem:gen_prop_m}), we find that
\begin{equation} \label{im_m_outside}
\im m \;\asymp\; \eta
\end{equation}
on $\f D^o$. This proves \eqref{m_sq_root}. 

We first establish \eqref{mz_reg} for $z = E \in \R$ satisfying $\dist(E, \supp \varrho) \geq \tau'$. For definiteness, let $z = E \in [a_{2l}, a_{2l+1}]$ for some $l$, and let $i = 2, \dots, n$ satisfy $\{a_{2l}, a_{2l+1}\} = f(\partial J_i)$, where we recall the set $J_i \deq \h{x \in I_i \col f'(x) > 0}$ from the proof of Lemma \ref{lem:a_order}. Then there exists an $x \in J_i$ such that $E = f(x)$. Moreover, from \eqref{f_der} we get $f'(y) \leq y^{-2}$ for $y \in \R$, so that for $y \in J_i$ we have $0 \leq f'(y) \leq C$ by \eqref{f_der_est}. Since $\dist(E, \{a_{2l}, a_{2l+1}\}) \geq \tau'$, we conclude that $\dist(x, \partial J_i) \geq c$, which implies $\dist(x, \partial I_i) \geq c$. The cases $E > a_1$ and $E < a_{2p}$ are handled similarly. This concludes the proof of \eqref{mz_reg} for $z = E \in \R$ satisfying $\dist(E, \supp \varrho) \geq \tau'$.

Next, we note that if $\eta \geq \epsilon$ for some fixed $\epsilon > 0$ then \eqref{mz_reg} is trivially true by \eqref{im_m_outside}. On the other hand, if $\im m \leq \epsilon$ then we get
\begin{equation*}
\min_i \abs{m(E + \ii \eta) + s_i^{-1}} \;\geq\; \min_i \abs{m(E) + s_i^{-1}} - C \epsilon \;\geq\; c - C \epsilon \;\geq\; c/2
\end{equation*}
for $\epsilon$ small enough, where we used the case $z = E \in \R$ satisfying $\dist(E, \supp \varrho) \geq \tau'$ established above and the estimate $\abs{m'(z)} = \absb{\int \frac{1}{(x - z)^2} \, \varrho(\dd x)} \leq C$ (since $\abs{x - z} \geq \tau'$ for $z \in \f D^o$ and $x \in \supp \varrho$). This concludes the proof of \eqref{mz_reg} for $z \in \f D^o$.

As in the proof of Lemma \ref{lem:stab_bulk}, to prove the strong stability of \eqref{def_m} on $\f D^o$, it suffices to show that $\abs{\alpha} \leq C$ and $\abs{\beta} \asymp 1$ for $z \in \f D^e_k$, where $\alpha$ and $\beta$ were defined in \eqref{def_al_be} and $u$ satisfies $\abs{u - m} \leq (\log N)^{-1/3}$. The upper bound $\abs{\alpha} + \abs{\beta} \leq C$ follows immediately from \eqref{mz_reg}, and the lower bound $\abs{\beta} \geq c$ follows from the definition \eqref{def_al_be} combined with \eqref{im_def_m}, \eqref{im_m_outside}, and Lemma \ref{lem:gen_prop_m}. This concludes the proof.
\end{proof}

\section*{Acknowledgements}
A.K.\ was partially supported by Swiss National Science Foundation grant 144662 and the SwissMAP NCCR grant. J.Y.\ was partially supported by NSF Grant DMS-1207961. We are very grateful to the Institute for Advanced Study, Thomas Spencer, and Horng-Tzer Yau for their kind hospitality during the academic year 2013-2014. We also thank the Institute for Mathematical Research (FIM) at ETH Z\"urich for its generous support of J.Y.'s visit in the summer of 2014. We are indebted to Jamal Najim for stimulating discussions.

{\small

\providecommand{\bysame}{\leavevmode\hbox to3em{\hrulefill}\thinspace}
\providecommand{\MR}{\relax\ifhmode\unskip\space\fi MR }
\providecommand{\MRhref}[2]{%
  \href{http://www.ams.org/mathscinet-getitem?mr=#1}{#2}
}
\providecommand{\href}[2]{#2}

}

\end{document}